\documentclass[12pt,letterpaper,reqno]{amsart}
\usepackage[foot]{amsaddr}
\makeatletter
\renewcommand{\email}[2][]{%
  \ifx\emails\@empty\relax\else{\g@addto@macro\emails{,\space}}\fi%
  \@ifnotempty{#1}{\g@addto@macro\emails{\textrm{(#1)}\space}}%
  \g@addto@macro\emails{#2}%
}
\makeatother
\usepackage{cmap}
\usepackage[T1]{fontenc}
\usepackage{epsfig}
\usepackage{amsmath}
\usepackage{amssymb}
\usepackage{amsthm}
\usepackage{indentfirst}
\usepackage{xspace}
\usepackage{xcolor}
\usepackage{setspace}
\usepackage{verbatim}
\usepackage[letterpaper,margin=1in,headheight=15pt]{geometry}
\usepackage{mathpazo}
\usepackage{mathtools}
\usepackage{tikz}
\usetikzlibrary{chains}
\usepackage{booktabs}
\usepackage{cancel}
\usepackage{float} 
\definecolor{shadecolor}{rgb}{0.85,0.85,0.85}
\usepackage{bibentry}
\usepackage{bm}
\usepackage{graphics} 
\usepackage[all]{xy}
\usepackage{hyperref} 
\definecolor{darkred}{rgb}{0.5,0.15,0.15}
\hypersetup{colorlinks=true,urlcolor=darkred,linkcolor=darkred,citecolor=darkred}
\allowdisplaybreaks

\newtheorem{thm}{Theorem}
\newtheorem{cor}[thm]{Corollary}
\newtheorem{conj}[thm]{Conjecture}
\newtheorem{lem}[thm]{Lemma}
\newtheorem{obs}{Observation}
\newtheorem{prop}[thm]{Proposition}

\newtheorem{ex}[thm]{Example}

\theoremstyle{remark}

\theoremstyle{definition}
\newtheorem{rem}[thm]{Remark}
\newtheorem{defn}[thm]{Definition}

\numberwithin{thm}{section}
\numberwithin{equation}{section}
\numberwithin{figure}{section}

\newcommand{\fB}{{\mathfrak B}}

\newcommand{\cG}{\ensuremath{\mathcal G}}
\newcommand{\cB}{\ensuremath{\mathcal B}}
\newcommand{\cL}{\ensuremath{\mathcal L}}

\newcommand{\cF}{\ensuremath{\mathcal F}}

\newcommand{\cM}{\ensuremath{\mathcal M}}
\newcommand{\cO}{\ensuremath{\mathcal O}}

\newcommand{\cI}{\ensuremath{\mathcal I}}

\newcommand{\cP}{\ensuremath{\mathcal P}}
\newcommand{\cT}{\ensuremath{\mathcal T}}
\newcommand{\cV}{\ensuremath{\mathcal V}}

\newcommand{\R}{\ensuremath{\mathbb R}}
\newcommand{\C}{\ensuremath{\mathbb C}}
\newcommand{\CP}{\ensuremath{\mathbb {CP}}}

\newcommand{\Z}{\ensuremath{\mathbb Z}}

\newcommand{\bfalpha}{\ensuremath{\bm \alpha}}

\newcommand{\calA}{\mathcal A}
\newcommand{\calB}{\mathcal B}
\newcommand{\calD}{\mathcal D}
\newcommand{\calG}{\mathcal G}
\newcommand{\calH}{\mathcal H}

\newcommand{\calP}{\mathcal P}
\newcommand{\calR}{\mathcal R}

\newcommand{\calM}{\mathcal M}

\newcommand{\tcalX}{\widetilde{\mathcal X}}
\newcommand{\frakB}{\mathfrak B}
\newcommand{\RR}{{\mathcal R}}

\newcommand{\cE}{{\mathcal E}}

\newcommand{\delbar}{\ensuremath{\overline{\partial}}}
\newcommand{\zbar}{{\ensuremath{\overline{z}}}}

\newcommand{\Id}{\ensuremath{\mathrm{Id}}}

\newcommand{\semif}{\ensuremath{\mathrm{sf}}}

\newcommand{\I}{{\mathrm i}}
\newcommand{\e}{{\mathrm e}}
\newcommand{\de}{\mathrm{d}}

\newcommand{\IP}[1]{\langle#1\rangle}

\newcommand{\eps}{\epsilon}

\newcommand{\calX}{\ensuremath{\mathcal X}}

\newcommand{\aff}{\mathrm{aff}}

\newcommand{\del}{{\partial}}

\newcommand{\cMam}{\mathcal{M}(\boldsymbol{\alpha}, \mathbf{m})}
 
\newcommand{\bsa}{\boldsymbol{\alpha}}
\newcommand{\bfm}{\mathbf{m}}

\newcommand{\Simplex}{\Delta}

\DeclareMathOperator{\ad}{ad}
\DeclareMathOperator{\im}{Im}
\DeclareMathOperator{\image}{im}
\DeclareMathOperator{\re}{Re}
\DeclareMathOperator{\Tr}{Tr}

\DeclareMathOperator{\End}{End}

\DeclareMathOperator{\pdeg}{pdeg}

\DeclareMathOperator{\Res}{Res}
\DeclareMathOperator{\interior}{int}

\newcommand{\fixme}[1]{{\color{blue}{\tt [#1]}}}

\DeclareMathOperator{\GL}{GL}
\DeclareMathOperator{\SL}{SL}

\DeclareMathOperator{\OO}{O}

\DeclareMathOperator{\Diff}{Diff}

\tikzset{node distance=2em, ch/.style={circle,draw,on chain,inner sep=2pt},chj/.style={ch,join},every path/.style={shorten >=4pt,shorten <=4pt},line width=1pt,baseline=-1ex}

\begin{document}
\onehalfspacing

\title{ALG gravitational instantons and Hitchin moduli spaces, I: \\ Torelli parameters}
\author{Laura Fredrickson}
\address{Department of Mathematics\\
University of Oregon, Eugene, OR 97403 USA}
\email{lfredric@uoregon.edu}

\author{Rafe Mazzeo}
\address{Department of Mathematics\\
Stanford University, Stanford, CA 94305 USA}
\email{rmazzeo@stanford.edu}

\author{Jan Swoboda}
\address{Department of Mathematics for Engineering\\
Universit\"{a}t der Bundeswehr M\"{u}nchen, 95579 Neubiberg, Germany}
\email{jan.swoboda@unibw.de}

\author{Hartmut Weiss}
\address{Department of Mathematics\\
Christian-Albrechts-Universit\"{a}t Kiel, 24118 Kiel, Germany}
\email{weiss@math.uni-kiel.de}
\date{\today}

\begin{abstract}
This is the first of two papers which together prove that the $12$-parameter family of parabolic $SU(2)$-Hitchin moduli spaces on the 
four-punctured sphere are all ALG gravitational instantons of type D4, and hence are asymptotic to $(\C \times T^2_\tau)/\Z_2$ at infinity. 
The elliptic modulus $\tau$ is determined by the cross-ratio of the four points.  In this first paper, we consider each Hitchin moduli space 
corresponding to an allowable set of parabolic data and compute its Torelli parameters. 
There is a $12$-parameter family of Hitchin moduli spaces corresponding to different parabolic data, and we show that these realize
all possible allowable Torelli parameters.  In the companion paper, we we will show there that all of the Hitchin moduli spaces studied here are indeed ALG of type $D_4$, and
consequently that every ALG-$D_4$ gravitational instanton can be realized as a Hitchin moduli space. Altogether, this will give the first verification of any 
case of the Modularity Conjecture: that all ALG gravitational instantons with tangent cone $\C/\Z_2$ can be realized as Hitchin moduli spaces
with their natural associated $L^2$ metrics.
\end{abstract}

\maketitle

\tableofcontents

\setcounter{page}{1}

\date{\today}

\section{Introduction}\label{sec:introduction}

Let $C$ be a compact Riemann surface and $D = \{p_1, \ldots, p_N\}$ a divisor on $C$ with all points of multiplicity $1$. Let $E$ be a rank $2$ 
complex vector bundle over $C$.  We shall consider parabolic structures on $(C, D, E)$, consisting of the following data: a full flag
$\{0\} \subset F_{p_i} \subset E_{p_i} := E|_{p_i}$ in the fiber of $E$ over each point $p_i$ in $D$, as well as an assignment of two real numbers 
$\alpha^{(1)}(p_i), \alpha^{(2)}(p_i) \in (0,1)$ and a complex number $m_i$ for each $p_i \in D$.  In general, the bundle $E$ may have higher rank, one does not 
require the flag to be full, and the numbers $\alpha^{(i)}(p)$ are unrelated.  In this paper we specialize by requiring $E$ to be rank $2$, as already stated,
and to have structure group $\mathrm{SU}(2)$, which is equivalent to fixing a Hermitian metric on $\det E$ and requiring all transformations below
to be compatible with this choice. In particular, this forces that $\alpha^{(1)}(p_i) + \alpha^{(2)}(p_i) = 1$, so it suffices to specify only 
$\alpha^{(1)}(p_i) \in (0, 1/2)$, which we call simply $\alpha_i$ for simplicity. 

Any such choice of parabolic data leads to three seemingly distinct collections of objects:  a space of compatible Higgs bundles, a space of gauge fields $(A,\varphi)$
which solve the Hitchin equations and have specified singularities at points of $D$, and a space of flat $\mathrm{SL}(2,\C)$ connections on $C^\times = C
\setminus D$, again with specified holonomy and singular behavior at points of $D$.  There are gauge groups acting on each of these spaces. The first main theorem 
in this subject is that restricting to suitable subsets of stable objects in each space, the quotients of each of these spaces
by the relevant gauge group defines a smooth moduli space, and the three moduli spaces determined in this way are mutually diffeomorphic, but inherit
different complex structures.  These identifications, in various directions, are called the Kobayashi-Hitchin and nonlinear Hodge correspondences. The 
moduli space itself is denoted $\cMam$, where $\boldsymbol{\alpha} = (\alpha_1, \ldots, \alpha_N)$ and $\mathbf{m} = (m_1, \ldots, m_N)$. 

In slightly more detail, a Higgs bundle consists of a pair $(\cE, \varphi)$ where $\cE = (E, \overline{\partial}_A)$ is a holomorphic structure on $E$ and $\varphi$ is a 
meromorphic section of $\mathrm{Hom}(\cE, \cE \otimes K_C)$ with simple poles at $D$. The `real mass parameters' $\alpha_j$ in this setting are extra data 
whose role will be explained later, while the `complex mass parameters' $m_j$ specify the eigenvalues of the residue of $\varphi$ at $p_j$.   Next, the gauge field $A$ is a unitary 
connection on $E$ and $\varphi$ is a section of $\mathrm{Hom}\,(E, E \otimes K_C)$; each has first order blowup at the points of $D$ with singular part 
determined by the $\alpha_j$ and $m_j$, respectively. These satisfy the Hitchin equations 
\begin{equation}
F_A^\perp + [\varphi, \varphi^*] = 0, \ \ \overline{\del}_A \varphi = 0,
\label{HE}
\end{equation}
where the adjoint is taken with respect to a fixed Hermitian metric $h_0$ on $E$.  An alternate point of view is that if we begin with the holomorphic structure
$\overline{\partial}_A$, then we seek a Hermitian metric $h_0$ such that the system \eqref{HE} is satisfied, where $F_A$ is the curvature of the associated Chern 
connection $\overline{\partial}_A + \partial_A^{h_0}$.  Finally, in terms of these fields, the flat $\mathrm{SL}(2,\C)$ connections
we consider take the form $\nabla_A + \zeta^{-1}\varphi + \zeta\varphi^*$ for $\zeta \in \C^\times$. The group $\cG$ of unitary gauge transformations acts on solutions of the Hitchin equations and
on the space of flat connections; its complexification $\cG_{\C}$ acts on the space of Higgs bundles. 

In this paper we shall primarily focus on the space of pairs $(A, \varphi)$ which solve the Hitchin equations from the Higgs bundle perspective. 
It is known that   
\begin{equation}
\dim_{\C} \cMam =  6(\mathrm{genus}(C)-1) + 2N.
\label{dimM}
\end{equation}
 There is a natural hyperK\"ahler metric on the affine space of all pairs $(A,\varphi)$, and an infinite dimensional 
version of hyperK\"ahler reduction defines a metric $g$ on this quotient moduli space.   Another key feature of this moduli space is the existence
of the Hitchin fibration $\mathrm{Hit}$, which maps a pair $(\cE, \varphi)$ (or equivalently, $(A,\varphi)$) to $-\det \varphi$, which is a meromorphic quadratic differential
on $C$, with poles of order no more than $2$ at points of $D$.  The leading singular terms of these differentials are determined by the
complex masses $m_j$. The range of $\mathrm{Hit}$ is an affine space, and this map is known to be surjective and proper. 

\bigskip

While much is known about the geometry and topology of the hyperK\"ahler spaces $(\cMam, g)$, many open questions remain.   Our goal here is to 
study some of these questions when $C= \CP^1$ 
and $|D| = 4$. In this special case, $\dim_{\C} \cMam = 2$; note that by \eqref{dimM}, the only other case where the moduli space has this dimension 
is when $g=N=1$, i.e., for the once-punctured torus\footnote{There is a $3$-parameter family of Hitchin moduli spaces on once-punctured torus. It is expected that each of these hyperK\"ahler structures can also be obtained from a $3$-parameter subfamily of Hitchin moduli spaces on the four-punctured sphere.}.  By one of the general results obtained by Hitchin in his initial investigation of these spaces \cite{hitchin87} (see \cite{simpsonnoncompact} 
for this result in the parabolic case), the metric $g_{L^2}$ is complete.  In other words, $(\cMam, g_{L^2})$ is a complete, noncompact (real) 
four-dimensional hyperK\"ahler space. It is expected that the Riemannian curvature tensor is $L^2$-integrable. We will prove this in the companion paper.

\bigskip

\noindent\emph{Gravitational Instantons:} Complete, noncompact (real) four-dimensional hyperK\"ahler spaces with integrable Riemannian curvature tensors are known as \emph{gravitational instantons}, and are fundamental objects in the study of four-dimensional 
Riemannian manifolds with special holonomy.  
Gravitational instantons have been the focus of intensive study, particularly in the last decade, and there is now a fairly complete classification.  There are six 
different types: ALE, ALF, ALG${}^*$, ALG, ALH${}^*$, and ALH, 
based on their asymptotic geometric structure, and within each type the possible topological types are classified by affine Dynkin
diagrams (which correspond to their intersection forms on middle degree integer homology). Finally, fixing both the asymptotic and 
topological type, there are still continuous moduli.  The monikers ALE and ALF are acronyms for asymptotically locally Euclidean and 
asymptotically locally flat; the remaining ones, ALG${}^*$/G/H${}^*$/H, were coined by S.\ Cherkis (with ${}^*$ additions by Hein), 
as the natural `continuation' of this series.  The most `visible' geometric differences between these spaces are based on their 
volume and quasi-isometry classes. Thus, the ALE spaces are asymptotically conical; ALF spaces are $S^1$ fibrations over three-dimensional
asymptotically conical spaces, where the $S^1$ fibers degenerate at some finite collection of points and which have lengths converging to
some finite positive number at infinity; ALG and ALG${}^*$ spaces are $T^2$ fibrations over two-dimensional asymptotically conical spaces,
with the $T^2$ fibers degenerating at some finite collection of points and with the fiber metrics converging to a flat metric on $T^2$ of fixed
area in the ALG case but asymptotic to a diverging family of flat conformal structures on $T^2$ in the ALG${}^*$ case; similarly, the 
ALH and ALH${}^*$ spaces are fibrations over a half-line with fiber a three-dimensional nilmanifold, with similar convergence and divergence
properties in the unstarred and starred cases, respectively.  There has been significant progress in understanding these gravitational 
instanton moduli spaces, starting from the celebrated thesis of Kronheimer \cite{Kronheimerconstruction, KronheimerTorelli}, where a complete classification of all ALE spaces was 
obtained. That all gravitational instantons fall into one of these six classes is the upshot of a number of papers by several different authors, leading to the final resolution in \cite{SunZhang}. The classification of ALG and ALG${}^*$ 
starts with G.\ Chen-X.\ Chen \cite{ChenChenI, ChenChenIII}. 
In the companion paper we shall provide a much more detailed description of this entire classification scheme.

Fixing $\boldsymbol{\alpha}$ and $\mathbf{m}$, the Hitchin moduli space $\cMam$ for the four-punctured sphere is a singular $T^2$ fibration over a two real 
dimensional (asymptotically) conical space.  The map to this base is the Hitchin fibration. Because of this, it is natural to suspect that these parabolic 
Hitchin moduli spaces are either ALG or ALG${}^*$. 

The goal of \emph{this paper and its companion} is to prove that these are in fact always ALG:
\begin{thm} \label{thm:mainofboth}
As the parabolic parameters $\boldsymbol{\alpha}$, $\mathbf{m}$ vary, the spaces $\cMam$ nearly exhaust the entire family of ALG spaces with affine D4 
topology. 
\end{thm}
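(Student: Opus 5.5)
The plan splits into an asymptotic part and a Torelli part, reduced to each other by the classification of ALG gravitational instantons. By Chen--Chen \cite{ChenChenI, ChenChenIII}, an ALG-$D_4$ space is determined up to triholomorphic isometry by two pieces of data. The first is its asymptotic model $(\C\times T^2_\tau)/\Z_2$ with fixed elliptic modulus $\tau$ and flat fiber area. The second is its Torelli data: the periods of the hyperK\"ahler triple $[\omega_I],[\omega_J],[\omega_K]\in H^2(\cMam;\R)$, restricted to the affine $D_4$ lattice, modulo the Weyl-type group, with the degenerate (non-smooth) periods excluded. So it suffices to prove two things. First, each $(\cMam,g_{L^2})$ is ALG of type $D_4$, with $\tau$ determined by the cross-ratio of $p_1,\dots,p_4$. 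Second, the map $(\bsa,\bfm)\mapsto$ Torelli parameters is onto an open dense subset of the allowable period domain; this is the content of ``nearly exhaust''. The count is consistent: $\bsa$ and $\bfm$ give $4+8=12$ real parameters, matching $3\times 4$ periods against a rank-$4$ lattice once the cross-ratio and scale are fixed.

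\textbf{Step 1 (topology and periods).} I would identify $H_2(\cMam;\Z)$ with the affine $D_4$ lattice by looking at the nilpotent cone $\mathrm{Hit}^{-1}(0)$. For generic $\bsa$ with $\bfm=0$, this fiber is a $\CP^1$ (the moduli of stable parabolic bundles) together with four further rational curves, one near each $p_i$, forming an $I_0^*$ configuration whose central curve has multiplicity $2$. I would then compute the periods over these five spheres, subject to one linear relation. The $\omega_J+i\omega_K$ periods should be integrals of the holomorphic symplectic form, i.e. residue-type expressions linear in the $m_i$, essentially $\pm m_i$ combinations. The $\omega_I$ periods should be the Kähler class in the Dolbeault complex structure, which by the standard parabolic degree computation is linear in the $\alpha_i$, essentially $\pm\alpha_i$ and $1/2-\sum$-type combinations.

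\textbf{Step 2 (surjectivity).} With the periods in hand as an explicit affine map of $(\bsa,\bfm)$, I would check that this map is injective and that its image is the complement of the walls. The walls are the loci where some root has zero triple period, and they correspond exactly to non-generic parabolic weights or masses where $\cMam$ becomes singular. Together with the Weyl reflections realized by Hecke/Okamoto-type symmetries of the parameters, this gives the ``nearly all'' statement.

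\textbf{Step 3 (ALG asymptotics).} This is the step I expect to be the main obstacle, and it is deferred to the companion paper. Following the limiting-configuration approach of Mazzeo--Swoboda--Weiss--Witt and Fredrickson, I would show that on the regular locus of $\mathrm{Hit}$ the $L^2$ metric approaches the semiflat metric exponentially fast in the radius of the base. The semiflat metric over the base $\{q_2\}\cong\C$ (with quadratic growth at infinity) is exactly the flat model $(\C\times T^2_\tau)/\Z_2$. Here $\tau$ is the period ratio of the spectral curve $\lambda^2=q_2$, which for large $q_2$ is the elliptic double cover of $\CP^1$ branched at $p_1,\dots,p_4$, hence depends only on the cross-ratio. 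The difficulty is obtaining decay uniform near the singular fibers and the $\Z_2$ monodromy at infinity; these estimates also give the $L^2$ curvature bound.
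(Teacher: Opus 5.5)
Your top-level decomposition matches the paper's: the Torelli theorem, explicit periods, $W_{\mathrm{aff}}$-surjectivity, and ALG asymptotics deferred to the companion paper. The gap is in Step~1, which is the actual content of this paper: it only states the expected answers, and the one mechanism you name does not reach $\bfm\neq\mathbf 0$. The ``parabolic degree computation'' is the moment map $\mu=i\int_C\Tr\varphi\wedge\varphi^{*_h}$ of the circle action, with $\mu=-2\pi K_I$ at the fixed point labelled $I$. It is evaluated at the two fixed points of each exterior sphere and combined with Duistermaat--Heckman on $S^2$. This exists only on $\cM(\bsa,\mathbf 0)$. Even there the central sphere is fixed pointwise, so its period needs the fiber area $\int_F\omega_I=4\pi^2$, which comes from the semiflat asymptotics.

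For $\bfm\neq\mathbf 0$ there is no circle action and no nilpotent cone, and the basis exists only by parallel transport. So you need an argument that the $\omega_I$-periods are independent of $\bfm$. The paper builds a relative primitive $\lambda_I$ for the rescalings $m_t:\cM(\bsa,\bfm)\to\cM(\bsa,t\bfm)$. The naive flow $(A,t\varphi)$ must be corrected by the complex gauge term $\dot g=-2L_{A,\varphi}^{-1}[\varphi,\varphi^*]$ to stay on $\mu_\R^{-1}(0)$, which requires invertibility on weighted spaces. The paper then lets $t\to 0$, using continuity of $\omega_I$ on the extended space $\mathcal P_0(\bsa)$.

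For $\Omega_I$ the gap is more serious. On $\cM(\bsa,\mathbf 0)$ every basis sphere is $I$-holomorphic, so all $\Omega_I$-periods vanish there and carry no information. The missing idea is a singular primitive. One shows $\Omega_I=-4\,\Omega_I^{\mathrm{sf}}$, hence $\Omega_I=i\,d\tau$ off the eight polar sections $\sigma_j^\pm$ on which the tautological form has residue $\pm m_j$. This gives $\int_S\Omega_I=-2\pi\sum_j m_j\,I(S,\sigma_j^+-\sigma_j^-)$. These intersection numbers still have to be computed. The paper follows the $\C^\times$-limits of the polar sections through the Bia{\l}ynicki-Birula strata of $\mathcal P_0(\bsa)$. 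In interior chambers four of them wrap the central sphere, so each exterior-sphere period is $2\pi M_J$ and involves all four masses. Without these coefficients you cannot know that the $4\times4$ coefficient matrix is invertible, and that is exactly what surjectivity in the $\bfm$-directions requires.

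Step~2 also misplaces the word ``nearly''. The loci where some $(-2)$-class has vanishing triple period are exactly the images of Nakajima's walls and their $W_{\mathrm{aff}}$-translates. They are removed on both sides, since the affine extension of the period map is a bijection $\widetilde{\mathcal R}^{\mathrm{full}}\to\calP$. What is actually missed is the $W_{\mathrm{aff}}$-saturation of a codimension-two set: weights lying both on $\partial[0,\frac12]^4$ and on the boundary of the Biswas polytope (e.g.\ $\alpha_1=\frac12$, $\alpha_2+\alpha_3+\alpha_4=\frac12$), with generic $\bfm\neq\mathbf 0$. These are genuine points of $\calP$, but their orbits never enter the open cube, and there the flags become non-full.

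The reflections need no Hecke/Okamoto isomorphisms. The paper realizes $W_{\mathrm{aff}}$ by re-marking the same $\cMam$ via 4D Dehn twists along the $(-2)$-spheres. Together with $W_{\mathrm{aff}}$-equivariance of the period map and the fact that the model chamber is a fundamental domain for $W_{\mathrm{aff}}$ on $\R^4$, this gives the tiling. Finally, the Torelli theorem you need is Chen--Viaclovsky--Zhang's, with surjectivity also due to Lee--Lin. Chen--Chen only supply the asymptotic model and $(\tau,\ell)$.
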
  
This directly answers one case of the so-called `Modularity Conjecture', articulated by Boalch, 
which states loosely that every possible 
gravitational instanton occurs as a gauge theoretic moduli space.  In particular, 
\begin{conj}[Modularity Conjecture \cite{aim}]
Every ALG or ALG$^*$ gravitational instanton arises as the hyperK\"ahler metric on some Hitchin moduli space.
\end{conj}
Our result will be the first of this kind for ALG${}^*$/G/H${}^*$/H moduli spaces, where the proof is considerably harder than the ALE or ALF cases.

\bigskip

In this particular case, the Torelli Theorem of \cite{CVZ2024}[Theorem 1.10]  states that marked ALG-$D_4$ gravitational are classified by the classes of $[\omega_I], [\omega_J], [\omega_K]$, or alternatively, the integrals of $\omega_I, \Omega_I = \omega_J+ i \omega_K$ over a given basis of the homology.  See Section \ref{subsec:Torelli_map} for a more precise statement.
 
 \bigskip
 
 In \emph{this paper}, we prove two main results. These are not stated precisely, since there is considerable notation we must introduce first; but we refer to the precise statement appearing later.

 \begin{thm}[cf.\, Theorem \ref{thm:Torelli}] The class $[\omega_I]$ is an certain affine linear function of $\boldsymbol{\alpha}$ and the class $[\Omega_I = \omega_J + i \omega_K]$ is a certain linear function of $\mathbf{m}$.
 \end{thm}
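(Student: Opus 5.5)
The plan is to compute the two classes separately, using the fact that in complex structure $I$ the space $\cMam$ is the moduli space of stable parabolic Higgs bundles. The holomorphic symplectic form $\Omega_I = \omega_J + i\omega_K$ depends only on the Higgs data. The Kähler class $[\omega_I]$ depends on the stability and metric data, hence on $\boldsymbol{\alpha}$.

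First I fix a basis of $H_2(\cMam;\Z)$, which is the affine $D_4$ lattice. It consists of five spheres $S_0,\dots,S_4$: four of them lie in the singular fibers over the poles, and one central sphere meets the other four. This is most transparent in the degenerate limit $\mathbf{m}=0$. There the nilpotent cone $\mathrm{Hit}^{-1}(0)$ is a union of rational curves in the $D_4$ configuration: the moduli of stable parabolic bundles, which is $\CP^1$, together with four further components. These components are attached at the parabolic points and carry Higgs fields nilpotent with respect to the flag at $p_j$. For general $\mathbf{m}$ I transport these classes via the $\mathbb C^*$-action or the hyperkähler rotation, that is, by a continuous family in which the homology is locally constant.

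For $[\Omega_I]$, I use that $\Omega_I$ is the Atiyah–Bott form $\int_C \Tr(\dot\varphi_1\wedge\dot A_2 - \dot\varphi_2\wedge\dot A_1)$ restricted to Higgs data. Integrated over a $2$-cycle, it is computed by residues. Using the Hitchin fibration, a class $\gamma\in H_2$ projects to the spectral curve picture, where $\Omega_I$ becomes $d\lambda$ with $\lambda$ the tautological (Seiberg–Witten) differential. So $\int_\gamma \Omega_I$ reduces to a period of $\lambda$ along a vanishing-type cycle. Cycles surrounding the preimages of $p_j$ give $2\pi i \Res_{p_j}\lambda = \pm 2\pi i\, m_j$. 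Hence $\int_{S_k}\Omega_I$ is an integer linear combination of the $m_j$, of the form $\pm 2\pi i\,m_j$ for the outer spheres and a signed sum such as $\pi i(m_1\pm m_2\pm m_3\pm m_4)$ for the central one. I will fix the signs by consistency with the null-root class $\delta=2S_0+\sum S_j$. Its pairing must vanish, since $\delta$ is the class of a generic fiber, which is a Lagrangian torus for $\Omega_I$.

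For $[\omega_I]$, I work at $\mathbf{m}=0$, where each $S_k$ is a holomorphic curve. Then $\int_{S_k}\omega_I$ is its area, which is the degree of the induced Kähler class. On the moduli space of parabolic bundles, the Kähler class of the $L^2$ metric is given by the parabolic degree formula of Narasimhan–Seshadri/Mehta–Seshadri type, namely the first Chern class of a determinant line bundle twisted by the flag data with weights $\alpha_j$. This yields an affine linear expression in $\boldsymbol{\alpha}$ of the form $c(1-\sum\pm 2\alpha_j)$ for the central component and $c\,\alpha_j$-type expressions for the others. The class is continuous in $\mathbf{m}$ and independent of it, because $[\omega_I]$ is determined by $\boldsymbol{\alpha}$ through the moment-map level. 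This independence is the step I expect to be the main obstacle: justifying that $[\omega_I]$ does not change as $\mathbf{m}$ varies, and computing areas exactly rather than up to topology. My first attempt would be a direct computation via the infinite-dimensional hyperkähler reduction, where the Duistermaat–Heckman-type variation of the reduced class is linear in the level $(\boldsymbol{\alpha},\mathbf{m})$. That is exactly the statement that $[\omega_I]$ is affine in $\boldsymbol{\alpha}$ and $[\Omega_I]$ linear in $\mathbf{m}$. The explicit residue and area computations then pin down the coefficients.
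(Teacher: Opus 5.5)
Your proposal has two genuine gaps in the $\omega_I$ half, and the $\Omega_I$ half is only an outline.

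\textbf{The periods of $\omega_I$ at $\mathbf{m}=\mathbf{0}$.} The Mehta--Seshadri/determinant-line-bundle description of the K\"ahler class applies only to the moduli space of stable parabolic bundles. That means it covers only the central sphere $S_0$, and only in the sixteen interior chambers; that part is fine and agrees with Witten's volume formula. In the eight exterior chambers there are no stable parabolic bundles, and $S_0$ consists of split Higgs bundles with $\varphi\neq 0$. The four exterior spheres consist, away from their attaching points, of Higgs bundles with nonzero nilpotent Higgs field. So nothing you cite computes four of the five periods. Your guessed ``$c\,\alpha_j$-type'' answers are also wrong in interior chambers, where the area of $S_J$ is $4\pi^2K_J$, e.g.\ $4\pi^2(\alpha_1+\alpha_2-\alpha_3-\alpha_4)$.

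The missing idea is the circle $S^1\subset\C^\times$, which acts in a Hamiltonian way for $\omega_I$ with moment map $\mu=i\int_C\Tr\varphi\wedge\varphi^{*_h}$. Each exterior sphere is an invariant sphere with exactly two fixed points, so its area is $2\pi(\mu_{\max}-\mu_{\min})$. At a fixed point $(\cL_1(\beta_1)\oplus\cL_2(\beta_2),\varphi)$ the Hitchin equations split into line-bundle equations, and Chern--Weil gives $|\mu|=2\pi|\pdeg\cL_2(\beta_2)|=2\pi K_I$; also $\mu=0$ where $\varphi=0$. The central sphere then follows from $2[S_0]+\sum_i[S_i]=[F]$. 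This requires $\int_F\omega_I=4\pi^2$, which is an analytic input (exponential convergence to the semiflat metric at infinity), not a topological one.

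\textbf{Independence of $[\omega_I]$ from $\mathbf{m}$.} This cannot be obtained from Duistermaat--Heckman. The reduction is infinite-dimensional with distributional levels on $D$, and the framed configuration space underlying $\mathcal{P}_0(\boldsymbol{\alpha})$ is neither hyperk\"ahler nor allows $\boldsymbol{\alpha}$ to vary. So DH is only a heuristic here; applied naively it does not even give the correct normalization of the $\Omega_I$-periods. You also cannot use $\R_+\subset\C^\times$ directly: it maps $\mathcal{M}(\boldsymbol{\alpha},\mathbf{m})$ to $\mathcal{M}(\boldsymbol{\alpha},t\mathbf{m})$ and does not preserve $\omega_I$. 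Hyperk\"ahler rotation does not change $\mathbf{m}$ at all.

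What is needed is a cohomological argument. Lift the rescaling to solutions via a complex gauge transformation, $(A,t\varphi)^{g_t}$. Then build a relative primitive $\lambda_{I,t}$ with $\partial_t\, m_t^*\omega_I=d\lambda_{I,t}$ out of $\dot g=-2L_{A,\varphi}^{-1}[\varphi,\varphi^*]$. This needs weighted-space invertibility of $L_{A,\varphi}$ and justified integrations by parts at the punctures. Finally, let $t\to 0$ using continuity of $\omega_I$ on the extended space $\mathcal{P}_0(\boldsymbol{\alpha})$.

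\textbf{The periods of $\Omega_I$.} Your residue mechanism points the right way and does give linearity with integer coefficients, provided you first prove two things. You must show $\Omega_I$ agrees with the semiflat form, which means checking that boundary terms at $D$ vanish. You must then show the tautological form $\tau$ is a primitive, $\Omega_I=i\,d\tau$, with simple poles along the polar sections $\sigma_j^\pm$. This gives $\int_S\Omega_I=-2\pi\sum_j m_j\,I(S,\sigma_j^+-\sigma_j^-)$. Those integers are intersection numbers, and the null-root relation alone cannot determine them. Your guesses are again wrong in interior chambers, where $\int_{S_J}\Omega_I=2\pi M_J$ involves all four masses. Pinning them down requires tracking the $\C^\times$-limits of the polar sections in one chamber and propagating across walls by Picard--Lefschetz.
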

 
\begin{thm}[cf.\, See Theorem \ref{thm:surjective}] 
As the parabolic parameters $\boldsymbol{\alpha}$, $\mathbf{m}$ vary, the classes of $[\omega_I], [\omega_J], [\omega_K]$ nearly exhaust the allowable Torelli parameters of the family of ALG spaces with affine D4 
topology. 
\end{thm}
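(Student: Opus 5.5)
We read the statement as: the image of the map $(\boldsymbol{\alpha},\mathbf{m}) \mapsto ([\omega_I],[\omega_J],[\omega_K])$, composed with a choice of marking of $H_2(\cMam,\Z)$, covers an open dense subset (the complement of finitely many walls) of the space of admissible Torelli parameters in \cite{CVZ2024}[Theorem 1.10]. The plan is to assume the explicit formulas of Theorem \ref{thm:Torelli} and reduce the statement to a linear-algebra surjectivity claim followed by a chamber analysis.

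First, fix a marking. For $\cMam$ on the four-punctured sphere, $H_2 \cong \Z^5$ realizes the affine $D_4$ lattice. A convenient basis consists of the four irreducible components of the nilpotent cone (one at each $p_i$) together with the central component, giving classes $\Sigma_0,\Sigma_1,\dots,\Sigma_4$. The fibre class $F = 2\Sigma_0+\Sigma_1+\dots+\Sigma_4$ pairs to zero with all three Kähler forms; this is the ALG constraint in the Torelli theorem. The admissible parameter space is then a real $3\times 4$-dimensional space, namely triples of cohomology classes vanishing on $F$. This matches the $12$ real parameters: four values $\alpha_i$ and four complex values $m_i$. We remove the degenerate locus where some root $\delta$ has $([\omega_I],[\omega_J],[\omega_K])(\delta)=0$, since there the space is singular (an orbifold).

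Second, use Theorem \ref{thm:Torelli}. The map $\mathbf{m}\mapsto [\Omega_I]$ is linear. Evaluating on the $\Sigma_i$ gives (up to normalisation $2\pi i$) expressions such as $\int_{\Sigma_i}\Omega_I \propto m_i$ for $i=1,\dots,4$, and the value on $\Sigma_0$ is then forced by the $F$-constraint. We check directly that this $4\times4$ complex matrix is invertible, so every admissible $[\Omega_I]$ is attained for some $\mathbf{m}\in\C^4$.

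Third, treat $[\omega_I]$. Theorem \ref{thm:Torelli} gives it as an affine function of $\boldsymbol{\alpha}$, with something like $\int_{\Sigma_i}\omega_I = c(\tfrac12 - 2\alpha_i)$ or similar, plus a combination for $\Sigma_0$. The linear part is again invertible on the $F$-annihilator. However, $\alpha_i\in(0,1/2)$ ranges only over an open box, so the image is a bounded open polytope rather than all of $\R^4$. This is where ``nearly exhaust'' comes from.

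This is the main obstacle: showing that the box image, together with the symmetries, fills the admissible region. The symmetries are the affine Weyl group $W(\tilde D_4)$ acting on markings, the hyperkähler rotation $SO(3)$ mixing $I,J,K$, and scaling. Two further tools enlarge the reachable set. One can also move to the boundary values $\alpha_i\in\{0,1/2\}$ with $m_i\ne0$ (strongly parabolic or Hecke-modified data). Alternatively, one can note that the Weyl group reflections $\alpha_i\mapsto \tfrac12-\alpha_i$ and related moves tile $[\omega_I]$-space by images of the box, as the fundamental alcove does.

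Concretely, we would first try to show that the image of the box is exactly a fundamental domain (an alcove) for $W(\tilde D_4)$ acting on the $F$-annihilator, after accounting for the overall scale fixed by the fibre area. Its translates then cover everything except the walls. Finally, we would use a hyperkähler rotation to reduce a general triple to one where $[\omega_I]$ is generic. Only triples lying on the walls, or on the residual boundary of the box when $\mathbf{m}=0$, are missed.
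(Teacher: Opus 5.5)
Your overall strategy is the paper's: use the explicit affine/linear period formulas, invert the $4\times 4$ matrix, then re-mark by the affine $D_4$ Coxeter group generated by Dehn twists. But the step you flag as the main obstacle is never carried out, and the route you propose for it fails.

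\textbf{The fundamental domain is a single chamber, not the box.} The box $(0,\tfrac12)^4$ is not a fundamental domain: it is a union of $24$ alcoves, and a fundamental domain is one chamber $\Simplex$ (Proposition \ref{prop:homeo}). What is actually needed is the following.
\begin{enumerate}
\item $\mathcal{T}^{\omega_I}_{\mathfrak B(\Simplex)}$ maps $\Simplex$ onto the simplex $\Simplex_{\mathrm{target}}$ with vertices $0$ and $4\pi^2 e_i$, sending face $f_i$ to face $F_i$.
\item The Picard--Lefschetz action of the Dehn twist along $S_i$ on periods is exactly the reflection $R_i$ in $F_i$ (Lemma \ref{lem:affine}). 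It acts on $\mathbf x$ and, through its linear part, on $\mathbf z$ simultaneously.
\end{enumerate}
Together these give the equivariance $\mathcal T(g\cdot(\bsa,\bfm))=g\cdot\mathcal T(\bsa,\bfm)$, which is what produces the tiling.

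\textbf{The tiling needs $\int_F\omega_I\neq 0$.} You claim $F$ pairs to zero with all three forms, but the fibre is $I$-holomorphic and $\int_F\omega_I=4\pi^2$. If it were $0$, the translation parts $\hat b$ in Lemma \ref{lem:period_equivariance} would vanish. Re-marking would then act on $[\omega_I]$ only through the finite group $W(D_4)$, and there would be no tiling at all.

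\textbf{Your two auxiliary tools are not available.} A hyperk\"ahler $SO(3)$ rotation does not preserve the normalization $\int_F\omega_I=\ell^2\operatorname{Im}\tau$, $\int_F\Omega_I=0$. So it does not act on $\cP$; the Torelli theorem is for the complex structure in which $X$ is elliptically fibred. Degenerating to $\alpha_i\in\{0,\tfrac12\}$ with $m_i\neq0$ produces non-full-flag moduli spaces of lower dimension, which is exactly Conjecture \ref{conj:notfullflags}.

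\textbf{Neither tool is needed; the missing idea is that genericity is joint.} Genericity is a codimension-three condition on $(\bsa,\bfm)$ together (Proposition \ref{prop:generic}). The faces of $\Simplex$ are restrictions of Nakajima walls, and $\mathcal T$ sends these to the codimension-three walls of $\cP$. So a triple with $[\omega_I]$ on an alcove wall but $[\Omega_I]$ off the matching wall is realized directly by $\bsa$ on a chamber wall with generic $\bfm$. This is how the paper shows $\mathcal T:\widetilde{\calR}^{\mathrm{full}}\to\cP$ is a bijection (Proposition \ref{prop:Torellisurjectivity}).

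\textbf{The missed set is misidentified.} It is not caused by the boundedness of the box, since re-marking removes that. It is not boundary points with $\bfm=\mathbf 0$ either: those lie on the walls $\mathcal H_{d,i}$ and are already outside $\cP$. It is $\widetilde{\calR}^{\mathrm{full}}\setminus\widetilde{\calR}$: the $W_{\aff}$-orbit of those $\bsa$ on the intersection of the Biswas polytope boundary with the cube boundary, taken with $\bfm\neq\mathbf 0$. No element of $W_{\aff}$ moves these points into $(0,\tfrac12)^4$.

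\textbf{Minor.} In interior chambers $\int_{S_J}\Omega_I=2\pi M_J$ involves all four masses, not just $m_i$; invertibility still holds.
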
  
We now address this annoying word ``nearly''. The statement in Theorem \ref{thm:surjective} clarifies that the issue is that the set $\widetilde{\mathcal{R}}$ of extended parameters $(\boldsymbol{\alpha}, \mathbf{m})$ is a proper subset of $\widetilde{\mathcal{R}}^{\mathrm{full}}$ (see Definition \ref{def:Rdomain}). Fundamentally, this discrepancy is for the following reason: as $(\boldsymbol{\alpha}, \mathbf{m})$ approaches one of the points in $([0,\frac{1}{2}]^4 \times \C^4) \cap (\widetilde{\mathcal{R}}^{\mathrm{full}} \setminus \widetilde{\mathcal{R}})$, necessarily $\boldsymbol{\alpha}$ is approaching the boundary $\del[0, \frac{1}{2}]^4$ and $\mathbf{m} \neq \mathbf{0}$; in this case we are degenerating from the full flag case to the not-full-flag case; in the current definition of the such weakly parabolic not-full-flags Higgs bundle moduli spaces, the dimension of these moduli spaces drops; the moduli spaces are not four-dimensional, and thus not ALG-$D_4$. 
\begin{conj}\label{conj:notfullflags} 
There is a more natural definition of the Higgs bundle moduli space when the flags are not all full, i.e. when for some $p_i$, $\alpha_i \in \{0, \frac{1}{2}\}$.
\end{conj}
If Conjecture \ref{conj:notfullflags} holds, then we can remove the word ``nearly'' in Theorem \ref{thm:mainofboth} and Theorem \ref{thm:surjective}.
\bigskip

In the \emph{companion paper}, we will prove the necessary result that the Riemannian curvature tensor is $L^2$-integrable, hence, 
\begin{thm}
The moduli space $\mathcal{M}(\bsa, \bfm)$ is ALG for every generic choice $(\bsa, \bfm)$.
\end{thm}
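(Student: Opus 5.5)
The plan is to reduce the ALG property to two inputs: the structural facts about $\cMam$ recalled above (completeness of $g_{L^2}$, real dimension four, hyperK\"ahler), and an $L^2$ bound on the curvature. Once the curvature is known to be $L^2$, $\cMam$ is a gravitational instanton, so by the classification of \cite{SunZhang, ChenChenI, ChenChenIII} it belongs to one of the six asymptotic types. The task is then to rule out every type except ALG, and this we do by examining the Hitchin fibration at infinity.

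First, we will describe the asymptotic geometry via the ``semiflat'' (limiting) metric $g_{\mathrm{sf}}$. Write $\mathrm{Hit}:\cMam\to\C$ for the Hitchin map; here $\det\varphi$ has the form $q_0 + t\,q_1$, where $q_0$ is determined by $\mathbf m$ and $t\in\C$. For generic $(\bsa,\bfm)$ and large $|t|$, the quadratic differential has four simple zeros. Away from a compact set, the fibers are then the Prym/spectral tori $T^2$ of the double cover $\Sigma_t\to\CP^1$ branched over the zeros and the poles. The periods of $\sqrt{\det\varphi}$ over a basis of $H_1$ grow like $|t|^{1/2}$. Their ratio tends to the modulus $\tau$ fixed by the cross-ratio of the four points, since the leading term of $q_1$ is $dz^2/\prod(z-p_i)$. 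Hence $g_{\mathrm{sf}}$ is a flat torus fibration over a cone. The monodromy around infinity is $-\mathrm{Id}$, and this yields the model $(\C\times T^2_\tau)/\Z_2$ with fixed fiber area. The fixed fiber area is the ALG, as opposed to ALG$^*$, signature.

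Second, we will prove that $g_{L^2}-g_{\mathrm{sf}}$ decays polynomially, indeed exponentially in $|t|^{1/2}$, in a suitable $C^k$ sense. The route is the large-$t$ gluing analysis of Mazzeo--Swoboda--Weiss--Witt and Fredrickson: approximate solutions are built from fiducial solutions near the simple zeros and the parabolic points, and the linearized Hitchin operator is shown to be invertible. Differentiating the resulting correction in the moduli directions gives the decay of the metric difference. The same estimates then give $|\mathrm{Rm}|=O(r^{-2-\delta})$ in the distance $r\sim|t|^{1/2}$. Because volume grows like $r^2$, this implies $\int|\mathrm{Rm}|^2<\infty$.

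The main obstacle is the uniformity of the decay in the second step near the parabolic points, where the model solutions depend on $\alpha_i$ and $m_i$. For $\bfm\neq0$ the residue of $\varphi$ is semisimple, so $\det\varphi$ keeps a double pole with leading coefficient $m_i^2$. The local fiducial model must therefore interpolate between the parabolic singularity and the large-$t$ semiflat region, and one needs the ``exponentially small'' error to persist there. We would first treat generic parameters, for which all zeros are simple and the residues are non-degenerate, using a weighted $b$-calculus to control the linearization near the $p_i$. Finally, the identification of the type as $D_4$ follows from the intersection form on $H_2$, or directly from the $-\mathrm{Id}$ monodromy combined with Chen--Chen.
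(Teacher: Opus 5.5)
\textbf{Where the proposal fails: the finite-energy step.}
Your reduction is the one the paper uses. $\cMam$ is complete and hyperK\"ahler of real dimension four, so the only missing input is finite energy, $\int|\mathrm{Rm}|^2<\infty$. The paper states this as a result of the companion paper and gives no argument for it here. Once finite energy is known, the classification of gravitational instantons together with the structure of the Hitchin fibration forces ALG: an elliptic fibration with $I_0^*$ fiber at infinity, limiting modulus $\tau_0$ with $\lambda(\tau_0)=p_0$, and fixed fiber area.

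The gap is in your second step, which is the whole non-formal content of the statement. The gluing results you invoke (Mazzeo--Swoboda--Weiss--Witt, Fredrickson, and the strongly parabolic analysis behind the case $\bfm=\mathbf{0}$) control the metric along rays $(\bar\partial_A,t\varphi)$, $t\to\infty$, for a \emph{fixed} quadratic differential whose zeros stay away from the punctures. For $\bfm\neq\mathbf{0}$ that is not how one goes to infinity in a fixed moduli space. The rescaling $m_t$ carries $\mathcal{M}(\bsa,\bfm)$ to $\mathcal{M}(\bsa,t\bfm)$, which is why even the $\omega_I$-periods had to be handled through the extended space $\mathcal{P}_0(\bsa)$. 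Inside a fixed $\cMam$ one lets $\beta\to\infty$ with $\bfm$ fixed. After normalizing by $\beta^{-1/2}$, this is a family of differentials degenerating to the strongly parabolic one.

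\textbf{The zeros collide with the poles.}
Along this family, the four simple zeros you count collide with the double poles. By the computation in the proof of Proposition~\ref{prop:jinv}, the zero near $p$ lies at distance $\sim|m_p|^2/|\beta|$ from $p$. In the coordinate $\zeta=\beta(z-p)$ one gets $q\to(m_p^2\zeta^{-2}+c_p\zeta^{-1})\,d\zeta^2$, where $c_p\neq0$ does not depend on $\beta$. So around each $p_i$ there is a region of fixed conformal size on which $\varphi$ never becomes large. On that region the limiting configuration underlying $g_{\mathrm{sf}}$ is not a good approximation.

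\textbf{What your Step 2 would actually require.}
The following ingredients are not available from the cited work:
\begin{enumerate}
\item a fiducial solution for a simple zero colliding with a double pole of weight $\alpha_i$ and residue eigenvalues $\pm m_i$;
\item invertibility of the linearized operator, uniformly across this two-scale neck;
\item an estimate, with two derivatives in all directions including the non-shrinking fiber directions, of how these regions contribute to $g_{L^2}-g_{\mathrm{sf}}$.
\end{enumerate}
A priori these regions carry a fraction of order $|\beta|^{-1}$ of the $L^2$-norm of tangent vectors, and there is no reason for $g_{L^2}$ and $g_{\mathrm{sf}}$ to agree on them. So exponential closeness is not something existing theory delivers here. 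You need either a cancellation argument, or an honest polynomial estimate with derivative control (which would suffice for $L^2$ curvature). Either one requires the missing local analysis. Note also that, unlike the case $\bfm=\mathbf{0}$, $g_{\mathrm{sf}}$ is not flat here, since $\tau(\beta)-\tau_0=O(1/\beta)$.

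As written, $|\mathrm{Rm}|=O(r^{-2-\delta})$ is asserted, not derived. Genericity of $(\bsa,\bfm)$ does not help, because the collision happens whenever some $m_i\neq0$. A weighted $b$-calculus at the fixed punctures does not address it either, because the difficulty sits at the moving scale $|z-p_i|\sim|\beta|^{-1}$.

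A minor point: for $m_i\neq0$ the spectral cover is unramified over $p_i$. Its only branch points are the four zeros, which is what makes it genus one.
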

This will extend our result that $\mathcal{M}(\bsa, \mathbf{0})$ is ALG-$D_4$ in \cite{FMSW}, a paper about asymptotic geometry of the moduli space of weakly parabolic $SL(2,\C)$-Higgs bundles. Theorem \ref{thm:mainofboth} follows from the above results and the Torelli Theorem of \cite{CVZ2024}[Theorem 1.10].

\bigskip
Proving such a result was considerably more involved than we originally expected. In the strongly parabolic case, it is easy to compute $[\omega_I]$.  However, the weakly parabolic case is difficult. While we expected to show that $[\omega_I]$ doesn't depend on $\mathbf{m}$ using some $\C^\times$-action, we could only control the integrals of $[\omega_I]$ by considering the $\C^\times$-action of the recently analytically constructed moduli space $\mathcal{P}_0(\bsa)$ in \cite{collier2024conformallimitsparabolicslnchiggs}. (I.e. we had to justify that $\bsa, \bfm$ is a smooth parameter of the moduli space, and hence $[\omega_I]$ depend smoothly on it.)
Similarly, while we compute $[\Omega_I]$ via intersection numbers of some submanifolds, we again had to use the theory in \cite{collier2024conformallimitsparabolicslnchiggs} to justify that these were submanifolds. In practice, we show that $[\Omega_I]$ agrees with the semiflat holomorphic symplectic form in Gaiotto--Moore--Neitzke. 
 But for now we note that relevant case of ALG-$D_4$ gravitational instantons are asymptotic to $(\C \times T^2_\tau)/\Z_2$ for some torus parameter $\tau$.

\subsection{Fixed data for Hitchin moduli space}

We now say more about the Higgs bundle moduli space on the four-punctured sphere. Since the Higgs bundle moduli space depends only on the complex structure 
of $(\CP^1,D)$, we may as well use a M\"obius transformation to transform $D$ to $\{0, 1, \infty, p_0\}$ in the extended complex plane.  Fixing the first three points 
removes the M\"obius freedom, leaving $p_0$ as a free parameter.  This parameter determines the complex modulus $\tau$ of the limiting torus fibers of the Hitchin 
fibration by the equation $p_0 = \lambda(\tau)$, where $\lambda$ is the elliptic modular function. 
It is also possible to write the $j$-invariant of the elliptic curve $T_\tau$ as a rational function of $p_0$.  These explicit formul{\ae} will be omitted
since they are not needed here. 

As already noted, the parabolic parameters $\boldsymbol{\alpha}$ and $\mathbf{m}$ act as `boundary conditions' at each $p_j$.  To explain this, we use
the flag data $\{0\} \subset F_j \subset E_{p_j}$. The residue of the Higgs field is required to preserve these flags at each $p_j$ and
acts by multiplication by $m_j$ on $F_j$ (and hence, by the trace-free condition, by $-m_j$ on $\cE_{p_j}/F_j$).  Thus, the Hitchin fibration carries a 
Higgs bundle $(\cE, \varphi)$ to the meromorphic quadratic differential $q = - \det \varphi = m_j^2 z^{-2}dz^2 + \cO(z^{-1})$. If all $m_j = 0$, which
is called the strongly parabolic case, these residues map $E_{p_j}  \to F_j$ and $F_j \to 0$, or in other  words,  $\varphi$ has nilpotent residue at each $p_j$. 

The role of the $\alpha_j$ is less direct. In the Higgs bundle picture, these are simply additional parameters, but are subject to a stability condition. There is a notion of parabolic stability. The parabolic degree of the rank $2$ Higgs bundle with given real mass parameters
$\alpha^{(1)}(p_j), \alpha^{(2)}(p_j)$ at each $p_j$ is, by definition, 
\[
\mathrm{pdeg}\, (\cE(\boldsymbol{\alpha})) = \mathrm{deg}\,(\cE) + \sum (\alpha^{(1)}(p_j) + \alpha^{(2)}(p_j)).
\]
Since $\alpha^{(1)}(p_j) + \alpha^{(2)}(p_j) = 1$, this reduces to $\mathrm{deg}\,(\cE) + 4$, and we require in this paper that this parabolic degree vanishes.
We say that $(\cE(\boldsymbol{\alpha}), \varphi)$ is \emph{stable} if, for any holomorphic line subbundle $\cV \subset \cE$ preserved by $\varphi$, one has 
\[
\mathrm{pdeg}\, (\cV(\boldsymbol{\alpha})) =\mathrm{deg}\, (\cV) + \sum \alpha_j(\mathcal{V}) < \mathrm{pdeg}\, (\cE(\boldsymbol{\alpha})),
\]
where 
\[\alpha_j(\mathcal{V}) =\begin{cases} 1- \alpha_j & V|_{p_j} = F_j \\ \alpha_j & \mbox{otherwise}. \end{cases}\]
We say that $(\cE(\boldsymbol{\alpha}), \varphi)$ is \emph{polystable} if it is a direct sum of two holomorphic line subbundles, each preserved by $\varphi$.  
We say that $(\cE(\boldsymbol{\alpha}), \varphi)$ is \emph{semistable} if equality $\mathrm{pdeg}\, (\cV(\boldsymbol{\alpha})) \leq \mathrm{pdeg}\, (\cE(\boldsymbol{\alpha}))$, and note that for generic choices of $(\boldsymbol{\alpha}, \mathbf{m})$, semistability implies stability.

The parabolic parameters $\boldsymbol{\alpha}$ 
are used directly, however, to define the correspondence with solutions to the Hitchin equations in \eqref{HE}.   Indeed, first note that the flag determines a filtration $\cF$ 
of the space of germs of holomorphic sections of $\cE$ at $p_j$, namely $\cF_{j,1} = \cO \supset \cE_{j, \alpha_j} \supset \cF_{j,0} = \cO(-1)$.  Here, $\cO(-1)$ is the space of germs of holomorphic sections of $\cE$ vanishing to first order at $p_j$.
The quotient $\cF_{j,1}/\cF_{j,0}$ is identified with the fiber $\cE_{p_j}$, while by definition $\cE_{j,\alpha_j}/\cF_{j,0}$ is the subspace $\cL_j$.  We say that 
a Hermitian metric $h$ on $\cE$ is admissible if, near $p_j$, $h = \cO( |z|^{2(1-\alpha_j) - \epsilon})$ on the germs in $\cF_{j, 0 }$ and $h = \cO( |z|^{2\alpha_j - \epsilon})$ 
on $\cF_{j,1}$, for any $\epsilon > 0$.   

The moduli space $\cMam$ is now defined to be the quotient of the space of all stable pairs $(\cE, \varphi)$ by the group of all smooth $\mathrm{SL}(2,E)$ 
gauge transformations, or alternately, as the space of all solutions of \eqref{HE} moduli the group of all smooth $\mathrm{SU}(2,E)$ gauge transformations.
The aforementioned Kobayashi--Hitchin and nonlinear Hodge correspondences require, as a central hypothesis, this parabolic stability.

\subsection{HyperK\"ahler preliminaries}
Each moduli space $\cMam$ carries a natural $L^2$ metric, which is always hyperK\"ahler and complete. 
It is most naturally defined on the moduli space of solutions of the Hitchin equations \[\mathcal{M}=\{(\delbar_A, \varphi): \delbar_A \varphi=0, F^\perp_{\delbar_A + \del^{h_0}_A} + [\varphi, \varphi^{*_{h_0}}]=0\}/ \mathcal{G}\]
on a fixed complex bundle $E \to C$ with weighted flags at $p \in D$ and adapted hermitian metric $h_0$, modulo gauge transformations preserving $h_0$.
Then, the tangent space at $(\delbar_A, \varphi)$ consists of deformations $(\dot{A}^{(0,1)}, \dot{\varphi})$ satisfying the linearized equations, modulo equivalence.
In each deformation class, there is a unique representative---the \emph{harmonic representative}---which is perpendicular to the gauge orbit. This ``Coulomb gauge'' condition and the infinitesimal version of  $ F^\perp_{\delbar_A + \del^{h_0}_A} + [\varphi, \varphi^{*_{h_0}}]=0$ can be combined into a single equation 
\begin{equation}
\del_A^{h_0} \dot{A}^{0,1} + [\varphi^{*_{h_0}}, \dot{\varphi}]=0.
\label{eq:harmonic}
\end{equation} The hyperK\"ahler metric is naturally an $L^2$-metric on the harmonic representatives.

In particular, let  $v_i = (\dot A_i^{0,1}, \dot \varphi_i)$ be harmonic tangent vectors, $i=1,2$. If $z = x+ i y$ is a local holomorphic coordinate, the K\"ahler metric on $C$ is given by $e^{2u} (dx^2 + dy^2)$, hence $dA = e^{2u} dx \wedge dy$. In particular $|dz|^2 dA = |d \bar z|^2 dA = 2 dx \wedge dy = i dz \wedge d \bar z$.
The hermitian metric on the space of harmonic tangent vectors is 
\begin{align}\label{eq:hintro}
h(v_1,v_2) &= 2 \int_C \left(\langle \dot A_1^{0,1}, \dot A_2^{0,1} \rangle + \langle \dot \varphi_1, \dot \varphi_2 \rangle \right) \, dA\\
 & =  2i \int_C \Tr \left( (\dot A_2^{0,1})^* \wedge \dot A_1^{0,1} + \dot \varphi_1 \wedge (\dot \varphi_2)^* \right)
\end{align}
and the associated Riemannian metric is 
\begin{align} \label{eq:gintro}
g(v_1,v_2) = \operatorname{Re} h(v_1,v_2)  =  -2 \operatorname{Im} \int_C \Tr \left( (\dot A_2^{0,1})^* \wedge \dot A_1^{0,1} + \dot \varphi_1 \wedge (\dot \varphi_2)^* \right).\end{align}
The factor of $2$ is explained above \eqref{eq:metric3}.
The complex structures are 
\begin{align*}
 I(\dot{A}^{0,1}, \dot \varphi) &=  (\I \dot{A}^{0,1}, \I \dot \varphi) \\  
  J(\dot{A}^{0,1}, \dot \varphi) &= ( \dot \varphi^*, -(\dot{A}^{0,1})^*) \\ 
    K(\dot{A}^{0,1}, \dot \varphi) &= (\I  \dot \varphi^*, - \I (\dot{A}^{0,1})^*).
\end{align*}
The 
K\"ahler forms\footnote{Here we have used that for $\alpha = A dz \in \Omega^{1,0}$ and $\beta = B d \bar z \in \Omega^{0,1}$ we have
\[
\overline{\Tr \alpha \wedge \beta} = \overline{ \Tr A dz \wedge B d \bar z} = \Tr A^*B^* d \bar z \wedge d z = \Tr \alpha^* \wedge \beta^* = - \Tr  \beta^* \wedge  \alpha^*
\]
and therefore $\operatorname{Re} \Tr \beta^* \wedge \alpha^* = - \operatorname{Re} \Tr \alpha \wedge \beta$ and $\operatorname{Im} \Tr \beta^* \wedge \alpha^* = \operatorname{Im} \Tr \alpha \wedge \beta$.
} $\omega_{I_j}(v,w) = g(I_{j}v,w)$, $j=1,2,3$ are 
\begin{align*}
\omega_I(v_1,v_2) &=   -2 \operatorname{Im} \int_C \Tr \left( (\dot A_2^{0,1})^* \wedge i\dot A_1^{0,1} + i\dot \varphi_1 \wedge (\dot \varphi_2)^* \right) = -2 \operatorname{Re} \int_C \Tr \left( (\dot A_2^{0,1})^* \wedge \dot A_1^{0,1} + \dot \varphi_1 \wedge (\dot \varphi_2)^* \right)  \\
\omega_J(v_1,v_2) &=   -2 \operatorname{Im} \int_C \Tr \left( (\dot A_2^{0,1})^* \wedge (\dot \varphi_1)^*  - (\dot A_1^{0,1})^*  \wedge (\dot \varphi_2)^* \right) = 2 \operatorname{Im} \int_C \Tr \left( \dot \varphi_2 \wedge \dot A_1^{0,1} - \dot \varphi_1 \wedge \dot A_2^{0,1} \right)\\
\omega_K(v_1,v_2) &=  -2 \operatorname{Im} \int_C \Tr \left( (\dot A_2^{0,1})^* \wedge i(\dot \varphi_1)^*  - i(\dot A_1^{0,1})^*  \wedge (\dot \varphi_2)^* \right)   =-2 \operatorname{Re} \int_C \Tr \left( \dot \varphi_2 \wedge \dot A_1^{0,1} - \dot \varphi_1 \wedge \dot A_2^{0,1} \right)
\end{align*}
(We derive these in Appendix \ref{sec:scalings}, and note here that the conventions are $\e^{i \theta}=1$, $\lambda_1=\lambda_2=1$.)
We note that $\Omega_I = \omega_J + i \omega_K$ is simply
\begin{align*}
\Omega_I(v_1,v_2) &= -2i \int_C \Tr \left( \dot \varphi_2 \wedge \dot A_1^{0,1} - \dot \varphi_1 \wedge \dot A_2^{0,1} \right)\\
& = 2i \int_C \Tr \left(   \dot A_1^{0,1} \wedge   \dot \varphi_2  - \dot A_2^{0,1} \wedge  \dot \varphi_1\right).
\end{align*}

\subsection{Why Affine Linearity?} \label{sec:affinelinearity} This section is motivational and not at all rigorous. In this section, we'll explain why one might expect that $[\omega_I]$ is an affine linear function of $\alpha_i$, while $[\Omega_I]$ is a linear function of $m_i$. (We are indebted to Max Zimet for this observation.)

\bigskip

 In Duistermaat--Heckman \cite{DuistermaatHeckman}, one considers a symplectic manifold $(M, \omega)$ with a (free) Hamiltonian torus action $T$, with associated $T$-equivariant moment map $\mu: M \to \mathfrak{t}^*$. 
Given $\xi \in \mathfrak{t}^*$ a regular value of $\mu$, define $Y_\xi= \mu^{-1}(\xi)$, and define $M_\xi = Y_\xi/T$.  The principal $T$-bundle $\pi: Y_\xi \to M_\xi$ is given by an element $c \in H^2(M_\zeta, \Lambda)$ where $\Lambda= \{X \in \mathfrak{t}: \exp X=\mathrm{id}\}$ is the integral lattice of $T$.  

Now, let $U$ be an open neighborhood of some fixed $\xi_0$ of regular values of $\mu$. There is a $T$-invariant Ehresmann connection on $\mu^{-1}(U)$, which gives us a $T$-equivariant map $\mu^{-1}(U) \to Y_{\xi_0}$ by parallel transport over straight lines in $\mathfrak{t}^*$.  Since $c(\xi)$ is a continuous function of $\xi$  valued in a discrete lattice, we have an honest identification of $H^2(M_\xi, \Lambda_\xi)$ and $H^2(M_{\xi_0}, \Lambda_{\xi_0})$. This  induces an identification of $H^2(M_\xi,\R)$ with $H^2(M_{\xi_0}, \R)$, which allows us to compare the symplectic forms $[\omega_\xi] \in H^2(M_{\xi}, \R) , [\omega_{\xi_0}] \in H^2(M_{\xi_0}, \R)$. Duistermaat--Heckman prove that $[\omega_\xi]- [\omega_{\xi_0}]$ depends linearly on $\xi-\xi_0$.
Moreover, the constant coefficient of this linear map is an integer, namely $[\omega_\xi]- [\omega_{\xi_0}] = \IP{c, \xi-\xi_0}$, using the pairing between the two-form $c$ valued in $\Lambda_{\xi_0} \subset \mathfrak{t}$ and $\xi-\xi_0 \in \mathfrak{t}^*$.

\smallskip

There is a well-known extension of this, in which a non-abelian compact Lie group $F$ acting freely on $(M, \omega)$ 
If an $F$-equivariant map $\mu: M \to \mathfrak{f}^*$ exists\footnote{See HKLR p. 545 for a short discussion about obstructions and references.}, $\mu$ is called a moment map, and is unique up to the addition of a constant element in $\mathfrak{f}^*$ which is fixed by $F$, i.e. a character of $F$; we call this subspace $Z \subset \mathfrak{f}^*$.  If $\xi$ is in $Z$,
then $\mu^{-1}(\xi)/F$ inherits a symplectic structure\footnote{If we take a point $\xi \in \mathfrak{f}^*$ which is not fixed by $F$, but has isotropy subgroup $H$, then $\mu^{-1}(\xi)/H$ has a symplectic structure.}.

\smallskip

There is a natural hyperK\"ahler version of the above non-abelian statement, where $F$ acts freely on a hyperK\"ahler manifold $M$, preserving its hyperK\"ahler structure. We now consider the triple of symplectic structures $(\omega_{I_1}, \omega_{I_2}, \omega_{I_3})$, and consequently, if it exists, the $F$-equivariant moment map $\mu=(\mu_1, \mu_2, \mu_3): M \to  \mathfrak{f}^* \otimes \R^3$. Similarly one takes $ \xi = (\xi_1, \xi_2, \xi_3) \in (Z \otimes \R^3)^\circ$, where the superscript $\circ$ denote the regular values of $\mu$, and considers $\mu^{-1}(\xi)/F = \cap_{i=1}^3 \mu_i^{-1}(\xi_i)/F$ admitting a hyperK\"ahler structure.

As an example of this, \cite{Kronheimerconstruction}, Kronheimer constructs ALE spaces associated to $\Gamma \subset SU(2)$ as a finite hyperK\"ahler quotient, and then quotes Duistermaat--Heckman to prove in \cite[Proposition 4.1i]{Kronheimerconstruction} that the period map 
\begin{align*}
(Z \otimes \R^3)^\circ &\mapsto H^2(M_\xi, \R)^{\oplus 3} \\
\xi &\mapsto ([\omega_{I_1}(\xi)], [\omega_{I_2}(\xi)], [\omega_{I_3}(\xi)])
\end{align*}
is a linear isomorphism, and in particular that $[\omega_{I_i}(\xi)]$ is a linear function of $\xi_i$ alone.

\bigskip

 Hitchin moduli spaces arise as hyperK\"ahler quotients of an infinite-dimensional hyperK\"ahler space $\mathcal{A}$ by an infinite-dimensional gauge group $\mathcal{G}$. Consequently, Duistermaat--Heckman's result does not apply. 
 However, it is still motivational. Morally, we want to say something along the lines of ``if an analog of the Duistermaat--Heckman result held in this infinite-dimensional setting,  it would imply the affine linearity that we prove in this paper.''
 We refer the reader to Appendix \ref{sec:scalings} for further discussion of the hyperK\"ahler moment maps: the complex moment map is proportional to $\delbar_A \varphi$ while the real moment map is proportional to $F^\perp_{\delbar_A + \del_A^h} + [\varphi, \varphi^{*_h}]$.
Ideally $\mathcal{A}$ would be (1) slightly bigger than the space in Collier--Fredrickson--Wentworth so that it would be possible to vary $\boldsymbol{\alpha}$ and (2) admit a hyperk\"ahler structure (something it does not have in Collier--Fredrickson--Wentworth's construction). However, we briefly review their configuration space  which features a fixed framing near on the cylindrical ends each $p \in D$ as motivation:
\[ \mathcal{A} = \mathcal{A}_\delta \times \mathcal{D}_\delta
\]
where \begin{align*}\mathcal{A}_\delta &= d_{A_0} + L^2_{1, \delta}(\mathfrak{su}_E \otimes T^*C^\times), \qquad \qquad \qquad  d_{A_0} = d + i \begin{pmatrix} \alpha_p & \\ & 1 - \alpha_p \end{pmatrix} \de \theta\\\mathcal{D}_\delta
&= \{ \varphi \in L^2_{-\delta}(\mathfrak{sl}_E \otimes K_C) | \delbar_{A_0} \varphi \in L^2_\delta(\mathfrak{sl}_E \otimes K_C \otimes \overline{K}_C)\},\end{align*}
while the framed gauge group $\mathcal{G}=\mathcal{K}_{\delta, *}$ defined in \cite{collier2024conformallimitsparabolicslnchiggs} has Lie algebra is $\mathrm{Lie}(\mathcal{K}_{\delta, *}) = L^2_{2, \delta}(\mathfrak{su}_E)$. Notably, all elements are the identity at $p \in D$\footnote{This is imposed by the $\mathcal{G}_{\delta, *} = \{\eta \in \mathcal{G}_{\delta}: \mathbf{b}(\eta)= \mathrm{I} \}$ appearing above \cite[(3.7)]{collier2024conformallimitsparabolicslnchiggs}.}. Consequently,---being completely unrigorous---since elements of $\mathcal{G}$ are the identity at $p \in D$, then it is possible for some distributional component supported along $D$ to be fixed by $\mathcal{G}$. 
Since $\del_{\zbar} \frac{1}{z-p} = \pi \delta_p$, one can think of $\delbar_A \varphi$ and $F_{\delbar_A+ \del_A^h}^\perp + [\varphi, \varphi^{*_h}]$ as distribution-valued $2$-forms supported at $D$.
As is discussed further in Proposition \ref{prop:distributions}, the values of the complex and real moment maps are 
\begin{align*}
M_{I}(\delbar_A, \varphi) &= 2 i \delbar_A \varphi \\
&=  \underbrace{2 i  \sum_{p \in D} \begin{pmatrix} -m_p& n_p\\ 0 & m_p \end{pmatrix} \pi \delta_p d \zbar \wedge d z}_{\psi_{\C}(\mathbf{m}, \mathbf{n})}\\
\mu_I(\delbar_A, \varphi) &=-(F^\perp_D  +[\varphi, \varphi^{*}])\\ \nonumber
&= \underbrace{- \sum_{p \in D} \begin{pmatrix} \alpha_p - \frac{1}{2} & \\ & \frac{1}{2}- \alpha_p \end{pmatrix} \pi \delta_p d \zbar \wedge d z}_{\psi_{\R}(\boldsymbol{\alpha})}.
\end{align*}
In \cite{collier2024conformallimitsparabolicslnchiggs}, the leaves of the framed extended moduli space $\mathcal{P}_0(\boldsymbol{\alpha})$ each have a hyperK\"ahler structure which should come from their construction as $\mathcal{M}(\boldsymbol{\alpha}, \mathbf{m})= \mu_\C^{-1}(\psi_\C(\mathbf{m}, \mathbf{n})) \cap \mu_\R^{-1}(\psi_\R(\boldsymbol{\alpha}))/\mathcal{G}$. Note that in \cite{collier2024conformallimitsparabolicslnchiggs}, the moduli spaces for various values of $\mathbf{n}$ can be identified since $\mathbf{n}$ is an artifact of the framing.

\subsection{Torelli map for marked ALG-\texorpdfstring{$D_4$}{D4} spaces}\label{subsec:Torelli_map}

In this section, we carefully state the Torelli Theorem of \cite{CVZ2024}. Despite being in the introduction, this section contains some new results specifying the image $\mathcal{P} \subset \R^4 \times \C^4$ more explicitly (see Corollary \ref{cor:descriptionofP}), since we will need this later. 

Let $X$ be an ALG-space of type $D_4$.
Based on \cite{ChenChenI, ChenChenIII},
given $(X, g, I, J , K)$ an ALG-$D_4$ space, we may associate a \emph{complex torus parameter} $\tau \in \mathbb{H}/PSL(2,\Z)$ and \emph{length scale} $\ell$. Since $X \to \C$ is elliptically fibered, $\ell$ is defined by $\int_F \omega_I = \ell^2 \mathrm{Im} \tau$ for fiber $F$. 

\bigskip

Let $I_0$ denote the quadratic form on $\Z^5$ given by the symmetric matrix
\begin{equation}\label{eq:I0}
\begin{pmatrix}
	-2 &  1 &1 &1 &1 \\
	1 & -2 & 0 & 0 & 0\\
	1 & 0 & -2 & 0 & 0 \\
	1 & 0 & 0 & -2 & 0 \\
	1 & 0 & 0 & 0 & -2 
\end{pmatrix}.\end{equation}
\begin{defn}
An \emph{adapted frame} of $H_2(X, \Z)$ is an isomorphism $f : \Z^5  \to H_2(X,\Z)$ preserving quadratic forms and the fiber class:
\begin{enumerate}
\item $f^* I = I_0$, and
\item $f(2 e_0 + \sum_{i=1}^4 e_i) = F \in H_2(\mathcal{M}(\bfalpha,\bfm),\Z)$.
\end{enumerate}
\end{defn}

Note that since $2e_0 + \sum_{i=1}^4 e_i$ spans the kernel of $I_0$ in $\Z^5$ and $F$ the kernel of $I$ in $H_2(\mathcal{M}(\bfalpha,\bfm),\Z)$, one necessarily has $f(2 e_0 + \sum_{i=1}^4 e_i) = \pm F$ for a frame satisfying the first condition only.
\begin{defn}
Given an adapted frame, let $S_i = f(e_i)$ for $i=0, \ldots, 4$. 
We call $\mathfrak{B} = (S_0, \ldots, S_4)$  \emph{an admissible basis} of $H_2(X,\Z)$. 
\end{defn}

\bigskip

We now define the relevant moduli space.
\begin{defn}\hfill
\begin{enumerate} 
\item
Fix $X$ an ALG-$D_4$ space  with fixed $\tau \in \mathbb H$ and scale $\ell>0$. A hyperk\"ahler manifold $(M, g, I,J,K)$ together with diffeomorphism $\mu : X \to M$ is called a \emph {marked ALG-$D_4$ space}. 
\item Two marked ALG-$D_4$ spaces $(M,g, I,J,K)$ and $(M',g', I',J',K')$ are considered \emph{equivalent} if there exists a diffeomorphism $f : M \to M'$ such that $f^*g' =g$, $f^* I'=I$, $f^* J'=J$,$f^* K'=K$ and 
\[
f_* \circ \mu_* = \mu'_* : H_2(X, \Z) \to H_2(M', \Z). 
\]
\item[(3a)] Define $\calM_{ALG}(X; D_4,\tau, \ell)$ as the set of equivalence classes of marked ALG-structures.
\item[(3b)] Consider ALG-structures $(g, I,J,K)$ on the fixed 	manifold $X$. Let $\calH=H_2(X, \Z)$ and define $\Diff_\calH(X) = \{ f \in \Diff(X) : f_* = \Id : \calH \to \calH \}$. Then set
\[
\calM_{ALG}(X;D_4,\tau, \ell) = \{ (g, I, J, K) \text{ ALG-structure on $X$}\} / \Diff_\calH(X).
\]
\end{enumerate}
\end{defn}
Both ways of defining the moduli space in (3a) and (3b) are equivalent (just pull the structure on $M$ back  to $X$ by the marking diffeomorphism $\mu$).

The following Torelli theorem has been proven by \cite{CVZ2024} and the surjectivity part independently by \cite{LeeLin}.
\begin{thm}[Torelli theorem  for ALG spaces] Let $X$ be an ALG-$D_4$ space with associated parameters $\tau, \ell$ and let $\mathfrak{B}=(S_0, \ldots, S_4)$ be an admissible basis of $H_2(X,\Z)$. 
The period domain of $\mathfrak{B}$ is defined by 
\begin{equation}\label{eq:P}
 \calP = (\R \times \C)^4 \setminus\bigcup \{( (x_j, z_j)_{j=1}^4: \sum_{i=1}^4 (\lambda_0 - 2 \lambda_i)(x_i,z_i) = \lambda_0 (\ell^2 \im \tau, 0) \}
\end{equation}
where the union is over integers $\lambda_0, \ldots , \lambda_4$ for which the class $\Sigma = \lambda_0  S_0 + \sum_{i=1}^4 \lambda_i S_i$ has self-intersection $-2$.
Then the Torelli map
\begin{align}
\mathcal{T}_{\frakB} : \calM_{ALG}(X,I_0^*,\tau, \ell) &\longrightarrow \calP \\ \nonumber
 [(g, I, J, K)] &\longmapsto \left\{\left( \int_{S_i} \omega_I, \int_{S_i} \Omega_I \right)\right\}_{i=1}^4
\end{align}
is a bijection.
\end{thm}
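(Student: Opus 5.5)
The plan is to deduce the statement from the Torelli theorem of \cite{CVZ2024}[Theorem 1.10], with the independent surjectivity result of \cite{LeeLin} as a cross-check. The only new work is to rewrite their period domain in the coordinates $\{(x_i,z_i)\}_{i=1}^4 = \{(\int_{S_i}\omega_I,\int_{S_i}\Omega_I)\}_{i=1}^4$. I will use the form of their theorem in which a marked ALG-$D_4$ space with fixed $(\tau,\ell)$ is determined by its cohomology classes $([\omega_I],[\Omega_I]) \in H^2(X,\R)\times H^2(X,\C)$. These classes are subject to two constraints:
\begin{enumerate}
\item the fiber normalization $\int_F\omega_I=\ell^2\im\tau$ and $\int_F\Omega_I=0$;
\item the nondegeneracy condition that no class $\Sigma$ with $\Sigma\cdot\Sigma=-2$ has all three periods $\int_\Sigma\omega_I,\int_\Sigma\omega_J,\int_\Sigma\omega_K$ vanishing.
\end{enumerate}
Conversely, every such pair of classes is realized. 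The vanishing $\int_F\Omega_I=0$ holds because $F$ is an $I$-holomorphic curve (a fiber of the elliptic fibration), so the $(2,0)$-form $\Omega_I$ restricts to zero on it.

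First I reduce the data from five classes to four. Since $\frakB=(S_0,\dots,S_4)$ is a basis of $H_2(X,\Z)$, a pair of cohomology classes is the same as the ten periods over the $S_i$. The adapted-frame condition $F=2S_0+\sum_{i=1}^4S_i$ combined with the fiber normalization gives
\[
\Big(\int_{S_0}\omega_I,\int_{S_0}\Omega_I\Big)=\tfrac12\Big((\ell^2\im\tau,0)-\sum_{i=1}^4(x_i,z_i)\Big).
\]
So the periods over $S_0$ are determined by the four pairs $(x_i,z_i)$. This shows that $\mathcal T_\frakB$ is well defined into $(\R\times\C)^4$. Injectivity then follows from \cite{CVZ2024} injectivity, since the full period data is recovered from $\mathcal T_\frakB$.

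Next I translate the nondegeneracy condition. Take a root $\Sigma=\lambda_0S_0+\sum_{i=1}^4\lambda_iS_i$ and substitute the formula above. This gives
\[
\Big(\int_\Sigma\omega_I,\int_\Sigma\Omega_I\Big)=\tfrac{\lambda_0}{2}(\ell^2\im\tau,0)+\sum_{i=1}^4\big(\lambda_i-\tfrac{\lambda_0}{2}\big)(x_i,z_i).
\]
Vanishing of all three periods is equivalent to vanishing of this pair. Multiplying by $-2$, that is exactly the relation $\sum_i(\lambda_0-2\lambda_i)(x_i,z_i)=\lambda_0(\ell^2\im\tau,0)$ removed in \eqref{eq:P}. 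Hence the image of $\mathcal T_\frakB$ is precisely $\calP$. Surjectivity onto $\calP$ comes from the \cite{CVZ2024}/\cite{LeeLin} surjectivity: any $(x_i,z_i)\in\calP$ extends by the $S_0$ formula to period data satisfying both constraints.

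I expect the main obstacle to be matching conventions rather than any computation. Three things must agree with those of \cite{CVZ2024}:
\begin{enumerate}
\item the sign of $F$ (this is fixed by requiring $f(2e_0+\sum e_i)=+F$ rather than $-F$);
\item the normalization $\int_F\omega_I=\ell^2\im\tau$;
\item the definition of the marked moduli space modulo $\Diff_\calH(X)$, so that "marked" in their sense agrees with the admissible-basis data here.
\end{enumerate}
I would check these first by comparing with the explicit $I_0$ lattice (the affine $D_4$ form with null vector $2e_0+\sum e_i$).
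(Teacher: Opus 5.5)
Your proposal is correct and matches the paper's treatment. The paper does not reprove bijectivity: it takes the theorem from \cite{CVZ2024}, with surjectivity also from \cite{LeeLin}. In its remark explaining $\calP$ it then derives the excluded planes exactly as you do. It uses $2S_0+\sum_{i=1}^4 S_i=F$ together with $\int_F\omega_I=\ell^2\im\tau$ and $\int_F\Omega_I=0$ to eliminate the $S_0$-periods, and then requires that no class $\Sigma$ with $\Sigma\cdot\Sigma=-2$ have $\bigl(\int_\Sigma\omega_I,\int_\Sigma\Omega_I\bigr)=(0,0)$.
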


\begin{rem}[Explanation of $\mathcal{P}$]
Observe that the class $[F]$ of the elliptic (Hitchin) fiber $F$ satisfies
\begin{equation} 
\int_F \omega_I = \ell^2 \im \tau, \qquad \int_F \Omega_I=0,\ \ i = 1, 2, 3.
\label{eq:fiberintegral}
\end{equation}
Indeed, each fiber is a `BAA brane', namely complex for the complex structure $I$ and Lagrangian with respect to $J$ and $K$.  Since
$2[S_0] + [S_1] + [S_2] + [S_3] + [S_4] = [F]$, the corresponding combination of the basis periods must also vanish.  

However, this is not the only constraint. Recall also that an integral homology class $[\Sigma]$ of self-intersection $-2$ in an ALG space can be represented by a curve
which is holomorphic with respect to some complex structure $aI + bJ + cK$, $a^2 + b^2 + c^2 = 1$, hence these
classes represent further constraints. For example, this implies that the set of periods of such a curve cannot all vanish, see \cite{LeeLin}.  To understand the
further constraints, let us first write $\Sigma = \sum_{i=0}^4 \lambda_i S_i$.  We then compute that
\begin{align*}  
\int_\Sigma \omega_I & = \sum_{i=0}^5 \lambda_i \int_{S_i} \omega_I  \\
&= \frac{\lambda_0}{2}\left(\int_F \omega_I - 
\sum_{i=1}^4 \int_{S_i} \omega_I\right) + \sum_{i=1}^4 \lambda_i \int_{S_i} \omega_I\\   &=\frac{\lambda_0}{2} \int_F \omega_I + \sum_{i=1}^4 \left(\lambda_i - \frac{\lambda_0}{2}\right)\int_{S_i}\omega_I,
\end{align*}
with a similar computation for the integrals of $\Omega_I$.  We conclude from this that 
\[
 \sum_{i=1}^4 (\lambda_0 - 2 \lambda_i)\left( \int_{S_i} \omega_I , \int_{S_i} \Omega_I\right)  \neq \lambda_0 ( \ell^2 \im \tau, 0) 
\]
This is precisely the definition of $\mathcal{P}$ in \eqref{eq:P} appearing above.
\end{rem}

\begin{rem}
	We have stated the Torelli theorem only for ALG-spaces of type $I_0^*$ since this is the only asymptotic geometry we will encounter in this article. The Torelli theorem is more generally true for ALG-structures if the order is at least 2. In the $I_0^*$-case this places no further restriction, see \cite{CVZ2024}
\end{rem}

\bigskip

As written, it appears that $\calP$ is defined relative to a particular choice of basis $\frakB$. We will see that in fact this domain is 
independent of that basis.
Let
\[
\OO(\Z^5,I_0)^+ = \left\{ A \in \OO(\Z^5,I_0): A(2e_0 + \sum_{i=1}^4 e_i) =  2e_0 + \sum_{i=1}^4 e_i \right\}.
\]
There is a natural right action of $\OO(\Z^5,I_0)^+$ on adapted frames, namely $f\cdot A =f \circ A$ for $A \in \OO(\Z^5,I_0)^+$. If $A= (a_{ij}) \in \OO(\Z^5,I_0)^+$ then the corresponding right action on admissible bases is given by $\mathfrak{B} \cdot A = A^t \cdot \mathfrak{B}$ where we think of $\mathfrak{B}$ as the column vector $(S_0, \ldots, S_4)^t$, more precisely
\[
S_j' = f'(e_j) = f(A(e_j)) = \sum_{i=0}^4 a_{ij} f(e_i) = \sum_{i=0}^4 a_{ij} S_i.
\]

\begin{lem}\label{lem:period_equivariance}
The period map satisfies the equivariance condition
\[
\mathcal{T}_{\mathfrak{B} \cdot A}([(g, I, J, K)])= \mathcal{T}_{\mathfrak{B} }([(g, I, J, K)])\cdot A
\]	
for $A \in \OO(\Z^5,I_0)^+$. 
\end{lem}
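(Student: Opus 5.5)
The plan is to unwind the definitions and use only that integration over homology classes is linear. First I fix what ``$\cdot A$'' means on the period side. The Torelli map records the periods over all five basis elements, $p_i = \left(\int_{S_i}\omega_I, \int_{S_i}\Omega_I\right) \in \R\times\C$ for $i=0,\dots,4$. Because $f(2e_0+\sum e_i)=F$ and the periods of $F$ are fixed by \eqref{eq:fiberintegral}, the entry $p_0$ is determined by $p_1,\dots,p_4$:
\[
p_0 = \tfrac12\Big((\ell^2\im\tau,0) - \sum_{i=1}^4 p_i\Big).
\]
So the data in $\calP$ is equivalent to the row vector $(p_0,\dots,p_4)$. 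I let $A$ act on the right on such row vectors by $(p\cdot A)_j=\sum_i p_i a_{ij}$, and then project back to the last four coordinates.

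The key computation is direct. For the basis $\mathfrak B\cdot A$ we have $S_j'=\sum_i a_{ij}S_i$. Linearity of integration of the closed forms $\omega_I,\Omega_I$ over integral classes then gives
\[
\int_{S_j'}\omega_I=\sum_i a_{ij}\int_{S_i}\omega_I ,
\]
and the same for $\Omega_I$. Hence $p'_j=\sum_i p_i a_{ij}=(p\cdot A)_j$ for $j=1,\dots,4$, which is the claimed identity $\mathcal T_{\mathfrak B\cdot A}=\mathcal T_{\mathfrak B}\cdot A$.

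It remains to check that everything is well defined.
\begin{enumerate}
\item $\mathfrak B\cdot A$ is again an admissible basis. This holds because $f\circ A$ still pulls $I$ back to $I_0$ (as $A\in \OO(\Z^5,I_0)$), and it still sends $2e_0+\sum e_i$ to $F$ (as $A$ fixes that vector).
\item The recovered $p'_0$ agrees with $(p\cdot A)_0$. The fiber relation for the new basis holds automatically, since $A$ fixes $2e_0+\sum e_i$. Concretely, $2p'_0+\sum_j p'_j$ equals the pairing of $(p_0,\dots,p_4)$ with $A(2e_0+\sum e_i)=2e_0+\sum e_i$, which is the period of $F$.
\item The action preserves $\calP$. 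The excluded walls are indexed by classes $\Sigma$ with $\Sigma\cdot\Sigma=-2$. Since $A$ is an isometry of $I_0$, it permutes these classes bijectively: it sends the coefficient vector $\lambda$ to $A^{-1}\lambda$ in the new basis. Each wall is exactly the condition that the period of $\Sigma$ (equivalently, by the remark, of $\sum\lambda_iS_i$ rewritten using $F$) vanishes, so the walls are permuted.
\end{enumerate}

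The main obstacle is this bookkeeping: phrasing the right action on the four-coordinate space $\calP$ consistently with the eliminated coordinate $p_0$. I would handle it by working throughout with the full five-tuple constrained by the fiber relation. The period calculation itself is immediate.
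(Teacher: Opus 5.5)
Your proposal is correct and follows essentially the same route as the paper. Linearity of $C\mapsto\int_C\omega_I$ and $C\mapsto\int_C\Omega_I$ gives $x'=A^t x$ and $z'=A^t z$ on the full five-tuple of periods. The fiber relation $2S_0+\sum_i S_i=F$ then eliminates the $0$-th coordinate, which yields the affine action $\mathbf{x}\mapsto \hat{A}^t\mathbf{x}+\hat{b}$ on $\R^4$ and the linear action on $\C^4$.

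Your extra check (3), that $A$ permutes the walls of $\calP$, is not part of this lemma's proof in the paper. The paper proves it separately as Lemma~\ref{lem:invariance_period_domain}.
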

\begin{proof}
By linearity of $C \mapsto \int_C \omega_I$ (resp.\ $C \mapsto \int_C \Omega_I$)
\[
\int_{S_j'} \omega_I  = \sum_{i=0}^4 a_{ij}  \int_{S_i}  \omega_I \quad \text{and} \quad  \int_{S_j'}\Omega_I  = \sum_{i=0}^4 a_{ij}  \int_{S_i}  \Omega_I.
\]
If we set $x= (x_0, \ldots , x_4)^t$ for $x_i =\int_{S_i} \omega_I$ and $z= (z_0, \ldots , z_4)^t$ for $z_i =\int_{S_i} \Omega_I$ and $x',z'$ correspondingly  this amounts to 
\[
x' = A^t \cdot x \quad \text{and} \quad z' = A^t \cdot z.
\]
Using the homological relation $2S_0 + \sum_{i=1}^4 S_i = F$  together with $\int_F \omega_I = 4 \pi^2$ and $\int_F \Omega_I=0$ we may express $x_0= 2 \pi^2 - \frac 12 \sum_{i=1}^4 x_i$ and $z_0 = - \frac 12 \sum_{i=1}^4 z_i$. Hence, we obtain
\[
\begin{pmatrix} 
x_1' \\ x_2' \\ x_3' \\x_4'	
\end{pmatrix}
=
\hat{A}^t \cdot 
\begin{pmatrix} 
x_1 \\ x_2 \\ x_3 \\x_4	
\end{pmatrix}
+ \hat{b}
\quad \text{and} \quad
\begin{pmatrix} 
z_1' \\ z_2' \\ z_3' \\z_4'	
\end{pmatrix}
=
\hat{A} \cdot 
\begin{pmatrix} 
z_1 \\ z_2 \\ z_3 \\z_4	
\end{pmatrix}
\]
for $\hat{A} \in \GL(4,\R)$ and $\hat{b} \in \R^4$ defined by $\hat{a}_{ij} = a_{ij} - \frac 12 a_{0j}$  and $\hat{b}_j = 2 \pi^2 a_{0j}$, $i,j \in \{1, 2, 3, 4\}$. This determines a right action of $\OO(\Z^5,I_0)^+$ on $\R^4$ by affine-linear and on $\C^4$ by linear transformations which we will denote by $(\mathbf x, \mathbf z) \mapsto (\mathbf x, \mathbf z) \cdot A$. The following is then clear.
\end{proof}

We shall convince ourselves in the following that the right action of $\OO(\Z^5,I_0)^+ $ on the target $\R^4 \times \C^4$ restricts to an action on the period domain and that the period domain does not depend on the choice of admissible basis $\mathfrak{B}$.

\begin{lem}\label{lem:invariance_period_domain}
The period domain $\calP \subset \R^4 \times \C^4$ is invariant under the action of $\OO(\Z^5,I_0)^+$. More precisely, if $\mathfrak{B} = (S_0, \ldots, S_4)$ is an admissible basis, then $\mathcal{H}_{\boldsymbol{\lambda}} \cdot A = \mathcal{H}_{A^{-1}\boldsymbol{\lambda}}$ for $A \in \OO(\Z^5,I_0)^+$, $\boldsymbol{\lambda} = (\lambda_0, \ldots, \lambda_4) \in \Z^5$ and the codimension-three subspaces
\[
\mathcal{H}_{\boldsymbol{\lambda}} = \bigl\{\sum_{i=1}^4 (\lambda_0 -2 \lambda_i) (x_i,z_i) = \lambda_0( 4 \pi^2, 0) \bigr\} \subset \R^4 \times \C^4 
\]
where $[\Sigma]= \sum_{i=0}^4 \lambda_i [S_i] $ has self-intersection $-2$. 
\end{lem}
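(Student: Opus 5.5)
The plan is to rewrite the defining equation of $\mathcal{H}_{\boldsymbol{\lambda}}$ as a statement about periods of the homology class $\Sigma_{\boldsymbol{\lambda}} = \sum_{i=0}^4 \lambda_i S_i$. Once that is done, invariance under $\OO(\Z^5,I_0)^+$ reduces to the linearity already used in Lemma \ref{lem:period_equivariance}.

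\textbf{Step 1: extend to five coordinates.} For $(\mathbf x,\mathbf z)\in\R^4\times\C^4$, set
\[
x_0 = 2\pi^2 - \tfrac12\sum_{i=1}^4 x_i, \qquad z_0 = -\tfrac12\sum_{i=1}^4 z_i,
\]
as in the proof of Lemma \ref{lem:period_equivariance}. Write $X=(x_0,\dots,x_4)$ and $Z=(z_0,\dots,z_4)$. The computation in the Remark explaining $\calP$ then gives
\[
\sum_{i=0}^4 \lambda_i (x_i,z_i) = \tfrac{1}{2}\Bigl(\lambda_0(4\pi^2,0) - \sum_{i=1}^4(\lambda_0-2\lambda_i)(x_i,z_i)\Bigr).
\]
So $(\mathbf x,\mathbf z)\in \mathcal{H}_{\boldsymbol\lambda}$ if and only if $\boldsymbol\lambda^t X = 0$ and $\boldsymbol\lambda^t Z = 0$. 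In words, $\mathcal{H}_{\boldsymbol\lambda}$ is the locus where the formal periods of $\Sigma_{\boldsymbol\lambda}$ vanish.

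\textbf{Step 2: transport under $A$.} By construction the action satisfies $X' = A^t X$ and $Z' = A^t Z$. The normalisation is preserved because $A$ fixes $2e_0+\sum e_i$. Indeed, the relation $2x_0+\sum_{i=1}^4 x_i = 4\pi^2$ says exactly that $\langle 2e_0+\sum e_i, X\rangle = 4\pi^2$. Applying $A$ gives
\[
\langle 2e_0+\sum e_i, A^tX\rangle = \langle A(2e_0+\sum e_i), X\rangle = 4\pi^2,
\]
so $X'$ again satisfies the normalisation and the $x_0'$ coordinate is consistent. The same argument applies to $Z$, with $0$ in place of $4\pi^2$.

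Then $\boldsymbol\mu^t A^t X = (A\boldsymbol\mu)^t X$. Hence $(\mathbf x,\mathbf z)\cdot A \in \mathcal{H}_{\boldsymbol\mu}$ if and only if $(\mathbf x,\mathbf z)\in \mathcal{H}_{A\boldsymbol\mu}$. Setting $\boldsymbol\mu = A^{-1}\boldsymbol\lambda$ gives $\mathcal{H}_{\boldsymbol\lambda}\cdot A = \mathcal{H}_{A^{-1}\boldsymbol\lambda}$.

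\textbf{Step 3: the index set is preserved.} Since $A\in\OO(\Z^5,I_0)$, the inverse $A^{-1}$ is integral and preserves $I_0$. Hence
\[
I_0(A^{-1}\boldsymbol\lambda, A^{-1}\boldsymbol\lambda) = I_0(\boldsymbol\lambda,\boldsymbol\lambda) = -2,
\]
using that $f^*I = I_0$ for an adapted frame. So $\boldsymbol\lambda \mapsto A^{-1}\boldsymbol\lambda$ permutes the set of $(-2)$-vectors bijectively. It follows that the union of the $\mathcal{H}_{\boldsymbol\lambda}$ is mapped to itself, and therefore $\calP$ is invariant.

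\textbf{Expected obstacle.} The only delicate point is bookkeeping: keeping the transposes in the right-action convention straight, and checking that the affine term $\hat b$ is exactly what makes the reduced coordinates consistent. Step 2 handles this by working in the five-dimensional coordinates, where the action is genuinely linear.
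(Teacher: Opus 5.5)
Your proposal is correct and follows the paper's proof. Both arguments reduce the defining equation of $\mathcal{H}_{\boldsymbol{\lambda}}$ to the homogeneous condition $\sum_{i=0}^4 \lambda_i (x_i,z_i)=(0,0)$ in five coordinates and then use $(A^{-1}\boldsymbol{\lambda})^t A^t = \boldsymbol{\lambda}^t$. You also check two points the paper leaves implicit or defers to the following corollary: that $A(2e_0+\sum_{i=1}^4 e_i)=2e_0+\sum_{i=1}^4 e_i$ keeps the $x_0$-normalisation consistent after the action, and that $\boldsymbol{\lambda}\mapsto A^{-1}\boldsymbol{\lambda}$ permutes the $(-2)$-vectors.
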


\begin{proof}
First recall that after setting $x_0 = \frac{1}{2} ( 4 \pi^2 - \sum_{i=1}^4 x_i)$ and $z_0 = - \frac{1}{2} \sum_{i=1}^4 z_i$ the inhomogeneous equation defining $\mathcal{H}_{\boldsymbol{\lambda}}$ is equivalent to the homogeneous equation $\sum_{i=0}^4 \lambda_i(x_i, z_i) = (0,0)$. If $[\Sigma] = \sum_{i=0}^4 \lambda_i[S_i]$ and $\mathfrak{B}' = A^t \cdot \mathfrak{B}$ for $A \in \OO(\Z^5,I_0)^+$, then
\[
\Sigma = \boldsymbol{\lambda}^t \cdot \mathfrak{B} =(A^{-1} \boldsymbol{\lambda})^t \cdot A^t \mathfrak{B}= (\boldsymbol{\lambda}')^t \cdot \mathfrak{B}' = \sum_{i=0}^4 \lambda_i'S_i'
\]
with $\boldsymbol{\lambda}'  = A^{-1} \boldsymbol{\lambda}$ for $\boldsymbol{\lambda}=(\lambda_0, \ldots, \lambda_4) $ and $\boldsymbol{\lambda}' = (\lambda_0', \ldots, \lambda_4')$. 

Now for $\mathbf{x}=(x_0, \ldots, x_4)$ and $\mathbf{z}=(z_0, \ldots, z_4)$ 
\begin{align*}
\sum_{i=0}^4 \lambda_i(x_i, z_i) = (0,0) &\Longleftrightarrow \boldsymbol{\lambda}^t \cdot \mathbf{x} = \boldsymbol{\lambda}^t \cdot \mathbf{z} = 0 \\
&\Longleftrightarrow (A^{-1} \boldsymbol{\lambda})^t \cdot A^{t}\mathbf{x}  = (A^{-1} \boldsymbol{\lambda})^t \cdot A^{t}\mathbf{z} = 0 \\  
&\Longleftrightarrow (\boldsymbol{\lambda}')^t \cdot \mathbf{x}'  = (\boldsymbol{\lambda}')^t \cdot \mathbf{z}' = 0 \Longleftrightarrow \sum_{i=0}^4 \lambda_i'(x_i', z_i') = (0,0)
\end{align*}
for $\boldsymbol{\lambda}'=A^{-1}\boldsymbol{\lambda}$ as above, $\mathbf{x}' =A^{t} \mathbf{x}$ and $\mathbf{z}' = A^{t}\mathbf{z}$. 
This equivalence show that $\mathcal{H}_{\boldsymbol{\lambda}} \cdot A = \mathcal{H}_{A^{-1}\boldsymbol{\lambda}}$ for $A \in \OO(\Z^5,I_0)^+$. 
\end{proof}

\begin{cor}
The period domain $\calP \subset \R^4 \times \C^4$ does not depend on the choice of admissible basis $\mathfrak{B}=(S_0, \ldots, S_4)$ of $H_2(\mathcal{M}(\bfalpha,\bfm),\Z)$.
\end{cor}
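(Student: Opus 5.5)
Any two admissible bases $\mathfrak{B}$ and $\mathfrak{B}'$ of $H_2(\mathcal{M}(\bfalpha,\bfm),\Z)$ differ by the right action of an element $A \in \OO(\Z^5,I_0)^+$. The corollary should then follow from Lemma~\ref{lem:invariance_period_domain}. The plan has three steps.

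\emph{Step 1: relate the two bases.} Let $f,f'\colon \Z^5 \to H_2(\mathcal{M}(\bfalpha,\bfm),\Z)$ be the adapted frames with $S_i=f(e_i)$ and $S_i'=f'(e_i)$. Set $A := f^{-1}\circ f'$. This is an automorphism of $\Z^5$, since both frames are isomorphisms. Because $f^*I = I_0 = f'^*I$, the map $A$ preserves $I_0$. Because both frames send $2e_0+\sum_{i=1}^4 e_i$ to the fiber class $F$, the map $A$ fixes $2e_0+\sum_{i=1}^4 e_i$. Hence $A \in \OO(\Z^5,I_0)^+$ and $f'=f\circ A$, that is, $\mathfrak{B}'=\mathfrak{B}\cdot A$.

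\emph{Step 2: match the index sets.} Write $\calP_{\mathfrak{B}}$ for the complement of the union of the $\mathcal{H}_{\boldsymbol{\lambda}}$, where $\boldsymbol{\lambda}$ runs over the set
\[
\Lambda = \Bigl\{\boldsymbol{\lambda}\in\Z^5 : \Bigl(\sum_i \lambda_i S_i\Bigr)^2=-2\Bigr\} = \{\boldsymbol{\lambda} : \boldsymbol{\lambda}^t I_0 \boldsymbol{\lambda} = -2\}.
\]
This description of $\Lambda$ uses $f^*I=I_0$, so $\Lambda$ depends only on $I_0$ and not on $\mathfrak{B}$. The same holds for $\mathfrak{B}'$, so the two bases index their excluded subspaces by the same set. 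In particular, for any $\boldsymbol{\lambda}\in\Lambda$ the class $\Sigma=\sum_i\lambda_iS_i$ is a $(-2)$-class, and it is represented in the basis $\mathfrak{B}'$ by the coefficient vector $A^{-1}\boldsymbol{\lambda}\in\Lambda$.

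\emph{Step 3: compare the excluded loci.} The subspaces $\mathcal{H}^{\mathfrak{B}}_{\boldsymbol{\lambda}}$ are defined as equations in the periods $(x_i,z_i)$ computed in the basis $\mathfrak{B}$. After eliminating $x_0,z_0$, the defining condition reads $\sum_{i=0}^4\lambda_i(x_i,z_i)=0$, which says exactly that both periods of $\Sigma$ vanish. This condition is intrinsic to the homology class $\Sigma$.

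By the proof of Lemma~\ref{lem:invariance_period_domain}, a period vector lies in $\mathcal{H}_{\boldsymbol{\lambda}}$ with respect to $\mathfrak{B}$ if and only if its transform under $A$ lies in $\mathcal{H}_{A^{-1}\boldsymbol{\lambda}}$ with respect to $\mathfrak{B}'$. Since $A^{-1}$ is a bijection of $\Lambda$ onto itself, the union of excluded subspaces is carried onto itself by the action of $A$. Combined with the identity $\mathcal{H}_{\boldsymbol{\lambda}}\cdot A = \mathcal{H}_{A^{-1}\boldsymbol{\lambda}}$, this gives $\calP\cdot A=\calP$, so the subset $\calP\subset\R^4\times\C^4$ is the same for every admissible basis.

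\emph{Main obstacle.} The only delicate point is bookkeeping. One must check that the affine right action (with $\hat A$, $\hat b$ and the normalization $\int_F\omega_I$) is compatible with passing from the homogeneous form of the defining equations to the inhomogeneous one. Here I would rely directly on Lemma~\ref{lem:period_equivariance} and the homogeneous reformulation established in the proof of Lemma~\ref{lem:invariance_period_domain}.
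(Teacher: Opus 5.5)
Your proposal is correct and follows essentially the same route as the paper: transitivity of $\OO(\Z^5,I_0)^+$ on admissible bases, the fact that $A^{-1}$ permutes the coefficient vectors of $(-2)$-classes, and the identity $\mathcal{H}_{\boldsymbol{\lambda}}\cdot A=\mathcal{H}_{A^{-1}\boldsymbol{\lambda}}$ from Lemma~\ref{lem:invariance_period_domain}. Your Step~1 also proves the transitivity (via $A=f^{-1}\circ f'$), which the paper only asserts.
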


\begin{proof}
	The orthogonal group $\OO(\Z^5, I_0)^+$ acts transitively on the set of admissible bases. Now $\boldsymbol{\lambda} \in \Z^5$ gives rise to a codimension-three subspace $\mathcal{H}_{\boldsymbol{\lambda}}$ with respect to the basis $\mathfrak{B}=(S_0, \ldots, S_4)$ if and only if $C = \sum_{i=0}^4 \lambda_iS_i$ has self-intersection $-2$. Reexpressing $C = \sum_{i=0}^4 \lambda_i'S_i'$ for $\boldsymbol{\lambda}'  = A^{-1} \boldsymbol{\lambda}$ and $\mathfrak{B}'=A^t \cdot \mathfrak{B}$ for $A \in \OO(\Z^5, I_0)^+$ as above yields that $\boldsymbol{\lambda}' \in \Z^5$ gives rise to a codimension-three subspace $\mathcal{H}_{\boldsymbol{\lambda}'}$ with respect to the basis $\mathfrak{B}'=(S_0', \ldots, S_4')$. But $\mathcal{H}_{\boldsymbol{\lambda}'}$ is the translate of $\mathcal{H}_{\boldsymbol{\lambda}}$ under the element $A \in \OO(\Z^5, I_0)^+$ which proves the claim.
		\end{proof}

It remains to determine which linear combinations of the $S_i$ have self-intersection $-2$.
\begin{lem}
The classes $[\Sigma] = \lambda_0  [S_0] + \sum_{i=1}^4 \lambda_i [S_i]$ of self-intersection $-2$ in $H_2(X, \Z)$ have
coefficients $(\lambda_0, \boldsymbol{\lambda}) \in \Z^5$ given by 
\begin{enumerate}
\item $\boldsymbol{\lambda} = k (1,1,1,1)$, $k \in \Z$ and $\lambda_0 = 2k \pm 1$,
\smallskip \
\item $\boldsymbol{\lambda} = k (1,1,1,1) \pm e_i$, $k \in \Z$, $i \in \{1, \ldots, 4\}$ and $\lambda_0 = 2k, 2k \pm 1$,
\smallskip
\item $\boldsymbol{\lambda} = k (1,1,1,1) \pm (e_i+e_j)$, $k \in \Z$, $ i \neq j \in \{1, \ldots, 4\}$ and $\lambda_0 =  2k \pm 1$,
\end{enumerate}
where the $e_j$, $j = 1, \ldots, 4$ are the standard basis vectors of $\R^4$ and $\boldsymbol{\lambda} = (\lambda_1, \ldots, \lambda_4)$.
\end{lem}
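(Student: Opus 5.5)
We compute the self-intersection directly from the Gram matrix $I_0$ in \eqref{eq:I0}. For $\Sigma = \lambda_0 S_0 + \sum_{i=1}^4 \lambda_i S_i$,
\[
\Sigma\cdot\Sigma = -2\lambda_0^2 + 2\lambda_0\sum_{i=1}^4 \lambda_i - 2\sum_{i=1}^4 \lambda_i^2 .
\]
So the condition $\Sigma\cdot\Sigma=-2$ becomes
\[
\lambda_0^2 - \lambda_0 s + \sum_{i=1}^4 \lambda_i^2 = 1, \qquad s := \sum_{i=1}^4\lambda_i .
\]
We complete the square in each $\lambda_i$ relative to $\lambda_0/2$. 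Since $\sum_i (\lambda_i - \lambda_0/2)^2 = \sum_i \lambda_i^2 - \lambda_0 s + \lambda_0^2$, the equation is exactly
\[
\sum_{i=1}^4 \mu_i^2 = 1, \qquad \mu_i := \lambda_i - \tfrac{\lambda_0}{2}.
\]
Equivalently, $\sum_{i=1}^4 (2\lambda_i-\lambda_0)^2 = 4$ with integer entries. This matches the appearance of $\lambda_0 - 2\lambda_i$ in the definition of $\calP$.

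Next, we split by the parity of $\lambda_0$. The integers $n_i := 2\lambda_i - \lambda_0$ all have the parity of $\lambda_0$.

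If $\lambda_0 = 2k$ is even, the $n_i$ are even and $\sum n_i^2 = 4$. Hence exactly one $n_i = \pm 2$ and the others vanish. This gives $\boldsymbol{\lambda} = k(1,1,1,1) \pm e_i$ with $\lambda_0 = 2k$.

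If $\lambda_0$ is odd, every $n_i$ is odd, so $n_i^2 \ge 1$. With four terms summing to $4$, this forces $n_i = \pm 1$ for all $i$. Writing $\lambda_0 = 2k+1$ gives $\lambda_i \in \{k, k+1\}$ with an arbitrary sign pattern.

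The main step is to reorganize these $2^4$ sign patterns into the three families of the statement. The key tool is the freedom to describe $\lambda_0$ as either $2k+1$ or $2k'-1$ with $k' = k+1$. Let $r$ be the number of indices with $\lambda_i = k+1$.
\begin{itemize}
\item $r = 0$ gives $\boldsymbol{\lambda} = k(1,1,1,1)$ with $\lambda_0 = 2k+1$. $r = 4$ gives $\boldsymbol{\lambda} = k'(1,1,1,1)$ with $\lambda_0 = 2k'-1$. Together these form family (1).
\item $r = 1$ gives $k(1,1,1,1) + e_i$ with $\lambda_0 = 2k+1$. $r = 3$ gives $k'(1,1,1,1) - e_i$ with $\lambda_0 = 2k'-1$. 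Together with the even case, these form family (2); the listed $\lambda_0$ values cover both the even and the odd cases.
\item $r = 2$ gives $k(1,1,1,1) + e_i + e_j$ with $\lambda_0 = 2k+1$, or equivalently $k'(1,1,1,1) - (e_a + e_b)$ with $\lambda_0 = 2k'-1$, where $\{a,b\}$ is the complementary pair. This is family (3).
\end{itemize}

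For the converse, substituting each listed family back into $\sum_i n_i^2 = 4$ verifies it directly. Some listed sign combinations need a remark: for instance $k(1,1,1,1) + e_i$ with $\lambda_0 = 2k-1$ gives $n_i = 3$, which does not satisfy the equation. We will therefore note that the "$\pm$" choices in the statement are correlated in the way the reorganization above produces. This bookkeeping of correlated signs is the only delicate point; everything else is elementary.
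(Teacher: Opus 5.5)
Your proof is correct, but it takes a different route from the paper's.

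\emph{The paper's route.} The paper treats $\lambda_0^2-\lambda_0 s+\sum_i\lambda_i^2-1=0$ as a quadratic in $\lambda_0$. Its reduced discriminant is $D=1-\frac14\sum_{i<j}(\lambda_i-\lambda_j)^2\le 1$. Integrality of $\lambda_0=s/2\pm\sqrt D$ forces $D\in\{0,\frac14,1\}$, i.e.\ $\sum_{i<j}(\lambda_i-\lambda_j)^2\in\{4,3,0\}$. These three possible spreads of $\boldsymbol{\lambda}$ are exactly families (3), (2), (1).

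\emph{Your route.} You instead complete the square in each $\lambda_i$ about $\lambda_0/2$. This reduces the problem to writing $4$ as a sum of four squares of the integers $2\lambda_i-\lambda_0$, which share a common parity.

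\emph{Comparison.} Your parity split replaces two steps of the paper: the perfect-square analysis of $D$, and the step the paper leaves unstated of identifying which $\boldsymbol{\lambda}$ have pairwise-difference sum $0$, $3$ or $4$. The price is your final regrouping of the $2^4$ odd sign patterns via $2k+1=2(k+1)-1$. The paper's organization produces the three families directly in the shape of the statement. Moreover, $\lambda_0=s/2\pm\sqrt D$ automatically encodes the sign correlation you correctly flag: $k(1,1,1,1)+e_i$ forces $\lambda_0\in\{2k,2k+1\}$, $k(1,1,1,1)-e_i$ forces $\lambda_0\in\{2k-1,2k\}$, and $k(1,1,1,1)\pm(e_i+e_j)$ forces $\lambda_0=2k\pm1$ with the same sign. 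Your formulation has a side benefit: it produces exactly the integers $\lambda_0-2\lambda_i$ that define the walls $\mathcal{H}_{\boldsymbol{\lambda}}$ of $\mathcal{P}$ in Corollary~\ref{cor:descriptionofP}.
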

\begin{proof}
Using the intersection matrix \eqref{eq:intersectionform}, 
\begin{align*}
[\Sigma]\cdot [\Sigma] = -2 \Bigl( \lambda_0^2 +\sum_{i=1}^4 \lambda_i^2 - \lambda_0 \sum_{i=1}^4 \lambda_i \Bigr) = -2
\end{align*}
hence
\[
\lambda_0^2 - \lambda_0 \bigl(\sum_{i=1}^4 \lambda_i\bigr) + \bigl( \sum_{i=1}^4 \lambda_i^2)- 1 =0 
\]
This quadratic equation has discriminant,
\[
D = \frac{1}{4} \bigl(\sum_{i=1}^4 \lambda_i\bigr)^2 +1 -  \sum_{i=1}^4 \lambda_i  = 1 - \frac{1}{4} \sum_{i<j} (\lambda_i-\lambda_j)^2 \leq 1,
\]
so there can be integer solutions only when $D \in \{0, \frac{1}{4}, 1\}$. The solutions (1), (2) and (3) correspond to these three cases.
\end{proof}
\begin{cor}\label{cor:descriptionofP}
The period domain $\calP$ is the complement of the collection of codimension-three planes
\[
\bigcup_{k \in \Z} \calH_k \cup \calH_{k,i_0} \cup \calH'_{k,i_0} \cup \calH_{k, i_1, i_2},
\]
where
\begin{enumerate}
\item[(1)] $\mathcal{H}_k = \{ \sum_{i=1}^4 x_i =(2k+1) \ell^2 \im\tau, \, \sum_{i=1}^4 z_i =0 \} $, 
\smallskip 
\item[(2)] $\mathcal{H}_{k,i_0} = \{ x_{i_0} = k \ell^2 \im \tau, \,z_{i_0} = 0 \}$, 
\smallskip
\item[] $\mathcal{H}'_{k,i_0} = \{ 2x_{i_0} - \sum_{i=1}^4 x_i = (2k+1) \ell^2 \im\tau, \, 2 z_{i_0} - \sum_{i=1}^4 z_i = 0 \}$,  $i_0 \in \{1, \ldots ,4\}$,
\smallskip
\item[(3)] $\mathcal{H}_{k,i_1, i_2}= \{ 2 (x_{i_1} + x_{i_2}) -\sum_{i=1}^4 x_i)  = (2k + 1) \ell^2 \im\tau, \, 2 (z_{i_1} + z_{i_2}) -\sum_{i=1}^4 z_i = 0 \}$, 
$i_1 \neq i_2 \in \{1, \ldots , 4 \}$.
\end{enumerate}
\end{cor}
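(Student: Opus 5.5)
The plan is to substitute each family of integer solutions from the preceding Lemma into the defining condition of $\calP$ in \eqref{eq:P}. That condition says a removed plane is $\sum_{i=1}^4 (\lambda_0 - 2\lambda_i)(x_i,z_i) = \lambda_0(\ell^2\im\tau, 0)$. The key simplification is that if $\boldsymbol{\lambda} = k(1,1,1,1) + \boldsymbol{\mu}$, then $\lambda_0 - 2\lambda_i = (\lambda_0 - 2k) - 2\mu_i$. The coefficients therefore depend only on $\epsilon := \lambda_0 - 2k$ and on $\boldsymbol{\mu}$, while the right-hand side is $(2k+\epsilon)\ell^2\im\tau$. We also record that $\Sigma$ and $-\Sigma$ give the same plane, since the equation is homogeneous in $(\lambda_0,\boldsymbol\lambda)$. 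This lets us fix signs freely and reindex $k$.

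We treat the cases in order.

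Case (1): $\boldsymbol\mu = 0$ and $\epsilon = \pm1$. The equation reads $\pm\sum_i (x_i,z_i) = (2k\pm1)(\ell^2\im\tau,0)$. Dividing by $\pm1$ gives $\sum x_i = (\pm 2k + 1)\ell^2\im\tau$ and $\sum z_i = 0$. As $k$ ranges over $\Z$ this is exactly the family $\calH_k$.

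Case (2): $\boldsymbol\mu = \pm e_{i_0}$. Using $\Sigma \mapsto -\Sigma$ (which sends $k \mapsto -k$ and $\boldsymbol\mu \mapsto -\boldsymbol\mu$), we may take $\boldsymbol\mu = e_{i_0}$ with $\epsilon \in \{0, \pm1\}$. Note that the sign coupling in the Lemma should be checked against the discriminant computation: with $D = 1/4$ the roots are $\lambda_0 = \tfrac12\sum\lambda_i \pm \tfrac12$. For $\boldsymbol\lambda = k(1,1,1,1) + e_{i_0}$ these are $\lambda_0 = 2k$ or $2k+1$.
For $\epsilon = 0$, the coefficients are $0$ except $-2$ at $i_0$, so the equation is $-2(x_{i_0}, z_{i_0}) = 2k(\ell^2\im\tau, 0)$. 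This gives $\calH_{-k,i_0}$.
For $\epsilon = 1$, the coefficients are $1$ except $-1$ at $i_0$, so the equation is $\sum x_i - 2x_{i_0} = (2k+1)\ell^2\im\tau$ with the analogous $z$-equation. Multiplying by $-1$ and reindexing $k \mapsto -k-1$ gives $\calH'_{k,i_0}$.

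Case (3): $\boldsymbol\mu = e_{i_1} + e_{i_2}$ (after fixing the sign as above) and $D = 0$. The double root is $\lambda_0 = \tfrac12(4k+2) = 2k+1$, so $\epsilon = 1$. The coefficients are $1$ off $\{i_1,i_2\}$ and $-1$ on it, giving $\sum x_i - 2(x_{i_1}+x_{i_2}) = (2k+1)\ell^2\im\tau$. Negating and reindexing yields $\calH_{k,i_1,i_2}$.

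Since the Lemma exhausts all $-2$ classes, the union of these families is exactly the removed set.

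The main obstacle is purely bookkeeping: tracking which $\lambda_0$ values actually occur in each case, and checking that the sign and reindexing choices produce each listed plane with every $k \in \Z$ and no extras. The cleanest way to handle this is to invoke the $\Sigma \mapsto -\Sigma$ symmetry at the outset of each case. Finally, each plane imposes one real and one complex linear condition, so each is indeed of real codimension three.
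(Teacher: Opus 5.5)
Your proposal is correct and follows essentially the paper's own argument: the corollary comes from substituting the classification of $-2$ classes in the preceding lemma into the defining equations of $\calP$ in \eqref{eq:P}, using the symmetry $\Sigma \mapsto -\Sigma$ and reindexing $k$. Your check that the sign of $\lambda_0 - 2k$ is tied to the sign of $\boldsymbol{\mu}$ (for example $\lambda_0 \in \{2k, 2k+1\}$ when $\boldsymbol{\lambda} = k(1,1,1,1) + e_{i_0}$) is genuinely needed, since reading the lemma's looser statement literally would produce planes that are not in the list.
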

\begin{ex}
The spheres $S_0, S_1, S_2, S_3, S_4$ themselves have self-intersection $-2$ in $H_2(X,\Z)$. The corresponding codimension-three planes are then
\begin{enumerate}
\item $\mathcal{H}_{S_0}= \{ \sum_{i=1}^4 x_i = \ell^2 \im \tau, \, \sum_{i=1}^4 z_i =0 \}$,
\smallskip
\item $\mathcal{H}_{S_i} = \{ x_{i} = 0, \, z_i = 0 \}$,\ $i = 1, \ldots, 4$.
\end{enumerate}
Note that the intersections of these codimension three planes with $\R^4 \times \{0\}$ bound 
the 4-simplex with vertices $V_0=(0,0,0,0)$ and $V_i = \ell^2 \im \tau e_i$, $i = 1, \ldots, 4$.
\end{ex}

\subsection{Organization of the Paper}

\begin{enumerate}
\item[\S\ref{sec:extended}]A key tool in this paper is the fact that the various moduli spaces $\mathcal{M}(\bsa, \bfm)$ fit together into a larger `extended' moduli space, in which the complex masses (and the real masses) vary.  The larger moduli space $\mathcal{P}_0(\bsa)$ constructed in \cite{collier2024conformallimitsparabolicslnchiggs} in which the complex masses vary is particularly useful for us, and we discuss the $\C^\times$-action on the space.
\item[\S\ref{sec:distinguished_basis}] We review the chamber structure for the space of real mass parameters $\bsa$. This leads to an explicit description of the nilpotent cone for $\mathcal{M}(\bsa, \mathbf{0})$, which in turn gives a distinguished $2$-homology basis for $\mathcal{M}(\bsa, \bfm)$.
\item[\S\ref{sec:Torelli}] The first main theorem appears in Theorem \ref{thm:Torelli}. It is a statement about the integral of $\omega_I$ and $\Omega_J$ over the distinguished $2$-homology basis for $\mathcal{M}(\bsa, \bfm)$. Notably, as we mentioned before, each integral of $\omega_I$ is an affine linear function depending on $\bsa$ while $\Omega$ is a linear function depending on $\bfm$. \item[\S\ref{sec:TorelliomegaI}] In this section, we compute the integral of $\omega_I$ over the distinguished $2$-homology basis for $\mathcal{M}(\bsa, \mathbf{0})$. We then use the $\C^\times$-action on the larger moduli space $\mathcal{P}_0(\bsa)$ to compute the integral of $\omega_I$ over the distinguished $2$-homology basis of $\mathcal{M}(\bsa, \bfm)$. The main technical result here is Proposition \ref{prop:dlambda}, which implies Proposition \ref{prop:sameperiods}.
\item[\S\ref{sec:integralasintersectionnumber}] In this section, we prove in Proposition \ref{prop:primitive} we prove that the tautological $1$-form $\tau$ is a primitive for the holomorphic symplectic form $\Omega_I$ on the complement of a divisor, that we call the polar divisor (see Definition \ref{def:polarsections}); lastly, in Corollary \ref{cor:intersection_numbers}, we prove that the integral of $\Omega_I$ over $S^2$ can be computed via from the intersection numbers of $S^2$ with the spheres $\Sigma_j = \sigma_j^+ - \sigma_j^-$ formed from the polar sections. 
\item[\S\ref{sec:concrete}] The goal of this section is to parameterize an open dense subset of the Higgs bundle moduli space, as well as the tautological $1$-form, so that the required intersection numbers can be computed. 
\item[\S\ref{sec:TorelliOmegaI}] In this section, we compute the integral of $\Omega_I$ over the distinguished $2$-homology basis for  $\mathcal{M}(\bsa, \mathbf{m})$ when $\bsa$ is in two (of the 24) chambers.  
\item[\S\ref{sec:Dehntwists}] Finally, we discuss the action of the affine $D_4$ Coxeter group on various spaces and prove that the Torelli map is equivariant with respect to this action. Using this, we prove the \emph{near} surjectivity of the Torelli map onto the period domain in Theorem\ref{thm:surjective}.
\end{enumerate}
Lastly, we note that in Appendix \ref{sec:scalings}, we write out the $\R^+ \times \R^+ \times U(1)$ family of hyperK\"ahler structures one could naturally put on the moduli space of Higgs bundles,
while in Appendix \ref{sec:homologyspheres} we discuss the location of the homology spheres in the weakly parabolic (i.e. $\mathbf{m} \neq \mathbf{0}$) setting.

\subsection{Acknowledgements}

We received substantial advice throughout this project and we wish to thank in particular  
Philip Boalch (Modularity Conjecture), 
Ben Elias (affine Coxeter groups), 
Hans-Joachim Hein (Diffeomorphisms of ALGs), 
Sebastian Heller (explaining related work \cite{HHT2025} to us),
Paul Melvin (identifying $H_2(\cM,\Z)$ inside generic weakly parabolic moduli space), 
Richard Wentworth (extended moduli space), 
Claudio Meneses (description of the nilpotent cone and the use of his images), and 
Arya Yae (simplified calculation of $U(1)$ moment map).

Each of the four of us were supported at various times by the GEAR network NSF grant DMS 1107452, 1107263, 1107367  ``RNMS: Geometric Structures and Representation Varieties'';  RM was supported by
NSF DMS-1608223, LF was supported by NSF DMS-2005258, and JS and HW were supported by DFG SPP 2026. JS was supported   by a Heisenberg grant  of the  DFG  and  within the  DFG RTG 2229 ``Asymptotic invariants and limits of groups and spaces''. This work  is supported by   DFG under  Germany's Excellence Strategy EXC-2181/1 -- 390900948 (the Heidelberg  STRUCTURES Cluster of Excellence).
This paper is based upon work supported by the National Science Foundation under Grant No. DMS-1440140 while we were all in residence at the Mathematical Sciences Research Institute (now SLMath) in Berkeley, California, during the Fall 2019 semester and again while LF, RM, and HW were in residence during the Fall 2022 semester.

\section{Extended Moduli Space}\label{sec:extended}
A key tool in this paper is the fact that the various moduli spaces $\calM(\bsa,\bfm)$ fit together into a larger `extended' moduli space $\tcalX$
by letting the real and complex masses vary. In other words, $\tcalX$ is a fibration over an appropriate open dense subset in $(0, \frac12)^4 \times \C^4$
with fiber at $(\bsa, \bfm)$ equal to $\calM(\bsa,\bfm)$.    This construction has been achieved in different categories by various authors. 
First, regarding the individual fibers, Yokogawa \cite{yokogawa93} constructed the moduli space of semistable parabolic Higgs bundles 
for a given $(\bsa, \bfm)$ as a quasi-projective variety. Analytic constructions of the parabolic moduli spaces have also been given, first
by Konno \cite{konno93} in the strongly parabolic rank $2$ case, i.e., where $\bfm = 0$, then by Nakajima \cite{Nakajima} in the weakly 
parabolic ($\bfm \neq 0$) rank $2$ case.  More recently, the first author, together with Collier and Wentworth \cite{collier2024conformallimitsparabolicslnchiggs} 
constructed $\mathcal{P}_0(\boldsymbol{\alpha})$\footnote{They use the notation $\mathcal{P}_0(\bsa)$ for this space, while they use $\mathcal{P}(\bsa) \to \mathbb{C}$ for the space of parabolic logarithmic $\lambda$-connections.}, which as a set is $\sqcup_{\bfm \in \C^4}\calM(\bsa, \bfm)$.  This analytic construction gives rise to a hyperK\"ahler metric on the smooth locus of each $\calM(\bsa,\bfm)$. We 
review the construction of \cite{collier2024conformallimitsparabolicslnchiggs} later in this section with a particular emphasis on the hyperK\"ahler structure.

In any of these constructions, $\calM(\bsa, \bfm)$ acquires the structure of a complex manifold for generic parameters $(\bsa,\bfm)$. Here genericity
of $(\bsa, \bfm) \in (0, \frac12)^4 \times \C^4$ corresponds to the condition that $\bsa$-semistability implies $\bsa$-stability (see Definition \ref{defn:gen1} below). 

In this section we describe this fibration and some of its basic properties. 

\subsection{Genericity and Nakajima's Walls}\label{subsec:period_domain}

Following Nakajima in \cite{Nakajima}, we define the subset of generic parameters $(\boldsymbol{\alpha}, \mathbf{m})$ as follows. 
\begin{defn}\label{defn:gen1}
$(\boldsymbol{\alpha}, \mathbf{m}) \in (0, \frac12)^4 \times \C^4$ is called \emph{generic} if $\boldsymbol{\alpha}$-semistability implies $\boldsymbol{\alpha}$-stability.
\end{defn}
This is the complement of a finite union of codimension-three planes.
\begin{prop}\label{prop:generic}
The set $\mathcal R$ of generic parameters $(\boldsymbol{\alpha}, \mathbf{m}) \in (0, \frac12)^4 \times \C^4$ is characterized as 
\begin{equation}\label{eq:R}
\mathcal{R} = (0, \textstyle\frac12)^4 \times \C^4 \setminus \bigcup_{d \in \Z, \mathbf{e} \in \{0, 1\}^4} \mathcal{H}_{d, \mathbf{e}}, 
\end{equation}
where 
\begin{equation}
\mathcal{H}_{d, \mathbf{e}} = \{  (\boldsymbol{\alpha}, \mathbf{m}) \in (0, \textstyle \frac12)^4 \times \C^4 :   
d + \sum_{i=1}^4 (e_i + (-1)^{e_i} \alpha_i)  =0  \ \   \&  \ \    \sum_{i=1}^4 (-1)^{e_i} m_i = 0.\}.
\label{Nakajimagenericm} 
\end{equation}
\end{prop}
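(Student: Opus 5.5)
We prove the two inclusions by analysing when a strictly semistable object exists. A parabolic Higgs bundle is strictly $\bsa$-semistable exactly when it has a $\varphi$-invariant holomorphic line subbundle $\cV\subset\cE$ with
\[
\pdeg \cV(\bsa)=0 .
\]
So the plan is: first record the numerical and spectral conditions that such a $\cV$ forces; then show that, whenever these conditions can be met, a strictly semistable (in fact polystable) object actually occurs in $\calM(\bsa,\bfm)$.

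\emph{Necessity.} Let $\cV$ be a $\varphi$-invariant line subbundle, and encode its position relative to the flags by $\mathbf{e}\in\{0,1\}^4$: set $e_i=1$ if $\cV|_{p_i}=F_i$ and $e_i=0$ otherwise. Then $\alpha_i(\cV)=e_i+(-1)^{e_i}\alpha_i$, so the condition $\pdeg\cV(\bsa)=0$ reads
\[
d+\sum_{i=1}^4\bigl(e_i+(-1)^{e_i}\alpha_i\bigr)=0, \qquad d=\deg\cV .
\]
Next, $\varphi$ restricts to a meromorphic section $\varphi|_{\cV}$ of $K_C(D)$, i.e.\ a meromorphic $1$-form on $\CP^1$ with at most simple poles at the $p_i$. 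Its residue at $p_i$ is the eigenvalue of $\Res_{p_i}\varphi$ on the line $\cV|_{p_i}$:
\begin{itemize}
\item if $\cV|_{p_i}=F_i$ (so $e_i=1$), the residue is $m_i$;
\item if $\cV|_{p_i}\neq F_i$ (so $e_i=0$), the residue is the eigenvalue $-m_i$ on $\cE_{p_i}/F_i$, which is forced because $\cV$ is invariant.
\end{itemize}
In both cases the residue is $-(-1)^{e_i}m_i$, up to a global sign convention. The residue theorem on $\CP^1$ then gives
\[
\sum_{i=1}^4 (-1)^{e_i}m_i=0 .
\]
Hence non-generic parameters lie in some $\mathcal H_{d,\mathbf e}$. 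Only finitely many $d$ matter, because $\sum_i\bigl(e_i+(-1)^{e_i}\alpha_i\bigr)\in(0,4)$; this gives the finite union.

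\emph{Sufficiency.} Conversely, suppose $(\bsa,\bfm)\in\mathcal H_{d,\mathbf e}$. We construct a polystable object as follows.
\begin{itemize}
\item Take $\cE=\cO(d)\oplus\cO(-4-d)$, which has $\pdeg \cE(\bsa)=0$ as required.
\item Choose a diagonal Higgs field $\varphi=\mathrm{diag}(\omega,-\omega)$, where $\omega$ is the meromorphic $1$-form with simple poles and residues $-(-1)^{e_i}m_i$ at the $p_i$. Such an $\omega$ exists precisely because these residues sum to zero.
\item Choose the flags $F_i$ to equal the first summand exactly when $e_i=1$, and the second summand otherwise. The residues then act by $m_i$ on $F_i$, matching the prescribed parabolic data.
\end{itemize}
By construction, $\cV=\cO(d)$ has parabolic degree zero, and so does the complementary summand. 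The resulting object is therefore strictly semistable, so the parameters are non-generic.

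\emph{Main obstacle.} The delicate point is the sign bookkeeping in the residues. In particular, we must check that the convention $\alpha_i(\cV)=1-\alpha_i$ when $\cV|_{p_i}=F_i$ pairs with the residue $m_i$, not $-m_i$, so that the same $\mathbf e$ appears in both defining equations. A further point is that a subbundle is saturated, so its fibre at each $p_i$ is a genuine line, and the two cases ($e_i=1$ or $e_i=0$) are exhaustive.
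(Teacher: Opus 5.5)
Your proof is correct. The paper gives no argument of its own here; it simply invokes Nakajima's Lemma~2.4 ``adapted to our setting.'' Your two inclusions are a self-contained version of that standard argument:
\begin{itemize}
\item A strictly semistable object carries a $\varphi$-invariant line subbundle $\cV$ with $\pdeg\cV(\bsa)=0$. This is the $\bsa$-equation, with $e_i$ recording whether $\cV|_{p_i}=F_i$.
\item The residue theorem for $\varphi|_{\cV}\in H^0(K_C(D))$ then gives the $\bfm$-equation.
\item Conversely, a split object realizes any point of $\mathcal{H}_{d,\mathbf e}$.
\end{itemize}
Your sign bookkeeping is right. In fact it cannot go wrong, because the $\bfm$-equation is unchanged under $\mathbf e\mapsto\mathbf 1-\mathbf e$ (and under a global sign change of the residues).

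The one step you pass over is the semistability of the split object $\cL_1\oplus\cL_2$. Having two summands of parabolic degree $0$ shows it is not stable. Semistability, however, requires $\pdeg\mathcal{W}(\bsa)\le 0$ for every $\varphi$-invariant line subbundle $\mathcal{W}$, not just for the summands.

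This is quickly supplied. Every flag $F_i$ is one of the two summands, so the parabolic structure splits. Hence each projection $\mathcal{W}\to\cL_j$ is a parabolic morphism: it vanishes at every $p_i$ where $\mathcal{W}|_{p_i}=F_i\neq\cL_j|_{p_i}$, which is exactly where the source weight $1-\alpha_i$ exceeds the target weight $\alpha_i$. Now take a nonzero parabolic morphism of parabolic line bundles with weights $\gamma,\beta\in[0,1)$. It must vanish at each $p$ with $\gamma(p)>\beta(p)$, so $\pdeg(\text{target})-\pdeg(\text{source})\ge\sum_{\gamma(p)>\beta(p)}\bigl(1+\beta(p)-\gamma(p)\bigr)\ge 0$. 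Choose $j$ with nonzero projection; then $\pdeg\mathcal{W}(\bsa)\le\pdeg\cL_j(\bsa)=0$.

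With that added, your argument also makes explicit the dictionary that the paper's citation leaves to the reader: the weight $1-\alpha_i$ and the residue $m_i$ are both attached to $F_i$.
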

This is \cite[Lemma 2.4]{Nakajima}, adapted to our setting.
\begin{cor}
If $(\bsa, \mathbf{0})$ is generic, then $(\bsa, \mathbf{m})$  is generic for all $\bfm \in \mathbb{C}^4$.
\end{cor}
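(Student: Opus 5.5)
This follows directly from the characterization of $\mathcal R$ in Proposition \ref{prop:generic}. The plan is to show that if $(\bsa,\mathbf{m})$ lies on a wall, then $(\bsa,\mathbf{0})$ lies on a wall with the same indices. The key observation is that each wall $\mathcal{H}_{d,\mathbf{e}}$ in \eqref{Nakajimagenericm} is cut out by two independent conditions. One is a real condition involving only $\bsa$:
\[
d+\sum_{i=1}^4\bigl(e_i+(-1)^{e_i}\alpha_i\bigr)=0 .
\]
The other is a complex linear homogeneous condition involving only $\mathbf{m}$:
\[
\sum_{i=1}^4(-1)^{e_i}m_i=0 .
\]
Hence $\mathcal H_{d,\mathbf e}=A_{d,\mathbf e}\times L_{\mathbf e}$, where $A_{d,\mathbf e}\subset(0,\tfrac12)^4$ is an affine hyperplane slice and $L_{\mathbf e}\subset\C^4$ is a complex hyperplane through the origin. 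In particular $\mathbf{0}\in L_{\mathbf e}$ for every $\mathbf e\in\{0,1\}^4$.

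Now argue by contraposition. Suppose $(\bsa,\mathbf m)\notin\mathcal R$ for some $\mathbf m\in\C^4$. Since $\bsa\in(0,\tfrac12)^4$ already, Proposition \ref{prop:generic} gives some $d\in\Z$ and $\mathbf e\in\{0,1\}^4$ with $(\bsa,\mathbf m)\in\mathcal H_{d,\mathbf e}$. The real condition then holds for $\bsa$. The complex condition holds trivially for $\mathbf m=\mathbf 0$. Therefore $(\bsa,\mathbf 0)\in\mathcal H_{d,\mathbf e}$, so $(\bsa,\mathbf 0)$ is not generic. Equivalently, genericity of $(\bsa,\mathbf 0)$ forces $\bsa\notin A_{d,\mathbf e}$ for all $(d,\mathbf e)$, which excludes $(\bsa,\mathbf m)$ from every wall regardless of $\mathbf m$.

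There is no real obstacle here; the argument is immediate once Proposition \ref{prop:generic} is granted. The only point to check is that the homogeneous form of the $\mathbf m$-condition is read off correctly from \eqref{Nakajimagenericm}: the walls never impose an inhomogeneous constraint on $\mathbf m$. This reflects the fact that a reducible $\varphi$-invariant splitting forces the residue eigenvalues to sum to zero with appropriate signs. One might also remark that the converse fails: for $\mathbf m\ne\mathbf 0$ generic in $\C^4$, $(\bsa,\mathbf m)$ is generic even when $\bsa$ lies on some $A_{d,\mathbf e}$.
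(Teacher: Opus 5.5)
Your argument is correct and is exactly the reasoning the paper leaves implicit: the corollary is read off from Proposition \ref{prop:generic}, since each wall $\mathcal{H}_{d,\mathbf{e}}$ imposes a condition on $\bsa$ together with a homogeneous linear condition on $\mathbf{m}$, and $\mathbf{m}=\mathbf{0}$ always satisfies the latter. Hence any wall containing $(\bsa,\mathbf{m})$ also contains $(\bsa,\mathbf{0})$, which is the contrapositive you prove.
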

This Corollary suggests that although genericity has been defined for pairs $(\bsa, \bfm)$, it makes sense to refer to a given $\bsa$ as being generic.
We note, finally, that since $\mathcal R$ is the complement of a collection of codimension-three planes, 
\begin{cor}
$\mathcal{R}$ is both connected and simply-connected.
\end{cor}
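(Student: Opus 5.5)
The plan is to use the explicit description of $\mathcal R$ in Proposition \ref{prop:generic}. This exhibits $\mathcal R$ as a convex open set minus a locally finite union of affine subspaces of real codimension three, and the claim then follows from a general position (transversality) argument.

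First I check the codimension and local finiteness. For fixed $(d,\mathbf e)$, the set $\mathcal H_{d,\mathbf e}$ is cut out by one real affine equation in $\bsa$, namely $d+\sum_i (e_i+(-1)^{e_i}\alpha_i)=0$, whose linear part $\sum_i(-1)^{e_i}\alpha_i$ is nonzero. It is also cut out by one complex linear equation in $\bfm$, namely $\sum_i(-1)^{e_i}m_i=0$, with nonzero coefficients. These equations are independent, so $\mathcal H_{d,\mathbf e}$ is (an open piece of) a real affine subspace of codimension $1+2=3$ in $\R^4\times\C^4\cong\R^{12}$.

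Since $\bsa\in(0,\frac12)^4$, the quantity $\sum_i(e_i+(-1)^{e_i}\alpha_i)$ lies in a bounded interval, in fact in $(0,4)$. Hence only finitely many $d$ give nonempty $\mathcal H_{d,\mathbf e}$. So the removed set $Z=\bigcup\mathcal H_{d,\mathbf e}$ is a finite union of closed codimension-three affine pieces inside the convex open set $U=(0,\frac12)^4\times\C^4$.

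Next I argue connectedness and simple connectivity by transversality. Take a path or a loop $\gamma$ in $\mathcal R$. Since $U$ is convex, $\gamma$ is contractible in $U$. Contract it by a homotopy $H:D^2\to U$, or use the straight segment for paths.

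By a small perturbation, keeping the boundary fixed where needed, make $H$ smooth and transverse to each affine piece $\mathcal H_{d,\mathbf e}$. This is possible because $\gamma$ already misses the closed set $Z$, so we only perturb in the interior. A map from a $2$-dimensional domain is transverse to a codimension-$3$ submanifold exactly when it misses it, since $2<3$. Hence the perturbed $H$ lands in $\mathcal R$, so $\pi_1(\mathcal R)=0$. The same argument with a $1$-dimensional domain, joining any two points by a segment and perturbing it, gives path-connectedness.

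The only care needed is at the pieces where the affine subspaces meet the boundary of the box, and at the intersections of different $\mathcal H_{d,\mathbf e}$. The first is harmless because the homotopy can be kept in a compact subset of the open box $U$. For the second, note that each $\mathcal H_{d,\mathbf e}\cap U$ is a closed submanifold of $U$, and transversality to finitely many submanifolds is achieved simultaneously. I expect no real obstacle. The one point to state explicitly is that the complex equation in $\bfm$ is always nontrivial, which is what gives codimension three rather than one, so that loops avoid the walls.
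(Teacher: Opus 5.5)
Your proposal is correct and follows the paper's own reasoning: the paper deduces the corollary in one line from the fact that $\mathcal R$ is the complement of finitely many real codimension-three affine planes in the convex set $(0,\frac12)^4\times\C^4$, and your transversality argument supplies the standard general-position details behind that remark. One routine detail to add is that a merely continuous loop or path in the open set $\mathcal R$ should first be replaced by a homotopic smooth one before you apply relative transversality.
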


\subsection{Smoothness of the fibration} \label{sec:bundle} 
We now review the smooth bundle structure of $\tcalX$, with the added feature that the fibers are quasiprojective varieties.

\begin{prop}[Corollary of \cite{ISS19a, ISS19b}]\label{prop:fiberbundle} 
The complex manifolds $\calM(\bsa,\bfm)$, fit together to form a smooth fiber bundle 
$\pi: \tcalX = \coprod_{(\bsa,\bfm) \in \calR} \calM(\bsa, \bfm) \to \calR$.  Each fiber $\pi^{-1}( (\bsa, \bfm)) = \calM(\bsa,\bfm)$
is a smooth affine variety with completion a rational elliptic surface. The compactifying divisor is an $I_0^*$ fiber. 
\end{prop}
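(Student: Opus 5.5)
The plan is to realize $\tcalX$ as a family of complex surfaces cut out inside one fixed smooth ambient family, and then to obtain local triviality over $\calR$ from Ehresmann's theorem applied to the compactified family. The first step uses the algebraic description of \cite{ISS19a, ISS19b}. For each $(\bsa,\bfm)$ in the generic locus, the moduli space $\calM(\bsa,\bfm)$ is the complement, in a rational elliptic surface $X(\bsa,\bfm)$, of an $I_0^*$ fiber $F_\infty$ sitting over $\infty$ in the Hitchin base $\CP^1$. This surface is obtained by blowing up $\mathbb{F}_2$ (or $\CP^1\times\CP^1$) at eight points on the fiber at infinity, and these points are determined by the complex masses $\bfm$ through the residue eigenvalues $\pm m_j$. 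The parabolic weights $\bsa$ only enter through the choice of stability chamber, that is, through which components are contracted or blown down. So I will first fix a chamber of generic $\bsa$ and work over the set $\{\bsa \text{ in the chamber}\}\times \C^4$ minus Nakajima's walls. On this set the complex structure of $\calM(\bsa,\bfm)$ depends only on $\bfm$, and varying $\bsa$ inside the chamber yields a product family.

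The second step is to build the relative compactification $\overline{\tcalX}\to\calR$ as an iterated blow-up of a trivial $\mathbb{F}_2$-bundle along sections depending holomorphically on $\bfm$. The wall condition \eqref{Nakajimagenericm}, namely $\sum(-1)^{e_i}m_i=0$, is exactly where the blown-up points collide or a $(-2)$-curve appears in $\calM$. Away from these walls the blow-up centers are disjoint smooth sections, so $\overline{\tcalX}$ is smooth and the map is a proper submersion. The boundary divisor $F_\infty$ varies in a locally trivial way: it is a fixed $I_0^*$ configuration whose normal-crossing strata each map submersively. Ehresmann's theorem for proper submersions relative to a normal crossings divisor then gives a local trivialization of the pair $(\overline{\tcalX},F_\infty)$, and hence of $\tcalX$ itself. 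Compatibility with the analytic construction of \cite{collier2024conformallimitsparabolicslnchiggs}, which gives $\mathcal{P}_0(\bsa)=\sqcup_{\bfm}\calM(\bsa,\bfm)$ as a smooth space, identifies the fibers with the Higgs moduli spaces.

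The third step crosses chambers in $\bsa$. Here I will use the fact that $\calR$ is connected: walls in $\bsa$ of the form $d+\sum(e_i+(-1)^{e_i}\alpha_i)=0$ are only nongeneric when $\bfm$ also lies on the complex hyperplane. For $\bfm$ off that hyperplane, the moduli spaces on the two sides are canonically isomorphic as varieties, because the stability condition is vacuous on the relevant subobjects. The local trivializations therefore glue across $\bsa$-walls at those $\bfm$. Since the bad set has codimension three, the result is a smooth bundle over all of $\calR$.

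The main obstacle is this gluing across $\bsa$-walls. Near points with $\bfm$ close to the wall hyperplane, the algebraic models on either side differ by a flop of a $(-2)$-curve, and smoothness of the total family in $\bsa$ is not manifest. I would first attempt to use the simultaneous resolution picture: over a transverse slice, the family is a Brieskorn–Tjurina simultaneous resolution, which is a smooth fiber bundle away from the codimension-three locus.
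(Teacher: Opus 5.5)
Your proposal has a genuine gap. Step 2 misidentifies where the blow-up construction degenerates, and as a result it leaves part of $\calR$ uncovered.

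\textbf{Where the centres collide.} The eight blow-up centres are the points $\pm m_i$ on the four pairwise disjoint fibres $F_i$. So two of them coincide exactly when some $m_i=0$; the hyperplanes $\sum_i(-1)^{e_i}m_i=0$ play no role in the blow-up. For example, $\bfm=(0,1,3,7)$ lies on none of those hyperplanes, yet the two centres on $F_1$ collide.

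For $\bsa$ in an open chamber every $\bfm\in\C^4$ is generic, so these collision loci lie in $\calR$. They include the whole strongly parabolic slice $\bfm=\mathbf 0$, from which the distinguished basis is propagated. There your hypothesis of ``disjoint smooth sections'' is false. In local coordinates $(s,t)$ with $F_i=\{s=0\}$, blowing up the union $\{s=0,\ t=\pm m_i\}$ in one step gives total space $\{sv=t^2-m_i^2\}$, which is a threefold node. So your argument does not establish that $\overline{\tcalX}$ is smooth or that the projection is a submersion.

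The fix, which is what the paper's ``single base point with multiplicity $2$'' when $m_i=0$ encodes, is to blow up successively. Blow up the section $t=m_i$ first, then the proper transform of $t=-m_i$. Each step is a blow-up along a smooth section, so the result is a proper submersion. Its fibre at $m_i=0$ is the blow-up at the infinitely near point in the direction of $F_i$.

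\textbf{Points of $\calR$ you exclude.} Step 2 throws away the hyperplanes $\sum_i(-1)^{e_i}m_i=0$. But for $\bsa$ in an open chamber these points belong to $\calR$: a Nakajima wall requires both equations, and only codimension-three sets are removed. Step 3 only treats $\bfm$ off the hyperplane. So your local trivializations miss a codimension-two subset of $\calR$, and the remark that ``the bad set has codimension three'' does not cover it.

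No exclusion is needed. The $(-2)$-curve that appears there lies in the interior. That is a jump of complex structure in a smooth family, not a failure of submersivity. With the successive blow-up, $\overline{\tcalX}\to\C^4$ is a proper submersion everywhere. The proper transforms of $\sigma_\infty$ and of the $F_i$ form a relative normal-crossings divisor, since the centres avoid $\sigma_\infty$. Your Ehresmann step then gives local triviality over all of $\C^4$.

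\textbf{The ``main obstacle'' is not one.} A point of $\calR$ lies off each Nakajima wall, so for each wall either the $\bsa$-equation or the $\bfm$-equation is nonzero on a whole neighbourhood. If the $\bfm$-equation is nonzero, the residue theorem rules out $\varphi$-invariant subbundles of that type. If the $\bsa$-equation is nonzero, the corresponding stability inequality keeps its sign. Hence near any point of $\calR$ the set of stable objects, and so the fibre, is independent of $\bsa$, and no flop occurs inside $\calR$.

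This is why the paper can simply observe that the Ivanics--Stipsicz--Szab\'o family of blown-up $\mathbb{F}_2$'s depends smoothly on $\bfm$ and not on $\bsa$, and conclude directly. It needs no chamber-by-chamber gluing and no simultaneous-resolution analysis.
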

\begin{proof} 
Given a contractible open set $U \subset \mathcal{R}$ containing the base point $(\bsa,\bfm)$, we must produce a local trivialization $\calX|_U  \cong U \times
\calM(\boldsymbol{\alpha_0}, \mathbf{m_0})$. For the 4-punctured sphere, such a trivialization exists by the work of Ivanics, Stipsicz and 
Szab\'o \cite{ISS19a}, \cite{ISS19b} who construct the weakly parabolic moduli spaces in that case as follows.

For any set of complex masses $\bfm = (m_1, m_2, m_3, m_4) \in \C^4$, we consider the family of spectral curves $\Sigma_b \subset K_C(D)$
parameterized by $b \in \calB = \calB(\mathbf{m})$, the Hitchin base as in Proposition \ref{prop:Hitchinbase}. By construction, this family depends 
smoothly on $\bfm$, but is independent of $\bsa$. It extends to a pencil of curves in the projective compactification of $K_C(D)$ which, since 
$K_C(D) = \cO(2)$, equals the Hirzebruch surface $\mathbb{F}_2 = \mathbb{P}(\cO(2) \oplus \cO)$. The compactifying divisor curve $C_\infty$ has 
five components: the section at infinity $\sigma_\infty=\mathbb{F}_2 \setminus K_C(D)$ and the four projective lines $F_1, F_2, F_3, F_4$, 
which are the fibers of $p_1, p_2, p_3, p_4 \in C$ in the ruling $\mathbb{F}_2 \to C$. The base locus of this pencil is $F_1 \cup F_2 \cup F_3 \cup F_4$ 
since all spectral curves pass through the points $\lim_{z\to p_i}\frac{ \pm m_i}{z-p_i} dz$, regarded as points in the total space of $K_C(D)$. 
In particular there are two base points in $F_i$ if $m_i \neq 0$, and otherwise there is a single one with multiplicity 2.  This results in 8 base points,
and blowing up all of these results in a surface $X$ biholomorphic to the rational elliptic surface $\CP^2 \#9 \overline{\CP}^2$. Under the Hitchin
fibration, the pencil becomes an elliptic fibration on $X$, and $\calM(\bsa,\bfm)$ is identified with $\calM = X \setminus C_\infty$. (Here $C_\infty$ 
is identified with its proper transform in $X$). This show that the moduli spaces $\calM(\bsa,\bfm)$ are mutually diffeomorphic and form a smooth family 
over $\calR$. 

Since the fiber $C_\infty$ is a singular fiber of type $I_0^*$, this argument implies that the weakly parabolic moduli spaces compactify to a rational elliptic surface 
with $I_0^*$-fiber at infinity.
\end{proof}

\begin{rem} To foreshadow a later discussion, we observe that by a result of Hein \cite{hein43}, each open fiber admits a complete hyperK\"ahler metric of type ALG-$D4$. 
This fiber also admits Hitchin's hyperK\"ahler metric, and it is not a priori clear that these two metrics coincide. Failure to do so would be very surprising, and
could only happen if the Hitchin metric didn't have finite energy, i.e. the Riemannian curvature tensor was not $L^2$-integrable.  We proved in \cite{FMSW} that when $\bfm = \mathbf{0}$, Hitchin's metric
is ALG. In the sequel to this paper we prove that this remains true even for nonzero complex masses.
\end{rem}

\begin{rem}
The construction in Proposition \ref{prop:fiberbundle}  is equivalent to a familiar description of the strongly parabolic moduli space $\calM(\bsa,\mathbf{0})$, which has
been called the `toy model' \cite{MR1650276}.  That description proceeds by taking the product $\C \times T^2_\tau$ for some $\tau \in \mathbb{H}$,
and dividing by the $\Z_2$ action which acts by $\pm 1$ in each factor. The quotient $(\C \times T^2_\tau) / \Z_2$ has 4 orbifold singularities, all
lying in the central fiber $(\{0\} \times T^2_\tau)/\Z_2$, which is thus the pillow-case orbifold. Blowing up these 4 orbifold singularities yields the elliptic 
surface $\calM$ with a singular fiber of type $I_0^*$ over $0 \in \C/\Z_2$. This fiber is identified with the nilpotent cone of the corresponding strongly 
parabolic Higgs bundle moduli space and consists of 5 spheres $S_0,S_1,S_2,S_3,S_4$ which constitute a basis for $H_2(\cM, \Z) \cong \Z^5 $ 
for which the intersection form agrees with the intersection form for the affine $D_4$ Dynkin diagram: 
\begin{equation}\label{eq:intersectionform}
 \begin{pmatrix}
 -2 & 1 & 1  & 1 & 1 \\ 1 & -2 & 0 & 0 & 0 \\ 1 & 0 & -2 & 0 & 0 \\ 1 & 0 & 0 & -2 & 0 \\ 1 & 0 & 0 & 0 & -2
\end{pmatrix}
\end{equation}
This space can be compactified by adding another $I_0^*$ fiber at infinity, and the resulting space is a rational elliptic surface. The modulus $\tau \in \mathbb{H}$ 
of the torus fiber is determined by the cross-ratio of the points $p_1, p_2, p_3, p_4 \in D$, see  \cite{FMSW}.
\end{rem}
\subsection{Second Homology Bundle}
At each point of $\mathcal R$, the homology lattice $H_2(\cM, \Z)$ has rank $5$, and has an integral basis $S_0, S_1, S_2, S_3, S_4$ 
where the intersection form agrees with the intersection form for the affine $D_4$ Dynkin diagram. We call such bases {\it admissible}.
When $\bfm = \mathbf{0}$ and $\bsa$
lies in a certain subset of $\calR$, there is a particularly natural basis where the central sphere $S_0$ corresponds to the set of stable
parabolic bundles with Higgs field $\varphi \equiv 0$, and the other $S_i$ can also be explicitly described in terms of Higgs data, see Section \ref{sec:distinguished_basis} below.
An important point is that for any integral basis with this intersection form, the cycle $2S_0 + S_1 + S_2 + S_3 + S_4$ is homologous
to the Hitchin fiber $F$, and hence vanishes in homology relative to the fiber at infinity. 

It is not \emph{a priori} clear how to choose a basis for $H_2$ as $\bfm$ varies, but in fact, we may propagate this natural basis to all of $\calR$.
Proposition \ref{prop:fiberbundle} implies:
\begin{prop}\label{prop:localsystem}
The collection of homology lattices
$$ 
\mathcal{H} = \coprod_{(\boldsymbol{\alpha}, \mathbf{m}) \in \mathcal{R}}H_2(\mathcal{M}(\boldsymbol{\alpha}, \mathbf{m}),\Z)
$$ 
forms a local system of abelian groups over $\calR$.  In fact, this local system is trivial, so a choice of basis at 
$(\boldsymbol{\alpha_0}, \mathbf{0})$ (or indeed any $(\boldsymbol{\alpha_0}, \mathbf{m_0})$ in $\calR$) may be extended
by parallel transport over all of $\calR$.
\end{prop}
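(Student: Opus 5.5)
The plan is to deduce the statement directly from Proposition \ref{prop:fiberbundle} together with the topology of $\calR$. The argument has three steps.

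\emph{Step 1: the local system.} By Proposition \ref{prop:fiberbundle}, $\pi:\tcalX\to\calR$ is a smooth fiber bundle. Over any contractible open set $U\subset\calR$ containing a point $(\bsa_0,\bfm_0)$ we therefore have a trivialization $\pi^{-1}(U)\cong U\times\calM(\bsa_0,\bfm_0)$. Composing the inclusion of a fiber with the projection to $\calM(\bsa_0,\bfm_0)$ gives a diffeomorphism for every $(\bsa,\bfm)\in U$. The induced isomorphisms
\[
H_2(\calM(\bsa,\bfm),\Z)\cong H_2(\calM(\bsa_0,\bfm_0),\Z)
\]
make $\mathcal H$ into a locally constant sheaf of abelian groups over $U$.

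On overlaps $U\cap U'$, two trivializations differ by a continuous map $U\cap U'\to\Diff(\calM)$. Its action on integral homology is locally constant, because homotopic diffeomorphisms induce the same map on homology. Hence the transition functions are locally constant, and $\mathcal H$ is a local system over all of $\calR$. In the same way, these identifications preserve both the intersection form and the fiber class $[F]$. The fiber class is preserved because each trivialization can be chosen compatibly with the elliptic fibration coming from the pencil construction, which varies smoothly with $\bfm$ and is independent of $\bsa$. As a result, the monodromy lies in $\OO(\Z^5,I_0)^+$, and admissible bases are carried to admissible bases.

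\emph{Step 2: triviality.} A local system over a space $B$ is determined by its monodromy representation $\pi_1(B,b_0)\to\mathrm{Aut}(\mathcal H_{b_0})$. By the Corollary preceding this proposition, $\calR$ is connected and simply connected. That corollary rests on the fact that $\calR$ is the convex open set $(0,\frac12)^4\times\C^4\cong\R^{12}$ with a locally finite union of real codimension-three affine planes removed. Codimension at least three means loops can be pushed off the removed planes and homotopies in general position avoid them, so $\pi_1(\calR)=1$. The monodromy representation is therefore trivial, and $\mathcal H$ is a trivial local system.

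Concretely, parallel transport of a basis from $(\bsa_0,\mathbf 0)$ along any path to $(\bsa,\bfm)$ gives a basis of $H_2(\calM(\bsa,\bfm),\Z)$ that is independent of the chosen path. This independence follows because any two paths with the same endpoints are homotopic rel endpoints, and parallel transport in a local system depends only on the homotopy class of the path. Since transport preserves the intersection form and $[F]$ by Step 1, an admissible basis transports to an admissible basis.

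\emph{Main obstacle.} The step needing care is the justification that $\calR$ is simply connected, specifically the local finiteness of the walls $\mathcal H_{d,\mathbf e}$ inside $(0,\frac12)^4\times\C^4$. The condition $d+\sum_i\bigl(e_i+(-1)^{e_i}\alpha_i\bigr)=0$ with $\alpha_i\in(0,\frac12)$ forces $|d|\le 6$, so only finitely many walls meet the domain. The rest is standard general-position topology. A secondary point is checking that the trivializations in Proposition \ref{prop:fiberbundle} vary continuously in both $\bsa$ and $\bfm$ jointly. This holds because the construction of Ivanics--Stipsicz--Szab\'o depends smoothly on $\bfm$ and not at all on $\bsa$.
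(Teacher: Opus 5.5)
Your proof is correct and follows the paper's argument. The local trivializations from Proposition \ref{prop:fiberbundle} make $\mathcal H$ a local system, and since $\calR$ is connected and simply connected (the complement of finitely many real codimension-three planes), the local system is trivial and parallel transport of a basis is path-independent. Your general-position justification of $\pi_1(\calR)=1$ and your remarks on preserving the intersection form and $[F]$ just make explicit what the paper leaves implicit; note also that the wall condition actually forces $d\in\{-3,-2,-1\}$, which is sharper than your bound $|d|\le 6$.
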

\begin{proof}
Over any contractible open set $U \ni (\boldsymbol{\alpha_0}, \mathbf{m_0})$, the local trivialization $\mathcal{X}|_U  \cong U \times \mathcal{M}(\boldsymbol{\alpha_0},
\mathbf{m_0})$ induces a canonical isomorphism
\[
H_2(\mathcal{M}(\boldsymbol{\alpha}, \mathbf{m}),\Z) \cong H_2(\mathcal{M}(\boldsymbol{\alpha_0}, \mathbf{m_0}),\Z) \quad 
\forall \,(\boldsymbol{\alpha}, \mathbf{m}) \in U,
\]
which makes $\mathcal{H}$ a local system of abelian groups.

Tensoring $\mathcal H$ with $\RR$ yields a vector bundle which has a canonical flat Gauss--Manin connection. Since $\calR$ is connected and simply-connected, 
we can extend any basis $\mathfrak{B}_0 $ of $H_2(\mathcal{M}(\boldsymbol{\alpha_0}, \mathbf{m_0}),\Z)$ at a given base point 
$(\boldsymbol{\alpha_0}, \mathbf{m_0}) \in \mathcal{R}$ to a parallel basis $\mathfrak B$ of $\mathcal{H}$ over all of $\calR$. 
\end{proof}

\subsection{Hitchin Torelli Map}

We are now in a position to define one of the main topics of this paper.
\begin{defn}[$\mathfrak{B}$-Torelli map]\label{def:Torelli}
Fix a parallel admissible basis $\mathfrak{B} = (S_0, \ldots, S_4)$ of the local system of homology lattices $\mathcal{H} \to \mathcal{R}$ in Proposition \ref{prop:localsystem}. We then define the $\frakB$-\emph{Torelli map} by
\[
\mathcal{T}_{\frakB} : \calR \longrightarrow ( \R \times \C)^4, \quad (\boldsymbol{\alpha}, \mathbf{m}) \longmapsto \left\{\left( \int_{S_i} \omega_I, \int_{S_i} \Omega_I \right)\right\}_{i=1}^4.  
\]
Each of these integrals is called a \emph{period} of the hyperK\"ahler structure of $\calM(\bsa,\bfm)$. 
\end{defn}
 


\subsection{Extended Moduli Space of Collier--Fredrickson--Wentworth}\label{sec:CFW}
We need a bit more information about the smooth fibration $\pi: \widetilde{\calX} \to \calR$.  For each $(\bsa,\bfm) \in \calR$, there is
a distinguished complex structure $I$ and holomorphic symplectic form $\Omega_I$ on $\calM(\bsa,\bfm)$. This complex structure is
characterized as the one for which this moduli space is an algebraic completely integrable system.
We seek to understand how $(I, \Omega_I)$ behaves as we vary $\mathbf{m}$.  

Fixing a generic $\bsa$, let us restrict the fibration $\widetilde{\calX}$ to a fibration $\pi_{\bsa}: \mathcal{P}_0(\bsa) \to \{\bsa\} \times \C^4$. This restricted
total space $\mathcal{P}_0(\bsa)$ was constructed by Collier, Fredrickson and Wentworth \cite{collier2024conformallimitsparabolicslnchiggs}.
As a smooth fiber bundle, $\calP_0(\bsa)$ is simply the union of fibers of $\widetilde{\calX}$ over $\{\bsa\} \times \C^4$, but this space has a richer structure 
as a smooth quasi-projective variety. To exhibit this, we describe how it is constructed analytically.

\begin{figure}[ht]
\includegraphics[height=1.5in]{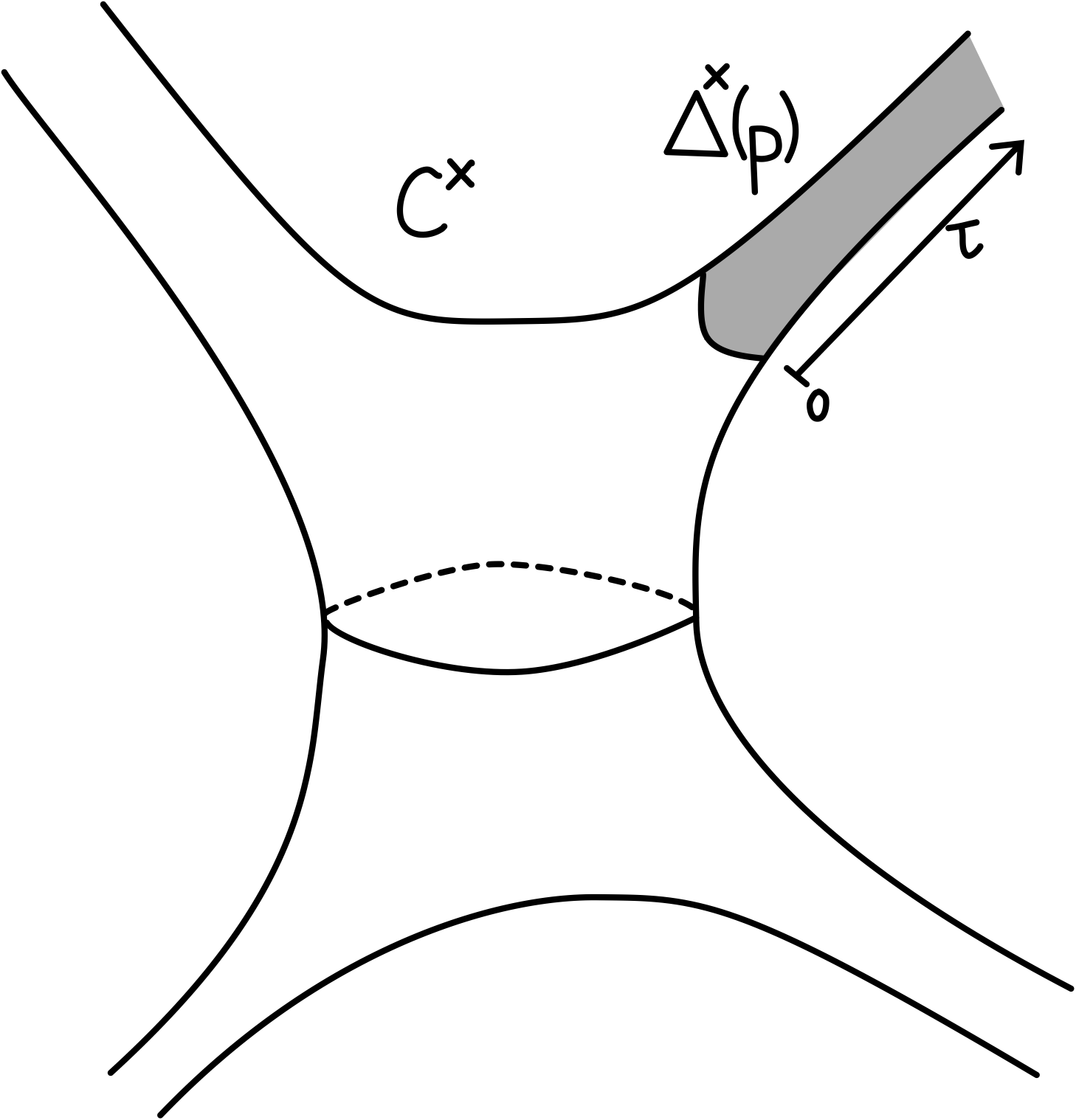}
\caption{\label{fig:CFW} }
\end{figure}
Let $C$ be a compact curve and $D=\{p_1, \ldots, p_N\}$ a divisor in $C$.  For simplicity, we restrict to rank $2$ Higgs bundles, and of course are primarily
concerned with the case where $C = \CP^1$ and $N=4$.  Endow the open Riemann surface $C^\times = C \setminus D$ with a conformally compatible
metric $g$ for which in a neighborhood of each $p_j$, $g$ takes the form $d\tau^2 + d\theta^2$, where $z$ is a local holomorphic coordinate in
a punctured disk $\Delta^\times(p_i)$, $z = e^w$ and $w = -\tau + i\theta$.  
Fix a hermitian metric $h_0$ on $E \vert_{C^\times}$ and a unitary frame $(e_1,e_2)$ over each $\Delta^\times(p_i)$. Let further $A_0$ be a fixed unitary 
connection on $E \vert_{C^\times}$ of the form
\[
d_{A_0} = d + \begin{pmatrix}
	\alpha^{(1)}(p_i) & 0 \\ 0 & \alpha^{(2)}(p_i)
\end{pmatrix} d\theta 
\]
on each punctured disk. 
We use exponentially weighted Sobolev spaces $L^2_{k,\delta}(E)$ with norm
\[
\|u\|_{L^2_{k,\delta}} = \Bigl(\int_{C^\times} e^{\tau \delta} \, \Bigl(\sum_{i=0}^k | \nabla_0^{(i)} u|^2 \Bigr)  \, dA_g \Bigr)^{1/2},\ \ \delta \in \R.
\]
This norm can be used, with obvious modifications, for sections of the various bundles below.

Now consider the space of $L^2_{k,\delta}$ connections
\[
\mathcal{A}_\delta =d_{A_0} + L^2_{1,\delta}( T^*C \otimes\mathfrak{su}(E))
\] 
and Higgs fields
\[
\calD_\delta = \{ \varphi \in L^2_{-\delta}(  T^*C^{1,0} \otimes \mathfrak{sl}(E)) : \bar\partial_{A_0} \varphi \in L^2_\delta \}.
\]
Clearly $L^2_{1, \delta}(  T^*C^{1,0} \otimes \mathfrak{sl}(E)) \subset \mathcal{D}_\delta$ and by \cite{collier2024conformallimitsparabolicslnchiggs}[Proposition 3.8],
this subspace is the kernel of the residue map
\[
\Res = r : \mathcal{D}_\delta \to \C^N, \quad \varphi \mapsto \lim_{\tau \to \infty} \varphi(\tau, \theta) = (m_1, \ldots, m_N)
\]
where $\varphi = \varphi(z)\frac{dz}{z} = \varphi(w) dw$, and we identify the matrix 
$\begin{pmatrix} 	-m_i & 0 \\ 0 & m_i \end{pmatrix}$ with the complex number $m_i$. 
Hence, $\mathcal{D}_\delta$ is a finite-dimensional 
extension of $L^2_{1, \delta}(  T^*C^{1,0} \otimes \mathfrak{sl}(E))$.

Now write
\[
\calB_\delta = \{ (A,\varphi) \in \calA_\delta \times \calD_\delta : \bar\del_A \varphi = 0 \};
\]
this is the space of parabolic Higgs pairs, and we let $\calB_\delta(\bsa)$ be the subset of $\bsa$-stable ones.
The complex gauge group $\calG_\delta$ acts on $\calB_\delta(\bsa)$ with quotient
\[
\calP_0(\bsa) = \calB_\delta(\bsa) / \calG_\delta
\]
the (extended) moduli space of parabolic Higgs bundles. This is independent of $\delta$ for $\delta > 0$ sufficiently small. 

By equivariance, the residue map descends to a map $\pi_{\bsa} : \calP_0(\bsa) \to \C^N$, and we define the space of parabolic Higgs bundles with fixed
complex masses as $\calM(\bsa,\bfm) = \pi_{\bsa}^{-1}(\bfm)$.
In particular, $\calM(\bsa, \mathbf{0})$ is the moduli space of strongly parabolic Higgs bundles.

The following is proved in \cite{collier2024conformallimitsparabolicslnchiggs}:
\begin{thm}[\cite{collier2024conformallimitsparabolicslnchiggs}]\ \label{thm:CFW}
\begin{enumerate}
\item $\calP_0(\bsa)$ is a complex manifold of complex dimension $6g-6 + 3N$;
\item $\pi_{\bsa}$ is a holomorphic submersion;
\item the fibers $\calM(\bsa,\bfm)$ are hyperK\"ahler manifolds of complex dimension $6g-6 + 2N$.
\end{enumerate}
\end{thm}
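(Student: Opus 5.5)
The plan is a weighted slice construction on the cylindrical ends: \(\calP_0(\bsa)\) is the quotient of the stable locus of \(\calB_\delta\) by \(\calG_\delta\), and \(\calM(\bsa,\bfm)\) its level sets of \(\Res\), so I would build local charts from a deformation complex. At \((A,\varphi) \in \calB_\delta(\bsa)\) consider
\[
C^\bullet_{\mathrm{ext}}:\ \ L^2_{2,\delta}(\mathfrak{sl}(E)) \xrightarrow{\ D_1\ } L^2_{1,\delta}(\Omega^{0,1}\otimes \mathfrak{sl}(E)) \oplus T_\varphi\calD_\delta \xrightarrow{\ D_2\ } L^2_{\delta}(\Omega^{1,1}\otimes\mathfrak{sl}(E)).
\]
Here \(D_1\gamma = (\delbar_A\gamma, [\varphi,\gamma])\) and \(D_2(\dot A^{0,1},\dot\varphi) = \delbar_A\dot\varphi + [\dot A^{0,1},\varphi]\). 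Let \(C^\bullet\) be the subcomplex obtained by replacing \(T_\varphi\calD_\delta\) with \(L^2_{1,\delta}(K_C\otimes\mathfrak{sl}(E))\).

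First I would establish Fredholmness of the rolled-up operators \(D_1^*\oplus D_2\) on these weighted spaces. On each end \(\Delta^\times(p_i)\), in the frame \((e_1,e_2)\) and with \(A_0\) as given, the operator is translation invariant in \(\tau\) up to exponentially decaying terms. Its indicial roots are computable from the weights \(\alpha_i\) and the residues \(m_i\). Choosing \(\delta>0\) smaller than the smallest nonzero indicial root makes everything Fredholm and independent of \(\delta\). For the extended complex \(C^\bullet_{\mathrm{ext}}\), the space \(\calD_\delta\) is the \(L^2_{1,\delta}\) space plus an \(N\)-dimensional space of cut-off model residue terms (\cite{collier2024conformallimitsparabolicslnchiggs}[Proposition 3.8], as quoted above). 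Hence \(C^\bullet_{\mathrm{ext}}\) is a finite-rank extension of \(C^\bullet\), and its Euler characteristic is that of \(C^\bullet\) shifted by \(N\).

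Next comes vanishing. \(H^0=0\) because an \(L^2_{2,\delta}\) infinitesimal automorphism of an \(\bsa\)-stable pair must vanish. \(H^2\) of \(C^\bullet\) is identified, via weighted \(L^2\)-duality (a parabolic Serre duality that exchanges \(\delta\leftrightarrow -\delta\) and the two flag weights), with \(H^0\) of the dual pair. That dual pair is again stable, so \(H^2(C^\bullet)=0\), and the same then holds for \(C^\bullet_{\mathrm{ext}}\). An index computation, either Riemann--Roch for parabolic bundles or the standard excision count, gives
\[
\dim H^1(C^\bullet) = 6g-6+2N, \qquad \dim H^1(C^\bullet_{\mathrm{ext}}) = 6g-6+3N.
\]
The Kuranishi map is then unobstructed. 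Combining a Coulomb slice \(D_1^*(\cdot)=0\) with the implicit function theorem gives holomorphic charts of \(\calP_0(\bsa)\) of the asserted dimension, and the charts are compatible because the gauge action is holomorphic. This proves (1).

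For (2), \(\Res\) is a bounded complex-linear map on \(\calD_\delta\), so it descends to a holomorphic map on the charts. The short exact sequence \(0\to C^\bullet\to C^\bullet_{\mathrm{ext}}\to \C^N[-1]\to 0\) of complexes yields
\[
0\to H^1(C^\bullet)\to H^1(C^\bullet_{\mathrm{ext}})\xrightarrow{d\Res}\C^N\to H^2(C^\bullet)=0,
\]
so \(d\Res\) is surjective and \(\pi_{\bsa}\) is a submersion. Its fibers therefore have complex dimension \(6g-6+2N\), with tangent space \(H^1(C^\bullet)\).

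For (3), fix \(\bfm\). By the parabolic Kobayashi--Hitchin correspondence (Simpson, Biquard), each stable pair in the fiber admits an adapted harmonic metric solving \eqref{HE}. Along the fiber the tangent vectors are represented in \(L^2_{1,\delta}\), and the harmonic representative satisfying \eqref{eq:harmonic} lies in \(L^2\), so the metric \eqref{eq:hintro} is finite. The complex structures \(I,J,K\) preserve the space of harmonic representatives, which is where the fixed residue is essential. The forms \(\omega_I,\omega_J,\omega_K\) are closed by the formal infinite-dimensional hyperK\"ahler reduction argument of Hitchin. This is legitimate because the moment map values \(\psi_\R(\bsa),\psi_\C(\bfm,\mathbf n)\) are supported at \(D\) and fixed by the framed gauge group, so the level set is gauge invariant.

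The main obstacle I expect is the analysis on the ends. One must show that the indicial roots avoid \(\pm\delta\) uniformly as \(\bfm\) varies, including when some \(m_i=0\) and the residue is nilpotent, so that a single \(\delta\) works on all of \(\calP_0(\bsa)\). One must also check that the cut-off residue terms make \(D_2\) Fredholm on the extended space. I would first diagonalize the model operator in Fourier modes in \(\theta\), then treat \(\varphi\) as a compact perturbation off its residue term.
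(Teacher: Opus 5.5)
Your overall architecture (Coulomb slice, implicit function theorem, and the exact sequence $0\to C^\bullet\to C^\bullet_{\mathrm{ext}}\to\C^N[-1]\to 0$ for the submersion) is the right one. The genuine gap is the degree-zero term of both complexes: the dimensions you assert are false for the complexes you wrote.

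\emph{What goes wrong.} $L^2_{2,\delta}(\mathfrak{sl}(E))$ is the Lie algebra of the \emph{framed} group $\calG_{\delta,*}$, whose elements equal the identity on $D$. But $\calP_0(\bsa)=\calB_\delta(\bsa)/\calG_\delta$ is the quotient by the unframed group. Its Lie algebra is $\mathcal{R}_\delta=L^2_{2,\delta}\oplus\mathcal{H}_\delta$, where $\mathcal{H}_\delta\cong\C^N$ is spanned by cut-off, asymptotically constant diagonal elements at the punctures (Proposition~3.3 of \cite{collier2024conformallimitsparabolicslnchiggs}).
\begin{enumerate}
\item Take $0\neq\gamma\in\mathcal{H}_\delta$. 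Then $D_1\gamma=(\delbar_A\gamma,[\varphi,\gamma])$ lies in $L^2_{1,\delta}\cap\ker D_2$: the diagonal residue of $\varphi$ commutes with $\gamma$, and the off-diagonal part of $\varphi$ decays like $e^{-(1-2\alpha)\tau}$.
\item Yet $D_1\gamma\notin D_1(L^2_{2,\delta})$, since otherwise $\ker D_1\cap\mathcal{R}_\delta$ would contain a nonzero element, contradicting stability.
\item So $H^1(C^\bullet)$ contains $N$ directions that are pure gauge for $\calG_\delta$. They are the infinitesimal action of the residual torus $\calG_\delta/\calG_{\delta,*}\cong(\C^\times)^N$.
\end{enumerate}
Riemann--Roch confirms this. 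Decaying gauge corresponds to the strongly parabolic endomorphisms $\mathrm{SParEnd}_0$ (nilpotent with respect to the flags, degree $-2N$). Decaying Higgs deformations correspond to $\mathrm{SParEnd}_0\otimes K_C(D)$. Hence
\[
\chi(C^\bullet)=\chi(\mathrm{SParEnd}_0)-\chi\bigl(\mathrm{SParEnd}_0\otimes K_C(D)\bigr)=(3-3g-2N)-(3g-3+N)=-(6g-6+3N).
\]
Likewise, writing $\mathrm{ParEnd}_0$ for the flag-preserving endomorphisms (degree $-N$),
\[
\chi(C^\bullet_{\mathrm{ext}})=\chi(\mathrm{SParEnd}_0)-\chi\bigl(\mathrm{ParEnd}_0\otimes K_C(D)\bigr)=-(6g-6+4N).
\]
These are the dimensions of the framed spaces. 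For $N$ odd, $6g-6+3N$ is odd, so $H^1(C^\bullet)$ cannot be the tangent space of the hyperK\"ahler fiber in (3).

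\emph{The slice has the same mismatch.} By Proposition~3.24 and Lemma~3.25 of \cite{collier2024conformallimitsparabolicslnchiggs}, $D_1^*D_1:\mathcal{R}_\delta\to L^2_\delta$ is an isomorphism, so on $L^2_{2,\delta}$ it is injective of index $-N$. Hence $\ker D_1^*$ complements $D_1(\mathcal{R}_\delta)$, not $D_1(L^2_{2,\delta})$. Your Coulomb charts are therefore transverse to $\calG_\delta$-orbits and do not model $H^1(C^\bullet)$.

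\emph{The repair.} Put $\mathcal{R}_\delta$ in degree zero in both complexes.
\begin{enumerate}
\item $H^0=0$ now requires that no asymptotically nonzero element of $\mathcal{R}_\delta$ is an infinitesimal automorphism. This again follows from stability (Lemma~3.25 there).
\item The Euler characteristics become $\chi(\mathrm{ParEnd}_0)-\chi(\mathrm{SParEnd}_0\otimes K_C(D))=-(6g-6+2N)$ and $\chi(\mathrm{ParEnd}_0)-\chi(\mathrm{ParEnd}_0\otimes K_C(D))=-(6g-6+3N)$, as the theorem requires.
\item Your $H^2$-vanishing and exact-sequence argument for (2) then go through unchanged. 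This is the slice construction of \cite{collier2024conformallimitsparabolicslnchiggs} that the paper quotes.
\end{enumerate}

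\emph{Part (3).} Drop the appeal to levels $\psi_\C(\bfm,\mathbf n)$ fixed by the framed group. Your $C^\bullet$ lets $\mathbf n$ vary, and the paper itself calls that moment-map picture heuristic. Instead, run the reduction by the unitary part of $\calG_\delta$ on the affine space of pairs with fixed residue $\bfm$ and $L^2_{1,\delta}$ perturbations, where the $L^2$ metric is finite. The $\mathcal{H}_\delta$ directions contribute no boundary terms there, because the tangent vectors decay.
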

This result generalizes earlier work of Konno \cite{konno93} and Nakajima \cite{Nakajima}. 

In our setting, $g=0$ and $N=4$, so $\dim_\C\mathcal{P}_0(\bsa) = 6$ and $\dim_\C \calM(\bsa, \bfm)=2$.

\begin{rem}\label{rem:bundle}
It does not seem to be known in general if the projection $\pi_{\bsa} : \calP_0(\bsa) \rightarrow \C^N$ is a smooth fiber bundle. However, as already pointed
out in Proposition \ref{prop:fiberbundle}, this is transparent for the four-punctured sphere by the alternate construction of Ivanics, Stipsicz and Szab\'o \cite{ISS19a}, \cite{ISS19b}.
\end{rem}

\begin{rem}  This analytic construction of $\calP_0(\bsa)$ agrees with the algebraic one in \cite{yokogawa93}, and hence in particular, the map $\pi_{\bsa}$
is just the usual one. Thus, if $(\cE, \varphi)$ is a parabolic Higgs bundle with a full flag $\{0\} \subset F_p \subset E_p$ at each $p \in D$, then
the residue $\mathrm{Res}(\varphi)\Big|_p$ in the eigenline $F_p$ is the complex mass $m_p$.
\end{rem}

\subsection{Hitchin map}
The Hitchin map is defined on this extended bundle as
\[
\mathrm{Hit}: \mathcal{P}_0(\boldsymbol{\alpha}) \to \mathcal{B}=H^0(K^2(2D)), \ \ (\mathcal{E}, \varphi) \mapsto -\det \varphi,
\]
and was proved by Yokogawa \cite{yokogawa93} to be proper. 

It is compatible with the Hitchin maps on each fiber
\[ 
\mathrm{Hit}: \calM(\bsa, \bfm) \to \calB(\bfm).
\] 
This base is an affine space modeled on $H^0(K^2(D))$, which consists of elements in $H^0(K^2(2D))$ which have residue $m_i^2$ at $p_i$.
In particular, $\calB(\mathbf{0})=H^0(K^2(D))$.  The spaces $\calB(\bfm)$ together as an affine bundle $\widehat{\calB} \to \C^4$, cf.\ 
\cite[Remark 5.6]{collier2024conformallimitsparabolicslnchiggs}.  Thus, the maps $\pi_{\bsa}: \calP_0(\bsa) \to \C^4$ and 
$\mathrm{Hit}:\calP_0(\bsa) \to H^0(K^2(2D))$ factor through the map $\calP_0(\bsa) \to \widehat{\calB}$.

We can make this even more concrete and give a trivialization $\widehat{\mathcal{B}} \simeq \C^4_{\mathbf{m}} \times \C_{\beta}$:
\begin{prop} [Hitchin Base] \label{prop:Hitchinbase}
Fix $\mathbf{m} \in \C^4$. 
The Hitchin base consists of the space of quadratic differentials of the form
\begin{equation}
 \cB(\mathbf{m}) =\left\{q=\frac{f_{\mathbf{m}}(z) + \beta z(z-1)(z-p_0)}{z^2(z-1)^2 (z-p_0)^2} \de z^2,\ \ \beta \in \C\right\} 
\end{equation}
where 
\begin{align*}
& f_{\mathbf{m}}(z):= 
m_\infty^2 z^4 + \frac{1}{3} (-m_0^2+2 m_1^2-4 m_\infty^2-m_{p_0}^2-m_{0}^2 p_0-m_1^2 p_0-4 m_{\infty}^2 p_0+2 m_p^2 p_0) z^3+ \\
& \frac{1}{3}(m_0^2+m_1^2+m_\infty^2+m_{p_0}^2+5 m_0^2 p_0-4 m_1^2 p_0+5 m_\infty^2 p_0-4 m_{p_0}^2 p_0+m_0^2 p_0^2+m_1^2 p_0^2+m_\infty^2 p_0^2+m_{p_0}^2 p_0^2) z^2+\\ & -\frac{p_0}{3}  (4 m_0^2+m_1^2+m_\infty^2-2 m_{p_0}^2+4 m_0^2 p_0-2 m_1^2 p_0+m_\infty^2 p_0+m_{p_0}^2 p_0) z + m_0^2 p_0^2.
\end{align*}
\end{prop}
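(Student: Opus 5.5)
The plan is to show directly that $\cB(\mathbf{m})$ is the set of meromorphic quadratic differentials on $\CP^1$ that have poles of order at most $2$ at $D=\{0,1,p_0,\infty\}$ and whose leading (double-pole) coefficient at each $p\in D$ equals $m_p^2$. We then check that the displayed family is exactly this set.

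\emph{Step 1: the target set.} If $(\cE,\varphi)\in\calM(\bsa,\bfm)$, then near $p$ we have $\varphi = \bigl(\operatorname{Res}_p\varphi\bigr)\frac{dz}{z-p}+\cO(1)$, and the residue has eigenvalues $\pm m_p$. Hence
\[
q=-\det\varphi = \frac{m_p^2}{(z-p)^2}\,dz^2 + \cO\bigl((z-p)^{-1}\bigr),
\]
and the same holds in the coordinate $w=1/z$ at $\infty$. So $\cB(\bfm)$ is the affine subspace of $H^0(K^2(2D))$ cut out by the four conditions ``leading coefficient at $p$ equals $m_p^2$''.

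\emph{Step 2: parametrize $H^0(K^2(2D))$.} Every such differential can be written as
\[
q=\frac{P(z)}{z^2(z-1)^2(z-p_0)^2}\,dz^2 .
\]
At $\infty$, substituting $z=1/w$ and $dz^2=w^{-4}dw^2$ gives
\[
q = \frac{w^{6}P(1/w)}{(1-w)^2(1-p_0w)^2}\,w^{-4}\,dw^2 .
\]
A pole of order at most $2$ at $w=0$ therefore forces $\deg P\le 4$. This gives $\dim H^0(K^2(2D))=5$, consistent with $\deg K^2(2D)=4$ on $\CP^1$.

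\emph{Step 3: the four leading-coefficient conditions.} These are the linear conditions
\[
\frac{P(0)}{p_0^2}=m_0^2,\qquad \frac{P(1)}{(1-p_0)^2}=m_1^2,\qquad \frac{P(p_0)}{p_0^2(p_0-1)^2}=m_{p_0}^2,\qquad [z^4]P=m_\infty^2 .
\]
The three evaluation functionals at $0,1,p_0$ together with the top-coefficient functional are linearly independent on polynomials of degree at most $4$. Hence the solution set is a one-dimensional affine space. Its direction is spanned by the polynomial of degree at most $3$ vanishing at $0,1,p_0$, namely $z(z-1)(z-p_0)$, which explains the $\beta$-term.

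\emph{Step 4: $f_{\mathbf m}$ is a particular solution.} It remains to check that $f_{\mathbf m}$ satisfies the four conditions. Two of them are immediate: the $z^4$ coefficient is $m_\infty^2$, and $f_{\mathbf m}(0)=m_0^2p_0^2$. The other two, $f_{\mathbf m}(1)=m_1^2(1-p_0)^2$ and $f_{\mathbf m}(p_0)=m_{p_0}^2p_0^2(p_0-1)^2$, amount to summing the coefficients and evaluating a quartic at $p_0$. We collect the result by $m_0^2,m_1^2,m_\infty^2,m_{p_0}^2$ and check that each coefficient agrees with the required value. (In the formula, $m_p$ should be read as $m_{p_0}$.)

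The only real obstacle is this bookkeeping in Step 4, since the $\tfrac13$-normalized coefficients are one specific choice of particular solution. To organize it, I would note that $f_{\mathbf m}$ is linear in $(m_0^2,m_1^2,m_\infty^2,m_{p_0}^2)$ and verify the four conditions separately for each basis vector, e.g.\ setting only $m_1\neq0$. Any discrepancy in the particular solution could be absorbed by a shift of $\beta$, so the description of $\cB(\mathbf m)$ as an affine line is robust. The trivialization $\widehat{\cB}\simeq\C^4_{\mathbf m}\times\C_\beta$ then follows because $f_{\mathbf m}$ depends polynomially on $\mathbf m$.
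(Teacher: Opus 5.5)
Your argument is correct and is essentially the paper's proof: write $q$ with a quartic numerator, impose the four linear residue conditions, and observe that the solutions form an affine line in the direction $z(z-1)(z-p_0)$. The paper additionally singles out $f_{\mathbf m}$ as the particular solution for which the $\beta^5$-coefficient of the discriminant vanishes, which the statement does not require; and your Step~4 check is not optional, since $z(z-1)(z-p_0)$ is annihilated by all four functionals, so no shift of $\beta$ can repair a particular solution that violates them — but the check does succeed (e.g.\ the $m_1^2$-coefficients of $f_{\mathbf m}(1)$ sum to $\tfrac13(3-6p_0+3p_0^2)=(1-p_0)^2$, the $m_{p_0}^2$-coefficients of $f_{\mathbf m}(p_0)$ sum to $p_0^2(1-p_0)^2$, and all other contributions cancel).
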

\begin{proof}  The fact that $q \in H^0(K^2(2D))$ implies that $q = \frac{f(z)}{z^2(z-1)^2(z-p_0)^2} dz^2$ for some quartic polynomial $f = \sum_{j=0}^4 f_j z^j$. 
The condition on the residues yields 
\begin{equation} \label{eq:complexmass}
 m_0^2 = \frac{f(0)}{p_0^2}, \quad  m_1^2 = \frac{f(1)}{(1-p_0)^2},  \quad m_{p_0}^2 = \frac{f(p_0)}{p_0^2(p_0-1)^2}, \quad m_\infty^2 = f_4.
\end{equation}
The particular polynomial $f_{\mathbf{m}}(z)$ above is the unique quartic polynomial satisfying these constraints for which the $\beta^5$-coefficient in 
the ($6^{\mathrm{th}}$ order in $\beta$) discriminant polynomial of $f_{\mathbf{m}}(z) + \beta z(z-1)(z-p_0)$ with respect to $z$ vanishes. 
\end{proof}
\begin{rem}
This quartic polynomial $f_{\mathbf{m}}(z)$ has the following geometric significance. The fiber $\mathrm{Hit}^{-1}(q) \subset: \calM(\bsa,\bfm)  \to \calB(\bfm)$
is smooth if and only if $q \in H^0(K^2(2D))$ has simple zeros, which is the same as the nonvanishing of the discriminant  of $f_{\bfm}$.  The fibers at the
zeroes of this discriminant are singular. The negative of the coefficient of $\beta^5$ in this discriminant is the sum of these six zeroes, counted 
with multiplicity. So in other words, $f_{\bfm}$ is normalized so that the center of mass of the locations of the singular fibers occurs at $\beta = 0$.
\end{rem}

\subsection{\texorpdfstring{$\C^\times$}{C*}-action and \texorpdfstring{Bia{\l}ynicki--Birula}{Bialynicki-Birula} stratification}\label{sec:BBstratification}

In this section, we will discuss the $\C^\times$ action on  $\mathcal{P}_0(\bsa)$ and its accompanying Bia{\l}ynicki--Birula stratification. 

Bia{\l}ynicki--Birula stratifications for complex manifolds equipped with holomorphic $\C^\times$ actions are
provided by Bia{\l}ynicki--Birula \cite{BialynickiBirula1973,BialynickiBirula1974}, Carrell and Sommese \cite{CarrellSommese1979}, Kirwan \cite{Kirwan1988}, and Yang \cite{Yang2008}, under additional hypotheses, e.g. (1) the manifold is a smooth algebraic variety \cite{BialynickiBirula1973,BialynickiBirula1974}, (2) the manifold is compact K\"ahler \cite{CarrellSommese1979, Kirwan1988}, or (3), in the non-compact K\"ahler setting under the assumption that the circle action $S^1 \subset \C^\times$ admits a Hamiltonian moment map that is proper and bounded below, such that the number of connected components of the fixed point set is finite \cite{Yang2008}.

In the case of non-parabolic (i.e. ordinary) Higgs bundles, there is a $\C^\times_\zeta$-action on the moduli space given by \[[(\cE, \varphi)] \mapsto [(\cE, \zeta \varphi)]. \]
In this case, the  Hitchin moduli spaces satisfy the hypotheses \cite{Yang2008}, and hence have Bia{\l}ynicki–Birula stratifications.
Since the $\C^\times$-action rescales the Higgs field, note that in its extension to parabolic Higgs bundles, the $\C^\times$-action will rescale the complex masses by $m_p \mapsto \zeta m_p$. Consequently, the Hitchin moduli space $\mathcal{M}=\mathcal{M}(\boldsymbol{\alpha}, \mathbf{m})$ has a $\C^\times$-action precisely when $\mathbf{m}=\mathbf{0}$. (Again, in this case, $\mathcal{M}(\boldsymbol{\alpha}, \mathbf{0})$ satisfies the hypotheses in \cite{Yang2008}.)  The extended moduli space $\mathcal{P}_0(\boldsymbol{\alpha})$ admits a $\C^\times$-action, but it does not satisfy the hypotheses in \cite{Yang2008} because it is not K\"ahler.   Similarly, it does not satisfy the hypotheses of \cite{Feehan2022} which make use of a non-degenerate (but not necessarily closed) $2$-form $\omega$. Consequently, we can't use any of powerful results about Bia{\l}ynicki--Birula stratifications for smooth complex manifolds off the shelf. 

We begin with the standard
Bia{\l}ynicki--Birula decomposition of non-singular semi-projective varieties, e.g. see the summary of results in \cite[\S1.2]{hausel2013cohomologylargesemiprojectivehyperkaehler}. 
 The moduli space $\mathcal{P}_0(\bsa)$ as constructed by Yokogawa is a quasi-projective variety.  Since this $\C^\times$-fixed point set is projective (we will soon see that it is the union of $\CP^1$ and four disjoint points) and all $\C^\times$-limits exist inside of $\mathcal{P}_0(\bsa)$, the space $\mathcal{P}_0(\bsa)$ is semi-projective. When $\bsa$ is generic, it is non-singular.  Consequently, it has a Bia{\l}ynicki--Birula stratification. 
 
 In particular,  by the Bia{\l}ynicki‑-Birula theory for non-singular semi‑projective varieties, (1) there is a decomposition of 
 \[\mathcal{P}_0(\bsa)= \bigsqcup_{a \in \pi_0(\mathcal{P}_0(\bsa)^{\C^\times})} \mathcal{P}_0(\bsa)_a^+; \qquad \qquad  \mathcal{P}_0(\bsa)_a^+ =\{x \in \mathcal{P}_0(\bsa) | \lim_{t \to 0} t \cdot x \in a\} \]
 by locally closed subvarieties, (2) when $\mathcal{P}_0(\bsa)$ is smooth, for each fixed component $a$,  
 \[\mathcal{P}_0(\bsa)_a^+ \to a\] 
 is a locally trivial affine bundle, (3) the Bia{\l}ynicki‑-Birula strata are ordered by a partial order determined by weights, and 
 \[\overline{\mathcal{P}_0(\bsa)_a^+} \subset \bigcup_{a' \leq a} \mathcal{P}_0(\bsa)_{a'}^+.\]
 In particular, since $\mathcal{M}(\bsa, \mathbf{0}) \subset \mathcal{P}_0(\bsa)$ contains all the $\C^\times$-fixed points and is K\"ahler, this partial ordering is from the upwards Morse flow. In this case, the central $\CP^1$ is higher than each of the four exterior $\C^\times$-points; there is no comparison between the four exterior $\C^\times$-fixed points.  As a result, for each exterior $\C^\times$-fixed point, the corresponding stratum is closed. We will call these \emph{generalized Hitchin sections}.\footnote{In fact, over the extended Hitchin base $\hat{\mathcal{B}}$, these really are sections.}  The strata for the central $\CP^1$ is open, but becomes closed once adds in the generalized Hitchin sections.

Note that since we are interested in the analytically defined moduli space rather than the algebraically defined moduli space in \cite{yokogawa93}, there is some accompanying subtlety involving the  $\C^\times$-limits addressed in 
\cite[Proposition 4.1]{collier2024conformallimitsparabolicslnchiggs}
\footnote{Moreover, note that Collier-Fredrickson-Wentworth extend this and show that $\C^\times$-limit exists on the larger space of logarithmic $\lambda$-connections, of which $\mathcal{P}_0(\bsa)$ is the fiber over $\lambda=0$\cite[Proposition 2.11]{collier2024conformallimitsparabolicslnchiggs}.}.
We can upgrade the above statements to statements about smooth submanifolds in $\mathcal{P}_0(\bsa)$ using the slices in \cite{collier2024conformallimitsparabolicslnchiggs}.
Moreover, as in \cite[Proposition 4.7]{collier2024conformallimitsparabolicslnchiggs}, these slices are compatible with the projection of the framed analytic moduli space to the residues of $\varphi$ at $p \in D$.
Consequently, the Bia{\l}ynicki--Birula stratification of $\mathcal{P}_0(\bsa)$ induces a stratification of each $\mathcal{M}(\boldsymbol{\alpha}, \mathbf{m})$ by intersecting the strata with it. 

\bigskip

We briefly discuss the structure of the $\C^\times$-fixed point set. The $\C^\times$-fixed points are known to be \emph{systems of Hodge bundles} \cite{SimpsonThesis}. The description of these Hodge bundles depends on $\boldsymbol{\alpha}$; this chamber-dependence is described in detail in Section \ref{sec:fixedpoints}, and then we can address this stratification more explicitly.
All the $\C^\times$-fixed points lie in the fiber defined by $q=-\det \varphi =0$ inside of the strongly parabolic moduli space $\mathcal{M}(\bsa, \mathbf{0})$. 
The preimage of $q=0$ in $\mathcal{M}(\bsa, \mathbf{0})$ is shown in Figure \ref{fig:fixedpoints}; it consists of five spheres in an affine $D_4$-configuration. 
Every point of the central sphere is $\C^\times$-fixed point.
Each of the four exterior spheres $\CP^1_p$ for $p \in D$ has exactly \emph{two} $\C^\times$-fixed points which we label $q_p^\pm$. We call the attaching point between exterior sphere and central sphere the \emph{interior fixed point} (or \emph{wobbly point} following Simpson) and denote it $q_p^-$. We call the other one the \emph{exterior fixed point} and denote it $q_p^+$. 
\begin{figure}[ht]
  \begin{centering} 
  \includegraphics[height=1.5in]{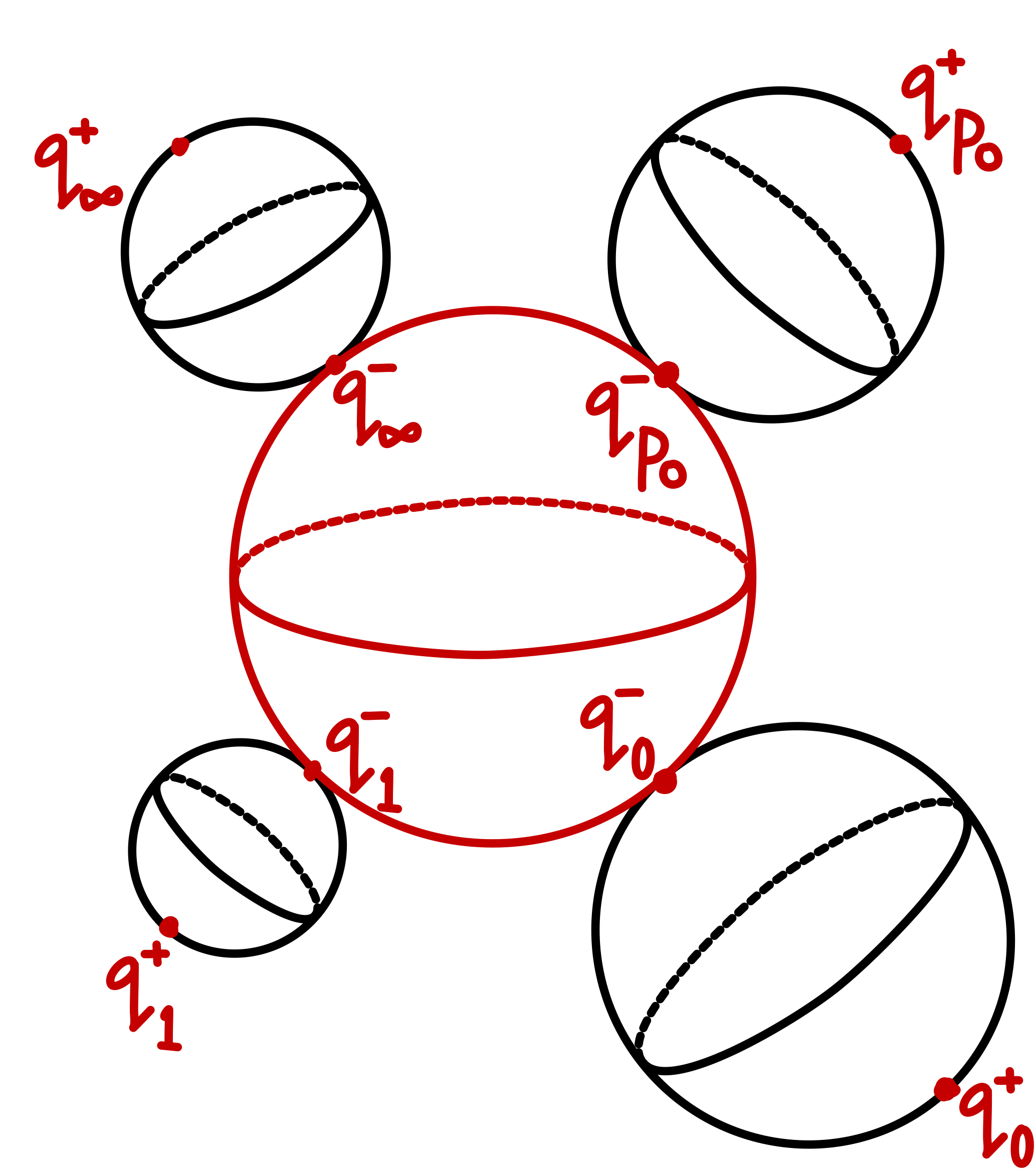}
  \caption{\label{fig:fixedpoints} $\C^\times$-fixed points are shown in red}
  \end{centering}
  \end{figure} 
Consequently, the $\C^\times$-fixed point set consists of five components when $\boldsymbol{\alpha}$ is generic: a copy of $\mathbb{P}^1$ and four additional points. 
\bigskip

\section{Chamber structure of stability conditions, the nilpotent cone and consistent homology bases}\label{sec:distinguished_basis}
In this section we review the chamber structure for the space of real mass parameters $\bsa$ for which the stability condition holds when the complex mass 
parameters $\bfm = \mathbf{0}$. This leads to an explicit description and parametrization of the nilpotent cone in these regimes. 

Recall from Definition \ref{defn:gen1}, that the parabolic data $(\bsa, \bfm) \in ((0, \tfrac{1}{2}) \times \C)^4$ is called generic if all semistable Higgs bundles with
these parameters are stable. The set of all generic parameters $\mathcal{R}$ is the complement of a collection of real-codimension three affine subspaces. 
The intersection of these affine subspaces with $\{(\bsa,\bfm): \bfm = \mathbf{0}\}$ is a set of affine hyperplanes which divide $(0,\tfrac12)^4$ into chambers. 

We first review this chamber structure in Section \ref{sec:chambers}, following \cite{Meneses}, and then in Section \ref{sec:fixedpoints} use the combinatorial 
data associated to these chambers to characterize the $\C^\times$-fixed points. Finally, Section \ref{sec:nilpotentcone} contains a full description of the nilpotent cone, 
i.e., the fiber over the quadratic differential $q=0$, in $\mathcal{M}(\bsa, \mathbf{0})$ as an affine $D_4$ pattern of five spheres, which
constitute a distinguished basis for $H_2(\mathcal{M}(\boldsymbol{\alpha}, \mathbf{0}), \Z)$.

\medskip

We pause briefly to explain how these results are used later.   This distinguished homology bases for the strongly parabolic moduli spaces $\mathcal{M}(\bsa, \mathbf{0})$ 
are used to obtain homology bases for $\mathcal{M}(\boldsymbol{\alpha}, \mathbf{m})$. The integrals of $\omega_I$ over such a homology basis
when $\bfm = \mathbf{0}$ are calculated using the explicit description of the $\C^\times$-fixed points in Section \ref{sec:fixedpoints}, but we then show that these 
integrals are independent of $\mathbf{m}$. On the other hand, we reduce the calculation of the integrals of $\Omega_I$ over this distinguished basis 
to the intersection numbers of the four outer spheres in the $D_4$ arrangement  and four other $2$-cycles related to the $\C^\times$-flow.

\medskip

For simplicity below, we simply label the distinct points $0, 1, p_0, \infty$ by $p_1, p_2, p_3, p_4$. 

\subsection{Chamber Structure}\label{sec:chambers}  We now recall Meneses' \cite{Meneses} description of the chamber structure of the space of 
real mass parameters $(0, \frac{1}{2})^4$. 

The starting point is the Biswas polytope, which by definition is the subset of parabolic weights which admit stable parabolic bundles.  When $C=\CP^1$, 
the cube $(0,1/2)^4$ is divided into 24 chambers by 12 hyperplanes. Amongst these, the $8$ ``exterior'' chambers lie outside the Biswas polytope
while the 16 ``interior'' chambers lie inside it.  These have slightly different combinatorial descriptions.

\subsubsection{Interior chambers}\label{sec:interiorchambers} 
The interior chambers are in bijective correspondence with the {\it even partition sets} of $\{1,2,3,4\}$ as
defined by \cite[Definition 1]{Meneses}: 
\begin{defn}
Consider the set of all decompositions $I \sqcup I^c = \{1,2,3,4\}$ where $I$ has even cardinality. This is a subset of the power set $2^{\{1,2,3,4\}}$
consisting of those $I \subset \{1,2,3,4\}$ such that either $I$ or its complement $I^c$ lie in $\mathtt E := \{\emptyset, \{1,2\}, \{1,3\}, \{1,4\} \}$.  
An \emph{even partition set} $\mathcal{I}$ is then a function which selects either $I$ or $I^c$ as $I$ ranges over $\mathtt E$. 
There are $16$ such even partition sets,which can be organized into four types, each corresponding to a distinguished index $i$ (with 
the remaining indices $j, k, l$ then used in their descriptions):
\begin{itemize}
\item Type A1: $\mathcal{I}_{A, i}=\{\emptyset, \{k,l\}, \{j,l\}, \{j,k\}\}$, corresponding to even subsets, none of which contain $i$;
\item Type A2: $\mathcal{I}_{A, i}^c=\{\{1, 2, 3, 4\}, \{i,j\}, \{i,k\}, \{i,l\}\}$, corresponding to even subsets all of which contain $i$;
\item Type B1: $\mathcal{I}_{B, i}=\{\emptyset, \{i,j\}, \{i,k\}, \{i,l\}\}$, where $i$ lies in the three subsets of cardinality $2$;
\item Type B2: $\mathcal{I}_{B, i}^c=\{ \{1, 2, 3, 4\}, \{k,l\}, \{j,l\}, \{j,k\}$, where only the cardinality $4$ subset contains $i$.
\end{itemize}
\end{defn}

We will see below that interior chambers associated to the even partition sets of type $A$ share no boundary with any exterior chamber, while those
associated to even partition sets of type $B$ each share a boundary with a single exterior chamber. 

\subsubsection{Description of exterior chambers}\label{sec:exteriorchambers} The combinatorial labels for the exterior chambers are the subsets 
of $\{1, 2, 3, 4\}$ of odd order.   
We use that to each such odd order subset $I_0$ we can associate an even partition set $\mathcal{I}$, which consists of all even order subsets that can obtained 
from $I_0$ by adding or removing a single element.  In the moduli space itself, each set in this corresponding $\mathcal{I}$ labels an exterior sphere, while $I_0$
itself labels the central sphere.

\begin{itemize}
\item Type E1: When $I_0=\{i\}$, the even partition set is $\cI_{B,i}=\{\emptyset, \{i, j\}, \{i, k\}, \{i, l\}\}$. 
\item Type E2: When $I_0=\{j, k, l\}$, the even partition set is $\cI_{B,i}^c=\{\{1, 2, 3, 4\}, \{k, l\}, \{j, l\}, \{j, k\}\}$. 
\end{itemize}

\subsubsection{Correspondence with parabolic weights}
The map from these combinatorial objects to the corresponding chambers is given by associating to each subset, as above, of $\{1, 2, 3,4\}$ 
a set of inequalities which determine the chamber.  Each subset $J \in \{1,2,3,4\}$ corresponds to a vertex $v_J$ in $[0,1/2]^4$ via
\begin{equation}\label{eq:vI}
v_{J,i} = \begin{cases} \frac{1}{2} & i \in J \\ 0 & i \notin J. 
\end{cases}\end{equation}
In  fact, within $(0,1/2)^4$, the interior chamber associated to the even partition set $\mathcal I$ 
is the convex hull of $(\frac{1}{4}, \frac{1}{4}, \frac{1}{4}, \frac{1}{4})$ and the vertices $\{v_I: I \in \mathcal{I}\}$.
The exterior chamber labelled by $\mathcal{I}$ is the convex hull of the vertices $\{v_I: I \in \mathcal{I}\}$ and $v_{I_0}$.
The following is then clear:

\begin{lem}
\label{lem:adjacentchambers}Chambers have a common boundary if they share four vertices. An exterior chamber is adjacent to an interior chamber if their associated even partition sets agree.
Two interior chambers 
are adjacent if their associated even partition sets satisfy
\begin{equation}\label{eq:wallcrossing}\mathcal{I}\backslash\{I\}= \mathcal{I}'\backslash \{I^c\}.\end{equation}
\end{lem}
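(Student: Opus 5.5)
The plan is to work directly with the explicit vertex descriptions of the chambers. Every chamber is a 4-simplex in $[0,\tfrac12]^4$: an interior chamber has vertices $c=(\tfrac14,\tfrac14,\tfrac14,\tfrac14)$ together with $\{v_I : I\in\mathcal I\}$, and an exterior chamber has vertices $\{v_I: I\in\mathcal I\}\cup\{v_{I_0}\}$. The chambers tile the cube with disjoint interiors, and any two of them meet along a common face of each. So two chambers share a codimension-one boundary exactly when they share a 3-dimensional face, that is, four affinely independent common vertices. This gives the first sentence of the lemma. Affine independence is automatic, since any four vertices of a 4-simplex span a facet.

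For the exterior/interior case, I note that the vertex $v_{I_0}$ with $|I_0|$ odd never occurs in an interior chamber, and $c$ never occurs in an exterior chamber. A common facet must therefore consist of exactly the four vertices $\{v_I : I\in\mathcal I\}$. This happens precisely when the interior chamber's even partition set coincides with the one attached to $I_0$. By the type E1/E2 descriptions, that set is $\mathcal I_{B,i}$ or $\mathcal I_{B,i}^c$, which also recovers the earlier remark that only type $B$ interior chambers border exterior ones.

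For two interior chambers with even partition sets $\mathcal I\neq\mathcal I'$, both contain $c$. A common facet therefore consists of $c$ together with three common vertices $v_I$, so $|\mathcal I\cap\mathcal I'|=3$. Each even partition set selects exactly one of $I, I^c$ for each of the four pairs indexed by $\mathtt E$. Agreeing on three selections and differing on one therefore means exactly $\mathcal I\setminus\{I\}=\mathcal I'\setminus\{I^c\}$ for the single differing pair, which is \eqref{eq:wallcrossing}. Conversely, if \eqref{eq:wallcrossing} holds, the two simplices share the four vertices $c$ and $v_J$ for $J\in\mathcal I\setminus\{I\}$.

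The main subtlety is justifying that these simplices genuinely form a face-to-face triangulation, so that "sharing four vertices" is equivalent to "sharing a wall". I would get this from the hyperplane description following \cite{Meneses}: the chambers are the regions cut out by the 12 hyperplanes. The four vertices in question then span a piece of one such hyperplane (for example $c$ and three $v_J$ all lie on a wall $\sum_i (e_i+(-1)^{e_i}\alpha_i)+d=0$), and the two simplices lie on opposite sides of it. I would check this on one representative case and appeal to the symmetry of the cube permuting coordinates and swapping $\alpha_i\leftrightarrow\tfrac12-\alpha_i$.
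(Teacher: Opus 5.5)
Your proposal is correct and follows essentially the paper's own argument. The paper simply declares the lemma clear from describing each chamber as the convex hull of $(\tfrac14,\tfrac14,\tfrac14,\tfrac14)$ (resp.\ $v_{I_0}$) together with the $v_I$, $I\in\mathcal I$; your vertex count, combined with the face-to-face property that follows from the chambers being the cells of the hyperplane arrangement, makes exactly that reasoning explicit.

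The one slip is in your optional symmetry check. Reflecting a single coordinate $\alpha_i\mapsto\tfrac12-\alpha_i$ sends even vertices to odd ones, so it does not preserve the chamber decomposition; permutations combined with an even number of such reflections do. In any case no case check is needed, since the cells of an arrangement automatically meet face to face.
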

\medskip

It will be convenient to define 
\begin{equation} 
  L_{i} =-\alpha_i + \sum_{j \neq i} \alpha_j.  \label{eq:L}\end{equation}
The Biswas polytope is cut out by the inequalities $0 < L_i < 1$. The exterior chamber labelled by $I_0=\{j, k, l\}$ is cut out by the single inequality $L_i > 1$. The exterior chamber labelled by $I_0=\{i\}$ is cut out by the single inequality $L_i < 0$.
We can also describe the interior chambers in terms of inequalities.  Given a set $I$, we define 
\begin{align}\label{eq:K} K_I : \left(0, \frac{1}{2}\right)^4 &\rightarrow \R\\ \nonumber
 (\alpha_{1}, \alpha_{2}, \alpha_{3}, \alpha_{4}) & \mapsto \sum_{i \in I}\alpha_{i}- \sum_{i \notin I} \alpha_{i} + \left\lfloor\frac{|I^c|-|I|}{4}\right\rfloor.\end{align}
 Then, by \cite[Corollary 5]{Meneses}, the interior chamber is characterized by $K_I>0$ for all $I \in \mathcal{I}$ and the Biswas polytope condition that 
 $$\{(\alpha_{1}, \alpha_{2}, \alpha_{3}, \alpha_{4})| 0 < L_i < 1, i \in \{1, 2, 3, 4\}\}.$$
 \begin{rem}\label{rem:KvsL}Note that $L_i < 0$---associated to $I_0 = \{i\}$---can be given by $K_{\{i\}} > 0 $, 
 while $L_i < 1$---associated to $I_0 =\{j, k, l \}$---can be given by $K_{\{j, k, l \}} > 0$, 
 parallel to the role of $K_I$ above.
 \end{rem}
 
 Consequently, we see that the Biswas polytope is divided into $16=2^4$ chambers by the four hyperplanes
\[0=K_{\{1,2\}}=-K_{\{3,4\}} \qquad 0=K_{\{1,3\}} = -K_{\{2,4\}} \qquad 0=K_{\{1,4\}}=-K_{\{2,3\}} \qquad 0=K_{\emptyset}=-K_{\{1,2, 3, 4\}}.\]

We call the $12$ hyperplanes from the vanishing loci of $K_I, L_i-1$, and $L_i$  \emph{walls}.

 \begin{rem}[Nakajima's Walls]
 When $\mathbf{m}=0$, we can see that Nakajima's equations in Proposition \ref{prop:generic} coincide precisely with the interior walls of $(0, \frac{1}{2})^4$.
  The walls dividing the exterior chambers from the interior chambers  are given by $L_i=0$ and $L_i=1$ for $i=1, 2, 3, 4$, i.e. $-\alpha_i + \alpha_j + \alpha_k + \alpha_l \in \{0, 1\}$ or equivalently $\alpha_i -\alpha_j - \alpha_k - \alpha_l \in \{-1, 0\}$. In Nakajima's language these correspond exactly to taking  $e_i=-1$ and $e_j, e_k, e_l=1$ with $d+ 2 \in \{0, -1\}$, respectively; or $e_i=0$ and $e_j, e_k, e_l=1$ with $d+ 2 \in \{0, 1\}$. Because each parabolic weight is in $(0, \frac{1}{2})$, there are no other choices of $d$ that correspond to a non-empty wall. The four walls dividing the Biswas polytope into $2^4$ interior chambers are given by $K_I=0$ or equivalently $K_{I^c}=0$ for
 an even subset $I \subset \{1, 2, 3, 4\}$. In Nakajima's language, we take $e_i=0$ for $i \in I$ and for $i \in I^c$ we take $e_i=1$. We take $d$ so that $d + \sum_{i=1}^4 e_i = \frac{|I^c| -|I|}{4}$. 
 When $|I|$ is even, we see that $\sum_{i=1}^4(-1)^{e_i} \alpha_i \in (-\frac{1}{2}|I^c|, \frac{1}{2} |I|)$. When $|I|=2$, $0$ is the only integer in this range; when $|I|=4$, $1$ is the only integer in this range; when $|I|=0$, $-1$ is the only integer in this range. Consequently, all other choices of $d$ correspond to empty walls.
\end{rem}

\subsection{\texorpdfstring{$\C^\times$}{C*}-fixed points}\label{sec:fixedpoints}
Recall the $\C^\times$-action $(\cE, \varphi) \mapsto (\cE, \lambda \varphi)$ on the moduli space of parabolic Higgs bundles, as described in Section \ref{sec:BBstratification}. 
In this section we describe the $\C^\times$-fixed points \emph{as Higgs bundles}. (We defer any discussion of the harmonic metric to a later section.)
We follow the succinct description in  Collier--Fredrickson--Wentworth, which historically was borrowed from a draft of this paper.  The $\C^\times$-fixed points are systems of Hodge bundles \cite{SimpsonThesis}, i.e. either (1) the Higgs vanishes and the underlying parabolic bundle is stable, or (2), the parabolic bundle splits as a direct sum of parabolic line bundles with non-zero Higgs field 
\begin{equation}
        \label{eq general rk 2 fixed}\left(\cL_1(\beta_1)\oplus\cL_2(\beta_2),\begin{pmatrix}
        0&0\\\phi_0&0
\end{pmatrix}\right). 
\end{equation}
Here, 
$\phi_0$ in the Higgs field is a parabolic map $\cL_1(\beta_1)\to
\cL_2(\beta_2)\otimes K(D).$  By ``parabolic map,'' we mean
\begin{defn}
         \label{def parabolic morph}Given two parabolic vector bundles $\cE(\alpha)$ and $\cF(\beta)$, a holomorphic bundle map $f:\cE\to \cF$ is called {\em parabolic} if, $\alpha_j(p)> \beta_{k}(p)$ implies $f(\cE_{p,j})\subset \cF_{p,k+1}$ for all $p\in D$, and {\em strongly parabolic} if $\alpha_j(p)\geq \beta_k(p)$ implies $f(\cE_{p,j})\subset \cF_{p,k+1}$ for all $p\in D$.
 \end{defn} 
 
 \bigskip
 
 For generic choices of parabolic weights, there are five components fixed by the $\C^\times$-action: a $\mathbb{CP}^1$ and four additional points. Inside the nilpotent cone, these are respectively the central sphere and the exterior $\C^\times$-fixed points of  four exterior spheres.  In interior chambers, points of the central sphere are parabolic bundles; in exterior chambers, points of the central sphere are of the second type above.  
 
A $\C^\times$-fixed point with \emph{non-zero} Higgs field determines $I\subset \{1, 2, 3, 4\}$:
\begin{lem}\label{lem:fixedtoI}
Given a $\C^\times$-fixed point as in \eqref{eq general rk 2 fixed}, there is a naturally associated set $I \subset \{1, 2, 3,4 \}$ defined by $i \in I$ if, and only if, $F_{p_i} = \mathcal{L}_2|_{p_i}$.
\end{lem}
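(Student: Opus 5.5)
The plan is to show that the set $I$ is well defined, which comes down to checking that the condition ``$F_{p_i} = \cL_2|_{p_i}$'' makes sense and holds or fails unambiguously at each $p_i$. The content of the lemma is that the splitting $\cE = \cL_1 \oplus \cL_2$ of a $\C^\times$-fixed point with nonzero Higgs field is canonical. I will then argue that at each parabolic point the flag line $F_{p_i}$ is one of the two coordinate lines $\cL_1|_{p_i}$ or $\cL_2|_{p_i}$. This makes the dichotomy defining $I$ meaningful.

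First, canonicity of the splitting. For a fixed point with $\varphi = \begin{pmatrix} 0 & 0 \\ \phi_0 & 0\end{pmatrix}$ and $\phi_0 \neq 0$, the subbundle $\cL_2$ is intrinsic: it is the saturation of $\ker \varphi$, equivalently the image of $\varphi$ twisted down. Since $\phi_0 \not\equiv 0$, $\ker\varphi$ is generically exactly $\cL_2$. The complement $\cL_1$ is canonical as the weight space for the $\C^\times$ gauge action that realizes the fixed-point condition. Simpson's description of systems of Hodge bundles \cite{SimpsonThesis} gives this: the gauge transformation $g_\zeta$ with $g_\zeta^{-1}\zeta\varphi g_\zeta = \varphi$ has $\cL_1, \cL_2$ as eigenbundles. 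Hence $\cL_2|_{p_i}$ is a well-defined line in $\cE_{p_i}$.

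Second, the flag is compatible with the splitting. In the fixed-point description \eqref{eq general rk 2 fixed}, $\cE(\bsa)$ splits as a direct sum of \emph{parabolic} line bundles $\cL_1(\beta_1)\oplus \cL_2(\beta_2)$. By definition, the induced parabolic structure on a direct sum means that each flag $F_{p_i}$ is a sum of the filtration pieces of the summands. In rank two with a full flag, this forces $F_{p_i}\in\{\cL_1|_{p_i},\cL_2|_{p_i}\}$. The weights $\beta_1(p_i),\beta_2(p_i)$ are then $\{\alpha_i, 1-\alpha_i\}$ in the corresponding order. So exactly one alternative holds at each $i$, and $I$ is determined. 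One can double check this with the $\C^\times$-equivariance: the flag must be preserved by $g_\zeta$, and since $\alpha_i\neq \tfrac12$ the eigenlines are distinct, so the only invariant lines are $\cL_1|_{p_i}$ and $\cL_2|_{p_i}$.

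The main obstacle I expect is the middle point, namely justifying that the flag sits in the splitting rather than assuming it. I would handle it with the equivariance argument, which uses $\zeta$ generic and the fact that $g_\zeta$ acts with distinct weights on $\cL_1,\cL_2$. Naturality means independence of choices under isomorphism of fixed points, and this follows because every step used only intrinsic data (the kernel of $\varphi$ and the flag).
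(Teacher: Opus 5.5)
Your proposal is correct and takes the same approach as the paper: because the fixed point splits as a direct sum of parabolic line bundles, each flag $F_{p_i}$ is either $\cL_1|_{p_i}$ or $\cL_2|_{p_i}$, and $I$ (equivalently the subdivisor $D_I$) is defined by which case occurs. Your points about canonicity and $g_\zeta$-equivariance are extra justification the paper omits, and only $\cL_2$ (the saturated kernel of $\varphi$) is actually needed for that; one small correction is that the eigenlines of $g_\zeta$ on $\cE_{p_i}$ are distinct because $\varphi\neq 0$ forces different weights on $\cL_1$ and $\cL_2$, not because $\alpha_i\neq\tfrac12$ (that condition only ensures $\beta_1(p_i)\neq\beta_2(p_i)$, so the flag is genuinely full).
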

\begin{proof}
The flag $F_p$ is either $\cL_1|_p$ or $\cL_2|_p$. Let $D_I$ and $D_{I^c}$ be the effective subdivisors of $D$ for which the subspace $F_p$ is $\cL_2|_p$ and $\cL_1|_p$, respectively. 
Here $I$ denotes the subset of $\{1,2,3,4\}$ determined by the support of $D_I,$ and $I^c$ is its complement.   
\end{proof}

 When $|I|$ is even, this can be uniquely reversed; when $|I|$ is odd, there is a $\mathbb{CP}^1$-family. 

\begin{prop}\label{prop:generalsplit}
Given (a) $I \in \mathcal{I}$ and, (b) if $|I|$ is odd, a choice of point $u \in \mathbb{CP}^1$. 

Define the following Higgs bundle fixed by the $\C^\times$-action of the shape in \eqref{eq general rk 2 fixed}:
\begin{itemize}
\item Let $\cL_2 = \cO(-1 - \lfloor \frac{|I|}{2} \rfloor)$ and $\cL_1 = \cO(-4)\cL_2^*$.
\item 
The parabolic weights $\beta_1(p_i)$ and $\beta_2(p_i)$ are given by 
\begin{equation}
        \label{eq fixedpoint weights}\beta_1(p_i)=\begin{dcases}
        \alpha_i & i \in I \\ 1-\alpha_i & i \in I^c
\end{dcases}\ \ \ \ \ \ \ \text{and}\ \ \ \ \ \ \ \beta_2(p_i)=\begin{dcases}
       1-\alpha_i & i \in I\\
       \alpha_i&i\in I^c
\end{dcases}.
\end{equation}
\item If $|I|$ is even, let $\phi_0$ be the unique section in $H^0(\cL_1^* \cL_2 K(D_I))$.  If $|I|$ is odd, let $\phi_0$ be the unique section in $H^0(\cL_1^* \cL_2 K(D_I))$ vanishing at $u \in \CP^1$.
\end{itemize}

If $|I|$ is even, this $\C^\times$-fixed point is stable when $K_I>0$, i.e. where $I \in \mathcal{I}$, the associated even partition set.
If $|I|=1$, this $\C^\times$-fixed point is stable when $L_i <0$. If $|I|=3$, this $\C^\times$-fixed point is stable when $L_i >1$.
\end{prop}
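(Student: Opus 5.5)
The plan is to check directly that the Higgs bundle in Proposition \ref{prop:generalsplit} is well defined, and then to reduce stability to a single parabolic-degree inequality for the line subbundle $\cL_2$.

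\textbf{Well-definedness.} First, $\deg \cL_1 + \deg \cL_2 = -4$, so $\mathrm{pdeg}(\cE(\bsa)) = 0$, as required.

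Next, the line bundle containing $\phi_0$ has degree
\[
\deg(\cL_1^*\cL_2K(D_I)) = 2\deg \cL_2 + 4 - 2 + |I| = |I| - 2\lfloor |I|/2\rfloor \in \{0,1\}.
\]
Hence $h^0 = 1$ when $|I|$ is even, so $\phi_0$ is unique up to a nonzero scalar. When $|I|$ is odd, $h^0 = 2$, and there is a unique (up to scalar) section vanishing at a given $u \in \CP^1$.

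We must also check that $\varphi$ is strongly parabolic, i.e.\ that its residue is nilpotent and respects the flags.
\begin{itemize}
\item At $p_i$ with $i \in I$, we have $F_{p_i} = \cL_2|_{p_i}$. The residue maps $\cL_1 \to \cL_2 = F$ and kills $F$, so a simple pole is allowed.
\item At $p_i$ with $i \notin I$, we have $F_{p_i} = \cL_1|_{p_i}$. The residue would have to map $E_{p_i}$ into $F = \cL_1$, but its image lies in $\cL_2$. So the residue must vanish, which is exactly why $\phi_0$ is taken in $K(D_I)$ rather than $K(D)$.
\end{itemize}
The weights \eqref{eq fixedpoint weights} are then precisely the induced weights of $\cL_1$ and $\cL_2$, and $\phi_0$ is a parabolic map in the sense of Definition \ref{def parabolic morph}.

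Finally, the point is $\C^\times$-fixed because $\zeta\varphi$ is conjugated back to $\varphi$ by the gauge transformation $\mathrm{diag}(\zeta^{1/2},\zeta^{-1/2})$, which preserves the splitting.

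\textbf{Reduction to one subbundle.} Stability only needs to be tested on $\varphi$-invariant line subbundles $\cV$. Since $\varphi(\cV) \subset \cL_2 \otimes K(D)$, invariance together with $\cV \neq \cL_2$ forces $\varphi|_{\cV} = 0$. But $\phi_0 \neq 0$, so $\ker \varphi = \cL_2$ generically, and this is impossible. Hence $\cL_2$ is the unique invariant subbundle, and stability is equivalent to $\mathrm{pdeg}(\cL_2(\bsa)) < 0$.

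\textbf{The degree computation.} By the definition of $\alpha_j(\cV)$, the subbundle $\cL_2$ picks up weight $1-\alpha_i$ for $i \in I$ and weight $\alpha_i$ for $i \notin I$. Therefore
\[
\mathrm{pdeg}(\cL_2) = -1 - \lfloor |I|/2\rfloor + |I| - \sum_{i\in I}\alpha_i + \sum_{i\notin I}\alpha_i .
\]
We treat the cases separately.
\begin{itemize}
\item If $|I| = 2m$, then $\lfloor (|I^c|-|I|)/4\rfloor = 1-m$, so $\mathrm{pdeg}(\cL_2) = -K_I$ by \eqref{eq:K}. Stability is therefore $K_I > 0$, i.e.\ $I \in \mathcal{I}$.
\item If $I = \{i\}$, then $\mathrm{pdeg}(\cL_2) = -\alpha_i + \sum_{j\neq i}\alpha_j = L_i$, so stability is $L_i < 0$.
\item If $|I| = 3$ with missing index $i$, then $\mathrm{pdeg}(\cL_2) = 1 - L_i$, so stability is $L_i > 1$.
\end{itemize}

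\textbf{Main obstacle.} The only delicate step is the flag-compatibility bookkeeping at the points $p_i$ with $i \notin I$. It has to be handled correctly to justify both the choice of $K(D_I)$ and the identification of the parabolic weights of $\cL_2$; everything else is the degree count above.
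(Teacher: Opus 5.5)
Your proof is correct and follows essentially the same route as the paper: the residue must vanish at the points of $D_{I^c}$, so $\phi_0 \in H^0(\cL_1^*\cL_2K(D_I))$, and stability reduces to $\pdeg(\cL_2(\beta_2))<0$, which you identify case by case with $-K_I$, $L_i$, or $1-L_i$. The differences are minor: you spell out why $\cL_2$ is the unique $\varphi$-invariant line subbundle, which the paper leaves implicit, while the paper additionally shows that stability forces $\deg \cL_2 \in \{-1,-2,-3\}$, which is not needed for the statement as posed.
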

\begin{rem}
The following proof gives us an interpretation of $K_I$:
At the $\C^\times$-fixed point labelled by $I \in \mathcal{I}$,
\[ K_I = -\pdeg (\cL_2(\beta_2)),\]
for $\cL_2(\beta_2)$, the potentially destabilizing $\varphi$-invariant parabolic subbundle. See \eqref{eq:stab2}.
\end{rem}
 \begin{proof}
 Following Lemma \ref{lem:fixedtoI}, the parabolic weights are given as above.

 \medskip
 
Now, we consider $\phi_0$ appearing in the Higgs field. The map $\phi_0$ is a meromorphic section of $\cL_1^*\otimes\cL_2\otimes K$
with at worst simple poles at $p\in D$. When $p_i \in D_{I^c}$,  
$\beta_1(p_i)> \beta_2(p_i)$, hence $\phi_0(F_p) \subset \{0\}$---i.e. the residue is zero.
Thus,
\[\phi_0\in H^0(\cL_1^* \cL_2 K(D_I)).\]

 \medskip
 
The $\C^\times$-fixed point is stable whenever $\phi_0\in H^0(\cL_1^*\cL_2K(D_I))$ is nonzero, i.e. 
\begin{equation} \deg(\cL_1^*\cL_2K(D_I))=2+2\deg(\cL_2)+\deg(D_I)\geq 0 \label{eq:stab1}
\end{equation}
 and also $\pdeg(\cL_2(\beta_2))<0$, i.e. 
\begin{align}
\pdeg(\cL_2(\beta_2))&=\deg(\cL_2)+\deg(D_I)-\sum_{i\in I}\alpha_i+\sum_{j\in I^c}\alpha_j \nonumber \\
&=-K_I<0.\label{eq:stab2}
\end{align}
A straightforward computation shows that stability forces the degrees of $\cL_1$ and $\cL_2$ to be $-3,-2,-1$. Hence, $\cL_2$ is isomorphic to $\cO(-1),\cO(-2)$ or $\cO(-3)$ and $\cL_1\cong\cO(-4)\cL_2^*.$ There are five cases determined by the degree of $D_I.$ The following table gives the conditions which are direct consequences of \eqref{eq:stab1}, \eqref{eq:stab2}.

\begin{equation}
\label{eq:tablefixedpoints}
        \begin{tabular}{|c|c|c|c|}\hline
                $\deg(D_I)$&$\cL_2$&condition on $(\alpha_1,\alpha_2,\alpha_3,\alpha_4)$& $\cL_1^* \cL_2 K(D_I)$\\\hline
                 $0$& $\cO(-1)$& $K_{\emptyset}= -\sum_{i=1}^4\alpha_i+1>0$&$\cO$ \\\hline 
                 $1$ &$\cO(-1)$& $L_i=-\alpha_i+\sum_{j\in I^c}\alpha_j<0$, where $D_I=p_i$& \cO(1)\\\hline
                 $2$ &$\cO(-2)$&$K_I = \sum_{i\in I}\alpha_i-\sum_{j\in I^c}\alpha_j>0$&$\cO$\\\hline
                 $3$&$\cO(-2)$& $L_i = -\alpha_i+\sum_{j\in I}\alpha_j>1$, where $D_{I^c}=p_i$&$\cO(1)$\\\hline
                 $4$&$\cO(-3)$& $K_{\{1,2,3,4\}}=\sum_{i=1}^4\alpha_i-1>0$&$\cO$\\\hline
        \end{tabular}
\end{equation}
Note that the diagonal gauge group action rescales $\phi_0$ by non-zero constant. Hence, when $|I|$ is odd, there is a $\mathbb{CP}^1$-family of non-zero sections of $\cO(1)$, up to rescaling gauge, parameterized by $u \in \mathbb{CP}^1$, the zero of $\phi_0$.
Concretely, 
the Higgs field $\phi_{0,u}$ is (up to scale) 
\begin{equation} \phi_{0,u} = \frac{(z-u)}{\prod_{i \in I} (z-p_i)} \de z 
\end{equation}
Similarly, when $|I|$ is even, there is a single non-zero section of $\cO$, up to rescaling. 
The Higgs field $\phi_0$ is (up to scale) 
\begin{equation} \label{eq:extfixed}
\phi_0 = \frac{\de z}{\prod_{i \in I} (z-p_i)} .
\end{equation}

\end{proof}

\subsection{Full description of nilpotent cone}\label{sec:nilpotentcone}
Given $\boldsymbol{\alpha}$, we now describe the nilpotent cone of the strongly parabolic moduli space $\mathcal{M}(\boldsymbol{\alpha}, \mathbf{0})$: five spheres in an affine $D_4$ configuration.
We already described the $\C^\times$-fixed points in the nilpotent cone with non-zero Higgs field, since Proposition \ref{prop:generalsplit} implies
\begin{cor}
In an interior chamber labelled by $\mathcal{I}$, the four $\C^\times$-fixed points labelled by $I \in \mathcal{I}$ are stable.
In an exterior chamber labelled by $I_0$ with associated even partition set $\mathcal{I}$, the four $\C^\times$-fixed points labelled by $I \in \mathcal{I}$ are stable; moreover, the $\CP^1_u$-family of $\C^\times$-fixed points labelled by $I_0$ are stable.
\end{cor}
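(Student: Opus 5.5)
The corollary follows from Proposition \ref{prop:generalsplit} once we match its stability conditions against the chamber inequalities of Section \ref{sec:chambers}.

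\emph{Interior chamber $\mathcal{I}$.} Every $I \in \mathcal{I}$ has even cardinality. By Proposition \ref{prop:generalsplit}, the fixed point labelled by $I$ is stable exactly when $K_I>0$. By \cite[Corollary 5]{Meneses}, the interior chamber is cut out by $K_I>0$ for all $I\in\mathcal{I}$, together with the Biswas inequalities $0<L_i<1$. So each of the four fixed points labelled by $I\in\mathcal{I}$ is stable.

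\emph{Exterior chamber labelled by $I_0$.} Its associated even partition set $\mathcal{I}$ consists of the even subsets obtained from $I_0$ by adding or removing one element. The exterior chamber is the convex hull of $v_{I_0}$ and the $v_I$, $I\in\mathcal{I}$.

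For the odd label, suppose first $I_0=\{i\}$. The chamber is cut out by $L_i<0$, and Proposition \ref{prop:generalsplit} says the $\CP^1_u$-family labelled by $\{i\}$ is stable exactly when $L_i<0$. The case $I_0=\{j,k,l\}$ is the same with $L_i>1$. This settles the $\CP^1$-family.

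For the even labels we must show $K_I>0$ for every $I\in\mathcal{I}$ on this chamber. Take $I_0=\{i\}$, so $\mathcal{I}=\{\emptyset,\{i,j\},\{i,k\},\{i,l\}\}$. Then
\[
K_\emptyset = 1-\textstyle\sum_m\alpha_m = 1-2\alpha_i - L_i > 0,
\]
since $\alpha_i<\tfrac12$ and $L_i<0$. Next,
\[
K_{\{i,j\}}=\alpha_i+\alpha_j-\alpha_k-\alpha_l = 2\alpha_j - L_i >0,
\]
since $L_i<0$ and $\alpha_j>0$; the other two pairs are symmetric.

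Take now $I_0=\{j,k,l\}$, so $\mathcal{I}=\{\{1,2,3,4\},\{k,l\},\{j,l\},\{j,k\}\}$. Then
\[
K_{\{1,2,3,4\}}=\textstyle\sum_m\alpha_m-1 = L_i+2\alpha_i-1>0,
\]
since $L_i>1$. Also
\[
K_{\{k,l\}}=\alpha_k+\alpha_l-\alpha_i-\alpha_j = L_i-2\alpha_j>1-2\alpha_j>0,
\]
since $\alpha_j<\tfrac12$; the other two pairs are symmetric.

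There is no real obstacle. The only step needing care is checking that the floor term in \eqref{eq:K} takes the values $1,0,-1$ for $|I|=0,2,4$. Alternatively, the conclusion follows from convexity: on the exterior chamber the linear functions $K_I$ are positive at the vertices $v_{I'}$ with $I'\ne I^c$, and $I^c$ is not among the chamber's vertices.
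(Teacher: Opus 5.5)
Your proposal is correct and is essentially the paper's argument: the paper reads the corollary directly off the stability criteria in Proposition~\ref{prop:generalsplit}. Your explicit check that $K_I>0$ for all $I\in\mathcal{I}$ on the exterior chambers (using $L_i<0$ or $L_i>1$ together with $0<\alpha_m<\tfrac12$) supplies the step the paper leaves implicit.

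One small correction to your closing convexity aside. $K_I$ is not positive at every vertex $v_{I'}$ with $I'\neq I^c$; for example $K_{\{i,j\}}(v_\emptyset)=0$. That remark should instead say $K_I\ge 0$ at all vertices of the chamber and $K_I>0$ at some of them, which still gives $K_I>0$ on the open chamber.
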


To fully describe the nilpotent cone, we need to describe (1) the central sphere in the interior chambers and (2) the $\C^\times$-family in each exterior sphere, which is not fixed by the $\C^\times$-action, and (3) the gluing maps between these pieces. The first thing to note is that the four exterior spheres to the central sphere $\CP^1_u$ at the points of the divisor $D$. These four points are known as ``wobbly points'' or the 
``wobbly locus'' \footnote{According to \cite{Laumon}, 
a bundle $\cE$ is said to be \emph{very stable} if the only nilpotent Higgs field $\varphi$ on $\cE$ is $\varphi=0$. The wobbly locus is the set of non-very-stable bundles. In the case without pictures, Hausel and Hitchin prove in \cite[Theorem 1.1]{HauselHitchinverystable} that bundle is very stable if and only if upward flow is closed. 
}.  
Since the gluing maps were carefully considered in \cite{Meneses}, we focus on (1) and (2), making only passing mention of (3).   ``Nilpotent cone assembly kits'' are provided in \cite[Figure 7-9]{Meneses}, and we include with permission those figures below in Figure \ref{fig:assemblyA}, \ref{fig:assemblyB}, \ref{fig:assemblyE}. Notably, these give the information about the underlying \emph{holomorphic} bundle type in the various chambers.

 \begin{figure}[!ht]
  \begin{centering} 
  \includegraphics[height=3in]{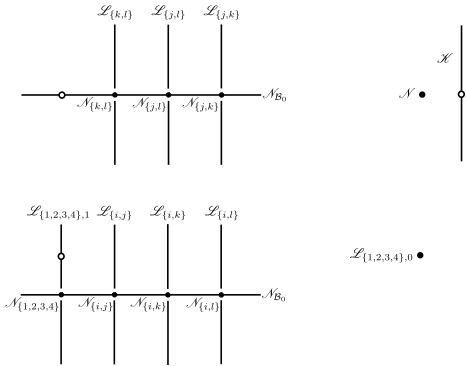}
  \end{centering}
  \caption{\label{fig:assemblyA} Nilpotent cone assembly kit for interior chambers of Type A1 (first row) and A2 (second row), as appearing in Figures 7-9 of Meneses. The first column describes the pieces of bundle type $\cO(-2) \oplus \cO(-2)$ and the second column describes the pieces of bundle type $\cO(-3) \oplus \cO(-1)$.   The labels of the $\CP^1$ are Meneses.}

  \end{figure}
  
 \begin{figure}[!ht]
  \begin{centering} 
  \includegraphics[height=3in]{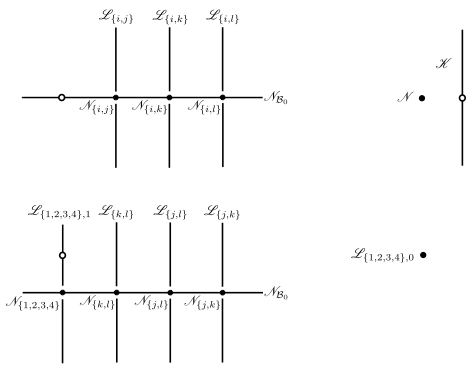}
  \end{centering}
  \caption{\label{fig:assemblyB} Nilpotent cone assembly kit for interior chambers of Type B1 (first row) and B2 (second row), as appearing in Figures 7-9 of Meneses.  See comments on Figure \ref{fig:assemblyA}.}
  \end{figure}

\begin{figure}[!ht]
  \begin{centering} 
  \includegraphics[height=3in]{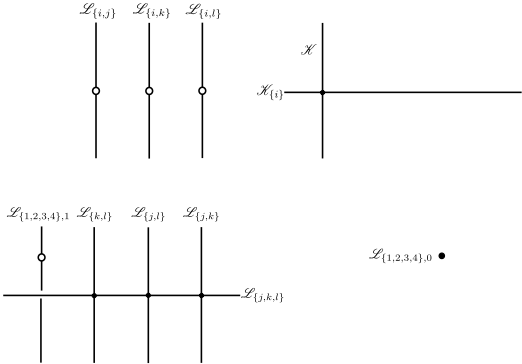}
  \end{centering}
  \caption{\label{fig:assemblyE} Nilpotent cone assembly kit for exterior chambers of Type E1 (first row) and E2 (second row), as appearing in Figures 7-9 of Meneses.   See comments on Figure \ref{fig:assemblyA}.
  }
  \end{figure}

\subsubsection{Central Sphere in Interior Chambers} \label{sec:centralsphereinint}

 The central sphere is the moduli space of stable parabolic bundles. In Types A1 and B1, where $\mathcal{I}$ contains $\emptyset$, there is one point of the central sphere with underlying bundle type $\cO(-3) \oplus \cO(-1)$, while all others have underlying bundle type $\cO(-2) \oplus \cO(-2)$. In Types A2 and B2, where $\mathcal{I}$ contains $\{1, 2, 3, 4\}$, all points of the central sphere have underlying bundle type $\cO(-2) \oplus \cO(-2)$. When the underlying bundle type is $\cO(-2) \oplus \cO(-2)$, it is meaningful to talk about the cross-ratio of the flags (interpreted as points in  $\mathbb{P}(\C^2)$) since $\mathrm{Aut}(\cO(-2) \oplus \cO(-2))$ is the group of constant $SL(2,\C)$ matrices, which acts by M\"obius transform on the flags.  In interior chambers of Types A2 and B1, one has that $F_j , F_k $ and $F_l$ are all distinct\footnote{Otherwise, say if $F_j=F_k$, there is a destabilizing parabolic line bundle intersecting both flags with parabolic degree $-2 + (1-\alpha_j)+(1-\alpha_k) + \alpha_i + \alpha_l = K_{\{i,l\}}>0$.}, so fixing them, the central sphere is parameterized by the flag $F_i$; three wobbly points occur where $F_i$ is equal to $F_j, F_k$ and $F_l$, while the distinguished exterior sphere, corresponding either to $I=\{1, 2, 3, 4\}$ or $I=\emptyset$, attaches where a certain cross-ratio is equal to $p_0$.  In interior chambers of Types A1 and B2, the central sphere is still parameterized by a cross-ratio of the four flags, but describing it is slightly more difficult. We refer the reader to \cite{Meneses}.

\subsubsection{Holomorphic coordinate on central sphere}
There are holomorphic coordinates on the central sphere such that exterior spheres (associated $I \in \mathcal{I}$) attach at $p \in D$. In this section, we want to make a few comments about the map
\begin{align}\label{eq:ItoD} \mathcal{I} &\to D \\ \nonumber 
I &\mapsto p \end{align}
relating these.

While we know that the central sphere can be holomorphically identified with $\CP^1$ and the wobbly locus with $D$, we remark that there are $4$ conformal transformations of the spheres which preserve the set $D$, up to permutations of its elements.  If we define the cross ratio of four points to be 
\[(p_1, p_2;p_3, p_4) = \frac{(p_3-p_1)(p_4-p_2)}{(p_3-p_2)(p_4-p_1)},\]
then we observe that 
\[(p_1, p_2;p_3, p_4)=(p_2, p_1; p_4, p_3)=(p_3, p_4; p_1, p_2)=(p_4, p_3; p_2, p_1). \]
This means that ``the natural coordinate on the central sphere'' is not well-defined. Two of the reasonable maps for \eqref{eq:ItoD} are as follows

\medskip
\noindent\emph{Convention \#1:} In this first convention, the distinguished point of $D$ is dependent on the chamber.  Consequently, the natural holomorphic coordinate jumps as we cross these interior hyperplanes between chambers.

Given an even partition set $\cI$, the natural coordinate $u$ on $\CP^1_{\mathrm{central}}$ is described by matching up $I \in \cI$ with points of $D$.
\begin{itemize}
  \item Type A1: The even partition set is $\mathcal{I}_{A, i}=\{\emptyset, \{k,l\}, \{j,l\}, \{j,k\}\}$. The sphere $\CP^1_i$ is labelled by $\emptyset$, $\CP^1_j$ is labelled by $\{k,l\}$, etc.
  \item Type A2: $\mathcal{I}_{A, i}^c=\{\{1, 2, 3, 4\}, \{i,j\}, \{i,k\}, \{i,l\}\}$. The sphere $\CP^1_i$ is labelled by $\{1, 2, 3, 4\}$, $\CP^1_j$ is labelled by $\{i,j\}$, etc.
  \item Type B1: $\mathcal{I}_{B, i}=\{\emptyset, \{i,j\}, \{i,k\}, \{i,l\}\}$. The sphere $\CP^1_i$ is labelled by $\emptyset$, $\CP^1_j$ is labelled by $\{i,j\}$, etc.
  \item  Type B2: $\mathcal{I}_{B, i}^c=\{ \{1, 2, 3, 4\}, \{k,l\}, \{j,l\}, \{j,k\} \}$. The sphere $\CP^1_i$ is labelled by $\{1, 2, 3, 4\}$, $\CP^1_j$ is labelled by $\{k,l\}$, etc.
  \item For each exterior chamber, use the same coordinate $u$ for the adjacent interior chamber.
  \end{itemize}
Consequently, one sees that if $\mathcal{I}$ and $\mathcal{I}'$ are adjacent interior chambers with respective distinguished points $i$ and $i'$ in $\{1, 2, 3, 4\}$ then we have the associated conformal transformation.

\smallskip

\noindent\emph{Convention \#2:} In this second convention, we pick a distinguished point of $D$ once and for all and put coordinates on the central sphere so that the distinguished exterior sphere attaches at this distinguished point of $D$.
In this convention, the holomorphic coordinate does not jump as we cross the hyperplanes between chambers.  
Taking $D=\{0, 1, p_0, \infty\}$,
we could, for example, take $\infty$ to be our distinguished point in $D$.  There is an interior chamber with even partition set
$$\cI=\{\{0, 1, p_0, \infty\}\}, \{0, \infty\}, \{1, \infty\}, \{\infty, \infty\} \}$$
which allows us to label the spheres as 
\begin{align*}
\CP^1_\infty &\leftrightarrow \{0, 1, p_0, \infty\} \mbox{ or } \emptyset\\
\CP^1_0 &\leftrightarrow \{0, \infty\} \mbox{ or } \{0, \infty\}^c \\
\CP^1_1 &\leftrightarrow \{1, \infty\} \mbox{ or } \{1, \infty\}^c\\
\CP^1_{p_0} &\leftrightarrow \{p_0, \infty\} \mbox{ or } \{p_0, \infty\}^c\\
\end{align*}

\subsubsection{Exterior Spheres} Looking at the expressions for the exterior $\C^\times$-fixed points for the exterior spheres in \eqref{eq:extfixed}, we recall that stability prescribed that $\pdeg (\cL_2(\beta_2))<0$, and hence the flags $F_{p_i}$ for $i \in I$ had to line up with $\cL_2(\beta_2)$. However, it was merely the ``$\C^\times$-fixed point condition'' that constrained the flags for $i \in I^c$ to line up with $\cL_1(\beta_1)$. We call the flags $F_i$ for $i \in I^c$ ``moveable flags''. In this section, we show how we can fill out the associated exterior sphere minus the wobbly point, by taking extensions of the subbundle $\cL_2(\beta_2)$ by the quotient bundle $\cL_1(\beta_1)$. 

\begin{prop} \label{prop:extension}
Fix a set $I$ of even cardinality and assume the parabolic weights $\boldsymbol{\alpha}$ satisfy $K_I>0$.  Let 
\[ \left(\cL_1(\beta_1)\oplus\cL_2(\beta_2),\begin{pmatrix}
        0&0\\\phi_0&0
\end{pmatrix}\right) \]
be the associated $\C^\times$-fixed point from Proposition \ref{prop:generalsplit}.
There is a $\C$-family of stable nilpotent Higgs bundles $(\mathcal{E}(\alpha), \varphi)$ in $\mathcal{M}(\boldsymbol{\alpha}, \mathbf{0})$, such that $\mathcal{E}(\alpha)$ is an extension
\[ 0 \to \cL_2(\beta_2) \to \mathcal{E}(\alpha) \to \cL_1(\beta_1) \to 0\]
and $\varphi$ vanishes on $\cL_2(\beta_2)$ and  thus induces a map $\mathcal{E}(\alpha)/\cL_2(\beta_2) \to \cL_2(\beta_2) \otimes K(D)$ agreeing with $\phi_0$.
\end{prop}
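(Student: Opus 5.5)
The plan is to build the family as parabolic extensions, parametrized by a one-dimensional $H^1$, and then check the Higgs field, stability and injectivity in turn.

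\textbf{Step 1: the extension space.} Extensions of parabolic bundles $0 \to \cL_2(\beta_2) \to \mathcal{E}(\alpha) \to \cL_1(\beta_1) \to 0$ are classified by $H^1(C, \mathrm{ParHom}(\cL_1(\beta_1), \cL_2(\beta_2)))$. The flags on $\mathcal{E}$ are induced as follows.
\begin{itemize}
\item At $p_i$ with $i \in I$, the flag $F_{p_i}$ is $\cL_2|_{p_i}$, as forced by Lemma \ref{lem:fixedtoI}.
\item At $p_i$ with $i \in I^c$, the flag is a lift of $\cL_1|_{p_i}$; these are the ``moveable flags''.
\end{itemize}
By Definition \ref{def parabolic morph}, a local map $\cL_1 \to \cL_2$ is parabolic exactly when it vanishes at those $p_i$ where $\beta_1(p_i) > \beta_2(p_i)$, namely at $i \in I^c$. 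Hence $\mathrm{ParHom} = \cL_1^*\cL_2(-D_{I^c})$.

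Using $\deg \cL_2 = -1-|I|/2$ and $\cL_1 = \cO(-4)\cL_2^*$ from Proposition \ref{prop:generalsplit}, this line bundle has degree
\[
2\deg\cL_2 + 4 - (4-|I|) = -2 .
\]
So $\mathrm{ParHom} \cong \cO(-2)$ and the extension space $H^1(\CP^1,\cO(-2)) \cong \C$ is one-dimensional. The zero class recovers the split $\C^\times$-fixed point itself.

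I would make this concrete with a \v{C}ech cocycle on a two-set cover, or with a Dolbeault representative $\overline{\partial}_{\mathcal{E}} = \begin{pmatrix} \overline{\partial}_{\cL_1} & 0 \\ b & \overline{\partial}_{\cL_2}\end{pmatrix}$ in which $b$ is supported away from $D$. The latter form makes it transparent that the local parabolic structure near each $p_i$ is unchanged.

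\textbf{Step 2: the Higgs field.} Define $\varphi$ as the composite
\[
\mathcal{E} \to \cL_1 \xrightarrow{\phi_0} \cL_2 \otimes K(D) \hookrightarrow \mathcal{E}\otimes K(D).
\]
This $\varphi$ is holomorphic, satisfies $\varphi^2 = 0$, vanishes on $\cL_2$ and induces $\phi_0$ on the quotient. I then check residues.
\begin{itemize}
\item At $i \in I$, the residue has image in $\cL_2|_{p_i} = F_{p_i}$ and kills $F_{p_i}$, so it is strongly parabolic (nilpotent and flag-compatible).
\item At $i \in I^c$, $\phi_0$ has no pole since $\phi_0 \in H^0(\cL_1^*\cL_2K(D_I))$, so the residue is zero.
\end{itemize}
Thus $(\mathcal{E}(\alpha),\varphi)$ lies in $\mathcal{M}(\bsa,\mathbf{0})$ and lies in the nilpotent cone.

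\textbf{Step 3: stability.} Since $\varphi \neq 0$ and $\varphi$ is nilpotent, any $\varphi$-invariant line subbundle must lie in $\ker\varphi$, whose saturation is $\cL_2$. By \eqref{eq:stab2}, $\pdeg\,\cL_2(\beta_2) = -K_I < 0$, so the hypothesis $K_I>0$ gives stability for every extension class.

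\textbf{Step 4: injectivity of $e \mapsto [(\mathcal{E}_e,\varphi_e)]$.} Suppose $g$ is an isomorphism between two members of the family. Then $g$ must preserve $\ker\varphi = \cL_2$, so it is lower triangular with diagonal entries $a$ on $\cL_1$ and $b$ on $\cL_2$. Compatibility with $\phi_0$ forces $a = b$. The $\mathrm{SL}$ condition then gives $a = b = \pm1$, which acts trivially on the extension class. Hence distinct classes $e \in \C$ give distinct points, and we obtain a $\C$-family.

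Presumably its closure adds the wobbly point, giving the exterior sphere, but that lies beyond the statement.

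The main obstacle I expect is Step 1. One has to justify that the moveable flags at $p_i$, $i \in I^c$, introduce no extra moduli beyond the extension class, i.e.\ that flag choices are absorbed into the parabolic Ext computed with the vanishing condition at $D_{I^c}$. I would settle this by a local computation near each $p_i$, showing that changing the lift of $\cL_1|_{p_i}$ is equivalent to modifying the cocycle by a section of $\cL_1^*\cL_2$ nonvanishing at $p_i$. This is exactly why the parabolic Hom sheaf is twisted by $-D_{I^c}$.
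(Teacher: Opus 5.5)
Your proposal is correct and follows the paper's route: the family is parametrized by the one-dimensional group $\mathrm{Ext}^1(\cL_1(\beta_1),\cL_2(\beta_2))$ (Lemma \ref{lem:extensionclasses}). You compute that group directly as $H^1(\CP^1,\cL_1^*\cL_2(-D_{I^c}))=H^1(\CP^1,\cO(-2))$, whereas the paper computes its Serre dual $H^0(\cL_2^*\cL_1K(D_{I^c}))$, a degree-zero bundle; your Steps 2--4 (residue check, stability via $\ker\varphi=\cL_2$ with $\pdeg\cL_2(\beta_2)=-K_I<0$, and injectivity of the class map) supply verifications that the paper's one-line proof leaves implicit.
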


We delay the proof, since we first need to discuss extension classes of parabolic line bundles. 

The isomorphism classes of extensions of parabolic bundles are in one-to-one correspondence with elements of
the group $\mathrm{Ext}^1(\cL_1(\beta_1), \cL_2(\beta)_2)$. In what follows,  we will use $\mathrm{Hom}(\cL(\beta), \cL'(\beta'))$ for both parabolic homomorphisms and strongly parabolic homomorphisms
between line bundles, unambiguously, since the distinction doesn't matter for the line bundles where $\beta(p) \neq \beta'(p)$. Here, 
\begin{align*}\mathrm{Ext}^1(\cL_1(\beta_1), \cL_2(\beta_2))&=H^1(\CP^1, \mathrm{Hom}(\cL_1(\beta_1), \cL_2(\beta_2)))\\
& \simeq H^0(\CP^1, \mathrm{Hom}(\cL_1(\beta_1), \cL_2(\beta_2))^* \otimes K)^*\\
&= H^0\left(\CP^1, 
\cL_2(\beta_2)^* \otimes \cL_1(\beta_1) \otimes K(D)\right)^*. \end{align*}
We remind the reader that in the definition of tensor products of parabolic line bundles, the parabolic weights are taken in $[0,1)$, i.e.  \[(\beta_1 -\beta_2)- \lfloor \beta_1 - \beta_2 \rfloor = \begin{cases} 2 \alpha_i & i \in I\\ 1-2 \alpha_i & i \in I^c \end{cases}\] and underlying holomorphic bundle
is $\cL_2^* \otimes \cL_1 \otimes K(D_{I^c})$. The underlying holomorphic bundle has degree $0$. Thus, 
\begin{lem}\label{lem:extensionclasses}
For $\cL_1(\beta_1), \cL_2(\beta_2)$ as in Proposition \ref{prop:extension},
the vector space $\mathrm{Ext}^1(\cL_1(\beta_1), \cL_2(\beta_2))$ is one-dimensional.
\end{lem}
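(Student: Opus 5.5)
The plan is to reduce the statement to a degree computation on $\CP^1$ via the Serre duality identification displayed just before the lemma:
\[
\mathrm{Ext}^1(\cL_1(\beta_1), \cL_2(\beta_2)) \simeq H^0\bigl(\CP^1, \cL_2(\beta_2)^* \otimes \cL_1(\beta_1) \otimes K(D)\bigr)^*.
\]
It therefore suffices to show that the underlying holomorphic line bundle of the parabolic tensor product on the right is $\cO$. Then $H^0(\CP^1,\cO)=\C$, and its dual is one-dimensional.

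First, I will pin down the underlying bundle. By Proposition \ref{prop:generalsplit} with $|I|$ even, we have $\cL_2 = \cO(-1-|I|/2)$ and $\cL_1 = \cO(-4)\cL_2^*$, so $\cL_1 \cong \cO(-3+|I|/2)$. Next, I apply the convention for parabolic tensor products of line bundles, where weights are reduced into $[0,1)$. For $i\in I$ the weight difference $\beta_1(p_i)-\beta_2(p_i) = 2\alpha_i - 1$ lies in $(-1,0)$. For $i\in I^c$ it is $1-2\alpha_i \in (0,1)$. Thus the floor is $-1$ exactly at $i\in I$ and $0$ at $i\in I^c$. This means the twist by $D$ is cut back to $D_{I^c}$ (equivalently, a factor $\cO(-D_I)$ appears), so the underlying bundle is $\cL_2^*\otimes\cL_1\otimes K(D_{I^c})$, as asserted in the text.

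Then I compute its degree:
\[
(1+|I|/2) + (-3+|I|/2) + (-2) + (4-|I|) = 0.
\]
On $\CP^1$ a degree $0$ line bundle is trivial, so $h^0 = 1$. The same count also gives $h^1=0$ for the dual side, which is consistent.

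The main point requiring care is the sign and floor convention in the parabolic Serre duality. One must check that duality uses $K(D)$ with the weight shift as stated, rather than $K$, and that the strongly parabolic versus parabolic distinction does not alter the bundle. As the text notes, this distinction is harmless here because $\beta_1(p)\neq\beta_2(p)$ at every $p$, which holds since $\alpha_i\neq 1/2$. I would verify this by a local check at one point of $D_I$ and one point of $D_{I^c}$, using local frames adapted to the weights. Once the floor values are confirmed, the degree computation completes the argument.
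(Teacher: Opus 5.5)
Your proposal is correct and is essentially the paper's own argument: you pass via parabolic Serre duality to $H^0$ of the reduced parabolic tensor product, observe that the weight differences $2\alpha_i-1$ for $i\in I$ and $1-2\alpha_i$ for $i\in I^c$ make the underlying bundle $\cL_2^*\otimes\cL_1\otimes K(D_{I^c})$, and conclude from degree $0$ on $\CP^1$ that this bundle is trivial with one-dimensional $H^0$. Your explicit degree count, and your remark that $\alpha_i\neq\tfrac12$ makes the parabolic versus strongly parabolic distinction harmless, spell out details the paper leaves implicit.
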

\begin{proof}[Proof of Proposition \ref{prop:extension}] The proof follows from Lemma \ref{lem:extensionclasses}.
\end{proof}

Concretely, when $|I|$ is even, these one-dimensional extension classes represent the following parabolic bundles with parabolic subbundle $\cL_1$:
\begin{itemize}
\item When $|I|=2$, the underlying holomorphic bundle is $\cO(-2) \oplus \cO(-2)$. Configurations of flags can be thought of as points in $\CP^1$, with holomorphic gauge transformations acting by M\"obius transformations.  The flags at $i \in I$ line up with $\cL_2=\cO(-2)$ (thought of as ``$\infty \in \CP^1=\mathbb{P}(\C^2)$''), while the flags at $i \in I^c$ do not line up with $\cL_2 =\cO(-2)$ (i.e. they are valued in $\C=\CP^1-\{\infty\}$). The extension class is trivial if the two flags at $i \in I^c$ agree, and by M\"obius transformation we can take them both to line up with $\cL_1$; the extension class is non-trivial if the two flags at $i \in I^c$ are distinct.
\item When $|I|=0$, the underlying holomorphic bundle is $\cO(-1) \oplus \cO(-3)$. No flags line up with $\cO(-1)$. The extension is trivial if there is a degree $-3$ subbundle interpolating all flags; otherwise, it is non-trivial.
\item When $|I|=4$, a different phenomenon occurs.  Because there is a non-trivial \emph{holomorphic} bundle extension $0 \rightarrow \cO(-3) \to \cE \rightarrow \cO(-1) \rightarrow 0$, the underlying bundle type is $\cO(-3) \oplus \cO(-1)$ for the trivial extension (i.e. the $\C^\times$-fixed point), while it is $\cO(-2) \oplus \cO(-2)$ for all non-trivial extensions. Regardless, all flags line up with an $\cO(-3)$ subbundle. 
\end{itemize}

\begin{ex}[The exterior sphere when $I=\{1, 2, 3, 4\}$] We now give slightly more concrete expressions for $|I|=4$. Suppose $D=\{p_1, p_2, p_3, p_4\}$ with $p_i \in \C$ distinct.
Let $s^{-1}$ be the deformation parameter in the extension. When $s^{-1} \in \C^\times$, the underlying holomorphic bundle type is $\cO(-2) \oplus \cO(-2)$.
In the trivialization $\de z \oplus \de z$, the Higgs field can be written 
\[ \varphi_{\{1, 2, 3, 4\}, s}= \frac{s \de z}{\prod_{i \in \{1, 2, 3, 4\}}(z-p_i)} \begin{pmatrix}(z-z_a)(z-z_b)& -(z-z_b)^2 \\ (z-z_a)^2 & -(z-z_a)(z-z_b) \end{pmatrix},\]
for $z_a, z_b \in \CP^1$ with  $z_a \neq z_b$. All flags line up with the line bundle $\begin{pmatrix} z-z_b\\z-z_a \end{pmatrix}$ of degree $-3$.
Note that the constants $z_a, z_b$ are not geometrically meaningful since the constant gauge transformation $g=\begin{pmatrix} a & b \\ c & d \end{pmatrix}$ acts by M\"obius transform $z_a \mapsto \frac{a z_a - c z_b}{a-c}, z_b \mapsto \frac{b z_a - d z_b}{b -d}$. We can use the freedom of $z_a, z_b$ in order to show that the $s \to 0$ limit is a certain $\C^\times$-fixed point; likewise, for the $s \to \infty$ limit. 

We consider the $s \to \infty$ limit and produce a gauge transformation $g_s$ on $\C \subset \CP^1$ such that 
\[ g_s^{-1} \varphi_{\{1, 2, 3, 4\}, s}  g_s =  \frac{\de z}{\prod_{i=1}^4 (z-p_i)} \begin{pmatrix} 0 & 0 \\ 1 & 0 \end{pmatrix},\]
matching the expression for the $\C^\times$-fixed point associated to $I=\{1, 2, 3, 4\}$.  
For this limit, it is convenient to take $z_a=\infty$ and $z_b =0$.
Recall the proof that non-trivial extensions $0 \rightarrow \cO(-1) \rightarrow \cE_t \rightarrow \cO(1) \rightarrow 0$ all satisfy $\cE_t \simeq \cO \oplus \cO$ for $t \in \C^\times$ based on the Birkhoff--Grothendieck decomposition 
\[ \begin{pmatrix} z & t \\ 0 & z^{-1} \end{pmatrix} = \begin{pmatrix} 0 & t \\ -1 & \frac{1}{z} \end{pmatrix} \begin{pmatrix}t & 0 \\ -z & 1 \end{pmatrix}^{-1}. \]
Tensoring everything by $\cO(-2)$, we get the equivalent result that non-trivial extensions  $0 \rightarrow \cO(-3) \rightarrow \cE_t \rightarrow \cO(-1) \rightarrow 0$ satisfy $\cE_t \simeq \cO(-2) \oplus \cO(-2)$. 
Taking $t=s^{-1}$,
modify the change of trivialization (by conjugation by a permutation matrix) so that $\cO(-1)$ is listed first and (by multiplication by $s^{-1}$) so the determinant is one: \[g_s=s^{-1/2} \begin{pmatrix} 0 & 1 \\ 1 & 0 \end{pmatrix}  \begin{pmatrix}s^{-1}& 0 \\ -z & 1 \end{pmatrix}^{-1} \begin{pmatrix} 0 & 1 \\ 1 & 0 \end{pmatrix}.\]
We compute that in the new trivialization of $\cE_{\frac{1}{s}}$, the Higgs field is 
\[ g_s^{-1} \varphi_{\{1, 2, 3, 4\}, s}  g_s =  \frac{\de z}{\prod_{i \in I} (z-z_i)} \begin{pmatrix} 0 & 0 \\ 1 & 0 \end{pmatrix}\]
as desired, even without taking the limit as $s^{-1} \to 0$.
The distinguished $\cO(-3)$ corresponding to $e=\begin{pmatrix} z\\ 1 \end{pmatrix}$ get mapped to 
\[ \lim_{s^{-1} \to 0} g_s \cdot e = \begin{pmatrix} 1 \\ 0 \end{pmatrix}, \]
in $\mathcal{O}(-3) \oplus \mathcal{O}(-1)$.

Conversely, at the wobbly point corresponding to $s=0$, the description of the $\C^\times$-fixed point depends on the chamber. Noting that the flag $F_{p_i} =\frac{p_i - z_b}{p_i-z_a}$, we note that the cross-ratio of the flags $(F_{p_1}, F_{p_2}; F_{p_3}, F_{p_4})$ of $\varphi_{\{1, 2, 3, 4\}, s}$ for $s \in \C^\times$ is equal to $(p_1, p_2; p_3, p_4)$. In interior chambers, this determines the $\C^\times$-fixed point. In exterior chambers, $\{1, 2, 3, 4\} \in \mathcal{I}$ forces $|I_0|=3$. In the exterior chamber of Type $E2_i$, it is useful to take $z_a=p_i$ and $z_b$ arbitrary and note that for $s \in \C^\times$, $\varphi_{\{1, 2, 3, 4\}, s}$ is gauge equivalent to\[  \frac{\de z}{\prod_{i \in I} (z-z_i)}  \begin{pmatrix} s(z-p_i)(z-z_b) & -s^2(z-z_b)^2 \\ (z-p_i)^2 & -s(z-p_i)(z-z_b)  \end{pmatrix}\]
which converges as $s \to 0$ to the following desired attaching point in the central sphere:
\[\cE=\cO(-2) \oplus \cO(-2) \qquad \varphi = \frac{(z-u) \de z}{\prod_{j \neq i} (z-z_j)} \begin{pmatrix} 0 & 0 \\ 1 & 0 \end{pmatrix} \Big|_{u=p_i}.\]

\end{ex}

\begin{ex}[The exterior sphere when $|I|=2$]
The exterior sphere when $|I|=2$ is almost trivial to describe.
For $s \in \CP^1-\{0\}$, we  
\begin{equation}
\mathcal{E} = \cL_1 \oplus \cL_2, \varphi =\begin{pmatrix}
        0&0\\\phi_0&0
\end{pmatrix},
\end{equation}
where $\cL_1 \simeq \cO(-2) \simeq \cL_2$, $F_{p_i} = \cL_2|_{p_i}$ for $i \in I$, and from \eqref{eq:extfixed}
\begin{equation} \phi_0 = \frac{\de z 
}{\prod_{i \in I} (z-p_i)}. \end{equation}
Designating the two points of $I^c$, $I^c=\{i_{vary}, i_{fixed}\}$, we let $F_{p_{i_{fixed}}} = \cL_1|_{p_{i_{fixed}}}$, while letting 
$F_{p_{i_{fixed}}} = \begin{pmatrix} 1\\ s^{-1} \end{pmatrix}$ in the above decomposition. 

We recover the exterior $\C^\times$-fixed point, by taking $s=\infty$. We refer to Meneses for the attachment at the interior $\C^\times$-fixed point, since it is chamber dependent. 
\end{ex}

\section{Torelli theorem for Hitchin moduli spaces}\label{sec:Torelli}

We now state the Torelli theorem for weakly parabolic Hitchin moduli spaces on the four-punctured sphere. The proof will occupy the remaining part of the paper.

Recall from Section \ref{sec:distinguished_basis} that each chamber determines a distinguished basis for the second homology of the moduli space. More precisely, if $\boldsymbol{\alpha}$ lies in an interior chamber labelled by the even partition set $\mathcal{I}$, then a basis $\mathfrak{B}$ of $H_2(\cM(\boldsymbol{\alpha}, \mathbf{0}),\Z)$ is provided by the central sphere $S_0$ consisting of stable parabolic bundles and the exterior spheres $S^2_J$ for $J \in \mathcal{I}$. If $\boldsymbol{\alpha}$ lies in an exterior chamber labelled by the odd order subset $I_0$ and associated even partition set $\mathcal{I}$, then a basis $\mathfrak{B}$ of $H_2(\cM(\boldsymbol{\alpha}, \mathbf{0}),\Z)$ is provided by the central sphere $S_0=S_{I_0}$ and the exterior spheres $S^2_J$ for $J \in \mathcal{I}$.

Each such basis may be extended to a parallel basis of the local system $\mathcal{H}$ in Proposition \ref{prop:localsystem} and hence to bases of $H_2(\cM(\boldsymbol{\alpha}, \mathbf{m}),\Z)$ for all $(\boldsymbol{\alpha}, \mathbf{m}) \in \mathcal{R}$ simultaneously. We will make no notational distinction between a basis $\mathfrak{B}$ and its parallel extension in the following. This applies as well to the homology classes of the individual spheres $S_0$ and $S^2_J$ in that basis.

\begin{thm}[Torelli theorem for Hitchin moduli spaces] \label{thm:Torelli}
For an interior chamber labelled by even partition set $\mathcal{I}$, the integrals of $\omega_I$ and $\Omega_I$ over the exterior sphere labelled by the subset $J \in \mathcal{I}$ are given by 
\begin{align}\label{eq:Torelli1}\int_{S^2_J} \omega_I &= 4 \pi^2 K_J\\ \nonumber
\int_{S^2_J} \Omega_I &= 2 \pi M_J,
\end{align}
where (see original definition of $K_J$ in \eqref{eq:K})
\begin{align}\label{eq:Torelli2}K_J &= \sum_{j \in J} \alpha_{p_j} -\sum_{j \notin J} \alpha_{z_j} + \left \lfloor \frac{|J^c| - |J|}{4} \right\rfloor\\ \nonumber
M_J &= \sum_{j \in J} m_{p_j} -\sum_{j \notin J} m_{p_j}
\end{align}

For an exterior chamber with central sphere labelled by $I_0$ and associated even partition set $\mathcal{I}$, the integrals of $\omega_I$ and $\Omega_I$ of the exterior sphere labelled by the subset $J \in \mathcal{I}$ are given by 
\begin{align}\label{eq:Torelli3}
\int_{S^2_J} \omega_I &= 4 \pi^2 \left(K_J- K_{I_0} \right)\\ \nonumber
\int_{S^2_J} \Omega_I &= 2 \pi \left(M_J- M_{I_0}\right)
\end{align}
\end{thm}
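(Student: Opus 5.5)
The proof splits into the $\omega_I$ part and the $\Omega_I$ part, and in each case I would first work at $\bfm=\mathbf{0}$ and then propagate to general $\bfm$ using the extended moduli space $\calP_0(\bsa)$ of Section \ref{sec:CFW} and the parallel basis of Proposition \ref{prop:localsystem}.

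\emph{The $\omega_I$-periods.} At $\bfm=\mathbf{0}$ each exterior sphere $S^2_J$ is the closure of a $\C^\times$-orbit joining the exterior fixed point $q^+_J$ to the wobbly point $q^-_J$. The $S^1\subset\C^\times$ action is Hamiltonian for $\omega_I$, with moment map $\mu=\|\varphi\|^2_{L^2}$ up to normalization. By Duistermaat--Heckman on a sphere (equivalently, Stokes on an $S^1$-invariant sphere), $\int_{S^2_J}\omega_I = 2\pi\,(\mu(q^+_J)-\mu(q^-_J))$.

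At a fixed point with zero Higgs field, $\mu=0$. At a split fixed point $(\cL_1(\beta_1)\oplus\cL_2(\beta_2),\phi_0)$ the Hitchin equations decouple into a line-bundle equation, so $\|\varphi\|^2$ is computed by Chern--Weil: integrating $F+[\varphi,\varphi^*]=0$ on the $\cL_2$-component gives $\|\varphi\|^2 = 2\pi\cdot(-\pdeg\cL_2(\beta_2)) = 2\pi K_J$, using the Remark after Proposition \ref{prop:generalsplit}. Hence $\int_{S^2_J}\omega_I = 4\pi^2 K_J$ in interior chambers, where $q^-_J$ lies on the central sphere of stable bundles with $\varphi=0$. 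In exterior chambers $q^-_J$ lies on the central $\CP^1$ labelled by $I_0$, where $\mu = 2\pi K_{I_0}$, giving $4\pi^2(K_J-K_{I_0})$. I will fix the constant normalization against the fiber identity $\int_F\omega_I=4\pi^2$ together with $2S_0+\sum S_J=F$.

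To pass to $\bfm\neq 0$, I would show $\frac{d}{d\bfm}\int_{S_J}\omega_I=0$. The $\C^\times$-action on $\calP_0(\bsa)$ scales $\bfm$ and, I expect, rescales the fiberwise $\omega_I$ only through the hyperk\"ahler structure of the fibers. Homogeneity of the period under this action, combined with smooth dependence on $\bfm$ (which follows from the smooth slices of \cite{collier2024conformallimitsparabolicslnchiggs}) and continuity at $\bfm=0$, should force independence of $\bfm$. The main obstacle is that $\omega_I$ on the fibers is not the restriction of a closed form on $\calP_0(\bsa)$ with an evident Hamiltonian description, so the derivative computation has to be done by hand. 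The plan is to write $\partial_{\bfm}$ of the period as an integral of $\iota_V\omega_I$ over a boundary, for a lift $V$ of $\partial_{\bfm}$, and show that it vanishes using the distributional moment map picture, in which $\mu_\R$ depends only on $\bsa$.

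\emph{The $\Omega_I$-periods.} At $\bfm=0$, $\Omega_I$ has $\C^\times$-weight $1$, so it vanishes on the nilpotent cone and all $\Omega_I$-periods are zero, consistent with $M_J=0$. For general $\bfm$, I will use the tautological $1$-form $\lambda$ (the Liouville form on the spectral curve, $\Tr$ of $\varphi$ paired with deformations) as a primitive of $\Omega_I$ off the polar divisor formed by the sections $\sigma^\pm_j$ where the eigenline meets the flag at $p_j$. Stokes' theorem then gives $\int_{S}\Omega_I = 2\pi\sum_j (\pm m_j)\,(S\cdot\Sigma_j)$ with $\Sigma_j=\sigma_j^+-\sigma_j^-$, since $\lambda$ has residue $\pm m_j$ along $\sigma^\pm_j$. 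It remains to compute the intersection numbers $S_J\cdot\Sigma_j=\pm1$ with the sign set by whether $j\in J$. I would do this in two explicit chambers using the concrete parametrization, and then transfer to all chambers by wall-crossing, that is, by the Coxeter reflections $K_J\mapsto K_J-K_{I_0}$, which also accounts for the exterior-chamber formula.
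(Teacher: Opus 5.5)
Your architecture matches the paper's. At $\bfm=\mathbf{0}$, the exterior-sphere areas come from the $S^1$ moment map $\mu=i\int_C\Tr\varphi\wedge\varphi^{*_h}$ evaluated at the two fixed points (Observation~\ref{obs:2}, Lemma~\ref{lem:mu}, Chern--Weil on $\cL_2(\beta_2)$). The $\Omega_I$-periods come from the singular tautological primitive, its residues $\pm m_j$ along the polar sections, intersection numbers computed in two chambers, and Dehn-twist wall-crossing.

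\textbf{The gap: passing the $\omega_I$-periods from $\bfm=\mathbf{0}$ to $\bfm\neq\mathbf{0}$.}
\begin{itemize}
\item There is no homogeneity to use. Only $S^1\subset\C^\times$ preserves $\omega_I$. For $t\in\R_+$, the pullback $m_t^*\omega_I$ along $m_t:\cM(\bsa,\bfm)\to\cM(\bsa,t\bfm)$ is the K\"ahler form of a different hyperk\"ahler structure, namely $(\lambda_1,\lambda_2)=(t^2,t^{-2})$ in the notation of Appendix~\ref{sec:scalings}. It is not a multiple of $\omega_I$, and $[m_t^*\omega_I]=[\omega_I]$ is exactly what has to be proved.
\item Your fallback does not close this. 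Since $S$ is closed, $\partial_{\bfm}\int_S\omega_I=\int_S\iota_V\,d\tilde\omega$ for whatever extension $\tilde\omega$ of the fiberwise forms to $\calP_0(\bsa)$ you choose.
\item Its vanishing is an infinite-dimensional Duistermaat--Heckman statement. The claim that ``the distributional level $\mu_\R$ depends only on $\bsa$'' is precisely the heuristic of Section~\ref{sec:affinelinearity}, which the paper says is not rigorous: the extended configuration space of \cite{collier2024conformallimitsparabolicslnchiggs} carries no hyperk\"ahler structure from which the fibers arise as reductions.
\item For non-radial $\bfm$-directions you would not even have a group action to produce $V$.
\end{itemize}

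\textbf{What is missing: an explicit relative primitive along rays.} This suffices because every $\bfm$ lies on the ray $\{t\bfm\}$.
\begin{itemize}
\item The paper lifts the $\R_+$-action to solutions of the Hitchin equations by a hermitian gauge correction. Its generator is $\tilde X=(\bar\partial_A\dot g-(\bar\partial_A\dot g)^*,\varphi+[\varphi,\dot g])$ with $\dot g=-2L_{A,\varphi}^{-1}[\varphi,\varphi^*]$.
\item It proves that $\tilde\lambda_I=\iota_{\tilde X}\omega_I$ is gauge invariant and horizontal, with $\partial_t m_t^*\omega_I=d\lambda_{I,t}$ (Lemma~\ref{lem:primitive}, Proposition~\ref{prop:dlambda}).
\item For $\bfm\neq\mathbf{0}$ this needs the weighted analysis of \cite{collier2024conformallimitsparabolicslnchiggs}: $[\varphi,\varphi^*]\in L^2_\delta$ even though $\varphi\notin L^2_\delta$; invertibility of $d_1^*d_1$ on $\mathcal{R}_\delta=L^2_{2,\delta}\oplus\mathcal{H}_\delta$; and justification of the integrations by parts.
\item Constancy of $\int_S m_t^*\omega_I$ for $t>0$, together with continuity of $\omega_I$ as $t\to 0$, then gives Proposition~\ref{prop:sameperiods}.
\end{itemize}

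\textbf{Smaller points.}
\begin{itemize}
\item The primitive of $\Omega_I$ is not ``$\Tr\varphi$ paired with deformations.'' That configuration-space form is smooth, and for $\bfm\neq\mathbf{0}$ it is not horizontal: it picks up a boundary term proportional to $m_p$ at each puncture, so it does not descend. You need the spectral tautological form $\tau=w\,dz$ in the coordinates $(\beta,z,w)$ of Proposition~\ref{prop:higgsbundles}. Identifying $i\,d\tau$ with $\Omega_I$ uses Proposition~\ref{prop:semiflatsymplectic} and $d\tau=d\beta\wedge\omega$.
\item To compute $I(S_J,\Sigma_j)$ at $\bfm\neq\mathbf{0}$ you need a concrete representative of the parallel-transported sphere. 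The concrete parametrization alone does not tell you which cycle it is. The paper uses $S_J\sim\tau_J^--\tau_J^+$, built from the Bia{\l}ynicki--Birula strata of $\calP_0(\bsa)$, and tracks the $\C^\times$-limits of the polar sections through flags and cross-ratios.
\item The fiber identity cannot fix your normalization, because the pointwise-fixed central sphere is an additional unknown. The constant comes from the moment map itself.
\end{itemize}
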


\begin{rem}
We emphasize the following features of these formul\ae:
\begin{enumerate}
\item The $\omega_I$-periods $\int_{S^2_J} \omega_I$ are affine-linear functions of the parabolic weights $\boldsymbol{\alpha} \in  (0, \frac 12 )^4$.
\item The $\Omega_I$-periods $\int_{S^2_J} \Omega_I$ are linear functions of the complex masses $\mathbf{m} \in  \C^4$. 	
\end{enumerate}
	In particular, the $\omega_I$-periods are independent of the complex masses and the $\Omega_I$-periods are independent of the parabolic weights.
\end{rem}
 
The proof of Theorem \ref{thm:Torelli} will be carried out in the remaining sections of the paper. In the rest of this section we will discuss a number of consequences, mostly concerning the $\omega_I$-periods and the geometry of strongly parabolic Hitchin moduli spaces $\cM(\boldsymbol{\alpha}, \mathbf{0})$.
 
\medskip
 
The first two corollaries collect some useful formul\ae\ for the integral of $\omega_I$ over the central and the exteriors spheres in various chambers. These will be used later to determine the range of the first component of the Torelli map $\mathcal{T}_{\mathfrak B}$ in Definition \ref{def:Torelli}. Note that since these spheres are holomorphic with respect to complex structure $I$ in $\cM(\boldsymbol{\alpha}, \mathbf{0})$, the integral of $\omega_I$ precisely computes the area of that sphere with respect to the Hitchin metric on $\cM(\boldsymbol{\alpha}, \mathbf{0})$.

\begin{cor}\label{cor:centralvolumes}
The integral of $\omega_I$  over the central sphere $S_0$ is as follows for each of the chamber types:
\smallskip
  \begin{itemize}
    \item Type A1: $\mathcal{I}_{A, i}=\{\emptyset, \{k,l\}, \{j,l\}, \{j,k\}\}$. 
    \[\int_{S_0} \omega_I=4 \pi^2 \cdot 2\alpha_i\]
    \item Type A2: $\mathcal{I}_{A, i}^c=\{\{1, 2, 3, 4\}, \{i,j\}, \{i,k\}, \{i,l\}\}$.  \[\int_{S_0} \omega_I=4\pi^2\left(1- 2\alpha_i\right) \]
    \item Type B1: $\mathcal{I}_{B, i}=\{\emptyset, \{i,j\}, \{i,k\}, \{i,l\}\}$.   \[\int_{S_0} \omega_I= 4 \pi^2 \left( -\alpha_i + \alpha_j +\alpha_k + \alpha_l \right)= 
    4 \pi^2 \left( - K_{\{i\}}\right)  \]
    \item  Type B2: $\mathcal{I}_{B, i}^c=\{ \{1, 2, 3, 4\}, \{k,l\}, \{j,l\}, \{j,k\}\}$.   \[\int_{S_0} \omega_I=4 \pi^2 \left( 1+ \alpha_i -  \alpha_j -  \alpha_k - \alpha_l\right) 
    = 4\pi^2 \left(- K_{\{j, k, l\}} \right)\]
    \item Type E1: $\mathcal{I}_{E, i}=\mathcal{I}_{B, i}=\{\emptyset, \{i,j\}, \{i,k\}, \{i,l\}\}$ and $I_0=\{i\}$.
    \[ \int_{S_0} \omega_I = 4 \pi^2 \, K_{I_0}\]
    \item Type E2: $\mathcal{I}_{E, i}^c=\mathcal{I}_{B, i}^c=\{ \{1, 2, 3, 4\}, \{k,l\}, \{j,l\}, \{j,k\}\}$ and $I_0=\{j, k, l\}$.
    \[ \int_{S_0} \omega_I = 4 \pi^2 \,K_{I_0} \]
    \end{itemize}
\end{cor}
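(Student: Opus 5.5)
The plan is to derive the central-sphere integral from Theorem \ref{thm:Torelli} and the homological relation $2S_0+\sum_{J\in\mathcal I}S^2_J=F$, together with the fiber integral $\int_F\omega_I=4\pi^2$ (the normalization $\ell^2\im\tau=4\pi^2$ used in the proof of Lemma \ref{lem:period_equivariance}). This gives
\[
\int_{S_0}\omega_I=\frac12\Bigl(4\pi^2-\sum_{J\in\mathcal I}\int_{S^2_J}\omega_I\Bigr),
\]
so everything reduces to summing the four $K_J$ (or $K_J-K_{I_0}$) over the even partition set $\mathcal I$ and simplifying, chamber type by chamber type. The fiber integral should be justified once, independently of the chamber. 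Since $F$ is an $I$-holomorphic Hitchin fiber, $\int_F\omega_I$ is its area. This area is locally constant in $(\bsa,\bfm)$ because $[F]$ is parallel, and it equals the value $4\pi^2$ already fixed in the normalization used for $\calP$.

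The per-chamber computation uses $K_J=\sum_{J}\alpha-\sum_{J^c}\alpha+\lfloor(|J^c|-|J|)/4\rfloor$, whose floor term is $1,0,-1$ for $|J|=0,2,4$.
\begin{itemize}
\item Type A1, $\{\emptyset,\{k,l\},\{j,l\},\{j,k\}\}$: the $\alpha_i$-coefficients sum to $-4$ and each $\alpha_j$-coefficient sums to $0$, while the constants sum to $1$. So $\sum K_J=1-4\alpha_i$ and $\int_{S_0}\omega_I=4\pi^2\cdot2\alpha_i$.
\item Type A2: $\sum K_J=-1+4\alpha_i$, giving $4\pi^2(1-2\alpha_i)$.
\item Type B1, $\{\emptyset,\{i,j\},\{i,k\},\{i,l\}\}$: $\sum K_J=1+2\alpha_i-2(\alpha_j+\alpha_k+\alpha_l)$, giving $4\pi^2(-\alpha_i+\alpha_j+\alpha_k+\alpha_l)$. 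This matches $-K_{\{i\}}$ after noting $\lfloor 2/4\rfloor=0$.
\item Type B2 is the sign-reversed analogue: $\sum K_J=-1-2\alpha_i+2(\alpha_j+\alpha_k+\alpha_l)$, giving $4\pi^2(1+\alpha_i-\alpha_j-\alpha_k-\alpha_l)=-4\pi^2K_{\{j,k,l\}}$, where $\lfloor -2/4\rfloor=-1$.
\end{itemize}

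For the exterior types E1 and E2 we use \eqref{eq:Torelli3}. The sum now becomes $\sum_J K_J-4K_{I_0}$, and $\sum_J K_J$ equals the B1 (resp. B2) sum, namely $1-2K_{I_0}$ (resp. $-1-2K_{I_0}$) rewritten in terms of $K_{I_0}$. For E1 this gives $\tfrac12(1-(1-2K_{I_0}-4K_{I_0}))=3K_{I_0}$, which does not match the claimed $K_{I_0}$. This signals a subtlety: in exterior chambers the relation with the fiber must be checked more carefully. I expect that in the E-chambers the admissible basis satisfies $2S_0+\sum S_J=F$ with $S_0=S_{I_0}$ oriented so that its area is $-4\pi^2K_{I_0}$ reinterpreted. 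The more likely fix is to compute $\int_{S_0}\omega_I$ directly, as the area of the $\C^\times$-fixed $\CP^1_u$-family of Hodge bundles. On that family the Higgs field is nonzero and the moment-map/Morse-theoretic identity gives the area as $4\pi^2$ times $-\pdeg(\cL_2(\beta_2))=K_{I_0}$, by the remark following Proposition \ref{prop:generalsplit}.

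The main obstacle is therefore the exterior-chamber case and the consistency of the fiber relation there. My plan is to obtain E1 and E2 by continuity across the walls $L_i=0$ and $L_i=1$ from the adjacent B1 and B2 chambers instead. Crossing such a wall replaces $S_0$ by $-S_0'$, a reflection in the affine $D_4$ Weyl group, and the exterior spheres by $S_J+S_0'$. The period of the new central sphere is then the negative of the B-chamber value, $4\pi^2K_{\{i\}}$ (resp. $4\pi^2K_{\{j,k,l\}}$), which is positive in the E-chamber. This agrees with the claimed formulas, and consistency with \eqref{eq:Torelli3} then follows since $K_J-K_{I_0}$ equals $K_J+\int_{S_0'}\omega_I/4\pi^2$ up to this reflection.
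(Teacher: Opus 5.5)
Your interior-chamber computations (A1, A2, B1, B2) are exactly the paper's proof: Observation \ref{obs:1} gives $\int_{S_0}\omega_I=2\pi^2\bigl(1-\sum_{J\in\mathcal I}K_J\bigr)$, and the formulas follow by counting coefficients, as you do. The exterior case is where your proposal breaks, and the ``subtlety'' you found there is an algebra slip, not a geometric issue. Since $\lfloor 2/4\rfloor=0$ and $\lfloor -2/4\rfloor=-1$, one has $K_{\{i\}}=\alpha_i-\alpha_j-\alpha_k-\alpha_l$ and $K_{\{j,k,l\}}=\alpha_j+\alpha_k+\alpha_l-\alpha_i-1$. So your B1 and B2 sums both equal $1+2K_{I_0}$, not $1-2K_{I_0}$ and $-1-2K_{I_0}$. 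This is already forced by your own results $2\pi^2(1-\sum_J K_J)=-4\pi^2K_{I_0}$ in those chambers. With the correct sum, \eqref{eq:Torelli3} and the fiber relation give
\[
\int_{S_0}\omega_I=2\pi^2\Bigl(1-\sum_{J\in\mathcal I}K_J\Bigr)+8\pi^2K_{I_0}=-4\pi^2K_{I_0}+8\pi^2K_{I_0}=4\pi^2K_{I_0}.
\]
This is precisely how the paper concludes: add $8\pi^2K_{I_0}$ to the value in the adjacent type B chamber. No reorientation of $S_0$ is needed.

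As written, though, you discard this computation, and neither replacement argument proves the E1/E2 formulas. The moment-map argument fails because in an exterior chamber the central sphere is an entire component of the $\C^\times$-fixed locus. The circle acts trivially on it and the moment map is constant there, so Observation \ref{obs:2} does not apply. The constant value of $\mu$ on a fixed component says nothing about its area; in interior chambers $\mu\equiv 0$ on $S_0$, yet its area is positive. The wall-crossing argument needs to know that the parallel transport of the type B basis differs from the exterior-chamber basis by the reflection $r_0$. Continuity across the wall only shows that the two affine period functions agree on the wall, not which homology classes correspond. In the paper this identification is Lemma \ref{lem:basiscomparison}, and its proof uses the identity $\int_{S_0}\omega_I=-4\pi^2K_{I_0}=-\int_{S_{I_0}}\omega_I$, which is the statement you are trying to prove. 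Invoking it is therefore circular unless you derive the homological wall-crossing independently, for instance from Meneses' geometric description. Finally, justify $\int_F\omega_I=4\pi^2$ by Observation \ref{obs:1}, i.e.\ exponential convergence to the semiflat metric. Your claim that the integral is locally constant because $[F]$ is parallel is not a valid reason, since $\omega_I$ itself depends on $(\bsa,\bfm)$.
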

    \begin{rem}
      Note in particular that in the exterior chamber labelled by $I_0$, $K_{I_0}$ is positive, while in the adjacent interior chamber $K_{I_0}$ is negative. Thus, it is clear that these integrals are all positive. This is expected since they compute the area of the central sphere in $\cM(\boldsymbol{\alpha}, \mathbf{0})$. Furthermore, these integrals vanish on the boundary face of the respective chamber which is given by the intersection of the closed chamber with the boundary of the Biswas polytope. This behavior is predicted by the description of wall-crossing in \cite{Meneses}: when crossing a wall from an interior chamber to an adjacent exterior chamber, the central sphere given by the moduli space of stable parabolic bundles disappears; it reappears as the sphere $S_{I_0}$ labelled by the odd order subset $I_0$.  
    \end{rem}
 
 \begin{rem}[Witten's Volume Formula]
 In the 16 interior chambers, where the central sphere $S_0$ represents the moduli space of stable parabolic bundles on $\CP^1 \setminus D$, the integral of $\omega_I$ over $S_0$ is known as the symplectic volume of the moduli space. Witten gives a formula for the symplectic volume in arbitrary genus for any number of punctures  in \cite{Witten91}, which has been rederived by Heller and Heller for the four-punctured sphere in \cite{hellerheller}. We find it noteworthy that the formula in Corollary \ref{cor:centralvolumes} precisely matches Witten's formula as presented in Theorem 4 in \cite{hellerheller} (where the shifted weights $\rho_i = \frac 12 - \alpha_i \in (0, \frac 12)$ are used  to state the formula).
  \end{rem}
   
\begin{cor}
    The integral of $\omega_I$ over the exterior spheres are as follows for each of the exterior chambers:
    \smallskip
   \begin{itemize}
       \item Type E1: $\mathcal{I}_{E, i}=\mathcal{I}_{B, i}=\{\emptyset, \{i,j\}, \{i,k\}, \{i,l\}\}$ and $I_0=\{i\}$.
    \[ \int_{S_\emptyset} \omega_I = 4 \pi^2(1-2\alpha_i), \quad \int_{S_{\{i,j\}}} \omega_I= 4 \pi^2 \cdot 2 \alpha_j,  \quad \int_{S_{\{i,k\}}} \omega_I = 4 \pi^2 \cdot 2 \alpha_k,  \quad \int_{S_{\{i,l\}}} \omega_I= 4 \pi^2 \cdot 2 \alpha_l  \]
    \item Type E2: $\mathcal{I}_{E, i}^c=\mathcal{I}_{B, i}^c=\{ \{1, 2, 3, 4\}, \{k,l\}, \{j,l\}, \{j,k\}\}$ and $I_0=\{j, k, l\}$.
    \[ \int_{S_{\{1, 2, 3, 4\}}} \omega_I = 4 \pi^2 \cdot 2\alpha_i, \; \int_{S_{\{k,l\}}} \omega_I= 4 \pi^2(1-2 \alpha_j),  \; \int_{S_{\{j,l\}}} \omega_I = 4 \pi^2(1-2 \alpha_k),  \; \int_{S_{\{j,k\}}} \omega_I= 4 \pi^2(1-2 \alpha_l) \]
    \end{itemize}
\end{cor}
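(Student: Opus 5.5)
The plan is to specialize the exterior-chamber formula \eqref{eq:Torelli3} of Theorem \ref{thm:Torelli}, $\int_{S^2_J}\omega_I = 4\pi^2(K_J - K_{I_0})$, to the two exterior chamber types. This reduces everything to evaluating the affine functions $K_J$ from \eqref{eq:K}, including their floor terms, on the relevant subsets. No new analysis is needed beyond Theorem \ref{thm:Torelli}. The only care required is computing $\lfloor (|J^c|-|J|)/4\rfloor$ correctly for odd-cardinality sets, where it takes the non-obvious values $0$ and $-1$.

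\emph{Type E1}, with $I_0=\{i\}$. Since $\lfloor 2/4\rfloor = 0$, we get $K_{I_0} = \alpha_i-\alpha_j-\alpha_k-\alpha_l$. The sets in $\mathcal{I}_{B,i}$ give
\[
K_\emptyset = 1-\textstyle\sum_{m}\alpha_m, \qquad K_{\{i,j\}} = \alpha_i+\alpha_j-\alpha_k-\alpha_l ,
\]
and similarly for $\{i,k\}$ and $\{i,l\}$. Subtracting $K_{I_0}$ gives $K_\emptyset - K_{I_0} = 1-2\alpha_i$ and $K_{\{i,j\}}-K_{I_0} = 2\alpha_j$, and likewise $2\alpha_k$ and $2\alpha_l$. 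Multiplying by $4\pi^2$ yields the stated areas.

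\emph{Type E2}, with $I_0=\{j,k,l\}$. Here $\lfloor -2/4\rfloor = -1$, so $K_{I_0} = \alpha_j+\alpha_k+\alpha_l-\alpha_i-1$. Also $K_{\{1,2,3,4\}} = \sum_m\alpha_m - 1$ and $K_{\{k,l\}} = \alpha_k+\alpha_l-\alpha_i-\alpha_j$, with $\{j,l\}$ and $\{j,k\}$ analogous. The differences are $K_{\{1,2,3,4\}}-K_{I_0} = 2\alpha_i$ and $K_{\{k,l\}}-K_{I_0} = 1-2\alpha_j$, and likewise $1-2\alpha_k$ and $1-2\alpha_l$.

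As a consistency check, I would combine these with Corollary \ref{cor:centralvolumes} ($\int_{S_0}\omega_I = 4\pi^2K_{I_0}$) and verify that $2\int_{S_0}\omega_I + \sum_J\int_{S^2_J}\omega_I = 4\pi^2$, the fiber area. In Type E1 this is $2K_{I_0} + (1-2\alpha_i) + 2\alpha_j+2\alpha_k+2\alpha_l = 1$, as required. The same check also confirms positivity of each area in the open chamber, since $\alpha_m\in(0,\tfrac12)$. The main (mild) obstacle is purely bookkeeping: getting the floor terms right for $|J|$ odd. The substantive content is contained in Theorem \ref{thm:Torelli}, which is assumed here.
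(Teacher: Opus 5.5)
Your proposal is correct and is essentially the paper's argument: the corollary is obtained by evaluating the exterior-chamber formula $\int_{S^2_J}\omega_I = 4\pi^2(K_J - K_{I_0})$ of Proposition~\ref{prop:stronglyparabolicvolumesv2} (the $\omega_I$-part of Theorem~\ref{thm:Torelli}) on the sets of $\mathcal{I}_{B,i}$ and $\mathcal{I}_{B,i}^c$, and your floor-term values $K_{\{i\}}=\alpha_i-\alpha_j-\alpha_k-\alpha_l$ and $K_{\{j,k,l\}}=\alpha_j+\alpha_k+\alpha_l-\alpha_i-1$ are exactly right. One small correction: positivity of these areas follows directly from $\alpha_m\in(0,\tfrac12)$, not from the fiber-area consistency check, which only confirms the normalization.
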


\begin{rem}
It is evident that the integrals are positive on the open chambers. Each integral vanishes on one of the boundary faces of the chamber which are given as the intersection of the closed chamber with the boundary of the weight cube $(0,\frac 12)^4$. 
\end{rem}

  \begin{rem} [Orbifold Points] \label{rem:orbifold} In the closure of each chamber there is a unique point $\boldsymbol{\alpha} \in [0, \frac 12]^4$  at which the exterior spheres have volume zero and the central sphere has volume $2 \pi^2$. While this is not a generic point, so that the moduli space $\mathcal{M}(\boldsymbol{\alpha}, \mathbf{0})$ is singular and the Hitchin metric is formally not defined, it follows from the injectivity part of the Torelli theorem \cite{CVZ2024} and the Kummer construction for gravitational instantons in \cite{BM11} that the Hitchin metric degenerates to an appropriate scaling of the flat orbifold $(\C \times T^2_\tau)/\Z_2$ at the distinguished point. This orbifold has singularities modeled on $\C^2/\Z_2$ at the four fixed points of the $\Z_2$ action. In the interior chambers, this distinguished point occurs at $\boldsymbol{\alpha}=(\frac{1}{4}, \frac{1}{4}, \frac{1}{4},\frac{1}{4})$.
Each exterior chamber with central sphere labelled by the odd order subset $I_0$ contains the vertex $v_{I_0}$ defined in \eqref{eq:vI}. This point is a vertex on the boundary of the weight cube and is the distinguished point for the exterior chamber.
  \end{rem}
  
  \begin{rem}[Comparison with recent work on degeneration from ALG-$D_4$ to ALE-$D_4$ gravitational instantons]   Recently, Heller--Heller--Traizet \cite{HHT2025}\footnote{   In \cite{HHT2025}, Heller--Heller--Traizet use twistorial methods to describe the path of orbifold metrics corresponding to the line of parabolic weights when $\boldsymbol{\beta}=(\beta, \beta, \beta, \beta)$. This line segment connects the vertex $(\frac 12, \frac 12,\frac 12,\frac 12)$ on the boundary of the weight cube to the orbifold point $(\frac 14, \frac 14, \frac 14, \frac 14)$ in the interior. It lies inside the Biswas polytope and has 3 interior chambers adjacent to it.  Along the interior of the segment three exterior spheres are collapsed. Indeed, the segment lies in the intersection of the three hyperplanes $\{K_{\{1,2\}}=0\}$, $\{K_{\{1,3\}}=0\}$ and $\{K_{\{1,4\}}=0\}$. At the vertex $(\frac 12, \frac 12,\frac 12,\frac 12)$, the central sphere collapses as well.  The authors prove that in the blow-up limit when the area of the central sphere is rescaled to be constant along the segment, the family of hyperK\"ahler orbifolds converges as $R \to 0$ to the quotient of an isometric action of $\Z_2 \times \Z_2$  on $T^*\CP^1$ equipped with the Eguchi--Hanson metric. This action has 6 fixed points on $\CP^1 \subset T^*\CP^1$ giving rise to 3 orbifold points in the limit. The methods used in \cite{HHT2025} are quite different from the methods used in this paper, namely they use loop group methods to construct solutions of Hitchin's self-duality equation.
}, Heller--Heller--Meneses \cite{HellerHellerMeneses}\footnote{Building on \cite{HHT2025}, Heller--Heller--Meneses again use loop methods. Their proof is for strongly parabolic Higgs bundles on the $n$-punctured sphere.}, and Fredrickson--Yae \cite{FredricksonYae}\footnote{Fredrickson--Yae use constructive analytic techniques building a hermitian metric $h_R^{\mathrm{app}}$ which is close to solving the Hitchin equations, then perturb to the true harmonic metric $h_R$ for $R\ll 0$. Their proof is for strongly parabolic Higgs bundles on the $n$-punctured sphere.} have considered the behavior of the  hyperK\"ahler metric  on a family of Hitchin moduli spaces $\mathcal{M}_R(\boldsymbol{\alpha}(R))$ as $R \to 0$. Here, the subscript $R$ indicates a certain rescaling of the metric by $R^{-1/2}$ on its torus fibers and $R^{1/2}$ in the base directions (see Appendix \ref{sec:scalings}\footnote{For comparison with Appendix \ref{sec:scalings}, take $\lambda_2=R^2, \lambda_1=R^{-1}$.}).  With this tuning, the holomorphic symplectic form $\Omega_{I, R}$ is unchanged along this family. The parabolic weights depend linearly on $R$ via $\alpha_i(R)=\frac{1}{2} - R \beta_i$ for weights $\boldsymbol{\beta}$ sufficiently close to $\mathbf{0}$. They proved that on the $n$-punctured sphere, the limiting hyperK\"ahler metric  agreed (up to a scale) with the hyperK\"ahler metric on $n$-hyperpolygon space with $(\mathfrak{u}(1))^n$-valued real moment map determined by $\boldsymbol{\beta}$.

Our theorem gives another interpretation of their choice of $\boldsymbol{\alpha}(R)$ when $n=4$: it is precisely the choice so that the integral of $\omega_{I,R}$ over the exterior spheres is constant in $R$. (In fact, early computations of $\omega_I$ and $\Omega_I$ from our project were the motivation for \cite{FredricksonYae}'s work.)  Moreover, while all three of those papers restrict to the case $\mathbf{m}=\mathbf{0}$, from our perspective, one should also be able to fix $\mathbf{m}(R)=\mathbf{m}$, and land on the hyperpolygon space with value $\left(\mathfrak{gl}(1)\right)^n$-valued complex moment map determined by $\mathbf{m}$.
\end{rem}

\section{Integral of \texorpdfstring{$\omega_I$}{the real symplectic form}}\label{sec:TorelliomegaI}

We separately integrate $\omega_I$ in the strongly parabolic moduli spaces ($\mathbf{m}=\mathbf{0}$) and the weakly parabolic moduli spaces ($\mathbf{m}\neq \mathbf{0}$). The computation in the strongly parabolic moduli spaces relies on our description of the $\C^\times$-fixed points in each chamber. The extension to the weakly parabolic moduli spaces depends on the structure of the fibration $\mathcal{P}_0(\boldsymbol{\alpha}) \to \C^4_{\mathbf{m}}$.

\subsection{Integrating \texorpdfstring{$\omega_I$}{the real symplectic form} in strongly parabolic Hitchin moduli spaces} 

In this section, we will use the notation $D=\{p_1, p_2, p_3, p_4\}$ rather than $D=\{0, 1, p_0, \infty\}$, and we will label spheres by $I \in \mathcal{I}$, using the notation of Section \ref{sec:nilpotentcone}.

\begin{prop}\label{prop:stronglyparabolicvolumesv2}
In an interior chamber (see Section \ref{sec:interiorchambers}) labelled by even partition set $\mathcal{I}$, the area of the exterior sphere labelled by the subset $J \in \mathcal{I}$ is given by 
\[\int_{S^2_J} \omega_I = 4 \pi^2 K_J>0,\]
where (see original definition of $K_J$ in \eqref{eq:K})
\[K_J = \sum_{j \in J} \alpha_{p_j} -\sum_{j \notin J} \alpha_{p_j} +\left\lfloor \frac{|J^c| - |J|}{4}\right\rfloor.\]

In an exterior chamber (see Section \ref{sec:exteriorchambers}) labelled $I_0=\{i\}$ or $\{j, k, l\}$, an odd order subset of $\{1, 2, 3, 4\}$,  with associated even partition set $\mathcal{I}$, 
the area of the exterior sphere labelled by the subset $J \in \mathcal{I}$  is given by 
\[\int_{S^2_J} \omega_I = 4 \pi^2 \left(K_J- K_{I_0} \right)>0.\]
\end{prop}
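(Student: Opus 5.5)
The area of each exterior sphere will be computed via the $S^1\subset\C^\times$ action, which is Hamiltonian for $\omega_I$ on $\cM(\bsa,\mathbf{0})$ with moment map $\mu=\|\varphi\|_{L^2}^2$, suitably normalized. Each exterior sphere $S^2_J$ is $S^1$-invariant and $I$-holomorphic. It has exactly two fixed points: the exterior fixed point $q_J^+$ and the wobbly point $q_J^-$ where it meets the central sphere. Duistermaat--Heckman (equivalently, Archimedes' theorem for a rotation of $\CP^1$ with weight one) then gives
\[
\int_{S^2_J}\omega_I = 2\pi\,\bigl|\mu(q_J^+)-\mu(q_J^-)\bigr|.
\]
The plan therefore has three steps: fix the normalization of $\mu$ against the conventions for $\omega_I$ and check that the action has weight one on $S^2_J$; evaluate $\mu$ at the two fixed points; and take the difference.

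At a fixed point with $\varphi\neq0$, as in \eqref{eq general rk 2 fixed}, the harmonic metric splits as $h=h_1\oplus h_2$ on $\cL_1(\beta_1)\oplus\cL_2(\beta_2)$. The Hitchin equation restricted to $\cL_2$ gives $F_{h_2}+\phi_0\wedge\phi_0^{*}=0$ in the appropriate sign convention. The key point is to integrate this over $C$, using Chern--Weil for parabolic line bundles with adapted singular metrics, which gives $\frac{\I}{2\pi}\int F_{h_2}=\pdeg\cL_2(\beta_2)$. This yields $\|\phi_0\|_{L^2}^2=2\pi\,(-\pdeg\cL_2(\beta_2))=2\pi K_J$ by \eqref{eq:stab2} and the remark following Proposition \ref{prop:generalsplit}. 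With the paper's factor of $2$ in the metric and the trace, this should become $\mu(q_J^+)=2\pi K_J$ up to a universal constant, chosen so that the final answer is $4\pi^2K_J$. The same computation for an odd $I_0$ gives $\mu=2\pi K_{I_0}$ on the whole central $\CP^1_u$ in an exterior chamber; this value is independent of $u$, as it must be on a connected fixed component.

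The wobbly point comes next. In an interior chamber it lies on the central sphere of stable parabolic bundles with $\varphi=0$, so $\mu(q_J^-)=0$ and $\int_{S^2_J}\omega_I=4\pi^2K_J$. In an exterior chamber it lies on the central $\CP^1_u$, so $\mu(q_J^-)=2\pi K_{I_0}$, giving $4\pi^2(K_J-K_{I_0})$. This difference is positive because $q_J^+$ lies above $q_J^-$ in the upward-flow partial order from Section \ref{sec:BBstratification}.

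The main obstacle is rigor in the first two steps. One must justify that $\|\varphi\|^2_{L^2}$ is the moment map on the noncompact $\cM(\bsa,\mathbf{0})$ with exactly the normalization matching $\omega_I$; Appendix \ref{sec:scalings} would be used here. One must also check that the parabolic Chern--Weil formula holds for the singular harmonic metrics $h_2\sim|z|^{2\beta}$. I would handle the latter by excising small discs around $p_j$ and computing boundary holonomy terms $\oint d\log h_2\to 4\pi\beta_2(p_j)$, so that the weights enter exactly as in $\pdeg$. Finally, I would verify from Proposition \ref{prop:extension} that the $\C^\times$ action on $S^2_J$ acts on the extension parameter $s$ with weight one rather than two; otherwise a factor of $2$ would appear.
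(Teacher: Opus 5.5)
Your proposal is correct and follows essentially the same route as the paper. Both arguments combine three ingredients:
\begin{itemize}
\item the $S^1$-moment map $\mu=\I\int_C\Tr\varphi\wedge\varphi^{*_h}$ (Lemma \ref{lem:mu});
\item the two-fixed-point area formula (Observation \ref{obs:2});
\item the splitting of the Hitchin equations at the Hodge-bundle fixed points, which gives $\mu=-\I\int_C F^\perp|_{\cL_2}=-2\pi\,\pdeg\cL_2(\beta_2)=2\pi K_J$ at the exterior fixed point, with $\mu=0$ (interior chamber) resp.\ $\mu=2\pi K_{I_0}$ (exterior chamber) on the central sphere.
\end{itemize}
The normalization constant you left open is fixed exactly by Lemma \ref{lem:mu}. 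Your additional checks, that the action has weight one on $S^2_J$ and that parabolic Chern--Weil holds via excision, are points the paper uses implicitly.
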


The proof of Proposition \ref{prop:stronglyparabolicvolumesv2} is based on the following two observations.
\begin{obs}\label{obs:1}
If $S_0$ denotes the central sphere and $S_1, S_2, S_3, S_4$ the exterior spheres, then
\[
	2 \int_{S_0} \omega_I + \sum_{i=1}^4 \int_{S_i} \omega_I  = 4 \pi^2.
\]
\end{obs}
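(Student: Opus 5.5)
The identity is a statement about the fiber class. I will deduce it from the homological relation $2[S_0]+\sum_{i=1}^4[S_i]=[F]$ together with a computation of $\int_F\omega_I$, where $F=\mathrm{Hit}^{-1}(q)$ is a generic (smooth) Hitchin fiber.

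\emph{Step 1: reduce to the fiber.} The nilpotent cone $\mathrm{Hit}^{-1}(0)\subset\mathcal{M}(\bsa,\mathbf{0})$ is the $I_0^*$ configuration of Section~\ref{sec:nilpotentcone}. As a divisor of the elliptic fibration it equals $2S_0+S_1+S_2+S_3+S_4$: the central component has multiplicity two, exactly as in the toy model $(\C\times T^2_\tau)/\Z_2$ blown up. Since all fibers of $\mathrm{Hit}$ are homologous, and $\omega_I$ is closed, we get
\[
2\int_{S_0}\omega_I+\sum_{i=1}^4\int_{S_i}\omega_I=\int_F\omega_I .
\]
So it suffices to show $\int_F\omega_I=4\pi^2$, independently of $\bsa$.

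\emph{Step 2: compute the fiber area.} Take a point of $F$ and use the spectral description near generic $q$. There, tangent vectors to $F$ are deformations $\dot A^{0,1}$ of the holomorphic structure with $\dot\varphi$ determined by the linearized equations. On a regular fiber, the torus directions correspond to $H^1$ of the spectral (Prym) curve, and the harmonic representatives are of the form $\dot A^{0,1}$ diagonal in the eigenbasis of $\varphi$, with $\dot\varphi=0$ to leading order. Then
\[
\omega_I=-2\,\mathrm{Re}\int_C\mathrm{Tr}\bigl((\dot A_2^{0,1})^*\wedge\dot A_1^{0,1}\bigr)
\]
is, up to exponentially small corrections in $|q|$, the flat (semiflat) metric on the Prym torus. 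Its area is a topological quantity: the pairing of the torus lattice $H^1(\tilde\Sigma,\Z)^-$ (with the $2\pi i$ periodicity of $U(1)$ gauge transformations) against the cup product.

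The cleaner route avoids asymptotics. $\int_F\omega_I$ is constant in $q$, so I would instead note that on a fiber $\omega_I$ restricts to a Kähler form whose class is determined by $[\omega_I]|_F$. That class equals the class of the flat Kähler form of the Jacobian-type torus $\mathrm{Pic}$ of the spectral curve, with the $L^2$ normalization of \eqref{eq:hintro}. Concretely, the fiber is $H^1(\tilde\Sigma,\R)^-/H^1(\tilde\Sigma,\Z)^-$ after identifying real $\mathfrak{u}(1)$-connections $i a$ with $a\in H^1(\tilde\Sigma,\R)$. Lattice periods are $2\pi$, and the metric involves the factor $2$ together with $\mathrm{Tr}$ over the two sheets. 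Its volume is then $(2\pi)^2$ times the determinant of the intersection pairing on the rank-two lattice $H_1(\tilde\Sigma,\Z)^-$. This determinant is $1$ for the Prym of the genus-one spectral double cover relevant here, giving $4\pi^2$.

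\emph{Main obstacle.} I expect Step 2 to be the hardest part. It requires tracking the normalization constants of \eqref{eq:gintro}: the factor $2$, the trace over both sheets, and the $2\pi$ lattice. It also requires checking that the anti-invariant lattice is unimodular rather than index two, since the latter would give $8\pi^2$ or $2\pi^2$.

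As a sanity check, I would verify the result against the orbifold point of Remark~\ref{rem:orbifold}. There all exterior spheres have zero area and the central sphere has area $2\pi^2$, which gives $2\cdot2\pi^2=4\pi^2$. Since $\int_F\omega_I$ is a cohomological invariant varying continuously (indeed locally constantly, being a lattice-determined period) in $\bsa$ across the chambers, this independently confirms the constant.
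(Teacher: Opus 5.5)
Your Step~1 is exactly the paper's first step, and the first sentence of your Step~2 (the Hitchin form agrees with the semiflat one up to exponentially small corrections) points in the right direction. The gap is the ``cleaner route'' you then adopt. On a fiber $F$, the form $\omega_I$ is the area form of the Hitchin $L^2$ metric, which is not the translation-invariant flat form on the torus. In general the harmonic metric is not the limiting configuration, and the harmonic representatives of fiber directions are only asymptotically diagonal with $\dot\varphi=0$. Since $H^2(F,\R)\cong\R$, your assertion that $[\omega_I]|_F$ is the class of the flat K\"ahler form on $H^1(\tilde\Sigma,\R)^-/H^1(\tilde\Sigma,\Z)^-$ is literally the assertion $\int_F\omega_I=\int_F\omega_I^{\mathrm{sf}}$, which is what has to be proved. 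Your lattice computation $(2\pi)^2\det(\cdot)$ only evaluates the right-hand side, and nothing in that route connects the two sides.

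The paper's link is precisely the asymptotic argument you set aside. Because $\int_F\omega_I$ is independent of $q$ (your Step~1), you may push $F$ to infinity along the rescalings $m_t$. There $m_t^*(\omega_I-\omega_I^{\mathrm{sf}})=O(e^{-\varepsilon t})$ by Theorem~7.2 of \cite{FMSW}, so $\int_F\omega_I=\int_F\omega_I^{\mathrm{sf}}$, and the semiflat area $4\pi^2$ is Proposition~8.4 of \cite{FMSW}. The normalization worries you list (the factor $2$, the trace over sheets, the index of the anti-invariant lattice) then only concern that semiflat constant.

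Your sanity check is circular. The value $2\pi^2$ for the central sphere at the orbifold point in Remark~\ref{rem:orbifold} comes from Corollary~\ref{cor:centralvolumes}, whose proof uses Observation~\ref{obs:1}. That point is also non-generic, so there is no Hitchin metric there to evaluate. Nor can you justify local constancy of $\int_F\omega_I$ in $\bsa$ by its being ``lattice-determined'': the periods of $\omega_I$ themselves vary affinely with $\bsa$, and constancy of the fiber period is a consequence of the semiflat identification, not an input.

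A genuinely non-asymptotic proof is possible, but it needs different input. You would compute $\int_{S_0}\omega_I$ in an interior chamber as the Atiyah--Bott volume of the moduli space of stable parabolic bundles, via Witten's formula \cite{Witten91} with normalizations matched to \eqref{eq:gintro}. You would then add the exterior areas $4\pi^2K_J$ from Proposition~\ref{prop:stronglyparabolicvolumesv2}, whose proof uses only the moment map and not Observation~\ref{obs:1}.
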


\begin{proof}[Proof of Observation \ref{obs:1}]
If $F$ denotes the regular fiber, then $2[S_0] + \sum_{i=1}^4 [S_i] = [F]$ in homology. Hence,
\[
	2 \int_{S_0} \omega_I + \sum_{i=1}^4 \int_{S_i} \omega_I  = \int_{F} \omega_I
\]
since $\omega_I$ is closed. But the fiber may be pushed off to infinity and along the family of rescalings $m_t[(\bar\partial_A,\varphi)] = [(\bar\partial_A,t\varphi)]$ one has $ m_t^*(\omega_I - \omega_I^{\mathrm{sf}} ) = O(e^{- \varepsilon t})$ for some $\varepsilon>0$ by Theorem 7.2 in \cite{FMSW}. Hence, 
\[
\int_{F} \omega_I =\int_{F} \omega^{\mathrm{sf}}_I = \mathrm{Area}_{\mathrm{sf}}(T^2_\tau) = 4 \pi^2 
\]
as computed by Proposition 8.4 in \cite{FMSW}.
\end{proof}
\begin{obs}\label{obs:2}
If $\mu: S^2 \to \R$ is the moment map of a Hamiltonian $S^1$-action on $(S^2,\omega)$ with exactly two critical points $p_{max}$ and $p_{min}$, then
	\[
	\int_{S^2} \omega = 2 \pi ( \mu_{max} - \mu_{min})
	\]
	for $\mu_{max} = \mu(p_{max})$ and $\mu_{min} = \mu(p_{min})$.
\end{obs}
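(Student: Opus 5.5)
Since the circle action is Hamiltonian with moment map $\mu$, we have $\iota_X\omega = -d\mu$ (up to the sign convention), where $X$ is the generating vector field of the $S^1$-action, normalized to have period $2\pi$. The plan is to pass to action--angle coordinates on the complement of the two fixed points and integrate there.

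\emph{Step 1: the critical values and level sets.} The critical points of $\mu$ are exactly the zeros of $X$, i.e. the fixed points, so by hypothesis they are only $p_{max}$ and $p_{min}$. A function on $S^2$ attains both a maximum and a minimum, so these are the global extrema and $\mu_{max}>\mu_{min}$. On $U = S^2\setminus\{p_{max},p_{min}\}$ the map $\mu$ is a submersion onto $(\mu_{min},\mu_{max})$. Each level set is a compact one-manifold invariant under $S^1$, hence a union of orbits. Each orbit is a circle, since the action is free away from fixed points, and I would rule out nontrivial finite stabilizers by a local normal form near the fixed points together with connectedness. It follows that $\mu\colon U\to(\mu_{min},\mu_{max})$ is a proper submersion and hence a fibration. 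Since $U$ is an annulus, connected with $H_1=\Z$, the fibers are single orbits.

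\emph{Step 2: coordinates.} Choose an angle coordinate $\theta$ on $U$ with $X=\partial_\theta$; this can be constructed from a section of the fibration. In the coordinates $(\mu,\theta)$ the form is $\omega = d\mu\wedge d\theta$. To verify this, note that $\omega(\partial_\theta,\cdot) = \pm d\mu$, and on a surface $\omega$ is determined by this identity, since $\partial_\theta$ together with any transverse vector spans the tangent plane. The overall sign is fixed by requiring $\omega$ to be positive for the orientation it induces.

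\emph{Step 3: integrate.} The two points have measure zero, so
\[\int_{S^2}\omega=\int_U\omega=\int_{\mu_{min}}^{\mu_{max}}\int_0^{2\pi}d\theta\,d\mu = 2\pi(\mu_{max}-\mu_{min}).\]

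The only delicate point is Step 1: showing that the stabilizers on $U$ are trivial, so that the period is exactly $2\pi$. A stabilizer $\Z_k$ would produce the factor $2\pi/k$ instead. I would handle this through the local linear model of the action at a fixed point, where the weight must be $\pm1$ for an effective action. If effectiveness is not assumed, the statement should be read with the action normalized to be effective, which is the setting of the $\C^\times$-action on the exterior spheres. As an alternative, one could cite the Duistermaat--Heckman formula applied to the measure $\mu_*\omega$, which is constant on $[\mu_{min},\mu_{max}]$ with density $2\pi$.
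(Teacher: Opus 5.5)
Your proposal is correct and follows the paper's own argument: on $S^2\setminus\{p_{max},p_{min}\}$ you use action--angle coordinates $(\mu,\theta)$, in which $\omega$ equals $d\theta\wedge d\mu$ up to the orientation sign, and integrate to get $2\pi(\mu_{max}-\mu_{min})$. Your extra care in Step~1, showing that level sets are single orbits with trivial stabilizers so that $\theta$ really has period $2\pi$, makes explicit the assumption that the action is effective, which the paper's one-line proof takes for granted.
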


\begin{proof}[Proof of Observation \ref{obs:2}]
If $(\mu, \theta)$ are action-angle coordinates on $S^2 \setminus \{ p_{max}, p_{min}\}$, then $\omega = d\theta \wedge d \mu  $	
and 
\[
\int_{S^2} \omega = \int_{ S^1 }  d\theta  \int_{\mu_{min}}^{\mu_{max}}d \mu = 2 \pi ( \mu_{max} - \mu_{min})
\]
as claimed.
\end{proof}

The relevant $S^1$-action will be from restricting 
the $\C^\times$-action given by $[(\cE, \varphi)] \mapsto [(\cE, \zeta \varphi)]$ for $\zeta \in \C^\times$ to an $S^1$-action by taking $|\zeta|=1$.

\begin{lem}\label{lem:mu}
The moment map is 
\[\mu =  i \int_C \Tr \varphi \wedge \varphi^{*_h}.\]
 \end{lem}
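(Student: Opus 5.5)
The plan is to verify directly that $d\mu = \iota_X \omega_I$ (up to the paper's sign convention), where $X$ is the vector field generating the $S^1$-action $[(\bar\partial_A,\varphi)] \mapsto [(\bar\partial_A, e^{i\theta}\varphi)]$. At a point $(\bar\partial_A,\varphi)$ solving the Hitchin equations, the infinitesimal generator is $(0, i\varphi)$. The first step is to check that this is already a harmonic representative, i.e. that it satisfies \eqref{eq:harmonic}. With $\dot A^{0,1}=0$ and $\dot\varphi = i\varphi$, the condition reduces to $[\varphi^{*_{h_0}}, i\varphi] = 0$, which is false in general. So we must correct by an infinitesimal gauge transformation: $X = (-\bar\partial_A \xi,\, i\varphi + [\xi,\varphi])$ for a suitable $\xi$. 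Since $\omega_I$ annihilates gauge directions against harmonic vectors, we may actually skip the correction. The pairing $\omega_I(X, v)$ with a harmonic $v$ depends only on the class of $X$ modulo the gauge orbit, because harmonic vectors are $L^2$-orthogonal to gauge orbits, both for $g$ and for $g(I\cdot,\cdot)$, as $I$ preserves the orbit tangent space. Some care is needed here with the decay of $\xi$ and $\varphi$ at the punctures, given the weighted spaces of \S\ref{sec:CFW}.

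Next, compute $\omega_I(X, v)$ for a harmonic $v = (\dot A^{0,1}, \dot\varphi)$ using the formula
\[\omega_I(v_1,v_2) = -2 \operatorname{Re}\int_C \Tr\bigl((\dot A_2^{0,1})^*\wedge \dot A_1^{0,1} + \dot\varphi_1\wedge \dot\varphi_2^*\bigr)\]
with $v_1 = (0, i\varphi)$ and $v_2 = v$. This gives $-2\operatorname{Re}\int_C \Tr(i\varphi\wedge\dot\varphi^*)$.

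On the other hand, differentiate $\mu = i\int_C \Tr \varphi\wedge\varphi^{*_h}$ along $v$. Here $h$ is fixed in the unitary gauge picture, so
\[d\mu(v) = i\int_C \Tr\bigl(\dot\varphi\wedge\varphi^* + \varphi\wedge\dot\varphi^*\bigr).\]
The two terms are related by the adjoint identity of the footnote: $\overline{\Tr(\alpha\wedge\beta)} = -\Tr(\beta^*\wedge\alpha^*)$ for $\alpha\in\Omega^{1,0}$, $\beta\in\Omega^{0,1}$. Hence $i\Tr(\dot\varphi\wedge\varphi^*)$ is the complex conjugate of $i\Tr(\varphi\wedge\dot\varphi^*)$, and
\[d\mu(v) = 2\operatorname{Re}\, i\int_C\Tr(\varphi\wedge\dot\varphi^*).\]
This matches $\omega_I(X,v)$ up to sign. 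In particular $\mu$ is real, as it must be. The sign is then fixed by the convention $\omega(X,\cdot) = \pm d\mu$. The normalization of the generator also needs attention: $\theta \in [0,2\pi)$ gives $X = (0, i\varphi)$, which is compatible with Observation \ref{obs:2}.

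The main obstacle is justifying these manipulations in the parabolic setting. First, $\mu$ must be finite. In the strongly parabolic case, $|\varphi|^2$ near $p$ behaves like $|z|^{-2+2(1-2\alpha)}$, which is integrable, so this holds. Second, the gauge correction term must contribute nothing when paired with harmonic $v$. That amounts to an integration by parts whose boundary terms on small circles around $p_j$ have to vanish. I would check these using the model behaviour $\varphi = O(|z|^{-1+2\alpha_j-\epsilon})$ off-diagonal for nilpotent residue, together with the $L^2$ decay of harmonic tangent vectors, so that the circle integrals are $o(1)$.
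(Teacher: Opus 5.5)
Your proposal is correct and is essentially the paper's proof: insert the generator $X=(0,i\varphi)$ into $\omega_I$, differentiate $\mu$ with $h$ held fixed, and compare the two via the adjoint identity in the footnote. Your extra points, which the paper does not spell out, also hold. The gauge correction to $X$ is harmless, and the cleanest reason is that $\omega_I(X_\xi,v)$ is, up to sign, the derivative of the real moment map along $v$, which vanishes for any $v$ tangent to the solution space; this is better than saying $I$ preserves the unitary orbit. Finiteness of $\mu$ is only needed for $\mathbf{m}=\mathbf{0}$, the only case where the $S^1$-action exists.

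Your conclusion that the two sides agree only up to sign is the accurate one. Indeed
\[
i\int_C\Tr(\dot\varphi\wedge\varphi^*+\varphi\wedge\dot\varphi^*)=-2\operatorname{Im}\int_C\Tr(\dot\varphi\wedge\varphi^*),
\]
so with the paper's formulas one gets $\iota_X\omega_I=-d\mu$ rather than the displayed $+d\mu$. This discrepancy does not matter for Observation~\ref{obs:2}, where only $|\mu_{\max}-\mu_{\min}|$ enters.
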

\begin{proof}[Proof of Lemma \ref{lem:mu}]
We use the fixed hermitian metric perspective. To see that 
    \begin{align}
    \mu(\delbar_A, \varphi, h):=  i \int_C \Tr \varphi \wedge \varphi^*
    \end{align}
    is a moment map for the $U(1)$-action, i.e.\  $\iota_X \omega =d \mu$, using the convention that $\iota_X$ inserts  $X=(0, i \varphi)$ in the first slot,
    we compute that
    \[\de \mu(\dot{A}^{0,1}, \dot \varphi) =  i \int_C \Tr (\dot \varphi \wedge \varphi^* + \varphi \wedge \dot \varphi^* ) = 2 \im\int_C \Tr(\dot\varphi \wedge \varphi^*)  \]
    and 
    \[ \iota_X \omega_I (\dot{A}^{0,1}, \dot \varphi)= -2 \re \int_C \Tr((i \varphi) \wedge \dot\varphi^*) =-2 \re \int_C \Tr(i \dot \varphi  \wedge \varphi^*) = 2 \im \int_C \Tr(\dot\varphi \wedge \varphi^*)\]
    proving the claim.
   \end{proof}

\begin{proof}[Proof of Proposition \ref{prop:stronglyparabolicvolumesv2}]
    From Observation \ref{obs:2} we can compute the size of the exterior sphere labelled by $I \in \mathcal{I}$, by evaluating the moment map at the two $\C^\times$-fixed points on the sphere: the exterior $\C^\times$-fixed point and at the wobbly point.  From Lemma \ref{lem:mu}, the moment map is 
\[\mu =  i \int_C \Tr \varphi \wedge \varphi^{*_h}.\]
     It is clear that $\mu$ vanishes on the central sphere in interior chambers since $\varphi \equiv 0$. We will consequently compute $\mu$ at other $\C^\times$-fixed points labelled by $I \subset \{1, 2, 3,4 \}$, assuming the expression in \eqref{eq general rk 2 fixed} and Proposition \ref{prop:generalsplit}. The key observation\footnote{We acknowledge Arya Yae for this observation, which considerably shortened our original proof.} is that because $\varphi$ is strictly lower triangular and $h$ respects the splitting, then 
     \[
     [ \varphi, \varphi^{*_h}] = \begin{pmatrix} - \phi_0^{*_h} \wedge \varphi_0 & 0 \\ 
 0 & \phi_0 \wedge \phi_0^{*_h}	
 \end{pmatrix}
     \]
     and the Hitchin equations break into two scalar-valued-$2$-form equations 
     \[
      (1) \;\;F^\perp_{\delbar_A + \del_A^h}|_{\cL_1} - \phi_0^{*_h} \wedge \phi_0=0 \qquad \text{and} \qquad (2) \;\; F^\perp_{\delbar_A + \del_A^h}|_{\cL_2} + \phi_0 \wedge \phi_0^{*_h}=0.  
    \] 
     Using
     \[
      \varphi \wedge \varphi^{*_h} = \begin{pmatrix} 0 & 0 \\ 
 0 & \phi_0 \wedge \phi_0^{*_h}	
 \end{pmatrix}
     \]
     we thus obtain
    \begin{equation}
      \mu(\delbar_A, \varphi, h) =  i \int_C \phi_0 \wedge \phi_0^{*_h}
      = -i \int_C F^\perp_{\delbar_A + \del_A^h}|_{\cL_2} 
      =-2 \pi  \pdeg \cL_2(\beta_2)   
      = -2 \pi K_I.
    \end{equation}
Thus, we've proved Proposition \ref{prop:stronglyparabolicvolumesv2}.
    \end{proof}

  \begin{proof}[Proof of Corollary  \ref{cor:centralvolumes}]
In the interior chambers, we use Observation \ref{obs:1} to see that the integral of $\omega_I$ over the central sphere is simply $2\pi^2 (1- \sum_{I \in \mathcal{I}} K_I)$. Expanding this, we get 
  \[2 \pi^2 \left(1+ \sum_{I \in I}\left(\sum_{i \in I^c} \alpha_i - \sum_{i \in I} \alpha_i + \frac{|I|-2}{2}\right)\right)\]
The formulas above simply come by counting how many times each element of $\{1, 2, 3, 4\}$ appears in an element of $\mathcal{I}$.
In the exterior chambers, we see that the integral of $\omega_I$ over the central sphere is $2\pi^2 (1 - \sum_{I \in \mathcal{I}} K_I) +  8 \pi^2 K_{I_0}$, so we can obtain the integral for the exterior chambers by adding $8 \pi^2 K_{I_0}$ to the integral for the adjacent interior chamber of type $B$.
\end{proof}

\subsection{Integrating \texorpdfstring{$\omega_I$}{the real symplectic form} in weakly parabolic Hitchin moduli spaces} 

In this section we show that the periods of $\omega_I$ on the weakly parabolic moduli spaces $\calM(\bsa,\bfm)$ do not depend at all on the complex masses $\bfm \in \C^4$ and hence are given by the values on the corresponding strongly parabolic moduli spaces $\calM(\bsa,\mathbf{0})$. We first prove that the periods of $\omega_I$ are invariant under the $\R_+$-action 
\begin{align*}
m_t : \mathcal{M}(\boldsymbol \alpha,\mathbf m) &\to \mathcal{M}(\boldsymbol \alpha, t \mathbf m)\\ 
  (\cE, \varphi) &\mapsto (\cE, t \varphi) 
  \end{align*}
by showing that $\omega_I$ and $\omega_{I,t}= m_t^* \omega_I$ are cohomologous on $\calM(\bsa,\bfm)$  for $t \in \R_+$. Then we employ a continuity argument for $t \to 0$. 

As a preliminary we treat the case without punctures first and then describe the necessary modifications in the parabolic case.

\subsubsection{The case without punctures} 
In this case the maps $m_t : \mathcal M \to \mathcal M$ are self-diffeomorphisms of the moduli space which are isotopic to the identity. Then clearly $\omega_{I,t}$ and $\omega_I$ are cohomologous. We recall the construction of a relative primitive. If we set $\phi_s = m_t$ for $t=e^s$, then $\phi_s$ is the flow of the vector field $X = \frac{\partial}{\partial s}\bigr\vert_{s=0} \phi_s =   \frac{\partial}{\partial t}\bigr\vert_{t=1} m_t$. Since $\omega_I$ is closed, Cartan's magic formula yields 
\[
\frac{\partial}{\partial t}  m_t^* \omega_I =  \frac{1}{t}\frac{\partial}{\partial s} \phi_s^* \omega_I =\frac{1}{t} \, m_t^* \, \mathcal{L}_X  \omega_I =  \frac{1}{t} \, m_t^* \, d(  \iota_X  \omega_I).
 \]
Setting $\lambda_I =  \iota_X \omega_I$ and $\lambda_{I,t} = \frac{1}{t} m_t^* \lambda_I$  we finally obtain
 \[
\omega_{I,t} - \omega_I =m_t^* \omega_I - \omega_I  = d \Bigl(\int_1^t \lambda_{I,t} \, d\tau \Bigr).
\]

Working on the configuration space $\mathcal C$ of pairs $(A,\varphi)$ we wish to make this relative primitive as explicit as possible. Recall that the  K\"ahler form with respect to complex structure $I$ is given by
\[
\omega_I \bigl( (\dot A_1^{0,1}, \dot \varphi_1), (\dot A_2^{0,1}, \dot \varphi_2) \bigr) = - \operatorname{Im} \int_C \IP{\dot A_1^{0,1}, \dot A_2^{0,1}} + \IP{\dot \varphi_1, \dot \varphi_2}.
\]

\begin{rem} In this remark we show why the naive expression for this relative primitive doesn't work.
It is tempting to set $\hat X(A,\varphi) =\frac{\partial}{\partial t} \bigr\vert_{t=1} (A,t\varphi) = (0,\varphi)$ and define $\hat\lambda_I=\iota_{\hat X} \omega_I$, i.e.
\[
\hat \lambda_I(\dot A^{0,1}, \dot \varphi) = - \operatorname{Im} \int_C \IP{\varphi,\dot \varphi}.
\]
Then 
\[
d \hat \lambda_I \bigl( (\dot A_1^{0,1}, \dot \varphi_1), (\dot A_2^{0,1}, \dot \varphi_2) \bigr) = - \operatorname{Im} \int_C \IP{\dot \varphi_1, \dot \varphi_2} + \operatorname{Im} \int_C \IP{\dot \varphi_2, \dot \varphi_1} = -2 \operatorname{Im} \int_C \IP{\dot \varphi_1, \dot \varphi_2}.
\]
With 
\[
\hat\omega_{I,t} \bigl( (\dot A_1^{0,1}, \dot \varphi_1), (\dot A_2^{0,1}, \dot \varphi_2) \bigr) = - \operatorname{Im} \int_C \IP{\dot A_1^{0,1}, \dot A_2^{0,1}} + t^2 \IP{\dot \varphi_1, \dot \varphi_2}
\]
we obtain $\hat{\omega}_{I,t}= \hat{m}_t^* \omega_I$ and
\[
\frac{\partial}{\partial t} \hat\omega_{I,t} = -2t \operatorname{Im} \int_C \IP{ \dot \varphi_1, \dot \varphi_2 } = \hat{m}_t^* d\hat\lambda_I/t
\]
for the naive $\R_+$-action $\hat{m}_t(A,\varphi)=(A,t\varphi)$. However, this action  doesn't preserve the zero set of the real moment map 
\[
\mu_\R^{-1}(0) = \{ (A,\varphi) \in \mathcal C :  \mathcal{H}(A,\varphi) := F_A + [\varphi , \varphi^* ] = 0\}.
\]
In particular, the vector field $\hat X(A,\varphi) = (0,\varphi)$ is not tangent to this level set, and as an effect $\hat\lambda_I$ as defined above won't descend to the moduli space. Similarly, $\hat \omega_{I,t}$ is not the correct lift of the K\"ahler form $\omega_{I,t}$ of the $t$-rescaled Hitchin metric.
\end{rem}

To rectify the issue addressed in the above remark,  we project the flow to $\mu_\R^{-1}(0)$. In order to do so we determine complex gauge transformations $g_t$  such that $\tilde{m}_t(A, \varphi):=(A,t\varphi)^{g_t} \in \mu_\R^{-1}(0)$ for $t \in \R_+$.  We may assume that the $g_t$ are hermitian and will do so from now on. This $\R_+$-action now lifts the $\R_+$-action on the Higgs bundle moduli space to the solution space of the Hitchin equations. We then set $\tilde{\omega}_{I,t}= \tilde{m}_t^* \omega_I$ and obtain using $D\tilde{m}_t(\dot A,\dot \varphi) = (g_t^{-1}\dot{A}^{0,1}g_t - (g_t^{-1}\dot{A}^{0,1}g_t)^*, t g_t^{-1} \dot \varphi g_t)$ the following expression
\begin{align}\label{tresckahler}
\tilde{\omega}_{I,t}\bigl( (\dot A_1^{0,1}, \dot \varphi_1), (\dot A_2^{0,1}, \dot \varphi_2) \bigr) & =  - \operatorname{Im} \int_C \IP{g_t^{-1}\dot A_1^{0,1}g_t, g_t^{-1}\dot A_2^{0,1}g_t} + t^2 \IP{g_t^{-1}\dot \varphi_1 g_t, g_t^{-1}\dot \varphi_2 g_t}\\
     &= - \operatorname{Im} \int_C \IP{\dot A_1^{0,1}, \dot A_2^{0,1}}_{h_t} + t^2 \IP{\dot \varphi_1 , \dot \varphi_2 }_{h_t}
\end{align}
for $h_t=(g_t^{-1})^*g_t^{-1}$.
We further set 
\[
\tilde X(A,\varphi) = \frac{\partial}{\partial t} \Bigr\vert_{t=1} (A,t\varphi)^{g_t}
\]
 and then $\tilde \lambda_I = \iota_{\tilde X} \omega_I$ and $\tilde\lambda_{I,t} = \frac{1}{t} \tilde m_t^* \tilde \lambda_I$. 
 
 In the following we give a more concrete expression for $\tilde X$ and in particular for $\tilde \lambda_I$.
 
 \begin{lem} For a solution of the Hitchin equations $(A,\varphi)$ and infinitesimal deformation $(\dot A, \dot \varphi)$    
 \begin{equation}\label{inf_action}
 \tilde X(A,\varphi) = \bigl( \bar\partial_A \dot g - (\bar\partial_A \dot g)^*, \varphi + [\varphi, \dot g] \bigr)
 \end{equation}
 and
 \begin{equation}\label{relprim}
 \tilde\lambda_I (\dot A, \dot \varphi) = - \im \int_C \langle \bar\partial_A \dot g, \dot A^{0,1} \rangle + \langle \varphi + [\varphi, \dot g] , \dot \varphi \rangle
\end{equation}
where $\dot g = \dot g(A,\varphi) =-2 L_{A,\varphi}^{-1}( [\varphi,\varphi^*])$.
\end{lem}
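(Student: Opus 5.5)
Differentiate the defining family at $t=1$. Write $g_t$ as a hermitian complex gauge transformation with $g_1=\Id$, set $\dot g=\frac{\partial}{\partial t}\big|_{t=1} g_t$ (hermitian, i.e.\ $\dot g^*=\dot g$), and recall how complex gauge acts on Higgs pairs: $(\bar\partial_A,\varphi)^g=(g^{-1}\circ\bar\partial_A\circ g,\ g^{-1}\varphi g)$. The unitary connection is determined by its $(0,1)$-part, $A=A^{0,1}-(A^{0,1})^*$. Differentiating gives:
\begin{itemize}
\item the $(0,1)$-part of the connection moves by $\bar\partial_A\dot g$, so the unitary connection moves by $\bar\partial_A \dot g-(\bar\partial_A\dot g)^*$;
\item the Higgs field moves by $\frac{\partial}{\partial t}\big|_{t=1}\, t\,g_t^{-1}\varphi g_t=\varphi+[\varphi,\dot g]$.
\end{itemize}
This is \eqref{inf_action}, up to the sign convention for $\dot g$ versus $-\dot g$, which I will fix by the convention in the definition of $\tilde m_t$.

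Next, I determine $\dot g$ from the requirement that $\tilde m_t(A,\varphi)\in\mu_\R^{-1}(0)$ for all $t$. Differentiate $F_{A_t}+[\varphi_t,\varphi_t^*]=0$ at $t=1$. The first-order variation of the curvature under a hermitian infinitesimal complex gauge transformation is $\bar\partial_A\partial_A\dot g-\partial_A\bar\partial_A\dot g$, up to sign. The variation of the quadratic term has two contributions:
\begin{itemize}
\item the scaling part contributes $2[\varphi,\varphi^*]$;
\item the gauge part contributes $[[\varphi,\dot g],\varphi^*]+[\varphi,[\varphi,\dot g]^*]$.
\end{itemize}
Collecting the terms linear in $\dot g$ gives the operator $L_{A,\varphi}\dot g=\partial_A\bar\partial_A\dot g-\bar\partial_A\partial_A\dot g-[\varphi^*,[\varphi,\dot g]]-[\varphi,[\varphi^*,\dot g]]$ (with signs normalized as in the paper's definition of $L_{A,\varphi}$). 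This is the standard nonnegative Laplace-type operator, the linearization of the real moment map along $i\mathfrak{su}$-directions. The linearized equation then reads $L_{A,\varphi}\dot g+2[\varphi,\varphi^*]=0$, hence $\dot g=-2L_{A,\varphi}^{-1}([\varphi,\varphi^*])$.

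The main obstacle is invertibility of $L_{A,\varphi}$ on the appropriate weighted Sobolev spaces (e.g.\ $L^2_{2,\delta}\to L^2_\delta$ in the parabolic setting of \S\ref{sec:CFW}). Without punctures, I get injectivity by pairing: $\langle L\dot g,\dot g\rangle=\|d_A\dot g\|^2+\|[\varphi,\dot g]\|^2$, which vanishes only for parallel endomorphisms commuting with $\varphi$; stability of $(A,\varphi)$ rules these out. Surjectivity follows because $L$ is self-adjoint and Fredholm. In the parabolic case I will appeal to the Fredholm theory of \cite{collier2024conformallimitsparabolicslnchiggs} for small $\delta>0$. I will also need to check that $[\varphi,\varphi^*]$ lies in the weighted target space; at the punctures it has bounded model behavior, so a $\delta$-adjustment or a subtraction of the model term may be needed.

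Finally, \eqref{relprim} follows by inserting $\tilde X$ into the first slot of $\omega_I(v_1,v_2)=-\operatorname{Im}\int_C\langle\dot A_1^{0,1},\dot A_2^{0,1}\rangle+\langle\dot\varphi_1,\dot\varphi_2\rangle$. The $(0,1)$-part of the connection component of $\tilde X$ is $\bar\partial_A\dot g$, and its Higgs component is $\varphi+[\varphi,\dot g]$, which gives the stated formula directly.
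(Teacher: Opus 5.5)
Your proposal is correct and follows the paper's proof essentially step for step: differentiate $(A,t\varphi)^{g_t}$ at $t=1$, linearize $F_A+[\varphi,\varphi^*]=0$ along this curve to get $L_{A,\varphi}\dot g+2[\varphi,\varphi^*]=0$, and insert $\tilde X$ into $\omega_I$. Your invertibility discussion goes beyond the paper, which takes $L_{A,\varphi}^{-1}$ for granted here and treats the parabolic case only afterwards; in that later setting the right domain is $\mathcal{R}_\delta=L^2_{2,\delta}\oplus\mathcal{H}_\delta$ rather than $L^2_{2,\delta}$, and no model-term subtraction is needed, because $[\varphi,\varphi^*]$ decays exponentially on the cylindrical ends (the diagonal residue commutes with its adjoint).
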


\begin{proof}
We observe first that
\[
(A, t\varphi)^{g_t} = \bigl( A+g_t^{-1}(\bar\partial_A g_t) - (g_t^{-1}(\bar\partial_A g_t))^*, t g_t^{-1}\varphi g_t \bigr)
\]
and therefore
\begin{equation*}
\tilde X(A,\varphi)=\frac{\partial}{\partial t} \Bigr \vert_{t=1} (A, t\varphi)^{g_t} 
= \bigl( \bar\partial_A \dot g - (\bar\partial_A \dot g)^*, \varphi + [\varphi, \dot g] \bigr).
\end{equation*} 
Secondly, if $(A,t\varphi)^{g_t} \in \mu_\R^{-1}(0)$ for all $t$, then $\frac{\partial}{\partial t} \bigr \vert_{t=1} (A, t\varphi)^{g_t} \in \ker \mathcal{L}_{A,\varphi}$, where  $\mathcal{L}_{A,\varphi} := D\mathcal{H}(A,\varphi)$ denotes the linearization of the Hitchin equations which is given by
 \[
  \mathcal{L}_{A,\varphi}(\dot A, \dot \varphi) = d_A \dot A + [\dot \varphi, \varphi^*] + [ \varphi, \dot \varphi^*].
 \]
Putting this together we obtain that $\dot g$ satisfies the equation
\[
d_A ( \bar\partial_A \dot g - (\bar\partial_A \dot g)^* ) + [ \varphi + [\varphi, \dot g], \varphi^*] + [ \varphi, \varphi^* + [\varphi, \dot g]^* ] = 0
\]
or, if $L_{A,\varphi}$ denotes the linearization along the complex gauge orbit, the equation
\[
L_{A,\varphi} \dot g + 2 [\varphi,\varphi^*] = 0
\]
whereas usual $L_{A,\varphi} = i \ast \Delta_A + M_\varphi$. We conclude that $\tilde \lambda_I$  is given by 
\begin{equation}
\tilde\lambda_I (\dot A, \dot \varphi) = - \im \int_C \langle \bar\partial_A \dot g, \dot A^{0,1} \rangle + \langle \varphi + [\varphi, \dot g] , \dot \varphi \rangle
\end{equation}
for  $\dot g = \dot g(A,\varphi) =-2 L_{A,\varphi}^{-1}( [\varphi,\varphi^*])$.
\end{proof}

\begin{rem}
If $(\dot A, \dot \varphi)$ is in Coulomb gauge, then it's easily checked that $\tilde \lambda_I(\dot A, \dot \varphi) = \hat\lambda_I(\dot A, \dot \varphi)$.
\end{rem}

Following Simpson, we define the operators $D' = \partial_A + \ad \varphi^*$ and $D'' = \bar\partial_A + \ad \varphi$ on differential forms with values in $\mathfrak{sl}(E)$.  Then according to  \cite{simpsonhiggs},  one has the K\"ahler identities
\[
(D')^* = i \,[ \Lambda, D''] \quad  \text{and} \quad (D'')^* = - i \,[ \Lambda, D']
\]
where $\Lambda$ denotes contraction with the K\"ahler form on $C$, i.e.\ $\Lambda = \ast$ on $\Omega^2(\mathfrak{sl}(E)) = \Omega^{1,1}(\mathfrak{sl}(E))$ and $\Lambda = 0$ on $\Omega^0(\mathfrak{sl}(E)) \oplus \Omega^1(\mathfrak{sl}(E)) $. If we set 
\[
d_1 = D''\vert_{\Omega^0}: \Omega^0(\mathfrak{sl}(E)) \to \Omega^{0,1}(\mathfrak{sl}(E)) \oplus \Omega^{1,0}(\mathfrak{sl}(E))\] 
and  
\[
d_2= D''\vert_{\Omega^1}: \Omega^{0,1}(\mathfrak{sl}(E)) \oplus \Omega^{0,1}(\mathfrak{sl}(E)) \to \Omega^2(\mathfrak{sl}(E))
\]
then $d_1(\gamma) = (\bar\partial_A \gamma, [ \varphi, \gamma])$ for $\gamma \in \Omega^0(\mathfrak{sl}(E))$ and $d_2(\dot A^{0,1}, \dot \varphi) = \bar\partial_A \dot \varphi + [\varphi,\dot A^{0,1}]$ for $\dot A^{0,1} \in \Omega^{0,1}(\mathfrak{sl}(E)) $ and $\dot \varphi \in  \Omega^{1,0}(\mathfrak{sl}(E))$. The deformation complex at $(A,\varphi)$ assumes the shape 
\[
0 \to \Omega^0(\mathfrak{sl}(E)) \overset{d_1}{\longrightarrow}  \Omega^{0,1}(\mathfrak{sl}(E)) \oplus \Omega^{0,1}(\mathfrak{sl}(E)) \overset{d_2}{\longrightarrow} \Omega^2(\mathfrak{sl}(E)) \to 0
\]
where we use the slightly unusual indexing of the differentials in \cite{collier2024conformallimitsparabolicslnchiggs}. The formal adjoints are given by $d_1^* = - i \ast (\partial_A +  \ad \varphi^*)$ and $d_2^* = i (\partial_A +  \ad \varphi^*) \ast$. 

Finally, recall from \cite{MSWW14} the formula $- i \ast L_{A,\varphi} = 2 d_1^* d_1$ for the linearization. With these preparations we can prove the following:
\begin{lem}
Let $\tilde \lambda_I$ be defined by \eqref{relprim}. Then the following holds: 
	\begin{enumerate}
	\item $R_g^* \tilde \lambda_I = \tilde \lambda_I $ for all unitary gauge transformations $g$.
	\smallskip
	\item $\tilde \lambda_I$ is horizontal, i.e.\ $\tilde \lambda_{I}(X_\gamma) =0 $ for $X_\gamma = (d_A \gamma, [\varphi, \gamma])$ and $\gamma$ an infinitesimal unitary gauge transformation.
	\smallskip
	\item $\frac{\partial}{\partial t}\bigr\vert_{t=1} \tilde\omega_{I,t} = d \tilde \lambda_I$ ($\Longrightarrow$ $\frac{\partial}{\partial t} \tilde \omega_{I,t} = \frac{\partial}{\partial t}  \tilde m_t^* \omega_I  =  \frac{1}{t} \, \tilde m_t^* \, d \tilde\lambda_{I} = d \tilde \lambda_{I,t}$ by naturality).
\end{enumerate}
\label{lem:primitive}
\end{lem}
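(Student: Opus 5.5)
The plan is to verify the three claims directly from the explicit formula \eqref{relprim}, using $\tilde\lambda_I = \iota_{\tilde X}\omega_I$ together with the equivariance properties of $\tilde X$ and of the solution $\dot g$.

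\emph{Part (1).} Under a unitary gauge transformation $g$ we have $(A,\varphi)\mapsto (A,\varphi)^g$ and $(\dot A,\dot\varphi)\mapsto (g^{-1}\dot A g, g^{-1}\dot\varphi g)$. The operator $L_{A,\varphi}$ is gauge equivariant, so $\dot g((A,\varphi)^g) = g^{-1}\dot g(A,\varphi) g$. Every term in \eqref{relprim} is an $\operatorname{Ad}$-invariant pointwise inner product of conjugated quantities, so invariance follows at once. Equivalently, $\tilde X$ is a $\mathcal{G}$-equivariant vector field, because the hermitian family $g_t$ is unique and therefore equivariant, and $\omega_I$ is $\mathcal{G}$-invariant.

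\emph{Part (2).} Evaluate $\tilde\lambda_I(X_\gamma) = \omega_I(\tilde X, X_\gamma)$. Since $\omega_I(\cdot, X_\gamma) = \pm\langle d\mu_\R(\cdot),\gamma\rangle$ is the moment map identity for the unitary gauge action, this reduces to $\langle \mathcal{L}_{A,\varphi}(\tilde X), \gamma\rangle$. That quantity vanishes because $\tilde X$ is tangent to $\mu_\R^{-1}(0)$, which is exactly the equation $L_{A,\varphi}\dot g + 2[\varphi,\varphi^*]=0$ defining $\dot g$. As a cross-check, one can compute directly: write $X_\gamma = d_1\gamma$ in its $(0,1)$ and $(1,0)$ parts, integrate by parts using the Kähler identities, and use $-i\ast L_{A,\varphi} = 2d_1^*d_1$ to see the result equals $\langle \gamma, L\dot g + 2[\varphi,\varphi^*]\rangle$ up to a constant, hence zero. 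In the parabolic setting the boundary terms vanish because $\gamma\in L^2_{2,\delta}$ decays at the punctures.

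\emph{Part (3).} Differentiate $\tilde\omega_{I,t} = \tilde m_t^*\omega_I$ at $t=1$ to get $\mathcal{L}_{\tilde X}\omega_I$ on $\mu_\R^{-1}(0)$. Since $\tilde m_t$ is a flow on the solution space with generator $\tilde X$ (after reparametrizing $t=e^s$), Cartan's formula and $d\omega_I=0$ give $\mathcal{L}_{\tilde X}\omega_I = d\iota_{\tilde X}\omega_I = d\tilde\lambda_I$. To be careful, I would check the flow property $\tilde m_{ts} = \tilde m_t\circ\tilde m_s$ modulo unitary gauge, which follows from uniqueness of the hermitian $g_t$; any unitary gauge discrepancy is harmless by (1) and (2). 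The stated consequence for general $t$ then follows by naturality: $\partial_t \tilde m_t^*\omega_I = t^{-1}\tilde m_t^* d\tilde\lambda_I$.

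\emph{Main obstacle.} The hardest step is (2), and in the parabolic setting also making (3) rigorous: one must justify the integrations by parts and the existence of $L^{-1}_{A,\varphi}$ on the relevant weighted spaces. I would use the Fredholm theory of \cite{collier2024conformallimitsparabolicslnchiggs} on $L^2_{k,\delta}$, noting that $[\varphi,\varphi^*]$ lies in $L^2_\delta$ because its singular diagonal part commutes away at the punctures.
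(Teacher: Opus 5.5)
Your proof is correct. Parts (1) and (2) are essentially the paper's. Part (1) is the same conjugation argument via $\dot g((A,\varphi)^g)=g^{-1}\dot g(A,\varphi)g$. In part (2), the paper's integration by parts uses $-i\ast L_{A,\varphi}=2(D'')^*D''$ and $L_{A,\varphi}\dot g=-2[\varphi,\varphi^*]$; this is your moment-map identity $\omega_I(\tilde X,X_\gamma)\propto\langle\mathcal{L}_{A,\varphi}(\tilde X),\gamma\rangle$ written out.

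Part (3) takes a genuinely different route. The paper extends $\tilde\lambda_I$ by \eqref{relprim} to a neighborhood in configuration space and computes $d\tilde\lambda_I$ on constant vector fields, which produces terms in the variation $D\dot g$. It then differentiates the conjugation formula \eqref{tresckahler} and disposes of the leftover $D\dot g$-terms. You instead apply Cartan's formula on $N=\mu_\R^{-1}(0)$. This needs only $\tilde m_1=\mathrm{id}$ and $\tilde X$ tangent to $N$, i.e. the first-order jet of $\tilde m_t$ at $t=1$. The flow property modulo unitary gauge enters only for general $t$, where it is harmless because $\omega_I|_N$ and $d\tilde\lambda_I$ are basic.

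Your route is shorter and never touches $D\dot g$. It is also more robust. If one keeps the term $D''(D\dot g(v))$ in $D\tilde m_t(v)$, which \eqref{tresckahler} omits, the $D\dot g$-terms cancel identically. The paper's justification instead pairs the hermitian $D\dot g(v_1)$ with the skew-hermitian $i\ast(\partial_A\dot A_2^{0,1}+[\varphi^*,\dot\varphi_2])$. That kills the real part of the pairing, but in general not the imaginary part that enters $\omega_I$. So that step really needs Coulomb-gauge (harmonic) representatives, which is enough on the quotient.

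What the paper's explicit route buys is Proposition \ref{prop:dlambda}. With punctures, $\tilde m_t$ maps $\mathcal{M}(\bsa,\bfm)$ to $\mathcal{M}(\bsa,t\bfm)$, and $\tilde X$ has the non-$L^2$ component $\varphi$. So Cartan's formula is not directly available on a space where $\omega_I$ is a closed form. The explicit formulas only pair $\varphi$ against fiber-tangent $L^2_\delta$ vectors, and they isolate exactly which integrations by parts must be justified there.
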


\begin{proof}
(1) If $R_g(A,\varphi)=(A,\varphi)^g$ denotes the right action by unitary gauge transformations, then $DR_g(\dot A,\dot \varphi) = (g^{-1}\dot A g, g^{-1}\dot \varphi g)$. Further note that $L_{(A,\varphi)^g} g^{-1}\dot g g = g^{-1}(L_{A,\varphi}\dot g)g$, i.e.\ $\dot g((A,\varphi)^g) = -2 L_{(A,\varphi)^g}^{-1}(g^{-1}[ \varphi, \varphi^*]g) = -2 g^{-1}L_{A,\varphi}^{-1}([\varphi,\varphi^*]) g =g^{-1} \dot g(A,\varphi) g$. With this at hand we obtain
\begin{align*}
(R_g^*\tilde\lambda_I) (\dot A, \dot \varphi)  &= - \im\int_C \langle  \bar\partial_{A^g} g^{-1}\dot g g ,g^{-1}\dot A^{0,1} g \rangle + \langle g^{-1} \varphi g + [g^{-1}\varphi g, g^{-1} \dot g g], g^{-1}\dot \varphi g \rangle\\
&=- \im \int_C \langle g^{-1} (\bar\partial_{A} \dot g)g  ,g^{-1}\dot A^{0,1} g \rangle + \langle g^{-1} (\varphi  + [\varphi,  \dot g ] )g, g^{-1}\dot \varphi g \rangle = \tilde\lambda_I(\dot A,\dot \varphi)
\end{align*}
as claimed.

(2) We compute using $- i \ast L_{A,\varphi} =2(D'')^*D''$ on $\Omega^0(\mathfrak{sl}(E))$ 
\begin{align*}
\tilde\lambda_I(X_\gamma) &= - \im \int_C  \langle \bar\partial_A \dot g, \bar\partial_A \gamma \rangle + \langle [\varphi, \dot g] + \varphi , [\varphi, \gamma] \rangle   = - \im \int_C  \langle D'' \dot g, D'' \gamma \rangle  +\langle \varphi, [\varphi, \gamma] \rangle \\
& = - \im \int_C  \langle  (D'')^*D'' \dot g,  \gamma \rangle  +\langle \varphi, [\varphi, \gamma] \rangle = - \im \int_C \langle - \frac{i}{2} \ast L_{A,\varphi} \dot g, \gamma \rangle + \langle \varphi, [\varphi, \gamma] \rangle \\
& = - \im \int_C \langle i \ast [\varphi,\varphi^*] , \gamma\rangle + \langle \varphi, [\varphi, \gamma] \rangle = 0
\end{align*}
since  $(\ad \varphi)^* = - i \ast\ad \varphi^*$.

(3) We use the formula $d \tilde \lambda_I(X_1,X_2) = X_1(\tilde \lambda_I(X_2))-X_2(\tilde \lambda_I(X_1))-\tilde \lambda_I([X_1,X_2])$ to compute the exterior differential. To that end we think of $\tilde \lambda_I$ as defined on a whole neighborhood of $(A,\varphi)$ in the space of configurations. If $X_i = (\dot A_i, \dot \varphi_i)$, $i=1,2$ are constant vector fields, they will, in particular, commute and we obtain from \eqref{relprim}
\begin{align*}
	&d \tilde \lambda_I \bigl( (\dot A_1^{0,1}, \dot \varphi_1), (\dot A_2^{0,1}, \dot \varphi_2) \bigr) \\
	  =&  - \im  \int_C \langle  [\dot A_1^{0,1}, \dot g] + \bar\partial_A (D \dot g(\dot A_1, \dot \varphi_1)), \dot A_2^{0,1} \rangle   + \langle [ \dot \varphi_1, \dot g]+ [ \varphi, D\dot g(\dot A_1, \dot \varphi_1)], \dot \varphi_2 \rangle   +  \langle \dot \varphi_1, \dot \varphi_2 \rangle \\
	 & +\im \int_C \langle  [\dot A_2^{0,1}, \dot g] + \bar\partial_A (D \dot g(\dot A_2, \dot \varphi_2)), \dot A_1^{0,1} \rangle   + \langle [ \dot \varphi_2, \dot g]+ [ \varphi, D\dot g(\dot A_2, \dot \varphi_2)], \dot \varphi_1 \rangle   +  \langle \dot \varphi_2, \dot \varphi_1 \rangle 
\end{align*}
where $D \dot g$ is the linearization of the map $(A,\varphi) \mapsto \dot g(A,\varphi) = -2 L_{A,\varphi}^{-1}([\varphi,\varphi^*])$ at $(A,\varphi)$.
 
 On the other hand, using \eqref{tresckahler} we get
\begin{align*}
& \frac{\partial}{\partial t}\Bigr\vert_{t=1} \tilde\omega_{I,t} \bigl( (\dot A_1^{0,1}, \dot \varphi_1), (\dot A_2^{0,1}, \dot \varphi_2) \bigr)\\
=&  - \im  \int_C \langle  [\dot A_1^{0,1}, \dot g], \dot A_2^{0,1} \rangle + \langle  \dot A_1^{0,1}, [ \dot A_2^{0,1},  \dot g] \rangle + \langle [ \dot \varphi_1, \dot g], \dot \varphi_2 \rangle + \langle \dot \varphi_1, [\dot \varphi_2, \dot g] \rangle  + 2 \langle \dot \varphi_1, \dot \varphi_2 \rangle
\end{align*}
Together these imply that
\begin{align*}
&\frac{\partial}{\partial t}\Bigr\vert_{t=1} \tilde\omega_{I,t} \bigl( (\dot A_1^{0,1}, \dot \varphi_1), (\dot A_2^{0,1}, \dot \varphi_2) \bigr)-d \tilde \lambda_I \bigl( (\dot A_1^{0,1}, \dot \varphi_1), (\dot A_2^{0,1}, \dot \varphi_2) \bigr)\\
=&  \im \int_C \langle D'' (D \dot g(\dot A_1, \dot \varphi_1)), (\dot A_2^{0,1},\dot\varphi_2) \rangle  - \im \int_C \langle D'' (D \dot g(\dot A_2, \dot \varphi_2)), (\dot A_1^{0,1},\dot\varphi_1) \rangle  \\
=&  \im \int_C \langle D \dot g(\dot A_1, \dot \varphi_1),  (D'')^*(\dot A_2^{0,1},\dot\varphi_2) \rangle  - \im \int_C \langle  D \dot g(\dot A_2, \dot \varphi_2), (D'')^*(\dot A_1^{0,1},\dot\varphi_1) \rangle  \\
=&  - \im \int_C \langle D \dot g(\dot A_1, \dot \varphi_1), i \ast ( \partial_A \dot A_2^{0,1} + [\varphi^*, \dot \varphi_2] ) \rangle  + \im \int_C \langle D \dot g(\dot A_2, \dot \varphi_2), i \ast (\partial_A \dot A_1^{0,1} + [\varphi^*, \dot \varphi_1] )\rangle  
\end{align*}
where we have used that $d_1^* = - i \ast (\partial_A +  \ad \varphi^*)$. 
Now if $(\dot A_i, \dot \varphi_i)$ are infinitesimal deformations, then  $2 \re (\partial_A \dot A_i^{0,1} + [\varphi^*, \dot \varphi_i]) = d_A\dot A_i + [\dot \varphi_i, \varphi^*] + [\varphi, \dot \varphi_i^*]=0$ (see Proposition 2.2 in \cite{Fredricksonasygeo}). This implies that $ i \ast (\partial_A \dot A_i^{0,1} + [\varphi^*, \dot \varphi_i])$ has values in skew-hermitian endomorphisms. Since $D \dot g(\dot A_i, \dot \varphi_i)$ is hermitian, this implies that the last two integrals are zero and we obtain $\frac{\partial}{\partial t}\Bigr\vert_{t=1} \tilde\omega_{I,t} = d \tilde \lambda_I $ when restricted to the space of solutions to the Hitchin equations.
\end{proof}

\begin{cor}
$\tilde \lambda_I$ descends to a 1-form $\lambda_I$ on $\mathcal M$ satisfying $\frac{\partial}{\partial t}  m_t^* \omega_I = \frac{1}{t} \, m_t^* \, d \lambda_I$. 
\end{cor}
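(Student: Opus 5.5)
The plan is to deduce the corollary from Lemma \ref{lem:primitive} by the standard criterion for a form on a hyperK\"ahler (or symplectic) quotient: a form on the configuration space descends to $\mathcal M = \mu_\R^{-1}(0)\cap\{\bar\partial_A\varphi=0\}/\mathcal G$ exactly when it is \emph{basic}, that is, invariant under the unitary gauge group and annihilating the vertical vectors $X_\gamma$. Parts (1) and (2) of Lemma \ref{lem:primitive} say precisely this for $\tilde\lambda_I$, restricted to the solution space $\mathcal S$ of the Hitchin equations. First I would check that $\tilde\lambda_I$ really is a well-defined smooth $1$-form on $\mathcal S$. This needs $\dot g = -2L_{A,\varphi}^{-1}([\varphi,\varphi^*])$ to be well defined and to depend smoothly on $(A,\varphi)$. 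The operator $L_{A,\varphi} = i\ast\Delta_A + M_\varphi$ is invertible at irreducible (stable) solutions, and $\dot g$ is hermitian. Once that is in place, (1) and (2) give a unique $1$-form $\lambda_I$ on $\mathcal M$ with $\pi^*\lambda_I = \tilde\lambda_I|_{\mathcal S}$, where $\pi:\mathcal S\to\mathcal M$ is the quotient map.

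Next I would compare the flows. By construction $\tilde m_t(A,\varphi) = (A,t\varphi)^{g_t}$ maps $\mathcal S$ to $\mathcal S$. It covers $m_t$ on $\mathcal M$, since $(A,t\varphi)^{g_t}$ represents the Higgs bundle $(\cE,t\varphi)$ and the Kobayashi--Hitchin correspondence identifies the two quotients. So $\pi\circ\tilde m_t = m_t\circ\pi$. The lifted Kähler form satisfies $\pi^*\omega_I = \omega_I|_{\mathcal S}$ on horizontal vectors, and $\omega_I|_{\mathcal S}$ is itself basic. It follows that $\tilde\omega_{I,t} = \tilde m_t^*\omega_I = \pi^*(m_t^*\omega_I)$ and $\tilde\lambda_{I,t} = \tfrac1t\tilde m_t^*\tilde\lambda_I = \pi^*(\tfrac1t m_t^*\lambda_I)$. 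Pullback by $\pi$ commutes with both $d$ and $\partial_t$, so part (3) of Lemma \ref{lem:primitive} becomes
\[
\pi^*\Bigl(\tfrac{\partial}{\partial t} m_t^*\omega_I - \tfrac1t m_t^* d\lambda_I\Bigr)=0 .
\]
Since $\pi$ is a submersion, $\pi^*$ is injective on forms, and the claimed identity on $\mathcal M$ follows.

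The main obstacle I expect is justifying that (3) holds as an identity of forms on $\mathcal S$. The computation in Lemma \ref{lem:primitive} differentiates $\tilde\lambda_I$ as if it were extended to a neighbourhood in configuration space, using constant vector fields. It then shows equality only after pairing with tangent vectors to $\mathcal S$, via the linearized equation. The step to check is that restricting the exterior derivative to $\mathcal S$ commutes with this extension: $d(\iota^*\tilde\lambda_I) = \iota^* d\tilde\lambda_I$ for $\iota:\mathcal S\hookrightarrow\mathcal C$. This is naturality of $d$, and the Lie-bracket term vanishes for the constant extensions. I would also confirm the smoothness of $t\mapsto g_t$, which comes from the implicit function theorem applied to $L_{A,\varphi}$, so that $\partial_t$ may be exchanged with $\pi^*$.
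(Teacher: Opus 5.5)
Your proposal is correct and is the argument the paper leaves implicit: the corollary is stated there as an immediate consequence of Lemma~\ref{lem:primitive}. Parts (1) and (2) make $\tilde\lambda_I$ basic on the solution space, so it descends, and part (3), in its all-$t$ form, is pushed down through the quotient map using $\pi\circ\tilde m_t = m_t\circ\pi$ and the injectivity of $\pi^*$; your additional checks (smooth dependence of $\dot g$ via invertibility of $L_{A,\varphi}$ at stable points, and $d$ commuting with restriction to the solution space) are the routine details the paper omits.
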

\subsubsection{The parabolic case}

In this case $m_t : \mathcal{M}(\boldsymbol{\alpha}, \mathbf{m}) \to \mathcal{M}(\boldsymbol{\alpha}, t \mathbf{m}) $ is not a self-diffeomorphism of a moduli space with fixed complex masses. Nevertheless, we can still define $\tilde \lambda_I$ via formula \eqref{relprim} for $\dot g = -2 L_{A,\varphi}^{-1}( [\varphi,\varphi^*])$ and check the desired properties. Here we have to be slightly more careful about the function spaces which $\varphi$, $\dot \varphi$ and $\dot g$ live in and, in particular, invertibility of $L_{A,\varphi}$ between those.

Recall from Section \ref{sec:CFW} that $\varphi \in \mathcal{D}_\delta$ and $\dot \varphi \in L^2_{1,\delta}$ for $\delta >0$. Note that since $(\dot A, \dot \varphi)$ is tangent to $\mathcal{M}(\boldsymbol{\alpha}, \mathbf{m})$, a moduli space with fixed complex masses, 
 $\dot A \in L^2_{1, \delta}$ and $\dot \varphi \in L^2_{1,\delta} \subset \mathcal{D}_\delta$. Now $\varphi \in \mathcal{D}_\delta \subset L^2_{-\delta}$ and $\dot \varphi \in L^2_{1,\delta} \subset L^2_\delta$ implies that
\[
\int_C \langle \varphi, \dot \varphi \rangle < \infty.
\]
In order to investigate the regularity of the term $[\varphi, \varphi^*]$ we express  
\[
\varphi =\frac{1}{z} \begin{pmatrix}
 m & 0 \\ |z|^{\alpha^{(2)} - \alpha^{(1)}} c & -m	
 \end{pmatrix}
dz + O(1)
\]
in the unitary frame $\tilde e_i = |z|^{-\alpha^{(i)}}$ and compute
\begin{align*}
[\varphi, \varphi^*] &= \frac{1}{|z|^2} \begin{pmatrix}
 - |z|^{2(\alpha^{(2)} - \alpha^{(1)})}	|c|^2 & 2 |z|^{\alpha^{(2)}-\alpha^{(1)}} m \bar c\\
 2 |z|^{\alpha^{(2)}-\alpha^{(1)}} \bar m c  &|z|^{2(\alpha^{(2)} - \alpha^{(1)})}	|c|^2
 \end{pmatrix} dz \wedge d \bar z
+O(|z|^{-1})\\
&=  \begin{pmatrix}
 - e^{-2 \tau (\alpha^{(2)}- \alpha^{(1)})}	|c|^2& 2 e^{-\tau(\alpha^{(2)}-\alpha^{(1)})}  m \bar c\\
 2 e^{-\tau(\alpha^{(2)}-\alpha^{(1)})}  \bar m c & 	e^{-2 \tau (\alpha^{(2)} - \alpha^{(1)})} |c|^2
 \end{pmatrix} d w \wedge d \bar w
+O(e^{-\tau})
\end{align*}
where $z=e^w$ for $w = - \tau + i \theta$, $\tau = - \log r$. We conclude that $[\varphi, \varphi^*] \in L^2_\delta$. Alternatively this can be obtained from Proposition 3.13 in \cite{collier2024conformallimitsparabolicslnchiggs}. Now set 
\[
\mathcal{R}_\delta = \{ \dot g \in L^2_{2,-\delta} : \|d_A \dot g \|_{L^2_{1, \delta}} < \infty \}. 
\]
Then $\mathcal{R}_\delta = L^2_{2, \delta} \oplus \mathcal{H}_\delta$ for a finite-dimensional complement $\mathcal{H}_\delta$, see Proposition 3.3 in \cite{collier2024conformallimitsparabolicslnchiggs}. According to Proposition 3.24 in \cite{collier2024conformallimitsparabolicslnchiggs} the operator
\[
(D'')^*D'' : \mathcal{R}_\delta= L^2_{2, \delta} \oplus \mathcal{H}_\delta  \longrightarrow L^2_\delta
\]
is Fredholm of index $0$ and $\ker (D'')^*D'' = \ker D''$. By Lemma 3.25 in \cite{collier2024conformallimitsparabolicslnchiggs}  the kernel of $d_1$ is trivial on $\mathcal{R}_\delta$. This implies that 
\[
- i \ast L_{A,\varphi} = 2 d_1^* d_1 : \mathcal{R}_\delta \longrightarrow L^2_\delta
\]
is invertible, hence there exists a unique $\dot g \in \mathcal{R}_\delta$ satisfying $L_{A,\varphi}\dot g = -2 [\varphi,\varphi^*]$. Then $\bar\partial_A \dot g$ and $[\varphi, \dot g]$ both lie in $L^2_{\delta}$ by the proof of Proposition 3.24 in \cite{collier2024conformallimitsparabolicslnchiggs}. It follows that
\[
\int_C \langle \bar\partial_A \dot g, \dot A^{0,1} \rangle < \infty \quad \text{and} \quad \int_C \langle [\varphi, \dot g] , \dot \varphi \rangle < \infty.
\]
We conclude that the integral expressions defining $\tilde \lambda_I$ in the following definition are finite and hence $\tilde \lambda_I$ is well-defined.

\begin{defn}
	For $(A,\varphi) \in \calB_\delta(\bsa)$, $(\dot A, \dot \varphi) \in L^2_{1,\delta} \oplus L^2_{1,\delta}$ we set
	\[
	\tilde\lambda_I (\dot A, \dot \varphi) = - \im \int_C \langle \bar\partial_A \dot g, \dot A^{0,1} \rangle + \langle \varphi + [\varphi, \dot g] , \dot \varphi \rangle.
	\]
\end{defn}

As in the case without punctures we need to prove that $\tilde\lambda_I$ descends to the moduli space to provide a relative primitive. 
\begin{prop}\label{prop:dlambda}
$\tilde\lambda_I$ descends to a 1-form $\lambda_I$ on $\mathcal{M}(\boldsymbol{\alpha}, \mathbf{m})$ such that with  $\lambda_{I,t} = m_t^* \frac{\lambda_I}{t} \in \Omega^1(\mathcal{M}(\boldsymbol{\alpha}, \mathbf{m}))$ the relation
\[
\frac{\partial}{\partial t} \omega_{I,t}= \de \lambda_{I,t}
\]
holds in $\Omega^2(\mathcal{M}(\boldsymbol{\alpha}, \mathbf{m}))$ for all $t>0$. In particular, $\omega_{I,t}$ and $\omega_I$ are cohomologous for all $t>0$. 
\end{prop}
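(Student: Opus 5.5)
The plan is to repeat the three-step argument of Lemma \ref{lem:primitive} on the framed configuration space $\calB_\delta(\bsa)$ of Section \ref{sec:CFW}, now with the weighted function spaces. Throughout, $\dot g=-2L_{A,\varphi}^{-1}([\varphi,\varphi^*])$ is taken in $\mathcal{R}_\delta = L^2_{2,\delta}\oplus\mathcal{H}_\delta$, as constructed just before the Definition of $\tilde\lambda_I$. The additional work is to justify every integration by parts, and to handle the fact that $m_t$ changes the fiber from $\calM(\bsa,\bfm)$ to $\calM(\bsa,t\bfm)$.

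\emph{Descent.} First I show that $\tilde\lambda_I$ is invariant under the framed unitary gauge group $\mathcal{K}_{\delta,*}$. This is the computation in part (1) of Lemma \ref{lem:primitive}. It uses the equivariance $\dot g((A,\varphi)^g)=g^{-1}\dot g(A,\varphi)g$, which holds by uniqueness of the solution in $\mathcal{R}_\delta$, because conjugation by $g$ preserves $\mathcal{R}_\delta$ and $L^2_\delta$.

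Second I show horizontality: $\tilde\lambda_I(X_\gamma)=0$ for $\gamma\in L^2_{2,\delta}(\mathfrak{su}_E)$. The argument is part (2) of Lemma \ref{lem:primitive}. The only new point is the adjoint identity $\int\langle D''\dot g,D''\gamma\rangle=\int\langle (D'')^*D''\dot g,\gamma\rangle$. Here $D''\dot g\in L^2_\delta$ (proof of Prop.~3.24 of \cite{collier2024conformallimitsparabolicslnchiggs}) and $\gamma$ decays exponentially, so the boundary terms on the cylinders $\{\tau=T\}$ vanish as $T\to\infty$. The term $\int\langle\varphi,[\varphi,\gamma]\rangle$ is finite because $\varphi\in L^2_{-\delta}$ and $[\varphi,\gamma]\in L^2_\delta$. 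These two facts give a $1$-form $\lambda_I$ on $\calM(\bsa,\bfm)$.

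\emph{The derivative.} Next I lift $m_t$ to $\tilde m_t(A,\varphi)=(A,t\varphi)^{g_t}$ with $g_t$ hermitian, $g_t-\mathrm{Id}\in\mathcal{R}_\delta$, and $g_t$ depending smoothly on $t$. This lift exists by the slice theorem of \cite{collier2024conformallimitsparabolicslnchiggs} applied on $\calP_0(\bsa)$, where $m_t$ is the restriction of the $\C^\times$-action (Section \ref{sec:BBstratification}). Since $\tilde m_t$ maps $\calM(\bsa,\bfm)$ into $\calM(\bsa,t\bfm)$, the pullback $\omega_{I,t}=m_t^*\omega_I$ is a form on the fixed manifold $\calM(\bsa,\bfm)$, and it is represented by \eqref{tresckahler}.

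The identity $\partial_t|_{t=1}\tilde\omega_{I,t}=d\tilde\lambda_I$ on tangent vectors $(\dot A_i,\dot\varphi_i)\in L^2_{1,\delta}$ to the solution space then follows from the computation in part (3) of Lemma \ref{lem:primitive}. The steps are: differentiate \eqref{relprim} along constant vector fields, and integrate $D''$ by parts onto $(D'')^*(\dot A_i^{0,1},\dot\varphi_i)$. This uses $D\dot g(\dot A_i,\dot\varphi_i)\in\mathcal{R}_\delta$, which holds because the linearization of $(A,\varphi)\mapsto L^{-1}_{A,\varphi}[\varphi,\varphi^*]$ stays in the invertible range. The argument finishes with the hermitian versus skew-hermitian cancellation, since the linearized real moment map equation holds for the $\dot A_i$.

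For general $t$, I use the group law $\tilde m_{ts}=\tilde m_s\circ\tilde m_t$, which holds modulo unitary gauge. Naturality then gives $\partial_t\omega_{I,t}=\frac1t m_t^*d\lambda_I=d\lambda_{I,t}$ on $\calM(\bsa,\bfm)$. Here $\lambda_I$ is evaluated on the fiber $\calM(\bsa,t\bfm)$ after the pushforward. This is legitimate because the construction above is uniform in $\bfm$ on all of $\calP_0(\bsa)$. Integrating in $t$ gives $\omega_{I,t}-\omega_I=d\int_1^t\lambda_{I,s}\,ds$, which shows the two forms are cohomologous.

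\emph{Main obstacle.} The hardest step is the analytic bookkeeping in the derivative computation. The tangent vectors decay, but $\varphi$ does not, and $\dot g$ includes the nondecaying piece $\mathcal{H}_\delta$. I have to check that every boundary term, such as $\lim_T\int_{\tau=T}\langle\dot g,\ast\dot A^{0,1}\rangle$, vanishes, and that $\dot g$ depends smoothly on $(A,\varphi)$. The plan is to use the pairing $L^2_{-\delta}\times L^2_\delta$ together with the explicit asymptotics of $[\varphi,\varphi^*]$ computed above. If the $\mathcal{H}_\delta$ part produces a nonzero boundary term, I will show it is a constant times $\sum_p\langle\mathrm{Res}_p\varphi,\mathrm{Res}_p\dot\varphi\rangle$. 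That expression vanishes because $\dot\varphi$ has zero residue on a fixed-mass fiber.
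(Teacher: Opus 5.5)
Your overall route is the paper's: redo parts (1)--(3) of Lemma~\ref{lem:primitive} on $\calB_\delta(\bsa)$ and justify each integration by parts in the weighted spaces of \cite{collier2024conformallimitsparabolicslnchiggs}. The gap is in the descent step. You check invariance and horizontality only for the framed group $\mathcal{K}_{\delta,*}$, i.e.\ for $\gamma\in L^2_{2,\delta}(\mathfrak{su}_E)$, and you dispose of the boundary terms using the decay of $\gamma$. But $\calM(\bsa,\bfm)\subset\calP_0(\bsa)=\calB_\delta(\bsa)/\calG_\delta$ is a quotient by the unframed group, whose Lie algebra is modelled on $\mathcal{R}_\delta=L^2_{2,\delta}\oplus\mathcal{H}_\delta$. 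The framed group appears only in the non-rigorous motivation of the introduction. This is why the paper states the integration by parts for $\dot g,\gamma\in\mathcal{R}_\delta$. Horizontality for decaying $\gamma$ only descends $\tilde\lambda_I$ to the framed quotient, which still carries a torus of framings at the punctures. This is not a formality. By the remark following the proposition, $\Lambda_I(X_\gamma)=0$ for decaying $\gamma$, yet $i\Lambda_I(X_\gamma)=2\lim_{\varepsilon\to0}\int_{\partial C_\varepsilon}\Tr\varphi\gamma\neq0$ for suitable non-decaying $\gamma\in U_\delta$ as soon as $\bfm\neq\mathbf 0$. Your two checks pass verbatim for $\Lambda_I$. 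So they would ``prove'' that $\Omega_I=d\Lambda_I$ is exact on $\calM(\bsa,\bfm)$, contradicting Theorem~\ref{thm:Torelli}.

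To close the gap, treat $\gamma\in\mathcal{R}_\delta$ with nonzero limits at the punctures. Such a limit is diagonal, since $d_{A_0}\gamma\in L^2_{1,\delta}$ and $\alpha^{(1)}-\alpha^{(2)}\notin\Z$. Hence $[\varphi,\gamma]$ decays, $X_\gamma=(d_A\gamma,[\varphi,\gamma])$ is a genuine tangent vector to the fixed-mass level set, and $\langle\varphi,[\varphi,\gamma]\rangle$ is integrable. In $\int_C\langle D''\dot g,D''\gamma\rangle=\int_C\langle (D'')^*D''\dot g,\gamma\rangle$ the boundary integrand on $\{\tau=T\}$ is bilinear in $\bar\partial_A\dot g$ and $\gamma$. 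It tends to zero because $D''\dot g\in L^2_{1,\delta}$ decays exponentially while $\gamma$ stays bounded. For $\Lambda_I$, by contrast, the boundary integrand is $\Tr\varphi\gamma$, and $\varphi$ does not decay. The zeroth-order terms then cancel pointwise as in Lemma~\ref{lem:primitive}(2).

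Two smaller remarks. First, the hermitian lift $g_t$ comes from the existence of harmonic metrics, with smooth $t$-dependence from the invertibility of $L_{A,\varphi}\colon\mathcal{R}_\delta\to L^2_\delta$; it does not come from the slice theorem. Second, the contingency in your last paragraph is not needed. In step (3) the boundary terms pair the bounded $D\dot g(\dot A_1,\dot\varphi_1)$ with the decaying $(\dot A_2,\dot\varphi_2)$, so the non-decaying part of $\dot g$ is harmless. It is the non-decaying part of $\gamma$ that needed attention.
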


\begin{proof}
We need to establish properties (1), (2) and (3) of Lemma \ref{lem:primitive} in the parabolic setting. The proof of (1) goes through without essential change. In the proof of properties (2) and (3) we in addition need to justify integration by parts. In (2) this amounts to checking that 
\[
\int_C \langle D'' \dot g, D'' \gamma \rangle = \int_C \langle (D'')^* D'' \dot g, \gamma \rangle
\]
for $\dot g, \gamma \in \mathcal{R}_\delta$, whereas in (3) we need to check that
\[
\int_C \langle D'' D \dot g (\dot A_1, \dot \varphi_1), (\dot A_2, \dot \varphi_2) \rangle = \int_C \langle D \dot g (\dot A_1, \dot \varphi_1), (D'')^*(\dot A_2, \dot \varphi_2) \rangle 
\]
for $(\dot A_i, \dot \varphi_i) \in L^2_{1, \delta} \oplus L^2_{1, \delta}$. Note that $D'' \dot g \in L^2_{1, \delta}$ and $D \dot g (\dot A_1, \dot \varphi_1) \in \mathcal{R}^0_\delta$, so both can be dealt with using the same argument as in the proof of Proposition 3.24 in \cite{collier2024conformallimitsparabolicslnchiggs}.
\end{proof}

\begin{rem}
Since $\omega_I$ is clearly invariant under the $S^1$-action, the cohomology class of $\omega_I$ is invariant under the full $\C^\times$-action, i.e.\ $[m_\lambda^* \omega_I] = [\omega_I]$ for all $\lambda \in \C^{\times}$.
\end{rem}

\begin{cor}\label{cor:const}
For $[S] \in H_2(\cM(\bsa, \bfm), \Z)$  the value $\int_S \omega_{I,t}$ does not depend on $t \in \R_+$.
\end{cor}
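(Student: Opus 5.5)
This follows almost directly from Proposition \ref{prop:dlambda}. Fix a closed cycle representing $[S]$; since $\cM(\bsa,\bfm)$ has a compactly supported homology generated by the spheres of the basis $\frakB$, we may take $S$ to be a compact (piecewise smooth) surface. By Proposition \ref{prop:dlambda}, $\frac{\partial}{\partial t}\omega_{I,t} = \de\lambda_{I,t}$ as forms on $\cM(\bsa,\bfm)$, with $\lambda_{I,t} = m_t^*(\lambda_I/t)$ a genuine smooth $1$-form on $\cM(\bsa,\bfm)$. We differentiate under the integral sign and apply Stokes' theorem:
\[
\frac{\de}{\de t}\int_S \omega_{I,t} = \int_S \frac{\partial}{\partial t}\omega_{I,t} = \int_S \de\lambda_{I,t} = \int_{\partial S}\lambda_{I,t} = 0,
\]
since $S$ is closed. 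Then $t\mapsto \int_S\omega_{I,t}$ has zero derivative on the connected interval $\R_+$, so it is constant. Equivalently, integrating the identity in $t$ gives $\omega_{I,t}-\omega_{I,1} = \de\bigl(\int_1^t \lambda_{I,s}\,\de s\bigr)$, an exact form on $\cM(\bsa,\bfm)$, whose integral over a cycle vanishes.

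The only point needing care is the regularity that justifies differentiating under the integral and interchanging $\int_1^t$ with $\de$. We need $(t,x)\mapsto \lambda_{I,t}(x)$ to be jointly smooth, or at least $C^1$, on $\R_+\times S$. I would obtain this from the smoothness of the family $\tilde m_t$. The complex gauge transformations $g_t$ solving the Hitchin equations depend smoothly on $t$ and on the point, by the implicit function theorem and the invertibility of $L_{A,\varphi}$ on $\mathcal{R}_\delta$ established above. Consequently $\dot g$ and the defining integrals of $\tilde\lambda_I$ depend smoothly on the point. Compactness of $S$ then gives uniform bounds on compact $t$-intervals.

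This regularity step is the main, though modest, obstacle. Once it is in place, the corollary is immediate. In particular, $\int_S \omega_{I,t} = \int_S \omega_I$ for all $t>0$, which is the input for the subsequent continuity argument as $t\to 0$.
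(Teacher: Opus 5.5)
Your proposal is correct and follows essentially the paper's argument: Proposition~\ref{prop:dlambda} gives $\frac{\partial}{\partial t}\omega_{I,t}=\de\lambda_{I,t}$, you differentiate under the integral, and an exact form integrates to zero over a closed cycle, so $t\mapsto\int_S\omega_{I,t}$ has zero derivative on the connected set $\R_+$. Your remarks on representing $[S]$ by a compact cycle and on joint regularity in $(t,x)$ justify steps the paper leaves implicit; they do not change the route.
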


\begin{proof}
By Proposition \ref{prop:dlambda},
the relation 
\[
\frac{\partial}{\partial t} \omega_{I,t}= \de \lambda_{I,t}
\]
holds in $\Omega^2(\mathcal{M}(\boldsymbol{\alpha}, \mathbf{m}))$. Consequently, for $[S] \in H_2(\cM(\bsa, \bfm), \Z)$
\[\frac{\partial}{\partial t}\int_{S^2} \omega_{I,t}= \int_{S^2} \de \lambda_{I,t}= 0,\]
i.e.\ $t \mapsto \int_{S} \omega_{I,t}$ is a constant function on $\R_+$. 
\end{proof}

Recall that if $(\bsa, \bfm)$ is generic, then $(\bsa, \mathbf{0})$ is generic and so the whole ray $(\bsa, t\bfm)$ for $t \in \R_{\geq 0}$ consists of generic parameters.  We can thus extend the homology class $[S] \in H_2(\cM(\bsa, \bfm), \Z)$ to a parallel family of classes $[S_t]  \in H_2(\cM(\bsa, t\bfm), \Z)$ for $t \in \R_{\geq 0}$.

\begin{prop}\label{prop:sameperiods}
The periods of $\omega_I$ on $\calM(\bsa,\bfm)$ coincide with the periods of $\omega_I$ on $\calM(\bsa,\mathbf{0})$. More precisely, for $[S] \in H_2(\cM(\bsa, \bfm), \Z)$ 
 \[
 \int_{S_{t=0}} \omega_I = \int_{S_{t=1}} \omega_I
 \]
where $[S_t]$ is the parallel extension in the local system $\mathcal H$. In particular, the value of $\int_{S^2_J} \omega_I$ on $\calM(\bsa,\bfm)$ is precisely given by the value when $t=0$ which appears in the previous section.
\end{prop}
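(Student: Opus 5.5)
We want to compare $\int_{S_1}\omega_I$ on $\calM(\bsa,\bfm)$ with $\int_{S_0}\omega_I$ on $\calM(\bsa,\mathbf 0)$. The approach combines Corollary \ref{cor:const} (invariance in $t$ along the rescaling) with a continuity argument as $t\to 0$, using the smooth structure of the Collier--Fredrickson--Wentworth space $\calP_0(\bsa)$ from Section \ref{sec:CFW}.

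\textbf{Step 1: transport the cycle along the ray.} Fix $[S]\in H_2(\cM(\bsa,\bfm),\Z)$. The map $m_t:\cM(\bsa,\bfm)\to\cM(\bsa,t\bfm)$ is a diffeomorphism for $t>0$. By Proposition \ref{prop:fiberbundle} the family over the ray $\{(\bsa,t\bfm)\}_{t\in[0,1]}$ is a smooth bundle, and $m_t$ is the restriction of a continuous family of diffeomorphisms. Hence $(m_t)_*[S]$ is the parallel transport $[S_t]$ in the local system $\mathcal H$ of Proposition \ref{prop:localsystem}. Since $\omega_{I,t}=m_t^*\omega_I$, we get
\[
\int_{S_t}\omega_I=\int_{S}\omega_{I,t}.
\]
By Corollary \ref{cor:const} this is independent of $t\in(0,1]$, so it equals $\int_{S_1}\omega_I$.

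\textbf{Step 2: continuity at $t=0$.} It remains to show that $t\mapsto\int_{S_t}\omega_I$ is continuous on $[0,1]$, and in particular at $t=0$. The plan is to represent the classes $[S_t]$ by a smooth family of compact embedded surfaces $\Sigma_t\subset\cM(\bsa,t\bfm)$, all sitting inside $\calP_0(\bsa)$ and varying smoothly in $t$ up to and including $t=0$. Theorem \ref{thm:CFW} makes $\pi_{\bsa}$ a holomorphic submersion, and the CFW slice construction gives local charts of $\calP_0(\bsa)$ compatible with the residue projection. So a local trivialization of $\pi_{\bsa}$ near the compact set $\Sigma_0$ pushes $\Sigma_0$ into nearby fibers.

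We then need the fiberwise hyperKähler forms $\omega_I$ to vary continuously, indeed smoothly, in $\bfm$ on this trivialization. This is where the analytic construction in \cite{collier2024conformallimitsparabolicslnchiggs} enters: harmonic metrics and harmonic representatives depend smoothly on the point of $\calP_0(\bsa)$, by the implicit function theorem with the invertible operator $2d_1^*d_1$ on the weighted spaces used above. With that in hand, $\int_{\Sigma_t}\omega_I\to\int_{\Sigma_0}\omega_I$ as $t\to0$. A function that is constant on $(0,1]$ and continuous at $0$ is constant on $[0,1]$, which proves the claim. The final statement then follows from Proposition \ref{prop:stronglyparabolicvolumesv2}.

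\textbf{Main obstacle.} I expect Step 2 to be the hardest part, specifically the smooth dependence of the fiberwise Kähler form $\omega_I$ on $\bfm$ near $\bfm=0$ inside $\calP_0(\bsa)$. Note that $\calP_0(\bsa)$ itself carries no hyperKähler structure. The first thing I would try is to write $\omega_I$ via the $L^2$ pairing of harmonic representatives and show these depend smoothly on parameters using the Fredholm theory of CFW (Propositions 3.3, 3.24). A fallback is available because only a compact surface $\Sigma_0$ is involved: one could argue with uniform local estimates for solutions of the Hitchin equations on a compact neighbourhood of $\Sigma_0$, avoiding any global smoothness statement.
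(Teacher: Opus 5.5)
Your proposal is correct and takes essentially the same route as the paper. You identify $[S_t]=(m_t)_*[S]$ by continuity and discreteness of the local system, then use naturality and Corollary \ref{cor:const} to get constancy for $t>0$, and finally pass to $t=0$ by continuity of $\omega_I$ on $\calP_0(\bsa)$. The only difference is that the paper does not rederive that last continuity from the weighted Fredholm theory; it simply cites Theorem 3.33 of \cite{collier2024conformallimitsparabolicslnchiggs} for the continuous extension of $\omega_I$ as $t\to 0$, which is exactly the step you flag as the main obstacle.
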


\begin{proof}
We first observe that $[S_t] =[m_t(S)]$ for all $t>0$ by continuity, since $[S_t]=[S]$ for $t=1$ and the fiber of the local system $\mathcal{H}$ is discrete.  Then, by naturality
\[
\int_{S_t} \omega_I = \int_{m_t(S)} \omega_I = \int_{S} m_t^* \, \omega_I = \int_{S} \omega_{I,t}  
\]
for all $t >0$ and the right-hand side is constant in $t$ by Corollary \ref{cor:const}. 
Since $\omega_I$ extends continuously as  $t \to 0$ by Theorem 3.33 in \cite{collier2024conformallimitsparabolicslnchiggs} and so does the family $[S_t]$ of homology classes, we obtain 
\[
\int_{S_{t=0}} \omega_I = \lim_{t \to 0} \int_{S_t} \omega_I = \int_{S_{t=1}} \omega_I
\]
as claimed.
\end{proof}

\begin{rem}
Compare this to the complex symplectic form
\[
\Omega_I \bigl( (\dot A_1^{0,1}, \dot \varphi_1), (\dot A_2^{0,1}, \dot \varphi_2) \bigr) = 2i \int_C \Tr (\dot \varphi_2 \wedge \dot A_1^{0,1} - \dot \varphi_1\dot \wedge A_2^{0,1})
\]
for which one has $\Omega_I =  d \Lambda_I$ with
\[
\Lambda_I(\dot A^{0,1}, \dot \varphi) = -2i \int_C \Tr \varphi \wedge\dot A^{0,1}.
\]
The finiteness of this integral is checked as above.

This form is clearly invariant and for an infinitesimal gauge transformation $\gamma$ and corresponding vertical vector field $X_\gamma=(d_A \gamma, [\varphi,\gamma])$ one has
\[
i \,\Lambda_I(X_\gamma) =  2\int_C \Tr (\varphi \wedge \bar\partial_A \gamma) = 2\int_C d  (\Tr \varphi\gamma  ) =  2\lim_{\varepsilon \to 0}\int_{\partial C_\varepsilon} \Tr \varphi \gamma
\]
where $C_\varepsilon = C \setminus \bigcup_i B_\varepsilon(p_i)$. Now with respect to the unitary frame $\tilde e_i = |z|^{-\alpha^{(i)}}e_i$
\[
\varphi =\frac{1}{z} \begin{pmatrix}
 m & 0 \\ |z|^{\alpha^{(2)} - \alpha^{(1)}} c & -m	
 \end{pmatrix}
dz + O(1) = \begin{pmatrix}
 m & 0 \\ e^{-\tau(\alpha^{(2)} - \alpha^{(1)})} c & -m 	
 \end{pmatrix}
dw + O(e^{-\delta z})
\]
and 
\[
\gamma = \begin{pmatrix}
 	i \alpha & b \\ - \bar b & - i \alpha
 \end{pmatrix} \in U_\delta = \{ \eta \in \mathcal{R}_\delta : \eta = - \eta^* \}
\]
one has
\[
\Tr \varphi \gamma = \frac{1}{z} ( 2 mi\alpha + |z|^{\alpha^{(2)} -\alpha^{(1)}} bc) dz + O(1) = (2m i \alpha + e^{-\tau (\alpha^{(2)} -\alpha^{(1)})} bc ) dw +  O(e^{-\delta z}).
\]
Now according to Proposition 3.3 in \cite{collier2024conformallimitsparabolicslnchiggs} $\lim_{\tau \to \infty} \gamma (\tau, \theta)$ exists. In particular, $\lim_{\tau \to \infty} \Tr \varphi \gamma = 2 m i \lim_{\tau \to \infty} \alpha(\tau, \theta)$. In particular, we see
\[
m = 0 \Longrightarrow \lim_{\varepsilon \to 0}\int_{\partial C_\varepsilon} \Tr \varphi \gamma = 0,
\]
i.e.\ for strongly parabolic Higgs bundles $\Lambda_I$ indeed descends to a primitive for $\Omega_I$ on $\mathcal{M}$. 

We will later see that for non-zero complex masses $\Omega_I$ is not exact at all but rather has a singular primitive with first order poles transverse to a divisor.
\end{rem}

\section{Integral of \texorpdfstring{$\Omega_I$}{the holomorphic symplectic form} is an intersection number}\label{sec:integralasintersectionnumber}

In this section, we prove the integral of $\Omega_I$ over the exterior spheres reduces to a computation of some intersection numbers.  Our main result is Corollary \ref{cor:intersectionnumbers}. We delay the computation of these intersection numbers to Section \ref{sec:TorelliOmegaI}.
Along the way, we prove a few other results of interest. 
\begin{itemize}
\item In Proposition \ref{prop:semiflatsymplectic}, we prove that the holomorphic symplectic form $\Omega_I$ agrees with a simpler ``semiflat holomorphic symplectic form.'' We note that while it is clear in the unpunctured case that $\Omega_I$ depends on the Higgs bundle $(\delbar_A, \varphi)$ but not on the hermitian metric $h$ solving the Hitchin equations, there is a potential contribution from $D$ that one needs to prove vanishes.
\item 
We use this to semiflat holomorphic form to prove that the tautological $1$-form is a primitive for $\Omega_I$ in Proposition \ref{prop:primitive}.
\end{itemize}

\begin{rem}[Importance of framing]
One might suppose that one does not need to feed harmonic representatives into $\Omega_I$. This is essentially correct.
However, we want to explain the following subtlety \emph{in the parabolic (or wild) setting, one must continually think about the framing}:
Given a Higgs bundle deformation $(\dot{A}^{0,1}, \dot{\varphi})$ one convenient way to find the harmonic representative $(\dot{A}^{0,1}-\delbar_A \dot{\nu}, \dot{\varphi} - [\varphi, \dot{\nu}])$ is to solve the PDE 
\begin{equation}\label{eq:ingaugetriple}
 \del_A^h \delbar_A \dot{\nu} - \del_A^h \dot{A}^{0,1} - \left[\varphi^{*_h}, \dot{\varphi} + [\dot{\nu}, \varphi]\right]=0
\end{equation}
in the appropriate space (see \cite{FMSW}).
In this perspective, one can think of  $\dot{\nu}$ is the unique  $\mathfrak{sl}(E)$-valued section defining one-parameter deformations of Hermitian metrics expressed in terms of an $\mathfrak{sl}(E)$-valued section $\dot{\nu}$ as
\begin{equation*}
 h_\eps(v,w) = h(\e^{\eps \dot{\nu}} v, \e^{\eps \dot{\nu}} w).
\end{equation*}
We can derive that 
the expression for the holomorphic symplectic form on the Hitchin moduli space is 
\begin{align*}
   & \Omega_I \left((\dot{A}^{0,1}_1, \dot{\varphi}_1), (\dot{A}^{0,1}_2, \dot{\varphi}_2)\right)\\  \nonumber
  &=2\I \int_{\CP^1 \setminus D} \Tr \left( (\dot{A}^{0,1}_1 - \delbar_A \dot{\nu}_1) \wedge (\dot{\varphi}_2 - [\varphi, \dot{\nu}_2]) -  
   (\dot{A}^{0,1}_2 - \delbar_A \dot{\nu}_2) \wedge (\dot{\varphi}_1 - [\varphi, \dot{\nu}_1])
  \right)\\
   &=2\I \int_{\CP^1 \setminus D} \Tr \left( (\dot{A}^{0,1}_1 \wedge \dot{\varphi}_2 
   -\dot{A}^{0,1}_2 \wedge \dot{\varphi}_1)
   - (\dot{A}^{0,1}_1 \wedge [\varphi, \dot{\nu}_2] - \delbar_A \dot{\nu}_2 \wedge \dot{\varphi}_1) \right.\\
   &  \qquad \left.+(\dot{A}^{0,1}_2\wedge [\varphi, \dot{\nu}_1] - \delbar_A \dot{\nu}_1 \wedge \dot{\varphi}_2) + (\delbar_A \dot{\nu}_1 \wedge [\varphi, \dot{\nu}_2] - \delbar_A \dot{\nu}_2 \wedge [\varphi, \dot{\nu}_1]) \right)\\
     &= 2\I \int_{\CP^1 \setminus D} \Tr \left( (\dot{A}^{0,1}_1 \wedge \dot{\varphi}_2 
   -\dot{A}^{0,1}_2 \wedge \dot{\varphi}_1)
   -   \left(\dot{\nu}_2  \left(\cancel{ \delbar_A \dot{\varphi}_1 + [\dot{A}^{0,1}_1,\varphi] } \right)- \delbar_A(\dot{\nu}_2 \dot{\varphi}_1)\right)\right.\\
   &  \qquad \left.+
 \left(\dot{\nu}_1 \left(\cancel{\delbar_A \dot{\varphi}_2 +  [\dot{A}^{0,1}_2, \varphi]} \right) - \delbar_A(\dot{A}^{0,1}_1 \dot{\varphi}_2)\right)
    + \delbar_A \left([\dot{\nu}_2, \dot{\nu}_1] \right) \wedge  \varphi  \right)\\
     &= 2\I \int_{\CP^1 \setminus D} \Tr \left( (\dot{A}^{0,1}_1 \wedge \dot{\varphi}_2 
   -\dot{A}^{0,1}_2 \wedge \dot{\varphi}_1)
   + \delbar_A\left(\dot{\nu}_2 \dot{\varphi}_1- \dot{\nu}_1 \dot{\varphi}_2+ [\dot{\nu}_2, \dot{\nu}_1]  \varphi \right) \right)
  \end{align*}
  In the last line we used that $\delbar_A \varphi$ is zero on $\CP^1 \setminus D$. The crossed out terms vanish since $\delbar_A \dot{\varphi}_i +[\dot{A}^{0,1}_i, \varphi]=0$ on $\CP^1 \setminus D$ since it is the linearization of $\delbar_A \varphi=0$. 
  
The framing shows up in the fact that this correction term can be non-zero. It is certainly zero if $(\dot{A}^{0,1}, \dot{\varphi})$ are in the hyperk\"ahler configuration space of allowable, i.e. \emph{framed}, deformations $\mathcal{A}$ used to construct the moduli space as a hyperK\"ahler quotient. Otherwise,  the boundary term can contribute.
\end{rem}

\subsection{Darboux coordinates for \texorpdfstring{$\Omega_I$}{the holomorphic symplectic form}}

 We recall the construction of the semiflat metric on $\cM'$ according to Gaiotto, Moore and Neitzke. For $b \in \cB'$ consider the spectral curve $\Sigma_b \subset \mathrm{Tot}(K_C(D))$ and the branched cover $\rho : \Sigma_b \to C$. Denote  by $\sigma$ the involution switching the sheets. Let $D' = \rho^{-1}(D)$ and $\Sigma_b' = \Sigma_b \setminus D'$ the punctured spectral curve. Note that $\Sigma_b' \subset \mathrm{Tot}(K_C)$ and hence carries the restriction of the tautological 1-form, denoted here by $\lambda$, which is meromorphic with simple poles at $D'$. Consider the local systems of lattices  $\Gamma_b = H_1(\Sigma_b, \Z)_-$ and $\Gamma_b' =H_1(\Sigma'_b , \Z)_-$, where $( \,\cdot\, )_-$ denotes the odd part under the involution $\sigma$. These fit into the short exact sequence
 \[
 0 \longrightarrow \Gamma^{\circ} \longrightarrow \Gamma' \longrightarrow \Gamma \longrightarrow 0 
 \]
 where $\Gamma^{\circ} \cong \bigoplus_{p \in D} \Z$ is generated by odd lifts of loops around the punctures. The lattice $\Gamma^{\circ}$ doesn't undergo any monodromy and is the kernel of the intersection pairing
 \[
 \langle \cdot \,  , \cdot \rangle : H_1(\Sigma'_b , \Z)_- \times H_1(\Sigma'_b , \Z)_- \to \Z.
 \]
 In particular, the intersection pairing gives rise to a non-degenerate pairing $\Gamma_b \times \Gamma_b \to \Z$.
 
 There is a natural special K\"ahler metric on $\cB'$ from its interpretation as a family of spectral curves $\{\Sigma_b\}_{b \in \cB'}$. Locally, choose a symplectic basis $\{\alpha_i(b), \beta_i(b)\}_{i=1}^N$ of the lattice $\Gamma_b = H_1(\Sigma_b, \Z)_-$ of rank $N$ with the intersection pairing, i.e.\ $\IP{\alpha_i, \beta_j} = \delta_{ij}$ and $\IP{\alpha_i, \alpha_j}=0$ and $\IP{\beta_i, \beta_j}=0$. There are special conjugate holomorphic coordinates $Z_{\alpha_i}(b), Z_{\beta_j}(b)$ defined by integrating the tautological $1$-form $\lambda$ over cycles of $\Sigma'_b \subset \mathrm{Tot}(K_C)$, more precisely $Z_{\gamma}(b)=\int_{\gamma(b)} \lambda$ for $\gamma \in H_1(\Sigma_b',\Z)_-$. If $\gamma$ is the local section of $\Gamma^{\circ}$ given by the odd lift of a loop around $p \in D$, then
 \[
 Z_\gamma(b) = \int_{\gamma(b)} \lambda = \pm  4 \pi i  \, m_p
 \]
 where $m_p \in \C$ is the complex mass attached to $p \in D$. In particular, $dZ_\gamma =0$ for any local section $\gamma$ of $\Gamma^{\circ}$. Hence, $dZ_\gamma$ is well-defined for any local section $\gamma$ of $\Gamma \cong \Gamma'/\Gamma^{\circ}$. Then the special K\"ahler metric on the base is given by 
  \[\omega_{\mathrm{sK}} = \sum_{i=1}^N \de Z_{\alpha_i} \wedge \de \overline{Z}_{\beta_i} + \de \overline{Z}_{\alpha_i} \wedge \de Z_{\beta_i}.\]
  More compactly\footnote{To relate this to the previous formula, note that $\langle\alpha^i(b), \beta^i(b)\rangle=1$ and $\langle\beta^i(b), \alpha^i(b)\rangle=-1$ and all other pairings vanish.}, if we take $\{\gamma_i\}$ to be a basis of the lattice $H_1(\Sigma_b, \Z)_-$ and $\{\gamma^i\}$ to be the dual basis of the dual lattice $H_1(\Sigma_b, \Z)^*_-$, we can write  \[\omega_{\mathrm{sK}}=\langle \de Z \wedge \de \overline{Z}\rangle,\]
  where $\langle \de Z \wedge \de \overline{Z}\rangle= \sum_{i,j}\langle\gamma^i, \gamma^j\rangle \de Z_{\gamma_i} \wedge \de \overline{Z}_{\gamma_j}$. 

 The cotangent bundle $T^*\cB'$ carries a canonical hyperK\"ahler metric \cite[Theorem 2.1]{freedsemiflat} which descends to the quotient $\cM' = T^* \cB'/\Gamma$. 
  Just as $Z_\gamma$ are coordinates on the base, $\theta_\gamma$ are coordinates on the fiber from 
  \[\mathrm{Hom}(H_1(\Sigma_b, \Z)_-, U(1)) \simeq \mathrm{Jac}(\Sigma_b). \]
  
 More precisely: Fiber coordinates $\theta_\gamma$ can be obtained as follows. Using the spectral correspondence the fiber $\cM'_b = \pi^{-1}(b)$ for $b \in \cB'$ is given by $\mathrm{Pic}_{d_L}(\Sigma_b)$ in the $\GL(2,\C)$-case and $\mathrm{Prym}_{d_L}(\Sigma_b)$ for $d_L=\deg \cE + 2(g-1+ \frac{n}{2})$ in the $\SL(2,\C)$-case. Both are torsors, over $\mathrm{Jac}(\Sigma_b)$ in the first case and $\mathrm{Prym}(\Sigma_b)$ in the second. If $[\dot \xi] \in H^{0,1}(\Sigma_b)$ represents a tangent vector to $\mathrm{Pic}_{d_L}(\Sigma_b)$, then
\[
 d \theta_\gamma ( \dot{A}^{0,1}, \dot \varphi) = \int_\gamma \im (\dot \xi - \bar\partial \dot \nu)
 \]
 where $\dot \xi - \bar\partial \dot \nu$ is the harmonic representative of $[\dot \xi]$, i.e.\ $\dot \nu \in C^\infty(\Sigma_b,\C)$ solves $\partial\bar\partial \dot \nu = \partial\dot \xi$. The same reasoning applies if $[\dot \xi] \in H^{0,1}(\Sigma_b)_-$ is tangent to $\mathrm{Prym}_{d_L}(\Sigma_b)$ in the $\SL(2,\C)$-case. Finally, the coordinates $\theta_\gamma$ are obtained by integrating the closed 1-forms $d \theta_\gamma$. 
 \medskip
 
 The resulting holomorphic symplectic form on $\cM'$ can be written 
 \[
 \Omega_I^{\semif}= \langle \de Z \wedge \de \theta \rangle = \sum_{i=1}^N d Z_{\alpha_i}\wedge d \theta _{\beta_i} - d Z_{\beta_i}\wedge d \theta_{\alpha_i}
 \]
where $\{\alpha_i, \beta_i\}_{i=1}^N$ is a local symplectic basis of  $\Gamma_b = H_1(\Sigma_b, \Z)_-$.
\begin{prop}\label{prop:semiflatsymplectic} 
$\Omega_I = -4 \,\Omega_I^{\semif}$. 
\end{prop}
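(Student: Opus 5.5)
We compare the two holomorphic symplectic forms pointwise on the regular locus $\cM' = \mathrm{Hit}^{-1}(\cB')$ by evaluating both on a convenient spanning set of tangent vectors: \emph{vertical} vectors tangent to a Hitchin fiber, and \emph{horizontal} vectors obtained by varying the spectral curve. Both forms are $I$-holomorphic, so it suffices to check agreement on holomorphic deformations $(\dot A^{0,1},\dot\varphi)$ of Higgs bundles. Here we use the expression for $\Omega_I$ in the Remark on the importance of framing,
\[
\Omega_I(v_1,v_2) = 2\I\int_{\CP^1\setminus D}\Tr\bigl(\dot A^{0,1}_1\wedge\dot\varphi_2-\dot A^{0,1}_2\wedge\dot\varphi_1\bigr) + (\text{boundary term}),
\]
with boundary term $2\I\lim_{\eps\to 0}\int_{\partial C_\eps}\Tr(\dot\nu_2\dot\varphi_1-\dot\nu_1\dot\varphi_2+[\dot\nu_2,\dot\nu_1]\varphi)$. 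The first step is to show that this boundary term vanishes for deformations tangent to $\calM(\bsa,\bfm)$, i.e.\ with fixed complex masses. We expect this to be the main obstacle. Following the computation at the end of Section \ref{sec:TorelliomegaI}, we would take $\dot\nu_i\in\mathcal{R}_\delta$, so that limits at the punctures exist by \cite[Proposition 3.3]{collier2024conformallimitsparabolicslnchiggs}, and $\dot\varphi_i\in L^2_{1,\delta}$, which decays exponentially in the cylindrical coordinate. The terms involving $\dot\varphi_i$ then vanish in the limit. The remaining term $\Tr([\dot\nu_2,\dot\nu_1]\varphi)$ has leading part $\Tr([\dot\nu_2,\dot\nu_1](\infty)\,\mathrm{diag}(m,-m))\,dw$. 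This must be killed by noting that the $\dot\nu_i$ may be chosen to preserve the flag to leading order, so that their limits are triangular; the diagonal of the commutator of two triangular matrices is zero.

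Next, we pass to the spectral side via the BNR correspondence. A Higgs bundle corresponds to a line bundle $\mathcal L$ on $\Sigma_b$, and $\varphi$ corresponds to multiplication by the tautological section $\lambda$. A vertical deformation is a class $[\dot\xi]\in H^{0,1}(\Sigma_b)_-$ with $\dot\varphi=0$ in a suitable gauge. A horizontal deformation changes $b$, and $\dot\varphi$ pushes forward $\dot\lambda$, a meromorphic differential on $\Sigma_b$ whose class in $H^1(\Sigma_b')_-$ is determined by $dZ_\gamma$. Pushing forward with $\rho_*$ and using the trace over the two sheets, the bulk term becomes
\[
2\I\cdot 2\int_{\Sigma_b}\bigl(\dot\xi_1\wedge\dot\lambda_2-\dot\xi_2\wedge\dot\lambda_1\bigr)
\]
up to a factor from the $\SL$ normalization. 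Here the factor $2$ comes from $\Tr\,\mathrm{diag}(x,-x)\mathrm{diag}(y,-y)=2xy$ on odd forms. A Riemann bilinear relation on $\Sigma_b$, applied to the $\sigma$-odd forms and using a symplectic basis $\{\alpha_i,\beta_i\}$ of $\Gamma_b$, rewrites this as $\sum_i\bigl(\int_{\alpha_i}\dot\lambda\int_{\beta_i}\dot\xi-\int_{\beta_i}\dot\lambda\int_{\alpha_i}\dot\xi\bigr)$. We identify $\int_\gamma\dot\lambda=dZ_\gamma$ and $\int_\gamma\im(\dot\xi-\bar\partial\dot\nu)=d\theta_\gamma$; the real/imaginary parts match by holomorphicity in $I$. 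The poles of $\dot\lambda$ at $D'$ contribute nothing, since the residues $m_p$ are fixed, so $\dot\lambda$ is of the second kind and $dZ_\gamma=0$ on $\Gamma^\circ$.

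Finally, we check the vertical–vertical pairing. It is $0$ on both sides, because $\Omega_I(v_1,v_2)$ involves only $\dot A\wedge\dot\varphi$ with $\dot\varphi=0$, and $\langle dZ\wedge d\theta\rangle$ vanishes on the fiber. The horizontal–horizontal pairing is handled by choosing horizontal lifts with $\dot A^{0,1}=0$ locally, e.g.\ via a flat Lagrangian section. Together with the previous step, this gives $\Omega_I=c\,\Omega_I^{\semif}$ on $\cM'$. The constant $c=-4$ is obtained by collecting the factor $2\I$ from $\Omega_I$, the factor $2$ from the trace, and the $\I$ relating $\dot\xi$ to $d\theta$. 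We would confirm the value by a single model computation on a flat $(\C\times T^2)/\Z_2$ region, matching against the fiber normalization $\int_F\omega_I=4\pi^2$ of Observation \ref{obs:1}. The equality extends to all of $\cM$ by density, since both sides are holomorphic.
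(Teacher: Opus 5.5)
Your route is the paper's: push $2\I\int_C\Tr(\dot A^{0,1}_1\wedge\dot\varphi_2-\dot A^{0,1}_2\wedge\dot\varphi_1)$ to $\Sigma_b$. Then identify $\dot\varphi$ with the variation $\dot\tau$ of the tautological form (your $\dot\lambda$) and $\dot A^{0,1}$ with a class $[\dot\xi]$, pass to the harmonic representative, and apply the Riemann bilinear relation. The gap is the constant $-4$, which your proposal never actually derives; two of your factors are wrong.

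First, there is no extra $2$ from the trace. Over $U\subset C$ the trace is the sum over the two sheets, so $\int_C\Tr(\cdot)=\int_{\Sigma_b}(\cdot)$ and
\[
\Omega_I(v_1,v_2)=2\I\int_{\Sigma_b}\bigl(\dot\tau_1\wedge\dot\xi_2-\dot\tau_2\wedge\dot\xi_1\bigr).
\]
The $2$ in $\Tr\,\mathrm{diag}(x,-x)\mathrm{diag}(y,-y)=2xy$ is absorbed by the degree of $\rho$.

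Second, passing from $\dot\xi$ to $d\theta_\gamma=\int_\gamma\im(\dot\xi-\bar\partial\dot\nu)$ costs $2\I$, not $\I$. For the antiholomorphic harmonic representative, $\dot\tau\wedge\overline{\dot\xi}=0$ by type. Hence $\int_{\Sigma_b}\dot\tau\wedge\dot\xi=\int_{\Sigma_b}\dot\tau\wedge(\dot\xi-\overline{\dot\xi})=2\I\int_{\Sigma_b}\dot\tau\wedge\im\dot\xi$, and only then is the bilinear relation applied to the closed real form $\im\dot\xi$. That step, not ``holomorphicity in $I$'', is what matches real and imaginary parts. The correct count is $2\I\cdot 2\I=-4$; your $2\I\cdot 2\cdot\I$ agrees only because the two errors cancel.

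Your fallback calibration cannot settle this. $F$ is $\Omega_I$-Lagrangian, so $\int_F\omega_I=4\pi^2$ carries no information about $\Omega_I$. Moreover, compatibility with $(g,I)$ is invariant under $\Omega_I\mapsto e^{\I\vartheta}\Omega_I$, so a flat-model comparison could at best fix $|c|$, never its sign or phase. These matter downstream, e.g.\ for $\Omega_I=\I\,d\tau$ in Corollary \ref{cor:primitive} and hence for the signs of all $\Omega_I$-periods.

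Three smaller points:
\begin{enumerate}
\item $\dot\tau=\tfrac12\dot q/\tau$ is holomorphic on $\Sigma_b$, not merely of the second kind, since $\dot q$ has at most simple poles when the masses are fixed. Holomorphy is what makes $\int_{\Sigma_b}\dot\tau\wedge\dot\xi$ finite and lets you replace $\dot\xi$ by $\dot\xi-\bar\partial\dot\nu$ via Stokes with no contributions at $D'$.
\item Your boundary-term step is extra rigor the paper omits, but there is no freedom in $\dot\nu_i$, and none is needed. Limits of elements of $\mathcal{R}_\delta$ are parallel for the model connection, hence diagonal in the adapted frame because $1-2\alpha\notin\Z$. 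So $[\dot\nu_2,\dot\nu_1]\to 0$.
\item The vertical--vertical and horizontal--horizontal checks are automatic. Fibers and holomorphic horizontal lifts are complex lines, and any $(2,0)$-form vanishes on a complex line.
\end{enumerate}
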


\begin{proof}
 If $v_i = (\dot A^{0,1}_i, \dot \varphi_i)$ are infinitesimal deformations of the parabolic Higgs bundle $(\bar\partial_A,\varphi)$, i.e.\ $\dot A^{0,1}_i \in \Omega^{0,1}(\mathfrak{sl}(E))$ are nonsingular, $\dot\varphi_i \in \Omega^{1,0}(\mathfrak{sl}(E))$ have at most simple poles at $D$, and satisfy $\bar\partial_A \dot\varphi_i + [\dot{A}^{0,1}_i,\varphi]=0$ for $i=1,2$, then 
\begin{align*}
\Omega_I \bigl( (\dot{A}^{0,1}_1, \dot\varphi_1), (\dot{A}^{0,1}_2, \dot\varphi_2) \bigr) &= 2i \int_C \Tr \bigl(\dot{A}^{0,1}_1 \wedge \dot \varphi_2 - \dot{A}^{0,1}_2 \wedge \dot \varphi_1 \bigr) \\
&= i \int_{\Sigma_b} \Tr \bigl(\pi^*\dot{A}^{0,1}_1 \wedge \pi^*\dot \varphi_2 - \pi^*\dot{A}^{0,1}_2 \wedge \pi^*\dot \varphi_1 \bigr)   \\
&= 2i \int_{\Sigma_b} \dot\xi_1 \wedge \dot \tau_2 - \dot\xi_2 \wedge \dot \tau_1 = 2i \int_{\Sigma_b} \dot \tau_1\wedge  \dot\xi_2 -  \dot \tau_2 \wedge \dot\xi_1
\end{align*}
where $\dot\tau_i$ denotes the variation of the tautological 1-form $\tau$ on $\Sigma_b$. Note that $\dot\tau_i$ is actually nonsingular on $\Sigma_b$, which can be seen as follows: If $b \in \mathcal{B}$ is represented by $q \in H^0(K_C(D)^2)$, then differentiating the relation $\tau^2 = q$ yields $2 \tau \dot\tau_i = \dot q_i$, where since the second order residue of $q$ is determined by the complex masses, $\dot q_i \in H^0(K_C(D))$. This means that $\dot\tau_i = \frac 12 \frac{\dot q_i}{\tau} \in H^0(K_{\Sigma_b})$.

In the last integral above we may as before replace $\dot\xi_i$ by the harmonic representative $\dot \xi_i - \bar\partial \dot \nu_i$ of $[\dot \xi_i]$ where $\dot \nu_i \in C^\infty(\Sigma_b,\C)$ solves $\partial\bar\partial \dot \nu_i = \partial\dot \xi_i$. Recall the Riemann bilinear identity
\[
\int_S \eta \wedge \xi  = \sum_{j=1}^g \left( \int_{\alpha_j} \eta  \int_{\beta_j} \xi - \int_{\alpha_j} \xi  \int_{\beta_j} \eta \right)
\]
for closed 1-forms $\eta, \xi$ on the Riemann surface $S$ of genus $g$. In particular, if $\eta \in \mathcal{H}^{1,0}$ is holomorphic and $\xi \in\mathcal{H}^{0,1}$ is anti-holomorphic, then $\eta \wedge \bar \xi = 0$ and using $2i \im \xi = \xi - \bar \xi$
\begin{align*}
	\int_S \eta \wedge \xi = 2i \int_S \eta \wedge \im \xi = 2i\sum_{j=1}^g \left( \int_{\alpha_j} \eta \int_{\beta_j} \im \xi - \int_{\alpha_j} \im \xi  \int_{\beta_j} \eta \right).
\end{align*}
Then
\begin{align*}
 2i \int_{\Sigma_b} \dot \tau_1\wedge  \dot\xi_2 &= -4 \sum_{j=1}^g \left( \int_{\alpha_j} \dot\tau_1 \int_{\beta_j} \im  (\dot\xi_2 - \bar\partial\dot\nu_2)       - \int_{\alpha_j} \im (\dot\xi_2 - \bar\partial\dot\nu_2)  \int_{\beta_j} \dot\tau_1 \right)\\
 & = -4\sum_{j=1}^g \left( dZ_{\alpha_j}(v_1)d\theta_{\beta_j}(v_2) - d\theta_{\alpha_j}(v_2)dZ_{\beta_j}(v_1)   \right)
\end{align*}
and similarly
\[
2i \int_{\Sigma_b} \dot \tau_2\wedge  \dot\xi_1 = -4 \sum_{j=1}^g \left( dZ_{\alpha_j}(v_2)d\theta_{\beta_j}(v_1) - d\theta_{\alpha_j}(v_1)dZ_{\beta_j}(v_2)   \right).
 \]
Altogether we obtain 
\begin{align*}
\Omega_I = -4 \sum_{j=1}^g d Z_{\alpha_j}\wedge d \theta _{\beta_j} - d Z_{\beta_j}\wedge d \theta_{\alpha_j} = -4 \, \Omega_I^{\semif}
\end{align*}
as claimed. 
\end{proof}

\begin{rem}
In contrast to $\omega_I$, the complex symplectic form $\Omega_I$ is homogeneous of degree 1 with respect to the $\C^\times$-action, i.e.\ 
$m_\lambda^* \Omega_I = \lambda \Omega_I$ for $\lambda \in \C^\times$. In particular, 	$m_t^*\Omega_I = t \Omega_I$ for $t \in \R_+$. \end{rem}

If $\Sigma_b = T^2_\tau$ is an elliptic curve  one has $H_1(\Sigma_b, \Z)_- = H_1(\Sigma_b, \Z)$. Furthermore, in this case $\mathrm{Prym}_{d_L}(\Sigma_b) = \mathrm{Pic}_{d_L}(\Sigma_b)$. This gives the following:
\begin{cor}\label{cplx_symp_ell}
If 	$\Sigma_b = T^2_\tau$ is an elliptic curve and $\{\gamma, \delta\}$ a local symplectic basis of $H_1(\Sigma_b, \Z)$, then
\[
\Omega_ I = -4 (dZ_\gamma \wedge d\theta_\delta - dZ_\delta \wedge d\theta_\gamma).
\]
\end{cor}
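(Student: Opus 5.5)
This is a direct specialization of Proposition \ref{prop:semiflatsymplectic}. The plan is to identify the two sides of that proposition in genus one and read off the formula.

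First, I check that the hypotheses transfer. When $\Sigma_b=T^2_\tau$ has genus one, the involution $\sigma$ acts on $H_1(\Sigma_b,\Z)\cong\Z^2$ by $-1$. The reason is that $\Sigma_b/\sigma\cong\CP^1$ has $H_1=0$, so the invariant part of $H_1(\Sigma_b,\Q)$ vanishes and all of $H_1$ is anti-invariant. Hence
\[
\Gamma_b=H_1(\Sigma_b,\Z)_-=H_1(\Sigma_b,\Z),
\]
which has rank $2$. So a local symplectic basis $\{\gamma,\delta\}$ of $H_1(\Sigma_b,\Z)$, with $\IP{\gamma,\delta}=1$, is also a local symplectic basis of $\Gamma_b$ in the sense used to define $\Omega_I^{\semif}$; here $N=1$, $\alpha_1=\gamma$, $\beta_1=\delta$.

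Second, I check that the fiber coordinates $\theta_\gamma,\theta_\delta$ are the ones appearing in Proposition \ref{prop:semiflatsymplectic}. Since $\sigma$ acts by $-1$ on $H^{0,1}(\Sigma_b)$, the Prym variety is the full Jacobian, and therefore $\mathrm{Prym}_{d_L}(\Sigma_b)=\mathrm{Pic}_{d_L}(\Sigma_b)$. Consequently the tangent vectors $[\dot\xi]\in H^{0,1}(\Sigma_b)_-=H^{0,1}(\Sigma_b)$ are all admissible, and $d\theta_\gamma, d\theta_\delta$ are given by the formula stated before the proposition.

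Third, I substitute into the proposition:
\[
\Omega_I=-4\,\Omega_I^{\semif}=-4\,(dZ_\gamma\wedge d\theta_\delta-dZ_\delta\wedge d\theta_\gamma).
\]
In the proof of that proposition the sum over $j$ reduces to the single term $j=1$, since $g=1$. The genus used in the Riemann bilinear identity there is that of $\Sigma_b$, and here it equals $\operatorname{rank}\Gamma_b/2=1$, so the reduction is consistent.

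The only point that needs any care is the first step, the identification $H_1(\Sigma_b)_-=H_1(\Sigma_b)$. I justify it by the quotient argument above, which uses $H_1(\CP^1)=0$. Everything else is immediate substitution.
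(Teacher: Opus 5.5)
Your proposal is correct and follows the paper's own route: the paper also obtains the corollary by specializing Proposition~\ref{prop:semiflatsymplectic} to genus one. It notes only that $H_1(\Sigma_b,\Z)_-=H_1(\Sigma_b,\Z)$ and $\mathrm{Prym}_{d_L}(\Sigma_b)=\mathrm{Pic}_{d_L}(\Sigma_b)$; your transfer argument via $\Sigma_b/\sigma\cong\CP^1$ supplies the justification the paper leaves implicit.
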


\subsection{A singular primitive for \texorpdfstring{$\Omega_I$}{the holomorphic symplectic form}} 
We show that the the tautological 1-form $\tau$ on $K_C(D)$ provides a primitive for the complex symplectic form $\Omega_I$ on  the complement of a divisor. The components of this divisor consist of eight sections. These will be called polar sections since the primitive has first order poles transverse to the divisor. We determine the residues of $\tau$ transverse to the polar sections.

Recall that for $q$ not in the discriminant locus the spectral curve $\Sigma_q$ is the non-singular curve given by
\[
\Sigma_q = \{ \lambda \in K_C(D) : \lambda^2 = q(\pi(\lambda))\} 
\]
where $q \in H^0(C, K_C(D)^2)$ and $\pi : K_C(D) \to C$ the projection. With $\tau \in H^0(K_C(D), \pi^* K_C(D))$ denoting the tautological section this means that $\Sigma_q$ is simply given as the zero locus of the section $\tau^2 - \pi^* q \in H^0(K_C(D), \pi^* (K_C(D)^2)$. The tautological section gives rise to a meromorphic 1-form on $K_C(D)$, the so-called tautological 1-form. We will denote the tautological 1-form again by $\tau$ as well as its restriction to $\Sigma_q$.

\begin{prop}\label{prop:primitive}
Let $\tau$ be the tautological $1$-form, and for $p \in D$ and choice of $\pm$, let sections $\sigma_p^{\pm}: \mathcal{B} \to \mathcal{M}$ be defined (see Definition \ref{def:polarsections}) as the point in $\pi^{-1}(p) \subset \Sigma_b$ such that $\tau$ has residue $\pm m_p$. Then on the complement of these sections, 
\[ \Omega_I = i \de \tau. \]
\end{prop}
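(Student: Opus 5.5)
The plan is to realize $\tau$ concretely on $\cM(\bsa,\bfm)$ through the spectral (ISS) model of Proposition \ref{prop:fiberbundle}, and then to compare $i\,\de\tau$ with $\Omega_I$ using the Darboux description of Proposition \ref{prop:semiflatsymplectic}.

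\emph{Step 1: what $\tau$ is on $\cM$.} Recall that $\cM(\bsa,\bfm)=X\setminus C_\infty$, where $X$ is the blow-up of $\mathbb{F}_2$ at the eight base points $\lim_{z\to p}\pm m_p\,\frac{dz}{z-p}$. The blow-down $\beta:X\to\mathbb{F}_2$ restricts to a map $\cM\to \mathrm{Tot}(K_C(D))$. I would then define $\tau$ on $\cM$ as $\beta^*$ of the tautological $1$-form on $\mathrm{Tot}(K_C(D))$. This form is a meromorphic form on $\mathrm{Tot}(K_C)$ with simple poles along the fibers over $D$.

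Away from the exceptional curves, $\cM$ is identified with $\mathrm{Tot}(K_{C^\times})$, because the proper transforms of the fibers $F_p$ lie in $C_\infty$. The exceptional curves over the base points meet each $\Sigma_b$ exactly in the points of $\rho^{-1}(p)$ where $\tau$ has residue $\pm m_p$. So these curves are the polar sections $\sigma_p^\pm$, and on their complement $\de\tau$ is the (pulled-back) canonical holomorphic symplectic form of $T^*C^\times$. In particular $\de\tau$ is holomorphic and closed there.

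\emph{Step 2: matching fiberwise.} Under this model a point of $\mathrm{Hit}^{-1}(b)$ is a point $x\in\Sigma_b'$. Via Abel--Jacobi with respect to a chosen polar section, $x$ corresponds to the spectral line bundle $\cO(x-\sigma_p^+(b))\otimes L_0$. For a tangent vector $v=(\dot b,\dot x)$ I would compute three quantities.
\begin{enumerate}
\item The vertical part $\dot x$ yields $\dot\xi\in H^{0,1}(\Sigma_b)$. Its pairing with the holomorphic differential gives $d\theta_\gamma(v)=\im\int_\gamma(\dot\xi-\bar\partial\dot\nu)$.
\item The horizontal part gives $dZ_\gamma(v)=\int_\gamma\dot\tau$, where $\dot\tau=\dot q/(2\tau)\in H^0(K_{\Sigma_b})$.
\item The form $\de\tau(v_1,v_2)=\dot\tau_1(\dot x_2)-\dot\tau_2(\dot x_1)$ up to sign, i.e.\ evaluation of the holomorphic differential $\dot\tau_i$ at the moving point.
\end{enumerate}
Since $\Sigma_b$ is an elliptic curve, the Riemann bilinear identity converts $\dot\tau_1(\dot x_2)$ into the combination $\int_\gamma\dot\tau_1\, d\theta_\delta(v_2)-\int_\delta\dot\tau_1\, d\theta_\gamma(v_2)$. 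Comparing with Corollary \ref{cplx_symp_ell} should then give $i\,\de\tau=-4(dZ_\gamma\wedge d\theta_\delta-dZ_\delta\wedge d\theta_\gamma)=\Omega_I$. The overall constant $i$ has to be checked against the normalization $Z_\gamma=\pm4\pi i\,m_p$ for loops around punctures and the factor $-4$ in Proposition \ref{prop:semiflatsymplectic}.

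\emph{Step 3: the remaining components.} Both forms vanish on pairs of vertical vectors, since the Hitchin fibers are Lagrangian for $\Omega_I$ and the fibers of $T^*C$ are Lagrangian for $\de\tau$. The mixed components agree by Step 2. The delicate part is the horizontal--horizontal component, which is where I expect the main obstacle. Its value depends on how the Abel--Jacobi base point moves with $b$, and it is also where the framing subtlety (the boundary term at $D$ in the remark on framing) could contribute.

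I would handle this by restricting both forms to a polar section $\sigma_p^+$ pushed slightly off, or to a generalized Hitchin section from \S\ref{sec:BBstratification}. One needs that $\Omega_I$ restricted to such a section vanishes. I would try to show this using the scaling $m_t^*\Omega_I=t\,\Omega_I$ together with the $\C^\times$-equivariance of the sections in $\calP_0(\bsa)$. For $\de\tau$, the corresponding vanishing holds because the section is a graph over the base. Both forms are holomorphic and closed on the complement of the polar sections, and they agree on vertical--vertical, vertical--horizontal, and on a Lagrangian transversal. This forces equality there, with the difference at worst a form pulled back from $\cB(\bfm)$, which is $1$-dimensional, hence zero.

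Finally, I would check the residue of $\tau$ transverse to $\sigma_p^\pm$. It should be $\pm m_p$, consistent with $Z_\gamma=\pm4\pi i\,m_p$, which serves as an independent check of the constant.
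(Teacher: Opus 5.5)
Your proposal is correct and is essentially the paper's argument. The paper writes $\tau = w\,\de z$ in the coordinates $(\beta,z,w)$, computes $\de\tau = \de\beta\wedge\omega$, and uses $\de Z_\gamma = \tfrac12\bigl(\int_\gamma\omega\bigr)\de\beta$ together with the Serre-duality/Riemann-bilinear identity $\omega = 2i\bigl(\int_\gamma\omega\bigr)\de\theta_\delta - 2i\bigl(\int_\delta\omega\bigr)\de\theta_\gamma$ on vertical vectors; it then compares with Corollary \ref{cplx_symp_ell}, which is your Step~2 (the constant $i$ is fixed by that Serre-duality normalization, so your closing residue check is not an independent check, since $Z_\gamma$ is by definition $\int_\gamma\tau$). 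Your Step~3 is unnecessary: a holomorphic $(2,0)$-form on a complex surface vanishes on every complex line, so $\Omega_I$ and $\de\tau$ automatically vanish on vertical--vertical and horizontal--horizontal pairs and are determined by the mixed component alone (which is why the paper only needs $\omega$ on vertical vectors), and the detour through Lagrangian sections and the $\C^\times$-scaling can be dropped.
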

The rest of this section contains definitions, lemmas, and corollaries necessary to prove this.

In the case at hand, i.e.\ $C = \CP^1$ and $D=\{0,1,p_0,\infty\}$ we will introduce local coordinates on $K_C(D)$ and derive coordinate expressions for $\tau$. On $U =\CP^1\setminus \{\infty\}$ we pick the standard coordinate $z \in \C$ . Then $K_C(D)$ is trivialized over $U$ by the local section $\sigma_U=\frac{dz}{z(z-1)(z-p_0)}$. If $\tilde w$ is the fiber coordinate relative to this trivialization, then
\[
\tau = \frac{\tilde{w} \, dz}{z(z-1)(z-p_0)}
\]

Over $V = \CP^1 \setminus \{0\}$ with local coordinate $\hat z = z^{-1}$ we trivialize $K_C(D)$ by the local section $\sigma_V = \frac{d \hat z}{\hat z}$. If $\hat{\tilde w}$ is the fiber coordinate relative to this trivialization, then
 \[
\tau = \frac{\hat{\tilde{w}} \, d \hat z}{\hat z}
\]
and the change of coordinates is given by $(z, \tilde w) \mapsto  (z^{-1}, - \frac{\tilde w}{(z-1)(z-p_0)})$. These expressions are compatible with the standard coordinate representation of $\tau$ on $K_C$, where the trivializing sections are given by $dz$ and $d \hat{z}$. If $w$ and $\hat{w}$ are the corresponding fiber coordinates, then the change of coordinates is given by $(z,w) \mapsto (z^{-1}, -z^2w)$, and  we have
\[
 w dz = \frac{\tilde{w} \, dz}{z(z-1)(z-p_0)} \quad \text{and} \quad \hat{w} d \hat z =  \frac{\hat{\tilde{w}} \, d \hat z}{\hat z}
\]
for $\tilde w = w z(z-1)(z-p_0)$ and $\hat{\tilde{w}} = \hat{w}\hat{z} = - wz$.

\medskip

Since $\Sigma_q$ is an elliptic curve, $K_{\Sigma_q}$ is  trivialized by a holomorphic section $\omega$ which is unique up to scale. We wish to compute $g = \tau / \omega$ for the tautological 1-form $\tau$ on the spectral curve $\Sigma_q$ for $q \in H^0(K_C(D)^2)$, $C= \CP^1$ and $D= \{0,1, p_0, \infty\}$. In order to do so we fix a choice of $\omega$ as follows.

\begin{lem}\label{invariant_1-form}
If $q$ does not lie in the discriminant locus and $m_\infty \neq0$, then the 1-form 	
\[
\omega =  \frac{dz}{ \tilde w} = \frac{dz}{ z (z-1)(z-p_0)w}
\]
defines a holomorphic trivialization of $K_{\Sigma_q}$.
\end{lem}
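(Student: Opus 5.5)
The plan is to write $\Sigma_q$ explicitly as a double cover of $\CP^1$ in the coordinates just introduced, and check that $\omega$ has neither zeros nor poles there. Over $U=\CP^1\setminus\{\infty\}$ the relation $\tau^2=q$ becomes, in the trivialization $\sigma_U$,
\[
\tilde w^2 = f(z) := f_{\mathbf m}(z)+\beta z(z-1)(z-p_0),
\]
since $q=\frac{f(z)}{z^2(z-1)^2(z-p_0)^2}dz^2$ and $\tau=\tilde w\,\sigma_U$. Thus $\Sigma_q\cap \pi^{-1}(U)$ is the affine curve $\tilde w^2=f(z)$. The hypothesis $m_\infty\neq 0$ says the leading coefficient $f_4=m_\infty^2$ is nonzero, so $f$ is a genuine quartic. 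The hypothesis that $q$ is off the discriminant locus says $f$ has four distinct roots. Hence the affine curve is smooth and is the standard model of an elliptic curve.

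On the affine part I would argue exactly as for $y^2=f(x)$. Differentiating gives $2\tilde w\,d\tilde w=f'(z)\,dz$. Where $\tilde w\neq0$, $z$ is a local coordinate and $dz/\tilde w$ is holomorphic and nonvanishing. Where $\tilde w=0$ we have $f'(z)\neq 0$ because the roots of $f$ are simple, so $\tilde w$ is a local coordinate and $dz/\tilde w = 2\,d\tilde w/f'(z)$ is holomorphic and nonvanishing. The second expression for $\omega$ in the lemma then follows from $\tilde w=wz(z-1)(z-p_0)$ on $K_C\subset K_C(D)$.

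The remaining step, and the only one needing care, is the two points over $z=\infty$. I would pass to the chart $V$ with $\hat z=1/z$ and $\hat{\tilde w}=-\tilde w/((z-1)(z-p_0))$. In this chart the curve is
\[
\hat{\tilde w}^2=\hat q(\hat z),\qquad \hat q(0)=m_\infty^2\neq0,
\]
which one reads off from the residue condition at $\infty$. So there are two distinct unramified points over $\hat z=0$, namely $\hat{\tilde w}=\pm m_\infty$, and $\hat z$ is a local coordinate at each. Substituting, and using $(z-1)(z-p_0)=\hat z^{-2}(1-\hat z)(1-p_0\hat z)$, gives
\[
\omega=\frac{dz}{\tilde w}=\frac{-\hat z^{-2}\,d\hat z}{-\hat{\tilde w}\,\hat z^{-2}(1-\hat z)(1-p_0\hat z)}=\frac{d\hat z}{\hat{\tilde w}(1-\hat z)(1-p_0\hat z)}.
\]
Near $\hat z=0$ this is holomorphic and nonvanishing, since $\hat{\tilde w}\to\pm m_\infty\neq0$. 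This is exactly where $m_\infty\neq0$ is used; if $m_\infty=0$ the two points collide into a branch point and the chart computation changes.

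Since $\omega$ is then a nowhere-vanishing holomorphic 1-form on the compact curve $\Sigma_q$, it trivializes $K_{\Sigma_q}$, which is consistent with $\Sigma_q$ being elliptic.
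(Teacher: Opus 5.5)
Your proposal is correct and follows essentially the same route as the paper. On the affine chart you use $2\tilde w\,d\tilde w=\tilde f'(z)\,dz$ together with the simplicity of the roots to handle the branch points, and at $z=\infty$ you pass to $(\hat z,\hat{\tilde w})$ and use $\hat{\tilde w}^2\to m_\infty^2\neq 0$. Your chart computation $\omega=d\hat z/\bigl(\hat{\tilde w}(1-\hat z)(1-p_0\hat z)\bigr)$ has the correct sign (the paper's displayed formula has a spurious, harmless minus sign), and you make explicit that $\hat z$ is a local coordinate at the two unramified points over $\infty$, which the paper leaves implicit.
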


\begin{proof}
Using the equation $\tilde w^2 = \beta z( z-1)(z-p_0) + f(z) =: \tilde f(z)$ we compute
\[
2 \tilde w d \tilde w = \Bigl( \beta ((z-1)(z-p_0)  +z (z-p_0) + z (z-1)) + \frac{\partial f}{\partial z} \Bigr) dz = \frac{\partial \tilde f}{\partial z}dz
\]
and hence
\[
\omega = \frac{dz}{2 \tilde w} = \frac{ d \tilde w}{\frac{\partial \tilde f}{\partial z}},
\] 
which implies (since $\tilde f$ does not have double zeros by assumption) that $\omega$ is nonsingular at the zeros of $\tilde w$ (resp.\ $\tilde f$).

Near infinity, we introduce coordinates $\hat z = z^{-1}$ and $\hat{\tilde w} = - \frac{\tilde w}{(z-1)(z-p_0)} = - wz$ as above. Then the equation
\[
\hat{\tilde{w}}^2 = \frac{\beta \hat z (1 - \hat z)(1 - \hat z p_0) + \hat z^4 f}{(1-\hat z)^2(1 - \hat z p_0)^2}
\]
shows that $\lim_{\hat z \to 0} \hat{\tilde w}^2 = m_\infty^2 \neq 0$ and therefore the expression
\[
\omega = \frac{dz}{\tilde w} = - \frac{d \hat z}{  (1- \hat z)(1 - \hat z p_0) \hat{\tilde w}}
\]
is nonsingular at $\hat z =0$.
\end{proof}

\begin{lem}\label{residues}
The tautological 1-form is given by $\tau = g \omega$ for
\[
g(z) = \frac{f(z) + \beta z(z-1)(z-p_0)}{z(z-1)(z-p_0)}
\]	
and in particular has simple poles at $\pi^{-1}(D)$. The residues of $\tau$ are given by the complex masses
\[
\Res_{z=0} \tau = \pm m_0, \quad \Res_{z=1} \tau = \pm m_1, \quad \Res_{z=p_0} \tau = \pm m_{p_0}, \quad \Res_{z=\infty} \tau = \pm m_\infty. 
\]
\end{lem}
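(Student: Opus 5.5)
The plan is to compare $\tau$ with the trivializing form $\omega$ of Lemma \ref{invariant_1-form} directly, and then compute residues in the local coordinates introduced before that lemma. On $\Sigma_q$ over $U=\CP^1\setminus\{\infty\}$ we have $\tau=\tilde w\,dz/(z(z-1)(z-p_0))$ and $\omega=dz/\tilde w$. Hence
\[
g=\frac{\tau}{\omega}=\frac{\tilde w^2}{z(z-1)(z-p_0)}=\frac{f(z)+\beta z(z-1)(z-p_0)}{z(z-1)(z-p_0)},
\]
using the defining equation $\tilde w^2=\tilde f(z)$ of the spectral curve. Since $\omega$ is a nowhere vanishing holomorphic $1$-form on $\Sigma_q$ by Lemma \ref{invariant_1-form}, the poles of $\tau$ on $\rho^{-1}(U)$ are exactly the poles of the meromorphic function $g$. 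These can only occur over $z\in\{0,1,p_0\}$, and at such a point $\tilde f(p)=f(p)$, because the $\beta$-term vanishes there. Over $\infty$ we use the chart $(\hat z,\hat{\tilde w})$, in which $\tau=\hat{\tilde w}\,d\hat z/\hat z$ with $\hat{\tilde w}^2\to m_\infty^2\neq0$. This already shows that $\tau$ has at worst simple poles along $\pi^{-1}(D)$.

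For the residues, suppose first that $m_p\neq0$. By \eqref{eq:complexmass}, $f(p)\neq0$, so $\tilde w\neq0$ over $p$ and $\rho$ is unramified there. Thus $z$ (respectively $\hat z$) is a local coordinate on each of the two sheets over $p$, and $\tilde w$ is a holomorphic function of it with $\tilde w(p)=\pm\sqrt{f(p)}$. The residue is then the limit of $(z-p)\tau/dz$, computed point by point:
\begin{align*}
\text{at } z=0:&\quad \pm\sqrt{f(0)}/p_0=\pm m_0,\\
\text{at } z=1:&\quad \pm\sqrt{f(1)}/(1-p_0)=\pm m_1,\\
\text{at } z=p_0:&\quad \pm\sqrt{f(p_0)}/(p_0(p_0-1))=\pm m_{p_0},
\end{align*}
again using \eqref{eq:complexmass}. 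At $\hat z=0$ the residue is $\lim\hat{\tilde w}=\pm m_\infty$. The two sheets carry opposite signs, consistent with the involution $\sigma$ acting by $\tau\mapsto-\tau$.

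The only point requiring care is the degenerate case $m_p=0$ for $p\neq\infty$; this is allowed, whereas $m_\infty\neq0$ is assumed throughout. In that case $f(p)=0$, so $p$ lies under a branch point of $\rho$, and $\tilde w$ rather than $z$ is the local coordinate there. I would write $z-p=\tilde w^2/\tilde f'(p)+O(\tilde w^3)$, with $\tilde f'(p)\neq0$ since $q$ avoids the discriminant. Substituting gives $\tau=2\,d\tilde w/((p-p')(p-p''))+O(\tilde w)\,d\tilde w$, where $p',p''$ are the other two finite points of $D$. So $\tau$ is holomorphic there and its residue is $0=\pm m_p$, as claimed. Verifying that this expansion is consistent with the local form of $\omega$ from the proof of Lemma \ref{invariant_1-form} is the one step needing an honest computation; everything else is bookkeeping with \eqref{eq:complexmass}.
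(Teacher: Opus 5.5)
Your proposal is correct and follows the paper's proof. You compute $g=\tau/\omega=\tilde w^2/(z(z-1)(z-p_0))$ from the spectral curve equation, then read off the residues in the charts $(z,\tilde w)$ and $(\hat z,\hat{\tilde w})$ using \eqref{eq:complexmass}. Your separate treatment of the ramified case $m_p=0$, where $\tau$ is actually holomorphic with residue $0$, is a correct refinement; the paper does not address that case.
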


\begin{proof}
The trivializing differential is given by
\[
\omega =  \frac{dz}{ \tilde w} = \frac{dz}{ z (z-1)(z-p_0)w}
\]	
and the tautological 1-form by $\tau = w dz$. Hence,
\[
\tau/ \omega = w^2z(z-1)(z-p_0) = \frac{f(z) + \beta z(z-1)(z-p_0)}{z(z-1)(z-p_0)}
\]
using the equation
\begin{equation}\label{quadratic}
w^2  = \frac{f(z) + \beta z( z-1)(z-p_0)}{z^2(z-1)^2(z-p_0)^2}. 
\end{equation}
The representation \[
\tau = \frac{ \tilde w dz}{z(z-1)(z-p_0)} = - \hat{\tilde w} \frac{d \hat z}{\hat z}
\]
in coordinates $(z,\tilde w)$ resp.\ $(\hat z, \hat{\tilde w})$ (cf.\ the proof of Lemma \ref{invariant_1-form}) 
 shows that
 \[
 \Res_{z=0} \tau = \pm \frac{\sqrt{f(0)}}{p_0} = \pm m_0 , \quad  \Res_{z=1} \tau = \pm \frac{\sqrt{f(1)}}{1-p_0}=\pm m_1 , \quad \Res_{z=p_0} \tau = \frac{\sqrt{f(p_0)}}{p_0(p_0-1)}=\pm m_{p_0} 
 \]
 and 
 \[
 \Res_{z=\infty} \tau = - \lim_{\hat z \to 0} \hat{\tilde w} = \pm m_\infty.
 \]
 This concludes the computation of the residues.
\end{proof}

Write $D = \{p_1, p_2, p_3, p_4\}$. For $q \in \mathcal B$ and $\pi : \Sigma_q \to C$ we write $\pi^{-1}(p_i)=\{p_i^+,p_i^-\}$, where
$p_i^\pm$ are determined by
\[
\operatorname{Res}_{p_i^+ }\tau =  m_i  \quad \text{and} \quad \operatorname{Res}_{p_i^-} \tau = -m_i .
\]
In the generic case when all the complex masses $m_i$ are different from zero, this determines sections $\sigma_i^+ : \mathcal B \to \mathcal M$ and $\sigma_i^- : \mathcal B \to \mathcal M$
by demanding that $\sigma_i^{\pm}(q)= p_i^{\pm}$ for $q \in \mathcal B$.
\begin{defn}\label{def:polarsections}
The sections $\sigma_i^+ : \mathcal B \to \mathcal M$ ($i=1, \ldots, 4$) are called \emph{polar sections}. Setting $\sigma_i = \sigma_i^+ \cup \sigma_i^-$ and $D_\infty =\sigma_1 \cup \sigma_2 \cup \sigma_3 \cup \sigma_4$ we will call $D_\infty$ \emph{polar divisor}.
\end{defn}

A priori $\tau$ is a fiberwise 1-form on the elliptic fibration $\mathcal M \to \mathcal B$. The representation $\tau = w dz$ determines an extension to a 1-form on  $\mathcal M$ which is vertical with respect to coordinates $(\beta, z,w)$. The same applies to $\omega$. While $\omega$ is holomorphic on $\calM$, $\tau$ is meromorphic with poles transverse to the polar divisor with residues as in Lemma \ref{residues}.

\begin{lem}
$ d \tau = d\beta \wedge \omega$.	
\end{lem}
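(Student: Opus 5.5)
The plan is to compute $d\tau$ directly in the coordinates $(\beta, z, w)$, equivalently $(\beta, z, \tilde w)$, on the open subset of $\mathcal M$ over $U = \CP^1\setminus\{\infty\}$ away from the polar divisor. There $\tau = w\,dz = \frac{\tilde w\, dz}{z(z-1)(z-p_0)}$ is regarded as a $1$-form on the three-dimensional ambient space. It is then restricted to $\mathcal M$, the surface cut out (for fixed $\bfm$) by the spectral equation $\tilde w^2 = \tilde f(z) = f(z) + \beta z(z-1)(z-p_0)$. Since $\mathcal M$ has complex dimension $2$, we may use $(\beta, z)$ as local coordinates wherever $\partial_{\tilde w}(\tilde w^2 - \tilde f) = 2\tilde w \neq 0$, and $(\beta, \tilde w)$ near the branch points where $\tilde f'(z)\neq 0$. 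Lemma \ref{invariant_1-form} guarantees that one of these applies away from the discriminant locus.

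First, in the chart $(\beta,z)$ write $\tau = w(\beta,z)\,dz$. Then $d\tau = \partial_\beta w \, d\beta\wedge dz$. Differentiating \eqref{quadratic} in $\beta$ at fixed $z$ gives $2w\,\partial_\beta w = \frac{z(z-1)(z-p_0)}{z^2(z-1)^2(z-p_0)^2} = \frac{1}{z(z-1)(z-p_0)}$. Hence $\partial_\beta w = \frac{1}{2 z(z-1)(z-p_0) w} = \frac{1}{2\tilde w}$, and
\[ d\tau = d\beta\wedge \frac{dz}{2\tilde w}. \]
This equals $d\beta\wedge\omega$ under the normalization $\omega = \frac{dz}{2\tilde w}$ used in the proof of Lemma \ref{invariant_1-form}. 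I will reconcile the factor of $2$ with the statement of that lemma, which writes $\omega = dz/\tilde w$; this is a normalization convention and I expect to fix it to match.

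Second, the identity must be extended across the branch points $\tilde w = 0$. In the chart $(\beta,\tilde w)$ we have $\omega = d\tilde w/\partial_z\tilde f$, and both sides are holomorphic $2$-forms there. Since they agree on a dense open set, they agree everywhere by continuity.

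Third, near $z=\infty$ the same computation is repeated in the coordinates $(\hat z, \hat{\tilde w})$, where $\tau = -\hat{\tilde w}\,d\hat z/\hat z$. Alternatively, I would note that both sides are meromorphic $2$-forms on $\mathcal M$ minus the polar divisor, so agreement on the dense open set over $U$ already suffices. The genuinely delicate point is to keep track of which variables are held fixed, so that $\tau$ and $\omega$ are the vertical extensions relative to $(\beta, z, w)$ and the pole along the polar sections does not contribute. Because $\tau$ has at most simple poles transverse to $\sigma_i^\pm$ with $\beta$-independent residues $\pm m_i$ (Lemma \ref{residues}), $d\tau$ picks up no singular term in $d\beta$. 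I expect this bookkeeping of the extension and the normalization constant to be the main obstacle, rather than the differentiation itself.
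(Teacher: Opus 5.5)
Your argument is correct and is essentially the paper's own proof: you differentiate \eqref{quadratic} and wedge with $dz$, giving $d\tau = dw\wedge dz = \frac{d\beta\wedge dz}{2\tilde w}$. Your extra remarks about the branch points, $z=\infty$ and the polar divisor only make explicit what the paper leaves implicit.

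The factor of $2$ you flagged is a genuine inconsistency in the paper itself. Its proof also ends with $\frac{d\beta\wedge dz}{2\tilde w}$, so the lemma holds with the normalization $\omega=\frac{dz}{2\tilde w}$ from the proof of Lemma \ref{invariant_1-form}. It does not hold with $\omega=\frac{dz}{\tilde w}$, which is the normalization in that lemma's statement and in the later formula $dZ_\gamma=\frac12\bigl(\int_\gamma\omega\bigr)d\beta$.
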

\begin{proof}
Using equation \eqref{quadratic} we compute
\[
2 w dw = \frac{\partial}{\partial z} \left\{\frac{f(z) + \beta z( z-1)(z-p_0)}{z^2(z-1)^2(z-p_0)^2}\right\}dz  + \frac{d\beta}{z(z-1)(z-p_0)}
\]
so that 
\[
d \tau  = dw \wedge dz = \frac{d\beta \wedge dz}{2w z (z-1)(z-p_0)} = d \beta \wedge \omega
\]
as claimed.
\end{proof}

We claim that $i d \tau = \Omega_I$. Towards that end, consider special coordinates on $\mathcal B$
\[
Z_\gamma(q) = \int_\gamma \tau
\]
for $\gamma \in H_1(\Sigma_q,\Z)$. Then using $\dot \tau = \frac{1}{2} \frac{\dot q}{\tau}$  we obtain $\dot{Z}_\gamma = \frac{1}{2} \int_\gamma \frac{\dot q}{\tau}$, or equivalently
\[
dZ_\gamma = \frac{1}{2} \left( \int_\gamma \frac{ \frac{\partial q}{\partial \beta}}{\tau} \right) d\beta .
\]
With
\[
q =  \frac{f(z) + \beta z( z-1)(z-p_0)}{z^2(z-1)^2(z-p_0)^2}
\]
we obtain
\[
\frac{\partial q}{\partial \beta} =\frac{dz^2}{z(z-1)(z-p_0)}
\]
and together with $ \tau = w dz$
\[
\frac{ \frac{\partial q}{\partial \beta}}{\tau} = \frac{dz}{w z(z-1)(z-p_0)} \Longleftrightarrow \dot \tau = \frac{1}{2} \omega .
\]
Hence,
\[
d Z_\gamma = \frac{1}{2} \left( \int_\gamma \frac{dz}{w z(z-1)(z-p_0)} \right) d\beta\\
= \frac 12 \left(\int_\gamma \omega\right) d\beta
\]
which proves the following lemma.
\begin{lem}
$
d Z_\gamma = \frac 12\left(\int_\gamma \omega \right) d\beta.
$
\end{lem}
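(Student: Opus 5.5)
The plan is to differentiate the period $Z_\gamma(q)=\int_{\gamma(q)}\tau$ directly in the single Hitchin-base parameter $\beta$. The complex masses $\mathbf m$ are fixed, so $f=f_{\mathbf m}$ is fixed, and by Proposition \ref{prop:Hitchinbase} the base $\mathcal B(\mathbf m)$ is the $\beta$-line. It therefore suffices to show that $Z_\gamma$ is holomorphic in $\beta$ with
\[
\frac{\partial Z_\gamma}{\partial\beta}=\frac12\int_\gamma\omega .
\]

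First I fix $\beta_0$ off the discriminant locus and a class $\gamma\in H_1(\Sigma_{q_{\beta_0}}',\Z)_-$. I project $\gamma$ to a closed curve in $C$ that avoids small discs around the four points of $D$ and around the zeros of $\tilde f=f+\beta_0 z(z-1)(z-p_0)$. For $\beta$ in a small disc $U$ around $\beta_0$, these zeros stay inside their discs. Hence on the complement of the discs in $C$ there is a holomorphic (in $(z,\beta)$) choice of the square root $\tilde w=\sqrt{\tilde f}$ along the curve. This lifts $\gamma$ to a cycle $\gamma(\beta)\subset\Sigma'_{q_\beta}$ with the same underlying $z$-path, and this is exactly the Gauss--Manin parallel transport of $\gamma$.

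In this representation $\tau=w\,dz$ with $w=\tilde w/(z(z-1)(z-p_0))$, and the path of integration does not depend on $\beta$. So
\[
Z_\gamma(\beta)=\int_{\gamma} w(z,\beta)\,dz ,
\]
and since the integrand is smooth and holomorphic in $\beta$ on a compact path, differentiation under the integral sign is legitimate. Differentiating \eqref{quadratic} in $\beta$ at fixed $z$ gives $2w\,\partial_\beta w = 1/(z(z-1)(z-p_0))$. Therefore
\[
\partial_\beta \tau=\frac{dz}{2\,w\,z(z-1)(z-p_0)}=\frac{dz}{2\tilde w}=\tfrac12\,\omega ,
\]
with $\omega=dz/\tilde w$ as in Lemma \ref{invariant_1-form}. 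This yields $dZ_\gamma=\tfrac12(\int_\gamma\omega)\,d\beta$, and there is no $d\bar\beta$ component because $Z_\gamma$ is holomorphic.

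As a consistency check, for $\gamma\in\Gamma^\circ$ (odd lifts of loops around punctures) the formula gives $dZ_\gamma=0$. This holds because $\omega$ is holomorphic at $\pi^{-1}(D)$ when $m_\infty\neq0$, by Lemma \ref{invariant_1-form}, so $\int_\gamma\omega=0$. This matches $Z_\gamma=\pm4\pi i\,m_p$.

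The main obstacle I expect is bookkeeping rather than analysis. First, the normalization of $\omega$ must be checked: the proof of Lemma \ref{invariant_1-form} mixes $dz/\tilde w$ and $dz/(2\tilde w)$, so the factor $\tfrac12$ has to be tracked consistently with the convention $\omega=dz/\tilde w$ used in the statement. Second, I must verify that the fixed-$z$-path lift really realizes parallel transport near $z=\infty$. There I would redo the same computation in the chart $(\hat z,\hat{\tilde w})$, using that $\hat z\,\partial_\beta\hat{\tilde w}^2$ is regular.
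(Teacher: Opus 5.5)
Your proposal is correct and follows the paper's argument. The paper likewise differentiates $\tau^2=q$ (equivalently \eqref{quadratic}) in $\beta$ at fixed $z$ to get $\partial_\beta\tau=\tfrac12\,\partial_\beta q/\tau=\tfrac12\,dz/\tilde w=\tfrac12\omega$, then integrates over $\gamma$. Your fixed-$z$-path lift just makes explicit the Gauss--Manin transport and the differentiation under the integral that the paper leaves implicit. You also use the same normalization $\omega=dz/\tilde w$ as the paper's proof, so the $dz/(2\tilde w)$ in the proof of Lemma~\ref{invariant_1-form} is only a slip there. Finally, since your path already avoids a disc around $\infty$, the extra chart check at infinity is unnecessary.
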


Recall from Corollary \ref{cplx_symp_ell}
\[
\Omega_ I =  -4 (dZ_\gamma \wedge d\theta_\delta - dZ_\delta \wedge d\theta_\gamma)
\]
if $\{\gamma, \delta\}$ is a symplectic basis of $H_1(\Sigma_q,\Z)$.

\begin{cor}
$
\Omega_I = -2 \left(d \beta \wedge \bigl( \left( \int_\gamma \omega  \right) d\theta_\delta - \left( \int_\delta \omega \right) d\theta_\gamma \bigr) \right). 
$	
\end{cor}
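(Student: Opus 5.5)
The plan is a direct substitution. We start from Corollary \ref{cplx_symp_ell}, which says that on the locus of smooth elliptic spectral curves
\[
\Omega_I = -4\bigl(dZ_\gamma \wedge d\theta_\delta - dZ_\delta \wedge d\theta_\gamma\bigr)
\]
for a local symplectic basis $\{\gamma,\delta\}$ of $H_1(\Sigma_q,\Z)$. We then insert the formula $dZ_\gamma = \tfrac12 \bigl(\int_\gamma \omega\bigr)\, d\beta$ from the preceding lemma, together with the analogous formula for $\delta$.

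Before substituting, we check that the hypotheses of both results are met on the same region. The complex masses $\bfm$ are fixed, so by Proposition \ref{prop:Hitchinbase} the base $\cB(\bfm)$ is the affine line with coordinate $\beta$. Hence the only variation of $q$ is $\partial q/\partial\beta$, and $dZ_\gamma$ is a multiple of $d\beta$. The lifts of cycles to $\Sigma_q'$ are ambiguous only up to elements of $\Gamma^\circ$. Their periods are the constants $\pm 4\pi i\, m_p$, so $dZ_\gamma$ depends only on the class of $\gamma$ in $\Gamma_b = H_1(\Sigma_q,\Z)$. This is exactly the lattice appearing in Corollary \ref{cplx_symp_ell}. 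We work where $q$ avoids the discriminant locus and $m_\infty\neq 0$, so that $\omega$ is the holomorphic trivialization of $K_{\Sigma_q}$ given by Lemma \ref{invariant_1-form}.

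The computation is then one line:
\[
\Omega_I = -4\Bigl(\tfrac12\bigl(\textstyle\int_\gamma\omega\bigr) d\beta\wedge d\theta_\delta - \tfrac12\bigl(\textstyle\int_\delta\omega\bigr) d\beta \wedge d\theta_\gamma\Bigr) = -2\, d\beta\wedge\Bigl(\bigl(\textstyle\int_\gamma\omega\bigr)d\theta_\delta - \bigl(\textstyle\int_\delta \omega\bigr) d\theta_\gamma\Bigr).
\]
To get the middle expression we pull the scalar factors through the wedge product.

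The only point needing care is that the normalization of $\omega$ matches the derivation of $dZ_\gamma$. The proof of Lemma \ref{invariant_1-form} writes $\omega$ both as $dz/\tilde w$ and as $dz/(2\tilde w)$. The lemma for $dZ_\gamma$, by contrast, used $\dot\tau = \tfrac12\,\partial_\beta q/\tau = \tfrac12\, dz/(wz(z-1)(z-p_0)) = \tfrac12\,\omega$ with $\omega = dz/\tilde w$. We therefore fix $\omega = dz/\tilde w$ throughout, so that the factor $\tfrac12$ in $dZ_\gamma$ combines with the $-4$ to give $-2$. With that convention fixed, the corollary follows immediately and there is no genuine obstacle.
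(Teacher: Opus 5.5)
Your proposal is correct and follows the paper's argument exactly: the corollary comes from substituting $dZ_\gamma = \tfrac12\bigl(\int_\gamma\omega\bigr)\,d\beta$, and the same formula for $\delta$, into Corollary \ref{cplx_symp_ell}. You are also right to fix $\omega = dz/\tilde w$, which is the normalization used in deriving $dZ_\gamma$, rather than the $dz/(2\tilde w)$ that appears inside the proof of Lemma \ref{invariant_1-form}; that check is correct and worth making.
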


\begin{lem}
$\omega = 2i\left( \int_\gamma \omega \right )d\theta_\delta - 2i\left( \int_\delta \omega \right)d\theta_\gamma $.
\end{lem}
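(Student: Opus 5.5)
The plan is to prove the identity fiberwise. Both sides are $1$-forms on $\calM$ that are (up to the choice of horizontal complement, which I will fix by requiring the $(\beta,z,w)$-coordinate extension of $\omega$ to be vertical) determined by their restrictions to the Hitchin fibers $\mathrm{Hit}^{-1}(q)$. Any discrepancy is a multiple of $d\beta$, which disappears after wedging with $d\beta$ in the preceding Corollary.

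On a smooth fiber the spectral correspondence identifies $\mathrm{Hit}^{-1}(q)$ with $\mathrm{Pic}_{d_L}(\Sigma_q)$. Since $\Sigma_q$ is an elliptic curve, choosing a base point makes the Abel--Jacobi map $x \mapsto \mathcal{O}(x-x_0)\otimes L_0$ an isomorphism $\Sigma_q \cong \mathrm{Hit}^{-1}(q)$. Through this identification the restriction of the vertical form $\omega = dz/\tilde w$ is the unique invariant holomorphic $1$-form on the fiber. The forms $d\theta_\gamma, d\theta_\delta$ are translation-invariant real $1$-forms on the fiber as well. It therefore suffices to compare both sides on a single tangent vector $v \in T_x\Sigma_q$, viewed as a tangent vector to $\mathrm{Pic}_{d_L}(\Sigma_q)$.

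The key step is to identify the class $[\dot\xi] \in H^{0,1}(\Sigma_q)$ corresponding to $v$. The derivative of Abel--Jacobi sends $v$ to the class in $H^1(\Sigma_q,\mathcal{O})$ given by the Čech cocycle $v(z)/(z-x)$ near $x$. Under Serre duality this class pairs with a holomorphic form $\eta$ as $\eta_x(v)$ (up to a universal factor $2\pi i$). Concretely, the harmonic representative $\dot\xi - \bar\partial\dot\nu$ satisfies
\[
\int_{\Sigma_q} \eta \wedge (\dot\xi - \bar\partial\dot\nu) = c\,\eta(v)
\]
for every holomorphic $\eta$, where $c$ is a universal constant. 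Applying the Riemann bilinear relation exactly as in the proof of Proposition \ref{prop:semiflatsymplectic} (holomorphic $\eta$ against the anti-holomorphic harmonic form, using $\eta\wedge\overline{\xi}=0$) gives
\[
\int_{\Sigma_q}\eta\wedge(\dot\xi-\bar\partial\dot\nu) = 2i\Bigl(\int_\gamma\eta\; d\theta_\delta(v) - \int_\delta\eta\; d\theta_\gamma(v)\Bigr).
\]
Taking $\eta=\omega$ then yields $c\,\omega(v) = 2i(\int_\gamma\omega)d\theta_\delta(v) - 2i(\int_\delta\omega)d\theta_\gamma(v)$.

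The main obstacle is pinning down $c=1$, including its sign, which depends on the normalizations of $\theta_\gamma$ and of the spectral line bundle. My first check is a consistency test with periods. The $\theta$'s have period $2\pi$, or whichever normalization makes $d\theta_\gamma$ dual to the cycle $\gamma$ on the fiber. Integrating both sides over the fiber cycles corresponding to $\gamma$ and $\delta$ must reproduce $\int_\gamma\omega$ and $\int_\delta\omega$, and this should force $c$. If the normalization of $\theta$ used in the paper gives $c\neq1$, I would absorb the constant into the convention for $d\theta_\gamma(\dot A^{0,1},\dot\varphi)=\int_\gamma\im(\dot\xi-\bar\partial\dot\nu)$ and the Serre duality pairing, and verify that it is consistent with the factor $-4$ in Proposition \ref{prop:semiflatsymplectic}.
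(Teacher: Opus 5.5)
Your argument is the paper's argument: pair $\omega$ against the harmonic representative via Serre duality, then expand with the Riemann bilinear identity exactly as in Proposition~\ref{prop:semiflatsymplectic}. The only difference is the constant $c$, which the paper never confronts. It identifies $\Sigma_q$ with $\mathrm{Jac}(\Sigma_q)=H^0(K_{\Sigma_q})^*/H_1(\Sigma_q;\Z)$, and then $H^0(K_{\Sigma_q})^*$ with $H^{0,1}(\Sigma_q)$ by the Serre pairing. Then $\omega(\dot\xi)=\int_{\Sigma_q}\omega\wedge\dot\xi$ holds by definition, so $c=1$ by fiat.

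As written, your proposal does not prove the identity, because $c$ is left open. The fallback of absorbing $c$ into conventions is not available. The form $d\theta_\gamma=\int_\gamma\im(\dot\xi-\bar\partial\dot\nu)$ is already normalized by taking $\dot\xi$ to be the actual variation of the $\bar\partial$-operator of the spectral line bundle. That same $d\theta_\gamma$ is what enters Corollary~\ref{cplx_symp_ell}.

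More importantly, finishing your own computation shows $c\neq 1$.
\begin{itemize}
\item Take $z$ a coordinate centred at $x$ and a cutoff $\chi\equiv 1$ near $x$. The derivative of $x\mapsto L_0(\pm x)$ in the direction $\partial_z$ is represented by $\pm\bar\partial\chi/z$. Stokes then gives $\int\eta\wedge\bar\partial\chi/z=2\pi i\,\eta_x(\partial_z)$. This is exactly the factor $2\pi i$ you mention in passing, and it cannot be normalized away.
\item Your period test confirms this, independently of \v{C}ech/Dolbeault sign conventions. The forms $d\theta_\gamma,d\theta_\delta$ are real with period lattice $\pi\Z^2$ on the fiber $F$, since the connection $d+2i\,\im\dot\xi$ is trivial iff $\int_\ell\im\dot\xi\in\pi\Z$ for all $\ell$. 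Hence $\int_F d\theta_\gamma\wedge d\theta_\delta=\pm\pi^2$.
\item Now put $A=\int_\gamma\omega$ and $B=\int_\delta\omega$. The stated identity gives $\omega\wedge\bar\omega=4(A\bar B-\bar AB)\,d\theta_\gamma\wedge d\theta_\delta$. Since $\int_F\omega\wedge\bar\omega=\int_{\Sigma_q}\omega\wedge\bar\omega=A\bar B-\bar AB$, this would force $\int_F d\theta_\gamma\wedge d\theta_\delta=\tfrac14$, a contradiction.
\item Likewise, integrating $c\,\omega$ over the fiber loop corresponding to $\gamma$ gives $c=2i(p-\tau q)$ with $p,q\in\pi\Z$ and $\tau=B/A$. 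Since $c$ is a universal constant and $\tau$ varies, this forces $q=0$, $p=\pm\pi$, hence $c=\pm2\pi i$.
\end{itemize}

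So, with $d\theta_\gamma$ as defined in the paper, what your argument actually proves is $\omega=\pm\frac{1}{\pi}\bigl((\int_\gamma\omega)\,d\theta_\delta-(\int_\delta\omega)\,d\theta_\gamma\bigr)$. The factor $2i$ in the statement comes from the Jacobian normalization of $\dot\xi$ used in the paper's proof. That normalization differs by $\pm2\pi i$ from the $\dot\xi$ of Proposition~\ref{prop:semiflatsymplectic}. The same constant reappears downstream: the argument of Corollary~\ref{cor:primitive} really gives $\Omega_I=ic\,d\tau$.
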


\begin{proof}
We identify $\Sigma_q$ with its Jacobian $\mathrm{Jac}(\Sigma_q) = H^0(K_{\Sigma_q})^*/ H_1(\Sigma_q; \Z)$ and use that via Serre duality $H^0(K_{\Sigma_q})^* \cong H^{0,1}(\Sigma_q)$. For $\dot \xi \in H^{0,1}(\Sigma_q)$ we obtain using the Serre duality pairing and the Riemann bilinear identity as in the proof of Proposition \ref{prop:semiflatsymplectic}
\begin{align*}
\omega(\dot\xi) &= \int_{\Sigma_q} \omega \wedge \dot \xi = \int_{\Sigma_q} \omega \wedge (\dot\xi - \bar\partial\dot\nu)\\ 
&= 2i \int_\gamma\omega  \int_\delta \im ( \dot\xi - \bar\partial\dot\nu) - 2i \int_\gamma \im( \dot\xi - \bar\partial\dot\nu) \int_\delta \omega\\
&= 2i \left( \int_\gamma \omega \right )d\theta_\delta(\dot\xi) - 2i \left( \int_\delta \omega \right)d\theta_\gamma(\dot \xi).
\end{align*}
Here we have used that $d\theta_\gamma(\dot \xi) = \int_\gamma \im ( \dot\xi - \bar\partial\dot\nu)$ for $\dot \nu \in C^\infty(\Sigma_q,\C)$ solving $\partial\bar\partial \dot \nu = \partial\dot \xi$.
 \end{proof}
 
 We finally obtain the desired relationship.
 
 \begin{cor}\label{cor:primitive}
$\Omega_I =  i \,d \beta \wedge \omega =  i \,d\tau$.
 \end{cor}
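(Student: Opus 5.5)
The plan is to obtain the identity by combining the three preceding lemmas and the corollary to Corollary \ref{cplx_symp_ell}, working at first over the open set of $\calM(\bsa,\bfm)$ that lies over $q$ outside the discriminant locus and off the polar divisor $D_\infty$. There $(\beta, z, w)$, or equivalently $(\beta,\theta_\gamma,\theta_\delta)$ locally, are valid coordinates.

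\emph{Step 1.} Start from
\[
\Omega_I = -2\, d\beta \wedge \Bigl( \bigl(\textstyle\int_\gamma \omega\bigr) d\theta_\delta - \bigl(\textstyle\int_\delta \omega\bigr) d\theta_\gamma \Bigr),
\]
and substitute the lemma
\[
\omega = 2i\bigl(\textstyle\int_\gamma \omega\bigr) d\theta_\delta - 2i\bigl(\textstyle\int_\delta \omega\bigr) d\theta_\gamma .
\]
This lemma rewrites the bracket as $\omega/(2i) = -\tfrac{i}{2}\omega$. Hence
\[
\Omega_I = -2\, d\beta\wedge\bigl(-\tfrac{i}{2}\omega\bigr) = i\, d\beta\wedge\omega .
\]

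\emph{Step 2.} Apply the lemma $d\tau = d\beta\wedge\omega$, which gives $\Omega_I = i\,d\tau$.

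The one point needing care, and the main obstacle, is that the lemma expressing $\omega$ in terms of $d\theta_\gamma, d\theta_\delta$ is a fiberwise identity on $\Sigma_q \cong \mathrm{Jac}(\Sigma_q)$. On the total space, by contrast, $\omega$ and the $d\theta$'s are extended as $1$-forms in particular ways: $\omega$ is vertical with respect to $(\beta,z,w)$, while the $d\theta$'s come from the flat fiber coordinates. The two sides of that lemma can therefore differ by a multiple of $d\beta$.

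Since the Hitchin base $\cB(\bfm)$ is one-dimensional with coordinate $\beta$, any such discrepancy is of the form $c\, d\beta$, and it is annihilated upon wedging with $d\beta$. So I would state explicitly that the identity $\omega \equiv 2i(\cdots)$ holds modulo $d\beta$, and that this suffices for Step 1. Similarly, the formula $dZ_\gamma = \tfrac12(\int_\gamma\omega)\,d\beta$ already involves only $d\beta$, so no further ambiguity enters.

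Finally, I would note that both sides of $\Omega_I = i\,d\tau$ are defined on the complement of $D_\infty$: $\tau$ has poles only along the polar sections, by Lemma \ref{residues}. Over the discriminant locus, which has complex codimension one, both sides are smooth there, so the identity extends by continuity. This proves Proposition \ref{prop:primitive}.
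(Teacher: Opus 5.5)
Your proposal is correct and follows the paper's proof exactly: you substitute the fiberwise lemma $\omega = 2i(\int_\gamma\omega)\,d\theta_\delta - 2i(\int_\delta\omega)\,d\theta_\gamma$ into $\Omega_I = -2\,d\beta\wedge\eta$, where $\eta = (\int_\gamma\omega)\,d\theta_\delta - (\int_\delta\omega)\,d\theta_\gamma$, and then use $d\tau = d\beta\wedge\omega$. Your extra caveat, which the paper passes over, needs one more observation: a complex $1$-form vanishing on vertical vectors is a priori $a\,d\beta + b\,d\bar\beta$, and only the first term dies under $d\beta\wedge$; however, $b=0$ because $\Omega_I = -2\,d\beta\wedge\eta$ is of type $(2,0)$, which forces $\eta^{0,1}=0$, so the discrepancy $\eta - \omega/(2i)$ is a $(1,0)$-form that kills the vertical directions and is therefore a multiple of $d\beta$.
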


\subsection{Computing complex periods via intersection numbers}\label{sec:intersectionnumbers}
The aim of this section is to show that integrals of the complex symplectic form $\Omega_I = \omega_J + i \omega_K$ over the exterior spheres are linear functions of the complex masses $m_i$, and that the coefficients can be interpreted as intersection numbers (see Corollary \ref{cor:intersectionnumbers}). 

Let $S$ be a cycle in $\mathcal M$. Since $\sigma_j$ (resp.\ $\sigma_j^\pm$) are relative cycles (relative to the boundary at infinity, which is a torus bundle over the circle at infinity), the intersection product
\[
I : H_2(\mathcal{M}) \times H_2(\mathcal{M},\partial_\infty \mathcal{M}) \to \Z
\]
and hence the intersection numbers $I(S,\sigma_j)$ (resp.\ $I(S,\sigma_j^\pm)$) are defined. 
We mainly want to apply this to $S=S_i$ for one of the exterior spheres $S_i=\CP^1_i$ in the following.
\begin{cor}\label{cor:intersectionnumbers}
For any cycle $S$ we have
\[	
\int_{S} \Omega_I =  -2 \pi  \sum_{j=1}^4 m_j \bigl( I(S,\sigma_j^+)- I(S,\sigma_j^-) \bigr).
\]
\end{cor}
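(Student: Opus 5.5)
The plan is to apply Stokes' theorem to the singular primitive from Proposition \ref{prop:primitive}, i.e.\ $\Omega_I = i\,d\tau$ on $\mathcal{M}\setminus D_\infty$. Since $\Omega_I$ is closed, $\int_S\Omega_I$ depends only on $[S]\in H_2(\mathcal{M})$. I would first perturb $S$ to a smooth representative meeting the polar divisor $D_\infty$ transversally in finitely many points. This is possible because the polar sections $\sigma_j^\pm$ are smooth complex curves (images of sections $\mathcal{B}\to\mathcal{M}$), and $S$ is compact, so it avoids a neighborhood of $\partial_\infty\mathcal{M}$. Around each intersection point $x\in S\cap\sigma_j^\pm$ I remove a small disk $B_\varepsilon(x)\subset S$ and write
\[
\int_S \Omega_I = \lim_{\varepsilon\to 0}\int_{S\setminus\bigcup B_\varepsilon(x)} i\,d\tau = -\,i\lim_{\varepsilon\to0}\sum_{x}\oint_{\partial B_\varepsilon(x)}\tau,
\]
with the boundary circles oriented as boundaries of the disks. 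The contribution of $\Omega_I$ over the small disks vanishes as $\varepsilon\to0$, since $\Omega_I$ is smooth on all of $\mathcal{M}$.

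The key local computation is the small-loop integral of $\tau$ around a transverse intersection point with $\sigma_j^\pm$. Near such a point, $\tau$ is a meromorphic $1$-form on $\mathcal{M}$ with a first-order pole along $\sigma_j^\pm$. I would use the local coordinates $(\beta,z)$ (or $(\beta,\hat z)$ near $\infty$), in which $\sigma_j^\pm=\{z=p_j\}$ on the appropriate sheet and $\tau = \tilde w\,dz/(z(z-1)(z-p_0))$. By Lemma \ref{residues}, $\tau = \pm m_j\,\frac{dz}{z-p_j} + (\text{holomorphic})$ near $\sigma_j^\pm$. The residue $\pm m_j$ is constant along the section, which is what makes the leading term globally of the form $\pm m_j\,d\log(z-p_j)$. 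The restriction of the pole-free part to a shrinking loop integrates to zero. For a small loop in $S$ linking $\sigma_j^\pm$ once positively, $\oint\tau\to 2\pi i\,(\pm m_j)$, and the sign of the linking equals the local intersection sign of $S$ with the complex curve $\sigma_j^\pm$. Summing gives $\int_S\Omega_I = -i\cdot 2\pi i\sum_j m_j\bigl(I(S,\sigma_j^+)-I(S,\sigma_j^-)\bigr)$ up to an overall orientation sign. I will fix that sign by carefully comparing the boundary orientation of $S\setminus B_\varepsilon$ with the induced orientation on the linking circle, and I expect it to produce the stated factor $-2\pi$.

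The main obstacle is the behavior of $\tau$ as a form on the total space rather than fiberwise. $\tau=w\,dz$ is defined as a vertical extension in the coordinates $(\beta,z,w)$, and these coordinates break down near the branch points and near $z=\infty$. I would handle this by checking that the two coordinate patches give the same $1$-form up to an exact holomorphic correction away from $D_\infty$, or, better, by using that $S$ can be isotoped off the branch locus in the fibration wherever needed. A second obstacle is the degenerate case $m_j=0$, where $\sigma_j^+=\sigma_j^-$; that term contributes $0$ in any case, so I would treat it by continuity in $\mathbf m$, using the local system of Proposition \ref{prop:localsystem}. Finally, I need transversality to be achievable within the homology class. Because $\sigma_j^\pm$ are only relative cycles, I must check that the intersection pairing $I(S,\sigma_j^\pm)$ is well defined for compact $S$, which holds since $S$ misses a collar of $\partial_\infty\mathcal{M}$.
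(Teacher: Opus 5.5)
Your argument is the paper's argument: make $S$ transverse to the polar divisor, excise small disks around the intersection points, apply Stokes to $\Omega_I=i\,d\tau$, and evaluate the small linking circles using the residues $\pm m_j$. You carry out the local step in more detail than the paper does. The ``main obstacle'' you list is not actually one. The form $\tau$ is the pullback of the tautological form on $\mathrm{Tot}(K_C(D))$ under the natural map $\mathcal{M}\to\mathrm{Tot}(K_C(D))$ (the blow-down of Proposition~\ref{prop:fiberbundle}), so it is a single global meromorphic $1$-form. Near $\sigma_j^\pm$ with $m_j\neq 0$ (which the paper assumes when it defines the polar sections), the spectral cover is unramified. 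Hence $(\beta,z-p_j)$, resp.\ $(\beta,\hat z)$, are honest local coordinates and no patching or avoidance of branch points is needed.

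The step that fails is your last one, the promise that careful orientation bookkeeping will give $-2\pi$. There is no orientation left to fix. Choose holomorphic coordinates $(u,v)$ near $x\in S\cap\sigma_j^\pm$ with $\sigma_j^\pm=\{v=0\}$ and $\tau=\pm m_j\,dv/v+(\text{holomorphic})$. The local intersection sign $\epsilon_x$, taken with the complex orientations of $\mathcal{M}$ and $\sigma_j^\pm$, equals the local degree of $v|_S$ at $x$. (The pairing is symmetric for $2$-cycles in a $4$-manifold, so the order does not matter.) Therefore $\oint_{\partial B_\varepsilon(x)}\tau\to 2\pi i(\pm m_j)\epsilon_x$. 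Since $\partial(S\setminus\bigcup B_\varepsilon)=-\bigsqcup\partial B_\varepsilon$, the point $x$ contributes $-i\cdot 2\pi i(\pm m_j)\epsilon_x=+2\pi(\pm m_j)\epsilon_x$. Your own line ``$-i\cdot 2\pi i$'' is already the correctly oriented answer.

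From $\Omega_I=i\,d\tau$ the argument therefore proves $\int_S\Omega_I=+2\pi\sum_j m_j\bigl(I(S,\sigma_j^+)-I(S,\sigma_j^-)\bigr)$. The paper's proof has exactly the same feature: it says a positively oriented intersection contributes ``$2\pi$ times the residue.'' So the minus sign in the statement, which then propagates into Corollary~\ref{cor:intersection_numbers} and the tables of Section~\ref{sec:TorelliOmegaI}, cannot come from this Stokes computation. It must come from a sign convention or normalization upstream, namely the sign of $\Omega_I$ relative to $\tau$ in Proposition~\ref{prop:primitive}. State the sign your argument actually yields and flag the discrepancy, rather than claiming to recover the stated one.
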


\begin{proof}
Make $S$ transverse to $\sigma_1 \cup \ldots \cup \sigma_4$ by an isotopy and decompose $\sigma_j = \sigma_j^+ \cup \sigma_j^-$ as above. Then $S$ intersects $\sigma_j^\pm$ in finitely many points. Since $\sigma_j^{\pm}$ is a holomorphic submanifold we can choose local holomorphic coordinates $(z,w)$ near a point of intersection such that locally $\sigma_j^{\pm} = \{ w=0\}$ and
\[
\tau = \pm dz \wedge \frac{ m_j dw}{w}.
\]
By a further isotopy, we can achieve that locally $S = \{z=0\}$ near an intersection point. Note that the intersection number is the sum of local intersection numbers, which are either $1$ or $-1$, depending on whether the orientation of $S$ agrees with the orientation of the local curve $\{z=0\}$ at the intersection point or disagrees. In the former case, Stokes' theorem gives $2 \pi $ times the residue as a local contribution, in the latter its negative (taking into account that $\Omega_I = i d \tau$).
\end{proof}

In homology, we may express $S_i = (\mathcal{H}_i +S_i) - \mathcal{H}_i$, where $\mathcal{H}_i + S_i$ and $\mathcal{H}_i$ are relative cycles with the same boundary in $H_1(\del_\infty \cM)$ (hence their difference defines an absolute class). Let 
\[
\tau_i^+ = \mathcal{P}_0(\bsa)_{q_p^+}  \cap \mathcal{M}(\boldsymbol{\alpha}, \mathbf{m}) \quad \text{and} \quad \tau_i^- = \overline{\mathcal{P}_0(\bsa)_{q_p^-}}  \cap \mathcal{M}(\boldsymbol{\alpha}, \mathbf{m}) 
\]
where $\mathcal{P}_0(\bsa)_{q_p^+} $ is the (closed) upward flow of the exterior fixed point and $\overline{\mathcal{P}_0(\bsa)_{q_p^-}}$ is the closure of the upward flow of the interior fixed point. Both $\tau_i^+$ and $\tau_i^-$ are relative cycles, just like $\sigma_j^+$ and $\sigma_j^-$. Note that  \[\overline{\mathcal{P}_0(\bsa)_{q_p^-}}  \cap \mathcal{M}(\boldsymbol{\alpha}, \mathbf{m}) \neq \overline{\mathcal{P}_0(\bsa)_{q_p^-}  \cap \mathcal{M}(\boldsymbol{\alpha}, \mathbf{m})},\]
e.g. see Figure \ref{fig:tau}. 
In particular, note that for $\mathbf{m}=\mathbf{0}$, the left set is homologous to $S_i + \tau_i^+$ while the right set is $S_i$;  $\tau_i^+$ is homologous to $\mathcal{H}_i$.

  \begin{figure}[!ht]
  \includegraphics[height=2.0in]{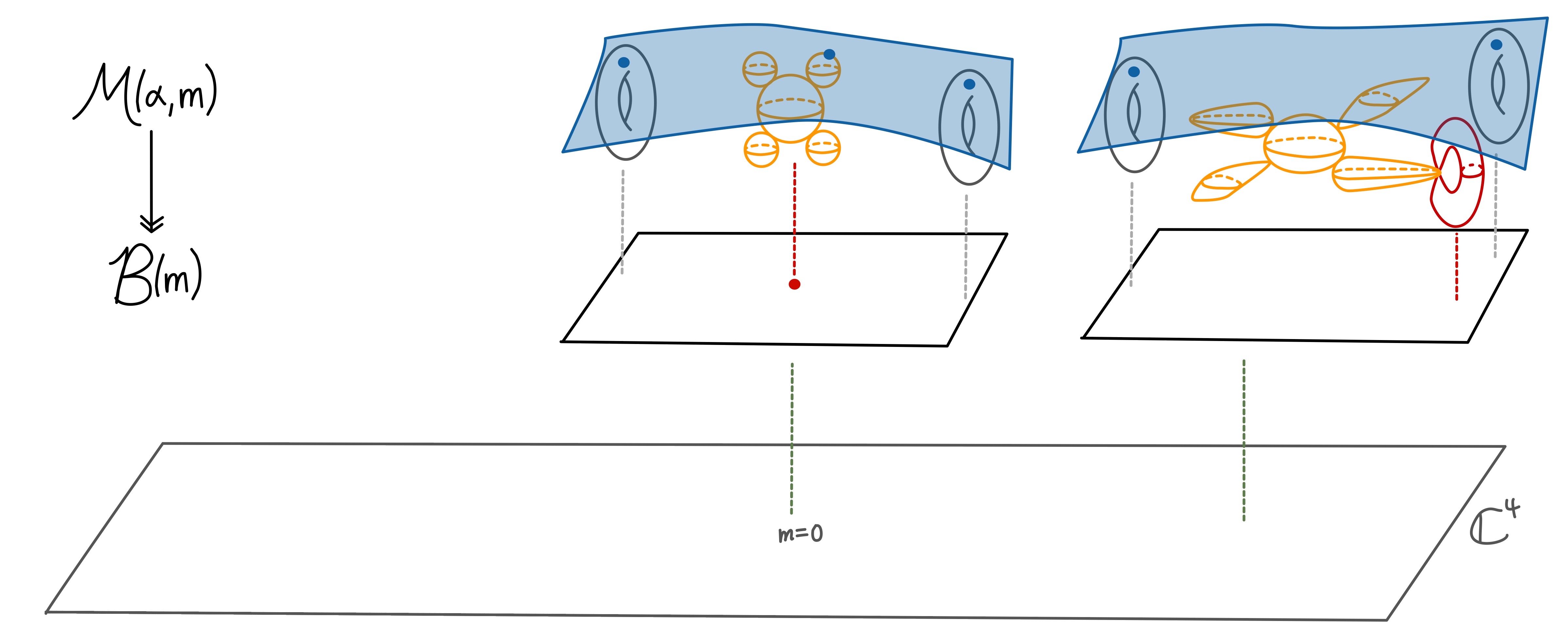}\\
    \includegraphics[height=2.0in]{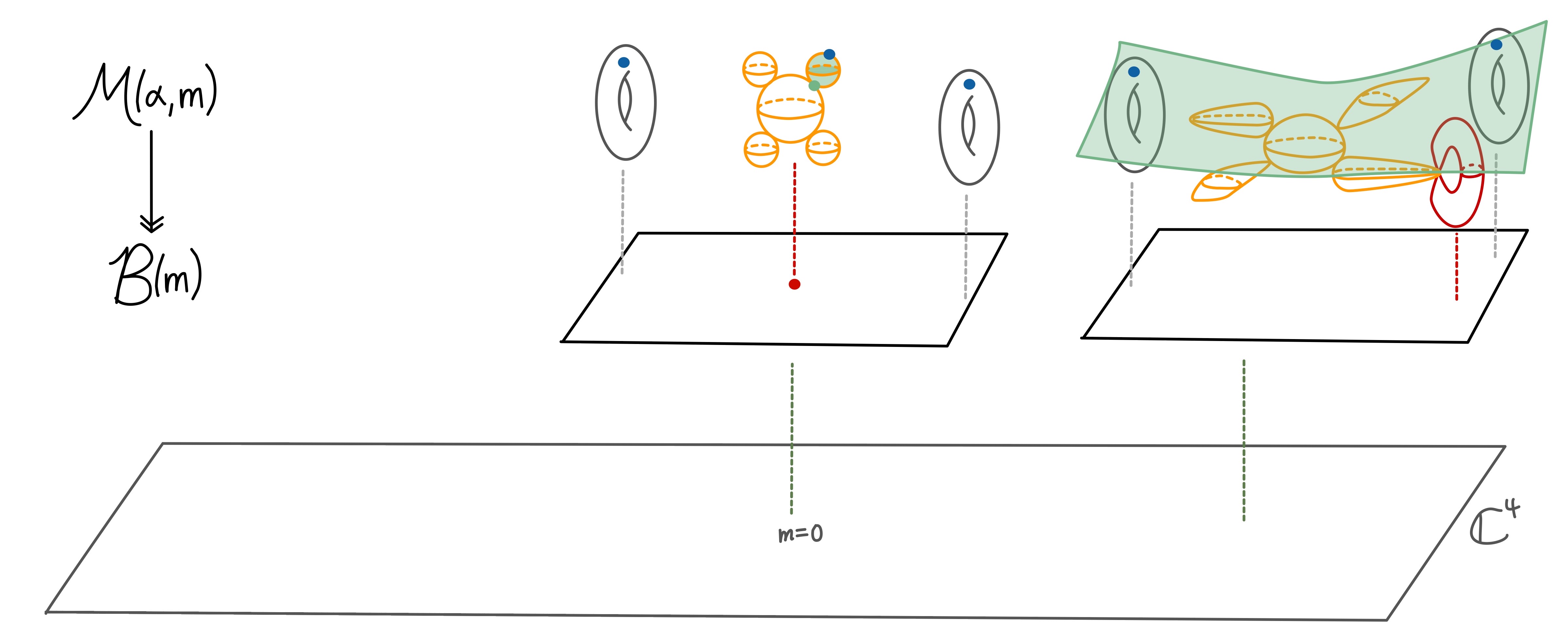}
  \caption{The top image is $\mathcal{P}_0(\bsa)_{q_p^+}  \cap \mathcal{M}(\boldsymbol{\alpha}, \mathbf{m})$, while the bottom is $\mathcal{P}_0(\bsa)_{q_p^-}  \cap \mathcal{M}(\boldsymbol{\alpha}, \mathbf{m})$. Note that $\overline{\mathcal{P}_0(\bsa)_{q_p^-}}  \cap \mathcal{M}(\boldsymbol{\alpha}, \mathbf{m}) \neq \overline{\mathcal{P}_0(\bsa)_{q_p^-}  \cap \mathcal{M}(\boldsymbol{\alpha}, \mathbf{m})}$. In particular, for $\mathbf{m}=\mathbf{0}$, $\tau_i^-$ is the union of the green and blue flows. \label{fig:tau}}
  \end{figure}  
  
\begin{rem}
In general, the intersection product of two relative classes $a, b \in H_2(\calM, \partial_\infty\calM)$ is not defined in a meaningful way, since the cup product of their Poincar\'e-dual cohomology classes lies in $H^4(\calM)= H_0(\calM, \partial_\infty \calM)  = 0$. However, looking at the long exact sequence of the pair $(\mathcal M, \partial_\infty \mathcal M)$
\[
\ldots  \to H_2(\partial_\infty \mathcal M)\overset{i_*}{\longrightarrow} H_2(\mathcal M) \overset{j_*}{\longrightarrow} H_2(\mathcal M, \partial_\infty \mathcal M) \overset{\partial_*}{\longrightarrow} H_1(\partial_\infty \mathcal M)  \to \ldots
\]
we can define the intersection product of  $a, b \in H_2(\calM,\partial_\infty\calM)$ with $\partial_* a = \partial_*b$ by setting
\[
I(a,b) = I(c,c)
\]
for any $c \in H_2(\calM)$ with $j_* c = a-b$. This is well-defined since any other choice differs from $c$ by an element in $\ker j_* =  \image i_*$ and the image of $H_2(\partial_\infty \mathcal M)$ in $H_2(\mathcal M)$ lies in the radical of the intersection form $I : H_2(\mathcal M) \times H_2(\mathcal M) \to \Z$. This can be seen by representing 2-dimensional homology classes by embedded surfaces, noting that any surface in $\partial_\infty \calM$ can be made disjoint by isotopy from any other surface in $\calM$. More concretely, this image is generated by the class of the smooth fiber $F=2S_0 + \sum_{i=1}^4 S_i$, which clearly lies in the radical. 
\end{rem}

The polar sections $\sigma_j^\pm$ are properly embedded disks which are disjoint in the interior of $\calM$. We claim that $\partial_\infty \sigma_j^+ = \partial_\infty\sigma_j^- \subset \partial_\infty \calM$. 

To see this, observe that for $q= \frac{f(z) + \beta z(z-1)(z-p_0)}{z^2(z-1)^2(z-p_0)^2} dz^2$  one has 
\[
\frac{q}{|\beta|} = \frac{\frac{1}{|\beta|}f(z) + \frac{\beta} {|\beta|}z(z-1)(z-p_0)}{z^2(z-1)^2(z-p_0)^2}dz^2 \underset{|\beta|\to \infty}{\longrightarrow} \frac{\beta} {|\beta|}\frac{dz^2}{z(z-1)(z-p_0)}
\]
and 
\[
p_j^\pm \underset{|\beta|\to \infty}{\longrightarrow}  \tilde{p_j}
\]
where $\tilde p_j$ is the unique preimage of $p_j$ in the spectral cover of the differential $\frac{\beta} {|\beta|}\frac{dz^2}{z(z-1)(z-p_0)}$. In particular, $\partial_\infty \calM$ is identified with the boundary at infinity of strongly parabolic moduli space for any choice of complex masses. Further, $\partial_\infty \sigma_j^+ = \partial_\infty\sigma_j^-  = \partial_\infty \mathcal{H}_j$ with respect to this identification. 

\medskip

Having established this claim, it follows that  $\Sigma_j = \sigma_j^+ - \sigma_j^-$ defines a spherical class in $H_2(\calM)$. We decompose  $S_i=(\mathcal{H}_i + S_i)-S_i$ and use that $ \mathcal{H}_i +S_i \sim \tau_i^- $ and $ \mathcal{H}_i \sim \tau_i^+ $ to obtain
\[
I(S_i,\sigma_j^+)- I(S_i,\sigma_j^-) =  I(S_i, \Sigma_j) = I(\mathcal{H}_i +S_i,\Sigma_j) - I(\mathcal{H}_i,\Sigma_j) = I (\tau_i^-, \Sigma_j) - I(\tau_i^+, \Sigma_j)
\]
which yields

\begin{cor}\label{cor:intersection_numbers} For the exterior spheres $S_i$ and the spheres $\Sigma_j = \sigma_j^+ - \sigma_j^-$ we have
\[
\int_{S_i} \Omega_I =  -2 \pi \sum_{j=1}^4 m_j  I(S_i, \Sigma_j)  =  2 \pi  \sum_{j=1}^4 m_j \bigl( I (\tau_i^+, \Sigma_j) - I(\tau_i^-, \Sigma_j)\bigr).
\]	
\end{cor}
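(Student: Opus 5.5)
The plan is to combine Corollary \ref{cor:intersectionnumbers} with the homological manipulation carried out just above. The first equality follows by applying Corollary \ref{cor:intersectionnumbers} with $S=S_i$ and grouping terms. For fixed $j$, the polar sections $\sigma_j^+$ and $\sigma_j^-$ are properly embedded disks with $\partial_\infty\sigma_j^+=\partial_\infty\sigma_j^-$, as established above from the limiting behaviour $p_j^\pm\to\tilde p_j$ as $|\beta|\to\infty$. Hence $\Sigma_j=\sigma_j^+-\sigma_j^-$ is an absolute class in $H_2(\calM)$. Since $S_i$ is a closed cycle, the pairing of $S_i$ with a relative class is well defined, and it is bilinear in the relative class. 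This gives $I(S_i,\sigma_j^+)-I(S_i,\sigma_j^-)=I(S_i,\Sigma_j)$. Substituting into Corollary \ref{cor:intersectionnumbers} yields $\int_{S_i}\Omega_I=-2\pi\sum_j m_j I(S_i,\Sigma_j)$.

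For the second equality I will express $S_i$ as a difference of relative cycles, $S_i=(\mathcal H_i+S_i)-\mathcal H_i$, where both pieces have the same boundary in $H_1(\partial_\infty\calM)$. I then use the homologies (rel.\ boundary) $\mathcal H_i\sim\tau_i^+$ and $\mathcal H_i+S_i\sim\tau_i^-$.

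\begin{itemize}
\item At $\bfm=\mathbf 0$ these hold by the Bia{\l}ynicki--Birula picture. The upward flow from the exterior fixed point $q_p^+$ is a relative disk $\tau_i^+$. The closure of the upward flow from the wobbly point $q_p^-$, intersected with the fiber, is $S_i$ together with that disk, as in Figure \ref{fig:tau}.
\item For general $\bfm$ I transport these homologies. The strata of $\mathcal P_0(\bsa)$ are smooth submanifolds, via the slices of \cite{collier2024conformallimitsparabolicslnchiggs}, and they are compatible with the residue projection. Their intersections with $\calM(\bsa,t\bfm)$ therefore vary continuously in $t\in[0,1]$. Because the local system $\mathcal H$ of Proposition \ref{prop:localsystem} is discrete, the relative classes are constant along the ray.
\end{itemize}

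With this in hand, bilinearity gives
\[
I(S_i,\Sigma_j)=I(\mathcal H_i+S_i,\Sigma_j)-I(\mathcal H_i,\Sigma_j)=I(\tau_i^-,\Sigma_j)-I(\tau_i^+,\Sigma_j).
\]
Here each pairing of a relative class with the absolute class $\Sigma_j$ is well defined. Multiplying by $-2\pi m_j$ and summing over $j$ gives the stated formula with the sign flipped to $\tau_i^+-\tau_i^-$.

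The main obstacle is justifying $\mathcal H_i+S_i\sim\tau_i^-$ for $\bfm\neq\mathbf 0$. As Figure \ref{fig:tau} shows, the closure of the stratum taken in the total space $\mathcal P_0(\bsa)$ differs from the closure taken inside the fiber. So I will work with the closure in $\mathcal P_0(\bsa)$ and then intersect with the fiber. The first thing to check is that this intersection is transverse for generic $\bfm$, using that $\pi_{\bsa}$ is a holomorphic submersion on each stratum (Theorem \ref{thm:CFW} together with the slice compatibility). A second point to check is that the boundary at infinity of $\tau_i^\pm$ stays fixed under the identification of $\partial_\infty\calM$ with the strongly parabolic boundary, so that the relative homology classes can be compared across $\bfm$.
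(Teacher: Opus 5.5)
Your proposal is correct and follows the paper's own argument: the first equality is Corollary \ref{cor:intersectionnumbers} combined with $\partial_\infty\sigma_j^+=\partial_\infty\sigma_j^-$, and the second comes from writing $S_i=(\mathcal H_i+S_i)-\mathcal H_i$ and using the relative homologies $\mathcal H_i\sim\tau_i^+$ and $\mathcal H_i+S_i\sim\tau_i^-$. The paper simply asserts those two homologies from the $\C^\times$-flow picture of Figure \ref{fig:tau}, so your transport argument along $t\bfm$ is additional care rather than a different route. One caveat on that argument: transversality only helps for $t>0$, because at $t=0$ the closure of the wobbly-point stratum meets the fiber in the reducible curve $S_i\cup\tau_i^+$, so that endpoint needs a separate limiting-cycle argument (convergence with multiplicity one).
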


\section{Interlude: Concrete description of \texorpdfstring{$\mathcal{M}$}{the Higgs bundle moduli space}}\label{sec:concrete}

In this section, we will explicitly write out the Higgs bundles associated to polar sections, so that we can compute the intersection numbers appearing in Corollary \ref{cor:intersectionnumbers} in Section \ref{sec:integralasintersectionnumber}. In fact, 
we give a detailed and explicit description of an open dense subset of the Hitchin moduli space.

\subsection{Moduli space of generic Higgs bundles}
\label{eq:genericHiggs}

We now turn to describing the moduli space of parabolic Higgs bundles. Generically, the Higgs field $\varphi$ and \emph{holomorphic} bundle 
$\cE$ determine the flag structure, and hence the parabolic Higgs bundle. This is true for all parabolic Higgs bundles in the regular locus
$\mathcal{M}'(\boldsymbol{\alpha}, \mathbf{m})=\mathrm{Hit}^{-1}(\mathcal{B}'(\mathbf{m}))$, the preimage of the regular locus under the Hitchin map; but it actually is true for 
a larger space $\mathcal{M}^0(\boldsymbol{\alpha}, \mathbf{m})=\mathrm{Hit}^{-1}(\mathcal{B}^0(\mathbf{m}))$\footnote{We note that if $q$ is a perfect square, then $\mathbf{m}$ satisfies $\sum (-1)^{e_i} m_i=0$ for some $e_i \in \{0, 1\}$. We note that if $q$ has a non-simple zero at $p_i \in D$, then $m_i = 0$.}, where
\begin{equation}
\mathcal{B}^{0}(\mathbf{m})=
\left\{q(\beta)
\in \mathcal{B}\Big| q(\beta)=f_{\mathbf{m}}(z) +\beta z(z-1)(z-p_0) \mbox{
\begin{minipage}{0.4\linewidth}
    is not a perfect square \\ \textbf{and} doesn't have a non-simple zero at some $p \in D$
  \end{minipage}
}\right\}.
\label{eq:B0}
\end{equation}
The following description is true in all chambers.  However, it is best-suited for the exterior chamber E1$_\infty$ and its adjacent interior chamber B1$_\infty$. In both, the $\C^\times$-action will be particularly simple. (See Remark \ref{rem:preference}.)
\begin{prop}[The moduli space of Higgs bundles]\label{prop:higgsbundles}
Given $\beta \in \cB^0(\mathbf{m})$, the fiber  $\mathrm{Hit}^{-1}(\beta)$ is identified with the projective curve $\{F(u,w,x) = 0\} \subset \mathbb P(\C^3)$, where
\begin{equation}\label{eq:projective}
F(u, w, x):=(f_{\mathbf{m}}(ux^{-1})-m_\infty^2(ux^{-1})^4)x^3  +\beta u(u-x)(u-p_0 x) - 2 m_\infty u^2 w - w^2 x.
\end{equation}
The locus $\cM^0(\mathbf{m}) = \mathrm{Hit}^{-1}(\mathcal{B}^0(\mathbf{m}))$ of the $SL(2,\C)$ moduli space on the four-punctured sphere is stratified by the holomorphic bundle type of $\cE$,
which is either $\cO(-2) \oplus \cO(-2)$ (``the big stratum'') or $\cO(-1) \oplus \cO(-3)$ (``the small stratum''). 

The big stratum is largely parameterized by $(\beta, u, w)$ which solve the affine \underline{cubic} equation (corresponding to $x=1$ in the 
projective curve) 
\begin{equation}\label{eq:cubic}
f_{\mathbf{m}}(u) + \beta u (u-1)(u-p_0) - (m_\infty u^2 + w)^2=0.
\end{equation} 
 The corresponding Higgs bundle is
 \begin{align}\label{eq:bigstrata}
 \cE &\simeq \cO(-2) \oplus \cO(-2)  \\ \nonumber
 \varphi_{\beta, u, w} &= \frac{\de z}{z(z-1)(z-p_0)}\begin{pmatrix}  -(m_\infty z^2 + w) & \frac{f(z) + \beta z (z-1)(z-p_0) - (m_\infty z^2 + w)^2}{z-u} \\  z-u & m_\infty z^2 + w \end{pmatrix},
\end{align}
where the flag $F_p$ is the eigenvector of $\mathrm{Res}\, \varphi|_p$ with associated eigenvalue $m_p \frac{\de z}{(z-p)}$ for each $p \in D$. (See Corollary \ref{cor:flags} below).

When $m_\infty \neq 0$, there is an additional point in the big stratum (corresponding to $(u:x:w)=(m_\infty: 0: \frac{f_3+\beta}{2})$) given by 
\begin{align} \label{eq:extrapoint}
  \cE &\simeq \cO(-2) \oplus \cO(-2)  \\ \nonumber
 \varphi_\beta &=  \frac{\de z}{z(z-1)(z-p_0)}\begin{pmatrix} -(m_\infty z^2 +
            \frac{f_3 + \beta}{2 m_\infty} z) & f(z) + \beta z(z-1)(z-p_0) 
            -\left(m_\infty z^2 +
            \frac{f_3 + \beta}{2 m_\infty} z\right)^2 \\ 1 & m_\infty z^2 +
            \frac{f_3 + \beta}{2 m_\infty} z.
           \end{pmatrix}.
\end{align}
(Note that the upper right entry of $\varphi$ is in fact quadratic.) 

The small stratum (corresponding to $(u:x:w)=(0:0:1)$) is
\begin{align}\label{eq:smallstrata}
 \cE \simeq&  \cO(-1) \oplus \cO(-3) \\ \nonumber
 \varphi_\beta  =& \frac{\de z}{z(z-1)(z-p_0)}\begin{pmatrix} 0 & f(z) + \beta z (z-1)(z-p_0) \\ 1 & 0 \end{pmatrix} 
\end{align}
\end{prop}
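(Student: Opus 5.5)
To prove Proposition \ref{prop:higgsbundles}, I would use the spectral/BNR-type correspondence made completely explicit. Every trace-free Higgs field with $-\det\varphi = q$ lives on a bundle of degree $-4$. By Grothendieck's theorem, the underlying holomorphic bundle is $\cO(a)\oplus\cO(-4-a)$. The first step is to bound the splitting type. Since $q(\beta)$ is not a perfect square, $\varphi$ has no invariant line subbundle. A standard argument then bounds $a$. If $\cL=\cO(a)$ with $a\ge 0$ is the maximal destabilizing summand, the composition $\cL\to\cE\xrightarrow{\varphi}\cE\otimes K(D)\to (\cE/\cL)\otimes K(D)$ must be nonzero. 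That forces $\deg\big(\cL^{-1}(\cE/\cL)K(D)\big)=-4-2a+2\ge 0$, which fails for $a\ge 0$. Hence $a=-1$ or $a=-2$, giving the small and big strata. For fixed $(\cE,\varphi)$ with $q\in\cB^0(\bfm)$, the residue $\Res_p\varphi$ has eigenvalues $\pm m_p$. When $m_p\neq0$ these are distinct. When $m_p=0$ the residue is nonzero nilpotent, because $q$ has only a simple zero at $p$. In both cases the flag $F_p$ is forced to be the $m_p$-eigenline (resp.\ the kernel). This justifies the claim that $(\cE,\varphi)$ determines the parabolic structure, which is the content of Corollary \ref{cor:flags}.

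Next, I would normalize $\varphi$ in each stratum. In the big stratum, $\mathrm{Aut}(\cE)=\SL(2,\C)$ constants. I write $\varphi = \frac{dz}{z(z-1)(z-p_0)}\Phi(z)$, where $\Phi$ is a trace-free matrix of polynomials. Its entries have degrees bounded by $2$, which is read off from the pole order at $\infty$. The lower-left entry $c(z)$ is a quadratic or lower. Using a constant gauge, I move one of its roots to $\infty$, so $c$ becomes linear, and I normalize it to be monic: $c(z)=z-u$. The residual unipotent gauge (upper triangular) then shifts the diagonal entry. I use it to make the diagonal $-(m_\infty z^2+w)$; the $z^2$-coefficient is fixed because the residue at $\infty$ has eigenvalue $m_\infty$. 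The determinant condition then determines the upper-right entry as $(q\text{-numerator}-(m_\infty z^2+w)^2)/(z-u)$. Polynomiality of this entry is exactly the cubic \eqref{eq:cubic} at $z=u$. The degenerate normalizations account for the remaining cases. If $c$ is constant, I get the extra point \eqref{eq:extrapoint}; its diagonal linear coefficient is forced by requiring the upper-right entry to be quadratic, i.e.\ the $z^4$ and $z^3$ terms cancel. In the small stratum, $\mathrm{Aut}$ contains $\cO(2)$-valued shears, and I use these to kill the diagonal, obtaining \eqref{eq:smallstrata}.

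Finally, I would glue these charts into the projective curve. I homogenize \eqref{eq:cubic} by expanding $(m_\infty u^2+w)^2$ and absorbing the $u^4$ term, since $f_\bfm$ has leading coefficient $m_\infty^2$. This yields $F(u,w,x)$ in \eqref{eq:projective}. The points at $x=0$ satisfy $2m_\infty u^2w=0$ together with the cubic terms. Checking these gives $(0:0:1)$ (the small stratum) and, when $m_\infty\neq0$, the point $(m_\infty:0:\tfrac{f_3+\beta}{2})$. I would then identify these with the two special Higgs bundles by taking limits of $\varphi_{\beta,u,w}$ under suitable gauges. The main obstacle is the injectivity and completeness of this parametrization. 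One must show that distinct $(u,w)$ give non-gauge-equivalent Higgs bundles, i.e.\ that the stabilizer of the normal form is only $\pm1$. One must also show that every stable point of $\mathrm{Hit}^{-1}(\beta)$ is reached, including the $m_p=0$ and $m_\infty=0$ edge cases where eigenlines degenerate. I would handle this by comparing with the spectral description: the cubic is a plane model of the spectral curve $\Sigma_\beta$, with $u$ encoding the kernel direction. Stability is automatic here since there are no invariant subbundles, so properness of the Hitchin map and a degree count then give a bijection.
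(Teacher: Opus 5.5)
Your proposal is correct and follows essentially the same route as the paper: your nonvanishing composition $\cL\to(\cE/\cL)\otimes K(D)$ is the paper's observation that the lower-left entry must vanish for any splitting more unbalanced than $\cO(-1)\oplus\cO(-3)$, which would make $f_{\mathbf m}(z)+\beta z(z-1)(z-p_0)$ a perfect square. The rest also matches the paper: constant $\SL(2,\C)$ normal forms in the big stratum with the split $c_1\neq 0$ versus $c$ constant, and a quadratic shear in the small stratum. The one thing to tighten is ``move one of its roots to $\infty$'': as in the paper, choose the frame with $F_\infty=\langle e_1\rangle$, which gives $c_2=0$ and $a_2=-m_\infty$ at once; since $F_\infty$ is determined by $\varphi$ for $\beta\in\cB^0(\mathbf m)$, the residual gauge of each normal form is visibly $\pm 1$, so completeness and injectivity come directly from the normalization and your properness/degree-count step is not needed.
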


The proof of this proposition is a somewhat lengthy computation, and so is delayed until later in this section.  
We first give some remarks and a corollary.

\begin{rem}\label{rem:preference} As mentioned at the beginning of this section, this description is convenient in chambers E1$_\infty$ (and less so for B1$_\infty$). There are other more natural descriptions in other $E$-type chambers. In the proof of the  Proposition \ref{prop:higgsbundles}, observe that we make a choice to prefer $\infty \in D$, by picking a frame in which $F_\infty=<e_1>$.
 \end{rem}

From Proposition  \ref{prop:higgsbundles}, we can write out the flags on an open dense subset of $\mathcal{M}^0(\boldsymbol{\alpha}, \mathbf{m})$:
\begin{cor} [Flags] \label{cor:flags}
For the Higgs bundles in the big strata \eqref{eq:bigstrata}, the flags are 
     \begin{equation}\label{eq:flags}
    F_0 = \begin{pmatrix} \frac{-m_0 p_0 +w}{u} \\ 1 \end{pmatrix}, \quad 
      F_1 = 
  \begin{pmatrix} \frac{m_1(p_0-1) + m_\infty +w}{u-1} \\ 1 \end{pmatrix}, \quad 
        F_{p_0} = \begin{pmatrix} \frac{-m_{p_0} p_0(p_0-1) + m_\infty p_0^2 +w}{u-p_0}\\ 1 \end{pmatrix}, \quad 
          F_\infty = \begin{pmatrix} 1\\ 0 \end{pmatrix};
   \end{equation}
 for the Higgs bundles in the additional point of the big stratum \eqref{eq:extrapoint} these are   \begin{align*}
    F_0 &= \begin{pmatrix} m_0 p_0  \\ 1 \end{pmatrix}, \quad 
      F_1 = 
  \begin{pmatrix} - \frac{f_3 + 2 m_\infty (m_\infty + m_1(p_0-1)) + \beta}{2 m_\infty} \\ 1 \end{pmatrix}, 
        F_{p_0} &= \begin{pmatrix} - \frac{f_3 p_0+ 2 m_\infty m_{p_0}(p_0 - 1) p_0 + p_0(2 m_\infty^2 p_0 + \beta)}{2m_\infty}\\ 1 \end{pmatrix}, \quad 
          F_\infty = \begin{pmatrix} 1\\ 0 \end{pmatrix},
   \end{align*}
   where $f_3$ is shorthand for the coefficient of $z^3$ in $f_{\mathbf{m}}(z)$;
 and for the Higgs bundles in the small stratum \eqref{eq:smallstrata}
  \begin{equation*}
    F_0 = \begin{pmatrix} m_0 p_0 \\ 1 \end{pmatrix}, \quad 
      F_1 = 
  \begin{pmatrix} m_1(1-p_0)  \\ 1 \end{pmatrix}, \quad 
        F_{p_0} = \begin{pmatrix} m_{p_0} p_0(p_0-1) \\ 1 \end{pmatrix}, \quad 
          F_\infty = \begin{pmatrix} - m_\infty\\ 1 \end{pmatrix}.
   \end{equation*}
\end{cor}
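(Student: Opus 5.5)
The corollary follows by direct computation from Proposition \ref{prop:higgsbundles}: at each $p \in D$ I take the residue of the explicit Higgs field and find its eigenline for the eigenvalue $m_p$. By definition (and Proposition \ref{prop:higgsbundles}) that eigenline is the flag $F_p$. The computation rests on one algebraic identity. Write the big-stratum Higgs field \eqref{eq:bigstrata} as $\frac{\de z}{z(z-1)(z-p_0)} M(z)$, where
\[
M(z) = \begin{pmatrix} -a(z) & b(z) \\ c(z) & a(z) \end{pmatrix}, \qquad a = m_\infty z^2 + w, \quad c = z-u .
\]
At any $p \in D \cap \C$ the term $\beta z(z-1)(z-p_0)$ vanishes. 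Hence $b(p)c(p) = f_{\mathbf m}(p) - a(p)^2$, and so $\det M(p) = -f_{\mathbf m}(p)$.

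For $p \in \{0,1,p_0\}$ I would proceed as follows.
\begin{enumerate}
\item The residue of $\varphi$ at $p$ is $\kappa_p M(p)$, where $\kappa_p$ is the reciprocal of the product of the two remaining linear factors evaluated at $p$. For example, $\kappa_0 = 1/p_0$.
\item By \eqref{eq:complexmass}, the eigenvalues of $M(p)$ are $\pm\sqrt{f_{\mathbf m}(p)} = \pm m_p/\kappa_p$. So requiring the residue to have eigenvalue $m_p$ means finding the kernel of $M(p) - (m_p/\kappa_p)\,\mathrm{Id}$.
\item Use the second row of $M(p)$, namely $c(p)x + (a(p) - m_p/\kappa_p)\,y = 0$. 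Setting $y = 1$ gives $x = \bigl(a(p) - m_p/\kappa_p\bigr)/(u-p)$. This is legitimate provided $u \neq p$ on the big-stratum chart.
\item Substituting $a(0) = w$, $a(1) = m_\infty + w$ and $a(p_0) = m_\infty p_0^2 + w$, together with the corresponding $\kappa_p$, should reproduce \eqref{eq:flags} literally. At $p=0$, for instance, $x = (w - m_0 p_0)/u$.
\end{enumerate}

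The extra point \eqref{eq:extrapoint} and the small stratum \eqref{eq:smallstrata} are handled by the same computation. In both cases the lower-left entry is $c \equiv 1$, and $a(z)$ is replaced by $m_\infty z^2 + \frac{f_3+\beta}{2m_\infty}z$ and by $0$, respectively. The identity $bc = f_{\mathbf m} - a^2$ at $p$ still holds. Hence the flag is $\bigl(\kappa_p^{-1}m_p - a(p),\,1\bigr)^t$ up to sign conventions, and the linear terms in $\beta$ and $f_3$ from $a(p)$ appear exactly as in the stated formulas.

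The main obstacle is the point $p = \infty$, where the frames must be changed. For $\cO(-2)\oplus\cO(-2)$ I would pass to $\hat z = 1/z$, using the transition function $z^{2}\,\mathrm{Id}$ on both summands. The residue at $\hat z = 0$ is then governed by the leading $z^2$ coefficients of $M$. These are $\mathrm{diag}(-m_\infty, m_\infty)$ in the diagonal entries and $0$ in the lower-left entry. The upper-right entry has leading coefficient $f_4 - m_\infty^2 = 0$, so the $z^3$ term cancels and the residue is upper triangular.
\begin{itemize}
\item In the big stratum, including the extra point, this makes $e_1 = (1,0)^t$ the eigenvector, matching $F_\infty$ in \eqref{eq:flags}. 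The sign coming from $\de z = -\hat z^{-2}\de \hat z$ has to be tracked to confirm that $e_1$ belongs to the eigenvalue $+m_\infty$ rather than $-m_\infty$.
\item For $\cO(-1)\oplus\cO(-3)$ the transition function is $\mathrm{diag}(z, z^3)$. Conjugating by it turns the off-diagonal pair into a residue matrix of the form $\begin{pmatrix}0 & m_\infty^2\\ 1 & 0\end{pmatrix}$, up to sign. This has eigenline $(\mp m_\infty, 1)^t$, and the sign conventions should select $(-m_\infty,1)^t$.
\end{itemize}
Most of the care in the proof goes into these sign and normalization checks, that is, into the orientation of $\de z$ at infinity and the ordering of the summands.
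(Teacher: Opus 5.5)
Your approach is the paper's own: the corollary is given without separate proof and is read off from Proposition~\ref{prop:higgsbundles} by taking $F_p$ to be the $m_p$-eigenline of $\mathrm{Res}\,\varphi|_p$. Your identity $\det M(p)=-f_{\mathbf m}(p)$ together with $x=(a(p)-m_p/\kappa_p)/(u-p)$ reproduces \eqref{eq:flags} exactly.

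The sign checks at $\infty$ that you leave open also come out as stated. On $\cO(-2)\oplus\cO(-2)$ the residue is minus the $z^2$-coefficient matrix of $M$, namely $\begin{pmatrix} m_\infty & * \\ 0 & -m_\infty\end{pmatrix}$. It is upper triangular because the lower-left entry has degree at most $1$; the upper-right entry is generally nonzero. On the small stratum the residue is $-\begin{pmatrix} 0 & m_\infty^2 \\ 1 & 0\end{pmatrix}$. The $m_\infty$-eigenlines are therefore $e_1$ and $(-m_\infty,1)^t$, as claimed.

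The one place where ``up to sign conventions'' hides a real mismatch is $F_{p_0}$ at the extra point \eqref{eq:extrapoint}. Your own formula $x=m_p/\kappa_p-a(p)$, with $m_{p_0}/\kappa_{p_0}=m_{p_0}p_0(p_0-1)$, gives
\[
F_{p_0}=\begin{pmatrix} -\frac{f_3p_0-2m_\infty m_{p_0}(p_0-1)p_0+p_0(2m_\infty^2p_0+\beta)}{2m_\infty}\\ 1\end{pmatrix}.
\]
The printed formula has $+2m_\infty m_{p_0}(p_0-1)p_0$ instead, which makes it the $-m_{p_0}$-eigenline. No convention can absorb this, for three reasons.
\begin{enumerate}
\item The same rule yields the printed $F_1$ at the extra point. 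The apparent parallel between the two printed numerators is misleading, because $m_1/\kappa_1=-m_1(p_0-1)$ whereas $m_{p_0}/\kappa_{p_0}=+m_{p_0}p_0(p_0-1)$.
\item The same rule yields the printed $F_{p_0}$ on both the big and small strata.
\item Gauging the big-stratum $F_{p_0}$ into the normal form \eqref{eq:extrapoint} as $u\to\infty$, with $w/u\to\frac{f_3+\beta}{2m_\infty}$, lands on your value and not on the printed one.
\end{enumerate}
So that entry of the statement has a sign typo. Your write-up should record the corrected formula rather than assert agreement with the printed one. Nothing downstream is affected: Proposition~\ref{prop:interiorintersection} uses only the $\beta$-coefficients $-\frac{1}{2m_\infty}$ and $-\frac{p_0}{2m_\infty}$ of $F_1$ and $F_{p_0}$ at this point, and your computation confirms both.
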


\begin{proof}[Proof of Proposition \ref{prop:higgsbundles}]
As already noted, since $\pdeg(\cE)=0$ and $|D| = k = 4$, the bundle $\cE$ itself has degree $-4$.  As a holomorphic bundle over $\CP^1$, it must
split as $\cE \simeq \cO(m) \oplus \cO(-4-m)$, for $m \in \Z$ (and we may as well take $m\geq -2$). For any $m$,
\begin{equation}
\End \cE \simeq \begin{pmatrix} \cO & \cO(4+2m) \\ \cO(-4-2m) & \cO \end{pmatrix}.
\end{equation}
The set $\cM^0(\mathbf{m})=\mathrm{Hit}^{-1}(\mathcal{B}^0(\mathbf{m}))$ is chosen as precisely the locus where the Higgs field $\varphi$ uniquely determines the flags. 
Since $\varphi$ has at most simple poles at $0, 1, p_0$, we can write
 \begin{equation}
  \varphi = \frac{\de z}{z(z-1)(z-p_0)}\begin{pmatrix} a(z) & b(z) \\ c(z) & -a(z) \end{pmatrix} \qquad \mbox{for some } a(z), b(z), c(z) \in \C[z].
\label{genvarphi}
 \end{equation}
Passing to the coordinate $\tilde{z}=z^{-1}$ gives
\begin{equation}
\varphi = -\frac{\de \tilde{z}}{\tilde{z}(1-\tilde{z})(1-\tilde{z}p_0)} \cdot \tilde{z}^2 \begin{pmatrix} \tilde{z}^0 a(\tilde{z}^{-1}) & 
\tilde{z}^{4+2m} b(\tilde{z}^{-1}) \\ \tilde{z}^{-4-2m} c(\tilde{z}^{-1}) & -\tilde{z}^0 a(\tilde{z}^{-1}) \end{pmatrix},
\end{equation}
hence since $\varphi$ also has at most a simple pole at $z=\infty$ (i.e. $\tilde{z}=0$), we see that
\begin{equation}
 \deg a \leq 2, \quad  \deg b \leq 6+2m, \quad \deg c \leq -2-2m.
\end{equation}
In addition, the expression for $-\det \varphi$ gives
\begin{equation} \label{eq:determinant}
a(z)^2 + b(z) c(z) = f(z)+\beta z(z-1)(z-p_0).
\end{equation}

If $m \geq 0$, then $\deg c\leq -2$, so in fact $c \equiv 0$, and hence
\begin{equation}
 a(z)^2 =f(z) + \beta z(z-1)(z-p_0).
\end{equation}
But this means that $f(z) + \beta z(z-1)(z-p_0)$ is a perfect square, which is not allowed since $q \in \mathcal{B}^0$.
We conclude from all of this that only two cases remain: either $\cE \simeq \cO(-2) \oplus \cO(-2)$ or else $\cE \simeq \cO(-1) \oplus \cO(-3)$.

\smallskip

Suppose first that $\cE \simeq \cO(-1) \oplus \cO(-3)$. In this case, 
\begin{equation}
\End \cE \simeq \begin{pmatrix} \cO & \cO(2) \\ \cO(-2) & \cO \end{pmatrix},
\end{equation}
and the polynomial entries $a(z), b(z), c(z)$ of the Higgs field have $\deg a \leq 2$, $\deg b \leq 4$ and $\deg c =0$. These also satisfy
\eqref{eq:determinant}, which shows in particular that $c$ is nonzero, else $f(z) + \beta z(z-1)(z-p_0)$ would again be a perfect square. 
Using the diagonal gauge freedom, we then arrange that $c \equiv 1$; the remaining gauge freedom then comes from the transformations
\begin{equation}
 g=\begin{pmatrix} 1 & s(z) \\ 0 & 1 \end{pmatrix}, \qquad \deg s(z) =2,
\end{equation}
and taking $s(z) = a(z)$ gives
\begin{equation}
 g^{-1} \varphi g = \frac{\de z}{z(z-1)(z-p_0)}\begin{pmatrix} 0 & f(z) + \beta z (z-1)(z-p_0) \\ 1 & 0 \end{pmatrix}.
\end{equation}

\smallskip

Next suppose that $\cE \simeq \cO(-2) \oplus \cO(-2)$.  Now the degrees of $a, b$ and $c$ are all less than or equal to $2$, so we write
\begin{equation}
\varphi = \frac{\de z}{z(z-1)(z-p_0)}\begin{pmatrix} a_2 z^2 + a_1 z + a_0 & b_2 z^2 + b_1 z + b_0 z \\ c_2 z^2 + c_1 z + c_0 & - a_2 z^2 - a_1 z - a_0 \end{pmatrix}.
\end{equation}
The complex gauge group is the group of constant $SL(2,\C)$ gauge transformations. We use this to write explicit representatives in each equivalence class. 
Choose a constant gauge transformation so that the flag $F_\infty$ is spanned by $\begin{pmatrix} 1\\ 0 \end{pmatrix}$.  From this we see that $a_2=-m_\infty$. 
Since $\left.\mathrm{Res}\,\varphi\right|_\infty$ respects this flag, $c_2=0$. But then, since $c(z) \not\equiv 0$, so $c_1$ and $c_0$ cannot simultaneously vanish,
hence we can regard $[c_1:c_0] \in \CP^1$.

\smallskip

\noindent \underline{If $c_1 \neq 0$}, a further diagonal gauge transformation makes $c_1=1$; the residual gauge freedom is to conjugate by 
$\begin{pmatrix} 1 & d \\ 0 & 1 \end{pmatrix}$, and using this we make $a_1=0$.  Now rename $w=-a_0$ since it will serve as one of the coordinates 
on the elliptic curve. In summary, we have now found a representative of the form 
\begin{align*}
 \varphi &= \frac{\de z}{z(z-1)(z-p_0)}\begin{pmatrix}  -(m_\infty z^2 + w) & b_2 z^2 + b_1 z + b_0 z \\  z-u & m_\infty z^2  + w \end{pmatrix}\\
& = \frac{\de z}{z(z-1)(z-p_0)}\begin{pmatrix}  -(m_\infty z^2 + w)  & \frac{f(z) + \beta z (z-1)(z-p_0) - (m_\infty z^2 + w)^2}{z-u} \\  z-u & m_\infty z^2  + w  \end{pmatrix},
\end{align*}
where, by definition, $u$ solves the equation
\begin{equation}
f(u) + \beta u (u-1)(u-p_0) - (m_\infty u^2 + w)^2=0.
\end{equation}

\smallskip

\noindent \underline{If $c_1 = 0$}, a diagonal gauge transformation makes $c_0=1$, and then by an upper triangular gauge transformation 
$g= \begin{pmatrix} 1 & d \\ 0 & 1 \end{pmatrix}$, we also arrange that $a_0=0$.  In summary, 
\begin{equation}
 \varphi = \frac{\de z}{z(z-1)(z-p_0)}\begin{pmatrix}  -m_\infty z^2 + a_1 z & b_2 z^2 + b_1 z + b_0 z \\  1 & m_\infty z^2  - a_1 z \end{pmatrix}.
\end{equation}
Writing $-\det \varphi$ as in \eqref{genvarphi}  
with $f(z) = f_4 z^4 + f_3 z^3 + f_2 z^2 + f_1 z + f_0$, we note again that $f_4=m_\infty^2$, and find that $-a_1=\frac{f_3 + \beta}{2 m_\infty}$.
Thus, for each choice of $\beta$, there is a single point in the fiber, and the corresponding Higgs field is
\begin{equation}
 \varphi = \frac{\de z}{z(z-1)(z-p_0)}\begin{pmatrix} -(m_\infty z^2 +
            \frac{f_3 + \beta}{2 m_\infty} z)& f(z) + \beta z(z-1)(z-p_0) 
            -\left(m_\infty z^2 +
            \frac{f_3 + \beta}{2 m_\infty} z\right)^2 \\ 1 & m_\infty z^2 +
            \frac{f_3 + \beta}{2 m_\infty} z.
           \end{pmatrix}.
\end{equation}

\medskip

Finally, let us consider whether this Higgs field determines the flag at $0$. We have
\begin{equation}
\left.\mathrm{Res}(\varphi_{\beta, u,w})\right|_{z=0}= \frac{\de z}{p_0 z}  \begin{pmatrix} 
-w& \frac{w^2-f(0)}{u}\\  -u &w \end{pmatrix},
\end{equation}
and this vanishes, so that the flag is not defined, when both
\begin{enumerate}
 \item $(\beta, u, w)=(\beta, 0, 0)$ solves $f(u)+\beta u(u-1)(u-p_0)-(m_\infty u^2 + w)^2=0$ and 
 \item $\frac{w^2-f(0)}{u}=0$.
\end{enumerate}
Condition (1) is equivalent to $f(u)=0$, i.e., the complex mass $m_0=0$. For Condition (2), observe that by L'H\^{o}pital's rule, $\lim_{u \to 0}\frac{w^2-f(0)}{u}$ 
vanishes only if $f(z) + \beta z(z-1)(z-p_0) \in \C[z]$ has a double root at $z=0$. These are the precisely two conditions defining $\mathcal{B}^0(\mathbf{m})$ in \eqref{eq:B0}.

A similar calculation gives the same result at the other residues.
\end{proof}

\subsection{Spectral Data}

 In Proposition \ref{prop:higgsbundles}, we saw that the fiber $\Sigma_\beta$ over $\beta \in \mathbb{C}$ inside of $\mathbb{CP}^2$ is cut out by the equation $F(u:x:w)=0$. It is helpful to note that 
\begin{equation} \frac{1}{x^3} F(u:x:w)= f_{\mathbf{m}}\left(\frac{u}{x}\right) + \beta \frac{u}{x}\left(\frac{u}{x}-1\right)\left(\frac{u}{x} - p_0\right) -\left(m_\infty \left(\frac{u}{x}\right)^2  + \left(\frac{w}{x}\right)\right)^2 \label{eq:F2}\end{equation}
 There is a clear map $\pi: \mathbb{CP}^2-\{(0:0:1)\} \to \mathbb{CP}^1; (u:x:w) \mapsto (u:x)$.
 \begin{lem}\label{lem:pi}
 There is a holomorphic map $\pi:\Sigma_\beta \to \CP^1$ which is generically $2:1$.
 \end{lem}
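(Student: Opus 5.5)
The map in question is the restriction to $\Sigma_\beta$ of the linear projection $\pi:\mathbb{CP}^2\setminus\{(0:0:1)\}\to\CP^1$, $(u:x:w)\mapsto(u:x)$. So the proof has two parts: check that the center of projection $(0:0:1)$ does not obstruct holomorphicity on $\Sigma_\beta$, and count the points in a generic fiber.

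\textbf{Behaviour at the center of projection.} First I would evaluate \eqref{eq:projective} at $(0:0:1)$. Every term of $F$ other than $-w^2x$ contains a factor of $u$, or else is $x^3$ times a polynomial in $u/x$ after clearing denominators. Hence $F(0,0,1)=0$, so $(0:0:1)\in\Sigma_\beta$; this is the small-stratum point of Proposition~\ref{prop:higgsbundles}.

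Near this point set $w=1$, giving the affine equation
\[
G(u,x):=F(u,1,x)=(\text{terms of degree}\geq 3\text{ in } u,x) - 2m_\infty u^2 - x = 0 .
\]
The term $-x$ is linear with nonzero coefficient, so $\partial G/\partial x\neq 0$ at the origin. Thus $\Sigma_\beta$ is smooth there, and $u$ is a local coordinate with $x=-2m_\infty u^2+O(u^3)$.

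In this chart the projection reads $(u:x)=(1:x/u)$, and $x/u=-2m_\infty u+O(u^2)$ is holomorphic in $u$. So $\pi$ extends holomorphically across $(0:0:1)$, sending it to $(1:0)=\infty$. This is the step needing care, since the map is a priori undefined there. Away from $(0:0:1)$, $\pi$ is the restriction of a holomorphic map, so it is holomorphic on all of $\Sigma_\beta$.

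\textbf{Degree count.} Fix a generic $(u_0:x_0)$ with $x_0\neq0$, normalized to $x_0=1$. The fiber is cut out by \eqref{eq:cubic} as an equation in $w$:
\[
(m_\infty u_0^2+w)^2 = f_{\mathbf m}(u_0)+\beta u_0(u_0-1)(u_0-p_0).
\]
This is a quadratic in $w$ with leading coefficient $1$. It has two distinct roots exactly when the right-hand side is nonzero, which holds for all but finitely many $u_0$ because $q$ is not identically zero (indeed not a perfect square, as $\beta\in\cB^0(\mathbf m)$). Hence the generic fiber has exactly two points. This matches the fact that the cubic curve $\Sigma_\beta$, meeting the lines through $(0:0:1)$, has residual intersection $3-1=2$.

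Finally, for $\Sigma_\beta$ to be a genuine curve one may want irreducibility. This also follows from $q$ not being a perfect square: otherwise the quadratic in $w$ would split into two factors over $\C(u)$. Since $\pi$ is then a nonconstant holomorphic map of degree $2$, the statement follows.
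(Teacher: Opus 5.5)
Your proposal is correct and follows essentially the same route as the paper. You restrict the projection $(u:x:w)\mapsto(u:x)$, extend it across the center $(0:0:1)\in\Sigma_\beta$ by sending that point to $(1:0)$, and read off the generic $2:1$ behaviour from the fact that each fiber over $(u_0:1)$ is cut out by a quadratic in $w$ whose discriminant vanishes only at the zeros of $f_{\mathbf{m}}(u_0)+\beta u_0(u_0-1)(u_0-p_0)$. Your treatment of the center is slightly cleaner than the paper's L'H\^{o}pital argument: solving for $x=-2m_\infty u^2+O(u^3)$ by the implicit function theorem gives smoothness of $\Sigma_\beta$ there and holomorphicity of $\pi$, not merely continuity.
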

 \begin{proof}
 We note that the map $\pi: \mathbb{CP}^2-\{(0:0:1)\} \to \mathbb{CP}^1$ immediately gives a map $\pi: \Sigma_\beta-\{(0:0:1)\} \to \CP^1$. Note, however, that $(0:0:1) \in \Sigma_\beta$. 
 
 The map $\pi$ continuously extends to $\Sigma_\beta$ by defining $\pi(0:0:1)=(1:0)$. To see this, $F(u:x:1)=0$ implicitly defines $u(x)$; by L'H\^{o}pital's rule, $\lim_{x \to 0, u \to 0}\frac{u}{x} = \lim_{x \to 0, u\to 0} \frac{\partial u}{\partial x} = -  \frac{\partial_x F}{\partial_u F}|_{(0:0:1)}=\infty$.

 From \eqref{eq:F2}, it is clear that for $(u:x)$ fixed, $F(u:x:w)=0$ is quadratic in $(w:x)$, i.e. $\pi$ is generically two-to-one; moreover, it is ramified at the four zeros of $f_{\mathbf{m}}(\frac{u}{x}) + \beta \frac{u}{x}(\frac{u}{x}-1)(\frac{u}{x} - p_0)$, counted with multiplicity. Small standard modifications to above argument are made when $x=0$ or $m_\infty = 0$. In particular, if $m_\infty=0$, then $(1:0)$ is a branch point, and the preimage of $\pi^{-1}(1:0)$ consists of the single point $(0:0:1)$; if $m_\infty \neq 0$, then $\pi^{-1}(1:0)$ consists of $(0:0:1)$ and $(m_\infty: 0: \frac{f_3+\beta}{2})$, where $f_3$ is the coefficient of the cubic term in $f_{\mathrm{m}}$.
 \end{proof}

 Now, we can embed $\Sigma_\beta \hookrightarrow K_{\CP^1} \to \CP^1$, by giving the value of the tautological $1$-form $\lambda$.

 \begin{lem}\label{lem:tautological}
 The tautological $1$-form $\lambda$ on $\Sigma_\beta$ is 
\begin{equation}
\lambda =   \frac{m_\infty (\frac{u}{x})^2 + w}{\frac{u}{x}(\frac{u}{x}-1)(\frac{u}{x}-p_0)} d\left(\frac{u}{x}\right).
\end{equation}
 \end{lem}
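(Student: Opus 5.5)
The tautological $1$-form is, by definition, $\lambda = \eta\, dz$ when a point of $\Sigma_\beta$ is viewed as the covector $\eta\,dz \in K_{\CP^1}$ over $z\in\CP^1$. So the plan is to identify, for a point $(u:x:w)$ of the curve $\{F=0\}$, which eigenvalue of the Higgs field it represents. Here the point is regarded as a Higgs bundle via Proposition \ref{prop:higgsbundles}, and spectral correspondence applies. Then read off the coefficient of $d(u/x)$.

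Concretely, I would work in the affine chart $x=1$ and use the big-stratum Higgs field $\varphi_{\beta,u,w}$ from \eqref{eq:bigstrata}. The spectral datum of $(\cE,\varphi)$ is the eigenline sheaf. The natural eigenvector to test is the one visible in the lower-left entry $z-u$: at $z=u$ the lower-left entry vanishes. Moreover, the upper-right entry equals $\bigl(f(z)+\beta z(z-1)(z-p_0)-(m_\infty z^2+w)^2\bigr)/(z-u)$, and it remains finite at $z=u$ by the cubic equation \eqref{eq:cubic}. Hence at $z=u$ the matrix is upper triangular, and $e_1$ (equivalently the cokernel direction) is an eigenvector with eigenvalue
\[
 -\frac{m_\infty u^2+w}{u(u-1)(u-p_0)}\,dz \quad\text{or}\quad \frac{m_\infty u^2+w}{u(u-1)(u-p_0)}\,dz,
\]
according to which of the two diagonal entries is chosen. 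I would pick the sign consistently with the convention used for the spectral line bundle; the cokernel of $\varphi-\eta$ corresponds to the entry $+(m_\infty z^2+w)$. I would also double-check that squaring gives $q(u)$ by \eqref{eq:cubic}, so the point $(u,\eta)$ indeed lies on the spectral curve $\eta^2=q$. This shows that the map $(u,w)\mapsto \bigl(u,\tfrac{m_\infty u^2+w}{u(u-1)(u-p_0)}du\bigr)$ is the embedding $\Sigma_\beta\hookrightarrow K_{\CP^1}(D)$ on the affine part, and hence that $\lambda=\frac{m_\infty u^2+w}{u(u-1)(u-p_0)}\,du$ there. Homogenizing with $u\mapsto u/x$ and $w\mapsto w/x$ (consistent with \eqref{eq:F2}) gives the stated formula.

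The remaining step is to confirm that this expression is the correct extension over the points at infinity of the chart, namely $(0:0:1)$ and, if $m_\infty\ne0$, $(m_\infty:0:\tfrac{f_3+\beta}{2})$. Since both sides are meromorphic on the compact curve and agree on a dense open set, they agree everywhere. It then suffices to note, as a sanity check, that the formula has simple poles at $u/x\in D$ with residues $\pm m_p$, matching Lemma \ref{residues}. The main subtlety is the sign/orientation convention: whether the spectral point records the eigenvalue on the eigenvector or on the cokernel. I would fix it by checking the residue at $\infty$, where $F_\infty=\langle e_1\rangle$ must carry eigenvalue $m_\infty$.
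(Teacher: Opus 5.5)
Your proposal is correct, and its decisive step is the paper's entire proof. The paper notes that $\lambda^2=-\det\varphi=q(u/x)\,d(u/x)^2$ and that the relation $F=0$, in the form \eqref{eq:F2}, exhibits the stated expression as a square root; this is exactly your check via \eqref{eq:cubic}. The paper then picks that root without comment, in effect defining the embedding $\Sigma_\beta\hookrightarrow\mathrm{Tot}(K_{\CP^1}(D))$ over $\pi$ by the formula. Your spectral-correspondence layer is additional. It explains why the Higgs bundle $\varphi_{\beta,u,w}$ should correspond to a point over $z=u$, using the triangular form of $\varphi_{\beta,u,w}$ there, and it ties the choice of sheet to a convention. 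To make that layer rigorous you would show that the image of the line spanned by $e_1$ in the cokernel of $\varphi-\eta$ vanishes exactly at $\bigl(u,\tfrac{m_\infty u^2+w}{u(u-1)(u-p_0)}\,du\bigr)$ and at one point over $\infty$ independent of $(u,w)$. The lemma as stated does not need this.

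Two corrections. First, the sign is a genuine convention: the two roots differ by the sheet involution $\eta\mapsto-\eta$, and both cover $\pi$ and square to $q$. Your proposed test at $\infty$ cannot decide between them. The reason is that $e_1$ is the $m_\infty$-eigenvector of $\mathrm{Res}_\infty\varphi_{\beta,u,w}$ for every $(u,w)$. Moreover, for either sign (when $m_\infty\neq0$) the two points of $\{F=0\}$ over $u/x=\infty$ carry residues $+m_\infty$ and $-m_\infty$; changing the sign merely swaps them. What selects the printed sign is the cokernel convention you name. At $z=u$ with the $+$ sign, the image of $\varphi-\eta$ is $\langle e_1\rangle$, whereas $e_1$ is a kernel vector only for the $-$ sign. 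So your phrase ``$e_1$ (equivalently the cokernel direction)'' conflates the two sheets. (The paper itself writes $\tau=-\frac{m_\infty u^2+w}{z(z-1)(z-p_0)}\,dz$ in the proof of Proposition \ref{prop:exteriorintersection}, but computes residues there with the $+$ sign.)

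Second, homogenizing produces $w/x$, not $w$, in the numerator. The printed formula is not homogeneous of degree $0$ in $(u:x:w)$ and should be read with $w/x$, which agrees with it on the chart $x=1$.
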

\begin{proof}
Clearly, \begin{equation}\lambda^2 = -\det \varphi = \frac{f_{\mathrm{m}}(\frac{u}{x})+ \beta \frac{u}{x}(\frac{u}{x}-1)(\frac{u}{x}-p_0)}{(\frac{u}{x})^2 (\frac{u}{x}-1)^2(\frac{u}{x}-p_0)^2} d\left(\frac{u}{x}\right)^2.\end{equation}
Then $F(u:x:w)$ gives us a square root of this.
\end{proof}

Lastly, we conclude this entire discussion, by considering the limiting Hitchin fiber as $\beta \to \infty$. (This is not to be confused with the fiber at $\beta=\infty$ in the compactification of an ALG instanton as a rational elliptic surface, which here would be $I_0^*$.)

\begin{prop}[Limiting Hitchin Fiber]\label{prop:jinv}
The limiting torus fiber as $|\beta| \to \infty$ is isomorphic to $\C/(\Z \oplus \tau_0 \Z)$ where, 
in terms of the elliptic modular lambda function, $p_0=\lambda(\tau_0)$. 
Moreover,  the $\tau$ invariant of the fiber over any $\beta$ in the Hitchin base has asymptotics $\tau \sim \tau_0 + O(\frac{1}{\beta})$ 
as $\beta \to \infty$.
\end{prop}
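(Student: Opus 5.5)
The plan is to work directly with the affine model of the spectral curve. By Lemma \ref{lem:tautological} and the proof of Lemma \ref{invariant_1-form}, the Hitchin fiber over $\beta$ is the compactification of
\[
\tilde w^2 = \tilde f_\beta(z) := f_{\mathbf{m}}(z) + \beta z(z-1)(z-p_0),
\]
which is a genus one curve for $\beta\in\mathcal{B}'(\mathbf{m})$. Its $\tau$-invariant is determined by the cross-ratio of the four branch points of the double cover $z\mapsto$ (roots of $\tilde f_\beta$ together with $\infty$, or the four roots when $m_\infty\neq 0$). So the strategy has three steps: locate the branch points asymptotically, compute their cross-ratio, and then pass from the cross-ratio to $\tau$ via the modular $\lambda$ function.

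\textbf{Step 1: locate the branch points.} Rescale by $\beta$. The polynomial $\beta^{-1}\tilde f_\beta(z) = z(z-1)(z-p_0) + \beta^{-1} f_{\mathbf{m}}(z)$ is a small perturbation of the cubic $z(z-1)(z-p_0)$. There are two cases.
\begin{itemize}
\item If $m_\infty=0$, the perturbation is a cubic and $\infty$ is a branch point.
\item If $m_\infty\neq0$, the quartic term $\beta^{-1}m_\infty^2 z^4$ contributes a fourth root near $z\approx -\beta/m_\infty^2\to\infty$.
\end{itemize}
In either case the three finite roots $0,1,p_0$ are simple, so the implicit function theorem (equivalently Rouch\'e's theorem on small discs) gives roots
\[
e_1=O(\beta^{-1}),\qquad e_2=1+O(\beta^{-1}),\qquad e_3=p_0+O(\beta^{-1}),
\]
depending analytically on $\beta^{-1}$. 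The fourth branch point $e_4$ is either $\infty$ or satisfies $1/e_4=O(\beta^{-1})$. Concretely, I would set $s=1/e_4$ and apply the implicit function theorem to $s^4\beta^{-1}\tilde f_\beta(1/s)=0$ near $s=0$, using $m_\infty\neq0$.

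\textbf{Step 2: compute the cross-ratio.} The cross-ratio $(e_1,e_2;e_3,e_4)$ is a holomorphic function of $\beta^{-1}$, since $e_4=\infty$ is harmless in the coordinate $s$. At $\beta^{-1}=0$ it equals the cross-ratio of $(0,1,p_0,\infty)$, which is $p_0$ with the paper's conventions, up to the usual $S_3$ relabeling. Therefore
\[
\lambda(\tau_\beta)=p_0+O(\beta^{-1}).
\]
The limiting curve $y^2=x(x-1)(x-p_0)$ is the Legendre curve, whose period ratio $\tau_0$ satisfies $\lambda(\tau_0)=p_0$. This identifies the limiting torus fiber with $\C/(\Z\oplus\tau_0\Z)$.

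\textbf{Step 3: invert $\lambda$.} Since $p_0\notin\{0,1,\infty\}$, the map $\lambda:\mathbb H\to\C\setminus\{0,1\}$ is a local biholomorphism. So on a neighborhood of $p_0$ there is a holomorphic branch of $\lambda^{-1}$ with $\lambda^{-1}(p_0)=\tau_0$. Composing gives $\tau_\beta=\tau_0+O(\beta^{-1})$.

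The main obstacle is making this branch choice consistent. The basis of $H_1$ undergoes monodromy as $\beta$ circles $\infty$, and a priori $\tau$ is only defined modulo $\Gamma(2)$ or $PSL(2,\Z)$. The way to handle this is to notice that the branch points are single-valued analytic functions of $\beta^{-1}$ near $0$. With the labeling fixed, the monodromy is then trivial in $\Gamma(2)$-terms, so the Legendre periods
\[
\int_{e_1}^{e_2}\frac{dz}{\tilde w},\qquad \int_{e_2}^{e_3}\frac{dz}{\tilde w}
\]
vary analytically on $|\beta|\gg1$ up to an overall sign. One caveat: at $\infty$ the $I_0^*$ monodromy is $-\mathrm{Id}$, which acts trivially on $\tau$. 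So $\tau$ itself, not merely its class, is well defined and analytic near $\beta=\infty$. Finally, I would check that the normalization $\omega=dz/\tilde w$ from Lemma \ref{invariant_1-form} agrees with the period-ratio convention, so that the result is stated as $\tau$ rather than $-1/\tau$.
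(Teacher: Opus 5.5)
Your proposal is correct and follows essentially the same route as the paper: rescale by $\beta$ to see the spectral curves as a $1/\beta$-perturbation of the Legendre curve $y^2=u(u-1)(u-p_0)$, note that the branch points are analytic in $1/\beta$, deduce that the cross-ratio is $p_0+O(1/\beta)$, and invert $\lambda$ locally near $p_0$. Your handling of the fourth branch point when $m_\infty\neq 0$ via $s=1/e_4$, and your single-valuedness and monodromy remark, fill in details that the paper's proof leaves implicit. (The paper's root expansion $r_p(\beta)$ only makes literal sense for finite $p$.)
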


\begin{proof}The spectral curve 
\[
(\lambda')^2 = f(u) + \beta u (u-1)(u-p_0)
\]
is a branched double cover of $\CP^1_u$.  Changing coordinate by $\lambda' \mapsto \sqrt{\beta} y$ gives the equivalent form
\[ 
y^2 = \frac{1}{\beta} f(u) + u (u-1)(u-p_0).
\]
Thus, the family of spectral curves is a perturbation with small parameter $\frac{1}{\beta}$ of the spectral curve 
\[
y^2 =  u (u-1)(u-p_0),
\]
which is branched at $0, 1, p_0, \infty$.  
The parameter $\tau$ is determined in $\mathbb H^2/\mathrm{SL}(2,\Z)$ by computing $\lambda^{-1}(p)$, where $p$ is the cross ratio of the 
branching points and $\lambda$ is the modular lambda function.  The roots of $\frac{1}{\beta} f(u) + u (u-1)(u-p_0)$ are distinct when
$\beta$ is sufficiently small, hence analytic in $\frac{1}{\beta}$.  In fact, if $r_p(\beta)$ is the root converging to $p$, for $p \in D$,

then 
\[
r_p(\beta) = p - \frac{1}{\beta} f(p) \cdot \left.\frac{(u-p)}{u (u-1)(u-p_0)} \right|_{u=p} + \mathcal O\left(\frac{1}{\beta^2}\right).
\]
Recall that $f(p)$ is proportional to $m_p^2$, the square of the associated complex mass.
Since $\lambda$ is holomorphic, $p-p_0 = \mathcal O(\frac{1}{\beta})$ implies $\tau - \tau_0 = \mathcal O(\frac{1}{\beta})$ as $\beta \to \infty$.
\end{proof}

\section{Computation of integral of \texorpdfstring{$\Omega_I$}{the holomorphic symplectic form}}\label{sec:TorelliOmegaI}

The preferred basis of $H_2(\cM,\Z)$ depends on parabolic weight chamber, and consequently the intersection numbers
appearing in Corollary \ref{cor:intersection_numbers} are chamber dependent.
In this section, we compute intersection numbers---and hence the integral of $\Omega_I$--- in two chambers: one exterior chamber in Proposition \ref{prop:exteriorintersection} and one interior chamber in Proposition \ref{prop:interiorintersection}.  Looking forward, the statement in \emph{all} chambers appears in Proposition \ref{prop:integralofOmegaI} in Section \ref{sec:Torelliproof}; the proof 
follows directly from formula in a \emph{single} chamber. However, we separately compute the intersection numbers in these two types of chambers because the behavior is so different.

\subsection{Computation of \texorpdfstring{$\Omega_I$}{the holomorphic symplectic form} in an exterior chamber}
 If $D=\{p_1, p_2, p_3, p_4\}$, given $I \subset \{1, 2, 3, 4\}$, let $D_I = \{p_i: i \in I\}$.
We consider the exterior chamber labelled by odd order set $D_{I_0}=\{0, 1, p_0\}$. This is 
 of type $E2$ with distinguished point $\infty \in D$.
  If $D=\{p_1, p_2, p_3, p_4\}$, given $I \subset \{1, 2, 3, 4\}$, let $D_I = \{p_i: i \in I\}$.
We will use the convention that 
\begin{equation}
\begin{array}{ccc}
D_I & & p \in D\\
\hline
\{1, p_0 \} &\leftrightarrow& 0 \\
\{0, p_0 \} &\leftrightarrow& 1 \\
\{0, 1 \} &\leftrightarrow& p_0 \\
\{0, 1 ,p_0, \infty\} &\leftrightarrow& \infty 
\end{array}
\end{equation}

\begin{prop}\label{prop:exteriorintersection}
In the exterior chamber labelled by $D_{I_0} = \{0, 1, p_0\}$ (see Section \ref{sec:exteriorchambers}) 
  \[\int_{\CP^1_p} \Omega_I = \begin{cases} -4 \pi  m_p &\quad p \in \{0, 1, p_0\}\\
  4 \pi  m_p & \quad p=\infty\end{cases},  \]
  i.e. $\int_{\CP^1_J} \Omega_I = 2 \pi (M_J -M_{I_0}).$
 \end{prop}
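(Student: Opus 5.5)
The plan is to combine Corollary \ref{cor:intersection_numbers} with the explicit description of $\mathcal{M}$ from Section \ref{sec:concrete}. By that corollary,
\[
\int_{\CP^1_p}\Omega_I=-2\pi\sum_{j}m_j\,I(\CP^1_p,\Sigma_j),
\]
so the claim is equivalent to the intersection numbers
\[
I(\CP^1_p,\Sigma_j)=2\delta_{pj}\ \ (p\in\{0,1,p_0\}),\qquad I(\CP^1_\infty,\Sigma_j)=-2\delta_{\infty j}.
\]
As a consistency check, for $p=0$ (so $J=\{1,p_0\}$) we have $M_J-M_{I_0}=-2m_0$, which matches $-4\pi m_0$.

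Intersection numbers are deformation invariant within the local system $\mathcal{H}$ over $\mathcal{R}$. I will therefore fix $\bsa$ in the chamber E2$_\infty$ and take $\mathbf{m}$ generic, with all $m_j\neq0$, pairwise generic sums, and $m_\infty\neq 0$. In this regime the coordinates $(\beta,u,w)$ of Proposition \ref{prop:higgsbundles} are best adapted (Remark \ref{rem:preference}). I will then use the second form of the corollary,
\[
I(\CP^1_p,\Sigma_j)=I(\tau_p^-,\Sigma_j)-I(\tau_p^+,\Sigma_j).
\]
All of $\tau_p^\pm,\sigma_j^\pm$ are complex curves in $\mathcal{M}(\bsa,\mathbf{m})$ (the $\tau$'s by the Bia{\l}ynicki--Birula discussion in Section \ref{sec:BBstratification}, the $\sigma$'s by construction). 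Hence each local intersection is positive, and every intersection number reduces to a count of points.

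The first concrete step is to locate the polar sections in coordinates. By Lemma \ref{lem:tautological}, the residue of $\lambda$ at $u=0$ is $w/p_0$, so $\sigma_0^\pm=\{u=0,\ w=\pm m_0p_0\}$; the points over $1$ and $p_0$ are similar. Over $\infty$, the two preimages are $(0:0:1)$, the small stratum, and $(m_\infty:0:\tfrac{f_3+\beta}{2})$, the extra point, and I will sort out which carries residue $+m_\infty$. Corollary \ref{cor:flags} then gives the flags at each polar point. At $\sigma_0^+$ the flag $F_0$ stays generic. At $\sigma_0^-$ the numerator $-m_0p_0+w$ becomes $-2m_0p_0$ while $u\to0$, so $F_0$ degenerates to $F_\infty=\langle e_1\rangle$.

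The second step is to characterize $\tau_p^\pm$ by flag and extension conditions. The upward flow of the exterior fixed point labelled $J$ consists of Higgs bundles containing the parabolic subbundle $\cL_2(\beta_2)$, of the degree in table \eqref{eq:tablefixedpoints}, through the flags $F_i$ for $i\in J$. The closure of the flow of the wobbly point $q_p^-$ is described similarly, with the central-sphere data from Meneses. In the frame of \eqref{eq:bigstrata}, such a condition becomes an explicit algebraic condition in $(u,w)$ on each fibre; for instance, for $J=\{1,p_0\}$ it is the existence of a constant line through $F_1$ and $F_{p_0}$. I will substitute the polar points into these conditions and solve for the finitely many $\beta$ where they hold. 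The expected outcome is that $\sigma_j^\pm$ meets $\tau_p^\pm$ only for $j=p$, with the four counts $I(\tau_p^-,\sigma_p^+)$, $I(\tau_p^-,\sigma_p^-)$, $I(\tau_p^+,\sigma_p^+)$, $I(\tau_p^+,\sigma_p^-)$ combining to give $2$ for $p$ finite. For $p=\infty$ the roles of $\sigma^\pm$ should be reversed, because $F_\infty$ is the frame-distinguished flag; this produces the sign $-2$.

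The main obstacle is this second step. I need to describe $\tau_p^-$ correctly: it is the closure in $\mathcal{P}_0(\bsa)$ intersected with the fibre, not the closure of the intersection (cf. Figure \ref{fig:tau}), so extra components near $\beta\to\infty$ and at the small stratum must be tracked. I also need to confirm transversality and that no intersections escape to infinity, where the limiting behaviour $p_j^\pm\to\tilde p_j$ is relevant. If the direct flag computation for $\tau_p^-$ becomes unwieldy, I would fall back on computing $I(\CP^1_p,\Sigma_j)$ directly, using $\mathbf{m}\to0$ along rays. This limit is homologically constant by Proposition \ref{prop:localsystem}, and in it $\sigma_j^\pm$ collapse onto $\mathcal{H}_j$, which reduces the computation to the known nilpotent-cone intersection form \eqref{eq:intersectionform}. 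The factor $2$ would then come from the two sheets $\sigma_j^+$ and $\sigma_j^-$ each contributing $\pm1$ with opposite orientation.
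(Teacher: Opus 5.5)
Your use of Corollary~\ref{cor:intersection_numbers}, the check against $M_J-M_{I_0}$, and the location of the polar sections all match the paper. Your second step, however, would not go through, for two reasons.

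First, the condition you propose for $\tau_p^+$ (a copy of $\cL_2(\beta_2)$ through the flags $F_i$, $i\in J$) is the description of the nilpotent extensions in Proposition~\ref{prop:extension}. That describes the exterior sphere, which is the closure of the upward flow of the wobbly point $q_p^-$, not the upward flow of $q_p^+$. In the chart of Proposition~\ref{prop:higgsbundles}, conjugating $\zeta\varphi$ by $\mathrm{diag}(\zeta^{1/2},\zeta^{-1/2})$ fixes $u$ and scales $w$ and $\bfm$ by $\zeta$. As $\zeta\to0$ it keeps a flag equal to $\langle e_1\rangle$ and pushes every other flag to $\langle e_2\rangle$. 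So the limit is the fixed point with parameter $u$, and it is the exterior one exactly when $F_u=\langle e_1\rangle=F_\infty$, i.e.\ a copy of $\cL_1$ through the flags indexed by $J^c$. For instance, your condition $F_1=F_{p_0}$ fails on $\sigma_0^-$ unless $m_0+m_\infty=m_1+m_{p_0}$.

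Second, and more seriously, you expect the $\tau$'s and $\sigma$'s to meet in finitely many points, each contributing positively. In this chamber they do not meet transversally at all. You already observed that $F_0=\langle e_1\rangle=F_\infty$ along all of $\sigma_0^-$, which says that the whole section flows to the exterior fixed point. The paper's proof shows $\tau_p^+=\sigma_p^-$ and $\tau_p^-=\sigma_p^+$ for $p\in\{0,1,p_0\}$, and $\tau_\infty^\pm=\sigma_\infty^\pm$ (the small stratum flows to the exterior point over $\infty$). So ``solving for the finitely many $\beta$'' would return entire sections, and positivity of intersections of distinct complex curves is simply not available.

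The missing idea is to identify the polar sections with the upward-flow cycles rather than intersect them. Then $S_p=\tau_p^--\tau_p^+=\Sigma_p$ for finite $p$, and $S_\infty=-\Sigma_\infty$. The off-diagonal entries vanish because distinct polar sections are disjoint. The diagonal entries are $\pm\Sigma_p\cdot\Sigma_p$, the self-intersection of a $(-2)$-sphere formed as the difference of two disjoint sections. This, not a count of transverse points, is where the $2$ comes from, and the relation $S_\infty=-\Sigma_\infty$ is where the opposite sign at $\infty$ comes from.

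Your fallback has the same defect. Along $t\bfm$ with $t\to0$, the two sheets do not both collapse onto $\mathcal H_j$: the one equal to $\tau_j^-$ tends to $\mathcal H_j\cup S_j$, and the other tends to $\mathcal H_j$. Hence $\Sigma_j\to\pm S_j$, and the $2$ is $|S_j\cdot S_j|$ from \eqref{eq:intersectionform}. If both sheets collapsed onto $\mathcal H_j$, their contributions would cancel rather than add up to $2$.
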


 \begin{proof}
 We wish to employ Corollary \ref{cor:intersection_numbers}, and therefore we need to determine the image of the polar sections $\sigma_p^\pm$ under the $\C^\times$-flow in the limit $\zeta \to 0$; then, for each polar section, we'll determine the intersection of this limiting set with the exterior and interior fixed points corresponding to points of $D$. This gives us the intersection numbers between $S^2_p = \tau_p^- - \tau_p^+$ and $\Sigma_{p'} = \sigma^+_{p'} - \sigma^-_{p'}$.

Recalling the fiber $\mathrm{Hit}^{-1}(\beta)$ in Lemma \ref{lem:pi},
the map from the spectral curve to base curve is 
\begin{align}
\Sigma_\beta &\rightarrow \mathbb{CP}^1 \\ \nonumber 
(u, w, x) &\mapsto \frac{u}{x}.
\end{align}
The tautological $1$-form in Lemma \ref{lem:tautological} is 
\[ \tau = -\frac{(m_\infty u^2 + w)}{z(z-1)(z-p_0)} dz.\]

We observe that the eight polar sections $\sigma^\pm_p$ are:
\begin{itemize}
\item[$\sigma^+_\infty$ \quad ] This is the small stratum, i.e. underlying bundle type $\cO(-1) \oplus \cO(-3)$. 

\item[$\sigma^-_\infty$ \quad ]  This section corresponds consists of the additional point of the big stratum corresponding to $(u: x: w)=(m_\infty: 0: \frac{f_3 + \beta}{2 m_\infty})$ for $\beta$ arbitrary.

\item [$\sigma^+_0$ \quad ] This section within the big stratum  ($x=1$) corresponds to $u=0, w=m_0p_0, \beta \in \C$. By way of explanation for the value of $w$, note that $(m_\infty u^2 + w)^2\Big|_{u=0}=f_{\mathbf{m}}(u) + \beta u(u-1)(u-p_0) \Big|_{u=0} = f_{\mathbf{m}}(0) = m_0^2 p_0^2$, hence $w=\pm m_0p_0$. Then, the residue of $\tau$ is $\frac{m_0p_0}{(-1)(-p_0)} = m_0$. 

\item [$\sigma^-_0$ \quad ] This section within the big stratum  ($x=1$) corresponds to $u=0, w=-m_0p_0, \beta \in \C$. 

\item[$\sigma^+_1$ \quad ]  This section within the big stratum  ($x=1$) corresponds to $u=1, w=-m_\infty + m_1(1-p_0), \beta \in \C$. By way of explanation for the value of $w$, note that $(m_\infty u^2 + w)^2\Big|_{u=1}=f_{\mathbf{m}}(u) + \beta u(u-1)(u-p_0) \Big|_{u=1} = f_{\mathbf{m}}(1) = m_1^2(1-p_0)^2$, hence $w=-m_\infty \pm m_1(1-p_0)$. Then, the residue of $\tau$ is $m_1$.

\item[$\sigma^-_1$ \quad ] This section within the big stratum  ($x=1$) corresponds to $u=1, w=-m_\infty - m_1(1-p_0), \beta \in \C$. 

\item[$\sigma^+_{p_0}$ \quad ] This section within the big stratum ($x=1$) corresponds to $u=p_0, w=-m_\infty p_0^2 + m_p p_0(p_0-1), \beta \in \C$. By way of explanation for the value of $w$, note that $(m_\infty u^2 + w)^2\Big|_{u=p_0}=f_{\mathbf{m}}(u) + \beta u(u-1)(u-p_0) \Big|_{u=p_0} = f_{\mathbf{m}}(p_0) = p_0^2(p_0-1)^2 m_{p_0}^2$, hence $w=-m_\infty p_0^2 \pm m_{p_0} p_0(p_0-1)$. Then, the residue of $\tau$ is $m_{p_0}$.

\item[$\sigma^-_{p_0}$ \quad ] This section within the big stratum ($x=1$) corresponds to $u=p_0, w=-m_\infty p_0^2 - m_p p_0(p_0-1), \beta \in \C$. 

\end{itemize}

In the exterior chamber of type E2 labelled by $D_{I_0}=\{0, 1, p_0\}$, 
the associated subsets $\{D_I: I \in \mathcal{I}\}$ are 
$$\{\{0, 1, p_0, \infty\}, \{0, 1\}, \{0, p_0\}, \{1, p_0\}\}$$
 and hence $K_I>0$ for each $I \in \mathcal{I}$. In this chamber, all but one point of the nilpotent cone has underlying bundle type $\cO(-2) \oplus \cO(-2)$, as shown in Figure \ref{fig:assemblyE}.
 We refer the reader to the description of all $\C^\times$-fixed points in \eqref{eq:tablefixedpoints}. 

We chose this exterior chamber because the choices of coordinates we made when we wrote the Higgs fields in Proposition \ref{prop:Hitchinbase} match up nicely. I.e. by taking $g_\zeta=\mathrm{diag}(\zeta^{1/2}, \zeta^{-1/2})$ the following limit exists and is $\boldsymbol{\alpha}$-stable $\C^\times$-fixed point: $\lim_{\zeta \to 0} g_\zeta \zeta\varphi g_{\zeta}^{-1}$.
Since the $\C^\times$-flow preserves $u$, the only subtlety is determining whether $\sigma^\pm_u$ lands on the interior or exterior $\C^\times$-fixed point corresponding to $u$ on the central sphere. To determine this, we consider the limit of the flags (described in Corollary \ref{cor:flags}) under the $\C^\times$-flow. 

From the description of the $\C^\times$-fixed points, observe that on the central sphere, the flags are given by $F_0=F_1=F_{p_0}= \langle e_2 \rangle$ and $F_\infty = \langle e_1 \rangle$. The exterior $\C^\times$-fixed point corresponding to $u \in \{0, 1, p_0\}$ is obtained from changing the flag $F_u$ from $\langle e_2 \rangle$ to $\langle e_1 \rangle$ and leaving all the other flags the same. The exterior $\C^\times$-fixed point corresponding to $u=\infty$ has bundle type $\cE=\cO(-1) \oplus \cO(-3)$. 

Altogether we obtain:

 \begin{itemize}
  \item  $\sigma_0^+$ all flows to the interior $\C^\times$-fixed point with $u=0$. One can indeed check that the flags are $F_0=F_1=F_{p_0}=\langle e_2\rangle$ and $F_\infty=\langle e_1\rangle$.
   \item $\sigma_0^-$ all flows to the the exterior $\C^\times$-fixed point with $u=0$. One can indeed check that $F_1=F_{p_0}=\langle e_2\rangle$ and $F_0=F_\infty=\langle e_1\rangle$.
   \item $\sigma_1^+$ all flows to the interior $\C^\times$-fixed point with $u=1$. One can indeed check that the flags are $F_0=F_1=F_{p_0}=\langle e_2\rangle$ and $F_\infty=\langle e_1\rangle$.
    \item$\sigma_1^-$ all flows to the exterior $\C^\times$-fixed point with $u=1$. One can indeed check that $F_0=F_{p_0}=\langle e_2\rangle$ and $F_1=F_\infty=\langle e_1\rangle$.  
       \item $\sigma_{p_0}^+$ all flows to the interior $\C^\times$-fixed point with $u=p_0$. One can indeed check that the flags are $F_0=F_1=F_{p_0}=\langle e_2\rangle$ and $F_\infty=\langle e_1\rangle$. 
        \item $\sigma_{p_0}^-$ all flows to the exterior $\C^\times$-fixed point with $u=p_0$. One can indeed check that $F_0=F_1=\langle e_2\rangle$ and $F_{p_0}=F_\infty=\langle e_1\rangle$. 
        \item $\sigma_{\infty}^+$ all flows to the exterior $\C^\times$-fixed point at $u=\infty$.
        \item $\sigma_{\infty}^-$ all flows to the interior $\C^\times$-fixed point at $u=\infty$. 
        \end{itemize}
\smallskip
The computations above show that $\tau_p^+ = \sigma_p^-$ and $\tau_p^- = \sigma_p^+$ for $p \in \{ 0,1,p_0\}$ as well as $\tau_\infty^+ = \sigma_\infty^+$ and $\tau_\infty^-= \sigma_\infty^-$.  
For $\Sigma_p = \sigma_p^+ - \sigma_p^-$, note
therefore that  intersection numbers are given as follows:
\begin{center}
\begin{tabular}{l c r}
	
\begin{tabular}{  c | c | c |c|c }
    I & $\Sigma_0$ & $\Sigma_1$ & $\Sigma_{p_0}$ & $\Sigma_\infty$ \\ \hline
    $\tau_0^+$ & -1 & 0 & 0 & 0\\ \hline
    $\tau_1^+$ & 0 & -1 & 0 & 0\\ \hline
    $\tau_{p_0}^+$ & 0 & 0  & -1 & 0\\ \hline
    $\tau_\infty^+$ & 0 & 0  & 0 & 1\\
\end{tabular}
&  \quad and \quad &

\begin{tabular}{  c | c | c |c|c }
    I & $\Sigma_0$ & $\Sigma_1$ & $\Sigma_{p_0}$ & $\Sigma_\infty$ \\ \hline
    $\tau_0^-$ & 1 & 0 & 0 & 0\\ \hline
    $\tau_1^-$ & 0 & 1 & 0 & 0\\ \hline
    $\tau_{p_0}^-$ & 0 & 0  & 1 & 0\\ \hline
    $\tau_\infty^-$ & 0 & 0  & 0 & -1\\
\end{tabular}.

\end{tabular}

\end{center}
Hence,  for $S_i = \tau_i^- - \tau_i^+$,
\begin{center}

 \begin{tabular}{  c | c | c |c|c }
    I & $\Sigma_0$ & $\Sigma_1$ & $\Sigma_{p_0}$ & $\Sigma_\infty$ \\ \hline
    $S_0$ & 2 & 0 & 0 & 0\\ \hline
    $S_1$ & 0 & 2 & 0 & 0\\ \hline
    $S_{p_0}$ & 0 & 0  & 2 & 0\\ \hline
    $S_\infty$ & 0 & 0  & 0 & -2 \\
\end{tabular}.
\end{center}
This proves the claim.
 \end{proof}

 \subsection{Computation of the integral of \texorpdfstring{$\Omega_I$}{the holomorphic symplectic form} in an interior chamber}
 
 Note that while the integral of $\Omega_I$ over exterior spheres in exterior chambers are some scalar multiple of a single complex mass, the integral of $\Omega_I$ over exterior spheres in the interior chambers features \emph{all} of the complex masses.  We illustrate this underlying complicated geometry in the case of the interior chamber labelled by even partition set $D_{\mathcal{I}}=\{\{0, 1, p_0, \infty\}, \{1, p_0\}, \{0, p_0\}, \{0, 1\}\}$.
This is the adjacent interior chamber to the exterior chamber considered in the previous section, we have the same identification of subset $D_I$ with points of the divisor.

\begin{prop}\label{prop:interiorintersection} 
In the interior chamber of Type B2 with $\mathcal{I}=\{\{0, 1, p_0, \infty\}, \{0, 1\}, \{0, p_0\}, \{1, p_0\}\}$ then we have  
\begin{equation}
\begin{array}{llll}
\int_{S^2_\infty} \Omega_I &=\int_{S^2_{\{0, 1, p_0, \infty \}}} \Omega_I &= -2 \pi (-m_0 - m_1 - m_{p_0}- m_\infty)&=2 \pi M_{\{0, 1, p_0, \infty\}}\\
\int_{S^2_1}  \Omega_I&= \int_{S^2_{\{0, p_0\}}} \Omega_I &= -2 \pi (-m_0 +m_1 - m_{p_0}+ m_\infty)&=2 \pi M_{\{0,p_0\}}\\
\int_{S^2_0}  \Omega_I&=\int_{S^2_{\{1, p_0\}}} \Omega_I &= -2\pi (m_0 -m_1 - m_{p_0}+ m_\infty)&=2 \pi M_{\{1,p_0\}}\\
\int_{S^2_{p_0}}  \Omega_I&=\int_{S^2_{\{0, 1\}}} \Omega_I &= -2 \pi (-m_0 -m_1 + m_{p_0}+ m_\infty)&= 2 \pi M_{\{0,1\}}
\end{array}
\end{equation}
\end{prop}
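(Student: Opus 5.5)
The plan is to reuse the machinery of the exterior computation (Proposition \ref{prop:exteriorintersection}) and then compare across the wall. By Corollary \ref{cor:intersection_numbers}, it suffices to compute the intersection numbers $I(S_J,\Sigma_p)$, with $\Sigma_p=\sigma_p^+-\sigma_p^-$, for the four exterior spheres $S_J$ of the Type B2 chamber. The polar sections, the tautological form and the coordinates of Proposition \ref{prop:higgsbundles} do not depend on $\bsa$. Only the $\C^\times$-limits of points, and hence the relative cycles $\tau_p^\pm$, change when we cross the wall $L_\infty=1$ separating E2 from B2.

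I would organize the argument homologically rather than recomputing all flows. Parallel transport in the local system $\mathcal H$ (Proposition \ref{prop:localsystem}) identifies the B2 basis with a combination of the E2 basis. This is the standard wall-crossing for the affine $D_4$ configuration described by Meneses. In the exterior chamber, the central sphere $S_{I_0}$ has area $4\pi^2K_{I_0}$, and this shrinks to zero on the wall. Across the wall it is replaced by $-S_{I_0}$. Meanwhile each exterior sphere $S_J$ with $J\in\mathcal I$ becomes $S_J+S_{I_0}$ in the new basis, and the new central sphere is $S_0^{B2}=-S_{I_0}^{E2}$. This is a reflection in the Weyl group of the affine $D_4$ lattice.

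To justify this identification I would check it against the $\omega_I$-periods, which were already computed independently in Proposition \ref{prop:stronglyparabolicvolumesv2} and Corollary \ref{cor:centralvolumes}. In E2 we have $\int_{S_J}\omega_I=4\pi^2(K_J-K_{I_0})$, and in B2 we have $\int_{S_J}\omega_I=4\pi^2K_J$. Both are affine functions on all of $\calR$, so they continue across the wall. Their difference is $4\pi^2K_{I_0}=\int_{S_{I_0}}\omega_I$, which is consistent with $S_J^{B2}=S_J^{E2}+S_{I_0}^{E2}$. Since $\int\Omega_I$ is also determined by the class, I would then compute the central sphere's period in E2. Using $2S_0+\sum S_i=F$, $\int_F\Omega_I=0$, and Proposition \ref{prop:exteriorintersection}, this gives $\int_{S_{I_0}}\Omega_I=-\tfrac12\cdot 2\pi\sum_J(M_J-M_{I_0})$. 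A short computation with $M_J=\sum_{J}m-\sum_{J^c}m$ shows this equals $2\pi M_{I_0}$; in particular $\sum_{J\in\mathcal I}M_J=2M_{I_0}$ holds for this $\mathcal I$. Adding it gives $\int_{S_J^{B2}}\Omega_I=2\pi M_J$, which is the claimed table.

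The main obstacle is rigorously justifying the wall-crossing identification of classes, rather than merely matching areas. As a direct alternative or cross-check, I would redo the flow computation: at the $\C^\times$-fixed points of B2, the central sphere consists of parabolic bundles $\cO(-2)\oplus\cO(-2)$ with $\varphi=0$. For each $\sigma_p^\pm$, I would use the flags of Corollary \ref{cor:flags} with $\zeta\varphi$, $\zeta\to0$, to decide whether its limit is an interior or an exterior fixed point. Now $\sigma_\infty^\pm$, and through the cross-ratio all four flags, enter each sphere's computation, which explains why all masses appear. I expect tracking these limits, especially the small stratum $\sigma_\infty^+$ landing at the unique $\cO(-3)\oplus\cO(-1)$ point, to be the delicate step. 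I would use this direct computation mainly to confirm the signs obtained from the homological argument.
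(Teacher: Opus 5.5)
Your algebra is correct. For $I_0=\{0,1,p_0\}$ one has $\sum_{J\in\mathcal I}M_J=2M_{I_0}$, and the fiber relation gives $\int_{S_{I_0}}\Omega_I=2\pi M_{I_0}$ in the E2 chamber. Applying the reflection $S_J\mapsto S_J+S_{I_0}$, $S_{I_0}\mapsto -S_{I_0}$ to the periods of Proposition~\ref{prop:exteriorintersection} then reproduces the stated table. This is the wall-crossing argument the paper uses later, in Proposition~\ref{prop:integralofOmegaI}.

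As a proof of this proposition, however, it has a gap exactly where the content lies: the claim that the E2 basis, transported into B2, differs from the B2 basis by this reflection. Your check against $\omega_I$-areas cannot establish that. Grant that for a parallel class $S$ the function $\bsa\mapsto\int_S\omega_I$ is continuous across the wall $\{K_{I_0}=0\}$ (crossing at generic $\bfm$). You also know it is independent of $\bfm$ inside each chamber (Proposition~\ref{prop:sameperiods}). Now suppose the transported E2 basis simply \emph{were} the B2 basis. Then the transported $S_{I_0}$ would have period $4\pi^2|K_{I_0}|$. That is continuous, agrees with Corollary~\ref{cor:centralvolumes} on both sides, and is compatible with the intersection form and with $2S_0+\sum_J S_J=F$. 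In fact continuity plus these constraints leave exactly two options: the identity and the reflection.

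Only affineness, or even just differentiability, in $\bsa$ across the wall rules out the identity. Your sentence ``both are affine functions on all of $\calR$, so they continue across the wall'' assumes this rather than proves it. Nothing available supplies it: $\mathcal P_0(\bsa)$ is built at fixed $\bsa$, and the Duistermaat--Heckman heuristic of \S\ref{sec:affinelinearity} is stated not to apply here. Under the identity you would get $\int_{S_J}\Omega_I=2\pi(M_J-M_{I_0})$ instead of $2\pi M_J$. So the wall-crossing rule is the substance of the proposition; in the paper it is the direct B2 computation that confirms it (compare the remark after Proposition~\ref{prop:integralofOmegaI}).

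That direct computation is the paper's proof, and your sketch misses its central mechanism. It is not a matter of deciding whether each polar section flows to an interior or an exterior fixed point.
\begin{itemize}
\item Four sections behave as in E2: $\sigma_0^-,\sigma_1^-,\sigma_{p_0}^-,\sigma_\infty^+$ coincide with $\tau_0^+,\tau_1^+,\tau_{p_0}^+,\tau_\infty^+$. In particular the small stratum $\sigma_\infty^+$ is not the delicate case.
\item The other four, $\sigma_0^+,\sigma_1^+,\sigma_{p_0}^+,\sigma_\infty^-$, flow onto the central sphere of stable parabolic bundles. The limit point is given by the cross-ratio $(F_\infty,F_0;F_1,F_{p_0})$ of the flags of Corollary~\ref{cor:flags}, with $F_p$ on $\sigma_p^+$ evaluated by L'H\^opital in $\beta$.
\item On each of these four sections the cross-ratio is a M\"obius function of $\beta$. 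Its limits as $\beta\to\infty$ are $1,0,\infty,p_0$, which are the wobbly points $u=0,1,p_0,\infty$.
\end{itemize}
So each of these sections sweeps out the central sphere minus exactly its own wobbly point, and meets $\tau_p^-$ once for each of the other three $p$. This yields the matrix $I(S_p,\Sigma_{p'})$ with all entries $\pm1$, and Corollary~\ref{cor:intersection_numbers} then gives the table. That is the argument you would need to carry out.
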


\begin{proof} 
We observe that the eight polar sections $\sigma^\pm_p$ are the same as in the proof of Proposition \ref{prop:exteriorintersection}.

As one passes from the exterior chamber of Proposition \ref{prop:exteriorintersection} to the adjacent interior chamber, all exterior $\C^\times$-fixed points remain stable. However, the description of the central sphere changes entirely. From this we see that the flows to the exterior $\C^\times$-fixed points remain unchanged:
 \begin{itemize}
   \item $\sigma_0^-$ all flows to the exterior $\C^\times$-fixed point with $u=0$. One can indeed check that $F_1=F_{p_0}=\langle e_2\rangle$ and $F_0=F_\infty=\langle e_1\rangle$.
    \item$\sigma_1^-$ all flows to the exterior $\C^\times$-fixed point with $u=1$. One can indeed check that $F_0=F_{p_0}=\langle e_2\rangle$ and $F_1=F_\infty=\langle e_1\rangle$.  
        \item $\sigma_{p_0}^-$ all flows to the exterior $\C^\times$-fixed point with $u=p_0$. One can indeed check that $F_0=F_1=\langle e_2\rangle$ and $F_{p_0}=F_\infty=\langle e_1\rangle$. 
        \item $\sigma_{\infty}^+$ all flows to the exterior $\C^\times$-fixed point at $u=\infty$.
        \end{itemize}
In fact, $\sigma_0^- = \tau_0^+, \sigma_1^-=\tau_1^+, \sigma_{p_0}^-=\tau_{p_0}^+, \sigma_\infty^+ =\tau_\infty^+$. 

It remains to consider the flows of these other four polar sections under the $\C^\times$-flow.  Since each point of these four polar sections flows somewhere on the central sphere, the $\C^\times$-limit is simply the underlying stable parabolic bundle with zero Higgs field. In our particular chamber, this is determined by the cross-ratio of the flags (see \ref{sec:centralsphereinint}).
From Corollary \ref{cor:flags}, we recall that for the big strata, the flags are    \begin{equation*}
    F_0 = \begin{pmatrix} \frac{-m_0 p_0 +w}{u} \\ 1 \end{pmatrix}, \quad 
      F_1 = 
  \begin{pmatrix} \frac{m_1(p_0-1) + m_\infty +w}{u-1} \\ 1 \end{pmatrix}, \quad 
        F_{p_0} = \begin{pmatrix} \frac{-m_{p_0} p_0(p_0-1) + m_\infty p_0^2 +w}{u-p_0}\\ 1 \end{pmatrix}, \quad 
          F_\infty = \begin{pmatrix} 1\\ 0 \end{pmatrix}.
   \end{equation*}
   Note that the formula for $F_p$ is not defined on $\sigma^+_p$ for $p \in \{0, 1, p_0\}$, however, we can use L'H\^{o}pital's Rule to extend the formula for $F_p$ to $\sigma^+_p$
\begin{itemize}
\item On $\sigma^+_0$, \[
F_0 = \begin{pmatrix} \frac{f'(0) + \beta p_0}{2 m_0 p_0} \\ 1.\end{pmatrix}
\]
To see this observe by L'H\^{o}pital's Rule
\begin{align*} F_0\Big|_{\sigma^+_0} &= \frac{-m_0 p_0 + w}{u}\Big|_{\sigma^+_0} \\
& =\frac{\frac{\partial}{\partial \beta}(m_0 p_0 + w)}{\frac{\partial}{\partial \beta}(u)}\Big|_{\sigma^+_0}\\
&=\frac{\partial_\beta w}{\partial_\beta u} \Big|_{\sigma^+_0}\\
&\overset{(1)}{=}\frac{f'_{\mathbf{m}}(0)  + \beta p_0}{2m_0 p_0}
\end{align*}
In (1), we use $\partial_\beta$ of the defining equation of the elliptic curve:
\[ 0=f'_{\mathbf{m}}(u) \partial_\beta u + u(u-1)(u-p_0)+\beta \partial_u \left( u(u-1)(u-p_0) \right) \partial_\beta u -2 (m_\infty u^2 + w)( 2 m_\infty u \partial_\beta u + \partial_\beta w).\]
Evaluated on $\sigma^+_0$ ($u=0$, $w=m_0 p_0$), this gives 
\[ 0 = \left( f'_{\mathbf{m}}(0)  + \beta p_0 \right)\partial_\beta u    - 2 (m_0p_0)\partial_\beta w. \]
   \item On $\sigma^+_1$,
   we compute 
   \begin{align*}
F_1\Big|_{\sigma^+_1} &= 
\frac{\partial_\beta w}{\partial_\beta (u-1)} \Big|_{\sigma^+_1}\\  
&=  \frac{f'_{\mathbf{m}}(1) + \beta(1-p_0)- 2 m_1(1-p_0) 2 m_\infty}{2m_1(1-p_0)} 
 \end{align*}
   \item On $\sigma^+_{p_0}$, we compute
      \begin{align*}
F_1\Big|_{\sigma^+_{p_0}} &= 
\frac{\partial_\beta w}{\partial_\beta (u-p_0)} \Big|_{\sigma^+_{p_0}}\\  
&=  \frac{f'_{\mathbf{m}}(p_0) + \beta p_0(p_0-1)- 2 m_{p_0} p_0(p_0-1)2 m_\infty p_0}{2 m_{p_0} p_0(p_0-1)} 
 \end{align*}
   \end{itemize}
   Now, the $\beta$ dependence of $F_{p}$ on $\sigma^+_p$ for $p \in \{0, 1, p_0\}$ is manifest. The other flags are independent of $\beta$.
   
Now, we look at the image under the $\C^\times$-limit of the polar sections $\sigma^+_0, \sigma^+_1, \sigma^+_{p_0}, \sigma_\infty^-$ , by considering the cross-ratio of the points.
Note that since $F_\infty=\infty$, the cross-ratio is 
\[ (F_\infty, F_0; F_1, F_{p_0})= \frac{F_{p_0} - F_0}{F_1-F_0}.\]
\begin{itemize}
\item Let's first consider $\sigma^+_0$, which we note is parameterized by $\beta \in \C$. Note that $F_1, F_{p_0}, F_\infty$ are all fixed on $\sigma^-_0$, so the cross-ratio of the flag only sees $\beta$ through $F_0$.
The important thing is that the cross-ratio of the flags has the shape of a conformal transformation:
\begin{equation} (F_\infty, F_0; F_1, F_{p_0}) = \frac{a_0 \beta + a_1}{a_2 \beta + a_3}\label{eq:conformal}\end{equation}
for fixed values $a_0, a_1, a_2, a_3$. 
Consequently, its image is all of $\mathbb{CP}^1$, minus the point 
\[ \lim_{\beta \to \infty} (F_\infty, F_0; F_1, F_{p_0}) = \frac{a_0}{a_2}, \]
Since $\beta$ only appears in $F_0$, we can see that 
\[ \lim_{\beta \to \infty} (F_\infty, F_0; F_1, F_{p_0}) =\frac{-F_0}{-F_0} = 1.\]
\item Now, let's consider $\sigma^+_1$. Since $\beta$ only appears in $F_1$, we see
\[ \lim_{\beta \to \infty} (F_\infty, F_0; F_1, F_{p_0}) =\frac{0}{F_1} = 0.\]
\item Now, let's consider $\sigma^+_{p_0}$. Since $\beta$ only appears in $F_{p_0}$, we see 
\[ \lim_{\beta \to \infty} (F_\infty, F_0; F_1, F_{p_0}) =\frac{F_p}{0}= \infty.\]
\item 
Lastly, we consider $\sigma_\infty^-$, the additional section in the big stratum. Note that from Corollary \ref{cor:flags}, $\beta$ appears in both $F_1$ (with coefficient $-\frac{1}{2m_\infty}$) and $F_{p_0}$ (with coefficient $-\frac{p_0}{2m_\infty}$), but not in $F_0, F_\infty$. Thus, 
\[ \lim_{\beta \to \infty} (F_\infty, F_0; F_1, F_{p_0}) = \frac{-\frac{p_0}{2m_\infty}}{-\frac{1}{2m_\infty}} = p_0. \]
\end{itemize}

We must match these flag arrangements to a value of $u$. 
We observe that when $\mathbf{m}=\mathbf{0}$, the cross-ratio of the flags is independent of $\beta$ and is simply 
\[(F_\infty, F_0; F_1, F_{p_0} ) = \frac{p_0(u-1)}{(u-p_0)}. \]
We indeed see that $u=0$ corresponds to the flag $1$; $u=1$ corresponds to the flag $0$; $u=p_0$ corresponds to the flag $\infty$; $u=\infty$ corresponds to the flag $p_0$. 
Thus, we see that we get the following intersections as the polar sections (each modeled on $\beta \in \C$) get wrapped around the central sphere minus a single point of $D$.
\begin{equation}
\begin{tabular}{  c | c | c |c|c }
    I & $\sigma^+_0$ & $\sigma^+_1$ & $\sigma^+_{p_0}$ & $\sigma^-_\infty$ \\ \hline
    $\tau_0^-$ & 0 & 1 & 1 & 1\\ \hline
    $\tau_1^-$ & 1 & 0 & 1 & 1\\ \hline
    $\tau_{p_0}^-$ & 1 & 1  & 0 & 1\\ \hline
    $\tau_\infty^-$ & 1 & 1  & 1 & 0\\
\end{tabular}
\end{equation}
The intersection numbers for $\Sigma_p = \sigma_p^+- \sigma_p^-$ are thus given as follows:

\begin{center}

\begin{tabular}{l c r}
	
\begin{tabular}{  c | c | c |c|c }
    I & $\Sigma_0$ & $\Sigma_1$ & $\Sigma_{p_0}$ & $\Sigma_\infty$ \\ \hline
    $\tau_0^+$ & -1 & 0 & 0 & 0\\ \hline
    $\tau_1^+$ & 0 & -1 & 0 & 0\\ \hline
    $\tau_{p_0}^+$ & 0 & 0  & -1 & 0\\ \hline
    $\tau_\infty^+$ & 0 & 0  & 0 & 1\\
\end{tabular}

&  and &

\begin{tabular}{  c | c | c |c|c }
    I & $\Sigma_0$ & $\Sigma_1$ & $\Sigma_{p_0}$ & $\Sigma_\infty$ \\ \hline
    $\tau_0^-$ & 0 & -1 & -1 & 1\\ \hline
    $\tau_1^-$ & -1 & 0 & -1 & 1\\ \hline
    $\tau_{p_0}^-$ & -1 & -1  & 0 & 1\\ \hline
    $\tau_\infty^-$ & -1 & -1  & -1 & 0.\\
\end{tabular}

\end{tabular}

\end{center}

Thus, the intersection numbers for $S_p = \tau_p^--\tau_p^+$ are 

\begin{center}

 \begin{tabular}{  c | c | c |c|c }
    I & $\Sigma_0$ & $\Sigma_1$ & $\Sigma_{p_0}$ & $\Sigma_\infty$ \\ \hline
    $S_0$ & 1 & -1 & -1 & 1\\ \hline
    $S_1$ & -1 & 1 & -1 & 1\\ \hline
    $S_{p_0}$ & -1 & -1  & 1 & 1\\ \hline
    $S_\infty$ & -1 & -1  & -1 & -1. \\
\end{tabular}

\end{center}

\end{proof}
\section{Affine \texorpdfstring{$D_4$}{D4}-Coxeter group action and Surjectivity} \label{sec:Dehntwists}

We have seen in Section \ref{subsec:Torelli_map} that there are natural right actions of the orthogonal group $\OO(\Z^5,I_0)^+$ on the set of admissible bases of $H_2(\calM(\bfalpha,\bfm),\Z)$ and on the target $\R^4 \times \C^4$  such that the period domain $\calP \subset \R^4 \times \C^4$ and the period map satisfies the equivariance condition
\[
\mathcal{T}_{\mathfrak{B} \cdot A}( \boldsymbol{\alpha}, \mathbf{m})= \mathcal{T}_{\mathfrak{B} }( \boldsymbol{\alpha}, \mathbf{m})\cdot A
\]	
for $A \in \OO(\Z^5,I_0)^+$. 

In this section we will define for fixed chamber $\Delta$ and associated homology basis $\mathfrak B(\Delta)$ a left action of the affine Weyl group $W_{\aff}$ on the domain $\R^4 \times \C^4$ and on the target  $\R^4 \times \C^4$ such that the period map satisfies the equivariance condition 
\[
\mathcal{T}_{\mathfrak{B}(\Delta)}( g \cdot (\boldsymbol{\alpha}, \mathbf{m}))= g \cdot \mathcal{T}_{\mathfrak{B}(\Delta) }( \boldsymbol{\alpha}, \mathbf{m})
\]
for $g \in W_{\aff}$. The affine Weyl group $W_\aff$ is the affine $D_4$-Coxeter group abstractly defined by the presentation
\[W_{\mathrm{aff}}=\IP{r_0, r_1, r_2, r_3, r_4| (r_i r_j)^{m_{ij}}},\]
  where the Coxeter matrix $M=(m_{ij})$ is given by 
  \[\begin{pmatrix} 1 &3 & 3 & 3 & 3\\ 3 & 1 & 2 & 2 & 2\\ 3 & 2 & 1 & 2 & 2\\ 3 & 2 & 2 & 1 & 2\\  3 & 2 & 2 & 2 & 1\end{pmatrix}.\]
  Note that $m_{ii}=1$ encodes that each 
  $r_i$ is squaring to the identity; for distinct indices $i$ and $j$, $m_{ij}=2$ is equivalent to the fact that $r_i$ and $r_j$ commute. This associated Coxeter diagram is somewhat unsurprisingly the affine $D_4$-diagram.\footnote{To create the Coxeter diagram from the Coxeter matrix, connect vertex $i$ and $j$ if $m_{ij} \geq 3$ and label the edge with the value of $m_{ij}$ if $m_{ij} \geq 4$.}
  \begin{figure}[!ht]
  \includegraphics[height=1.5in]{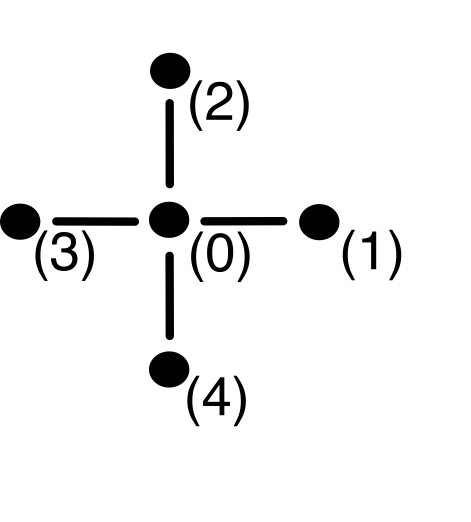}
  \caption{Coxeter diagram, with central node labelled `0' and exterior nodes labelled `1', `2', `3', `4'. \label{fig:Coxeter}}
  \end{figure}  
  The associated Schl\"afli matrix is obtained by $C_{ij}=-\cos(\pi/M_{ij})$. Here, it is $-\frac12$ times
  the usual intersection form on $H_2(\cM(\boldsymbol{\alpha}, \mathbf{m}),\Z)$\[
  \begin{pmatrix} 1 & - \frac12 & - \frac12 & - \frac12 & - \frac12 \\ - \frac12 &1 & 0 & 0 & 0 \\ - \frac12 & 0 & 1 & 0 & 0 \\
    - \frac12 & 0 & 0 & 1 & 0 \\
    - \frac12 & 0 & 0 & 0 & 1 \end{pmatrix}.
  \]The fact that this matrix has all non-negative eigenvalues, but at least one zero eigenvalue implies that this is a Coxeter group of affine type, and in particular infinite.

\smallskip

Recall that in \eqref{eq:R} we defined $\mathcal{R} \subset (0, \frac{1}{2})^4 \times \C^4$ to be the set of generic parameters. For these values, we have an associated Hitchin moduli space $\mathcal{M}(\boldsymbol{\alpha}, \mathbf{m})$. Given a fixed chamber $\Simplex $ and associated basis $\mathfrak{B}(\Simplex)$, we computed the map
\begin{align*}
\mathcal{T}_{\mathfrak{B}(\Simplex)}: (\Simplex \times \C^4) \cap \mathcal{R} &\longrightarrow \mathbb{R}^4 \times \C^4\\ \nonumber 
(\boldsymbol{\alpha}, \mathbf{m})   &\longmapsto \mathcal{T}_{\mathfrak{B}(\Simplex)}(\boldsymbol{\alpha}, \mathbf{m})=\left\{\left( \int_{S_i} \omega_I, \int_{S_i} \Omega_I \right)\right\}_{i=1}^4
\end{align*}
in Section \ref{sec:TorelliomegaI} and  Section \ref{sec:TorelliOmegaI} in some special cases. In particular, we computed the integral of $\omega_I$ for all chambers; however, we only directly computed the integral of $\Omega_I$ for two chambers. Furthermore, we established some structural results for the period map: the periods of the real symplectic form $\omega_I$ only depend on the parabolic weights $\bfalpha=(\alpha_1, \alpha_2, \alpha_3, \alpha_4)$ and the periods of the complex symplectic form $\Omega_I$ only depend on the complex masses $\bfm = (m_1, m_2, m_3, m_4)$ such that we can write
\[
\mathcal{T}_{\mathfrak{B}}(\bfalpha, \bfm) = \bigl(\mathcal{T}_{\mathfrak{B}}^{\omega_I}(\bfalpha), \mathcal{T}_{\mathfrak{B}}^{\Omega_I}(\bfm) \bigr)
\]
for maps $\mathcal{T}_{\mathfrak{B}}^{\omega_I}: (\R^4 \times \{\mathbf{0}\}    ) \cap \mathcal{R} \to \R^4$ and  $\mathcal{T}_{\mathfrak{B}}^{\Omega_I}: ( \{\mathbf{0}\} \times \C^4    ) \cap \mathcal{R} \to \C^4$ if $\mathfrak{B}$ is an admissible homology basis. If we used the chamber-dependent basis $\mathfrak{B}(\Simplex)$ we find by inspection of the concrete formul{\ae} in Proposition \ref{prop:stronglyparabolicvolumesv2} that the real period map $\mathcal{T}_{\mathfrak{B}(\Simplex)}^{\omega_I}$ extends to an affine-linear map $\R^4 \to \R^4$ for all chambers. Similarly, by the formul{\ae} in Proposition \ref{prop:exteriorintersection} and Proposition \ref{prop:interiorintersection} the complex period map  $\mathcal{T}_{\mathfrak{B}(\Simplex)}^{\Omega_I}$ extends to a $\C$-linear map $\C^4 \to \C^4$ for the two sample chambers.

\smallskip

The main objective of this section is to use the action of the affine $D_4$-Coxeter group and the equivariance properties of the Torelli map  to extend these results to all chambers. We will see that $\mathcal{T}_{\mathfrak{B}}^{\omega_I}$ is affine-linear and $\mathcal{T}_{\mathfrak{B}}^{\Omega_I}$ is $\C$-linear for any admissible  homology basis $\mathfrak{B}$ and we we will derive formul{\ae} for $\mathcal{T}_{\mathfrak{B}(\Simplex)}^{\Omega_I}$ in any chamber $\Delta$. This will prove our first  main result, Theorem \ref{thm:Torelli}, the Torelli theorem for Hitchin moduli spaces on the four-punctured sphere.

\smallskip

 We now define two different extensions of $\mathcal{R}$  to $\R^4 \times \C^4$, i.e.\ both will reproduce $\mathcal{R}$ when intersected with $(0, \frac{1}{2})^4 \times \C^4$.

\begin{defn}\label{def:Rdomain}
Let
\[
\widetilde{\mathcal{R}}^{\mathrm{full}} = \R^4 \times \C^4 \setminus \left( 
\bigcup_{d \in \Z,\, \mathbf{e} \in \{0, 1\}^4} \mathcal{H}_{d, \mathbf{e}} \cup \bigcup_{d \in \Z, \, i \in   \{1, \ldots, 4\}} \mathcal{H}_{d,i}
\right)
\]
where
    \[
  \mathcal{H}_{d, \mathbf{e}} = \Bigl\{  (\boldsymbol{\alpha}, \mathbf{m}) \in \R^4 \times \C^4 :   d + \sum_{i=1}^4 e_i +  (-1)^{e_i} \alpha_i  =0  \;  \&  \;   \sum_{i=1}^4 (-1)^{e_i} m_i = 0 \Bigr\} \]
 for $d \in \Z$, $\mathbf{e} = (e_1, e_2, e_3, e_4) \in \{0, 1\}^4$, and
    \[
  \mathcal{H}_{d, i} = \bigl \{  (\boldsymbol{\alpha}, \mathbf{m}) \in \R^4 \times \C^4 :   2\alpha_i  =  d  \;  \&  \;    m_i = 0 \bigr \} \]
for $d \in \Z$, $i \in   \{1, \ldots, 4\} $. Let further
\begin{align*}
\widetilde{\mathcal{R}} =    \widetilde{\mathcal{R}}^{\mathrm{full}} \setminus  \Bigl\{(\boldsymbol{\alpha}, \mathbf{m}) \in \R^4 \times \C^4 : & -\alpha_i + \alpha_j + \alpha_k + \alpha_\ell \in \Z \mbox{ for } i,j,k,\ell \mbox{ distinct} \\ & \;  \&  \; 2\alpha_i \in \Z  \mbox{ for some } i \in   \{1, \ldots, 4\} \Bigr\}.
\end{align*}
\end{defn}

The reason for introducing these two extensions is that $\widetilde{\mathcal{R}}^{\mathrm{full}}$ turns out to be the full preimage of the period domain $\mathcal{P}$ under the Torelli map $\mathcal{T}_\mathfrak{B}$, see Theorem \ref{thm:surjective} below. However, not all parameters $(\bfalpha, \bfm) \in \widetilde{\mathcal{R}}^{\mathrm{full}}$ correspond to Hitchin moduli spaces. Clearly, parameters $(\bfalpha, \bfm) \in \mathcal{R}$ do correspond to Hitchin moduli spaces. Now $\widetilde{\mathcal{R}}$ can alternatively be characterized as the saturation of the set of regular parameters $\mathcal{R}$ with respect to the action of $W_\aff$, i.e.\
\[
\widetilde{\mathcal{R}} = \bigcup_{g \in 	W_{\mathrm{aff}}} g \cdot \mathcal{R} = \bigl \{ (\boldsymbol{\alpha}, \textbf{m}) \in \R^4 \times \C^4 : \exists g \in W_{\mathrm{aff}} \mbox{ such that } g \cdot (\boldsymbol{\alpha}, \textbf{m}) \in \mathcal{R} \bigr\}
\]
according to Lemma \ref{lem:saturation} below. In other words, $\widetilde{\mathcal{R}}$ consists precisely of those elements in $\widetilde{\mathcal{R}}^{\mathrm{full}}$ whose $W_{\mathrm{aff}}$-orbits intersect non-trivially with $(0, \frac{1}{2})^4 \times \C^4$. To achieve this, we throw out the points in $\widetilde{\mathcal{R}}$ such that $\boldsymbol{\alpha}$ is in the $W_{\mathrm{aff}}$-orbits of the intersection of the Biswas polytope and the boundary of the cube weight $[ 0, \frac{1}{2}]^4$, leading to the above definition. The point now is that $g \in W_\aff$ gives rise to the isotopy class of a diffeomorphism (in fact, a composition of Dehn twists), such that $g \cdot (\bfalpha, \bfm)$ can be asociated with a (re-)marking of the hyperK\"ahler manifold $\calM(\bfalpha, \bfm)$.

\smallskip

We now state our second main theorem.

\begin{thm}\label{thm:surjective}
The Torelli map $\cT_\fB: \widetilde{\mathcal{R}}^{\mathrm{full}} \to \calP$ is bijective. The image $\mathcal{T}_{\mathfrak{B}}(\widetilde{\mathcal{R}})\subset \calP$ of the open and dense subset $\widetilde{\mathcal{R}}\subset \widetilde{\mathcal{R}}^{\mathrm{full}}$ precisely corresponds to periods of Hitchin moduli spaces on the four-punctured sphere.
\end{thm}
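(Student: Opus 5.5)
The plan is to first make the Torelli map explicit on all of $\R^4 \times \C^4$ and then compare the walls on both sides. Fix a chamber $\Simplex$, say the exterior chamber of type E2 labelled by $D_{I_0}=\{0,1,p_0\}$, with basis $\fB=\fB(\Simplex)$. By Proposition \ref{prop:stronglyparabolicvolumesv2}, Proposition \ref{prop:sameperiods} and Proposition \ref{prop:exteriorintersection}, on $(\Simplex\times\C^4)\cap\calR$ the map is $\cT_\fB(\bsa,\bfm)=\bigl(4\pi^2(K_J-K_{I_0}),\,2\pi(M_J-M_{I_0})\bigr)_{J\in\mathcal I}$. I take this formula as the definition of the extension $\cT_\fB:\R^4\times\C^4\to\R^4\times\C^4$. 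Its real part is affine-linear in $\bsa$ with linear part a constant multiple of the matrix with rows $\pm 2e_j$ (with $K_{\{1,2,3,4\}}-K_{I_0}=2\alpha_\infty$, etc.). Its complex part is the same matrix times $\bfm$, up to the factor $2\pi$ versus $4\pi^2$. This matrix is invertible, so the extended map is a real-affine bijection $\R^4\times\C^4\to\R^4\times\C^4$ that is linear in $\bfm$, with the same linear part acting on both factors.

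Next I define the $W_\aff$-actions. On the target, $r_i$ acts as the Picard--Lefschetz reflection in the class $S_i$, $x\mapsto x+(x\cdot S_i)S_i$. Through Lemma \ref{lem:period_equivariance} this is an element of $\OO(\Z^5,I_0)^+$ acting affinely on $\R^4$ and linearly on $\C^4$. On the domain, I define the action as the conjugate of this one by the explicit bijection above. I will then check that it is generated by the natural reflections: $\alpha_i\mapsto-\alpha_i$, $\alpha_i\mapsto 1-\alpha_i$, and the reflections across the walls $K_I=0$ and $L_i\in\{0,1\}$, each paired with the corresponding sign change of the $m$'s. Equivariance of $\cT_\fB$ then holds by construction.

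The key content is a wall comparison. For the first claim, I pull back each plane of Corollary \ref{cor:descriptionofP} through the affine formula and show that the resulting family is exactly $\{\mathcal H_{d,\mathbf e}\}\cup\{\mathcal H_{d,i}\}$. The plane $\calH_{k,i_0}=\{x_{i_0}=k\cdot 4\pi^2,\ z_{i_0}=0\}$ pulls back to $\{2\alpha_i\in\Z,\ m_i=0\}$, i.e.\ to $\mathcal H_{d,i}$. The planes $\calH_k$, $\calH'_{k,i_0}$ and $\calH_{k,i_1,i_2}$ pull back to the Nakajima planes $\mathcal H_{d,\mathbf e}$ with $|\mathbf e|$ even or odd, respectively. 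This is a finite bookkeeping check of coefficients: $\pm1$ combinations of the $\alpha_i$ on the real side and of the $m_i$ on the complex side. Matching $\ell^2\im\tau=4\pi^2$ is supplied by Observation \ref{obs:1}. Since the extended map is a bijection of ambient spaces carrying the removed set onto the removed set, it restricts to a bijection $\widetilde{\calR}^{\mathrm{full}}\to\calP$. Independence of the chosen $\fB$ follows from Lemma \ref{lem:invariance_period_domain} and the transitivity of $\OO(\Z^5,I_0)^+$ on admissible bases.

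For the second claim, I use the saturation description $\widetilde\calR=\bigcup_g g\cdot\calR$ (Lemma \ref{lem:saturation}). If $(\bsa,\bfm)=g\cdot(\bsa',\bfm')$ with $(\bsa',\bfm')\in\calR$, then by equivariance $\cT_\fB(\bsa,\bfm)=g\cdot\cT_\fB(\bsa',\bfm')$. This is the period vector of $\calM(\bsa',\bfm')$ with respect to the remarked basis $g^t\fB$. The remarking is realized by a diffeomorphism, namely Dehn twists along the $-2$ spheres $S_i$, which act on $H_2$ exactly by the reflection $r_i$. So these are periods of Hitchin moduli spaces. Conversely, any Hitchin moduli space's periods in any admissible basis are $A\cdot\cT_\fB(\bsa',\bfm')$ for some $A\in\OO(\Z^5,I_0)^+$.

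I expect the main obstacle to be this last step: showing that every $A\in\OO(\Z^5,I_0)^+$ lies in the image of $W_\aff$ times the finite diagram symmetries of $D_4$. Permutations of the outer nodes correspond to relabelling the punctures $p_i$, i.e.\ to a permutation of $(\bsa,\bfm)$, so they too preserve $\widetilde\calR$. I would first try to prove this by the standard argument that $\OO^+$ of the affine root lattice, fixing the null vector, is (translations by the $D_4$ coroot lattice) $\rtimes$ (the finite Weyl group) $\rtimes$ (the diagram automorphisms), and that the first two factors make up $W_\aff$. A secondary concern is that Proposition \ref{prop:interiorintersection} must agree with the formula transported from the exterior chamber by the wall reflection. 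That agreement gives a consistency check for the domain action.
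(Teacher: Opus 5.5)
Your proof of the bijection $\widetilde{\mathcal R}^{\mathrm{full}}\to\calP$ is correct, but it is organized differently from the paper's.

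\textbf{How the paper argues.} The paper extends $\cT_\fB$ from the B1 model chamber. The $\Omega_I$-periods there are only available after the wall-crossing argument of Proposition~\ref{prop:integralofOmegaI}. It then checks that the five Nakajima walls containing the faces of $\Simplex$ (Remark~\ref{rem:Nakajima}) map to the five basic walls $\{\sum_i x_i=4\pi^2,\ \sum_i z_i=0\}$ and $\{x_i=z_i=0\}$. It concludes by $W_{\aff}$-equivariance, which tacitly uses that the $W_{\aff}$-orbits of these five planes exhaust both arrangements.

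\textbf{How you argue.} You work in the E2 chamber, where both period formulas were computed directly, and compare the arrangements plane by plane. That check goes through. With $x_\infty=8\pi^2\alpha_\infty$, $x_j=4\pi^2(1-2\alpha_j)$, $z_\infty=4\pi m_\infty$ and $z_j=-4\pi m_j$ for $j\in\{0,1,p_0\}$:
\begin{itemize}
\item the planes $\calH_{k,i_0}$ pull back to the $\mathcal H_{d,i}$;
\item the planes $\calH_k$ and $\calH_{k,i_1,i_2}$ pull back to the four classes of $\mathcal H_{d,\mathbf e}$ with $|\mathbf e|$ odd;
\item the planes $\calH'_{k,i_0}$ pull back to the four classes with $|\mathbf e|$ even,
\end{itemize}
where $\mathbf e$ is identified with $\mathbf 1-\mathbf e$. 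So your first claim needs no Coxeter-group input and proves the wall matching outright, at the price of a finite computation.

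The forward half of your second claim (saturation via Lemma~\ref{lem:saturation}, plus remarking by Dehn twists) is exactly what the paper does. There it is packaged as the definition of $\widetilde{\mathcal M}(\bsa',\bfm')$ together with Remark~\ref{rem:Torellisurjectivity}.

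\textbf{The converse step.} You also set out to prove that every admissible marking of every $\calM(\bsa,\bfm)$ has its periods in $\cT_\fB(\widetilde{\calR})$. This is not part of the paper's argument, and your sketch of it has a wrong step.
\begin{itemize}
\item The translation subgroup of $\OO(\Z^5,I_0)^+$ is $\{x\mapsto x+\phi(x)\,\delta:\phi\in\mathrm{Hom}(\Z^5/\Z\delta,\Z)\}$, with $\delta=2e_0+\sum_i e_i$. This is the coweight lattice, not the coroot lattice; it contains the coroot lattice with index $4$. Hence $\OO(\Z^5,I_0)^+/W_{\aff}\cong S_4$.
\item Relabelling the punctures realizes only an $S_3$ of this quotient, because the double transpositions of $(\bsa,\bfm)$ already lie in $W_{\mathrm{fin}}$.
\item The node transpositions that move the distinguished node are not relabellings. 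In your E2 basis, exchanging the nodes of $0$ and $\infty$ pulls back to $(\alpha_0,\alpha_\infty,m_0,m_\infty)\mapsto(\tfrac12-\alpha_\infty,\tfrac12-\alpha_0,-m_\infty,-m_0)$, which is a Hecke modification at two punctures.
\end{itemize}
You therefore also need $\widetilde{\calR}$ to be invariant under these maps. Modulo $W_{\aff}$ and permutations, this is the same as invariance under $\alpha_i\mapsto\alpha_i+\tfrac12$, $\alpha_j\mapsto\alpha_j+\tfrac12$ with $\bfm$ fixed. That check is easy: such a map changes every $\pm\alpha_1\pm\alpha_2\pm\alpha_3\pm\alpha_4$ and every $2\alpha_k$ by an integer. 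So it permutes the planes $\mathcal H_{d,\mathbf e}$ and $\mathcal H_{d,k}$ and the extra removed locus. Your argument as written leaves this step out.
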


As a consequence of Theorem \ref{thm:surjective}, the period map is not quite surjective on Hitchin moduli spaces since these correspond to the open and dense subset $\widetilde{\mathcal{R}} \subset \widetilde{\mathcal{R}}^{\mathrm{full}}$. As we write in Conjecture \ref{conj:notfullflags}, we believe that this is because there is a more natural definition of the Higgs bundle moduli space when the parabolic weights are such that the flags are not full, i.e. for some $p_i$, $\alpha_i \in \{0, \frac{1}{2}\}$ .

\subsection{Affine \texorpdfstring{$D_4$}{D4} Coxeter group acting on homology}\label{sec:homologyaction}

In this section, we describe an action of the affine $D_4$ Coxeter group $W_\aff$ on $H_2(\mathcal{M}(\bfalpha,\bfm), \mathbb{Z})$. This action is generated by certain orientation-preserving diffeomorphisms, so-called Dehn twists, and will hence in particular preserve the intersection form.

\subsubsection{The 4-dimensional Dehn twist}
If $X$ is an oriented 4-manifold and $S$ an embedded 2-sphere with self-intersection $-2$, the 4-dimensional Dehn twist along $S$ is an orientation-preserving diffeomorphism \[\mathrm{DT}_S : X \to X\] which is supported in a neighborhood of $S$ and acts on $S$ as the antipodal map. It can be constructed as follows \cite{Arnold,Seidel}: Since $I(S,S)=-2$, at tubular neighborhood of $S$ is diffeomorphic to the cotangent bundle $T^*S$. To obtain a model Dehn twist on $T^*S^2$ identify $TS^2 \cong T^*S^2$ using the round metric and consider the geodesic flow 
\[
\phi_t : UTS^2 \to UTS^2
\]
on the unit tangent bundle. Pick a smooth function $h: [0, \infty) \to \R$ satisfying $h(t) = 0$ for $t \gg 0$ and $h(t)= \pi$  for  $t$ near $0$. Then define $\tau : TS^2 \to TS^2$ by $\tau (tv) =t\phi_{h(t)}(v)$ for $v \in UTS^2$, $t \in [0, \infty)$. Clearly $\tau$ restricts to the antipodal map on $S^2$ if identified with the zero section in $TS^2$. Since $\tau$ is compactly supported it can be transplanted back into the manifold $X$ yielding $\mathrm{DT}_S$.

\subsubsection{Induced action on homology}

The action of a Dehn twist on $H_2(X,\Z)$ is described by the Picard-Lefschetz theorem, namely
\[
(\mathrm{DT}_S)_* (\alpha) = \alpha  +I(\alpha, S)[S]
\]
for $\alpha \in H_2(X,\Z)$, in particular $(\mathrm{DT}_S)_*[S] =- [S]$. If $\Sigma$ is another embedded 2-sphere with $I(\Sigma,S)=1$, then $(\mathrm{DT}_S)_*[\Sigma] = [\Sigma] + [S]$.

\subsubsection{Specialization to \texorpdfstring{$4$}{4}-punctured sphere}
Let $X= \mathcal{M}(\boldsymbol{\alpha}_0, \mathbf{0})$ for some fixed $(\boldsymbol{\alpha}_0, \mathbf{0}) \in \mathcal{R}$ and let $S_0, S_1, S_2, S_3, S_4$
be the canonically associated basis of $H_2(\cM(\boldsymbol{\alpha}_0, \mathbf{0}), \Z)$ with intersection form given by \eqref{eq:intersectionform}. Since all of these spheres have self-intersection $-2$, we can perform a Dehn twist along each of them. We describe the induced action on homology using the Picard-Lefschetz theorem as above. For $i=1, 2, 3, 4$ the Dehn twist acts as the reflection
  \begin{align*}
    r_i: \qquad [S_0] &\mapsto [S_0] + [S_i]\\
    [S_i] &\mapsto -[S_i]\\
    [S_j] &\mapsto [S_j], \qquad j \neq i.
  \end{align*}
   and for $i=0$, i.e.\ along the central sphere, the Dehn twist acts as the reflection
  \begin{align*}
    r_0: \qquad [S_0] &\mapsto -[S_0]\\
    [S_j] &\mapsto [S_0]+[S_j].
  \end{align*}
on $H_2(\cM(\boldsymbol{\alpha}_0, \mathbf{0}), \Z)$. It is easy to check that $r_0, r_1, r_2, r_3, r_4$ satisfy the relations defining $W_\aff$. Hence, we obtain a left action of $W_\aff$ on $H_2(\cM(\boldsymbol{\alpha}_0, \mathbf{0}), \Z)$ and by parallel extension on $H_2(\cM(\boldsymbol{\alpha}_, \mathbf{m}), \Z)$ for all $(\bfalpha, \bfm) \in \mathcal{R}$. This action depends on the choice of open chamber containing the initial $\bfalpha_0$ and induces a left action on bases $\mathfrak{B}$ of $H_2(\cM(\boldsymbol{\alpha}, \mathbf{m}), \Z)$. Since the action preserves the fiber class $2[S_0] + \sum_{i=1}^4 [S_i]$, it transforms admissible bases into admissible bases. 
\begin{rem}\label{rem:compatibility}
The right action of $\OO(\Z^5,I_0)^+$ on admissible bases of $H_2(\cM(\boldsymbol{\alpha}_, \mathbf{m}), \Z)$ on the other hand does not depend on any choices and is simply transitive. Now fix a chamber $\Delta$ determining the basis $\mathfrak{B}(\Delta)$ and a left action of $W_\aff$ on admissible bases as above. Then for $g \in W_\aff$ there will be a unique element $A^g \in \OO(\Z^5,I_0)^+$ such that $g \cdot \mathfrak{B}(\Delta) = \mathfrak{B}(\Delta) \cdot A^g$. The assignment $g \mapsto A^g$ gives rise to a representation $W_\aff \to \OO(\Z^5, I_0)^+$.
\end{rem}

\subsection{Affine \texorpdfstring{$D_4$}{D4} Coxeter group acting on domain \texorpdfstring{$\R^4$}{R4}}\label{sec:R4action}

In the previous section, we saw how $W_{\mathrm{aff}}$ acted on admissible bases of $H_2(\cM(\bfalpha,\bfm),\Z)$. In this section we will also define an action of $W_{\mathrm{aff}}$ on $\R^4$ parameterized by $\boldsymbol{\alpha}=(\alpha_1,\alpha_2,\alpha_3,\alpha_4)$. We will see that this action can be extended to an action of $W_\aff$ on $\R^4_{\bfalpha} \times \C^4_\bfm$.

    There is a classical construction of a space $\mathcal{U}(W_{\mathrm{aff}}, \Simplex)$ with a $W_{\mathrm{aff}}$-action by pasting together copies of a space $\Simplex$ --- one for each element of $W_{\mathrm{aff}}$. Taking $W_{\mathrm{aff}}$ to be the affine $D_4$ Coxeter group, it is a fact (see the general result in \cite[Theorem 6.4.3]{Davis}) that if the matrix of dihedral angles of $\Simplex$ is the Coxeter matrix of $W_{\mathrm{aff}}$, then the natural map $\iota: \mathcal{U}(W_{\mathrm{aff}}, \Simplex) \to \R^4$ is a homeomorphism. Here, we take $\Simplex$  to be  one of the chambers of $[0, \frac{1}{2}]^4 \ni \boldsymbol{\alpha}$. In particular, take the interior chamber of type B1 whose vertices are $v_0 =(\frac{1}{4}, \frac{1}{4}, \frac{1}{4}, \frac{1}{4})$ and $v_I$, as defined in \eqref{eq:vI}, for $I \in \mathcal{I}=\{\emptyset, \{1, 2\}, \{1, 3\}, \{1, 4\}\}$. Then each of the $24$ chambers is the image of $g \cdot \Simplex$ for some $g \in W_{\mathrm{aff}}$. 
   
  \begin{prop}\label{prop:homeo}
  Consider the simplex $\Simplex$ in $\R^4$ with vertices formed by $v_1=(0, 0, 0, 0)=v_{\emptyset}, v_2=(\frac{1}{2}, \frac{1}{2}, 0, 0)=v_{\{1, 2\}}, v_3=(\frac{1}{2}, 0, \frac{1}{2}, 0)=v_{\{1, 3\}}, v_4=(\frac{1}{2}, 0, 0, \frac{1}{2})=v_{\{1, 4\}}$, and $v_0=(\frac{1}{4}, \frac{1}{4}, \frac{1}{4}, \frac{1}{4})$. 
  Let $f_i$ be the face of the simplex containing all vertices except $v_i$. Let $r_0, r_1, r_2, r_3, r_4$ be the reflections in $f_0, f_1, f_2, f_3, f_4$. These generate an action of the affine $D_4$ Coxeter group $W_{\mathrm{aff}}$ on $\R^4$. The natural map
$\iota: \mathcal{U}(W_{\mathrm{aff}}, \Simplex) \to \R^4$ is a homeomorphism. 
  \end{prop}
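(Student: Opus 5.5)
The plan is to apply the Poincaré polyhedron / Tits–Vinberg reflection group theorem, in the form of \cite[Theorem 6.4.3]{Davis}. It requires two things. First, $\Simplex$ is a compact convex polytope in $\R^4$, here a $4$-simplex. Second, for each pair of distinct faces $f_i,f_j$, the dihedral angle between them is $\pi/m_{ij}$, with $m_{ij}$ the Coxeter matrix above. Once this holds, the reflections $r_i$ generate a discrete group whose abstract presentation is the Coxeter presentation $\IP{r_0,\ldots,r_4 \mid (r_ir_j)^{m_{ij}}}$. The translates $g\cdot\Simplex$ then tile $\R^4$ with disjoint interiors, so $\iota$ is a continuous bijection. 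Properness (local finiteness of the tiling) upgrades it to a homeomorphism.

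The first step computes the inward unit normals $n_i$ to the faces $f_i$.
\begin{itemize}
\item The face $f_0$ misses $v_0$. It contains $0$, $(\frac12,\frac12,0,0)$, $(\frac12,0,\frac12,0)$ and $(\frac12,0,0,\frac12)$, so it lies on the hyperplane $\alpha_1-\alpha_2-\alpha_3-\alpha_4=0$, i.e.\ $K_{\{1\}}=0$ (the wall $L_1=0$). The normal is $n_0\propto(1,-1,-1,-1)$, oriented toward $v_0$.
\item The face $f_1$ misses $v_\emptyset=0$. It lies on $\sum\alpha_i=1$, i.e.\ $K_\emptyset=0$, with normal $n_1\propto-(1,1,1,1)$.
\item The face $f_2$ misses $v_{\{1,2\}}$. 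It lies on $K_{\{1,3\}}=0$, i.e.\ $\alpha_1+\alpha_3=\alpha_2+\alpha_4$, and contains $0$, $v_{\{1,3\}}$, $v_{\{1,4\}}$ and $v_0$. The normal is $n_2\propto\pm(1,-1,1,-1)$.
\item By symmetry, $n_3\propto\pm(1,-1,-1,1)$ and $n_4\propto\pm(1,1,-1,-1)$.
\end{itemize}
The signs are fixed by requiring $\langle n_i, v_i-p\rangle>0$ for a point $p$ on $f_i$.

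The second step computes the Gram matrix $\langle n_i,n_j\rangle$ and checks that it equals the Schläfli matrix $-\cos(\pi/m_{ij})$ displayed above:
\begin{itemize}
\item $\langle n_0,n_j\rangle=-\frac12$ for $j\ge1$, giving angle $\pi/3$;
\item $\langle n_i,n_j\rangle=0$ for distinct $i,j\ge1$, giving angle $\pi/2$.
\end{itemize}
All four vectors $(\pm1,\pm1,\pm1,\pm1)/2$ have unit length, so this is a quick explicit check. The sign of $n_4$ matters, so I would fix it carefully using $v_4$ versus $v_{\{1,2\}}$. Since the reflections are affine (the faces do not all pass through one point), the group is the affine Weyl group rather than a finite one. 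This matches the Gram matrix being positive semidefinite with a one-dimensional kernel, as noted before the proposition.

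The third step invokes the Davis/Vinberg theorem for Euclidean simplices with Coxeter dihedral angles. Its hypotheses are exactly what the first two steps verify: convexity, compactness, and angles $\pi/m_{ij}$ with $m_{ij}\in\{2,3\}$. It gives that the map $\mathcal{U}(W_{\mathrm{aff}},\Simplex)\to\R^4$, $[g,x]\mapsto g x$, is a homeomorphism, and that $W_{\mathrm{aff}}$ acts faithfully with the stated presentation. Checking that the $r_i$ satisfy the relations is immediate from the angles, since the product of two reflections in hyperplanes meeting at angle $\pi/m$ is a rotation by $2\pi/m$.

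The main obstacle is bookkeeping rather than conceptual: identifying which wall each face lies on and getting the orientations of the normals right. A single sign error would turn a $\pi/3$ angle into $2\pi/3$. I would cross-check by computing the angle directly as the angle at the common codimension-two face, using two vertices not on that face.
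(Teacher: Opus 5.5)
Your approach is the same as the paper's. The paper identifies the normals to $f_0,f_2,f_3,f_4$ with the simple roots $\alpha_{(0)},\alpha_{(2)},\alpha_{(3)},\alpha_{(4)}$ of $D_4$ and the normal to $f_1$ with the highest root, checks that the dihedral angles are $\pi/m_{ij}$, and invokes \cite[Theorem 6.4.3]{Davis}.

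There is one bookkeeping slip to correct, though it does not affect the conclusion: $f_2$ does not lie on $K_{\{1,3\}}=0$. The hyperplane $\alpha_1+\alpha_3=\alpha_2+\alpha_4$ contains $v_0,v_1,v_2,v_4$ and misses $v_{\{1,3\}}$, so it carries $f_3$, not $f_2$. Your own sign rule would flag this, since it returns $\langle n_2,v_2-0\rangle=0$. In general the face opposite $v_I$ lies on $K_I=0$, so the inward normals are $n_0=\tfrac12(-1,1,1,1)$, $n_1=-\tfrac12(1,1,1,1)$, $n_2=\tfrac12(1,1,-1,-1)$, $n_3=\tfrac12(1,-1,1,-1)$ and $n_4=\tfrac12(1,-1,-1,1)$. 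Your lines were the right ones, just cyclically shifted among the labels $2,3,4$. The Coxeter matrix is invariant under permuting the nodes $2,3,4$, so once the labels are fixed the Gram-matrix check goes through exactly as you state.
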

 
 \begin{rem}
 The action of $W_{\mathrm{aff}}$ on $\R^4$ depends on the choice of a chamber $\Delta \subset [0, \frac 12]^4$ together with an ordering of its faces. The actions corresponding to different chambers $\Delta$ and $\Delta'$ are conjugate within the group of affine transformations of $\R^4$.
\end{rem}
   
 \begin{rem} \label{rem:Nakajima} The faces of the model simplex $\Delta$ are restrictions of Nakajima walls, namely
\begin{enumerate}
\item $f_0=\{\bfm = \mathbf{0}\} \cap \{ -\alpha_1+\alpha_2+\alpha_3+\alpha_4 = 0, -m_1+m_2+m_3+m_4=0\} $	
\item $f_1=\{\bfm = \mathbf{0}\}\cap \{1- \alpha_1-\alpha_2-\alpha_3 - \alpha_4 = 0, -m_1-m_2-m_3-m_4=0\}$
\item $f_2=\{\bfm = \mathbf{0}\} \cap \{\alpha_1+\alpha_2-\alpha_3-\alpha_4=0, m_1+m_2-m_3-m_4 =0\}$
\item $f_3=\{\bfm = \mathbf{0}\} \cap\{\alpha_1-\alpha_2+\alpha_3-\alpha_4=0, m_1-m_2+m_3-m_4 =0\}$
\item $f_4=\{\bfm = \mathbf{0}\} \cap\{\alpha_1-\alpha_2-\alpha_3+\alpha_4=0, m_1-m_2-m_3+m_4 =0\}$
\end{enumerate}
 
\end{rem}

\begin{proof}[Remarks on Homeomorphism in \ref{prop:homeo}]\hfill
  \begin{itemize}
    \item Each ``alcove'' of $\mathcal{U}(W_{\mathrm{aff}}, \Simplex)$ is then labelled by an element of $W_{\mathrm{aff}}$. The induced action of $W_{\mathrm{aff}}$ on $\mathbb{R}^4$ is the natural one: if $\bfalpha \in \R^4$ is in the alcove $\Simplex_g$ labelled by $g \in W_{\mathrm{aff}}$, and $g^{-1}\bfalpha$ is the corresponding point in $\Simplex=\Simplex_{\mathrm{id}}$, then for any $g' \in W_{\mathrm{aff}}$, $g'\cdot \bfalpha := g'g \cdot (g^{-1} \bfalpha)$.
    \item 
The faces $f_0, f_2, f_3, f_4$ are respectively perpendicular to the positive simple roots
\begin{align*}
\alpha_{(0)}&=\left(-\frac{1}{2},  \frac{1}{2},  \frac{1}{2}, \frac{1}{2}\right)\\
\alpha_{(2)}&=\left(\frac{1}{2},  \frac{1}{2}, - \frac{1}{2}, -\frac{1}{2}\right)\\
\alpha_{(3)}&= \left(\frac{1}{2}, - \frac{1}{2}, \frac{1}{2}, -\frac{1}{2}\right)\\
\alpha_{(4)}&=\left(\frac{1}{2}, - \frac{1}{2}, - \frac{1}{2}, \frac{1}{2}\right),
\end{align*}
while $f_1$ is relatively perpendicular to the highest root
\begin{align}
\alpha_{(1)}:= 2 \alpha_{(0)} + \alpha_{(2)} +\alpha_{(3)}+ \alpha_{(4)}=\left(\frac{1}{2},  \frac{1}{2},  \frac{1}{2}, \frac{1}{2}\right),
\end{align}
and intersects at its midpoint. One can easily check that
 the dihedral angles between $f_i$ and $f_j$  are $\frac{\pi}{m_{ij}}$ for $m_{ij}$ given by the Coxeter matrix $M$ of the affine $D_4$ Coxeter group above, as required by the \cite[Theorem 6.4.3]{Davis}. 
  
\item In the associated naming convention, we've labelled the central node of the affine $D_4$ Dynkin diagram $`0'$ and then labelled the exterior nodes $`1', `2', `3', `4'$, as shown in Figure \ref{fig:Coxeter}. 
 One finite subgroup of $W_{\mathrm{aff}}$ is the group $W_{\mathrm{fin}}$ generated by $r_0, r_2, r_3, r_4$. Note that each of these reflections preserves the origin in $\R^4$. This is the Coxeter group associated to the ordinary $D_4$ Dynkin diagram, which sits inside the affine $D_4$ Dynkin diagram as the set of nodes $`0', `2, `3', `4'$. It has $192$ elements.

 A corollary of $\mathcal{U}(W_{\mathrm{aff}}, \Simplex) \simeq \R^4$ is that 
there is a bijection between elements of $W_{\mathrm{fin}}$ and alcoves of $\mathcal{U}(W_{\mathrm{aff}}, \Simplex) \simeq \R^4$ which contain the origin in $\R^4$. 
This count is related to our count of chambers of $[0, \frac{1}{2}]^4$ as follows. Observe that within the cube $[0, \frac{1}{2}]^4$, exactly half of the $24$ chambers contain the vertex $(0, 0, 0, 0)$. 
 The group of reflections perpendicular to the usual axes in $\R^4$ is a subgroup of $W_{\mathrm{fin}}$, consequently, the planes $x_i=0$ for $i=1, 2, 3, 4$ are all walls of the alcoves. 
Consequently, in total, there are $2^4 \cdot 12=192$ alcoves meeting at $(0,0, 0, 0)$, and indeed this is equal to $|W_{\mathrm{fin}}|$.
\end{itemize}
\begin{ex}\label{ex:action}
  Given a chamber in $[0, \frac{1}{2}]^4$ the corresponding element of $W_{\mathrm{aff}}$ can be determined by changing out the vertices one by one, staying within the interior chambers as long as possible for ease. Consider the exterior chamber labelled by $I_0=\{1, 2, 3\}$. The even partition set is $\mathcal{I}'=\{\{1, 2\}, \{1, 3\}, \{2,3\}, \{1, 2, 3,4\}\}$.
We find that $g= r_0 r_1 r_4$ via the following steps: 
\[
\begin{array}{c|ccccc}
  & r_0 & r_1 & r_2 & r_3 & r_4 \\
  \hline
 \mathrm{id} & \mathrm{central}  &\emptyset & \{1, 2\} & \{1, 3\} & \{1, 4\}  \\
  r_4 & \mathrm{central} & \emptyset &  \{1, 2\} & \{1, 3\} & \{2, 3\}  \\  
  r_1r_4 & \mathrm{central} &
  \{1, 2, 3, 4\} & \{1, 2\} & \{1, 3\} & \{2, 3\}  \\
    r_0r_1r_4 & \{1, 2, 3\}  &
  \{1, 2, 3, 4\}  & \{1, 2\} & \{1, 3\} & \{2, 3\}  \\
\end{array}
\]
The column entries for $r_1, r_2, r_3, r_4$ are the sets in the even partition set $\mathcal{I}$. In particular if $I$ is in column $r_i$, then action by $r_i$ will replace $I$ by $\{1, 2, 3, 4\}-I$, leaving all others alone, representing the reflection replacing the vertex $v_I$ with $v_{\{1, 2, 3, 4\} - I}$. The column entries for $r_0$ are either $\mathrm{central}$,  corresponding to the distinguished vertex $(\frac{1}{4}, \frac{1}{4}, \frac{1}{4}, \frac{1}{4})$ in interior chambers, or the odd subset $I_0$ corresponding to the distinguished vertex $v_{I_0}$ in exterior chambers.
\end{ex}
\end{proof}

\begin{cor}\label{cor:orbits} 
There are $5$ orbits of the vertices of $[0, \frac{1}{2}]^4$ under the $W_{\mathrm{aff}}$ action. We label these by the sets $I \subset \{1, 2, 3, 4\}$ corresponding to $v_I$ in \eqref{eq:vI}.
\begin{itemize}
\item $\{1, 2\} \leftrightarrow \{3, 4\}$
\item $\{1, 3\} \leftrightarrow \{2, 4\}$
\item $\{2, 3\} \leftrightarrow \{1, 4\}$
\item $\emptyset \leftrightarrow \{1,2 3, 4\}$
\item The $8$ subsets of odd cardinality
\end{itemize}
\end{cor}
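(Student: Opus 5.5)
The plan is to argue entirely inside the Coxeter-complex picture of Proposition \ref{prop:homeo}. Via the homeomorphism $\iota:\mathcal{U}(W_{\mathrm{aff}},\Simplex)\to\R^4$, every point of $\R^4$ is $W_{\mathrm{aff}}$-equivalent to a point of the closed fundamental simplex $\Simplex$, and two points of $\Simplex$ are equivalent only if they coincide. The reason is that the stabilizer of a point is generated by the reflections in the faces containing it, and these fix the point. Consequently the orbits of vertices of $[0,\frac12]^4$ correspond exactly to the vertices of $\Simplex$ that they meet, namely $v_0=(\frac14,\frac14,\frac14,\frac14)$, $v_\emptyset$, $v_{\{1,2\}}$, $v_{\{1,3\}}$ and $v_{\{1,4\}}$. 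There are therefore at most five orbits, and at most five once we know every cube vertex lies in the orbit of some vertex of $\Simplex$. Since $v_0$ is not a cube vertex, the claim becomes: each cube vertex is equivalent to one of the four cube vertices of $\Simplex$, with the odd subsets handled separately as below.

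First I would find the orbits of the cube vertices. Every cube vertex is a vertex of some chamber $g\cdot\Simplex$, and the reflections preserve the combinatorial type of a vertex. So the vertex $g\cdot v_i$ of $g\cdot\Simplex$ lies in the orbit of $v_i$, and the task is to read off which vertex of $\Simplex$ each cube vertex corresponds to, using the labelling from Example \ref{ex:action}. In that table, column $r_i$ tracks the vertex opposite face $f_i$, meaning the vertex that the reflection $r_i$ moves. A cube vertex appearing in column $r_i$ of some chamber is therefore in the orbit of the $i$-th vertex of $\Simplex$.

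Next I would read off the orbits from the wall-crossing rule of Lemma \ref{lem:adjacentchambers}. Crossing an interior wall replaces $I$ by its complement $I^c$ and keeps the column index, so $\{1,2\}\sim\{3,4\}$, $\{1,3\}\sim\{2,4\}$, $\{1,4\}\sim\{2,3\}$ and $\emptyset\sim\{1,2,3,4\}$. Here I must check that each column $r_2,r_3,r_4$ only ever contains one complementary pair. This holds because $r_2$ is perpendicular to the root $\alpha_{(2)}$, and so on. The first four orbits are then distinct classes. The odd subsets $I_0$ appear only in column $r_0$, as the distinguished vertex of the exterior chambers, so they lie in the orbit of $v_0$. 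I would check that all eight are reached, either by symmetric versions of Example \ref{ex:action} or because the exterior chambers are exactly the $W_{\mathrm{aff}}$-images of $\Simplex$ through the face $f_0$.

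The main obstacle is the consistency of the column assignment: I need that a given cube vertex never appears in two different columns across different chambers. This follows from the uniqueness-of-orbit-representative property of fundamental domains (Theorem 6.4.3 of \cite{Davis}), since a point of $\R^4$ meets $\Simplex$ in exactly one point. That property is what makes the five orbits distinct.
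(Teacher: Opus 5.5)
Your proposal is correct and is essentially the argument implicit in the paper: Proposition \ref{prop:homeo} makes $\Delta$ a strict fundamental domain, so each cube vertex is classified by which vertex of $\Delta$ it is an image of (its column in Example \ref{ex:action}). The wall-crossings then give $v_I\sim v_{I^c}$ in columns $r_1,\dots,r_4$, and $v_{I_0}\sim v_0$ in column $r_0$ (e.g.\ $r_0v_0=v_{\{1\}}$). Two things to tidy: the column consistency comes from the fundamental-domain property you cite at the end, not from perpendicularity to $\alpha_{(2)}$, and your ``at most five \ldots\ at most five'' should read ``exactly five'', which holds once every cube vertex is placed in some vertex-orbit and the orbit of $v_0$ is seen to contain the odd vertices.
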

\begin{cor}
The faces of $[0, \frac{1}{2}]^4$ are identified by $\boldsymbol{\alpha} \mapsto (\frac{1}{2}, \frac{1}{2}, \frac{1}{2}, \frac{1}{2})-\boldsymbol{\alpha}$.
\end{cor}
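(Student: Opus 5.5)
The plan is to show that the point reflection $\boldsymbol{\alpha}\mapsto(\frac12,\frac12,\frac12,\frac12)-\boldsymbol{\alpha}$ through the center $(\frac14,\frac14,\frac14,\frac14)$ is itself an element of $W_{\mathrm{aff}}$ acting on $\R^4$ as in Proposition \ref{prop:homeo}. Since this map sends the face $\{\alpha_i=0\}$ of $[0,\frac12]^4$ to the opposite face $\{\alpha_i=\frac12\}$, the corollary follows. I will build the map as a composition of two pieces: the antipodal map $-\mathrm{Id}$, which lies in the finite subgroup $W_{\mathrm{fin}}$, and a translation, which lies in the translation lattice of $W_{\mathrm{aff}}$.

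\textbf{The antipodal map.} I use the explicit simple roots $\alpha_{(0)},\alpha_{(2)},\alpha_{(3)},\alpha_{(4)}$ from the remarks after Proposition \ref{prop:homeo}. The group $W_{\mathrm{fin}}=\langle r_0,r_2,r_3,r_4\rangle$ is the Weyl group of $D_4$ realized on $\R^4$; it has $192$ elements and fixes the origin. The $D_4$ Weyl group contains its longest element $w_0$, and in type $D_n$ with $n$ even one has $w_0=-\mathrm{Id}$. To be safe I will verify this concretely. The root system spanned by these simple roots consists of the vectors $\tfrac12(\pm1,\pm1,\pm1,\pm1)$ with an even number of minus signs (times a fixed scale), together with $\pm e_i\pm e_j$-type vectors after rescaling. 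Its Weyl group contains the reflections in the coordinate hyperplanes, as already noted in those remarks. The composition of the four coordinate reflections $x_i\mapsto -x_i$ is exactly $-\mathrm{Id}$, so $-\mathrm{Id}\in W_{\mathrm{fin}}$.

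\textbf{The translation.} The highest root $\alpha_{(1)}=\tfrac12(1,1,1,1)$ is a root, so the reflection $s$ in the hyperplane $\{\sum\alpha_i=0\}$ lies in $W_{\mathrm{fin}}$. The face $f_1$ lies in the parallel hyperplane $\{\sum\alpha_i=1\}$, and the unit normal to both is $n=\tfrac12(1,1,1,1)$. The hyperplane $\{\sum\alpha_i=1\}$ sits at signed distance $\tfrac12$ from the origin along $n$. Therefore $r_1\circ s$ is translation by $2\cdot\tfrac12\, n=(\tfrac12,\tfrac12,\tfrac12,\tfrac12)$. Composing, $r_1\circ s\circ(-\mathrm{Id})$ is the map $\boldsymbol{\alpha}\mapsto(\tfrac12,\tfrac12,\tfrac12,\tfrac12)-\boldsymbol{\alpha}$, and it lies in $W_{\mathrm{aff}}$.

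\textbf{Consistency check.} As a sanity check against Corollary \ref{cor:orbits}, this element sends $v_I$ to $v_{I^c}$, which matches the listed orbits $\{1,2\}\leftrightarrow\{3,4\}$, $\emptyset\leftrightarrow\{1,2,3,4\}$, and so on. It also fixes the orbifold point $(\tfrac14,\tfrac14,\tfrac14,\tfrac14)$.

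\textbf{Main obstacle.} The only step needing care is confirming that the coordinate reflections, and hence $-\mathrm{Id}$, really belong to the group generated by $r_0,r_2,r_3,r_4$ rather than to some larger group. I will check this directly: the vectors $e_i$ must be (up to scale) roots obtained as integer combinations of the simple roots, for example $e_1=\alpha_{(2)}+\alpha_{(3)}+\alpha_{(4)}+\alpha_{(0)}$ up to normalization. Equivalently, I can exhibit $-\mathrm{Id}$ as the longest word in $W_{\mathrm{fin}}$ via the central node relations.
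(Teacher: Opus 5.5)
Your proof is correct, and it is more explicit than the paper's. The paper gives no separate argument for this corollary. It reads it off from the alcove tiling of Proposition \ref{prop:homeo} and the vertex orbits of Corollary \ref{cor:orbits}, in which $v_I$ and $v_{I^c}=(\frac12,\frac12,\frac12,\frac12)-v_I$ always share an orbit. You instead exhibit the map itself as an element $r_1\circ s\circ(-\mathrm{Id})$ of $W_{\mathrm{aff}}$. This shows that opposite faces are identified pointwise by exactly this map, and the vertex pairing then follows as a consequence. Your translation step is right: $r_1=t\circ s$, where $t$ is translation by $(\frac12,\frac12,\frac12,\frac12)$.

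There is a shorter route to the same element. The faces $f_1,f_2,f_3,f_4$ all pass through $v_0=(\frac14,\frac14,\frac14,\frac14)$. Their normals $\frac12(1,1,1,1)$, $\frac12(1,1,-1,-1)$, $\frac12(1,-1,1,-1)$, $\frac12(1,-1,-1,1)$ are mutually orthogonal. Hence $r_1r_2r_3r_4$ is the point reflection $\boldsymbol{\alpha}\mapsto 2v_0-\boldsymbol{\alpha}$. This is the same element as yours, because $s\,r_2r_3r_4=-\mathrm{Id}$. This route never needs the fact that $-\mathrm{Id}\in W_{\mathrm{fin}}$.

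One correction: your description of the finite root system is wrong. The roots generated by $\alpha_{(0)},\alpha_{(2)},\alpha_{(3)},\alpha_{(4)}$ are the eight vectors $\pm e_i$ together with all sixteen vectors $\frac12(\pm1,\pm1,\pm1,\pm1)$. For instance, $\alpha_{(0)}$ itself has an odd number of minus signs. The vectors $\pm e_i\pm e_j$ are not roots, since they have length $\sqrt2$ and the roots have length $1$.

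This error is not load-bearing. What you actually need is that each $e_i$ is a root, and that holds exactly, with no normalization. For $j=2,3,4$ we have $e_j=r_j(\alpha_{(0)})=\alpha_{(0)}+\alpha_{(j)}$. Also $e_1=\alpha_{(0)}+\alpha_{(2)}+\alpha_{(3)}+\alpha_{(4)}=r_4r_3(e_2)$. Note that being an integer combination of simple roots is not enough on its own to be a root. It is exhibiting $e_i$ in the $W_{\mathrm{fin}}$-orbit of a simple root that puts the coordinate reflection in $W_{\mathrm{fin}}$.
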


\begin{rem}\label{rem:Wallextension}
The above corollary means that we can use $W_{\mathrm{fin}}$ to fill out the box $[-\frac{1}{2}, \frac{1}{2}]^4$ and then tile $\R^4$ by unit shifts. 
\end{rem}

\begin{rem}[Extension to $\R^4 \times \C^4$]\label{rem:extension}
The group $W_{\mathrm{aff}}$ acts on $\R^4 \times \C^4$ as follows: Given a reflection $r_i: \R^4 \to \R^4$, define $r'_i$ to be its $\R$-linear part, i.e.\ the reflection in the parallel $3$-plane through the origin. Complex-linear extension yields a $\C$-linear map $\C^4 \to \C^4$, again denoted by $r_i'$. Then the action is given by $(r_i, r_i')$ for $i=0, \ldots, 4$. The subset $\Simplex \times \C^4$ provides a fundamental domain for this action. Note that by linearity the origin in $\C^4$ is preserved, i.e.\ strongly parabolic bundles are distinguished. 
\end{rem}

Using the formula $\boldsymbol{\alpha} \mapsto (\mathrm{Id} - 2 \mathbf{n} \mathbf{n}^T) \boldsymbol{\alpha} + 2 \mathbf{n} \mathbf{n}^T \mathbf{b}$ for the reflection in the hyperplane $(\boldsymbol{\alpha} - \mathbf{b})^T \cdot  \mathbf{n}=0$ through parallel vector $\mathbf{b}$ with unit normal $\mathbf{n}$ we obtain 

\begin{lem}\label{lem:reflections}
The reflections $r_0, r_1, r_2, r_3, r_4$ in the faces $f_0, f_1, f_2, f_3, f_4$ of the model chamber $\Delta$ are given as follows:
\[
\begin{gathered}
	r_0(\bfalpha) = \frac{1}{2} \begin{pmatrix}
 1 & 1 &1 & 1\\
 	1 & 1 & -1 & -1\\
 	1 & -1 &1 & 1\\
 	1 & -1 &-1 & 1
 \end{pmatrix}
\begin{pmatrix}
\alpha_1 \\ \alpha_2 \\ \alpha_3 \\ \alpha_4	
\end{pmatrix}, \quad 
r_2(\bfalpha) = \frac{1}{2} \begin{pmatrix}
 1 & -1 &1 & 1\\
 	-1 & 1 & 1 & 1\\
 	1 & 1 &1 & -1\\
 	1 & 1 &-1 & 1
 \end{pmatrix}
\begin{pmatrix}
\alpha_1 \\ \alpha_2 \\ \alpha_3 \\ \alpha_4	
\end{pmatrix},\\
r_3(\bfalpha) = \frac{1}{2} \begin{pmatrix}
 1 & 1 & -1 & 1\\
 	1 & 1 & 1 & -1\\
 	-1 & 1 &1 & 1\\
 	1 & -1 &1 & 1
 \end{pmatrix}
\begin{pmatrix}
\alpha_1 \\ \alpha_2 \\ \alpha_3 \\ \alpha_4	
\end{pmatrix}, \quad 
r_4(\bfalpha) = \frac{1}{2} \begin{pmatrix}
 1 & 1 &1 & -1\\
 	1 & 1 & -1 & 1\\
 	1 & -1 &1 & 1\\
 	-1 & 1 &1 & 1
 \end{pmatrix}
\begin{pmatrix}
\alpha_1 \\ \alpha_2 \\ \alpha_3 \\ \alpha_4	
\end{pmatrix}
\end{gathered}
\]
and
\[
r_1(\bfalpha) = \frac{1}{2} \begin{pmatrix}
 1 & -1 &-1 & -1\\
 	-1 & 1 & -1 & -1\\
 	-1 & -1 &1 & -1\\
 	-1 & -1 &-1 & 1
 \end{pmatrix}
\begin{pmatrix}
\alpha_1 \\ \alpha_2 \\ \alpha_3 \\ \alpha_4	
\end{pmatrix}
+
\begin{pmatrix}
\frac12 \\ \frac12 \\ \frac12 \\ \frac12	
\end{pmatrix}
\]
\end{lem}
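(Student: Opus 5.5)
The plan is to apply the standard formula for a Euclidean reflection directly. The reflection in the affine hyperplane $\{(\bfalpha-\mathbf{b})^T\mathbf{n}=0\}$ with unit normal $\mathbf{n}$ is
\[
\bfalpha \mapsto (\mathrm{Id}-2\mathbf{n}\mathbf{n}^T)\bfalpha + 2\mathbf{n}\mathbf{n}^T\mathbf{b}.
\]
For each face $f_i$ of the model simplex $\Simplex$ from Proposition \ref{prop:homeo}, I will first read off a unit normal and a base point. Then I will substitute them into this formula.

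\textbf{Normals.} For the faces $f_0,f_2,f_3,f_4$, the normals are the simple roots $\alpha_{(0)},\alpha_{(2)},\alpha_{(3)},\alpha_{(4)}$ listed in the remarks after Proposition \ref{prop:homeo}. Each has entries $\pm\tfrac12$, so each is already a unit vector. Equivalently, by Remark \ref{rem:Nakajima}, these faces lie in the Nakajima walls
\[
-\alpha_1+\alpha_2+\alpha_3+\alpha_4=0,\quad \alpha_1+\alpha_2-\alpha_3-\alpha_4=0,\quad \alpha_1-\alpha_2+\alpha_3-\alpha_4=0,\quad \alpha_1-\alpha_2-\alpha_3+\alpha_4=0.
\]

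\textbf{Base points for $f_0,f_2,f_3,f_4$.} Each of these four faces contains $v_1=v_{\emptyset}=0$. Indeed, $f_i$ omits only $v_i$, and none of these four faces is $f_1$. So I can take $\mathbf{b}=0$, and each reflection is linear with matrix $\mathrm{Id}-2\mathbf{n}\mathbf{n}^T$. Since $\mathbf{n}\mathbf{n}^T$ has entries $\pm\tfrac14$, the matrix $\mathrm{Id}-2\mathbf{n}\mathbf{n}^T$ has diagonal entries $\tfrac12$ and off-diagonal entries $\pm\tfrac12$, with the sign of entry $(a,b)$ equal to $-n_an_b/|n_an_b|$. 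Writing out these four matrices gives the stated $r_0,r_2,r_3,r_4$.

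\textbf{The face $f_1$.} This face lies in the wall $\sum_i\alpha_i=1$. Its unit normal is $\mathbf{n}=\tfrac12(1,1,1,1)$, and a convenient base point is $\mathbf{b}=(\tfrac14,\tfrac14,\tfrac14,\tfrac14)=v_0$, which lies on $f_1$. Then
\[
\mathrm{Id}-2\mathbf{n}\mathbf{n}^T=\mathrm{Id}-\tfrac12 J,
\]
where $J$ is the all-ones matrix. This gives diagonal entries $\tfrac12$ and off-diagonal entries $-\tfrac12$. The translation part is
\[
2\mathbf{n}\mathbf{n}^T\mathbf{b}=\tfrac12 J\,\mathbf{b}=\tfrac12(1,1,1,1),
\]
which matches the stated $r_1$.

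\textbf{Sanity check.} Finally, I will confirm that each $r_i$ fixes the four vertices on $f_i$ and sends the remaining vertex $v_i$ to its mirror image. For example, $r_1$ should fix $v_0,v_2,v_3,v_4$; this is a direct check.

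The main risk is purely bookkeeping: keeping the signs correct in each rank-one projector $\mathbf{n}\mathbf{n}^T$. The vertex checks guard against sign errors, so there is no conceptual obstacle.
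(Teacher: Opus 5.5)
Your method is the paper's: its proof is the single line invoking $\bfalpha\mapsto(\mathrm{Id}-2\mathbf n\mathbf n^T)\bfalpha+2\mathbf n\mathbf n^T\mathbf b$. Your normals are correct: the roots $\alpha_{(0)},\alpha_{(2)},\alpha_{(3)},\alpha_{(4)}$ for $f_0,f_2,f_3,f_4$, and $\tfrac12(1,1,1,1)$ for $f_1$. Your base points are also correct: $v_1=0$ for $f_0,f_2,f_3,f_4$, and $v_0$ for $f_1$. This does reproduce $r_1,r_2,r_3,r_4$ exactly as printed.

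However, your assertion that writing out the matrices ``gives the stated $r_0$'' is false. With $\mathbf n=\tfrac12(-1,1,1,1)$, your own sign rule gives $(\mathrm{Id}-2\mathbf n\mathbf n^T)_{34}=-2n_3n_4=-\tfrac12$. So the third row of $r_0$ is $\tfrac12(1,-1,1,-1)$, whereas the lemma prints $\tfrac12(1,-1,1,1)$.

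The printed matrix cannot be the reflection in $f_0$, or indeed any reflection. It is not symmetric. Its square has $(1,4)$ entry $\tfrac12\neq 0$, so it is not even an involution. It also sends $v_4=(\tfrac12,0,0,\tfrac12)\in f_0$ to $(\tfrac12,0,\tfrac12,\tfrac12)$. So the vertex check you propose would fail against the statement as written.

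The correct conclusion of your argument is the lemma with this misprint fixed: the symmetric matrix with $(3,4)$ entry $-\tfrac12$. This matrix does fix $v_1,v_2,v_3,v_4$ and sends $v_0$ to $v_{\{1\}}=(\tfrac12,0,0,0)$. You should state this correction explicitly rather than claim agreement with the printed $r_0$. The later direct check of $\mathcal{T}^{\omega_I}_{\mathfrak{B}(\Simplex)}\circ r_0=R_0\circ\mathcal{T}^{\omega_I}_{\mathfrak{B}(\Simplex)}$ also only works with the corrected matrix.
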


\subsection{Affine \texorpdfstring{$D_4$}{D4} Coxeter group acting on target \texorpdfstring{$\R^4$}{R4}}\label{subsec:target_action}

We now describe the $W_{\mathrm{aff}}$ action on the target $\R^4$ parameterized by $\mathbf{x}=(x_1,x_2,x_3,x_4)$. Again we will see that this action can be extended to an action of $W_\aff$ on $\R^4_\mathbf{x} \times \C^4_\mathbf{z}$.

Recall that we defined the action of $W_{\mathrm{aff}}$ on the domain $\R^4$ starting from a fixed interior chamber $\Simplex$ corresponding to 
$\mathcal{I}=\{\emptyset, \{1, 2 \}, \{1, 3\}, \{1,4 \} \}$. Observe that the associated Torelli map is 
\[ \mathcal{T}^{\omega_I}_{\mathfrak{B}(\Simplex)}(\boldsymbol{\alpha})=\begin{pmatrix} 4 \pi^2 K_{\emptyset}(\boldsymbol{\alpha})\\ 4 \pi^2 K_{\{1,2\}}(\boldsymbol{\alpha})\\4 \pi^2 K_{\{1, 3\}}(\boldsymbol{\alpha})\\4 \pi^2 K_{\{1, 4\}}(\boldsymbol{\alpha})\end{pmatrix}, \]
with order determined by the labelling of the vertices of $\Simplex$. The images of the five vertices are
\begin{equation}
\begin{array}{c  l l c }
v_0&:=\left(\frac{1}{4}, \frac{1}{4}, \frac{1}{4}, \frac{1}{4} \right) &\mapsto ( 0, 0, 0, 0)&=:V_0\\
v_1&:=v_{\emptyset} &\mapsto  (4\pi^2, 0, 0, 0)&=:V_1\\
v_2&:=v_{\{1, 2\}} &\mapsto (0, 4\pi^2, 0, 0)&=:V_2 \\
v_3&:=v_{\{1, 3\}} &\mapsto (0, 0, 4\pi^2, 0)&=:V_3\\
v_4&:=v_{\{1, 4\}} &\mapsto (0, 0, 0, 4\pi^2)&=:V_4
\end{array}
\end{equation}
As before, after checking the angles we again have: 
\begin{prop}\label{Prop:target_action}
Consider the simplex $\Simplex_{\mathrm{target}}$ formed by $V_0, V_1, V_2, V_3, V_4$ in the target $\R^4$. Let $R_i$ by the reflection in the face $F_i$ of the simplex $\Simplex_{\mathrm{target}}$ that does not contain $V_i$. The reflections $R_0, R_1, R_2, R_3, R_4$ generate an action of the affine $D_4$ Coxeter group $W_{\mathrm{aff}}$ on $\R^4$. The natural map $\iota: 
\mathcal{U}(W_{\mathrm{aff}},\Simplex_{\mathrm{target}}) \simeq \R^4$ is a homeomorphism.
\end{prop}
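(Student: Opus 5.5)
The plan is to reduce Proposition \ref{Prop:target_action} to Proposition \ref{prop:homeo} by an affine isomorphism of $\R^4$ intertwining the two simplices, rather than checking the Davis criterion \cite[Theorem 6.4.3]{Davis} from scratch.

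First I would observe that on the model chamber $\Simplex$ the map $\mathcal{T}^{\omega_I}_{\mathfrak{B}(\Simplex)}$ is the restriction of the affine map
\[
A(\bfalpha) = 4\pi^2\bigl(K_\emptyset(\bfalpha), K_{\{1,2\}}(\bfalpha), K_{\{1,3\}}(\bfalpha), K_{\{1,4\}}(\bfalpha)\bigr)
\]
on all of $\R^4$. Its linear part has rows $\pm(1,1,1,1)$ and $(1,1,-1,-1)$, $(1,-1,1,-1)$, $(1,-1,-1,1)$, all scaled by $4\pi^2$. These rows are mutually orthogonal of equal length $2$, so the linear part is $8\pi^2$ times an orthogonal matrix. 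Hence $A$ is a similarity of $\R^4$. The table preceding the statement shows $A(v_i)=V_i$ for $i=0,\dots,4$, so $A(\Simplex)=\Simplex_{\mathrm{target}}$ and $A(f_i)=F_i$.

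Next, a similarity conjugates the reflection in a hyperplane $H$ to the reflection in $A(H)$. Therefore $R_i = A\circ r_i\circ A^{-1}$ for each $i$. It follows that $g\mapsto A g A^{-1}$ carries the $W_{\mathrm{aff}}$-action of Proposition \ref{prop:homeo} to the group generated by the $R_i$. In particular the $R_i$ satisfy the Coxeter relations with matrix $M$, and the dihedral angles of $\Simplex_{\mathrm{target}}$ equal those of $\Simplex$, namely $\pi/m_{ij}$. The map $A$ then induces a $W_{\mathrm{aff}}$-equivariant homeomorphism $\mathcal{U}(W_{\mathrm{aff}},\Simplex)\to\mathcal{U}(W_{\mathrm{aff}},\Simplex_{\mathrm{target}})$, given by $[g,x]\mapsto[g,Ax]$. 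This homeomorphism commutes with the natural maps $\iota$ to $\R^4$ via $A$. Since $\iota$ is a homeomorphism for $\Simplex$, it is one for $\Simplex_{\mathrm{target}}$.

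The only real verification is that $A(v_i)=V_i$ and that the linear part is conformal. The first uses the floor terms in $K_I$: the floor is $1$ for $I=\emptyset$ and $0$ for $|I|=2$. For example, $K_\emptyset(v_\emptyset)=1$, $K_{\{1,2\}}(v_{\{1,2\}})=1$, and the other entries vanish, as in the table. I expect this to be the only point requiring care. Should a sign convention disagree, one can alternatively compute the Gram matrix of the unit normals of the $F_i$ directly and check that it is $-\tfrac12 I_0$, as for the Schläfli matrix, and then invoke \cite[Theorem 6.4.3]{Davis}.
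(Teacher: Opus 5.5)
Your argument is correct, but it takes a different route from the paper. The paper proves this exactly as it proves Proposition~\ref{prop:homeo}: it checks that the dihedral angles of $\Simplex_{\mathrm{target}}$ are $\pi/m_{ij}$ and then invokes \cite[Theorem 6.4.3]{Davis}. Here that check is immediate. For $i\ge 1$ the face $F_i$ is $\{x_i=0\}$, and $F_0=\{\sum_i x_i=4\pi^2\}$. So distinct $F_i,F_j$ with $i,j\ge 1$ meet at angle $\pi/2$, and $F_0$ meets each $F_i$ at angle $\pi/3$.

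You instead transport Proposition~\ref{prop:homeo} along the affine extension $A$ of $\mathcal{T}^{\omega_I}_{\mathfrak{B}(\Simplex)}$. The essential input is that the linear part of $A$ is $8\pi^2$ times an orthogonal matrix, which gives $R_i=A r_i A^{-1}$. The map $[g,x]\mapsto[g,Ax]$ is then well defined on $\mathcal{U}$ because $A(f_i)=F_i$ with matching labels.

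Your route depends on Proposition~\ref{prop:homeo}, which the paper itself only sketches. The direct check is self-contained modulo Davis, and it is even shorter for the target simplex than for the model chamber.

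On the other hand, your argument also yields the intertwining relation $\mathcal{T}^{\omega_I}_{\mathfrak{B}(\Simplex)}\circ r_i=R_i\circ\mathcal{T}^{\omega_I}_{\mathfrak{B}(\Simplex)}$ used in Proposition~\ref{prop:equivariance}(1). It also makes explicit the conformality of the linear part. The paper's first justification there (that $\mathcal{T}^{\omega_I}_{\mathfrak{B}(\Simplex)}$ maps the faces $f_i$ to the faces $F_i$) tacitly relies on this, since an arbitrary affine map carrying faces to faces need not conjugate orthogonal reflections to orthogonal reflections.
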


\begin{rem}
Note that $\Simplex_{\mathrm{target}}$ is the image of the Torelli map for any choice of chamber $\Simplex$ if we compute with respect to the chamber basis $\mathfrak{B}(\Simplex)$. In particular, the action of $W_\aff$ on the target $\R^4$ does not depend on the choice of a chamber.
\end{rem}
\begin{rem}[Extension to $\R^4 \times \C^4$] 
As before, the action of $W_\aff$
extends to the target $\R^4 \times \C^4$ by extending the linear part of $R_i$ to a $\C$-linear map $R_i' :\C^4 \to \C^4$. The action is then given by $(R_i, R_i')$ for $i=0, \ldots, 4$. The subset $\Simplex_\mathrm{target} \times \C^4$ provides a fundamental domain for this action.
\end{rem}
 
We can map the abstract generators $r_i \in W_{\mathrm{aff}}$ to the matrices  $A_i \in \OO(\Z^5,I_0)^+$ representing the Dehn twist in the basis vector $e_i \in \Z^5$. More concretely,
\[
\begin{gathered}
A_0 = \begin{pmatrix} -1 & 1 & 1 & 1 & 1 \\
 0 & 1 & 0 & 0 & 0 \\
 0 & 0& 1 & 0 & 0 \\
 0 & 0& 0 & 1 & 0 \\
 0 & 0& 0& 0&1 	
 \end{pmatrix}, \quad
 A_1 = \begin{pmatrix} 1 & 0 & 0 & 0 & 0 \\
 1 & -1 & 0 & 0 & 0 \\
 0 & 0& 1 & 0 & 0 \\
 0 & 0& 0 & 1 & 0 \\
 0 & 0& 0& 0&1 	
 \end{pmatrix}, \quad
 A_2 = \begin{pmatrix} 1 & 0 & 0 & 0 & 0 \\
 0 & 1 & 0 & 0 & 0 \\
 1 & 0& -1 & 0 & 0 \\
 0 & 0& 0 & 1 & 0 \\
 0 & 0& 0& 0&1 	
 \end{pmatrix}\\
 A_3 = \begin{pmatrix} 1 & 0 & 0 & 0 & 0 \\
 0 & 1 & 0 & 0 & 0 \\
 0 & 0& 1 & 0 & 0 \\
 1 & 0& 0 & -1 & 0 \\
 0 & 0& 0& 0&1 	
 \end{pmatrix}, \quad
 A_4 = \begin{pmatrix} 1 & 0 & 0 & 0 & 0 \\
 0 & 1 & 0 & 0 & 0 \\
 0 & 0& 1 & 0 & 0 \\
 0 & 0& 0 & 1 & 0 \\
 1 & 0& 0& 0& -1 	
 \end{pmatrix}.
\end{gathered}
\]
This yields a representation $W_{\rm{aff}} \to \OO(\Z^5,I_0)^+$, $g \mapsto A^g$ and hence a right action of $W_{\rm{aff}}$ on bases via $\mathfrak{B} \mapsto \mathfrak{B} \cdot A^g$ and on $\R^4 \times \C^4$ via $(\mathbf{x},\mathbf{z}) \mapsto (\mathbf{x},\mathbf{z}) \cdot A^g$. Recall that the right action of   $\OO(\Z^5,I_0)^+$ on $\R^4 \times \C^4$ is given by $(\mathbf{x},\mathbf{z}) \cdot A = (\hat{A}^t \cdot \mathbf{x} + \hat b, \hat{A}^t \cdot \mathbf{z})$ for $\hat{A} \in \GL(4,\R)$ and $\hat{b} \in \R^4$ as defined earlier in Section \ref{subsec:Torelli_map}.

\begin{lem}\label{lem:affine}
	The affine transformations $\mathbf{x} \mapsto \hat{A}_i^t \cdot \mathbf{x} + \hat{b}_i$ are precisely the reflections $R_i$ in the faces $F_0, F_1, F_2, F_3, F_4$ of the simplex $\Delta_{\mathrm{target}}$.
\end{lem}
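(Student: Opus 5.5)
The proof is a direct computation from the explicit formulas for $\hat A$ and $\hat b$ in the proof of Lemma \ref{lem:period_equivariance}: $\hat a_{kj} = a_{kj}-\tfrac12 a_{0j}$ and $\hat b_j = 2\pi^2 a_{0j}$ for $k,j\in\{1,\dots,4\}$. These come from substituting $x_0 = 2\pi^2-\tfrac12\sum_{k=1}^4 x_k$ (i.e.\ $\int_F\omega_I=4\pi^2$) into $x'_j=\sum_{k=0}^4 a_{kj}x_k$. I will compute $\hat A_i^t$ and $\hat b_i$ for each of the five matrices $A_i$. I will then check that the resulting affine map is the Euclidean orthogonal reflection in a hyperplane, and that this hyperplane contains the four vertices $V_j$, $j\neq i$, of $\Simplex_{\mathrm{target}}$. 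Since those four vertices are affinely independent, they span the face $F_i$, which identifies the map with $R_i$.

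\emph{Case $i\in\{1,2,3,4\}$.} The matrix $A_i$ differs from the identity only in row $i$, where $a_{i0}=1$ and $a_{ii}=-1$. In particular $a_{0j}=\delta_{0j}$, so $a_{0j}=0$ for $j\ge1$. Hence $\hat b_i=0$, and $\hat A_i$ equals the identity except that its $(i,i)$ entry is $-1$. The transformation is therefore $x_i\mapsto -x_i$ with the other coordinates fixed, which is the orthogonal reflection in $\{x_i=0\}$. This hyperplane contains $V_0=0$ and $V_j=4\pi^2 e_j$ for $j\neq i$, so it is the face $F_i$.

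\emph{Case $i=0$.} Row $0$ of $A_0$ is $(-1,1,1,1,1)$ and the other rows are those of the identity. Thus $a_{0j}=1$ for $j\ge1$, giving $\hat a_{kj}=\delta_{kj}-\tfrac12$ and $\hat b_j=2\pi^2$. The map is
\[
\mathbf x\mapsto \mathbf x-\tfrac12\Big(\textstyle\sum_k x_k\Big)\mathbf 1+2\pi^2\mathbf 1 .
\]
With the unit normal $\mathbf n=\tfrac12\mathbf 1$ this is $(\Id-2\mathbf n\mathbf n^t)\mathbf x+2\mathbf n\mathbf n^t\mathbf b$ for any $\mathbf b$ with $\sum_k b_k=4\pi^2$. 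By the formula recalled before Lemma \ref{lem:reflections}, it is the orthogonal reflection in the hyperplane $\{\sum_k x_k=4\pi^2\}$. As a check, the map sends $\sum_k x_k$ to $8\pi^2-\sum_k x_k$, so it fixes exactly that hyperplane. The hyperplane contains $V_1,\dots,V_4$, so it is the face $F_0$.

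The only step needing care is bookkeeping rather than anything conceptual. One must keep the transposes straight, since $x'=\hat A^t x+\hat b$ and the relevant entries are the columns of $A_i$. One must also use the normalization $\ell^2\im\tau=4\pi^2$ from Observation \ref{obs:1}, because it is what makes the constant $\hat b_0=2\pi^2\mathbf 1$ match the face $\{\sum_k x_k=4\pi^2\}$ of $\Simplex_{\mathrm{target}}$.
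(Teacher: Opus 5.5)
Your proposal is correct and follows the same route as the paper: compute $\hat A_i$ and $\hat b_i$ from the matrices $A_i$ via $\hat a_{kj}=a_{kj}-\tfrac12 a_{0j}$ and $\hat b_j=2\pi^2 a_{0j}$, then identify the resulting affine maps as the reflections $R_i$. Your explicit check that the fixed hyperplanes $\{x_i=0\}$ and $\{\sum_k x_k=4\pi^2\}$ contain exactly the four vertices $V_j$, $j\neq i$, spells out what the paper only says it ``reads off''.
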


\begin{proof}
From the matrices $A_0, A_1, A_2, A_3, A_4$ above we obtain
\[
\begin{gathered}
\hat{A}_0 = \frac{1}{2}\begin{pmatrix}
 1 & -1 & -1 & -1 \\
 -1 & 1 & -1 & -1 \\
 -1 & -1 & 1 & -1 \\
 -1 & -1 & -1 & 1 	
\end{pmatrix},\quad
	\hat{A}_1 = \begin{pmatrix}
 -1 & 0 & 0 & 0 \\
 0 & 1 & 0 & 0 \\
 0 & 0 & 1 & 0 \\
 0 & 0 & 0 & 1 	
\end{pmatrix}, \quad
\hat{A}_2 = \begin{pmatrix}
 1 & 0 & 0 & 0 \\
 0 & -1 & 0 & 0 \\
 0 & 0 & 1 & 0 \\
 0 & 0 & 0 & 1 	
\end{pmatrix}\\
\hat{A}_3 = \begin{pmatrix}
 1 & 0 & 0 & 0 \\
 0 & 1 & 0 & 0 \\
 0 & 0 & -1 & 0 \\
 0 & 0 & 0 & 1 	
\end{pmatrix}, \quad
\hat{A}_4 = \begin{pmatrix}
 1 & 0 & 0 & 0 \\
 0 & 1 & 0 & 0 \\
 0 & 0 & 1 & 0 \\
 0 & 0 & 0 & -1 	
\end{pmatrix}
\end{gathered}
\]
and
\[
 \hat{b}_0 = 2 \pi^2 \begin{pmatrix}
 1 \\ 1 \\ 1 \\1 	
 \end{pmatrix}, \quad \hat{b}_1 = \hat{b}_2= \hat{b}_3 = \hat{b}_4=\begin{pmatrix}
 0 \\ 0 \\ 0 \\0 	
 \end{pmatrix}.
\]
From this we read off that $R_i(\mathbf{x})= \hat{A}_i^t \cdot \mathbf{x} + \hat{b}_i$ for $i=0, \ldots, 4$. 	
\end{proof}

\begin{cor}\label{cor:compatibility}
We recover the left action of $W_{\mathrm{aff}}$	on $\R^4 \times \C^4$ defined in Section \ref{subsec:target_action} via $g \cdot (\mathbf{x}, \mathbf{z}) = (\mathbf{x}, \mathbf{z}) \cdot A^{g^{-1}}$ for all $g \in W_{\mathrm{aff}}$.
\end{cor}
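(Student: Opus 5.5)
We verify the identity on the generators $r_0,\dots,r_4$ and then propagate it to all of $W_{\mathrm{aff}}$ formally. Both sides are group actions, and two left actions of a group that agree on a generating set agree everywhere. So the work is in three places: checking that the right-hand side is a left action, checking that it agrees with $(R_i,R_i')$ on each $r_i$, and making sure the conventions line up.

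First, we confirm that $(\mathbf{x},\mathbf{z}) \mapsto (\mathbf{x},\mathbf{z})\cdot A$ is an honest right action of $\OO(\Z^5,I_0)^+$ on $\R^4\times\C^4$.

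1. In the five-dimensional coordinates of Lemma \ref{lem:period_equivariance} the action is $x \mapsto A^t x$, and similarly for $z$. Hence $(x\cdot A)\cdot B = B^tA^tx = (AB)^t x = x\cdot(AB)$.
2. Passing to the four coordinates $x_1,\dots,x_4$ uses the relations $x_0 = 2\pi^2 - \tfrac12\sum_{i\ge 1} x_i$ and $z_0 = -\tfrac12\sum_{i\ge 1} z_i$.
3. These relations are preserved because every $A\in\OO(\Z^5,I_0)^+$ fixes $2e_0+\sum_{i=1}^4 e_i$. Concretely, the fiber integrals $\int_F\omega_I$ and $\int_F\Omega_I$ are unchanged under the re-marking, so the reduction commutes with the action.

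Second, we use that $g\mapsto A^g$ is a homomorphism $W_{\mathrm{aff}}\to\OO(\Z^5,I_0)^+$.

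1. By Remark \ref{rem:compatibility}, $A^g$ is defined by $g\cdot\mathfrak{B}(\Delta)=\mathfrak{B}(\Delta)\cdot A^g$, where the left $W_{\mathrm{aff}}$-action on bases commutes with the right $\OO$-action.
2. This gives $(gh)\cdot\mathfrak{B} = g\cdot(\mathfrak{B}\cdot A^h) = (g\cdot\mathfrak{B})\cdot A^h = \mathfrak{B}\cdot A^gA^h$. By simple transitivity, $A^{gh}=A^gA^h$.
3. As an independent check, the explicit matrices $A_0,\dots,A_4$ can be verified directly to satisfy the Coxeter relations $(A_iA_j)^{m_{ij}}=1$.

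Given the homomorphism, $g\cdot v := v\cdot A^{g^{-1}}$ is a left action:
\[
(gh)\cdot v = v\cdot A^{h^{-1}}A^{g^{-1}} = (v\cdot A^{h^{-1}})\cdot A^{g^{-1}} = g\cdot(h\cdot v).
\]

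Third, we compare on generators. Since $r_i^{-1}=r_i$, we get
\[
r_i\cdot(\mathbf{x},\mathbf{z}) = (\hat A_i^t\mathbf{x}+\hat b_i,\ \hat A_i^t\mathbf{z}).
\]
By Lemma \ref{lem:affine}, the first component is $R_i(\mathbf{x})$. The linear part of $R_i$ is $\hat A_i^t$, and its complex-linear extension is by definition $R_i'$, so the second component is $R_i'(\mathbf{z})$. Thus the two left actions agree on generators, and hence on all of $W_{\mathrm{aff}}$.

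The main obstacle I expect is keeping the left/right and transpose conventions consistent. In particular, one must confirm that the assignment in Remark \ref{rem:compatibility} is a homomorphism rather than an anti-homomorphism; if it is not, the inverse in $A^{g^{-1}}$ would have to be dropped. I would settle this first by computing a two-generator product such as $r_0r_1$ both ways and comparing.
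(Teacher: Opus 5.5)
Your argument is correct and is essentially the paper's route: the corollary follows from Lemma \ref{lem:affine} on the generators (using $r_i^{-1}=r_i$ and $A^{r_i}=A_i$), together with the fact that $g\mapsto A^g$ is a representation. You justify that fact cleanly via $g\cdot(\mathfrak{B}\cdot A)=(g\cdot\mathfrak{B})\cdot A$ and simple transitivity, which makes $v\mapsto v\cdot A^{g^{-1}}$ a left action. Your consistent use of $\hat A^t$ on the $\mathbf{z}$-component matches the convention stated in Section \ref{subsec:target_action}; the $\hat A\cdot z$ in the proof of Lemma \ref{lem:period_equivariance} is a slip, and it is harmless on generators since each $\hat A_i$ is symmetric.
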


\subsection{Equivariance of the Torelli map} 
As stated in the introduction, we computed the full Torelli map $\mathcal{T}_{\mathfrak{B}(\Simplex)}: (\Simplex \times \C^4) \cap \mathcal{R}  \to \R^4 \times \C^4$ only for certain chambers $\Delta$. We have introduced (left) actions of $W_{\aff}$ on the domain $\R^4 \times \C^4$, on the target $\R^4 \times \C^4$ and on the set of admissible homology bases in the previous section. In this section we will establish various equivariance properties of the Torelli map with respect to these actions.

\begin{figure}[ht] 
\begin{centering} 
\includegraphics[height=2.0in]{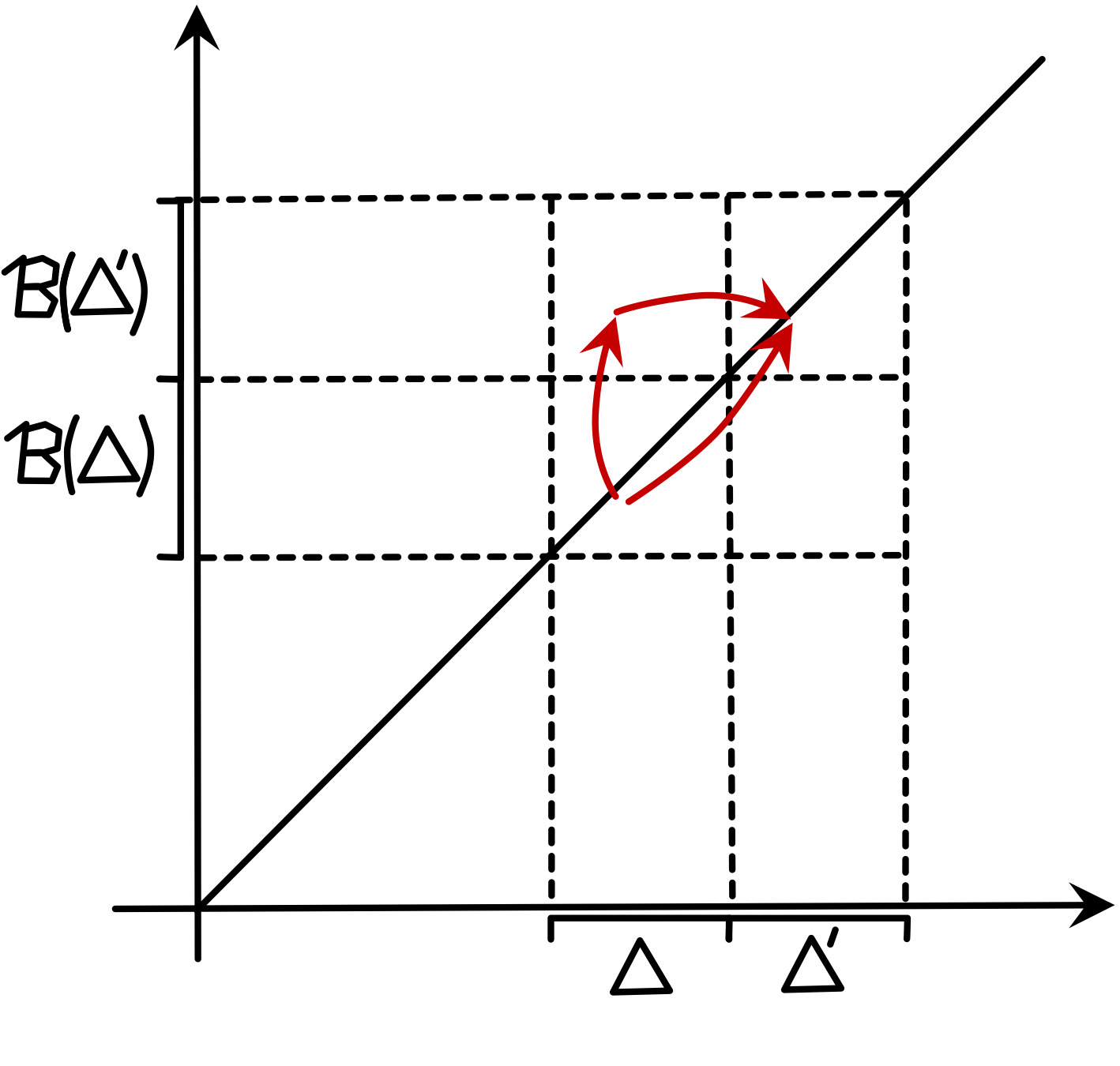}
\caption{\label{fig:groupaction}  $W_{\mathrm{aff}}$ action.}
\end{centering}
\end{figure}

\subsubsection{\texorpdfstring{$W_{\mathrm{aff}}$}{Affine D4 Coxeter group}-invariance for the integral of \texorpdfstring{$\omega_I$}{the real symplectic form}}

In this section we establish the invariance of the Torelli map on strongly parabolic moduli spaces under the action of the affine Coxeter group $W_{\mathrm{aff}}$ acting on the weight cube $[0, \frac{1}{2}]^4 \subset \R^4$. 
We have 
\[ \mathcal{T}^{\omega_I}: \mathrm{Bases} \times \R^4 \to \R^4, \quad (\mathfrak{B}, \bfalpha) \mapsto  \mathcal{T}^{\omega_I}_{\mathfrak{B}}(\bfalpha)\]
and each of these three spaces has a $W_{\mathrm{aff}}$-action.

\bigskip

\begin{prop}\label{prop:equivariance}
Let $\Delta$ be the model chamber with associated basis $\mathfrak{B}(\Delta)$ and ensuing  left action of $W_\aff$ on admissible bases and on $\R^4_{\bfalpha}$. The period map $\mathcal{T}^{\omega_I}$ has the equivariance properties:
\begin{enumerate}
\item $\mathcal{T}_{\mathfrak{B}(\Delta)}^{\omega_I}(g \cdot \bfalpha) = g \cdot \mathcal{T}_{\mathfrak{B}(\Delta)}^{\omega_I}(\bfalpha)$ 
\smallskip
\item $\mathcal{T}_{g \cdot \mathfrak{B}(\Delta)}^{\omega_I}(\bfalpha) = g^{-1} \cdot \mathcal{T}_{\mathfrak{B}(\Delta)}^{\omega_I}(\bfalpha)$
\end{enumerate}
for $g \in W_\aff$ and $\bfalpha \in \R^4$.
\end{prop}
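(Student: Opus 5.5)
Both parts reduce to checking on the generators $r_0,\dots,r_4$ of $W_\aff$, since all the actions involved are group actions. For (1), the definitions of the actions and composition propagate equivariance from generators to arbitrary $g$. For (2), a cocycle-type induction on word length does the same.

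For (1) I would use Proposition \ref{prop:stronglyparabolicvolumesv2} in the model chamber $\Delta$, where $\mathcal{T}^{\omega_I}_{\mathfrak{B}(\Delta)}(\bfalpha)=4\pi^2(K_\emptyset,K_{\{1,2\}},K_{\{1,3\}},K_{\{1,4\}})(\bfalpha)$. This is an affine-linear map $\Phi:\R^4\to\R^4$ extended off $\Delta$. It is invertible, and it sends the vertices $v_0,\dots,v_4$ of $\Delta$ to the vertices $V_0,\dots,V_4$ of $\Delta_{\mathrm{target}}$, as tabulated before Proposition \ref{Prop:target_action}. An affine bijection carrying one simplex to another with vertex labels matched carries the face $f_i$ to $F_i$. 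It therefore conjugates the reflection $r_i$ to the unique affine involution fixing $F_i$ pointwise and swapping the two sides. I must still check that this conjugate is the Euclidean reflection $R_i$. The cleanest route is to verify directly that $\Phi\circ r_i=R_i\circ\Phi$, using the explicit matrices of Lemma \ref{lem:reflections} and the form $R_i(\mathbf{x})=\hat{A}_i^t\mathbf{x}+\hat b_i$ of Lemma \ref{lem:affine}. This is five small matrix identities. Equivalently, $\Phi$ is a similarity: its linear part is $4\pi^2$ times a matrix with $\pm1$ entries, and one checks its columns are orthogonal of equal length, so $\Phi$ conjugates orthogonal reflections to orthogonal reflections. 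Since the action on $\R^4_\bfalpha$ is generated by the $r_i$ (Proposition \ref{prop:homeo}), (1) follows for all $\bfalpha\in\R^4$. Here $\mathcal{T}^{\omega_I}_{\mathfrak{B}(\Delta)}$ is understood as its affine extension $\Phi$.

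For (2), the integrals of $\omega_I$ are linear in the homology class. The Dehn twist $r_i$ acts on the basis by Picard--Lefschetz, $S_j\mapsto S_j+I(S_j,S_i)S_i$, which is exactly $\mathfrak{B}\mapsto\mathfrak{B}\cdot A_i$. By Lemma \ref{lem:period_equivariance} (its proof applies verbatim to $\mathcal{T}^{\omega_I}$ on Hitchin moduli spaces, using Observation \ref{obs:1} for $\int_F\omega_I=4\pi^2$), we get $\mathcal{T}^{\omega_I}_{\mathfrak{B}\cdot A}=\mathcal{T}^{\omega_I}_{\mathfrak{B}}\cdot A$. Then, by Remark \ref{rem:compatibility} and Corollary \ref{cor:compatibility},
\[
\mathcal{T}^{\omega_I}_{g\cdot\mathfrak{B}(\Delta)}(\bfalpha)=\mathcal{T}^{\omega_I}_{\mathfrak{B}(\Delta)\cdot A^g}(\bfalpha)=\mathcal{T}^{\omega_I}_{\mathfrak{B}(\Delta)}(\bfalpha)\cdot A^g=g^{-1}\cdot\mathcal{T}^{\omega_I}_{\mathfrak{B}(\Delta)}(\bfalpha).
\]

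The main obstacle is the bookkeeping of left versus right actions. It has two parts. First, $g\mapsto A^g$ defined via $g\cdot\mathfrak{B}(\Delta)=\mathfrak{B}(\Delta)\cdot A^g$ must be consistent with the generator assignment $r_i\mapsto A_i$. Because the left action on bases is by parallel transport of Dehn twists of the fixed spheres in $\mathfrak{B}(\Delta)$, $A^{gh}$ may be $A^gA^h$ or $A^hA^g$. I would verify the order on a length-two word such as $r_0r_1$ and fix conventions so that the displayed chain above holds. Second, I must confirm that the fixed chamber basis is used throughout, so that Picard--Lefschetz with respect to $S_i\in\mathfrak{B}(\Delta)$ really yields $A_i$ and not a conjugate.
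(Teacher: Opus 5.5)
Your proposal is correct and follows the paper's proof essentially verbatim. For (1) you reduce to generators via $\mathcal{T}^{\omega_I}_{\mathfrak{B}(\Delta)}\circ r_i=R_i\circ\mathcal{T}^{\omega_I}_{\mathfrak{B}(\Delta)}$, using faces $f_i\mapsto F_i$ or the direct computation with Lemma~\ref{lem:reflections} and Lemma~\ref{lem:affine}, and for (2) you use the same chain through Remark~\ref{rem:compatibility}, Lemma~\ref{lem:period_equivariance} and Corollary~\ref{cor:compatibility}. Your similarity check (the linear part is $8\pi^2$ times an orthogonal matrix) makes the paper's face-matching argument rigorous, and your left/right worry dissolves because Dehn twists act linearly on $H_2$, hence commute with the right $\OO(\Z^5,I_0)^+$-action, which gives $A^{gh}=A^gA^h$.
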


\begin{proof}
(1) In order to prove the first equivariance of the period map, we need to show that it is equivariant for all $g \in W_{\mathrm{aff}}$ and for every element of the domain.  It suffices to check that it is equivariant for the generators of the group action. This is clear, since $\mathcal{T}^{\omega_I}_{\mathfrak{B}(\Simplex)}$ maps the faces $f_i$ of $\Simplex$ to the faces $F_i$ of $\Simplex_{\mathrm{target}}$ and the reflections $r_i$ and $R_i$ in these faces generate the respective $W_\aff$-actions on $\R^4_{\bfalpha}$ and on $\R^4_{\mathbf{x}}$.
Alternatively, using Lemma \ref{lem:reflections} and Lemma \ref{lem:reflections} we may directly compute that 
\[ \mathcal{T}^{\omega_I}_{\mathfrak{B}(\Simplex)}(\boldsymbol{\alpha})=\begin{pmatrix} 4 \pi^2 K_{\emptyset}(\boldsymbol{\alpha})\\ 4 \pi^2 K_{\{1,2\}}(\boldsymbol{\alpha})\\4 \pi^2 K_{\{1, 3\}}(\boldsymbol{\alpha})\\4 \pi^2 K_{\{1, 4\}}(\boldsymbol{\alpha})\end{pmatrix}
\]
satisfies the relation
\[ 
\mathcal{T}^{\omega_I}_{\mathfrak{B}(\Simplex)} \circ r_i = R_i \circ \mathcal{T}^{\omega_I}_{\mathfrak{B}(\Simplex)}
\]
for all $i=0, \ldots, 4$.
\smallskip

\noindent (2) Now we show the second equivariance. By Remark \ref{rem:compatibility} we have $g \cdot \mathfrak{B}(\Delta) = \mathfrak{B}(\Delta) \cdot A^g$  for $A^g \in  \OO(\Z^5, I_0)^+$. Then \[
\mathcal{T}_{g \cdot \mathfrak{B}(\Delta)}^{\omega_I}(\bfalpha) = \mathcal{T}_{\mathfrak{B}(\Delta) \cdot A^g}^{\omega_I}(\bfalpha)=\mathcal{T}_{\mathfrak{B}(\Delta)}^{\omega_I}(\bfalpha) \cdot A^g = g^{-1} \cdot \mathcal{T}_{\mathfrak{B}(\Delta)}^{\omega_I}(\bfalpha)
\]
by Lemma \ref{lem:period_equivariance} and Corollary \ref{cor:compatibility}. 
\end{proof}

Recall from Section \ref{subsec:period_domain} that the domain $\mathcal R \subset (0, \frac 12 )^4 \times \C^4$ of generic parameters $(\boldsymbol{\alpha}, \mathbf{m})$ is the complement of an arrangement of codimension-three planes which intersect $(0,\frac12)^4 \times \{0\}$ precisely in the chamber walls. Hence any basis of $H_2(\mathcal{M}(\boldsymbol{\alpha}, \mathbf{m}),\Z)$ may be moved by parallel transport to a basis of $H_2(\mathcal{M}(\boldsymbol{\alpha}', \mathbf{m}'),\Z)$ for any other generic $(\boldsymbol{\alpha}', \mathbf{m}')$.

\begin{lem}\label{lem:basiscomparison}
If $\mathfrak{B}= \mathfrak{B}(\Simplex)$ is the preferred homology basis for chamber $\Simplex$ moved by parallel transport to chamber $\Simplex'=g \cdot \Simplex$ for $g \in W_{\mathrm{aff}}$, then $g \cdot \mathfrak{B} = \mathfrak{B}'$ where $\mathfrak{B}'= \mathfrak{B}(\Simplex')$ is the preferred homology basis for chamber $\Simplex'$. In particular, if $\mathfrak{B}$ is a parallel admissible homology basis, then the Torelli map \[\mathcal{T}_{\mathfrak{B}}^{\omega_I}: (\R^4 \times \{ \mathbf{0}\}) \cap \mathcal{R} \to \R^4\] is affine-linear.
\end{lem}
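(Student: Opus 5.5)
We reduce to generators of $W_{\mathrm{aff}}$ by induction on word length, so it suffices to treat a single reflection $g=r_i$ and a chamber $\Simplex$ adjacent to $\Simplex'=r_i\cdot\Simplex$ across the face $f_i$. The left action of $W_{\mathrm{aff}}$ on bases is defined via the chamber containing the base point and is built from Dehn twists along the spheres $S_i$ of that basis, so for the inductive step we must be careful that the action applied at each stage is the one attached to the current chamber. Since both the chamber labelling and the Dehn twist formulas are expressed in the same combinatorial language (even partition sets, odd subsets $I_0$, and the table in Example \ref{ex:action}), this bookkeeping should be routine once the single-wall case is settled.

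For a single wall we use the description of the nilpotent cone in Section \ref{sec:nilpotentcone}. Crossing an interior wall $K_I=0$ replaces the label $I\in\mathcal I$ by $I^c$, as in \eqref{eq:wallcrossing}. Crossing a wall $L_i\in\{0,1\}$ from a type B chamber to the adjacent exterior chamber replaces the central sphere by $S_{I_0}$ and keeps the exterior labels. We compare the parallel transport of $\mathfrak B(\Simplex)$ with $\mathfrak B(\Simplex')$ by moving $\bsa$ along a path with $\bfm\neq 0$ generic, which avoids the codimension-three wall in $\mathcal R$. The required identity in $H_2$ is then the Picard--Lefschetz formula:
\[
[S_I]\mapsto -[S_{I^c}],\qquad [S_0]\mapsto [S_0]+[S_I],
\]
together with the analogous formula for $r_0$. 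To verify it we use that $H_2$ classes are detected by pairing with $[\omega_I]$ together with the intersection form. Concretely, the $\omega_I$-periods of $\mathfrak B(\Simplex)$ given by Proposition \ref{prop:stronglyparabolicvolumesv2} and Corollary \ref{cor:centralvolumes} are affine-linear functions on the closure of each chamber. The period of the collapsing sphere $S_I$ tends to $0$ at the wall, and on the other side it equals the negative of the period of $S_{I^c}$, since $K_{I^c}=-K_I$. Proposition \ref{prop:sameperiods} shows these periods are $\bfm$-independent and hence continuous along the transport path.

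Because the parallel transport is continuous and the Torelli periods determine the class modulo the radical spanned by $F$, this pins the transported basis down to $r_i\cdot\mathfrak B(\Simplex')$. We then match the central sphere using Observation \ref{obs:1}. The main obstacle is rigorously identifying the transported classes, and not merely their periods, across the wall. Our first attempt is the lattice rigidity argument: an admissible basis with prescribed intersection form and prescribed $\omega_I$-periods in an open set of $\bsa$ is unique, because $\OO(\Z^5,I_0)^+$ acts simply transitively and its affine action on $\R^4$ (Lemma \ref{lem:period_equivariance}) is faithful on an open set.

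Finally, affine-linearity follows by combining two facts. First, by Lemma \ref{lem:period_equivariance}, $\mathcal T^{\omega_I}_{\mathfrak B}=\mathcal T^{\omega_I}_{\mathfrak B(\Simplex')}\cdot A$ on $\Simplex'$, where $\mathfrak B=\mathfrak B(\Simplex')\cdot A$ with $A=A^{g^{-1}}$. Second, Proposition \ref{prop:equivariance}(1) gives $\mathcal T^{\omega_I}_{\mathfrak B(\Simplex)}(g\bsa)=g\cdot\mathcal T^{\omega_I}_{\mathfrak B(\Simplex)}(\bsa)$. Together these show that on every chamber $\mathcal T^{\omega_I}_{\mathfrak B}$ is the restriction of the single affine map $\mathcal T^{\omega_I}_{\mathfrak B(\Simplex)}$ extended to $\R^4$ as in Proposition \ref{prop:equivariance}. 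The result is therefore globally affine-linear.
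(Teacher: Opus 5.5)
Your outline follows the paper: reduce to one reflection, compare the chamber formulas of Proposition \ref{prop:stronglyparabolicvolumesv2} and Corollary \ref{cor:centralvolumes} across the wall via $K_{I^c}=-K_I$ (resp.\ $\int_{S_0}\omega_I=-4\pi^2K_{I_0}=-\int_{S_{I_0}}\omega_I$ for $r_0$), and finish by rigidity, since an element of $\OO(\Z^5,I_0)^+$ preserving the $\omega_I$-periods on an open set of $\bsa$ is the identity. The gap is in the step that carries the content. You need that on $\Simplex'$ the period of the transported collapsing class is the affine continuation $4\pi^2K_I$, which is negative there and equals $-\int_{S_{I^c}}\omega_I$. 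Continuity cannot give this. Write the transported basis as $\mathfrak B(\Simplex')\cdot A$. Agreement of periods along the common wall only says that the affine action of $A$ fixes the image of the wall, an open piece of a face of $\Simplex_{\mathrm{target}}$. Since $\OO(\Z^5,I_0)^+$ acts faithfully by Euclidean isometries of $\R^4_{\mathbf x}$, $A$ is either the Dehn twist in the collapsing slot or the identity. The identity option means $\mathfrak B(\Simplex)$ transports to $\mathfrak B(\Simplex')$ itself, with $[S_I]\mapsto+[S_{I^c}]$. It gives period $4\pi^2|K_I|$ in that slot and a kinked but continuous central period. That is perfectly continuous, yet it contradicts the lemma.

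What excludes it is differentiability. Suppose the $\omega_I$-period of a parallel class is smooth on $\mathcal R$ near wall points $(\bsa_w,\bfm)$ with $\bfm\neq0$. Then the two affine pieces agree to first order and therefore coincide. This matches the Eguchi--Hanson picture, where the period $(t,z)$ of the vanishing class passes smoothly through $t=0$ with $z\neq 0$. The paper uses this implicitly when it continues the formula across the wall. Your proposal must state it as an input: the hyperK\"ahler structure depends smoothly on $\bsa$ over $\mathcal R$. Also, Proposition \ref{prop:sameperiods} does not even give continuity. It requires $(\bsa,\mathbf 0)\in\mathcal R$, i.e.\ $\bsa$ off the walls, so $\bfm$-independence says nothing at the wall points your path crosses.

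A few smaller points.
\begin{itemize}
\item The obstacle you single out, classes versus periods, is already settled by the rigidity argument you cite. The real issue is knowing the periods beyond the wall.
\item Your displayed formula has a sign slip. The twist gives $[S_I]\mapsto-[S_I]$, and the claim to prove is $-[S_I]=[S_{I^c}]$ in $\Simplex'$.
\item The $\omega_I$-periods determine a class outright, not merely modulo $F$, since $\int_F\omega_I=4\pi^2\neq0$.
\item Proposition \ref{prop:equivariance}(1) is a statement about the affinely extended model formula. Your last paragraph actually needs the formula identity $\mathcal T^{\omega_I}_{\mathfrak B(\Simplex)}=g\cdot\mathcal T^{\omega_I}_{\mathfrak B(\Simplex')}$ from the wall-crossing comparison. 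Once the continuation step is granted, affine-linearity follows immediately.
\end{itemize}
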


\begin{proof}
We first show that the periods of $\omega_I$ in chamber $\Simplex'$ computed with respect to bases $g \cdot \mathfrak{B}$ and $\mathfrak{B}'$ are the same, i.e.\ $\mathcal{T}^{\omega_I}_{g \cdot \mathfrak{B}}(\bfalpha) = \mathcal{T}^{\omega_I}_{ \mathfrak{B}'}(\bfalpha)$ for all $\bfalpha \in \Simplex'$. Using Proposition \ref{prop:equivariance} this amounts to proving the identity $\mathcal{T}^{\omega_I}_{\mathfrak{B}} = g \cdot \mathcal{T}^{\omega_I}_{\mathfrak{B}'}$ for all $g \in W_\aff$. It is clearly enough to check this identity for the generators $r_0, \ldots, r_4$ of $W_\aff$. Then $\Simplex$ and $\Simplex'$ are adjacent and we need to examine how the formula for the periods of $\omega_I$  in Proposition \ref{prop:stronglyparabolicvolumesv2} (see also Corollary \ref{cor:centralvolumes}) behave under wall-crossing. 

If $\Simplex$ and $\Simplex'$ are adjacent interior chambers then the associated even partition sets satisfy $\mathcal{I}\backslash\{I\}= \mathcal{I}'\backslash \{I^c\}$ according to Lemma \ref{lem:adjacentchambers}. Then $K_{I^c} = - K_I$ and so $\int_{S_{I^c}} \omega_I = - \int_{S_I} \omega_I$. Since all other periods remain unaffected by this type of wall-crossing, we have shown that $\mathcal{T}^{\omega_I}_{\mathfrak{B}} = R_i \cdot \mathcal{T}^{\omega_I}_{\mathfrak{B}'}$ for $i=1, \ldots, 4$. If $\Delta$ is an interior chamber with adjacent exterior chamber $\Delta'$ labelled by the odd order subset $I_0$, then $\int_{S_0} \omega_I = - 4 \pi^2 K_{I_0} = - \int_{S_{I_0}} \omega_I$. It follows that $\mathcal{T}^{\omega_I}_{\mathfrak{B}} = R_0\cdot \mathcal{T}^{\omega_I}_{\mathfrak{B}'}$.

We claim that this implies that $g \cdot \mathfrak{B} = \mathfrak{B}'$. This and the final claim follow from the following general observation: Two local parallel admissible bases $\mathfrak{B}=(S_0, \ldots, S_4)$ and $\mathfrak{B'}=(S_0', \ldots, S_4')$ differ by an element $A \in \OO(\Z^5, I_0)^+$, and hence by Lemma \ref{lem:period_equivariance}
\[
\mathcal{T}_{\mathfrak{B}'}^{\omega_I}( \boldsymbol{\alpha})= \mathcal{T}_{\mathfrak{B}}^{\omega_I}( \boldsymbol{\alpha})\cdot A
\Longleftrightarrow 
\begin{pmatrix}
\int_{S_0'} \omega_I \\ \vdots \\ \int_{S_4'} \omega_I  
\end{pmatrix}
= A^t \cdot
\begin{pmatrix}
\int_{S_0} \omega_I \\ \vdots \\ \int_{S_4} \omega_I 	
\end{pmatrix} 
\]
for all $\boldsymbol{\alpha}$ in a given chamber. If in addition 
$\int_{S_i'} \omega_I = \int_{S_i} \omega_I$ for $i=0, \ldots ,4$ and all $\boldsymbol{\alpha}$, then $A$ has to be the identity matrix since the image of the period map with respect to the basis $\mathfrak{B}$ on the whole chamber contains a basis of $\R^5$. 
\end{proof}

\subsubsection{\texorpdfstring{$W_{\mathrm{aff}}$}{Affine D4 Coxeter}-invariance for the integral of \texorpdfstring{$\Omega_I$}{the holomorphic symplectic form}}
In contrast to the real symplectic form $\omega_I$, where we first computed the periods in all 24 chambers and then observed certain equivariance properties with respect to the action of $W_\aff$, we have only computed the periods of the complex symplectic form $\Omega_I$ in two chambers so far. However, we found that the periods of $\Omega_I$ are independent of $\bfalpha$ and can be computed from intersection numbers. More precisely, if $\mathfrak{B}=(S_0, S_1, S_2, S_3, S_4)$ is an admissible homology basis, then by Corollary \ref{cor:intersection_numbers} 
\begin{equation}\label{eq:linear}
\int_{S_i} \Omega_I = -2 \pi \sum_{j=1}^4 m_j (I(S_i, \sigma_j^+) - I(S_i, \sigma_j^-)) =  -2 \pi \sum_{j=1}^4 m_j  I(S_i, \Sigma_j). 
\end{equation}
The polar sections $\sigma_j^\pm$ and hence the homology classes $\Sigma_j \in H_2( \cM(\bfalpha,\bfm),\Z)$ are again independent of $\boldsymbol{\alpha}$. The periods $\int_{S_i} \Omega_I$ are thus linear in the complex masses $m_j$ with coefficients given by the intersection numbers $I(S_i, \Sigma_j)$. These are locally constant on $\mathcal R$, hence constant since $\mathcal R$ is connected. Consequently, we obtain the following

\begin{lem}\label{lem:linear}
If $\mathfrak{B}$ is a parallel admissible homology basis, the Torelli map \[\cT^{\Omega_I}_{\mathfrak{B}}:( \{\mathbf{0}\} \times \C^4) \cap \mathcal{R} \to \C^4\] is complex-linear. The coefficients are given by intersection numbers as in \eqref{eq:linear}.
\end{lem}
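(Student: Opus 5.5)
The plan is to combine Corollary \ref{cor:intersection_numbers} with the topological invariance of the relevant intersection numbers. By Corollary \ref{cor:intersection_numbers}, for any parallel admissible basis $\mathfrak{B}=(S_0,\ldots,S_4)$ and any $(\bsa,\bfm)\in\calR$ with all $m_j\neq 0$, we have
\[
\int_{S_i}\Omega_I = -2\pi\sum_{j=1}^4 m_j\, I(S_i,\Sigma_j),
\]
where $\Sigma_j=\sigma_j^+-\sigma_j^-$. So it suffices to prove two things: the integers $I(S_i,\Sigma_j)$ do not depend on $(\bsa,\bfm)$, and the resulting formula extends to the loci where some $m_j=0$ (where the polar sections degenerate).

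For the first point, I would regard the classes $\Sigma_j$ as sections of the local system $\mathcal{H}$ of Proposition \ref{prop:localsystem} over the open set $\calR^\ast=\calR\cap\{m_j\neq 0 \ \forall j\}$. The polar sections $\sigma_j^\pm$ are cut out by the explicit equations in the proof of Proposition \ref{prop:exteriorintersection} (e.g.\ $u=0$, $w=\pm m_0p_0$). These equations depend continuously, indeed holomorphically, on $\bfm$ and not at all on $\bsa$. Under the local trivializations of Proposition \ref{prop:fiberbundle} they therefore vary continuously, and $[\Sigma_j]$ is a continuous, hence parallel, section of $\mathcal{H}$. The parallel classes $S_i$ are also constant, and the intersection form is preserved by parallel transport. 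Hence $I(S_i,\Sigma_j)$ is locally constant on $\calR^\ast$.

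The set $\calR^\ast$ is connected: $\calR$ is the complement of real codimension-three planes, and removing the complex hyperplanes $\{m_j=0\}$ keeps it connected. So the coefficients are constant on $\calR^\ast$. This shows that $\bfm\mapsto\cT^{\Omega_I}_{\mathfrak{B}}(\bfm)$ agrees with a fixed $\C$-linear map on $\calR^\ast$.

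It remains to extend to all of $\calR$. The periods $\int_{S_i}\Omega_I$ vary continuously in $(\bsa,\bfm)$, because $\Omega_I$ varies smoothly on the bundle $\tcalX$, or alternatively via the holomorphic structure of $\calP_0(\bsa)$ from Theorem \ref{thm:CFW}. Since $\calR^\ast$ is dense in $\calR$, the linear formula holds everywhere. The expected main obstacle is the step asserting that $[\Sigma_j]$ is parallel, in particular justifying that $\partial_\infty\sigma_j^+=\partial_\infty\sigma_j^-$ uniformly in $\bfm$, so that $\Sigma_j$ is a well-defined absolute class varying continuously. I would handle it with the $|\beta|\to\infty$ limit argument given before Corollary \ref{cor:intersection_numbers}, which identifies $\partial_\infty\calM$ with the strongly parabolic boundary independently of $\bfm$.
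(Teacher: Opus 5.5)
Your argument is correct and is essentially the paper's: Corollary~\ref{cor:intersection_numbers} writes each period as $-2\pi\sum_j m_j\, I(S_i,\Sigma_j)$, and these integer coefficients are locally constant, hence constant by connectedness. You restrict to the still-connected set $\calR^\ast$ where all $m_j\neq 0$, so that the polar sections are actually defined, and then extend by density and continuity of the periods; this is slightly more careful than the paper, which simply asserts local constancy on all of $\calR$.
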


We will now use the transformation behaviour of the distinguished basis for each chamber under the action of $W_\aff$ in Lemma \ref{lem:basiscomparison}  to derive an expression for any such basis. The preferred basis of $H_2(\cM(\boldsymbol{\alpha}, \mathbf{m}),\Z)$ is the same as the preferred basis of $H_2(\cM(\boldsymbol{\alpha}, \mathbf{0}),\Z)$. Consequently, using the result about the integral of $\Omega_I$ in two chambers in Proposition \ref{prop:exteriorintersection} and Proposition \ref{prop:interiorintersection}, we can conclude that

\begin{prop}\label{prop:integralofOmegaI}
In an interior chamber labelled by even partition set $\mathcal{I}$, the integral of $\Omega_I$ over the exterior sphere labelled by the subset $J \in \mathcal{I}$ is given by 
\begin{equation}\label{eq:periodinterior}
\int_{S^2_J} \Omega_I = 2\pi M_J,
\end{equation}
where 
\[M_J = \sum_{j \in J} m_{z_j} -\sum_{j \notin J} m_{z_j}.\]
In an exterior chamber labelled by one of the eight inequalities $L_i \leq 0$ or $L_i \geq 1$ ($L_i$ was defined in \eqref{eq:L} earlier) with associated even partition set $\mathcal{I}$, 
the central sphere is labelled by $I_0=\{i\}$ or $I_0=\{j,k,l\}$ respectively.  
The integral of $\Omega_I$ over the exterior sphere labelled by the subset $J \in \mathcal{I}$ is given by 
\begin{equation}\label{eq:periodexterior}
\int_{S^2_J} \Omega_I = 2\pi \left(M_J- M_{I_0}\right)
\end{equation}
 \end{prop}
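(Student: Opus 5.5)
The plan is to transport the two explicit computations (Proposition \ref{prop:exteriorintersection} for the exterior chamber labelled by $I_0=\{0,1,p_0\}$, and Proposition \ref{prop:interiorintersection} for the adjacent interior chamber of type B2) to every other chamber. The transport mechanism is the wall-crossing behaviour of the preferred bases, exactly as in the proof of Lemma \ref{lem:basiscomparison}.

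First, fix a parallel admissible basis $\mathfrak{B}$. By Lemma \ref{lem:linear} the map $\mathbf{m}\mapsto \cT^{\Omega_I}_{\mathfrak{B}}(\mathbf{m})$ is $\C$-linear and independent of $\bsa$. Its coefficients $I(S_i,\Sigma_j)$ are constant on the connected set $\mathcal{R}$. Consequently, for each chamber $\Delta'$ it suffices to express $\mathfrak{B}(\Delta')$ in terms of one computed basis, say $\mathfrak{B}(\Delta)$ for the interior B2 chamber, via an integer matrix $A\in \OO(\Z^5,I_0)^+$. Lemma \ref{lem:period_equivariance} then gives $\cT^{\Omega_I}_{\mathfrak{B}(\Delta')}=\cT^{\Omega_I}_{\mathfrak{B}(\Delta)}\cdot A$.

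Second, I determine $A$ chamber by chamber using adjacency. Lemma \ref{lem:basiscomparison} shows that crossing a wall is implemented by a single generator $r_i$, acting on bases by the Picard--Lefschetz reflections of \S\ref{sec:homologyaction}. I will also use that the same $A$ is already pinned down by the $\omega_I$-periods, since those span $\R^5$ on a chamber. Concretely, there are two kinds of wall. Crossing between adjacent interior chambers with $\mathcal{I}\setminus\{I\}=\mathcal{I}'\setminus\{I^c\}$ replaces $S_I$ by $-S_{I^c}$ in homology, and fixes the other exterior spheres. Hence the new exterior sphere has $\Omega_I$-period $-2\pi M_I=2\pi M_{I^c}$, because $M_{I^c}=-M_I$. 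This matches \eqref{eq:periodinterior}. Crossing from an interior chamber of type B to its exterior chamber $I_0$ is the central reflection $r_0$. It sends $S_0\mapsto -S_0$ and $S_J\mapsto S_J+S_0$, up to the identification $S_{I_0}=-S_0^{\mathrm{old}}$ implicit in the $\omega_I$ computation $\int_{S_0}\omega_I=-4\pi^2K_{I_0}$. So the exterior period becomes $2\pi M_J+\int_{S_0^{\mathrm{old}}}\Omega_I$. The central period is obtained from the fiber relation $2S_0+\sum S_J=F$ with $\int_F\Omega_I=0$, which gives $\int_{S_0}\Omega_I=-\pi\sum_{J\in\mathcal{I}}M_J$. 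For $\mathcal{I}$ of type B this sum simplifies to $-2\pi M_{I_0}$, because each index outside $I_0$ and inside $I_0$ appears with the appropriate multiplicity. This yields \eqref{eq:periodexterior}. As a consistency check, the exterior chamber of Proposition \ref{prop:exteriorintersection} should be reproduced from the interior one of Proposition \ref{prop:interiorintersection}.

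Third, since the 24 chambers form a connected adjacency graph (all are $W_{\mathrm{aff}}$-translates of the model chamber, Proposition \ref{prop:homeo}), induction along paths from the B2 chamber covers every interior and exterior chamber.

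The main obstacle is bookkeeping of signs and labels. I need to verify that the parallel transport of the preferred basis across each wall is exactly the Picard--Lefschetz reflection with the sign conventions that match the labelling of spheres by $J$ and $I_0$; in particular, that the new central sphere $S_{I_0}$ equals $-S_0$ rather than $S_0$. The plan is to settle each such sign by comparing $\omega_I$-periods on both sides of the wall, since these are already known in all chambers and determine $A$ uniquely, rather than by geometric arguments.
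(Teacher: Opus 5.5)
Your proposal is correct and is essentially the paper's argument. You fix a parallel basis, use that the intersection-number coefficients are constant on $\mathcal{R}$, and propagate from an explicitly computed chamber via Lemma \ref{lem:basiscomparison}, using $M_{J^c}=-M_J$ at interior walls and $\sum_{J\in\mathcal{I}}M_J=2M_{I_0}$ at the walls between interior and exterior chambers. The only difference is that you start from the interior B2 chamber of Proposition \ref{prop:interiorintersection} rather than the exterior E2 chamber of Proposition \ref{prop:exteriorintersection}; this is immaterial, since the two computations are related by exactly the $r_0$ wall-crossing you describe.
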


\begin{proof}
We fix a parallel admissible homology basis  $\mathfrak{B}=(S_0, \ldots ,S_4)$ of the local system. Such a basis may be obtained by choosing $\boldsymbol{\alpha_0}$ in a fixed chamber and moving around the distinguished basis of $H_2(\mathcal{M}(\boldsymbol{\alpha_0}, 0),\Z)$ by parallel transport. Since $\mathcal{H}$ is a local system (i.e.\ the Gauss-Manin connection on $\mathcal{H}$ is flat) and $\mathcal{R}$ is simply-connected, this can be done without ambiguity. With respect to such a parallel basis, the functions
\[
\mathbf{m}=(m_1, \dots, m_4) \mapsto \int_{S_i} \Omega_I =  -2 \pi \sum_{j=1}^4 m_j  I(S_i, \Sigma_j)
\]
are linear in the complex masses $m_j$ with coefficients given by the intersection numbers $I(S_i, \Sigma_j)$. As observed earlier, these are locally constant on $\mathcal R$, hence constant since $\mathcal R$ is connected. It is hence enough to determine these coefficients in a single point $(\boldsymbol{\alpha_0}, \mathbf{m_0}) \in \mathcal{R}$. We put $\boldsymbol{\alpha_0}$ in one of the two chambers where we have already done the computation, e.g.\ the exterior chamber of type E2 in Proposition \ref{prop:exteriorintersection}. We only need to check that when computing with respect to the distinguished basis in any other chamber the expression obtained in Proposition \ref{prop:exteriorintersection} reduces to \eqref{eq:periodinterior} or \eqref{eq:periodexterior}. In view of Lemma \ref{lem:basiscomparison} this is tantamount to checking that the formulas in adjacent chambers are related by the Dehn twist associated to the wall separating them. This is done as in the proof of Lemma \ref{lem:basiscomparison}. At walls within the Biswas polytope separating two interior chambers we use that $M_{J^c} = - M_J$ and at walls separating an interior chamber from an exterior chamber that $ \sum_{J \in \mathcal{I}} M_J  = 2M_{I_0} = - \sum_{J \in \mathcal{I}} (M_J - M_{I_0})$. 
\end{proof}

\begin{rem}
The proof also shows that the computations in the two chambers are consistent in the sense that the expression of Proposition \ref{prop:integralofOmegaI} reduces to either of the expressions in the two chambers.
\end{rem}

From the proof of Proposition \ref{prop:integralofOmegaI} we also obtain
\begin{cor}
The period map $\mathcal{T}^{\Omega_I}$ has the equivariance properties 
\begin{enumerate}

\item $\mathcal{T}_{\mathfrak{B}(\Delta)}^{\Omega_I}(g \cdot \bfm) = g \cdot \mathcal{T}_{\mathfrak{B}(\Delta)}^{\Omega_I}(\bfm)$ 
\smallskip
\item $\mathcal{T}_{g \cdot \mathfrak{B}(\Delta)}^{\Omega_I}(\bfm) = g^{-1} \cdot \mathcal{T}_{\mathfrak{B}(\Delta)}^{\Omega_I}(\bfm)$
\end{enumerate}
for $g \in W_\aff$ and $\bfm \in \C^4$.

\end{cor}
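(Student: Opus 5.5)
The plan is to deduce both equivariance properties from the linearity statement in Lemma \ref{lem:linear} together with the equivariance already established for the homology bases, mirroring the proof of Proposition \ref{prop:equivariance}. Everything reduces to checking the action of the generators $r_0,\dots,r_4$, since both sides of each identity are compatible with composition.

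For (2), I would argue exactly as in part (2) of Proposition \ref{prop:equivariance}. By Remark \ref{rem:compatibility}, $g\cdot\mathfrak{B}(\Delta)=\mathfrak{B}(\Delta)\cdot A^g$. Lemma \ref{lem:period_equivariance} applies verbatim to the $\Omega_I$-periods, because only linearity of $C\mapsto\int_C\Omega_I$ is used, and $\int_F\Omega_I=0$ makes the translation part vanish. It gives $\mathcal{T}^{\Omega_I}_{\mathfrak{B}(\Delta)\cdot A^g}(\bfm)=\mathcal{T}^{\Omega_I}_{\mathfrak{B}(\Delta)}(\bfm)\cdot A^g$. Corollary \ref{cor:compatibility}, restricted to the $\C^4$ factor, where the action is the $\C$-linear part $R_i'$, identifies the right-hand side with $g^{-1}\cdot\mathcal{T}^{\Omega_I}_{\mathfrak{B}(\Delta)}(\bfm)$.

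For (1), Lemma \ref{lem:linear} shows that $\mathcal{T}^{\Omega_I}_{\mathfrak{B}(\Delta)}$ is the restriction of a $\C$-linear map $T:\C^4\to\C^4$. For the model chamber $\Delta$ of type B1 with $\mathcal{I}=\{\emptyset,\{1,2\},\{1,3\},\{1,4\}\}$, Proposition \ref{prop:integralofOmegaI} gives explicitly
\[
T(\bfm)=2\pi\bigl(M_\emptyset,M_{\{1,2\}},M_{\{1,3\}},M_{\{1,4\}}\bigr)(\bfm).
\]
It then suffices to verify $T\circ r_i'=R_i'\circ T$ for each $i$. Here $r_i'$ is the linear part of $r_i$ from Lemma \ref{lem:reflections}, and $R_i'=\hat A_i^t$ from Lemma \ref{lem:affine}. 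This works because $T$ is, up to the factor $2\pi$, precisely the linear part of $\mathcal{T}^{\omega_I}_{\mathfrak{B}(\Delta)}=4\pi^2(K_\emptyset,K_{\{1,2\}},K_{\{1,3\}},K_{\{1,4\}})$: each $K_J$ is $M_J$ with $\bfm$ replaced by $\bfalpha$, plus a constant. Taking linear parts of the identity $\mathcal{T}^{\omega_I}_{\mathfrak{B}(\Delta)}\circ r_i=R_i\circ\mathcal{T}^{\omega_I}_{\mathfrak{B}(\Delta)}$, established in Proposition \ref{prop:equivariance}(1), yields $T_{\R}\circ r_i'=R_i'\circ T_{\R}$. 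Complexifying then gives the claim, and extending from generators to all $g\in W_\aff$ is formal.

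The main subtlety is bookkeeping rather than analysis. One must make sure the $\C$-linear extensions in Remark \ref{rem:extension} and the subsequent remark for the target are the complexified linear parts of the same reflections used for $\omega_I$. Only then is the linear-part argument legitimate. One must also confirm that the domain $(\{\mathbf 0\}\times\C^4)\cap\mathcal{R}$ poses no issue, since $T$ is defined on all of $\C^4$.
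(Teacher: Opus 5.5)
Your proof is correct, but it takes a somewhat different route from the paper.

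\textbf{The paper's route.} The paper reads the corollary off the proof of Proposition~\ref{prop:integralofOmegaI}. There the wall-crossing check of Lemma~\ref{lem:basiscomparison} is redone for the complex masses. The identities $M_{J^c}=-M_J$ and $\sum_{J\in\mathcal{I}}M_J=2M_{I_0}$ show that the $\Omega_I$-period formulas in adjacent chambers differ by the Dehn twist of the separating wall. This is the $\Omega_I$-analogue of the argument behind Proposition~\ref{prop:equivariance}.

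\textbf{Your route.} You split the statement into two parts.
\begin{enumerate}
\item Part (2) is purely formal: Lemma~\ref{lem:period_equivariance} with $\int_F\Omega_I=0$, plus Corollary~\ref{cor:compatibility}. It needs no period computation at all.
\item Part (1) is transported from Proposition~\ref{prop:equivariance}(1). Let $L$ be the $4\times 4$ coefficient matrix of $(M_\emptyset,M_{\{1,2\}},M_{\{1,3\}},M_{\{1,4\}})$ and $c=(1,0,0,0)^t$. Then $\mathcal{T}^{\Omega_I}_{\mathfrak{B}(\Delta)}=2\pi L$ and $\mathcal{T}^{\omega_I}_{\mathfrak{B}(\Delta)}=4\pi^2(L\bfalpha+c)$ share the linear part $L$, so $L r_i'=R_i' L$ follows by taking linear parts.
\end{enumerate}

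\textbf{Comparison.} Your approach explains why a single $W_\aff$-action works on both factors: the complex period map is the complexified linear part of the real one, just as $M_J$ is the linear part of $K_J$. It also requires no new wall-crossing computation. The price is that you take the model-chamber formula from Proposition~\ref{prop:integralofOmegaI} as input, rather than establishing chamber-by-chamber consistency yourself.

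\textbf{A caveat on signs.} The transfer needs the same scalar $2\pi$ in all four components. Proposition~\ref{prop:integralofOmegaI} does give this. The expression $-2\pi M_{\{1,3\}}$ displayed for $\int_{S_3}\Omega_I$ in the surjectivity subsection is a sign slip; its right-hand side equals $+2\pi M_{\{1,3\}}$.
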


\subsubsection{Proof of Torelli theorem}

\begin{proof}[Proof of Theorem \ref{thm:Torelli}]
In Proposition \ref{prop:stronglyparabolicvolumesv2}, we proved the statement in \eqref{eq:Torelli1} about the integral of $\omega_I$. In Proposition \ref{prop:integralofOmegaI}, we finished the proof of the statement in \eqref{eq:Torelli1} about the integral of $\Omega_I$.
This completes the proof of Theorem \ref{thm:Torelli}.
\end{proof}

\subsection{Surjectivity of the Torelli map}  \label{sec:Torelliproof}

In this section we finish the proof of Theorem \ref{thm:surjective}.

\medskip

In Lemma  \ref{lem:basiscomparison} and Lemma \ref{lem:linear}, we saw that the integrals of $\omega_I$ and $\Omega_I$ over some chamber-dependent natural basis of $H_2(\cM(\bfalpha,\bfm), \Z)$ are, respectively, affine and linear functions of, respectively, the parabolic weights and the complex masses. In this section we show that we can fix a manifold $X$ and a basis $\mathfrak{B}_0$ of $H_2(X,\Z)$ and define a marked hyperK\"ahler manifold $\widetilde{\cM}(\boldsymbol{\alpha}, \mathbf{m})$ for $(\boldsymbol{\alpha}, \mathbf{m})$ in the subset $\widetilde{\mathcal{R}} \subset \R^4 \times \C^4$. Recall from Section \ref{subsec:Torelli_map} that
a marked hyperK\"ahler manifold is a hyperK\"ahler manifold $M$ together with a diffeomorphism $\mu : X \to M$ where two marked hyperK\"ahler manifolds $M$ and $M'$ are considered equivalent if there exists a hyperK\"ahler isometry $f: M \to M'$ such that $f_* \circ \mu_* = \mu'_* : H_2(X,\Z) \to H_2(M',\Z)$. 
Fixing a basis $\mathfrak{B}_0$ of $H_2(X,\Z)$ a marked hyperK\"ahler manifold $M$ comes with the distinguished basis $\mathfrak{B}=\mu_*(\mathfrak{B}_0)$ of $H_2(M,\Z)$.  

\medskip

We have seen how to associate with each parameter $(\boldsymbol{\alpha}, \mathbf{m}) \in \mathcal{R} \subset (0,\frac 12 )^4 \times \C^4$ a marked hyperK\"ahler manifold, namely the Hitchin moduli space $\cM(\boldsymbol{\alpha}, \mathbf{m})$.
More precisely, fix $\boldsymbol{\alpha}_0$ in the interior of some model chamber $\Simplex \subset [0,\frac 12 ]^4$. Set $X=\cM(\boldsymbol{\alpha}_0, \mathbf{0})$ and let $\mathfrak{B}_0$ the preferred basis of $H_2(X,\Z)$ associated with that chamber. Then, using the fiber bundle structure in Section \ref{sec:bundle}, for any $(\boldsymbol{\alpha}, \mathbf{m}) \in \mathcal{R}$ we obtain a diffeomorphism $\mu: X \to \cM(\boldsymbol{\alpha}, \mathbf{m})$ which is uniquely determined up to isotopy. Setting $\mathfrak{B} = \mu_*(\mathfrak{B}_0)$ precisely yields the parallel transport as described before. Now if $\boldsymbol{\alpha}' = g \cdot \boldsymbol{\alpha}$ for $\boldsymbol{\alpha} \in \interior(\Simplex)$ and $g \in W_{\mathrm{aff}}$, then $\mathfrak{B}' = g \cdot \mathfrak{B}$ is the preferred basis associated with chamber $\Simplex' = g \cdot \Simplex$.

\medskip

For values of $\boldsymbol{\alpha} \not\in (0,\frac 12)^4$, we first observe that the homeomorphism $\mathcal{U}(W_{\mathrm{aff}}, \Simplex) \simeq \R^4$ displays the simplex $\Simplex$ as a fundamental domain for the action of $W_{\mathrm{aff}}$ on $\R^4$, which means that  $\bigcup_{g \in W_{\mathrm{aff}}} g \cdot \Simplex  = \R^4$ and $g(\interior(\Simplex)) \cap h(\interior(\Simplex)) = \emptyset$ for $g\neq h$. Using the extension of the action of $W_\aff$ to $\R^4 \times \C^4$ as in Remark \ref{rem:extension} we obtain
\begin{lem}\label{lem:saturation}\
\begin{enumerate}
\item $\widetilde{\mathcal{R}}^{\mathrm{full}} \subset \R^4 \times\C^4$ is invariant under the action of $W_\aff$.

\smallskip

\item $\widetilde{\mathcal{R}}=\bigcup_{g \in W_{\mathrm{aff}}} g \cdot \mathcal{R}  \subset \R^4 \times\C^4$.
\end{enumerate}
	\end{lem}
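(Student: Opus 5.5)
For (1), invariance can be checked on the generators $(r_i, r_i')$ of Remark \ref{rem:extension}, since $W_\aff$ is generated by them. Each generator has a linear part $r_i'$ that is a signed permutation-type orthogonal map; in fact it lies in the group generated by sign changes and the $D_4$ Weyl group. The plan is to show that the collection of codimension-three planes $\{\mathcal H_{d,\mathbf e}\}\cup\{\mathcal H_{d,i}\}$ is permuted by each $(r_i,r_i')$. Write each plane uniformly as
\[
\{\langle \mathbf n,\bfalpha\rangle = c,\ \langle \mathbf n, \bfm\rangle =0\},
\]
with $\mathbf n$ a vector in the $D_4$ root system: $\mathbf n=((-1)^{e_j})_j$ for $\mathcal H_{d,\mathbf e}$, and $\mathbf n = 2e_i$ for $\mathcal H_{d,i}$. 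After rescaling by $\frac12$, these $24$ normals are exactly the roots of $D_4$, namely the half-integer vectors $\frac12(\pm1,\pm1,\pm1,\pm1)$ and the $\pm e_i$. The allowed constants $c$ are exactly the ones making the $\bfalpha$-hyperplane a reflecting wall of the affine arrangement. Concretely, $c\in\Z$ for the first family, since $d+\sum e_i$ ranges over $\Z$, and $2\alpha_i\in\Z$ for the second.

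By Remark \ref{rem:Nakajima} and Remark \ref{rem:Wallextension}, the affine hyperplane arrangement in $\R^4_{\bfalpha}$ that $W_\aff$ preserves is $\{\langle\mathbf n,\bfalpha\rangle\in\Z\}$ for $\frac12\mathbf n$ ranging over $D_4$ roots. More precisely, it is the orbit of the five faces $f_i$ of $\Simplex$. I will check this by verifying that the $W_\aff$-orbit of the walls $f_0,\dots,f_4$ is exactly this family. The ingredients are Lemma \ref{lem:reflections}, the fact that the alcove tiling of Proposition \ref{prop:homeo} is the standard $D_4$ alcove tiling (after scaling by $\frac12$), and the fact that the walls of a standard affine Weyl alcove tiling are exactly the hyperplanes $\langle\text{root},x\rangle\in\Z$. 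Since $r_i$ acts affinely on $\bfalpha$ with linear part $r_i'$, and acts on $\bfm$ by $r_i'$, the image of $\{\langle \mathbf n,\bfalpha\rangle = c,\ \langle\mathbf n,\bfm\rangle=0\}$ is $\{\langle r_i'\mathbf n,\bfalpha\rangle=c',\ \langle r_i'\mathbf n,\bfm\rangle=0\}$ for an admissible $c'$. Hence the union of planes is permuted, and its complement $\widetilde{\mathcal R}^{\mathrm{full}}$ is invariant. The main point to double-check is matching the half-integer normalizations, so that both families exhaust the root system with the correct integer offsets.

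For (2), first show $\mathcal R\subset\widetilde{\mathcal R}$ and that $\widetilde{\mathcal R}$ is $W_\aff$-invariant. The extra excluded set in $\widetilde{\mathcal R}$ is the set of $(\bfalpha,\bfm)$ where $\bfalpha$ lies on both an exterior-type wall $L_i\in\Z$ and a wall $2\alpha_j\in\Z$. This set is again defined by a $W_\aff$-stable condition: it is the $W_\aff$-orbit of the intersection of $\partial[0,\frac12]^4$ with the Biswas-boundary faces. I will verify this through the identification of $[0,\frac12]^4$-faces in the corollaries following Proposition \ref{prop:homeo}. Points of $\mathcal R$ have $\alpha_j\in(0,\frac12)$, so they avoid it, and the inclusion $\supseteq$ follows.

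For $\subseteq$, take $(\bfalpha,\bfm)\in\widetilde{\mathcal R}$. Since $\Simplex$ is a fundamental domain, some $g$ has $g\bfalpha\in\Simplex$ (closed). By (1) and invariance, $g(\bfalpha,\bfm)\in\widetilde{\mathcal R}$. The remaining issue, which is the main obstacle, is when $g\bfalpha$ lies on $\partial\Simplex$. If it lies on an interior wall of the cube, the $\bfm$-condition (not being on a full codimension-three plane) shows it is in $\mathcal R$ by Proposition \ref{prop:generic}. If it lies on $\partial[0,\frac12]^4$, the definitions separate two cases. Where $\Simplex$'s face meets the cube boundary only at $2\alpha_j\in\Z$ with $m_j\neq0$, one reflects (by $r_i$, or by the face identification $\bfalpha\mapsto(\frac12,\dots,\frac12)-\bfalpha$ realized in $W_\aff$) to move into the open cube. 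The excluded set handles the case where no such move exists. Carrying out this case analysis on the faces of $\Simplex$ finishes (2).
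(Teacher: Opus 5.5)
Part (1) is correct. It is a cleaner version of the paper's generator-by-generator check:
\begin{itemize}
\item the normals $\{\pm 2e_i\}\cup\{(\pm1,\pm1,\pm1,\pm1)\}$ form a $D_4$ root system with highest root $(1,1,1,1)$;
\item $\Simplex$ is its fundamental alcove, so the walls of the $W_\aff$-arrangement are exactly $\{\langle\mathbf n,\bfalpha\rangle\in\Z\}$;
\item orthogonality of $r_i'$ makes the $\bfalpha$-normals and $\bfm$-normals transform together.
\end{itemize}
The linear parts $r_i'$ are not signed permutations, but you never use that.

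Part (2) has a gap, and one step in it is false. The step ``if $g\bfalpha\in\partial[0,\frac12]^4$ and $m_j\neq0$, reflect into the open cube'' does not work. From the vertices of $\Simplex$ one finds
\[
\bar\Simplex\cap\partial[0,\tfrac12]^4=f_0\cap(f_1\cup f_2\cup f_3\cup f_4).
\]
No point of this set can be moved into $(0,\frac12)^4$ by any element of $W_\aff$. Indeed, in each of the $24$ chambers the face of type $\{0,k\}$ is spanned by three vertices $v_I$ with $I$ even, and any three members of an even partition set share a coordinate in $\{0,\frac12\}$. Two further points:
\begin{itemize}
\item Whether $\bfalpha$ can be moved does not depend on $\bfm$ at all; the condition $m_j\neq0$ only keeps you off $\mathcal H_{d,j}$.
\item The map $\bfalpha\mapsto(\frac12,\dots,\frac12)-\bfalpha$ sends the cube boundary to itself, so it cannot move anything into the open cube.
\end{itemize}

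So your first case is empty, and the whole argument rests on two facts about
\[
E:=\{L_i\in\Z\text{ for some }i\}\cap\{2\alpha_j\in\Z\text{ for some }j\}.
\]
\begin{itemize}
\item[(a)] $\bar\Simplex\cap\partial[0,\frac12]^4\subset E$. This holds because $f_0\subset\{L_1=0\}$, $f_0\cap f_1\subset\{2\alpha_1=1\}$, and $f_0\cap f_k\subset\{\alpha_k=0\}$ for $k\geq2$.
\item[(b)] $E$ is $W_\aff$-invariant. You only assert this; the corollaries on vertex orbits and on the face identification do not give it.
\end{itemize}
Note also that $i\neq j$ must be allowed in $E$. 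With a common index, generic points of $f_0\cap f_2$ would not be removed and the lemma would be false.

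Here is a proof of (b).
\begin{itemize}
\item A wall with normal $\mathbf n\in\{\pm1\}^4$ and a wall $\{2\alpha_j\in\Z\}$ meet at angle $\pi/3$. Their intersection lies in a third wall, whose normal is $\mathbf n$ with its $j$-th sign flipped.
\item One of these two $\{\pm1\}^4$-normals has an odd number of minus signs, i.e.\ it is $\pm$ the normal of some $L_i$.
\item Conversely, two $\{\pm1\}^4$-walls meeting at angle $\pi/3$ differ in one or three signs, so their intersection lies in some $\{2\alpha_j\in\Z\}$.
\end{itemize}
Hence $E$ is exactly the union of the codimension-two flats along which two walls meet at angle $\pi/3$. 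Since $W_\aff$ acts by isometries preserving the wall arrangement, it permutes these flats, so $E$ is invariant.

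With (a) and (b) the lemma follows.
\begin{itemize}
\item $\bigcup_g g\cdot\mathcal R\subset\widetilde{\mathcal R}$ follows from (1), (b), and $E\cap(0,\frac12)^4=\emptyset$.
\item Conversely, take $(\bfalpha,\bfm)\in\widetilde{\mathcal R}$ and choose $g$ with $g\bfalpha\in\bar\Simplex$. Since $g\bfalpha\notin E$, (a) forces $g\bfalpha\in(0,\frac12)^4$. Therefore $g\cdot(\bfalpha,\bfm)\in\widetilde{\mathcal R}^{\mathrm{full}}\cap\bigl((0,\frac12)^4\times\C^4\bigr)=\mathcal R$.
\end{itemize}
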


\begin{proof}
(1) It is enough to prove this for the generators of $W_\aff$ acting on $\R^4 \times \C^4$ via $(r_i, r_i')$ for the affine-linear maps $r_i$ in Lemma \ref{lem:reflections} and $r_i'$ the complex-linear extension of their respective linear parts. Using these formul{\ae} it is straightforward to check that under the substitution $(\bfalpha, \bfm)  \mapsto (r_i(\bfalpha), r_i'(\bfm))$ a codimension-three plane of type $\mathcal{H}_{d, \mathbf{e}}$ is transformed into a codimension-three plane either of type $\mathcal{H}_{d', \mathbf{e}'}$ or of type $\mathcal{H}_{d', i'}$. Similarly, a codimension-three plane of type $\mathcal{H}_{d, i}$ is transformed into a codimension-three plane either of type $\mathcal{H}_{d', i'}$  or of type $\mathcal{H}_{d', \mathbf{e}'}$.

\smallskip

\noindent (2) Recall that the Biswas polytope is bounded by boundary facets of the weight cube $[0, \frac 12]^4$ and by hyperplanes $\{L_i=0\}$ or $\{L_i=1\}$ where $L_i(\bfalpha)= -\alpha_i + \sum_{j \neq i} \alpha_j$. Points in the intersection of these hyperplanes with the boundary of the weight cube are exactly those points in $[0, \frac 12]^4$ that cannot be moved by an element of $W_\aff$ into the open weight cube $(0, \frac 12)^4$. The $W_\aff$-orbit of these points is the set that we are removing from $\widetilde{\mathcal{R}}^{\mathrm{full}}$ to obtain $\widetilde{\mathcal{R}}$.
\end{proof}

\begin{rem}
In particular, the complement of $\widetilde{\mathcal{R}}^{\mathrm{full}}$ is an arrangement of codimension-three planes which is invariant under the action of $W_\aff$, and $\widetilde{\mathcal{R}}$ is an invariant subset of $\widetilde{\mathcal{R}}^{\mathrm{full}}$. We are removing additional codimension-two planes to obtain $\widetilde{\mathcal{R}}$ from $\widetilde{\mathcal{R}}^{\mathrm{full}}$, hence $\widetilde{\mathcal{R}}^{\mathrm{full}}$ and $\widetilde{\mathcal{R}}$ are both open and dense.
\end{rem}

\begin{defn}
For $(\boldsymbol{\alpha}',\mathbf{m}') \in \widetilde{\mathcal{R}}$ we set $\widetilde{\mathcal{M}}(\boldsymbol{\alpha}',\mathbf{m}') = \mathcal{M}(\boldsymbol{\alpha},\mathbf{m})$ with basis $\mathfrak{B}' = g \cdot \mathfrak{B}$ of $H_2(\mathcal{M}(\boldsymbol{\alpha},\mathbf{m}),\Z)$ if $(\boldsymbol{\alpha}',\mathbf{m}') = g \cdot (\boldsymbol{\alpha},\mathbf{m})$ for $g \in W_{\mathrm{aff}}$ and $(\boldsymbol{\alpha},\mathbf{m}) \in\mathcal{R}$. If we consider $g$ as an element in $\Diff_+(X)$, then we obtain a marked hyperK\"ahler manifold via the diffeomorphism $X \overset{g}\rightarrow X \overset{\mu}\rightarrow \mathcal{M}(\boldsymbol{\alpha},\mathbf{m})$.

\end{defn}

As before, we choose our initial simplex $\Delta$ to be the interior chamber of type B1 with even partition set $\mathcal{I}=\{\emptyset, \{1, 2\}, \{1, 3\}, \{1, 4\}\}$. We observe that
inside our given initial simplex,
the sphere $S_i$ collapses at $f_i$, with the naming convention that 
$S_1$ is the sphere associated to $I=\emptyset$, $S_2$ is the sphere associated to $I=\{1, 2\}$, $S_3$ is the sphere associated to $I=\{1, 3\}$, $S_4$ is the sphere associated to $I=\{1, 4\}$ and $S_0$ is the central sphere.
 The integrals of $\omega_I$ are respectively \begin{align*}
\int_{S_1} \omega_I &=4 \pi^2 K_{\emptyset}(\boldsymbol{\alpha})= 4 \pi^2 (1 - \alpha_1- \alpha_2-\alpha_3 - \alpha_4)\\
\int_{S_2} \omega_I&= 4 \pi^2 K_{\{1,2\}}(\boldsymbol{\alpha})=4 \pi^2 (\alpha_1+ \alpha_2 - \alpha_3-\alpha_4)\\
\int_{S_3} \omega_I &=4 \pi^2 K_{\{1, 3\}}(\boldsymbol{\alpha})=4 \pi^2 (\alpha_1- \alpha_2 + \alpha_3 -\alpha_4)\\
\int_{S_4} \omega_I&=4 \pi^2 K_{\{1, 4\}}(\boldsymbol{\alpha})=4 \pi^2 (\alpha_1- \alpha_2-\alpha_3+ \alpha_4 ).
\end{align*}
Similarly, the integrals of $\Omega_I$ are respectively
\begin{align*}
\int_{S_1} \Omega_I &=2\pi M_{\emptyset}(\bfm) = 2\pi  ( - m_1- m_2-m_3 - m_4)\\
\int_{S_2} \Omega_I&= 2\pi M_{\{1,2\}}(\bfm)= 2\pi (m_1+ m_2 - m_3-m_4)\\
\int_{S_3} \Omega_I &=-2\pi M_{\{1, 3\}}(\bfm)=2\pi (m_1- m_2+ m_3 -m_4)\\
\int_{S_4} \Omega_I&= 2\pi M_{\{1, 4\}}(\bfm)=2\pi (m_1- m_2-m_3 + m_4 ).
\end{align*}
The integral of $\omega_I, \Omega_I$ over $S_0$ are determined by $2 [S_0] + \sum_{i=1}^4 [S_i]=[T^2]$, and the facts that $\int_{T^2} \omega_I = 4 \pi^2$
and $\int_{T^2} \Omega_I = 0$.
This computation shows that the Torelli map
extends to an affine-linear map $\mathcal{T}_{\mathfrak{B}(\Simplex)} : \R^4 \times \C^4 \to (\R \times \C)^4$. The main result is the following:

\begin{prop}\label{prop:Torellisurjectivity}
The extended Torelli map $\mathcal{T}_{\mathfrak{B}(\Simplex)} : \R^4 \times \C^4 \to (\R \times \C)^4$ is a bijection. It restricts to a bijection between $\widetilde{\mathcal{R}}^{\mathrm{full}} \subset  \R^4 \times \C^4$  and the period domain $\calP \subset (\R \times \C)^4$. 
\end{prop}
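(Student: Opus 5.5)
The map $\mathcal{T}_{\mathfrak{B}(\Simplex)}$ splits as $(\mathcal{T}^{\omega_I},\mathcal{T}^{\Omega_I})$. The real part is $\boldsymbol{\alpha}\mapsto 4\pi^2(K_\emptyset,K_{\{1,2\}},K_{\{1,3\}},K_{\{1,4\}})$. The complex part is $\mathbf{m}\mapsto 2\pi(M_\emptyset,M_{\{1,2\}},M_{\{1,3\}},M_{\{1,4\}})$. The linear parts of the two factors are the same real matrix $N$, up to the constants $4\pi^2$ and $2\pi$, with rows $(-1,-1,-1,-1)$, $(1,1,-1,-1)$, $(1,-1,1,-1)$, $(1,-1,-1,1)$. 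The rows are pairwise orthogonal of norm $2$, so $N^tN=4\,\mathrm{Id}$ and $N$ is invertible. Hence the real part is an affine bijection $\R^4\to\R^4$ and the complex part is a $\C$-linear bijection $\C^4\to\C^4$, which proves the first claim. Since $W_\aff$ preserves both $\widetilde{\mathcal R}^{\mathrm{full}}$ and $\calP$, one could alternatively check matters on a fundamental domain. I will instead compare the removed sets directly: it suffices to show that $\mathcal{T}_{\mathfrak B(\Simplex)}$ maps the union of the planes $\mathcal H_{d,\mathbf e}$ and $\mathcal H_{d,i}$ of Definition \ref{def:Rdomain} \emph{onto} the union of the planes in Corollary \ref{cor:descriptionofP}, with $\ell^2\im\tau=4\pi^2$.

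To do this I pull back each plane of Corollary \ref{cor:descriptionofP}. A plane there is $\{\sum_i \lambda_i(x_i,z_i)=0\}$, where $x_0,z_0$ are eliminated via the fiber relation and $\sum\lambda_iS_i$ is a $(-2)$-class. Its preimage is the set where $\int_\Sigma\omega_I=0$ and $\int_\Sigma\Omega_I=0$ for $\Sigma=\sum_{i=0}^4\lambda_iS_i$. Using the formulas above together with $\int_{S_0}\omega_I=2\pi^2(1-\sum_{i\ge1}K_{I_i})$ and $\int_{S_0}\Omega_I=-\pi\sum M_{I_i}$, both periods are explicit functions of $(\boldsymbol\alpha,\mathbf m)$. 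I then run through the three families of the self-intersection $-2$ lemma.
\begin{itemize}
\item For $\boldsymbol\lambda=k(1,1,1,1)$ and $\lambda_0=2k\pm1$, I expect $\int_\Sigma\Omega_I=\mp\pi\sum_J M_J$. Here $\sum_J M_J=-4m_2-\dots$, which must be checked. The goal is to see that the pair of equations is exactly one $\mathcal H_{d,\mathbf e}$ or one $\mathcal H_{d,i}$.
\item In every case the $\Omega_I$-equation should be a single linear form in $\mathbf m$, namely either $\sum(-1)^{e_i}m_i$ or $m_i$.
\item The $\omega_I$-equation is then the same linear form in $\boldsymbol\alpha$ shifted by an integer (or half-integer) determined by $k$ and the floor constants in $K_J$.
\end{itemize}

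Conversely, I must show that every $\mathcal H_{d,\mathbf e}$ and every $\mathcal H_{d,i}$ arises in this way. Counting the linear forms up to sign handles the linear part: there are eight forms $\sum(-1)^{e_i}m_i$ and four forms $m_i$, giving $12$ in all. This matches the $12$ positive roots of $D_4$, and also matches the linear parts from Corollary \ref{cor:descriptionofP}: one from $\mathcal H_k$, four from $\mathcal H_{k,i_0}$, four from $\mathcal H'_{k,i_0}$ and three from $\mathcal H_{k,i_1,i_2}$. For the translates, the integer $d$ must range over all of $\Z$ exactly as $k$ does; the check is that the affine shift is $d\mapsto$ a shift by $\frac12$ or $1$ in each family.

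A cleaner route avoids the case check. Roots of the affine $D_4$ lattice correspond to reflections in $W_\aff$. Both arrangements are $W_\aff$-invariant: $\widetilde{\mathcal R}^{\mathrm{full}}$ by Lemma \ref{lem:saturation}, and $\calP$ by Lemma \ref{lem:invariance_period_domain} combined with Corollary \ref{cor:compatibility}. The map intertwines the two actions by Proposition \ref{prop:equivariance} and its $\Omega_I$ analogue. So it suffices to match representatives of the $W_\aff$-orbits of planes; the simple ones are the faces $f_i\times\{\cdot\}$ lifted as in Remark \ref{rem:Nakajima}, mapping to the planes $\mathcal H_{S_i}$ of the Example. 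Every real root is $W_\aff$-conjugate to a simple root, so every $(-2)$-class $\Sigma$ equals $wS_i$ for some $w\in W_\aff$ and some $i$. On the domain side, I must show that every $\mathcal H_{d,\mathbf e}$ and $\mathcal H_{d,i}$ is the $W_\aff$-image of one of the five Nakajima walls through faces of $\Simplex$, i.e.\ that these walls are the reflecting hyperplanes of $W_\aff$ tensored with $\R^3$.

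The main obstacle is this last point. The $\alpha$-hyperplanes of $W_\aff$ acting on $\R^4$ are all hyperplanes $\langle\rho,\boldsymbol\alpha\rangle\in\frac12\Z$ for $\rho$ a $D_4$ root in the normalization of the remark on homeomorphism. I must verify that these coincide, together with the matching $\mathbf m$-equation, with the $\mathcal H_{d,\mathbf e}\cup\mathcal H_{d,i}$ list. The roots $\pm e_i\pm e_j$ after rescaling should correspond to $\frac12(\pm1,\pm1,\pm1,\pm1)$ and $\pm e_i$, and the level sets should match $d\in\Z$. I expect this to be a direct but careful bookkeeping check.
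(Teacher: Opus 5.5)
Your proposal is correct, and its ``cleaner route'' is essentially the paper's own proof: invertibility of the common $4\times 4$ matrix, the observation that the five Nakajima walls through the faces of $\Delta$ map onto the five walls $\mathcal H_{S_i}$, and then $W_{\mathrm{aff}}$-equivariance. You are more explicit than the paper, which silently uses the two orbit facts you isolate: every $(-2)$-class is $W_{\mathrm{aff}}$-conjugate to some $S_i$, and the reflecting hyperplanes of $W_{\mathrm{aff}}$ are exactly $\{\langle\rho,\bfalpha\rangle\in\frac12\Z\}$ for $\rho\in\{\pm e_i\}\cup\{\frac12(\pm1,\pm1,\pm1,\pm1)\}$, which together with $\langle\rho,\mathbf m\rangle=0$ give precisely the $\mathcal H_{d,i}$ and $\mathcal H_{d,\mathbf e}$. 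That bookkeeping does go through, and the only slip is an aside in your abandoned first route, where in fact $\sum_J M_J=2(m_1-m_2-m_3-m_4)$.
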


\begin{proof}
The integrals on the parameter space $\R^4_{\boldsymbol{\alpha}} \times \C^4_{\mathbf{m}}$ are given by the above formul{\ae}, so the result for the extended Torelli map follows from invertibility of the matrix
\[
\begin{pmatrix}
-1 & -1 & -1 & -1 \\
1 & 1 & -1 & -1 \\
1 & -1 & 1 & -1 \\
1 & -1 & -1 & 1	
\end{pmatrix}
\]
over the reals (it has determinant 16). 
Finally, we need to prove that $\mathcal{T}_\mathfrak{B}(\widetilde{\mathcal{R}}^{\mathrm{full}})=\cP$. We have already observed that $\mathcal{T}_{\mathfrak{B}(\Simplex)}^{\omega_I}$ maps the faces $f_0, f_1, f_2, f_3, f_4$ of the model simplex $\Simplex$ in $\R^4_{\bfalpha}$ to the faces $F_0, F_1, F_2, F_3, F_4$ of the simplex $\Simplex_{\mathrm{target}}$ in $\R^4_{\mathbf{x}}$. Further, we have seen in Remark \ref{rem:Nakajima} that the faces $f_0, f_1, f_2, f_3, f_4$ are restrictions of the following Nakajima walls
\begin{enumerate}
\item $\{ -\alpha_1+\alpha_2+\alpha_3+\alpha_4 = 0, -m_1+m_2+m_3+m_4=0\} $	
\item $\{1- \alpha_1-\alpha_2-\alpha_3 - \alpha_4 = 0, -m_1-m_2-m_3-m_4=0\}$
\item $\{\alpha_1+\alpha_2-\alpha_3-\alpha_4=0, m_1+m_2-m_3-m_4 =0\}$
\item $\{\alpha_1-\alpha_2+\alpha_3-\alpha_4=0, m_1-m_2+m_3-m_4 =0\}$
\item $\{\alpha_1-\alpha_2-\alpha_3+\alpha_4=0, m_1-m_2-m_3+m_4 =0\}$
\end{enumerate}
By the formul{\ae} above for the period map these map to the period-domain walls 
\begin{enumerate}
	\item $\{\sum_i x_i = 4 \pi^2, \sum_i z_i=0\}$ 
	\item $\{x_1=z_1=0\}$
	\item $\{x_2=z_2=0\}$
	\item $\{x_3=z_3=0\}$
	\item $\{x_4=z_4=0\}$
\end{enumerate}
respectively. The desired statement then follows from $W_\aff$-equivariance of $\mathcal{T}_{\mathfrak{B}(\Simplex)}$.
\end{proof}

\begin{rem}\label{rem:Torellisurjectivity}
The subset $\mathcal{T}_\mathfrak{B}(\widetilde{\mathcal{R}}) \subset \cP$ consists of periods of marked Hitchin moduli space and is open and dense. In that sense the period is nearly surjective on Hitchin moduli spaces as claimed.
\end{rem}

\subsubsection{Proof of surjectivity}

\begin{proof}[Proof of Theorem \ref{thm:surjective}]
Proposition \ref{prop:Torellisurjectivity} and Remark \ref{rem:Torellisurjectivity} yield the proof of Theorem \ref{thm:surjective}
\end{proof}

\appendix

\section{Scalings of \texorpdfstring{HyperK\"ahler}{HyperKahler} Metric}\label{sec:scalings}

In this section, once and for all,  we write out the $\mathbb{R}^+ \times \mathbb{R}^+ \times U(1)$ family of hyperK\"ahler structures one could naturally put on the moduli space of Higgs bundles. Various conventions in the literature fit into this universal scheme, so it's helpful to write it out. Our treatment here is similar to \cite{Neitzkecourse}, but with $\mathbb{R}^+ \times \mathbb{R}^+ \times U(1)$ parameters spelled out.

\subsection{Affine spaces \texorpdfstring{$\mathcal{A}^{h_0}$ and $\mathcal{A}_I^{\C}$}{}}
Fix a Riemann surface $(C, g)$. Fix a complex vector bundle $E \to C$ with hermitian metric $h_0$. In this section, we will omit the trace-free conditions; they can easily be put back in. 
\begin{defn}
A \emph{doubled connection} on $(E, h_0)$ is a pair $(D, \Phi)$ where $D$ is a unitary connection on $(E,h_0)$ and $\Phi \in \Omega^1( \mathfrak{u}(E))$.
\end{defn}

The space of double connections $\mathcal{A}^{h_0}$ is naturally an affine space over the \emph{real} vector space $\Omega^1(\mathfrak{u}(E)) \oplus \Omega^1(\mathfrak{u}(E))$.
The group of unitary gauge transformations $\mathcal{G}$ acts on the space of doubled connections $\mathcal{A}^{h_0}$ by
\[(D, \Phi) \mapsto (D^g= g \circ D \circ g^{-1}, \Phi^g = g \Phi g^{-1}). \]
Soon, we will equip $\mathcal{A}^{h_0}$ with a hyperK\"ahler structure, then perform the hyperK\"ahler quotient to get $\mathcal{M}$.

If one wants to view  $\mathcal{M}$ as a holomorphic symplectic manifold obtained via a holomorphic symplectic quotient, it is most natural to instead consider doubled version of the space of $\delbar$-operators on $E$, which has a natural complex structure. 
\begin{defn}
Define $\mathcal{A}^{\mathbb{C}}= T^* \mathcal{A}^{\delbar} = \mathcal{A}^{\delbar} \times \Omega^{1,0}( \mathfrak{g}_\C(E))$
\end{defn}
Note that at $(\delbar_E, \varphi) \in \mathcal{A}^{\mathbb{C}}$, and  $(\dot{A}^{0,1}, \dot{\varphi}) \in T_{(\delbar_E, \varphi)} \mathcal{A}^{\mathbb{C}}$, the natural complex structure $I$ is 
\[I(\dot{A}^{0,1}, \dot{\varphi}) =(i \dot{A}^{0,1}, i \dot{\varphi}). \]
The space $\mathcal{A}^{\mathbb{C}}$ has a natural action of $\mathcal{G}_{\C}$ given by 
\[(\delbar_E, \varphi) \mapsto (\delbar_E^g= g \circ \delbar_E \circ g^{-1}, \varphi^g = g \Phi g^{-1}). \]

There is an isomorphism of the underlying real affine spaces 
\begin{align}
\mathcal{A}^{\mathbb{C}} &\to \mathcal{A}^{h_0} \\ \nonumber 
(\delbar_E, \varphi) &\mapsto (D = \delbar_E + \del_A^{h_0}, \Phi = \varphi - \varphi^{*_{h_0}}). 
\end{align}
Then naturally\footnote{For completeness, we note that $\dot{A} \in \Omega^1(C, \mathfrak{u}(E))$, so locally $\dot{A} = \dot{A}_x \de x + \dot{A}_y \de y$ for $\dot{A}_x, \dot{A}_y$ satisfying $\dot{A}_\bullet^{*_{h_0}} + \dot{A}_\bullet =0$. Then note that $\dot{A}$ can be viewed as a complex $1$-form 
\[\dot{A} = \underbrace{ \frac{1}{2}(\dot{A}_x +i \dot{A}_y) \de \zbar}_{\dot{A}^{0,1}} + \underbrace{ \frac{1}{2}(\dot{A}_x - i \dot{A}_y) \de z}_{\dot{A}^{1,0}}  
 \]
 and indeed that $\dot{A}^{1,0} = - (\dot{A}^{0,1})^{*_{h_0}}$.
}, there is an isomorphism of the real tangent spaces 
\begin{align}
T_{(\delbar_E, \varphi)} \mathcal{A}^{\mathbb{C}} &\to T_{(D, \Phi)} \mathcal{A}^{h_0} \\ \nonumber 
(\dot{A}^{0,1}, \dot \varphi) &\mapsto (\dot{A} = \dot{A}^{0,1} - (\dot{A}^{0,1})^{*_{h_0}}, \dot \Phi = \dot \varphi - \dot \varphi^{*_{h_0}}). 
\end{align}
The complex structure $I$ on $\mathcal{A}^{\mathbb{C}}$ induces a complex structure on $\mathcal{A}^{h_0}$ which is most naturally written\[ I (\dot{A}, \dot{\Phi})  = ( (i \dot{A}^{0,1}) - (i \dot{A}^{0,1})^{*_{h_0}}, i \dot{\varphi} - (i \dot{\varphi})^{*_{h_0}}) = (i \dot{A}^{0,1} - i \dot{A}^{1,0}, i \Phi^{1,0} - i \Phi^{0,1}) =(\star \dot{A}, -\star \dot{\Phi}) \]
since $\star \de z = \star (\de x + i \de y) = \de y  - i \de x = -i \de z$ and $\star \de \zbar = i \de \zbar$.

\subsection{Families of \texorpdfstring{hyperK\"ahler}{hyperKahler} structures} 

Now, we start introducing the various scales.  There is an $\mathbb{R}_{\lambda_1}^+ \times \mathbb{R}_{\lambda_2}^+ \times U(1)_{e^{i \vartheta}}$ family of choices of hyperK\"ahler structure on $\mathcal{A}^{h_0}$ (or $\mathcal{A}_I^{\C}$) once we've fixed $I$ as above. The $\mathbb{R}_{\lambda_1}^+ \times \mathbb{R}_{\lambda_2}^+$ freedom appears in horizontal and vertical scale of the Riemannian metric, while the $U(1)$ comes from the freedom to rotate the circle through complex structures $J, K$.

\subsubsection{Riemannian metric}
First, we put a $\mathbb{C}$-valued hermitian metric $h$ on the complex tangent space $T\mathcal{A}_I^{\C}$. Let $v_i = (\dot A_i^{0,1}, \dot \varphi_i)$ be tangent vectors, $i=1,2$. The Hermitian metric is 
\begin{equation} \label{eq:metric1}
 h \left((\dot{A}^{0,1}_1, \dot{\varphi}_1), (\dot{A}^{0,1}_2, \dot{\varphi}_2)\right)
= 2  \lambda_1 \left(\int_C \lambda_2 \IP{\dot{A}^{0,1}_1, \dot{A}^{0,1}_2}_{h_0} + \IP{\dot\varphi_1, \dot\varphi_2}_{h_0} \right), 
\end{equation}
where $\IP{\alpha, \beta} = \Tr\left(\alpha \wedge \star \beta^{*_{h_0}} \right)$ is a two-form. Note that $\star$ has conjugation built in: if $\beta = \beta_z \de z + \beta_{\zbar} \de \zbar$, then $\star \beta^{*_{h_0}}
\I \beta_z^{*_{h_0}} \de \zbar -  \I \beta_{\zbar}^{*_{h_0}} \de z$. (Note that because $C$ is K\"ahler, 
the Hodge star on $1$-forms depends only on the conformal class of the metric $g_C$.)
Thus, if $\dot A_i^{0,1} = \dot{A}_{i,\zbar}^{0,1} d \zbar$ and $\dot \varphi_i = \dot \varphi_{i z} d z$, then in local coordinates 
\begin{align} 
 h \left((\dot{A}^{0,1}_1, \dot{\varphi}_1), (\dot{A}^{0,1}_2, \dot{\varphi}_2)\right)
&= 2   \left( \lambda_1 \int_C \Tr  \left( \lambda_2 \dot{A}^{0,1}_{1,\zbar}  (\dot{A}^{0,1}_{2, \zbar} )^{*_{h_0}} + \varphi_{1,z} \varphi_{2,z}^{*_{h_0}}  \right) \right) i \de z \wedge \de \zbar \nonumber \\
& = 2i   \left( \lambda_1 \int_C \Tr  \left( \lambda_2 (\dot{A}^{0,1}_{2} )^{*_{h_0}} \wedge \dot{A}^{0,1}_{1}   + \varphi_{1} \wedge \varphi_{2}^{*_{h_0}}  \right) \right).
\end{align}
This last expression is what we wrote before in \eqref{eq:hintro} with $\lambda_1=\lambda_2=\e^{i \theta}=1$.

The Riemannian metric is simply the real part of this:
\begin{align} \label{eq:metric2}
 g \left((\dot{A}^{0,1}_1, \dot{\varphi}_1), (\dot{A}^{0,1}_2, \dot{\varphi}_2)\right)
&= 2  \mathrm{Re} \left(  \lambda_1 \int_C \lambda_2 \IP{\dot{A}^{0,1}_1, \dot{A}^{0,1}_2}_{h_0} + \IP{\dot\varphi_1, \dot\varphi_2}_{h_0} \right) \nonumber \\
&=-2 \mathrm{Im}  \left( \lambda_1 \int_C \Tr  \left( \lambda_2 (\dot{A}^{0,1}_{2} )^{*_{h_0}} \wedge \dot{A}^{0,1}_{1}   + \varphi_{1} \wedge \varphi_{2}^{*_{h_0}}  \right) \right).
\end{align}
This strange factor of $2$ appears because of the nice expression for the Riemannian metric in terms of deformations in $\mathcal{A}^{h_0}$:
\begin{align}\label{eq:metric3}
 g \left((\dot{A}_1, \dot \Phi_1), (\dot{A}_2, \dot{\Phi}_2)\right)
&=  \mathrm{Re}\left( \lambda_1 \int_C \lambda_2 \IP{\dot{A}_1, \dot{A}_2}_{h_0} + \IP{\dot\Phi_1, \dot\Phi_2}_{h_0} \right) \nonumber \\
&=  - \lambda_1 \mathrm{Re} \left(\int_C \Tr \left( \lambda_2 \dot{A}_1 \wedge \star \dot{A}_2 + \dot \Phi_1 \wedge \star  \dot\Phi_2 \right) \right).
\end{align}
\subsubsection{Complex structures}

Now, we begin on the complex structures. First, we note that $I$ clearly squares to $-1$ and moreover is compatible with $g$ in the sense that $g(I v_1, I v_2)=g(v_1,v_2)$.
This is apparent in either perspective: in the $\mathcal{A}_I^{\C}$ perspective, these follows because $i^2=-1$ and $i \cdot  \overline{i}=1$; in the $\mathcal{A}^{h_0}$ perspective, $\star^2 =-1$ on $\Omega^1(C)$. 

\begin{lem}
On $\mathcal{A}^{h_0}$, define 
 \begin{align} J(\dot{A}, \dot{\Phi}) &= (- \lambda_2^{-1/2}\dot{\Phi},  \lambda_2^{1/2}\dot{A}). \nonumber \\
K(\dot{A}, \dot{\Phi}) &= I \circ J (\dot{A}, \dot{\Phi}) = ( -\lambda_2^{-1/2} \star \dot{\Phi},  -\lambda_2^{1/2} \star \dot{A}). \end{align}
 \begin{itemize}
\item Then $I,J,K$ satisfy quaternionic relations.
\item Then $g(J \cdot, J \cdot) =g(\cdot, \cdot)=g(K\cdot, K \cdot)$.
 \item On $\mathcal{A}_I^{\C}$, $J$ acts as 
 \begin{align*} J( \dot{A}^{0,1}, \dot \varphi) &= (\lambda_2^{-1/2} \dot{\varphi}^{*_{h_0}}, -\lambda_2^{1/2} (\dot{A}^{0,1})^{*_{h_0}})\\
   K( \dot{A}^{0,1}, \dot \varphi) &= (\lambda_2^{-1/2} i\dot{\varphi}^{*_{h_0}}, -\lambda_2^{1/2} i(\dot{A}^{0,1})^{*_{h_0}}).
  \end{align*}
 \end{itemize}
\end{lem}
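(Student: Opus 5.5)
The lemma is a pointwise linear-algebra statement on the real vector space $T_{(D,\Phi)}\mathcal{A}^{h_0}=\Omega^1(\mathfrak{u}(E))\oplus\Omega^1(\mathfrak{u}(E))$. I will verify each of its three claims directly from the formulas already recorded above: $I(\dot A,\dot\Phi)=(\star\dot A,-\star\dot\Phi)$, the metric \eqref{eq:metric3}, and the dictionary $\dot A=\dot A^{0,1}-(\dot A^{0,1})^{*_{h_0}}$, $\dot\Phi=\dot\varphi-\dot\varphi^{*_{h_0}}$. Throughout I will use only that $\star^2=-1$ on $1$-forms and that $\star$ is an isometry of the pairing $\IP{\cdot,\cdot}_{h_0}$ on $1$-forms.

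\emph{Quaternion relations.} First, $J^2(\dot A,\dot\Phi)=J(-\lambda_2^{-1/2}\dot\Phi,\lambda_2^{1/2}\dot A)=(-\dot A,-\dot\Phi)$, so $J^2=-1$. Next I compute $IJ(\dot A,\dot\Phi)=(-\lambda_2^{-1/2}\star\dot\Phi,-\lambda_2^{1/2}\star\dot A)$, which matches the stated formula for $K$. Then I compute $JI(\dot A,\dot\Phi)=J(\star\dot A,-\star\dot\Phi)=(\lambda_2^{-1/2}\star\dot\Phi,\lambda_2^{1/2}\star\dot A)=-IJ$. These identities give $K^2=IJIJ=-I^2J^2=-1$, and the remaining relations follow formally.

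\emph{Metric compatibility.} Substituting $J(\dot A_i,\dot\Phi_i)$ into \eqref{eq:metric3}, the two terms trade places. Indeed, $\lambda_2\IP{-\lambda_2^{-1/2}\dot\Phi_1,-\lambda_2^{-1/2}\dot\Phi_2}=\IP{\dot\Phi_1,\dot\Phi_2}$ and $\IP{\lambda_2^{1/2}\dot A_1,\lambda_2^{1/2}\dot A_2}=\lambda_2\IP{\dot A_1,\dot A_2}$. So $g(J\cdot,J\cdot)=g$. The same weights appear for $K=IJ$, and $I$ is already known to be $g$-orthogonal, so $g(K\cdot,K\cdot)=g$ follows.

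\emph{Formulas on $\mathcal{A}_I^{\C}$.} I need to identify $-\lambda_2^{-1/2}\dot\Phi$ with an element of the form $\dot B^{0,1}-(\dot B^{0,1})^{*}$ by extracting its $(0,1)$-part. Since $\dot\Phi=\dot\varphi-\dot\varphi^{*}$ and $\dot\varphi$ is of type $(1,0)$, the $(0,1)$-part of $\dot\Phi$ is $-\dot\varphi^{*}$. Hence the $(0,1)$-part of $-\lambda_2^{-1/2}\dot\Phi$ is $\lambda_2^{-1/2}\dot\varphi^{*}$. Similarly, the $(1,0)$-part of $\lambda_2^{1/2}\dot A$ is $-\lambda_2^{1/2}(\dot A^{0,1})^{*}$. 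This gives the stated formula for $J$ on $\mathcal{A}_I^{\C}$. The formula for $K$ then follows by applying $I$, which is multiplication by $i$ on both components.

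The only delicate point is bookkeeping of the sign and conjugation conventions for $\star$ and for adjoints, especially checking that $\star\dot\Phi^{1,0}=-i\dot\Phi^{1,0}$ is consistent with the $(0,1)$/$(1,0)$ extraction. Everything else is routine.
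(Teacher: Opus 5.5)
Your proposal is correct and follows the paper's route. The paper's written proof is only your third step: it extracts the $(0,1)$-part of $-\lambda_2^{-1/2}\dot\Phi$ and the $(1,0)$-part of $\lambda_2^{1/2}\dot A$ in local coordinates, obtaining $\lambda_2^{-1/2}\dot\varphi^{*_{h_0}}$ and $-\lambda_2^{1/2}(\dot A^{0,1})^{*_{h_0}}$. Your direct checks of $J^2=-1$, $JI=-IJ$, and $g(J\cdot,J\cdot)=g$ (the $\lambda_2$ weights swap between the two terms of \eqref{eq:metric3}) correctly supply the first two bullets, which the paper leaves implicit.
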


\begin{proof}
We compute the action on $\mathcal{A}^{\C}_I$:
\begin{align}\dot{A}^{0,1} &= \frac{\dot{A}_x + i \dot{A}_y}{2} d \zbar  \quad \overset{J}{\mapsto}  \quad  - \lambda_2^{-1/2}\frac{\dot{\Phi}_x + i \dot{\Phi}_y }{2} d \zbar = - \lambda_2^{-1/2} \dot \Phi^{0,1} = \lambda_2^{-1/2} \dot \varphi^{*_{h_0}}  \nonumber \\  \dot \varphi&= \frac{\dot{\Phi}_x  - i \dot{\Phi}_y }{2} d z  \quad \overset{J}{\mapsto}  \quad   \lambda_2^{1/2}\frac{\dot{A}_x  - i \dot{A}_y }{2} d z=  \lambda_2^{1/2} \dot A^{1,0} =-\lambda_2^{1/2} (\dot{A}^{0,1})^{*_{h_0}}.   \end{align}
 \end{proof}

Now, we observe that $J$ and $K$ are not special. We can rotate one into another and more generally write
\[ J_{\vartheta} + i K_{\vartheta} = e^{i \vartheta}(J + i K) \]
for any phase $\vartheta \in \R/ 2 \pi \Z$.
Then, 
\begin{align}
J_{\vartheta} ( \dot{A}^{0,1}, \dot \varphi) &= (e^{i \vartheta} \lambda_2^{-1/2} \dot{\varphi}^{*_{h_0}}, -e^{i \vartheta} \lambda_2^{1/2} (\dot{A}^{0,1})^{*_{h_0}})\\ \nonumber 
K_{\vartheta} ( \dot{A}^{0,1}, \dot \varphi) &= (i e^{i \vartheta} \lambda_2^{-1/2} \dot{\varphi}^{*_{h_0}}, -i e^{i \vartheta} \lambda_2^{1/2} (\dot{A}^{0,1})^{*_{h_0}}).
\end{align}
This is somewhat useful since while various papers generally have the same convention for $I$, they may pick a rotated $J_\vartheta, K_\vartheta$ instead.

\subsubsection{Symplectic and holomorphic symplectic forms}
Now, we compute the symplectic and holomorphic symplectic forms. We use the convention that $\omega_{I}(v,w) = g(Iv,w)$.
\begin{lem}

Then, 
\begin{align*}
\omega_I\left((\dot{A}^{0,1}_1, \dot{\varphi}_1), (\dot{A}^{0,1}_2, \dot{\varphi}_2)\right)=-2 \mathrm{Re}  \left( \lambda_1 \int_C \Tr  \left( \lambda_2 (\dot{A}^{0,1}_{2} )^{*_{h_0}} \wedge \dot{A}^{0,1}_{1}   +\varphi_{1} \wedge \varphi_{2}^{*_{h_0}}  \right) \right)\\\end{align*}
and 
\begin{align*}
\Omega_{I, \vartheta}\left((\dot{A}^{0,1}_1, \dot{\varphi}_1), (\dot{A}^{0,1}_2, \dot{\varphi}_2)\right)&= (\omega_{J_\vartheta} + i \omega_{K_\vartheta})\left((\dot{A}^{0,1}_1, \dot{\varphi}_1), (\dot{A}^{0,1}_2, \dot{\varphi}_2)\right)\\
&=-2i e^{-i \vartheta}   \lambda_1 \lambda_2^{1/2} \int_C  \Tr  \left(\dot{\varphi}_2  \wedge \dot{A}_{1}^{1,0}  - \dot{\varphi}_1 \wedge \dot{A}^{0,1}_{2}  \right) 
\end{align*}
\end{lem}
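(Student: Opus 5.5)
The plan is to compute directly from the definition $\omega_{I_j}(v,w)=g(I_jv,w)$, using the Riemannian metric \eqref{eq:metric2} and the explicit formulas for $I$, $J_\vartheta$, $K_\vartheta$ on $\mathcal{A}^{\C}_I$ from the preceding lemma. Since $g=\mathrm{Re}\, h$ with $h$ as in \eqref{eq:metric1}, every form reduces to taking real parts of $h(I_jv_1,v_2)$. For $\omega_I$ we substitute $Iv_1=(i\dot A^{0,1}_1,i\dot\varphi_1)$. Because $h$ is complex-linear in its first slot, we get $h(Iv_1,v_2)=i\,h(v_1,v_2)$. Hence $\omega_I=\mathrm{Re}(i h)=-\mathrm{Im}\,h$. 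Inserting the coordinate expression $h=2i\lambda_1\int_C\Tr(\lambda_2(\dot A_2^{0,1})^{*}\wedge\dot A_1^{0,1}+\dot\varphi_1\wedge\dot\varphi_2^{*})$ then gives $-\mathrm{Im}(2i\,X)=-2\,\mathrm{Re}\,X$, which is the claimed formula.

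For $\Omega_{I,\vartheta}$ we first compute $\omega_{J_\vartheta}(v_1,v_2)=\mathrm{Re}\,h(J_\vartheta v_1,v_2)$ with
\[
J_\vartheta v_1=\bigl(e^{i\vartheta}\lambda_2^{-1/2}\dot\varphi_1^{*},\,-e^{i\vartheta}\lambda_2^{1/2}(\dot A_1^{0,1})^{*}\bigr).
\]
The $\lambda_2$ in the $A$-slot of $h$ combines with $\lambda_2^{-1/2}$, and the $\varphi$-slot carries $\lambda_2^{1/2}$. Both terms therefore acquire an overall factor $\lambda_1\lambda_2^{1/2}$. The expression becomes
\[
2i\lambda_1\lambda_2^{1/2}\int_C\Tr\bigl((\dot A_2^{0,1})^*\wedge e^{i\vartheta}\dot\varphi_1^* - e^{i\vartheta}(\dot A_1^{0,1})^*\wedge\dot\varphi_2^*\bigr).
\]
Next we use the footnote identity $\overline{\Tr\alpha\wedge\beta}=-\Tr\beta^*\wedge\alpha^*$ to rewrite each integrand as the conjugate of a holomorphic–antiholomorphic wedge. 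This lets us express $\omega_{J_\vartheta}$ as $\mathrm{Re}$ of $Y:=-2ie^{-i\vartheta}\lambda_1\lambda_2^{1/2}\int_C\Tr(\dot\varphi_2\wedge\dot A_1^{0,1}-\dot\varphi_1\wedge\dot A_2^{0,1})$, up to checking the sign.

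Then we run the same computation for $K_\vartheta=IJ_\vartheta$. Since $K_\vartheta v_1$ is $J_\vartheta v_1$ with $e^{i\vartheta}$ replaced by $ie^{i\vartheta}$, conjugation turns this into $-ie^{-i\vartheta}$. We therefore expect $\omega_{K_\vartheta}=\mathrm{Im}\,Y$, and hence $\Omega_{I,\vartheta}=\omega_{J_\vartheta}+i\omega_{K_\vartheta}=Y$. As a consistency check, $Y$ is complex-bilinear in the $I$-holomorphic data $(\dot A^{0,1},\dot\varphi)$, as a holomorphic symplectic form must be. At $\lambda_1=\lambda_2=1$, $\vartheta=0$ it should reproduce the introduction's $\Omega_I$.

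The main obstacle is bookkeeping of signs and conjugations when converting $\mathrm{Re}\,\Tr(\beta^*\wedge\alpha^*)$ into $\mathrm{Re}$ of a holomorphic pairing, and making sure the two real parts assemble into a single complex expression rather than its conjugate. I would fix this by testing on a simple pair, e.g.\ $v_1=(\dot A,0)$, $v_2=(0,\dot\varphi)$, before doing the general case. I would also reconcile the notation $\dot A_1^{1,0}$ in the statement with $\dot A_1^{0,1}$: the $(1,0)$ superscript appears to be a typo, and the computation naturally produces $\dot A_1^{0,1}$.
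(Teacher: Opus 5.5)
Your proposal is correct and follows the same route as the paper: both compute $\omega_{I_j}(v_1,v_2)=g(I_jv_1,v_2)$ directly from the explicit formulas for $I$, $J_\vartheta$, $K_\vartheta$ and then conjugate using $\overline{\Tr\alpha\wedge\beta}=-\Tr\beta^*\wedge\alpha^*$. The signs you left to check do work out, giving exactly $\omega_{J_\vartheta}=\mathrm{Re}\,Y$ and $\omega_{K_\vartheta}=\mathrm{Im}\,Y$, hence $\Omega_{I,\vartheta}=Y$ (the paper's $2\,\mathrm{Im}$ and $-2\,\mathrm{Re}$ expressions are the same statement), and you are right that $\dot A_1^{1,0}$ in the statement is a typo for $\dot A_1^{0,1}$, as the paper's own intermediate lines show.
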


\begin{proof} 
We begin with the following expression for the Riemannian metric:
\begin{equation*} 
 g \left((\dot{A}^{0,1}_1, \dot{\varphi}_1), (\dot{A}^{0,1}_2, \dot{\varphi}_2)\right)
=-2 \mathrm{Im}  \left( \lambda_1 \int_C \Tr  \left( \lambda_2 (\dot{A}^{0,1}_{2} )^{*_{h_0}} \wedge \dot{A}^{0,1}_{1}   + \varphi_{1} \wedge \varphi_{2}^{*_{h_0}}  \right) \right).
\end{equation*}
Then, we compute: 
\begin{align*}
\omega_I\left((\dot{A}^{0,1}_1, \dot{\varphi}_1), (\dot{A}^{0,1}_2, \dot{\varphi}_2)\right)&= g \left(I (\dot{A}^{0,1}_1, \dot{\varphi}_1), (\dot{A}^{0,1}_2, \dot{\varphi}_2)\right)\\
&=
-2 \mathrm{Im}  \left( \lambda_1 \int_C \Tr  \left( \lambda_2 (\dot{A}^{0,1}_{2} )^{*_{h_0}} \wedge i\dot{A}^{0,1}_{1}   +i\varphi_{1} \wedge \varphi_{2}^{*_{h_0}}  \right) \right)\\
&=-2 \mathrm{Re}  \left( \lambda_1 \int_C \Tr  \left( \lambda_2 (\dot{A}^{0,1}_{2} )^{*_{h_0}} \wedge \dot{A}^{0,1}_{1}   +\varphi_{1} \wedge \varphi_{2}^{*_{h_0}}  \right) \right)\\
\end{align*}
Similarly, we compute
\begin{align*}
\omega_{J_\vartheta} \left((\dot{A}^{0,1}_1, \dot{\varphi}_1), (\dot{A}^{0,1}_2, \dot{\varphi}_2)\right)
&= g \left(J_{\vartheta} (\dot{A}^{0,1}_1, \dot{\varphi}_1), (\dot{A}^{0,1}_2, \dot{\varphi}_2)\right)\\
&=-2 \mathrm{Im}  \left( \lambda_1 \int_C \Tr  \left( \lambda_2 (\dot{A}^{0,1}_{2} )^{*_{h_0}} \wedge (e^{i \vartheta} \lambda_2^{-1/2} \dot{\varphi}_1^{*_{h_0}})  - e^{i \vartheta} \lambda_2^{1/2}(\dot{A}_1^{0,1})^{*_{h_0}} \wedge \dot \varphi_{2}^{*_{h_0}}  \right) \right)\\
&\overset{(1)}{=}2 \mathrm{Im}  \left( \lambda_1 \lambda_{1/2} e^{-i \vartheta} \int_C \Tr  \left(\dot\varphi_{2} \wedge \dot{A}_1^{0,1}   - \dot{\varphi}_1  \wedge \dot{A}^{0,1}_{2}   \right) \right).
\end{align*}
In (1), we conjugated the entire expression.
Likewise,
\begin{align*}
\omega_{K_\vartheta} \left((\dot{A}^{0,1}_1, \dot{\varphi}_1), (\dot{A}^{0,1}_2, \dot{\varphi}_2)\right)
&= g \left(K_{\vartheta} (\dot{A}^{0,1}_1, \dot{\varphi}_1), (\dot{A}^{0,1}_2, \dot{\varphi}_2)\right)\\
&=2 \mathrm{Im}  \left( \lambda_1 \lambda_{1/2} (-i)e^{-i \vartheta} \int_C \Tr  \left(\dot\varphi_{2} \wedge \dot{A}_1^{0,1}   - \dot{\varphi}_1  \wedge \dot{A}^{0,1}_{2}   \right) \right)\\
&=-2 \mathrm{Re}  \left( \lambda_1 \lambda_{1/2} e^{-i \vartheta} \int_C \Tr  \left(\dot\varphi_{2} \wedge \dot{A}_1^{0,1}   - \dot{\varphi}_1  \wedge \dot{A}^{0,1}_{2}   \right) \right)
\end{align*}
Hence,
\begin{align*}
&\Omega_{I, \vartheta}\left((\dot{A}^{0,1}_1, \dot{\varphi}_1), (\dot{A}^{0,1}_2, \dot{\varphi}_2)\right)\\
&=(\omega_{J_\vartheta} + i \omega_{K_\vartheta})\left((\dot{A}^{0,1}_1, \dot{\varphi}_1), (\dot{A}^{0,1}_2, \dot{\varphi}_2)\right)\\
&=-2i e^{-i \vartheta}   \lambda_1 \lambda_2^{1/2} \int_C  \Tr  \left(\dot{\varphi}_2  \wedge \dot{A}_{1}^{1,0}  - \dot{\varphi}_1 \wedge \dot{A}^{0,1}_{2}  \right) 
\end{align*}
\end{proof}
\subsection{Moment maps}

Now, we remind the reader that if $\mathcal{G}_{\C}$ acts on $(\mathcal{A}_I^{\C}, I, \Omega_{I, \vartheta})$ the moment map is a map $$M_{I, \theta}: \mathcal{A}_I^{\C} \to \mathrm{Lie}(\mathcal{G}_{\C})^*.$$ 
We usually pair this with $Z \in  \mathrm{Lie}(\mathcal{G}_{\C})$, and then $M_{I, \theta}$ satisfies
 \[d \langle M_{I, \theta}, Z \rangle = \iota_{\rho(Z)} \Omega_{I, \theta}, \]
where $\langle , \rangle $ is the trace pairing on the Lie algebra and its dual. Here, this is $\langle M , Z \rangle  = \int_{C} \Tr(Z M)$.

\begin{prop}\label{prop:M}
 \[M_{I, \theta} = 2i e^{-i \theta}\lambda_1 \lambda_2^{1/2}  \delbar_E \varphi \]
 is a moment map for the $\mathcal{G}_{\C}$ action on the holomorphic symplectic space $(\mathcal{A}_I^{\C}, I, \Omega_{I, \vartheta})$.
\end{prop}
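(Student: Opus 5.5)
The plan is to verify the defining identity $d\langle M_{I,\theta}, Z\rangle = \iota_{\rho(Z)}\Omega_{I,\vartheta}$ directly on the affine space $\mathcal{A}_I^{\C}$, for an arbitrary $Z \in \mathrm{Lie}(\mathcal{G}_\C) = \Omega^0(\mathfrak{gl}(E))$ and an arbitrary tangent vector $(\dot A^{0,1}, \dot\varphi)$. Because $\mathcal{A}_I^{\C}$ is affine, the tangent space at every point is identified with the model vector space, and the differential of the function $(\delbar_E,\varphi)\mapsto \int_C \Tr(Z\,\delbar_E\varphi)$ can be computed by simply varying the pair. I also need to check the equivariance of $M_{I,\theta}$; this is immediate, since $\delbar_E^g\varphi^g = g(\delbar_E\varphi)g^{-1}$ and the trace pairing is $\mathrm{Ad}$-invariant.

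\textbf{Step 1: the infinitesimal action.} From $(\delbar_E,\varphi)\mapsto (g\circ\delbar_E\circ g^{-1}, g\varphi g^{-1})$ with $g=e^{tZ}$, the infinitesimal action is
\[
\rho(Z) = \bigl(-\delbar_E Z,\ [Z,\varphi]\bigr).
\]
The sign follows the convention of the action as written. If the pairing convention requires it, I would absorb the opposite sign into the choice of left or right action at the end.

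\textbf{Step 2: compute both sides.} Varying $(\delbar_E,\varphi)$ in the direction $(\dot A^{0,1},\dot\varphi)$ gives
\[
\tfrac{d}{dt}\,\delbar_{E}\varphi = \delbar_E\dot\varphi + [\dot A^{0,1},\varphi],
\]
so that
\[
d\langle M,Z\rangle(\dot A^{0,1},\dot\varphi) = c\int_C \Tr\bigl(Z(\delbar_E\dot\varphi + [\dot A^{0,1},\varphi])\bigr), \qquad c=2ie^{-i\theta}\lambda_1\lambda_2^{1/2}.
\]
On the other side, I insert $\rho(Z)$ into the first slot of the formula for $\Omega_{I,\vartheta}$ from the preceding lemma, reading the typo $\dot A_1^{1,0}$ there as $\dot A_1^{0,1}$ so that the form is of type $(1,1)$ and matches the earlier $\Omega_I$. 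This gives
\[
\iota_{\rho(Z)}\Omega = -c\int_C \Tr\bigl(\dot\varphi\wedge(-\delbar_E Z) - [Z,\varphi]\wedge \dot A^{0,1}\bigr).
\]

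\textbf{Step 3: match the two expressions.} Integrating by parts on the closed surface $C$ gives $\int\Tr(Z\,\delbar_E\dot\varphi) = \int \Tr(\delbar_E Z\wedge\dot\varphi)$, up to the sign from commuting a $0$-form derivative past a $(1,0)$-form. Cyclicity of the trace gives $\Tr(Z[\dot A^{0,1},\varphi]) = \Tr([\varphi,Z]\wedge\dot A^{0,1})$, up to the graded sign. With these two identities the terms of Step 2 agree pairwise.

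\textbf{Main obstacle.} The only real difficulty is sign bookkeeping: the graded signs in $\Tr(\alpha\wedge\beta)$ for $(1,0)$- and $(0,1)$-forms, the sign convention for $\rho$, and the typo in the preceding lemma. I would fix all three by testing with $Z$ constant and $\varphi$ arbitrary, where only the commutator term survives, and then with $\varphi=0$, where only the $\delbar_E Z$ term survives, to pin down the overall constant $c$.

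Finally, I would note that in the parabolic setting the integration by parts produces boundary terms at $D$. These are exactly the distributional contributions $\psi_\C(\mathbf m,\mathbf n)$ mentioned in the introduction. They vanish for $Z$ in the framed Lie algebra, where $Z$ is the identity at $p\in D$, as used in \cite{collier2024conformallimitsparabolicslnchiggs}, so the statement is taken on that gauge algebra.
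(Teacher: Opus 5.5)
Your proposal is correct and is essentially the paper's own argument: compute $d\langle M_{I,\theta},Z\rangle$, insert $\rho(Z)=(-\delbar_E Z,[Z,\varphi])$ into $\Omega_{I,\vartheta}$ with $\dot A_1^{1,0}$ read as $\dot A_1^{0,1}$ (as the paper's proof implicitly does), and match the two sides by integration by parts and expanding the bracket. With these conventions the signs agree exactly, so you do not need to change left/right action. Your closing remark on boundary terms at $D$ is extra to the paper's proof, which integrates by parts without comment. One correction there: framed Lie algebra elements \emph{vanish} at $p\in D$; it is the group elements that equal the identity.
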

\begin{cor}
A moment map for the $\mathcal{G}$ action on the symplectic space $(\mathcal{A}^{h_0}, \omega_{J, \theta})$ is 
\begin{align*} 
\mu_{J, \vartheta} &= \mathrm{Re} (2i e^{-i \vartheta} \lambda_1 \lambda_2^{-1/2} \delbar_E \varphi) = -2i \lambda_1 \lambda_2^{-1/2} ( \cos \theta D  \star \Phi + \sin \vartheta D \Phi)
\end{align*}
\end{cor}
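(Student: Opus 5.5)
The plan is to obtain $\mu_{J,\vartheta}$ from Proposition \ref{prop:M} by restricting to the compact gauge group and taking real parts. Then I will rewrite the answer in the variables $(D,\Phi)$ of $\mathcal{A}^{h_0}$ using the real-affine isomorphism $(\delbar_E,\varphi)\mapsto(\delbar_E+\del^{h_0}_A,\ \varphi-\varphi^{*_{h_0}})$.

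\emph{Step 1 (restriction and real part).} We have $\mathrm{Lie}(\mathcal{G})=\Omega^0(\mathfrak{u}(E))\subset\mathrm{Lie}(\mathcal{G}_\C)$, and the infinitesimal action $\rho(Z)$ of $\mathcal{G}$ is the restriction of the $\mathcal{G}_\C$ action. Also $\omega_{J_\vartheta}=\mathrm{Re}\,\Omega_{I,\vartheta}$ by definition. For $Z\in\mathrm{Lie}(\mathcal{G})$, take the real part of $d\langle M_{I,\vartheta},Z\rangle=\iota_{\rho(Z)}\Omega_{I,\vartheta}$. This gives
\[
d\,\mathrm{Re}\int_C\Tr(Z M_{I,\vartheta})=\iota_{\rho(Z)}\omega_{J_\vartheta}.
\]
So $\mathrm{Re}\,\langle M_{I,\vartheta},\cdot\rangle$ restricted to $\mathfrak{u}$-valued $Z$ is a moment map. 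Equivariance is inherited from the $\mathcal{G}_\C$-equivariance of $M_{I,\vartheta}$.

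Identifying $\mathrm{Lie}(\mathcal{G})^*$ with $\mathfrak{u}(E)$-valued $2$-forms via the real trace pairing, $\mathrm{Re}\,\Tr(ZM)=\Tr\bigl(Z\cdot\tfrac12(M-M^{*_{h_0}})\bigr)$ for skew-hermitian $Z$. Hence $\mu_{J,\vartheta}$ is the skew-hermitian part of $2ie^{-i\vartheta}\lambda_1\lambda_2^{\bullet}\,\delbar_E\varphi$, which is what "$\mathrm{Re}$" means in the statement.

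\emph{Step 2 (rewrite in $(D,\Phi)$).} Split $\Phi=\varphi-\varphi^{*_{h_0}}$ into types. Since $D$ is unitary, $(\delbar_E\varphi)^{*_{h_0}}=\del_A(\varphi^{*_{h_0}})$ up to the sign convention for adjoints of $2$-forms, which I will track via $(dz\wedge d\zbar)^*=d\zbar\wedge dz$. Using $D=\delbar_E+\del_A$ and types, I expect
\[
D\Phi=\delbar_E\varphi-\del_A\varphi^{*_{h_0}}.
\]
Using $\star dz=-i\,dz$ and $\star d\zbar=i\,d\zbar$, we have $\star\Phi=-i(\varphi+\varphi^{*_{h_0}})$, and so
\[
D\star\Phi=-i\bigl(\delbar_E\varphi+\del_A\varphi^{*_{h_0}}\bigr).
\]
These give $\delbar_E\varphi$ and its adjoint as linear combinations of $D\Phi$ and $D\star\Phi$. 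Writing $e^{-i\vartheta}=\cos\vartheta-i\sin\vartheta$ and taking the skew-hermitian part, the $\cos\vartheta$ term pairs with $D\star\Phi$ and the $\sin\vartheta$ term with $D\Phi$. This produces $-2i\lambda_1\lambda_2^{-1/2}(\cos\vartheta\,D\star\Phi+\sin\vartheta\,D\Phi)$ up to the stated normalization.

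\emph{Main obstacle.} The real difficulty is bookkeeping of signs and factors. This includes the adjoint of a $(1,1)$-form, the factor $\tfrac12$ from the real pairing against the factor $2$, and the power of $\lambda_2$. Proposition \ref{prop:M} has $\lambda_2^{1/2}$ while the statement has $\lambda_2^{-1/2}$, so I will recheck the scaling by recomputing $\iota_{\rho(Z)}\omega_{J_\vartheta}$ directly on $\mathcal{A}^{h_0}$. There, $\rho(Z)=(-DZ,[Z,\Phi])$ and $J(\dot A,\dot\Phi)=(-\lambda_2^{-1/2}\dot\Phi,\lambda_2^{1/2}\dot A)$. The check is an integration by parts against the metric \eqref{eq:metric3}, done first at $\vartheta=0$ and then with the rotation $J_\vartheta=\cos\vartheta\,J+\sin\vartheta\,K$. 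I will adopt whichever power of $\lambda_2$ this direct check yields.
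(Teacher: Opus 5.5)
\textbf{There is a gap in Step 2.} Your Step 1 is the route the paper intends: the corollary has no argument beyond Proposition \ref{prop:M}. One restricts $M_{I,\vartheta}$ to $Z\in\Omega^0(\mathfrak{u}(E))$ and takes the skew-Hermitian (``real'') part, using $\omega_{J_\vartheta}=\mathrm{Re}\,\Omega_{I,\vartheta}$. But Step 2, carried out honestly, does not produce the displayed right-hand side, and no choice of normalization will. Your own identities give
\[
\delbar_E\varphi=\tfrac12\bigl(D\Phi+i\,D\star\Phi\bigr).
\]
Here $D\Phi$ and $D\star\Phi$ are already real, $\mathfrak{u}(E)$-valued $2$-forms, so the real part can be read off from the scalar coefficients:
\[
\mathrm{Re}\bigl(2ie^{-i\vartheta}\lambda_1\lambda_2^{1/2}\,\delbar_E\varphi\bigr)=\lambda_1\lambda_2^{1/2}\bigl(\sin\vartheta\,D\Phi-\cos\vartheta\,D\star\Phi\bigr).
\]
The direct check you propose agrees with this. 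Take $\rho(Z)=(-DZ,[Z,\Phi])$ and integrate by parts once. Then $\omega_J=g(J\cdot,\cdot)$ and $\omega_K=g(K\cdot,\cdot)$ on $\mathcal{A}^{h_0}$ have moment maps $\mu_J=-\lambda_1\lambda_2^{1/2}D\star\Phi$ and $\mu_K=\lambda_1\lambda_2^{1/2}D\Phi$. The displayed formulas for $J_\vartheta$ amount to $J_\vartheta=\cos\vartheta\,J+\sin\vartheta\,K$, i.e.\ $\Omega_{I,\vartheta}=e^{-i\vartheta}\Omega_I$.

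So the statement as printed cannot be reached, for three reasons.
\begin{enumerate}
\item The exponent must be $\lambda_2^{1/2}$, so your suspicion is right. Both terms of $g(J\cdot,\cdot)$ carry $\lambda_2^{1/2}$, consistent with $\Omega_{I,\vartheta}$ and Proposition \ref{prop:M}.
\item The prefactor $-2i$ contradicts your own Step 1. The expression $-2i(\cos\vartheta\,D\star\Phi+\sin\vartheta\,D\Phi)$ is Hermitian-valued, and it pairs with $Z\in\Omega^0(\mathfrak{u}(E))$ to an imaginary number. It is therefore not a moment map for $\mathcal{G}$ at all.
\item The same-sign combination of the two terms belongs to the opposite phase convention. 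It corresponds to $e^{+i\vartheta}$, i.e.\ to reading $J_\vartheta+iK_\vartheta=e^{i\vartheta}(J+iK)$ literally. That is inconsistent with the $e^{-i\vartheta}$ in the first equality.
\end{enumerate}

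What your argument actually proves is the corrected identity
\[
\mu_{J,\vartheta}=\mathrm{Re}\bigl(2ie^{-i\vartheta}\lambda_1\lambda_2^{1/2}\,\delbar_E\varphi\bigr)=\lambda_1\lambda_2^{1/2}\bigl(\sin\vartheta\,D\Phi-\cos\vartheta\,D\star\Phi\bigr).
\]
Saying that it yields the printed formula ``up to the stated normalization'' glosses over a genuine mismatch. You could have caught this from the fact that $D\Phi$ and $D\star\Phi$ are already $\mathfrak{u}(E)$-valued.
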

\begin{proof}[Proof of Proposition \ref{prop:M}]
Here, we follow the details in \cite{Neitzkecourse}. Given $Z \in \mathrm{Lie}(\mathcal{G}_{\C})$, then 
\begin{align*} 
M_{I, \theta, Z}: \mathcal{A}_I^{\C} & \to \mathbb{R}\\
(\delbar_E, \varphi) &\mapsto \langle M_{I, \theta}(\delbar_E, \varphi), Z \rangle  =2i e^{-i \theta}\lambda_1 \lambda_2^{1/2}  \int_C \Tr(Z  \delbar_E \varphi) \end{align*}
Hence, we compute: 
\begin{align*}
d M_{I, \vartheta, Z}(\dot{A}^{0,1}, \dot{\varphi}) &= 2i e^{-i \vartheta}\lambda_1 \lambda_2^{1/2}  \int_C \Tr(Z ( \delbar_E \dot \varphi + [\dot{A}^{0,1}, \varphi]))
\end{align*}
Meanwhile, $Z$ generates the following vector field on $\mathcal{A}_I^{\C}$:
\[ \rho(Z) = (-\delbar_E Z, [Z, \varphi]).\]
Hence, 
\begin{align*}
\Omega_{I, \vartheta}(\rho(Z), (\dot{A}^{0,1}, \dot{\varphi}))
&=-2i e^{-i \vartheta}   \lambda_1 \lambda_2^{1/2} \int_C  \Tr  \left(\dot \varphi \wedge (-\delbar_E Z) -[Z, \varphi] \wedge \dot{A}^{0,1}  \right) \\
&=d M_{I, \vartheta, Z}(\dot{A}^{0,1}, \dot{\varphi}),
\end{align*}
using integration by parts and expanding the Lie bracket. 
\end{proof}

Similarly, we compute that the moment map for the $\mathcal{G}$ action on $\mathcal{A}^{h_0}$. We first note that the symplectic form $\omega_I$ is 
\begin{align*}
\omega_I((\dot{A}_1, \dot \Phi_1), (\dot{A}_2, \dot \Phi_2))&= -g \left((\dot{A}_1, \dot \Phi_1), I(\dot{A}_2, \dot{\Phi}_2)\right)\\
&=  \lambda_1 \mathrm{Re} \left(\int_C \Tr \left( \lambda_2 \dot{A}_1 \wedge \star^2\dot{A}_2 + \dot \Phi_1 \wedge -\star^2  \dot\Phi_2 \right) \right)\\
&= \lambda_1  \left(\int_C \Tr \left(-\lambda_2 \dot{A}_1 \wedge \dot{A}_2 +\dot \Phi_1 \wedge  \dot\Phi_2 \right) \right).
\end{align*}
In the last line, we dropped the ``$\mathrm{Re}$'' because it is automatically real.

\begin{prop}
A moment map for the $\mathcal{G}$ action on the symplectic space $(\mathcal{A}^{h_0}, \omega_I)$ is 
\[ \mu_{I}(D, \Phi) = \lambda_1 \left(-\lambda_2F_D + \Phi \wedge \Phi \right)\]
\end{prop}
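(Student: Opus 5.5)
The plan is to verify the defining identity of a moment map directly, in the same way as in the proof of Proposition \ref{prop:M}, using the expression for $\omega_I$ on $\mathcal{A}^{h_0}$ derived just above:
\[
\omega_I\bigl((\dot A_1,\dot\Phi_1),(\dot A_2,\dot\Phi_2)\bigr)=\lambda_1\int_C \Tr\bigl(-\lambda_2\,\dot A_1\wedge\dot A_2+\dot\Phi_1\wedge\dot\Phi_2\bigr).
\]
For $Z\in\mathrm{Lie}(\mathcal{G})=\Omega^0(\mathfrak{u}(E))$, the paired function is $\mu_{I,Z}(D,\Phi)=\langle\mu_I(D,\Phi),Z\rangle=\int_C\Tr\bigl(Z\,\mu_I(D,\Phi)\bigr)$. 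I must show two things:
\begin{enumerate}
\item $\mathcal{G}$-equivariance of $\mu_I$;
\item $d\mu_{I,Z}=\iota_{\rho(Z)}\omega_I$, where $\rho(Z)$ is the fundamental vector field of the action $(D,\Phi)\mapsto(gDg^{-1},g\Phi g^{-1})$.
\end{enumerate}
Differentiating the action at $g=\exp(tZ)$ gives $\rho(Z)=(-D Z,[Z,\Phi])$. This matches the convention $\rho(Z)=(-\delbar_E Z,[Z,\varphi])$ used in the proof of Proposition \ref{prop:M}.

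Equivariance is immediate, since $F_{D^g}=gF_Dg^{-1}$ and $\Phi^g\wedge\Phi^g=g(\Phi\wedge\Phi)g^{-1}$, and the trace pairing is $\mathrm{Ad}$-invariant. For the differential, I will use the two first variations $\delta F_D=D\dot A$ and $\delta(\Phi\wedge\Phi)=\dot\Phi\wedge\Phi+\Phi\wedge\dot\Phi$. These give
\[
d\mu_{I,Z}(\dot A,\dot\Phi)=\lambda_1\int_C\Tr\bigl(-\lambda_2\,Z\,D\dot A+Z(\dot\Phi\wedge\Phi+\Phi\wedge\dot\Phi)\bigr).
\]
Next I will integrate the first term by parts, using $d\Tr(Z\dot A)=\Tr(DZ\wedge\dot A)+\Tr(Z\,D\dot A)$, so that it becomes $\lambda_1\lambda_2\int_C\Tr(DZ\wedge\dot A)$. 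For the second term I will use cyclicity of the trace together with the sign rules for $\mathfrak{u}(E)$-valued $1$-forms. This rewrites $\Tr\bigl(Z(\dot\Phi\wedge\Phi+\Phi\wedge\dot\Phi)\bigr)$ as $\Tr\bigl([Z,\Phi]\wedge\dot\Phi\bigr)$, up to a sign that must be tracked carefully.

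Finally I will compare with
\[
\iota_{\rho(Z)}\omega_I(\dot A,\dot\Phi)=\lambda_1\int_C\Tr\bigl(-\lambda_2(-DZ)\wedge\dot A+[Z,\Phi]\wedge\dot\Phi\bigr),
\]
and check that the two expressions agree term by term.

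I expect the main obstacle to be purely bookkeeping of signs. The graded commutation of $1$-forms under the trace, the sign in $\rho(Z)$, the $\star^2=-1$ step already used to obtain $\omega_I$, and the relative sign between $F_D$ and $\Phi\wedge\Phi$ all have to line up. The $-\lambda_2$ versus $+1$ weighting in $\mu_I$ is exactly what mirrors the $-\lambda_2$ versus $+1$ in $\omega_I$. If a global sign comes out off, the fix is the convention for $\rho(Z)$ (left versus right action), and I will fix that convention to agree with Proposition \ref{prop:M}.

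A secondary point is the boundary term from integration by parts. On a compact $C$ it vanishes. In the parabolic setting it vanishes for framed deformations, by the same reasoning as in the remark on the importance of framing, since $Z\in L^2_{2,\delta}$ decays at $D$. As a consistency check, I will verify that the real part of the holomorphic moment map computed earlier reproduces the $\Phi\wedge\Phi$ and $F_D$ structure.
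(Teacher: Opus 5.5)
Your proposal is correct and follows essentially the paper's proof: compute $d\mu_{I,Z}$, integrate the $D\dot A$ term by parts, and match the result with $\iota_{\rho(Z)}\omega_I$ for $\rho(Z)=(-DZ,[Z,\Phi])$. You additionally check equivariance and the vanishing of the boundary term, which the paper omits. The sign you hedge on works out exactly, since $\Tr\bigl(Z(\dot\Phi\wedge\Phi+\Phi\wedge\dot\Phi)\bigr)=\Tr\bigl([Z,\Phi]\wedge\dot\Phi\bigr)$, so no change of convention for $\rho(Z)$ is needed. Your proposed consistency check is misdirected, however: the real part of $M_{I,\vartheta}$ is the moment map for $\omega_{J_\vartheta}$, not for $\omega_I$, so it will not reproduce the $F_D$ and $\Phi\wedge\Phi$ terms.
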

\begin{rem}
Since $\Phi \wedge \Phi = - [\varphi, \varphi^{*_{h_0}}]$, then we could instead write
\[ \mu_I(\delbar_E, \varphi, h_0) = -\lambda_1  \lambda_2 \left(F_D + \lambda_2^{-1} [\varphi, \varphi^{*_{h_0}}]\right) \]
\end{rem}
\begin{proof}
Take $$\mu_{I}(D, \Phi) = \lambda_1 \left(-\lambda_2F_D + \Phi \wedge \Phi \right).$$
Then $$\de \mu_{I}(\dot{A}, \dot{\Phi}) =  \lambda_1 \left(- \lambda_2D \dot{A} + \dot{\Phi} \wedge \Phi + \Phi \wedge \dot{\Phi} \right)=\lambda_1 \left(-\lambda_2 D \dot{A} + [\Phi \wedge \dot{\Phi}]\right).$$
We then compute
\begin{align*}
\mu_{I, Z}(D, \Phi) &= \lambda_1\int_C \Tr\left(Z(-\lambda_2F_D + \Phi \wedge \Phi)\right),
\end{align*}
hence 
\begin{align*}
d\mu_{I, Z}(\dot{A}, \Phi) &= \lambda_1 \int_C \Tr \left(Z (-\lambda_2 D \dot{A} + [\Phi \wedge \dot{\Phi}] \right).
\end{align*}
In the other direction, note  $\rho(Z)=(-D Z, [Z, \Phi])$. We compute:
\begin{align*}
\omega_I(\rho(Z), (\dot{A}, \dot{\Phi})) &=  \lambda_1  \left(\int_C \Tr \left(\lambda_2 DZ  \wedge \dot{A} + [Z, \Phi] \wedge  \dot\Phi \right) \right)\\
&=  \lambda_1  \left(\int_C \Tr \left( -Z \lambda_2 D \dot{A} + Z[ \Phi \wedge  \dot\Phi]\right) \right)\\
&=d \mu_{I, Z}(\dot{A}, \dot{\Phi})
\end{align*}
\end{proof}

\subsection{Levels of \texorpdfstring{hyperK\"ahler}{hyperKahler} moment map}
Based on Duisermaat--Heckman, when we express the integral of $\omega_I$ in terms of the level of the hyperkahler quotient at $\mu_\R$ and when we express $\Omega_I$ in terms of the level of the hyperkahler quotient at $\mu_\C$, the coefficients should be the same. 
\begin{prop}\label{prop:distributions}Define
\footnote{This is in keeping with the fact $\mathrm{pdeg}(E) = \frac{i}{2 \pi} \int_C \Tr F_D$.}
\[ F^\perp_D =F_D - \frac{1}{\mathrm{rk}(E)} \mathrm{Tr} F_{\mathrm{D}}  \mathrm{Id}_E.
\]
The value of the moment maps 
\begin{align*}
M_{I, \vartheta}(\delbar_E, \varphi)&= 2i e^{-i \vartheta} \lambda_1 \lambda_2^{1/2} \delbar_E \varphi\\
\mu_I(\delbar_E, \varphi) &= -\lambda_1 \left(\lambda_2 F^\perp_D + [\varphi, \varphi^{*_{h_0}}]   \right)
\end{align*}
at $p \in D$ are n
\begin{align*}
M_{I, \vartheta}(\delbar_E, \varphi) &= 2 i e^{-i \theta} \lambda_1 \lambda_2^{1/2} \delbar_E \varphi \\
&=  2 i e^{-i \theta} \lambda_1 \lambda_2^{1/2} \begin{pmatrix} -m& * \\ 0 & m \end{pmatrix} \pi \delta_p d \zbar \wedge d z\\
\mu_I(\delbar_E, \varphi) &=-\lambda_1 (\lambda_2 F^\perp_D  +[\varphi, \varphi^*_{h_0}])\\ \nonumber
&= -\lambda_1 \lambda_2  \begin{pmatrix} \alpha - \frac{1}{2} & \\ & \frac{1}{2}- \alpha \end{pmatrix} \pi \delta_p d \zbar \wedge d z \\
\end{align*}
\end{prop}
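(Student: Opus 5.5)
The plan is to compute the distributional parts of both moment maps near a single puncture $p\in D$, using the local models for parabolic Higgs bundles in a punctured disk. In a holomorphic frame adapted to the flag, $\varphi$ has the local form $\varphi = \begin{pmatrix} -m & n\\ 0 & m\end{pmatrix}\frac{dz}{z} + (\text{holomorphic})$, possibly after the ordering convention $F_p=\langle e_1\rangle$. We then apply the identity $\partial_{\zbar}\frac{1}{z} = \pi\delta_p$, interpreted distributionally. Since $\delbar_E$ of the holomorphic remainder vanishes, this gives
\[
\delbar_E\varphi = \begin{pmatrix} -m & n\\ 0 & m\end{pmatrix}\pi\delta_p\, d\zbar\wedge dz ,
\]
up to the sign convention for $d\zbar\wedge dz$ versus $dz\wedge d\zbar$. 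Multiplying by the prefactor $2ie^{-i\vartheta}\lambda_1\lambda_2^{1/2}$ from Proposition \ref{prop:M} gives the stated expression for $M_{I,\vartheta}$. The off-diagonal entry $*$ is not determined, because it depends on the framing, consistent with the remark in the introduction that $\mathbf n$ is an artifact of the framing.

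For the real moment map, first note that $[\varphi,\varphi^{*_{h_0}}]$ is locally integrable near $p$. By the computation in the parabolic-case discussion of Section \ref{sec:TorelliomegaI}, it is $O(|z|^{-2+2(\alpha^{(2)}-\alpha^{(1)})})$ off-diagonal and similarly on the diagonal, with the $m$-terms cancelling in the commutator at leading order. It therefore carries no delta mass at $p$. The delta contribution comes entirely from $F^\perp_D$. Choose the adapted metric $h_0 = \mathrm{diag}(|z|^{2\alpha^{(1)}},|z|^{2\alpha^{(2)}})$ with $\alpha^{(1)}=\alpha$ and $\alpha^{(2)}=1-\alpha$; its Chern connection has curvature $\delbar\partial\log h_0$ on each line. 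Using $\partial_z\partial_{\zbar}\log|z|^2 = \pi\delta_p$, the curvature of $h_0$ is $\mathrm{diag}(\alpha,1-\alpha)\,\pi\delta_p\, d\zbar\wedge dz$ modulo smooth terms. Removing the trace part with $\frac12\Tr$ gives $\mathrm{diag}(\alpha-\frac12,\frac12-\alpha)$. Multiplying by $-\lambda_1\lambda_2$ yields the claimed formula. As a check, the trace normalization must match the footnote $\mathrm{pdeg}(E)=\frac{i}{2\pi}\int_C\Tr F_D$, and the overall sign should agree with the computation $\mu=-2\pi\,\mathrm{pdeg}$ in the proof of Proposition \ref{prop:stronglyparabolicvolumesv2}.

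The main obstacle is making the distributional statements meaningful, since $F_D$ and $\delbar_E\varphi$ are only defined away from $D$. I would define them by pairing with a test section $Z$ that is continuous at $p$, as elements of the Lie algebra of the framed gauge group in \cite{collier2024conformallimitsparabolicslnchiggs} are. The pairing is computed as $\lim_{\epsilon\to0}\int_{C\setminus B_\epsilon(p)}$ plus the boundary term from Stokes, and only the boundary circle integral survives. This is the same mechanism as in the remark computing $\Lambda_I(X_\gamma)$, which produced $2mi\lim\alpha$. Tracking signs and the factors of $2$ and $\pi$ consistently across the conventions of the appendix is the delicate part. I would fix these by checking against the exterior-sphere computation.
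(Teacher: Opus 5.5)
Your proposal follows the paper's own argument: apply $\partial_{\zbar}\frac{1}{z-p}=\pi\delta_p$ to the residue of $\varphi$, take $\delbar\partial\log$ of the adapted metric $\mathrm{diag}(|z-p|^{2\alpha},|z-p|^{2(1-\alpha)})$ to get the delta part of $F_D$, and remove the trace part. Your local-integrability remark on $[\varphi,\varphi^{*_{h_0}}]$ supplies a step the paper leaves implicit. There are two small corrections: the off-diagonal entries of $[\varphi,\varphi^{*_{h_0}}]$ are $O(|z|^{-2+(\alpha^{(2)}-\alpha^{(1)})})$, not of order exponent $2(\alpha^{(2)}-\alpha^{(1)})$, which is still integrable since $\alpha^{(2)}-\alpha^{(1)}=1-2\alpha>0$; and for the pairing interpretation you need test sections with a nonzero limit at $p$ (e.g.\ elements of $\mathcal{R}_\delta$), because elements of the framed Lie algebra $L^2_{2,\delta}$ vanish at $p$ and would not detect the delta terms.
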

\begin{rem} Our convention in the rest of the paper corresponds to $\e^{i \vartheta}=1$ and $\lambda_1=\lambda_2=1$.
Thus, the levels of $M_I$ and $\mu_I$ at $p \in D$ in the frame where $F_p = \langle e_2 \rangle$ are respectively
\begin{align*}
 &2 i  \begin{pmatrix} -m_p& * \\ 0 & m_p \end{pmatrix} \pi \delta_p d \zbar \wedge d z
 & -\begin{pmatrix} \alpha_p - \frac{1}{2} & \\ & \frac{1}{2}- \alpha_p \end{pmatrix} \pi \delta_p d \zbar \wedge d z \\
\end{align*}
We computed that the integral of $\omega_I$ over $S_J$ was $4 \pi^2 K_J$. From the expressions for the levels of $M_I$ and $\mu_I$, by analogy with Duistermaat--Heckman as discussed in Section \ref{sec:affinelinearity}, we would have expected that the integral of $\Omega_I$ over $S_J$ would be $\frac{4 \pi^2}{2i}M_J$. At this time, we cannot explain the difference.
\end{rem}
\begin{proof}
We work out that $\partial_\zbar \frac{1}{z-p} =\pi \delta_p$.
Then, we compute that near $p \in D$,
for $\varphi= \begin{pmatrix}-m_p & * \\ 0 & m_p \end{pmatrix} \frac{dz}{z-p} + \mathrm{hol}$,
\begin{align*} \delbar_E \varphi &= \begin{pmatrix}-m_p & * \\ 0 & m_p \end{pmatrix} \pi \delta_p d\zbar \wedge d z \\
&= \begin{pmatrix}-m_p & * \\ 0 & m_p \end{pmatrix} \pi \delta_p(-2 i dx \wedge dy)
\end{align*}
We first note that distributional part of $F_D$ at $p \in D$ is as follows:
\begin{align*} F_D &= \delbar (h^{-1} \del h)\\
& \sim \delbar \del \begin{pmatrix} \alpha_p \log ((z-p) (\zbar-\overline{p})) & \\  & (1-\alpha_p)\log ((z-p)(\zbar-\overline{p})) \end{pmatrix}  \\
&= \begin{pmatrix} \alpha_p \log ((z-p)(\zbar-\overline{p})) & \\  & (1-\alpha_p)\log ((z-p)(\zbar-\overline{p})) \end{pmatrix}  d \zbar \wedge \del_{\zbar} \frac{1}{z-p} dz\\
&=  \begin{pmatrix} \alpha_p & \\ & 1-\alpha_p \end{pmatrix}   \pi \delta_p d \zbar \wedge d z
\end{align*}
Hence, the distributional part of $F_D^\perp$ at $p$ is 
\begin{align*}F_D^\perp &\sim \begin{pmatrix} \alpha_p- \frac{1}{2}& \\ & \frac{1}{2}-\alpha_p \end{pmatrix}   \pi \delta_p  d \zbar \wedge d z 
\end{align*}
since $\mathrm{Tr} F_D$ had a distributional piece supported at $D$ encoding the induced parabolic weight on $\det E$.
The result follows.
\end{proof}

\section{Locations of homology spheres in non-strongly parabolic Hitchin moduli spaces}\label{sec:homologyspheres} 
The $2$-homology $H_2(\cM, \Z)$ is $5$-dimensional and there is a basis $e_1, e_2, e_3, e_4, e_5$ in which the intersection form agrees with the intersection form in \eqref{eq:intersectionform}. (As discussed in Section \ref{sec:Dehntwists}, this basis is not unique, and any 4d Dehn twist maps one such basis to another.) 
When $\mathbf{m}=\mathbf{0}$, there is a single singular fiber---the nilpotent cone---consisting of five spheres in an affine $D_4$ arrangement, as described in Section \ref{sec:nilpotentcone}. These give a basis of $H_2(\cM,\Z)$.  In this section, we describe the location of homology spheres when $\mathbf{m} \neq \mathbf{0}$.

\subsection{Factorization}
 There is a $\Z^2$ lattice $\Gamma' \to \cB'$ such that $\cM'=\Gamma' \otimes_{\Z} \R/2\pi \Z$.\footnote{To recover this lattice, pick a point $\beta \in \cB'$ and a point in the fiber $\theta \in \cM'_\beta$. The Gauss-Manin connection gives us a notion of parallel transport. \ldots } 
 Let $\mathbb{D}$ be a large disk in $\cB$ containing all the points of the discriminant.
  Fix a point $\beta_0 \in \cB- \mathbb{D}$, and fix $\gamma_0$, a loop  based at $\beta_0$ going once around $\mathbb{D}$ counterclockwise, as shown in Figure \ref{fig:hurwitz}.
  \begin{figure}[ht]
\includegraphics[height=2.0in]{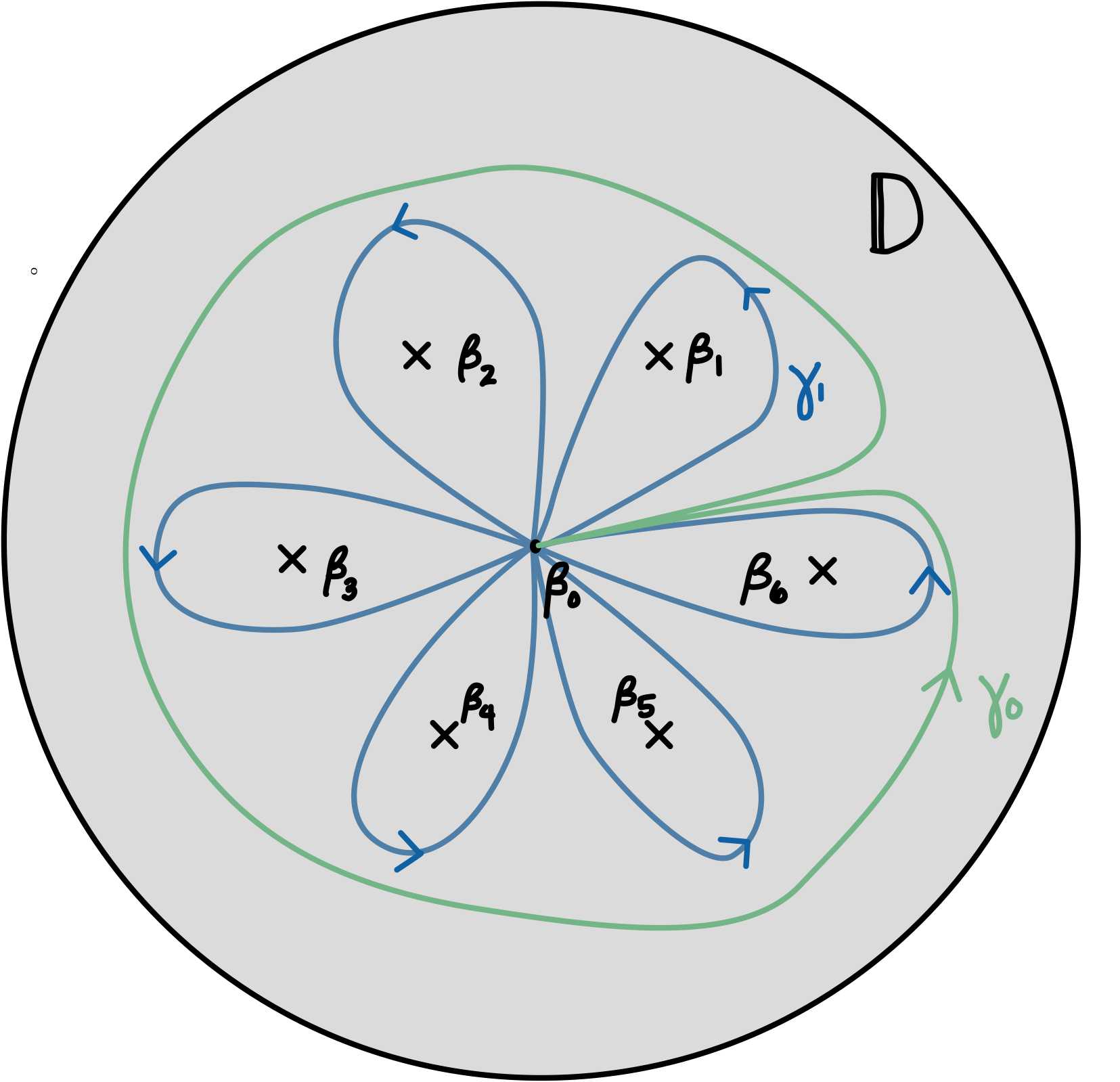}
\caption{\label{fig:hurwitz} }
\end{figure}
   We describe the generic situation  where the discriminant locus consists of six distinct points $\beta_1, \ldots, \beta_6$.
Given a point $\beta_i$ in the discriminant locus, let $\gamma_i$ be a loop which goes around $\beta_i$ counterclockwise, encircling no other point of the discriminant locus, as shown in Figure \ref{fig:hurwitz}.
We say that the indices are ``cyclically ordered'' if $[\gamma_6 \circ \gamma_5 \circ \gamma_4 \circ \gamma_3 \circ \gamma_2 \circ \gamma_1]=[\gamma_0]$, again as shown in Figure \ref{fig:hurwitz}.

Now, let $\rho$ be the $SL(2,\Z)$-valued monodromy of the lattice:
\[\rho: \pi_1(\cB', \beta_0) \to SL(2,\Z) \qquad \gamma \mapsto \rho(\gamma). \]
Then each of the six singular fibers is of type $I_1$; by the Picard-Lefshetz theorem, $A_i=\rho(\gamma_i)$ are given by Dehn twists, i.e. $$A_i \sim\begin{pmatrix} 1 & 1 \\ 0 & 1 \end{pmatrix}.$$ For other singular fiber types, a different conjugacy class is prescribed.  Since the total monodromy corresponding to a fiber of Kodaira type $I_0^*$ is $-\mathrm{Id}$, for the loop $\gamma_0$ going counterclockwise around $\mathbb{D}$, we have $\rho(\gamma_0)=-\mathrm{Id}$. This imposes a single relation on the $A_i$: $A_6 A_5 A_4 A_3 A_2 A_1=-\mathrm{Id}$.

The following theorem states that we can find loops $[\gamma_1], \cdots, [\gamma_6]$ such that the indices are cyclically ordered and $A_i = \rho(\gamma_i)$ are particularly nice by making a finite number of Hurwitz moves. Given one product
 $A_k \cdots A_1$, the product $A_k' \cdots A_1'$ is said to be obtained by applying a ``Hurwitz move''
if either (1) $A_i' = A_{i+1}$ and $A'_{i+1} = A_{i+1}^{-1} A_i A_{i+1}$ or (2) $A_i'=A_{i} A_{i+1} A_{i}^{-1}$ and $A_{i+1}'=g_i$.
We observe that (2) is the inverse of (1). The Hurwitz moves on the $A_i$ arise from the analogous moves on the loops $\gamma_i$, as shown in Figure \ref{fig:hurwitzmoves}.
At the level of the loops, in (1) we conjugate $\gamma_i$ by $\gamma_{i+1}$, observing that $\gamma_{i+1} ^{-1} \circ \gamma_i \circ \gamma_{i+1}$ goes once around $\beta_i$ counterclockwise; we leave all the remaining loops alone; after conjugation $\{1, 2, \cdots i+1, i, \cdots k\}$ are cyclically ordered, so we reindex, exchanging $i \leftrightarrow i+1$; (2) is similar.
  \begin{figure}[ht]
\includegraphics[width=7.0in]{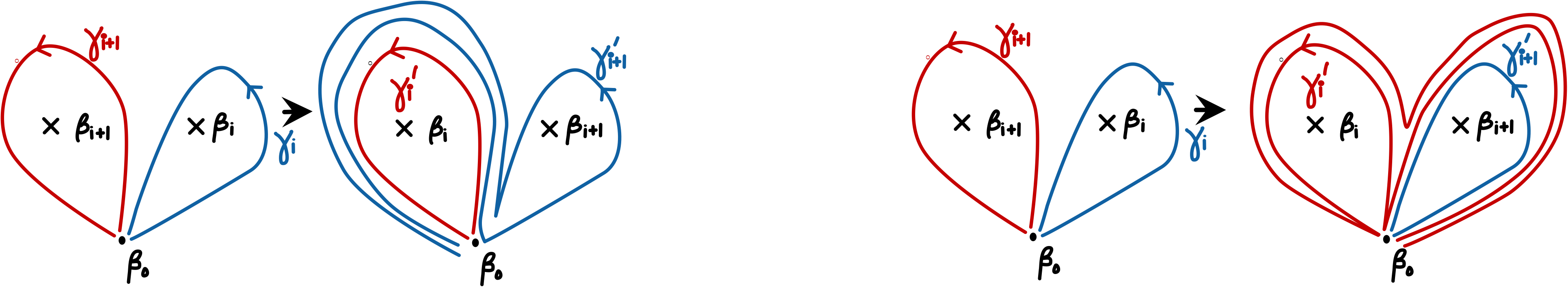}
\caption{Hurwitz moves (1) and (2) arise from the following analog of Hurwitz moves on the loops, shown respectively on the left and right.  \label{fig:hurwitzmoves} }
\end{figure}

\begin{thm}[Cadavid--Velez's Theorem 19 in \cite{CadavidVelez}]\label{thm:factorization}
Given initial loops, after a finite number of Hurwitz moves we produce loops $[\gamma_1], \cdots [\gamma_6]$ 
satisfying the same conditions and such that  $A_6=A_4=A_2=A$ and $A_5=A_3=A_1=B$
\[
	A = \begin{pmatrix}
 1 & 1 \\ 0 & 1	
 \end{pmatrix}
	\quad \text{and} \quad 
	B = \begin{pmatrix}
 1 & 0 \\ -1 & 1	
 \end{pmatrix} = W^{-1} A W \qquad W=\begin{pmatrix} 0 & 1 \\ -1 & 0 \end{pmatrix}, 
\]
i.e. we have the factorization $ABABAB=-\mathrm{Id}$.
\end{thm}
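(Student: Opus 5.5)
The plan is to reduce the statement to a combinatorial fact about factorizations in $SL(2,\Z)$, using the classical argument of Moishezon--Livne type that underlies Cadavid--Velez. Each $A_i=\rho(\gamma_i)$ is a positive Dehn twist, so $A_i=T_{v_i}$ for a primitive vector $v_i\in\Z^2$, determined up to sign. Here $T_v(x)=x+\det(v,x)\,v$, and $A=T_{e_1}$, $B=T_{e_2}$. A Hurwitz move on an adjacent pair $(T_{v_i},T_{v_{i+1}})$ replaces it by $(T_{v_{i+1}},T_{T_{v_{i+1}}^{-1}v_i})$ or by the inverse move. So Hurwitz moves act on the tuple $(v_1,\dots,v_6)$ through the braid group, and the relation $T_{v_6}\cdots T_{v_1}=-\mathrm{Id}$ is preserved.

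Two bookkeeping observations come first. Because the total monodromy $-\mathrm{Id}$ is central, moving the first factor to the end by a sequence of Hurwitz moves conjugates it by a central element. Hence cyclic rotations of the factorization are Hurwitz equivalences. Second, simultaneous conjugation of all $A_i$ by some $g\in SL(2,\Z)$ only changes the choice of $\Z$-basis of the lattice $\Gamma_{\beta_0}$. We are free to make this choice, and we use this freedom at the end.

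The core step is a complexity reduction. Define $c=\sum_i \|v_i\|^2$, or better the lexicographically ordered multiset of the norms $\|v_i\|$. The key lemma to prove is the following: if some adjacent pair has $|\det(v_i,v_{i+1})|\ge 2$, then some Hurwitz move strictly decreases $c$. The idea is that $T^{\pm1}_{v_{i+1}}(v_i)=v_i\pm\det(v_{i+1},v_i)\,v_{i+1}$. Choosing the sign that shortens the longer of the two vectors should reduce its norm, except in a small number of configurations, which must be handled using the global relation. This is where the product being $-\mathrm{Id}$ (rather than arbitrary) enters. I would first try a trace/Euler-characteristic count: a positive factorization of $-\mathrm{Id}$ into $6$ twists corresponds to the $I_0^*$ fiber, of Euler number $6$. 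This should force the existence of a pair whose vectors can be shortened, much as in Livne's proof that the relation $(AB)^6=\mathrm{Id}$ of length $12$ is the only one up to Hurwitz moves. Since $c$ is a positive integer, finitely many moves reach a factorization with $|\det(v_i,v_{i+1})|\le1$ for all adjacent pairs, taken cyclically.

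The final step is to identify this reduced factorization. Equal adjacent vectors, where the determinant vanishes, can be ruled out or absorbed by a further move. Once $\det(v_i,v_{i+1})=\pm1$ throughout, a direct computation in $SL(2,\Z)$ shows that a length-$6$ chain of unimodular consecutive pairs with product $-\mathrm{Id}$ must alternate between two vectors forming a basis. Conjugating so that this basis is $(e_1,e_2)$ gives $A_{2k}=A$ and $A_{2k-1}=B$, that is, $ABABAB=-\mathrm{Id}$. This identity can be checked directly, since $AB$ has order $6$ in $SL(2,\Z)$ with $(AB)^3=-\mathrm{Id}$. The main obstacle I expect is the exceptional cases of the complexity-reduction lemma, where no single adjacent move shortens a vector. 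There I would fall back on the explicit Cadavid--Velez case analysis.
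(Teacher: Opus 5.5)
The paper gives no proof of this statement; it imports it from Cadavid--V\'elez. As a self-contained argument your proposal has two genuine gaps.

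\textbf{The descent lemma is not proved.} The complexity-reduction lemma carries essentially all the content of the theorem, and you do not establish it. You propose to shorten the longer vector by ``choosing the sign'' of the twist, and you defer the exceptional configurations to the Cadavid--V\'elez case analysis. But a Hurwitz move offers no sign choice. With the product $T_{v_6}\cdots T_{v_1}$, the moves at position $i$ are $(v_i,v_{i+1})\mapsto(v_{i+1},T_{v_{i+1}}v_i)$ and $(v_i,v_{i+1})\mapsto(T_{v_i}^{-1}v_{i+1},v_i)$. So a vector can be twisted by a given neighbour in only one direction. (The move you wrote, like the paper's move (1), preserves $T_{v_1}\cdots T_{v_6}$ rather than $T_{v_6}\cdots T_{v_1}$.) This positivity constraint is what makes the problem nontrivial. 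It is why the classical Moishezon--Livne argument is organised around the free-product structure $PSL(2,\Z)\cong\Z/2*\Z/3$ rather than Euclidean norms. The Euler-number heuristic does not replace such an argument.

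\textbf{The terminal step is false.} Take $(v_1,\dots,v_6)=(e_1,\,e_1+e_2,\,e_2,\,e_1,\,e_1+e_2,\,e_2)$.
\begin{itemize}
\item All six cyclically adjacent determinants are $\pm1$.
\item Since $T_{e_1+e_2}=T_{e_1}T_{e_2}T_{e_1}^{-1}$, you get $T_{v_6}\cdots T_{v_1}=(T_{e_2}T_{e_1}T_{e_2})^2=(BAB)^2=W^2=-\mathrm{Id}$.
\item Yet three distinct vectors occur: a Farey triangle traversed twice, not an alternation between two vectors.
\end{itemize}

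Now take $(e_2,e_1,e_2,e_2,e_1,e_2)$.
\begin{itemize}
\item Its product is $(BAB)^2=-\mathrm{Id}$.
\item It attains the absolute minimum $c=6$, so no $c$-decreasing move exists.
\item It is not alternating, and its equal adjacent pair can be neither ruled out nor removed without increasing $c$.
\end{itemize}
Reaching the normal form requires realising the braid relation $BAB=ABA$. Two Hurwitz moves on the last three entries, passing through $(e_1,e_1+e_2,e_2)$ (which has larger $c$), turn the tuple into $(e_2,e_1,e_2,e_1,e_2,e_1)$.

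So norm descent alone cannot land on $ABABAB$. A complete argument needs an actual proof of a descent lemma compatible with positivity, and a classification of the terminal configurations that includes such braid moves. That is precisely what the cited Cadavid--V\'elez result supplies.
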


\subsection{Vanishing Cycles and Homology Spheres}

We now can discuss the homology spheres. Since the fiber above $\beta_i$ is of type $I_1$,
there is a unique (up to a sign) simple eigenvalue $\lambda_i = \begin{pmatrix} l_1\\l_2 \end{pmatrix}$ of $A_i$ such that $l_1$ and $l_2$ are relatively prime.  In any fiber of $\cM'$ above any $\beta$ nearby $\beta_i$, $\lambda_i$ corresponds to a circle; In the fiber of $\cM'$ above $\beta_i$, it corresponds to the ordinary double point. Consequently, we call $S^1_{\lambda_i}$ the vanishing cycle at $\beta_i$. (It is defined up to orientation.) We call the disk formed as the $S^1$-fibration over a path terminates at $\beta_i$ a ``Lefschetz thimble'', as shown in Figure \ref{fig:lefschetzthimble}.
\begin{figure}[ht] 
\begin{centering} 
\includegraphics[height=1.5in]{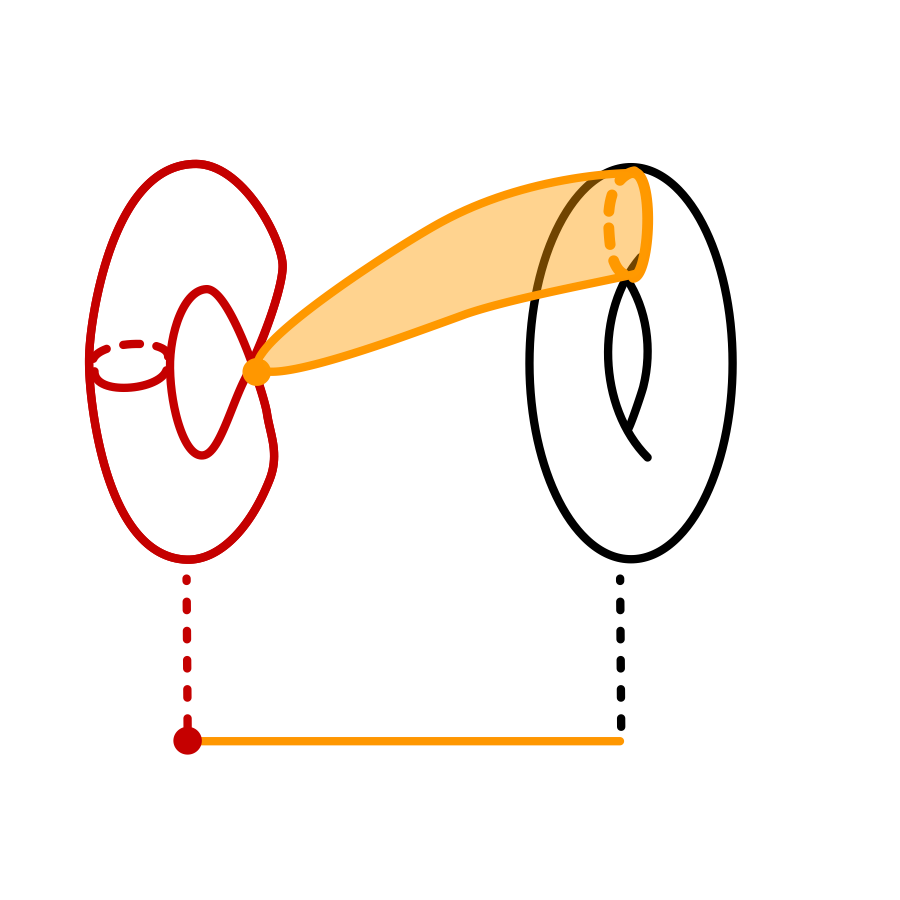}
\caption{\label{fig:lefschetzthimble}  Lefschetz thimble formed by vanishing cycle.}
\end{centering}
\end{figure}
Given a path $\gamma_1$ from $\beta_i \in \mathcal{B}^{\mathrm{sing}}$ to $\beta \in \mathcal{B}'$ and a path $\gamma_2$ from $\beta$ to $\beta_j \in \mathcal{B}^{\mathrm{sing}}$, and respective oriented vanishing cycles $S^1_{\lambda_i}$ and $S^1_{\lambda_j}$, observe that the two Lefschetz thimbles form a two-sphere if $S^1_{\lambda_i}=S^1_{\lambda_j}$ in the fiber over $\beta$, as shown in the left image of Figure \ref{fig:homologysphere}; if they do not match, no two-sphere is formed, as shown in the right image of Figure \ref{fig:homologysphere}.
\begin{figure}[ht] 
\begin{centering} 
\includegraphics[height=1.5in]{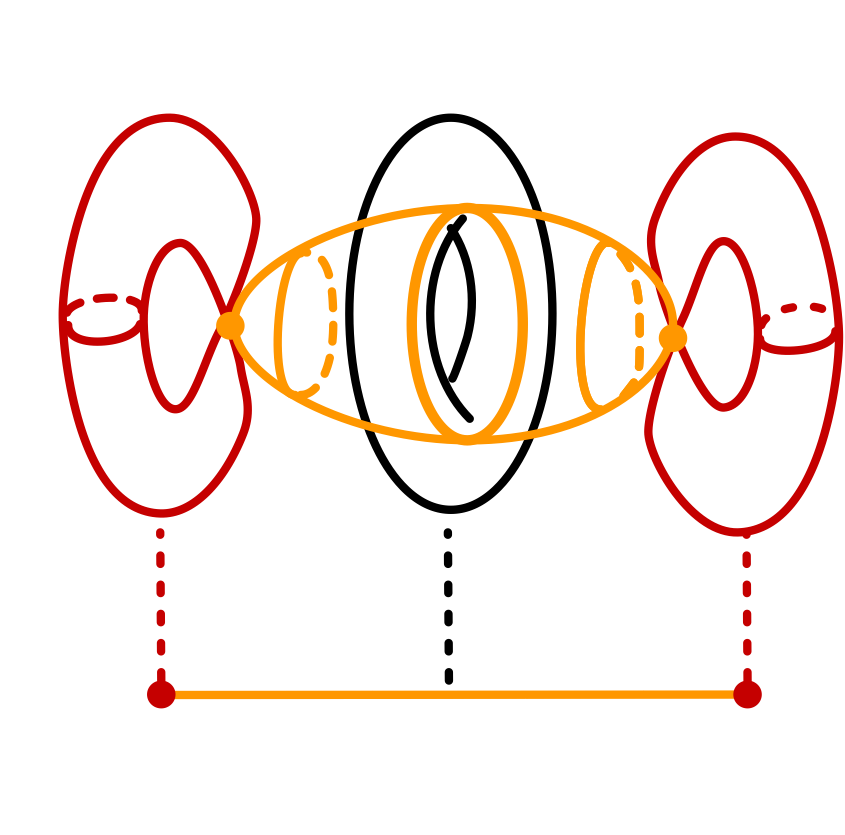}  \hspace{.5in} \includegraphics[height=1.5in]{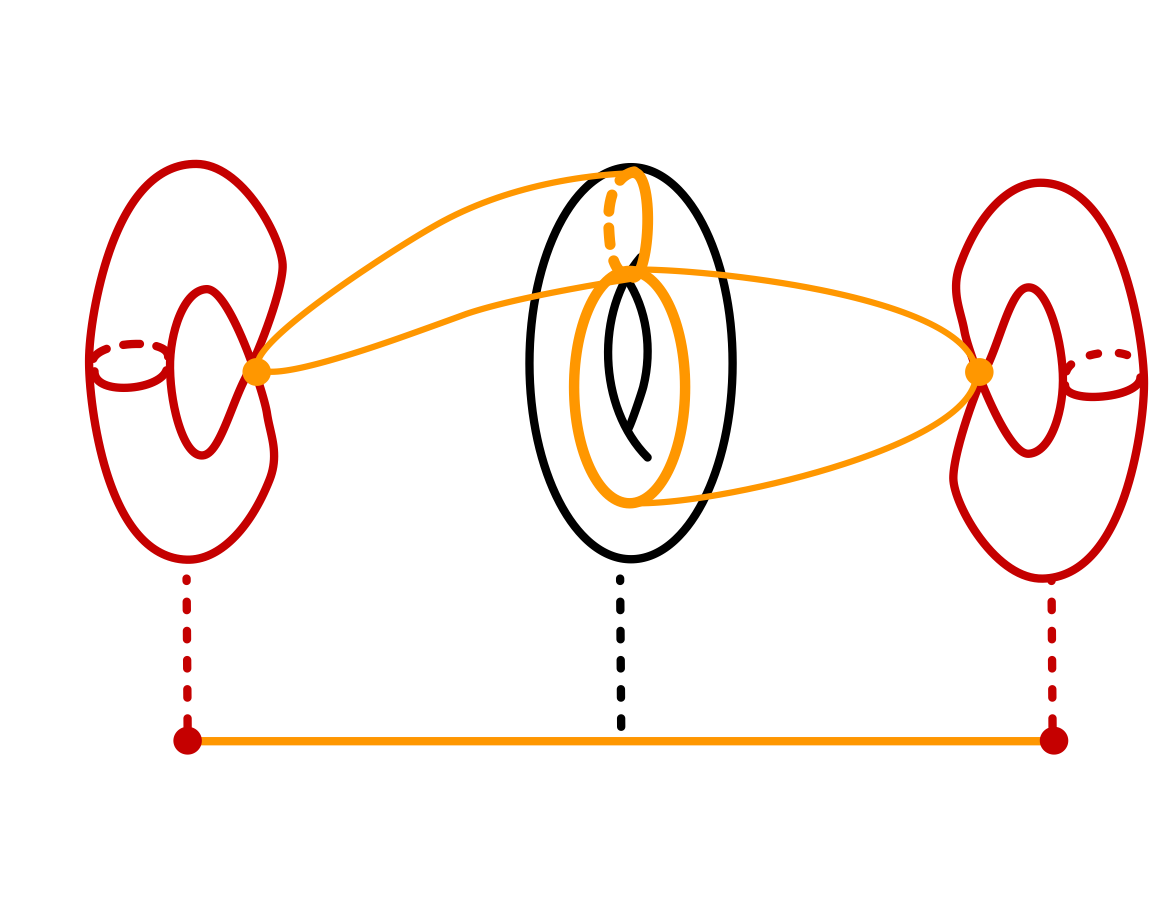} 
\caption{\label{fig:homologysphere}  (\textsc{Left}) Homology sphere formed from Lefschetz thimbles, since vanishing cycles coincide.  (\textsc{Right}) No homology sphere formed from Lefschetz thimbles, since vanishing cycles do not coincide.}
\end{centering}
\end{figure}

\medskip

Now, we can consider the location of the homology spheres in the basis of Theorem \ref{thm:factorization}. Since the eigenvalue associated to $A=\begin{pmatrix} 1 & 1 \\ 0 & 1 \end{pmatrix}$ is $\lambda_A=\begin{pmatrix} 1 \\ 0 \end{pmatrix}$ and the eigenvalue associated to $B=\begin{pmatrix} 1 & 0 \\ -1 & 1 \end{pmatrix}$ is $\lambda_B=\begin{pmatrix} 0 \\1 \end{pmatrix}$, we see that the vanishing cycles at $\beta_2, \beta_4, \beta_6$ are all $S^1_{\lambda_A}$ while the vanishing cycles at $\beta_1, \beta_3, \beta_5$ are all $S^1_{\lambda_B}$. The moduli space $\cM$ is homotopy equivalent to $T^2$ with $3$ disks into $S^1_{\lambda_A}$ and $3$ disks into $S^1_{\lambda_B}$, as shown in Figure \ref{fig:homotopy}. 
\begin{figure}[!ht]
\begin{centering} 
\includegraphics[height=1.5in]{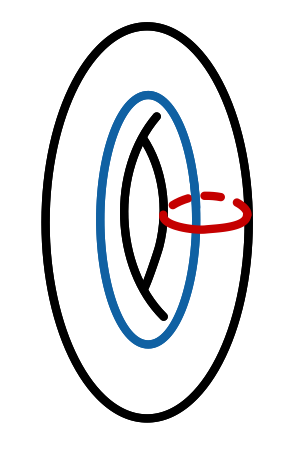}
\caption{\label{fig:homotopy} The moduli space $\mathcal{M}$ is homotopy equivalent to $T^2$ with $3$ disks glued into $S^1_{\lambda_A}$ and $3$ disks glued into $S^1_{\lambda_B}$, shown in blue and red.}
\end{centering}
\end{figure}
Note that there is no simple relationship between a homology basis coming from the $ABABAB=-\mathrm{Id}$ factorization and a homology basis consisting of five spheres in an affine $D_4$ configuration.  This is expected, since the affine $D_4$ Coxeter group acts on the homology basis.

\begin{rem}[Once-punctured torus]
A $3$-parameter subfamily of the $12$-parameter space of ALG-$D_4$ metrics is conjecturally realized by $SU(2)$-Hitchin moduli spaces on the once-punctured torus.
In this case, the $I_0^*$ splits as three $I_2$ fibers. We refer the reader to \cite[p. 14-16]{DAHA} for a discussion of the homology spheres in this case. 
\end{rem}  

 \bibliography{ALG}{}
\bibliographystyle{math}
\end{document}